\UseRawInputEncoding
\documentclass[10pt, reqno]{amsart}
\usepackage{amsfonts,latexsym,enumerate}
\usepackage{amsmath}
\usepackage{amscd}
\usepackage{float,amsmath,amssymb,mathrsfs,bm,multirow,graphics}
\usepackage[dvips]{graphicx}
\usepackage[percent]{overpic}
\usepackage{amsaddr}
\usepackage[numbers,sort&compress]{natbib}
\usepackage{todonotes}
\usetikzlibrary{shapes.misc}

\usepackage{cancel}
\usepackage{ulem}

\addtolength{\topmargin}{-10ex}
\addtolength{\oddsidemargin}{-4em}
\addtolength{\evensidemargin}{-4em}
\addtolength{\textheight}{16ex}
\addtolength{\textwidth}{7em}

\newcommand{\R}{{\Bbb R}}

\newcommand{\C}{{\Bbb C}}
\newcommand{\D}{{\Bbb D}}

%%%%% DOCUMENT SPECIFIC DEFINITIONS

%\newcommand{\res}{\text{\upshape Res\,}}
\newcommand{\diag}{\text{\upshape diag\,}}
\newcommand{\re}{\text{\upshape Re\,}}
\newcommand{\im}{\text{\upshape Im\,}}
\newcommand{\Ai}{\text{\upshape Ai}}

\newcommand{\HM}{\text{\upshape HM}}
\newcommand{\sym}{\text{\upshape sym}}

\DeclareMathOperator{\res}{Res}

\DeclareMathOperator{\dist}{dist}

\DeclareMathOperator{\erf}{erf}

\newcommand{\II}{\text{\upshape II}}
\newcommand{\III}{\text{\upshape III}}
\newcommand{\IV}{\text{\upshape IV}}
\newcommand{\V}{\text{\upshape V}}

\usepackage{tikz}
\usepackage{pict2e}
\usetikzlibrary{arrows,calc,chains, positioning, shapes.geometric,shapes.symbols,decorations.markings,arrows.meta}
\usetikzlibrary{patterns,angles,quotes}
\usetikzlibrary{decorations.markings}
\tikzset{middlearrow/.style={
			decoration={markings,
				mark= at position 0.6 with {\arrow{#1}} ,
			},
			postaction={decorate}
		}
	}
\tikzset{->-/.style={decoration={
				markings,
				mark=at position #1 with {\arrow{latex}}},postaction={decorate}}}
	
\tikzset{-<-/.style={decoration={
				markings,
				mark=at position #1 with {\arrowreversed{latex}}},postaction={decorate}}}
				
				\tikzset{
	master/.style={
		execute at end picture={
			\coordinate (lower right) at (current bounding box.south east);
			\coordinate (upper left) at (current bounding box.north west);
		}
	},
	slave/.style={
		execute at end picture={
			\pgfresetboundingbox
			\path (upper left) rectangle (lower right);
		}
	}
}
\tikzset{cross/.style={cross out, draw, 
         minimum size=2*(#1-\pgflinewidth), 
         inner sep=0pt, outer sep=0pt}}

%%%%%Principal value integral sign: \dashint %%%%%%%
\def\Xint#1{\mathchoice
{\XXint\displaystyle\textstyle{#1}}%
{\XXint\textstyle\scriptstyle{#1}}%
{\XXint\scriptstyle\scriptscriptstyle{#1}}%
{\XXint\scriptscriptstyle\scriptscriptstyle{#1}}%
\!\int}
\def\XXint#1#2#3{{\setbox0=\hbox{$#1{#2#3}{\int}$}
\vcenter{\hbox{$#2#3$}}\kern-.5\wd0}}

\def\dashint{\;\Xint-}

%  Uncomment to allow pagebreaks within groups of equations:
\allowdisplaybreaks

%  Theorems, Lemmas and the like, should be typeset in italic
\newtheorem{theorem}{Theorem}[section]
\newtheorem{proposition}[theorem]{Proposition}

\newtheorem{lemma}[theorem]{Lemma}
\newtheorem{definition}[theorem]{Definition}
\newtheorem{assumption}[theorem]{Assumption}
\newtheorem{remark}[theorem]{Remark}

\newtheorem{RHproblem}[theorem]{RH problem}
\newtheorem{figuretext}{Figure}

\numberwithin{equation}{section}

\usepackage[colorlinks=true]{hyperref}
\hypersetup{urlcolor=blue, citecolor=red, linkcolor=blue}

%%%%% END DOCUMENT SPECIFIC DEFINITIONS

%\input epsf
%\date{\today}
\title[On Boussinesq's equation for water waves]
{On Boussinesq's equation for water waves}

\author{C. Charlier$^{1}$ and J. Lenells$^{2}$}

\address{$^{1}$Centre for Mathematical Sciences, Lund University,
22100 Lund, Sweden. \\
%\address{$^{1}$Department of Mathematical Science, University of Copenhagen, \\ 2100 Copenhagen, Denmark. \\
$^{2}$Department of Mathematics, KTH Royal Institute of Technology, \\
10044 Stockholm, Sweden.}
%\email{charlier@math.ku.dk}
\email{christophe.charlier@math.lu.se}
\email{jlenells@kth.se}

\begin{document}

\begin{abstract} 
A century and a half ago, J. Boussinesq derived an equation for the propagation of water waves in a channel. 
Despite the fundamental importance of this equation
for a number of physical phenomena, mathematical results on it remain scarce. One reason for this is that the equation is ill-posed. In this paper, we establish several results on the Boussinesq equation. First, by solving the direct and inverse problems for an associated third-order spectral problem, we develop an Inverse Scattering Transform (IST) approach to the initial value problem. Using this approach, we establish a number of existence, uniqueness, and blow-up results. 
For example, the IST approach allows us to identify physically meaningful global solutions and to construct, for each $T > 0$, solutions that blow up exactly at time $T$. 
Our approach also yields 
an expression for the solution of the initial value problem for the Boussinesq equation in terms of the solution of a Riemann--Hilbert problem. By analyzing this Riemann--Hilbert problem, we arrive at 
asymptotic formulas for the solution. We identify ten main asymptotic sectors in the $(x,t)$-plane; in each of these sectors, we compute an exact expression for the leading asymptotic term together with a precise error estimate. %The asymptotic picture that emerges consists, roughly speaking, of two nonlinearly coupled copies of the corresponding picture for the (unidirectional) KdV equation, one copy for right-moving and one for left-moving waves. Of particular interest are the sectors which describe the interaction of right and left moving waves, which present qualitatively new phenomena. 
\end{abstract}

\maketitle

\noindent
{\small{\sc AMS Subject Classification (2020)}: 35G25, 35Q15, 37K15, 76B15.}

\noindent
{\small{\sc Keywords}: Boussinesq equation, initial value problem, long-time asymptotics, existence, uniqueness, spectral analysis, inverse scattering transform.}

\setcounter{tocdepth}{1}
\tableofcontents

\section{Introduction}
In 1872, the mathematician and physicist Joseph Boussinesq published a paper where he derived an equation for water waves propagating in a rectangular channel \cite{B1872}. This equation---now known as the Boussinesq equation---is a fundamental equation in water wave theory modeling nonlinear dispersive long waves of small amplitude, see e.g. \cite{J1997}. It is given in nondimensional units by
\begin{align}\label{badboussinesq}
u_{tt} = u_{xx} + (u^2)_{xx} + u_{xxxx},
\end{align}
where $u(x,t)$ is a real-valued function and subscripts denote partial derivatives.
In addition to its relevance in fluid dynamics, equation (\ref{badboussinesq}) describes a range of other physical phenomena, including nonlinear lattice waves in the continuum limit \cite{T1975}, the propagation of ion-sound waves in a uniform isotropic plasma \cite{M1978}, and the dynamics of the anharmonic lattice in the Fermi--Pasta--Ulam (FPU) problem \cite{M1978}. Equation (\ref{badboussinesq}) is also known as the ``nonlinear string equation'' \cite{Z1974}.

In this paper, we establish several results on the Boussinesq equation (\ref{badboussinesq}). First, by solving the direct and inverse problems for an associated third-order spectral problem, we develop an Inverse Scattering Transform (IST) approach to the initial value problem. The IST approach yields an expression for the solution of the initial value problem for the Boussinesq equation in terms of the solution of a $3 \times 3$-matrix Riemann--Hilbert (RH) problem. By establishing a vanishing lemma for this RH problem, we obtain existence, uniqueness, and blow-up results for (\ref{badboussinesq}). Furthermore, by performing a steepest descent analysis of the RH problem, we arrive at asymptotic formulas for the solution. The asymptotic picture that emerges consists, roughly speaking, of two nonlinearly coupled copies of the corresponding picture for the (unidirectional) KdV equation, one copy for right-moving and one for left-moving waves. Of particular interest are the sectors which describe the interaction of right and left moving waves, which present qualitatively new phenomena. For simplicity, we only consider initial data in the Schwartz class $\mathcal{S}(\R)$, although it will be clear from the arguments that a finite degree of regularity and decay is sufficient.

\subsection{Background}
Substitution of $u(x,t) = e^{i(kx - \omega t)}$ into the linearized version of (\ref{badboussinesq}) yields the dispersion relation
$$\omega = \pm \sqrt{k^2(1 - k^2)}.$$
Since $\omega$ is nonreal for all $k \in \R$ with $|k| > 1$, it follows that (\ref{badboussinesq}) is linearly unstable, i.e., Fourier modes of large frequency grow (or decay) exponentially in time. Because of this feature, which makes the initial value problem ill-posed, equation (\ref{badboussinesq}) is sometimes referred to as the ``bad'' Boussinesq equation. The ``good'' Boussinesq equation is in this context the equation obtained by changing the sign of $u_{xxxx}$ in (\ref{badboussinesq}). This change of signs makes the equation linearly stable and amenable to various classical PDE techniques. Accordingly, there exists a multitude of works dedicated to the study of the ``good'' Boussinesq equation; see e.g. \cite{BFH2019, BS1988, F2009, F2011, HM2015, KT2010, TM1991, X2006} for local existence and well-posedness results, \cite{BS1988, L1993, TM1991, X2006} for global existence results, and \cite{LS1995, F2011, L1997} for scattering properties and asymptotic results.
On the other hand, the literature on equation (\ref{badboussinesq}) is limited, at least from a mathematical standpoint. Multi-soliton solutions of (\ref{badboussinesq}) were constructed by Hirota \cite{H1973} and further evidence for the integrability of (\ref{badboussinesq}) was put forward by Zakharov in \cite{Z1974} where a Lax pair was constructed.
The solitons were shown to be unstable under small perturbations in the linear approximation by Berryman \cite{B1976} (but the question of their nonlinear stability was left open). In \cite{KL1977}, the authors examined an initial-boundary value problem on a finite interval for (\ref{badboussinesq}) and showed that solutions may blow up in finite time; this initial-boundary value problem was studied further in \cite{LS1985, Y2002}. The solitons of (\ref{badboussinesq}) were analyzed using a $\bar{\partial}$-dressing method in \cite{BZ2002}. 
There are also various results on modified versions of (\ref{badboussinesq}). For example, the sixth-order Boussinesq equation obtained by adding an additional term with a sixth order derivative to (\ref{badboussinesq}) (which makes it linearly stable) has been extensively studied, see e.g. \cite{EF2012}. Another version of (\ref{badboussinesq}) that has been investigated (see \cite{M1981, DTT1982}) is the equation obtained by ignoring the $u_{xx}$-term in (\ref{badboussinesq}), i.e.,
 \begin{align}\label{badboussinesqnouxx}
u_{tt} = (u^2)_{xx} + u_{xxxx}.
\end{align}

Some steps towards an IST formalism for the Boussinesq equation were outlined in \cite{Z1974}. These steps were made complete in \cite{DTT1982} in the case of the simplified equation (\ref{badboussinesqnouxx}). 
A key contribution of \cite{DTT1982} was the establishment of a so-called vanishing lemma for the RH problem associated to (\ref{badboussinesqnouxx}). 
Ever since the publication of \cite{DTT1982} in 1982, there has been an expectation in the integrable systems community that it should be possible to develop similar results for the Boussinesq equation (\ref{badboussinesq}). After all, the equation (\ref{badboussinesqnouxx}) considered in \cite{DTT1982} differs from (\ref{badboussinesq}) only in that the $u_{xx}$-term has been removed. 
The relevance of an IST approach for (\ref{badboussinesq}) became increasingly clear in 1993 when Deift and Zhou introduced their nonlinear steepest descent method for RH problems \cite{DZ1993}, the expectation being that Deift--Zhou ideas in combination with IST techniques would make it possible to derive detailed asymptotic formulas for the Boussinesq equation.   
However, it turns out that the removal of the $u_{xx}$-term from (\ref{badboussinesq}) thoroughly changes the quantitative and qualitative structure of solutions, and has far-reaching consequences also for the proofs; see further discussion below. 
The problem of developing an IST approach and of deriving asymptotic formulas for the solution of (\ref{badboussinesq}) has therefore remained an outstanding open problem in the field, see e.g. \cite{D2008} where P. Deift notes that ``The long-time behavior of the solutions of the Boussinesq equation with general initial data is a very interesting problem with many challenges.'' 
The purpose of this paper is to address this problem.

\subsection{Importance and implications of the $u_{xx}$-term}
The $u_{xx}$-term in (\ref{badboussinesq}) can be removed by the simple shift $u \to u - 1/2$, and the resulting equation, equation (\ref{badboussinesqnouxx}), possesses a similar, but much simpler, algebraic structure. So why is it important to include the $u_{xx}$-term in (\ref{badboussinesq})? To answer this question, we note that in physical units (\ref{badboussinesq}) takes the form (see \cite[Eq. (26)]{B1872})
\begin{align}\label{badboussinesqphysical}
\eta_{\tau \tau} - g h \eta_{\xi\xi} - g h \frac{\partial^2}{\partial\xi^2} \bigg(\frac{3}{2 h} \eta^2 + \frac{h^2}{3} \eta_{\xi\xi}\bigg) = 0,
\end{align}
where $h$ is the mean water depth, $g$ is the gravitational acceleration, and $\eta(\xi,\tau)$ is the surface elevation measured with respect to the mean water level at the horizontal position  $\xi$ at time $\tau$. The dimensionless equation (\ref{badboussinesq}) is obtained from (\ref{badboussinesqphysical}) by letting
$$u(x,t) = \frac{3}{2h}\eta(\xi, \tau), \quad x = \xi \frac{\sqrt{3}}{h}, \quad
t = \tau \sqrt{\frac{3g}{h}}.$$ 
In order to be relevant for water waves, the deviation $\eta$ from the mean water level, and hence also $u$, must tend to $0$ at spatial infinity, at least in average. The shift $u \to u - 1/2$ does not preserve this condition and is therefore not allowed if the connection to water waves is to be maintained. In fact, numerical simulations, as well as analytic calculations, show that the solutions (\ref{badboussinesq}) and (\ref{badboussinesqnouxx}) behave very differently both quantitatively and qualitatively. 

Let us also comment on why the $u_{xx}$-term in (\ref{badboussinesq}) makes such a big difference for the analysis of (\ref{badboussinesq}). First note that equation (\ref{badboussinesq}) can be rewritten as the system 
\begin{align}\label{boussinesqsystem}
& \begin{cases}
 v_{t} = u_{x} + (u^2)_{x} + u_{xxx},
 \\
 u_t = v_x.
\end{cases}
\end{align}
The Lax pair found in \cite{Z1974} does not apply to (\ref{badboussinesq}) directly, but to the system (\ref{boussinesqsystem}). The isospectral problem (i.e., the $x$-part of the Lax pair) associated to (\ref{boussinesqsystem}) is given by \cite{Z1974} 
\begin{align}\label{isospectral}
(D^3 + p_1D + p_0)\varphi = z^3 \varphi,
\end{align}
where $z \in \C$ is a spectral parameter, $D = -i\partial_x$, and 
$$p_1 := - \frac{u}{2} - \frac{1}{4}, \qquad
p_0 := i\Big(\frac{u_{x}}{4}+\frac{iv}{4\sqrt{3}}\Big).$$
We note that (\ref{isospectral}) is a higher-order spectral problem in the sense that the differential operator is not (first or) second order. The treatment of the direct and inverse problems for higher-order operators requires a different and more sophisticated theory compared to the second-order case. Building on the advances made in \cite{DTT1982}, such a theory was developed in \cite{BDT1988} in the case when all the coefficients $p_j$ of the higher-order operator lie in the Schwartz class $\mathcal{S}(\R)$. This is the case for the simplified equation (\ref{badboussinesqnouxx}) whose spectral problem has the same form as (\ref{isospectral}) except that the term $-1/4$ in the expression for $p_1$ is absent. In this case, the jump contour for the associated $3 \times 3$ RH problem consists of six rays going from $0$ to $\infty$, and on each ray the jump matrix is effectively $2 \times 2$ in the sense that it acts nontrivially only on a two-dimensional subspace.  
In contrast, the problem (\ref{isospectral}) does not fit into the framework of \cite{DTT1982} because the coefficient $p_1$ in (\ref{isospectral}) tends to $-1/4$ as $x \to \pm \infty$ (assuming that $u$ has decay at infinity). This has the effect of introducing an additional jump into the RH problem along the unit circle. This new jump is truly $3 \times 3$ in the sense that it acts nontrivially in all three dimensions. Handling this jump requires new techniques; in fact, most of the work in the present paper is related to the handling of this jump and its consequences. For example, it is this new jump that is responsible for the asymptotic behavior of solutions of (\ref{badboussinesq}) in the sense that all main asymptotic contributions originate from saddle points on (or very close to) the unit circle. Another example is that our steepest descent analysis of (\ref{badboussinesq}) leads to a new kind of local parametrix which is nontrivial in all three dimensions, see Section \ref{transitionsec}. The jump on the unit circle also has far-reaching implications for the direct and inverse problems. Our development of a direct and inverse scattering theory for (\ref{isospectral}) can be viewed as a first step towards the development of a general scattering theory for higher-order operators with non-decaying coefficients.

\subsection{Global solutions}\label{globalsubsec}
The linear instability of the Boussinesq equation (\ref{badboussinesq}) leads to the exponential amplification of high frequency Fourier modes. In particular, it implies that solutions will grow exponentially unless the unstable modes vanish identically. This exponential growth may at first sight seem to contradict the physical relevance of the equation as a model for water waves. However, the presence of unstable modes is consistent with Boussinesq's derivation which assumes that the waves are long, in the sense that the wavelength is long compared to the depth of the water. Physically relevant solutions correspond to initial data without high-frequency modes, and the equation only remains valid as a model as long as the unstable high-frequency modes are heavily suppressed. However, since the equation is nonlinear, high-frequency modes will be generated by the time evolution, even if none are present initially. This makes it difficult to analyze solutions of (\ref{badboussinesq}) by means of traditional techniques, even at a numerical level \cite{DH1999}. 

The IST approach is particularly well suited to address this problem. Indeed, the IST formalism can be viewed as a nonlinear Fourier transform which linearizes the flow of the equation. Thus it is possible within the IST framework to identify the unstable modes in a nonlinearly precise way: the condition that these modes vanish amounts to the spectral functions being zero for certain ranges of the spectral parameter. By projecting out these ``unstable nonlinear Fourier modes'', it is possible to identify physically meaningful solutions that do not blow up. These solutions exist globally and the asymptotic formulas we obtain for their long-time behavior can be expected to capture the physical characteristics of water waves according to Boussinesq's model.

\subsection{Description of main results}
For conciseness, we restrict ourselves throughout this paper to generic solitonless solutions, see Assumptions \ref{solitonlessassumption} and \ref{originassumption}. 
%Solitons correspond to poles in the RH problem and can be handled (at least in the generic case) by standard techniques. 
Our main results are presented in the form of seven theorems as follows:\footnote{Our approach can be used to obtain analogous theorems also in the case when solitons are present; since this case is notationally more cumbersome, details are deferred to the companion papers \cite{CLscatteringsolitons, CLsectorII, CLsectorIV}. The inclusion of solitons leads to a proof of the soliton resolution conjecture for (\ref{badboussinesq}) in all of the $(x,t)$-plane except in a number of small transition zones \cite{CLsectorII}.}

\begin{enumerate}[$-$]
\item Theorem \ref{directth} addresses the direct problem for the spectral problem (\ref{isospectral}). It constructs an appropriate set of scattering data associated to the given potentials $u$ and $v$ and establishes properties that characterize these data. 

\item Theorem \ref{inverseth} solves the inverse problem of reconstructing $u$ and $v$ from the scattering data. It expresses $u$ and $v$ in terms of a row vector RH problem, whose solution is shown to exist with the help of an associated vanishing lemma. By including the explicit time dependence of the scattering data in the definition of the jump matrix, the theorem constructs solutions $\{u(x,t), v(x,t)\}$ of the system (\ref{boussinesqsystem}) from some given scattering data. It is shown that the solution is unique and exists for $t \in [0,T)$ where $T$ is determined by the decay rate of the reflection coefficients. 

\item Theorem \ref{IVPth} provides the solution of the initial value problem for (\ref{badboussinesq}).
By combining the direct and inverse scattering results, it expresses the solution $u(x,t)$ of the initial value problem in terms of the given initial data $u_0(x) = u(x,0)$. It shows that the solution is unique and exists for $t \in [0,T)$ where $T$ is determined by the decay rate of the associated reflection coefficients. 

\item Theorems \ref{blowupth} and \ref{existenceblowupth} are blow-up results; the former shows that the solution obtained via inverse scattering actually ceases to exist at the time $T$ determined by the reflection coefficients if $T < \infty$, while the latter shows that there exist solutions of (\ref{badboussinesq}) that blow up at each $T \in (0, \infty)$. 

\item Theorem \ref{globalth} shows that the solution of (\ref{badboussinesq}) exists globally in the physically relevant case where the ``unstable nonlinear Fourier modes'' are assumed to vanish.

\item Theorem \ref{asymptoticsth} provides long-time asymptotic formulas for the global solutions.

\end{enumerate}

%For generic solitonless initial data $u(x,0), u_t(x,0) \in \mathcal{S}(\R)$ such that $\int_\R u_t(x,0) dx = 0$ and such that the associated reflection coefficient $r_1(k)$ vanishes for $k \in [0,i]$, the initial value problem for the Boussinesq equation (\ref{boussinesq}) has a unique global Schwartz class solution $u(x,t)$.

\begin{figure}
\bigskip\begin{center}
\begin{overpic}[width=.7\textwidth]{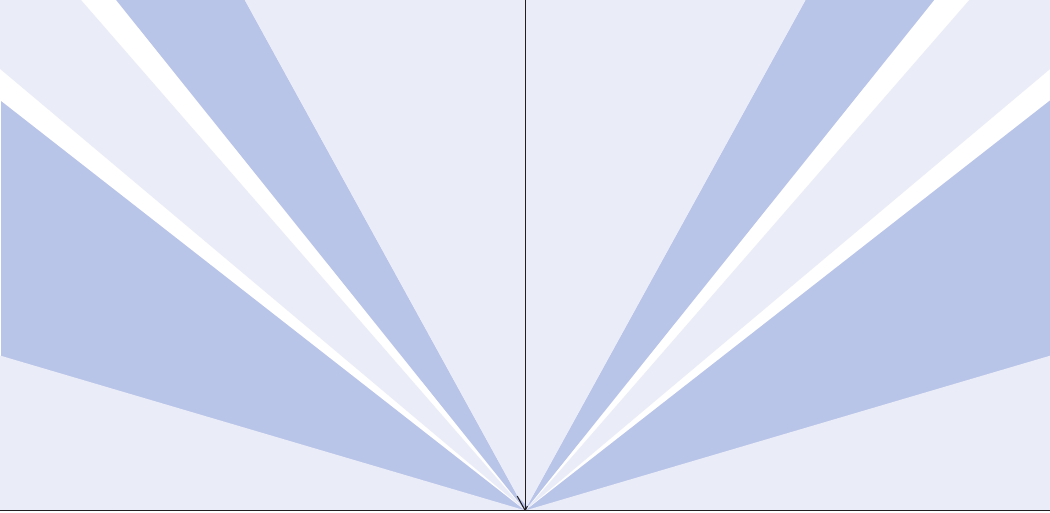}
      \put(102,-.3){\small $x$}
      \put(49.7,50.8){\small $t$}
      \put(90,4){\small I}
      \put(90,21){\small II}
      \put(86,36){\small III}
      \put(77,41){\small IV}
      \put(60.5,41){\small V}
      \put(37,41){\small VI}
      \put(20,41){\small VII}
      \put(10,36){\small VIII}
      \put(8,21){\small IX}
      \put(8,4){\small X}
    \end{overpic}
     \begin{figuretext}\label{sectors.pdf}
       The asymptotic sectors \upshape{I}--{X} in the $xt$-plane.
     \end{figuretext}
     \end{center}
\end{figure}

\subsection{Asymptotics and comparison to KdV}
Apart from the Boussinesq equation, another famous equation modeling nonlinear dispersive long waves of small amplitude is the Korteweg--de Vries (KdV) equation (which, incidentally, was also first introduced by Boussinesq \cite[p. 360]{B1877}). Whereas the KdV equation only describes waves traveling in one direction, the Boussinesq equation is bidirectional, i.e., it models waves propagating in both the positive and negative $x$-directions, see e.g. \cite[Section 3.2.5]{J1997}. 
It is therefore not surprising that the asymptotic picture that we find for the Boussinesq equation in Theorem \ref{asymptoticsth} is, roughly speaking, a superposition of two nonlinearly coupled copies of the corresponding picture for the KdV equation, one copy for right-moving and one for left-moving waves. 
More precisely, we identify ten main asymptotic sectors in the $(x,t)$-plane for the solution of (\ref{badboussinesq}), see Figure \ref{sectors.pdf}. Since (\ref{badboussinesq}) is invariant under the transformation $x \mapsto -x$, it is enough to consider the five sectors I--V corresponding to $x \geq 0$. Sector I (characterized by $x/t \geq M \gg 1$, $M$ constant) describes the rapid decay of the solution near the $x$-axis. In Sector II (characterized by $1 < x/t \leq M$), the solution approaches a modulated sinusoidal traveling wave decaying as $1/\sqrt{t}$.
% (for physically relevant data, the amplitude of this traveling wave is heavily suppressed). 
Sector III (characterized by $|x -  t|\leq M t^{1/3}$) is a transition sector describing the wave front; the leading behavior in this sector is given in terms of the Hastings--McLeod solution of the Painlev\'e II equation. In Sectors IV and V (characterized by $1/\sqrt{3}< x/t < 1$ and $0< x/t < 1/\sqrt{3}$), the solution asymptotes to a sum of two modulated sine waves of order $O(t^{-1/2})$, one moving to the left and one to the right. 

To compare the above with the corresponding picture for the KdV equation, we note that KdV also features a transition sector where the leading term is expressed in terms of the Hastings--McLeod solution of Painlev\'e II, see e.g. \cite{DVZ1994}. However, while the steepest descent analysis for KdV involves two saddle points and the Painlev\'e II sector is a result of these two points merging, the analysis for (\ref{badboussinesq}) involves twelve saddle points and Sector III is a result of these twelve points merging in three different groups of four. This means that the analysis near the wave front at $x/t \approx 1$ is much more involved. More precisely, it involves a new error function local parametrix nonlinearly superimposed on a more classical Painlev\'e II local parametrix. (As far as we know, this is the first time such a phenomenon has been seen in a steepest descent calculation for a RH problem.) In particular, there are subleading asymptotic terms of a new type for $x/t \approx 1$. This also means that there are multiple narrow transition regions on either side of Sector III. Just like for KdV, we expect that there exists some kind of collisionless shock region \cite{DVZ1994} between Sectors III and IV. 
But in addition to this collisionless shock region, there will be further narrow transition regions corresponding to different phases of the merging of the various saddle points. 

We also note that Sectors IV and V involve two modulated sine waves moving in opposite directions. This feature is a result of the Boussinesq equation being bidirectional and has no analog for the KdV equation.

\subsection{Organization of the paper}
All the main theorems are stated in Section \ref{mainsec}.
Sections \ref{directsec} and \ref{inversesec} treat the direct and inverse problems, respectively; in particular, these sections include proofs of Theorems \ref{directth} and \ref{inverseth}. 
%In particular, a vanishing lemma is established in Section \ref{vanishinglemmasubsec}.
In Section \ref{IVPsec}, we consider the solution of the initial value problem for (\ref{badboussinesq}) and prove Theorem \ref{IVPth}.
Blow-up is considered in Section \ref{blowupsec}. Sections \ref{overviewsec}--\ref{othersectorssec} are devoted to the proof of the asymptotic result Theorem \ref{asymptoticsth}. After providing an overview of the proof in Section \ref{overviewsec}, we give a detailed derivation of the asymptotics in Sector III in Section \ref{transitionsec}. The remaining sectors are discussed in Section \ref{othersectorssec}.

\subsection{Notation}\label{notationsubsec}
The following notation will be used throughout the article.

\begin{enumerate}[$-$]
\item $C>0$ and $c>0$ will denote generic constants that may change within a computation.

\item $[A]_1$, $[A]_2$, and $[A]_3$ will denote the first, second, and third columns of a $3 \times 3$ matrix $A$.

\item If $A$ is an $n \times m$ matrix, we define $|A| \ge 0$ by $|A|^2=\Sigma_{i,j}|A_{ij}|^2$. Note that $|A + B| \leq |A| + |B|$ and $|AB| \leq |A| |B|$.
For a piecewise smooth contour $\gamma \subset \C$, we write $A \in L^p(\gamma)$ and define $\|A\|_{L^p(\gamma)} := \| |A|\|_{L^p(\gamma)}$ if $|A|$ belongs to $L^p(\gamma)$, $1 \le p \le \infty$.

\item $\D = \{k \in \C \, | \, |k| < 1\}$ will denote the open unit disk and $\partial \D = \{k \in \C \, | \, |k| = 1\}$ will denote the unit circle. 

\item $f^*(k):= \overline{f(\bar{k})}$ will denote the Schwarz conjugate of a function $f(k)$.

\item $D_\epsilon(k)$ will denote the open disk of radius $\epsilon$ centered at a point $k \in \C$.

\item $\mathcal{S}(\R)$ will denote the Schwartz space of all smooth functions $f$ on $\R$ such that $f$ and all its derivatives have rapid decay as $x \to \pm \infty$. 

\item $\kappa_{j} = e^{\frac{\pi i(j-1)}{3}}$, $j=1,\ldots,6$, will denote the sixth roots of unity, see Figure \ref{fig: Dn}.

\item We let $\mathcal{Q} := \{\kappa_{j}\}_{j=1}^{6}$ and $\hat{\mathcal{Q}} := \mathcal{Q} \cup \{0\}$.

\item $D_n$, $n = 1, \dots, 6$, will denote the open subsets of the complex plane shown in Figure \ref{fig: Dn}.

\item $\Gamma = \cup_{j=1}^9 \Gamma_j$ will denote the contour shown and oriented as in Figure \ref{fig: Dn}.
% where $\Gamma_{j} = e^{\frac{\pi i(j-1)}{3}}\big( (0,i)\cup (-i,-i\infty) \big)$ for $j = 1, \dots, 6$ 

\item $\hat{\Gamma}_{j} = \Gamma_{j} \cup \partial \D$ will denote the union of $\Gamma_j$ and the unit circle.

\end{enumerate}

\section{Main results}\label{mainsec}

\subsection{The direct problem}
Our first theorem concerns the direct problem, that is, the construction of an appropriate set of scattering data from the given initial data. It turns out that the scattering data for the system (\ref{boussinesqsystem}) involves two spectral functions $r_1(k)$  and $r_2(k)$. These functions can viewed as the ``reflection coefficients'' for (\ref{boussinesqsystem}) corresponding to the initial data 
$$u_0(x) := u(x,0), \qquad v_0(x) := v(x,0).$$ 
Before stating the theorem, we need to define the functions $r_1$ and $r_2$. We refer to Section \ref{directsec} for the origin of the following definitions and for proofs that all quantities are well-defined.

Let $u_0, v_0 \in \mathcal{S}(\R)$ be real-valued.
Let $\omega := e^{\frac{2\pi i}{3}}$ and define $\{l_j(k), z_j(k)\}_{j=1}^3$ by
\begin{align}\label{lmexpressions intro}
& l_{j}(k) = i \frac{\omega^{j}k + (\omega^{j}k)^{-1}}{2\sqrt{3}}, \qquad z_{j}(k) = i \frac{(\omega^{j}k)^{2} + (\omega^{j}k)^{-2}}{4\sqrt{3}}, \qquad k \in \C\setminus \{0\}.
\end{align}
Let the matrix-valued function $\mathsf{U}(x,k)$ be given by
\begin{align}\label{mathsfUdef intro}
\mathsf{U}(x,k) = P(k)^{-1} \begin{pmatrix}
0 & 0 & 0 \\
0 & 0 & 0 \\
-\frac{u_{0x}}{4}-\frac{iv_{0}}{4\sqrt{3}} & -\frac{u_{0}}{2} & 0
\end{pmatrix} P(k),
\end{align} 
where
\begin{align}\label{Pdef intro}
P(k) = \begin{pmatrix}
1 & 1 & 1  \\
l_{1}(k) & l_{2}(k) & l_{3}(k) \\
l_{1}(k)^{2} & l_{2}(k)^{2} & l_{3}(k)^{2}
\end{pmatrix}.
\end{align}
Let $X(x,k)$ and $X^A(x,k)$ be the unique $3 \times 3$-matrix valued solutions of the Volterra integral equations
\begin{subequations}\label{XXAdef intro}
\begin{align}  
 & X(x,k) = I - \int_x^{\infty} e^{(x-x')\widehat{\mathcal{L}(k)}} (\mathsf{U}X)(x',k) dx',
	\\\label{XXAdefb intro}
 & X^A(x,k) = I + \int_x^{\infty} e^{-(x-x')\widehat{\mathcal{L}(k)}} (\mathsf{U}^T X^A)(x',k) dx',	
\end{align}
\end{subequations}
where $\mathsf{U}^T$ denotes the transpose of $\mathsf{U}$, $\mathcal{L} = \diag(l_1 , l_2 , l_3)$, and $\hat{\mathcal{L}}$ denotes the operator which acts on a $3 \times 3$ matrix $A$ by $\hat{\mathcal{L}}A = [\mathcal{L}, A]$ (i.e., $e^{\hat{\mathcal{L}}}A = e^\mathcal{L} A e^{-\mathcal{L}}$). Define $s(k)$ and $s^A(k)$ by 
\begin{align}\label{sdef intro}
& s(k) = I - \int_\R e^{-x\widehat{\mathcal{L}(k)}}(\mathsf{U}X)(x,k)dx,
 	\\ \label{sAdef intro}
& s^A(k) = I + \int_\R e^{x\widehat{\mathcal{L}(k)}}(\mathsf{U}^T X^A)(x,k)dx.
\end{align}
The two spectral functions $\{r_j(k)\}_1^2$ are defined by
\begin{align}\label{r1r2def}
\begin{cases}
r_1(k) = \frac{(s(k))_{12}}{(s(k))_{11}}, & k \in \hat{\Gamma}_{1}\setminus \hat{\mathcal{Q}},
	\\ 
r_2(k) = \frac{(s^A(k))_{12}}{(s^A(k))_{11}}, \quad & k \in \hat{\Gamma}_{4}\setminus \hat{\mathcal{Q}},
\end{cases}
\end{align}	
where the contours $\hat{\Gamma}_j$ and the set $\hat{\mathcal{Q}}$ are as defined in Section \ref{notationsubsec}.

\begin{figure}
\begin{center}
\begin{tikzpicture}[scale=0.7]
\node at (0,0) {};
\draw[black,line width=0.45 mm,->-=0.4,->-=0.85] (0,0)--(30:4);
\draw[black,line width=0.45 mm,->-=0.4,->-=0.85] (0,0)--(90:4);
\draw[black,line width=0.45 mm,->-=0.4,->-=0.85] (0,0)--(150:4);
\draw[black,line width=0.45 mm,->-=0.4,->-=0.85] (0,0)--(-30:4);
\draw[black,line width=0.45 mm,->-=0.4,->-=0.85] (0,0)--(-90:4);
\draw[black,line width=0.45 mm,->-=0.4,->-=0.85] (0,0)--(-150:4);

\draw[black,line width=0.45 mm] ([shift=(-180:2.5cm)]0,0) arc (-180:180:2.5cm);
\draw[black,arrows={-Triangle[length=0.2cm,width=0.18cm]}]
($(3:2.5)$) --  ++(90:0.001);
\draw[black,arrows={-Triangle[length=0.2cm,width=0.18cm]}]
($(57:2.5)$) --  ++(-30:0.001);
\draw[black,arrows={-Triangle[length=0.2cm,width=0.18cm]}]
($(123:2.5)$) --  ++(210:0.001);
\draw[black,arrows={-Triangle[length=0.2cm,width=0.18cm]}]
($(177:2.5)$) --  ++(90:0.001);
\draw[black,arrows={-Triangle[length=0.2cm,width=0.18cm]}]
($(243:2.5)$) --  ++(330:0.001);
\draw[black,arrows={-Triangle[length=0.2cm,width=0.18cm]}]
($(297:2.5)$) --  ++(210:0.001);

\draw[black,line width=0.15 mm] ([shift=(-30:0.55cm)]0,0) arc (-30:30:0.55cm);

\node at (0.8,0) {$\tiny \frac{\pi}{3}$};

\node at (-1:2.9) {\footnotesize $\Gamma_8$};
\node at (60:2.9) {\footnotesize $\Gamma_9$};
\node at (120:2.9) {\footnotesize $\Gamma_7$};
\node at (181:2.9) {\footnotesize $\Gamma_8$};
\node at (240:2.83) {\footnotesize $\Gamma_9$};
\node at (300:2.83) {\footnotesize $\Gamma_7$};

\node at (105:1.45) {\footnotesize $\Gamma_1$};
\node at (138:1.45) {\footnotesize $\Gamma_2$};
\node at (223:1.45) {\footnotesize $\Gamma_3$};
\node at (-104:1.45) {\footnotesize $\Gamma_4$};
\node at (-42:1.45) {\footnotesize $\Gamma_5$};
\node at (43:1.45) {\footnotesize $\Gamma_6$};

\node at (97:3.3) {\footnotesize $\Gamma_4$};
\node at (144:3.3) {\footnotesize $\Gamma_5$};
\node at (217:3.3) {\footnotesize $\Gamma_6$};
\node at (-96:3.3) {\footnotesize $\Gamma_1$};
\node at (-35:3.3) {\footnotesize $\Gamma_2$};
\node at (36:3.3) {\footnotesize $\Gamma_3$};
\end{tikzpicture}
\hspace{1.7cm}
\begin{tikzpicture}[scale=0.7]
\node at (0,0) {};
\draw[black,line width=0.45 mm] (0,0)--(30:4);
\draw[black,line width=0.45 mm] (0,0)--(90:4);
\draw[black,line width=0.45 mm] (0,0)--(150:4);
\draw[black,line width=0.45 mm] (0,0)--(-30:4);
\draw[black,line width=0.45 mm] (0,0)--(-90:4);
\draw[black,line width=0.45 mm] (0,0)--(-150:4);

\draw[black,line width=0.45 mm] ([shift=(-180:2.5cm)]0,0) arc (-180:180:2.5cm);
\draw[black,line width=0.15 mm] ([shift=(-30:0.55cm)]0,0) arc (-30:30:0.55cm);

\node at (120:1.6) {\footnotesize{$D_{1}$}};
\node at (-60:3.7) {\footnotesize{$D_{1}$}};

\node at (180:1.6) {\footnotesize{$D_{2}$}};
\node at (0:3.7) {\footnotesize{$D_{2}$}};

\node at (240:1.6) {\footnotesize{$D_{3}$}};
\node at (60:3.7) {\footnotesize{$D_{3}$}};

\node at (-60:1.6) {\footnotesize{$D_{4}$}};
\node at (120:3.7) {\footnotesize{$D_{4}$}};

\node at (0:1.6) {\footnotesize{$D_{5}$}};
\node at (180:3.7) {\footnotesize{$D_{5}$}};

\node at (60:1.6) {\footnotesize{$D_{6}$}};
\node at (-120:3.7) {\footnotesize{$D_{6}$}};

\node at (0.8,0) {$\tiny \frac{\pi}{3}$};

\draw[fill] (0:2.5) circle (0.1);
\draw[fill] (60:2.5) circle (0.1);
\draw[fill] (120:2.5) circle (0.1);
\draw[fill] (180:2.5) circle (0.1);
\draw[fill] (240:2.5) circle (0.1);
\draw[fill] (300:2.5) circle (0.1);

\node at (0:2.9) {\footnotesize{$\kappa_1$}};
\node at (60:2.85) {\footnotesize{$\kappa_2$}};
\node at (120:2.85) {\footnotesize{$\kappa_3$}};
\node at (180:2.9) {\footnotesize{$\kappa_4$}};
\node at (240:2.85) {\footnotesize{$\kappa_5$}};
\node at (300:2.85) {\footnotesize{$\kappa_6$}};

\draw[dashed] (-6.3,-3.8)--(-6.3,3.8);

\end{tikzpicture}
\end{center}
\begin{figuretext}\label{fig: Dn}
The contour $\Gamma = \cup_{j=1}^9 \Gamma_j$ in the complex $k$-plane (left) and the open sets $D_{n}$, $n=1,\ldots,6$, together with the sixth roots of unity $\kappa_j$, $j = 1, \dots, 6$ (right).
\end{figuretext}
\end{figure}

\subsubsection{Assumption of no solitons}
We will show in Section \ref{directsec} that the entries $(s(k))_{11}$ and $(s(k))_{12}$ of $s(k)$ that appear in (\ref{r1r2def}) are smooth functions of $k \in \hat{\Gamma}_{1}\setminus \hat{\mathcal{Q}}$, and that $(s^A(k))_{11}$ and $(s^A(k))_{12}$ are smooth functions of $k \in \hat{\Gamma}_{4}\setminus \hat{\mathcal{Q}}$. Thus $r_1(k)$ and $r_2(k)$ are smooth on their respective domains, except possibly at points where $s_{11}$ and $s^A_{11}$ vanish. Propositions \ref{sprop} and \ref{sAprop} imply that $s_{11}$ and $s^A_{11}$ have smooth extensions to $\omega^{2}\hat{\mathcal{S}}\setminus \hat{\mathcal{Q}}$ and $-\omega^{2}\hat{\mathcal{S}}\setminus \hat{\mathcal{Q}}$, respectively, where $\hat{\mathcal{S}} := \partial \D \cup \bar{D}_{3}\cup\bar{D}_{4}$. 
The possible zeros of $s_{11}$ and $s^A_{11}$ are related to the presence of solitons. 
%Since the case when solitons are present can be handled by considering a RH problem with poles (see e.g. \cite{L3x3} for a $3 \times 3$ matrix case), w
We will make the following assumption throughout this paper (see \cite{CLscatteringsolitons, CLsectorII, CLsectorIV} for the changes necessary when solitons are present).

\begin{assumption}[Absence of solitons]\label{solitonlessassumption}\upshape
Assume that $(s(k))_{11}$ is nonzero for $k \in (\bar{D}_2 \cup \partial \D) \setminus \hat{\mathcal{Q}}$.
\end{assumption}

The symmetries $s_{11}(k) = s_{11}(\omega/k)$ and $s_{11}^A(k) = \overline{s_{11}(\bar{k}^{-1})}$, which will be established in Propositions \ref{sprop} and \ref{sAprop}, show that if Assumption \ref{solitonlessassumption} holds, then $s_{11}$ is nonzero on $\omega^{2}\hat{\mathcal{S}} \setminus \hat{\mathcal{Q}}$ and $s_{11}^A$ is nonzero on $-\omega^{2}\hat{\mathcal{S}} \setminus \hat{\mathcal{Q}}$.

\subsubsection{Assumption of generic behavior at $k = \pm 1$}
It turns out that each of the four functions $s_{11}$, $s_{12}$, $s^A_{11}$, and $s^A_{12}$ in (\ref{r1r2def}) has at most a simple pole at $k = 1$ and at $k = -1$ (see Propositions \ref{sprop} and \ref{sAprop}). Moreover, $s_{12}$ has simple poles at $1$ and $-1$ if and only if $s_{11}$ has simple poles at $1$ and $-1$, and $s^A_{12}$ has simple poles at $1$ and $-1$ if and only if $s^A_{11}$ has simple poles at $1$ and $-1$. %, see (\ref{s1pm1p}), \eqref{sm1pm1p} and (\ref{sA1pm1p}), \eqref{sAm1pm1p}. 
For simplicity, we will restrict ourselves to the case in which all of these four functions have simple poles at both $k=1$ and $k=-1$; this is the case for generic initial data. For similar reasons, we will assume that some other entries of $s$ and $s^{A}$ also have a generic behavior at $k= \pm 1$. More precisely, our results will be stated under the following assumption.

\begin{assumption}[Generic behavior at $k = \pm 1$]\label{originassumption}\upshape
Assume for $k_{\star} =1$ and $k_{\star}=-1$ that
\begin{align*}
& \lim_{k \to k_{\star}} (k-k_{\star}) s(k)_{11} \neq 0, & & \lim_{k \to k_{\star}} (k-k_{\star}) s(k)_{13} \neq 0, & & \lim_{k \to k_{\star}} s(k)_{31} \neq 0, & & \lim_{k \to k_{\star}} s(k)_{33} \neq 0, \\
& \lim_{k \to k_{\star}} (k-k_{\star}) s^{A}(k)_{11} \neq 0, & & \lim_{k \to k_{\star}} (k-k_{\star}) s^{A}(k)_{31} \neq 0, & & \lim_{k \to k_{\star}} s^{A}(k)_{13} \neq 0, & & \lim_{k \to k_{\star}} s^{A}(k)_{33} \neq 0.
\end{align*}
\end{assumption}

\subsubsection{Statement of the theorem}
We can now state our theorem on the direct problem.

\begin{figure}[t]
\begin{center}
\begin{tikzpicture}[master]
\node at (0,0) {\includegraphics[scale=0.20]{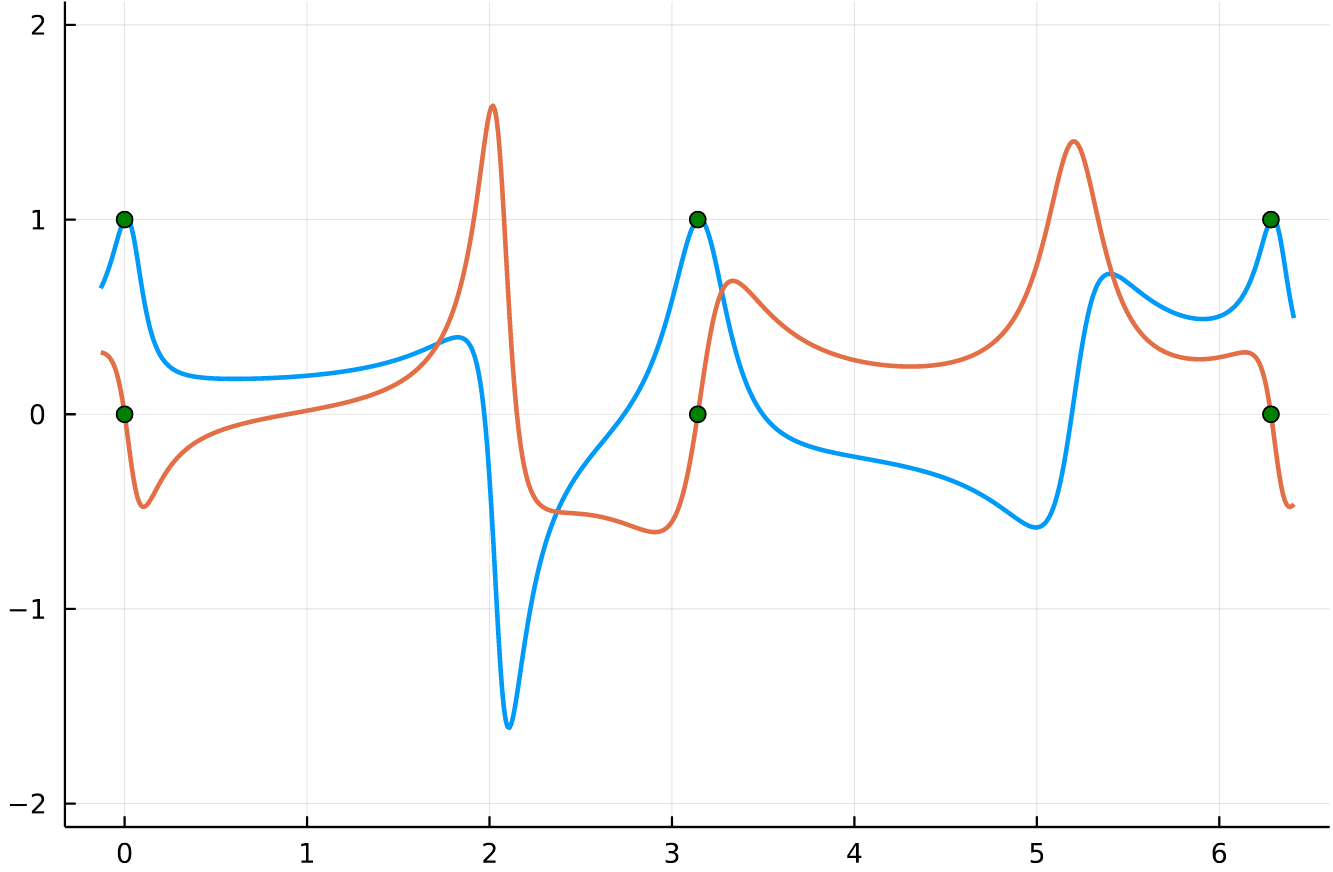}};
\node at (-2.3,2.53) {\tiny $\re r_{1}(e^{i\theta}), \im r_{1}(e^{i\theta})$};
\node at (3.65,-2.1) {\tiny $\theta$};
\end{tikzpicture}
\begin{tikzpicture}[slave]
\node at (0,0) {\includegraphics[scale=0.20]{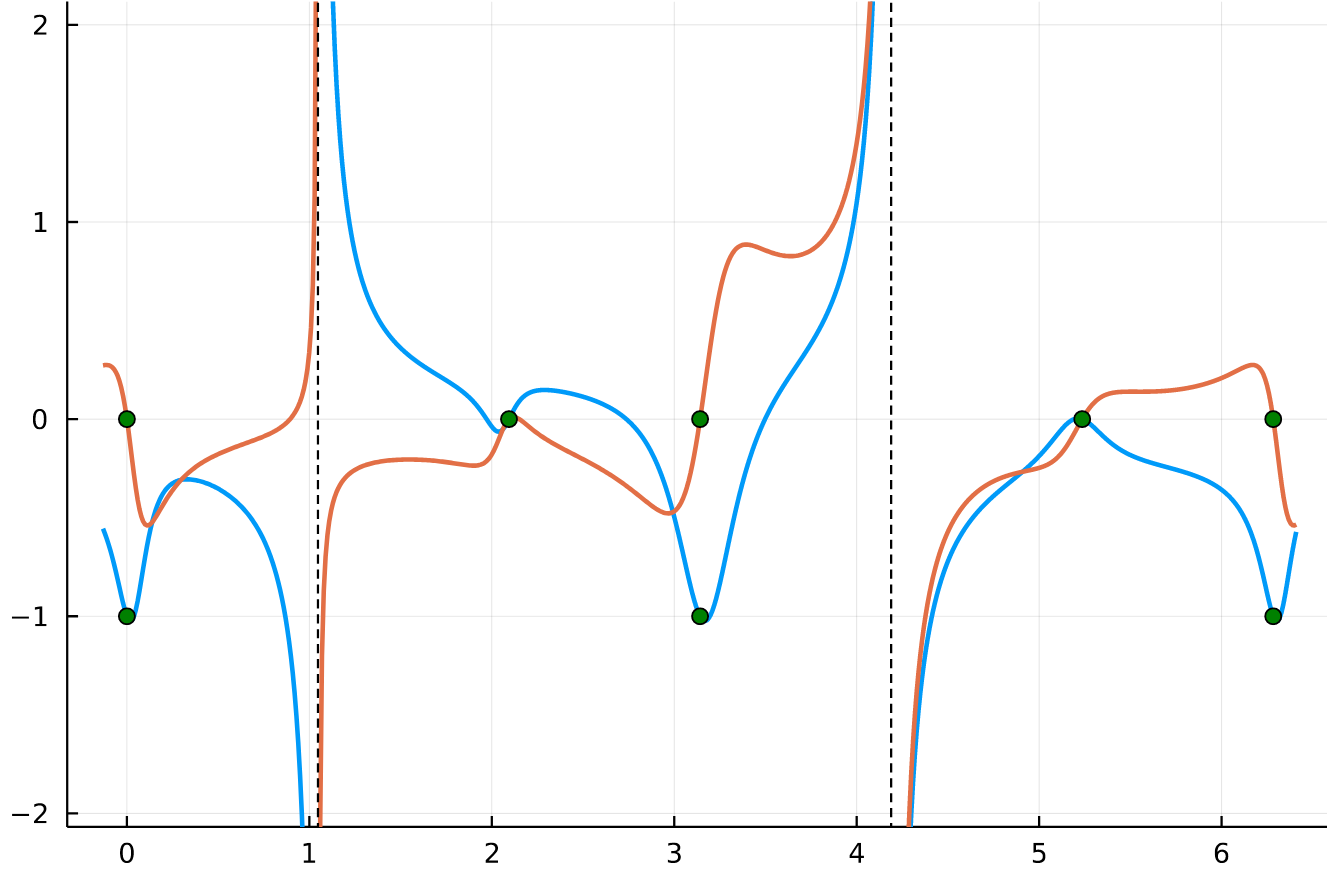}};
\node at (-2.3,2.53) {\tiny $\re r_{2}(e^{i\theta}), \im r_{2}(e^{i\theta})$};
\node at (3.65,-2.1) {\tiny $\theta$};
\end{tikzpicture}
\end{center}
\begin{figuretext}
\label{fig:r1 and r2}
The functions $\theta \mapsto \re r_{j}(e^{i\theta})$ (blue) and $\theta \mapsto \im r_{j}(e^{i\theta})$ (orange) for $j = 1,2$ in the case of the initial data (\ref{numericscompactinitialdata}). The dots and dashed vertical lines illustrate part $(ii)$ of Theorem \ref{directth}. 
\end{figuretext}
\end{figure}

\begin{theorem}[Properties of $r_1(k)$ and $r_2(k)$]\label{directth}
Suppose $u_0,v_0 \in \mathcal{S}(\R)$ are two real-valued functions such that Assumptions \ref{solitonlessassumption} and \ref{originassumption} hold.
Then the spectral functions $r_1:\hat{\Gamma}_{1}\setminus \hat{\mathcal{Q}} \to \C$ and $r_2: \hat{\Gamma}_{4}\setminus \hat{\mathcal{Q}} \to \C$ are well-defined by (\ref{r1r2def}) and have the following properties:
\begin{enumerate}[$(i)$]
 \item \label{Theorem2.3itemi}
 $r_1$ and $r_2$ admit extensions such that $r_1 \in C^\infty(\hat{\Gamma}_{1})$\footnote{At $k_\star = \pm i$, the notation $r_1 \in C^\infty(\hat{\Gamma}_{1})$ indicates that $r_1$ has derivatives to all orders along each subcontour of $\hat{\Gamma}_1$ emanating from $k_\star$ and that these derivatives have consistent finite limits as $k \to k_\star$.}
and $r_2 \in C^\infty(\hat{\Gamma}_{4}\setminus \{\omega^{2}, -\omega^{2}\})$.

\item \label{Theorem2.3itemii}
$r_{1}(k)$ is bounded on the whole unit circle, and $r_{1}(\kappa_{j})\neq 0$ for $j=1,\ldots,6$. $r_{2}(k)$ has simple poles at $k=\omega^2$ and $k = -\omega^2$, and simple zeros at $k=\omega$ and $k=-\omega$. Furthermore,
\begin{align}\label{r1r2at0}
r_{1}(1) = r_{1}(-1) = 1, \qquad r_{2}(1) = r_{2}(-1) = -1.
\end{align}

\item \label{Theorem2.3itemiii}
$r_1(k)$ and $r_2(k)$ are rapidly decreasing as $|k| \to \infty$, i.e., for each integer $N \geq 0$,
\begin{align}\label{r1r2rapiddecay}
& \max_{j=0,1,\dots,N}\sup_{k \in \Gamma_{1}} (1+|k|)^N |\partial_k^jr_1(k)| < \infty,  
\qquad
 \max_{j=0,1,\dots,N} \sup_{k \in \Gamma_{4}} (1+|k|)^N|\partial_k^jr_2(k)| < \infty.
\end{align}

\item \label{Theorem2.3itemiv}
For all $k \in \partial \D \setminus \{-\omega,\omega\}$, we have
\begin{align}\label{r1r2 relation on the unit circle}
r_{1}(\tfrac{1}{\omega k}) + r_{2}(\omega k) + r_{1}(\omega^{2} k) r_{2}(\tfrac{1}{k}) = 0.
\end{align}
In fact, \eqref{r1r2 relation on the unit circle} is also equivalent to any of the following two relations:
\begin{align}\label{r1r2 relation on the unit circle new}
r_{2}(k) = \frac{r_{1}( \omega k)r_{1}(\omega^{2} k)-r_{1}(\frac{1}{k})}{1-r_{1}(\omega k)r_{1}(\frac{1}{\omega k})}, \qquad r_{1}(k) = \frac{r_{2}(\omega k)r_{2}(\omega^{2}k)-r_{2}(\frac{1}{k})}{1-r_{2}(\omega k)r_{2}(\frac{1}{\omega k})}.
\end{align}

\item \label{Theorem2.3itemv}
$r_1$ and $r_2$ are related by the symmetry
\begin{align}\label{r1r2 relation with kbar symmetry}
%& r_{1}(k) = \frac{k^{2}-\omega}{\omega k^{2}-1}\overline{r_{2}\big(\bar{k}^{-1}\big)}, & & \mbox{for all } k \in \hat{\Gamma}_{1}\setminus \hat{\mathcal{Q}}, \label{lol1} \\
& r_{2}(k) = \tilde{r}(k) \overline{r_{1}(\bar{k}^{-1})} & & \mbox{for all } k \in \hat{\Gamma}_{4}\setminus \{0,\omega^{2}, -\omega^{2}\}, % \label{lol2} 
\end{align}
where
\begin{align}\label{def of tilde r}
\tilde{r}(k):=\frac{\omega^{2}-k^{2}}{1-\omega^{2}k^{2}} & & \mbox{for } k \in \mathbb{C}\setminus \{\omega^{2},-\omega^{2}\}.
\end{align}

%\eqref{lol1} and \eqref{lol2} are equivalent.

\end{enumerate} 
\end{theorem}

The proof of Theorem \ref{directth} is presented in Section \ref{directsec}.

\subsection{The inverse problem}
We next consider the inverse problem, i.e., the problem of recovering $\{u_0, v_0\}$ from the scattering data. Since we are assuming that no solitons are present, the scattering data consists only of the two reflection coefficients $r_1(k)$ and $r_2(k)$. We will show that the inverse problem can be solved by means of a RH problem for a $1 \times 3$-vector valued function $n(x,t,k)$ with jump contour $\Gamma$, whose jump matrix is expressed in terms of $r_1$ and $r_2$. As always in the implementation of the inverse scattering transform, a key point is that the time evolution of the scattering data is simple. By including this simple time-dependence in the definition of the jump matrix, we obtain the solution $\{u(x,t), v(x,t)\}$ at any later time $t$. We first give the definition of the RH problem.

Define the complex-valued functions $\Phi_{ij}(\zeta, k)$ for $1 \leq j<i \leq 3$ by
\begin{align}\label{def of Phi ij}
\Phi_{ij}(\zeta,k) = (l_{i}(k)-l_{j}(k))\zeta + (z_{i}(k)-z_{j}(k)),
\end{align}
where $\zeta := x/t$ and $\{l_j(k), z_j(k)\}_{j=1}^3$ are defined by (\ref{lmexpressions intro}).
Let $\theta_{ij}(x,t,k) = t \, \Phi_{ij}(\zeta,k)$ and define the jump matrix $v(x,t,k)$ for $k \in \Gamma$ by
\begin{align}\nonumber
&  v_1 = %J_{12,1}
  \begin{pmatrix}  
 1 & - r_1(k)e^{-\theta_{21}} & 0 \\
  r_1(\frac{1}{k})e^{\theta_{21}} & 1 - r_1(k)r_1(\frac{1}{k}) & 0 \\
  0 & 0 & 1
  \end{pmatrix},
	\quad  v_2 = %J_{2,3}
  \begin{pmatrix}   
 1 & 0 & 0 \\
 0 & 1 - r_2(\omega k)r_2(\frac{1}{\omega k}) & -r_2(\frac{1}{\omega k})e^{-\theta_{32}} \\
 0 & r_2(\omega k)e^{\theta_{32}} & 1 
    \end{pmatrix},
   	\\ \nonumber
  &v_3 = %J_{4,5}
  \begin{pmatrix} 
 1 - r_1(\omega^2 k)r_1(\frac{1}{\omega^2 k}) & 0 & r_1(\frac{1}{\omega^2 k})e^{-\theta_{31}} \\
 0 & 1 & 0 \\
 -r_1(\omega^2 k)e^{\theta_{31}} & 0 & 1  
  \end{pmatrix},
	\quad   v_4 = %J_{6,7}
  \begin{pmatrix}  
  1 - r_2(k)r_{2}(\frac{1}{k}) & -r_2(\frac{1}{k}) e^{-\theta_{21}} & 0 \\
  r_2(k)e^{\theta_{21}} & 1 & 0 \\
  0 & 0 & 1
   \end{pmatrix},
   	\\ \nonumber
&  v_5 = %J_{8,9}
  \begin{pmatrix}
  1 & 0 & 0 \\
  0 & 1 & -r_1(\omega k)e^{-\theta_{32}} \\
  0 & r_1(\frac{1}{\omega k})e^{\theta_{32}} & 1 - r_1(\omega k)r_1(\frac{1}{\omega k}) 
  \end{pmatrix},
	\quad   v_6 = %J_{10,11}
  \begin{pmatrix} 
  1 & 0 & r_2(\omega^2 k)e^{-\theta_{31}} \\
  0 & 1 & 0 \\
  -r_2(\frac{1}{\omega^2 k})e^{\theta_{31}} & 0 & 1 - r_2(\omega^2 k)r_2(\frac{1}{\omega^2 k})
   \end{pmatrix}, \nonumber \\
& v_{7} = \begin{pmatrix}
1 & -r_{1}(k)e^{-\theta_{21}} & r_{2}(\omega^{2}k)e^{-\theta_{31}} \\
-r_{2}(k)e^{\theta_{21}} & 1+r_{1}(k)r_{2}(k) & \big(r_{2}(\frac{1}{\omega k})-r_{2}(k)r_{2}(\omega^{2}k)\big)e^{-\theta_{32}} \\
r_{1}(\omega^{2}k)e^{\theta_{31}} & \big(r_{1}(\frac{1}{\omega k})-r_{1}(k)r_{1}(\omega^{2}k)\big)e^{\theta_{32}} & f(\omega^{2}k)
\end{pmatrix}, \nonumber \\
& v_{8} = \begin{pmatrix}
f(k) & r_{1}(k)e^{-\theta_{21}} & \big(r_{1}(\frac{1}{\omega^{2} k})-r_{1}(k)r_{1}(\omega k)\big)e^{-\theta_{31}} \\
r_{2}(k)e^{\theta_{21}} & 1 & -r_{1}(\omega k) e^{-\theta_{32}} \\
\big( r_{2}(\frac{1}{\omega^{2}k})-r_{2}(\omega k)r_{2}(k) \big)e^{\theta_{31}} & -r_{2}(\omega k) e^{\theta_{32}} & 1+r_{1}(\omega k)r_{2}(\omega k)
\end{pmatrix}, \nonumber \\
& v_{9} = \begin{pmatrix}
1+r_{1}(\omega^{2}k)r_{2}(\omega^{2}k) & \big( r_{2}(\frac{1}{k})-r_{2}(\omega k)r_{2}(\omega^{2} k) \big)e^{-\theta_{21}} & -r_{2}(\omega^{2}k)e^{-\theta_{31}} \\
\big(r_{1}(\frac{1}{k})-r_{1}(\omega k) r_{1}(\omega^{2} k)\big)e^{\theta_{21}} & f(\omega k) & r_{1}(\omega k)e^{-\theta_{32}} \\
-r_{1}(\omega^{2}k)e^{\theta_{31}} & r_{2}(\omega k) e^{\theta_{32}} & 1
\end{pmatrix}, \label{vdef}
\end{align}
where $v_j$ denotes the restriction of $v$ to $\Gamma_{j}$, and 
\begin{align}\label{def of f}
f(k) := 1+r_{1}(k)r_{2}(k) + r_{1}(\tfrac{1}{\omega^{2}k})r_{2}(\tfrac{1}{\omega^{2}k}), \qquad k \in \partial \mathbb{D}.
%= (1-r_{1}(\omega^{2}k)r_{1}(\tfrac{1}{\omega^{2}k}))(1-r_{2}(k)r_{2}(\tfrac{1}{k})).
\end{align}
Let $\Gamma_\star = \{i\kappa_j\}_{j=1}^6 \cup \{0\}$ denote the set of intersection points of $\Gamma$.

\begin{RHproblem}[RH problem for $n$]\label{RHn}
Find a $1 \times 3$-row-vector valued function $n(x,t,k)$ with the following properties:
\begin{enumerate}[$(a)$]
\item\label{RHnitema} $n(x,t,\cdot) : \C \setminus \Gamma \to \mathbb{C}^{1 \times 3}$ is analytic.

\item\label{RHnitemb} The limits of $n(x,t,k)$ as $k$ approaches $\Gamma \setminus \Gamma_\star$ from the left and right exist, are continuous on $\Gamma \setminus \Gamma_\star$, and are denoted by $n_+$ and $n_-$, respectively. Furthermore, they are related by
\begin{align}\label{njump}
  n_+(x,t,k) = n_-(x, t, k) v(x, t, k) \qquad \text{for} \quad k \in \Gamma \setminus \Gamma_\star.
\end{align}

\item\label{RHnitemc} $n(x,t,k) = O(1)$ as $k \to k_{\star} \in \Gamma_\star$.

\item\label{RHnitemd} For $k \in \C \setminus \Gamma$, $n$ obeys the symmetries
\begin{align}\label{nsymm}
n(x,t,k) = n(x,t,\omega k)\mathcal{A}^{-1} = n(x,t,k^{-1}) \mathcal{B},
\end{align}
where $\mathcal{A}$ and $\mathcal{B}$ are the matrices defined by 
\begin{align}\label{def of Acal and Bcal}
\mathcal{A} := \begin{pmatrix}
0 & 0 & 1 \\
1 & 0 & 0 \\
0 & 1 & 0
\end{pmatrix} \qquad \mbox{ and } \qquad \mathcal{B} := \begin{pmatrix}
0 & 1 & 0 \\
1 & 0 & 0 \\
0 & 0 & 1
\end{pmatrix}.
\end{align}

\item\label{RHniteme} $n(x,t,k) = (1,1,1) + O(k^{-1})$ as $k \to \infty$.
\end{enumerate}
\end{RHproblem}

%\subsubsection{Statement of the theorem}
Our second theorem states that the solution $\{u(x,t), v(x,t)\}$ of the Boussinesq system \eqref{boussinesqsystem} can be recovered from the solution $n = (n_1, n_2, n_3)$ of RH problem \ref{RHn}.
Although it is possible to carry out all the arguments under more restricted regularity and decay assumptions, we will only deal with Schwartz class solutions for simplicity. 

\begin{definition}\label{Schwartzsolutiondef}\upshape
Let $T \in (0, \infty]$. We say that $\{u(x,t), v(x,t)\}$ is a {\it Schwartz class solution of \eqref{boussinesqsystem} on $\R \times [0,T)$ with initial data $u_0, v_0 \in \mathcal{S}(\R)$} if
\begin{enumerate}[$(i)$] 
  \item $u,v$ are smooth real-valued functions of $(x,t) \in \R \times [0,T)$.

\item $u,v$ satisfy \eqref{boussinesqsystem} for $(x,t) \in \R \times [0,T)$ and 
$$u(x,0) = u_0(x), \quad v(x,0) = v_0(x), \qquad x \in \R.$$ 

  \item $u,v$ have rapid decay as $|x| \to \infty$ in the sense that, for each integer $N \geq 1$ and each $\tau \in [0,T)$,
$$\sup_{\substack{x \in \R \\ t \in [0, \tau]}} \sum_{i =0}^N (1+|x|)^N(|\partial_x^i u| + |\partial_x^i v| ) < \infty.$$
\end{enumerate} 
\end{definition}

\begin{theorem}[Solution of the inverse problem]\label{inverseth}
Let $r_1:\hat{\Gamma}_{1}\setminus \hat{\mathcal{Q}} \to \C$ and $r_2: \hat{\Gamma}_{4}\setminus \hat{\mathcal{Q}} \to \C$ be two functions which satisfy properties $(i)$--$(v)$ of Theorem \ref{directth}.
Define $T \in (0, \infty]$ by 
\begin{align}\label{Tdef}
T := \sup \big\{t \geq 0 \, | \, \text{$e^{\frac{|k|^2t}{4}}r_1(1/k)$ and its derivatives are rapidly decreasing as $\Gamma_1 \ni k \to \infty$}\big\}.
\end{align}

Then RH problem \ref{RHn} has a unique solution $n(x,t,k)$ for each $(x,t) \in \R \times [0,T)$ and the function $n_{3}^{(1)}$ defined by
$$n_{3}^{(1)}(x,t) := \lim_{k\to \infty} k (n_{3}(x,t,k) -1)$$ 
is well-defined and smooth for $(x,t) \in \R \times [0,T)$. Moreover, $\{u(x,t), v(x,t)\}$ defined by
\begin{align}\label{recoveruvn}
\begin{cases}
u(x,t) = -i\sqrt{3}\frac{\partial}{\partial x}n_{3}^{(1)}(x,t),
	\\
v(x,t) = -i\sqrt{3}\frac{\partial}{\partial t}n_{3}^{(1)}(x,t),
\end{cases}
\end{align}
is a Schwartz class solution of (\ref{boussinesqsystem}) on $\R \times [0,T)$.
\end{theorem}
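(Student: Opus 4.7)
The proof naturally divides into four tasks: solving RH problem \ref{RHn}, establishing smoothness of $n$ and of $n_3^{(1)}$, deriving the Boussinesq system from the RH problem, and verifying Schwartz decay in $x$. The centerpiece is the Lax-pair derivation in the third task, which I expect to be the main obstacle.

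\emph{Solvability of RH problem \ref{RHn}.} For each $(x,t) \in \R \times [0,T)$ I would reformulate the problem as a singular integral equation on $L^2(\Gamma)$ via the Cauchy projectors. The definition \eqref{Tdef} of $T$ together with Theorem \ref{directth}(iii) ensures that $v - I$ and all its $(x,t,k)$-derivatives lie in $L^2 \cap L^\infty(\Gamma)$, so the associated singular operator is Fredholm of index zero. Existence then reduces to a vanishing lemma for the homogeneous RH problem (with normalization $(0,0,0)$ at infinity). The vanishing lemma would exploit the Schwarz-reflection identity \eqref{r1r2 relation with kbar symmetry} together with the triality \eqref{r1r2 relation on the unit circle} and the $\mathcal{A}, \mathcal{B}$ symmetries in \eqref{nsymm}: forming the scalar $H(k) := n(k)\,\mathcal{M}(k)\,n(\bar{k}^{-1})^{*}$ for a suitable Hermitian matrix $\mathcal{M}(k)$ and applying Cauchy's theorem yields an identity of the form $\int_{\partial\D} n_{+}\mathcal{H} n_{+}^{*}\,|dk| = 0$ in which the Hermitian kernel $\mathcal{H}$ is nonnegative definite, forcing $n \equiv 0$. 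Uniqueness of the inhomogeneous problem follows from the same argument applied to the difference of two solutions.

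\emph{Smoothness and asymptotic expansion.} Since $v$ depends smoothly on $(x,t)$ and $v-I$ decays rapidly in $k$, differentiation of the singular integral equation in $(x,t)$ shows that $n$ is smooth on $\R \times [0,T)$ with values in $(1,1,1) + L^{2}(\Gamma)^{1\times 3}$. Iterating the integral equation at $k=\infty$ yields an asymptotic series $n(x,t,k) = (1,1,1) + k^{-1} n^{(1)}(x,t) + O(k^{-2})$ with smooth coefficients; extracting the third entry defines $n_{3}^{(1)}$ as a smooth function of $(x,t)$.

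\emph{Derivation of the Boussinesq system.} Since the $(x,t)$-dependence of $v$ enters only through $e^{\theta_{ij}(x,t,k)} = e^{(l_{i}-l_{j})x + (z_{i}-z_{j})t}$, conjugating $n$ by $e^{-x\mathcal{L}(k)-t\mathcal{Z}(k)}$ (with $\mathcal{Z} = \diag(z_1,z_2,z_3)$) removes the $(x,t)$-dependence from the jumps. To turn the row vector $n$ into a matrix solution I would invoke the $\mathcal{A}$-symmetry in \eqref{nsymm}, which produces three copies of $n$ at the points $k, \omega k, \omega^{2}k$; arranging these into a matrix $M$ and setting $\Psi(x,t,k) := M(x,t,k)\,P(k)^{-1}\,e^{x\mathcal{L}(k) + t\mathcal{Z}(k)}$ gives a solution whose jumps are constant in $(x,t)$. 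Consequently $U := \Psi_{x}\Psi^{-1}$ and $V := \Psi_{t}\Psi^{-1}$ are jumpless across $\Gamma$, hence meromorphic on $\C\setminus\{0\}$ with pole orders at $0$ and $\infty$ controlled by the Laurent structure of $l_{j}$ and $z_{j}$ in \eqref{lmexpressions intro}. Reading off the principal parts from the $k \to \infty$ and (via the $\mathcal{B}$-symmetry) $k \to 0$ expansions of $n$ identifies $U$ and $V$ with the Lax-pair operators for the Boussinesq system \eqref{boussinesqsystem}, with $u$ and $v$ appearing exactly through the combinations $-i\sqrt{3}\,\partial_{x} n_{3}^{(1)}$ and $-i\sqrt{3}\,\partial_{t} n_{3}^{(1)}$ as in \eqref{recoveruvn}. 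The compatibility $U_{t} - V_{x} + [U,V] = 0$ then gives \eqref{boussinesqsystem}. The delicate point is the local behavior at the sixth roots of unity $\kappa_{j} \in \hat{\mathcal{Q}}$ where $v$ has nontrivial structure coming from the $u_{xx}$-term (the points $\pm 1$ being particularly singular by Theorem \ref{directth}(ii)); one must verify, using only the boundedness condition (c) of RH problem \ref{RHn}, that $\Psi$ does not develop extraneous singularities there that would contaminate $U$ and $V$.

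\emph{Schwartz decay.} For rapid decay of $n_{3}^{(1)}$ and all its derivatives as $|x| \to \infty$ with $t$ bounded, I would perform a $g$-function / steepest-descent conjugation: for $|x|/t$ large the real parts $\re \theta_{ij}$ have a definite signature on the appropriate arcs of $\Gamma$, so a deformation to steepest-descent contours produces a conjugated RH problem whose jump is exponentially close to $I$ in $L^{2}\cap L^{\infty}(\Gamma)$. The resulting small-norm estimates, together with differentiation under the Cauchy integral in $(x,t)$, give decay of all orders in $|x|$ for all $(x,t)$-derivatives, completing the verification of Definition \ref{Schwartzsolutiondef}.
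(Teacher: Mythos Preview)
Your Lax-pair step contains a genuine gap: building the $3\times 3$ matrix $M$ out of $n(k)$, $n(\omega k)$, $n(\omega^{2}k)$ via the $\mathcal{A}$-symmetry does not produce an invertible matrix solution. The symmetry $n(k)=n(\omega k)\mathcal{A}^{-1}$ means that the three rows you stack are just cyclic permutations of the same row vector, so the resulting matrix is circulant in the entries of $n$ and, in particular, tends to the rank-one all-ones matrix as $k\to\infty$. Hence $\det M\to 0$ at infinity, the Liouville argument collapses, and you cannot read off $U=\Psi_{x}\Psi^{-1}$, $V=\Psi_{t}\Psi^{-1}$. The paper's construction is different in kind: it recovers a genuine $3\times 3$ solution from the single row $n$ by differentiation in $x$, setting
\[
M(x,t,k)=P(k)^{-1}\begin{pmatrix} n \\ n_{x}+n\mathcal{L} \\ n_{xx}+2n_{x}\mathcal{L}+n\mathcal{L}^{2} \end{pmatrix},
\]
which is the Wronskian of $ne^{x\mathcal{L}+t\mathcal{Z}}$ undone by $P^{-1}$ and $e^{-(x\mathcal{L}+t\mathcal{Z})}$. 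This $M$ satisfies $M\to I$ at infinity (so $\det M\equiv 1$), has the same jumps as $n$, and the long computation showing $\tilde{M}_{x}\tilde{M}^{-1}=O(1)$ at each $\kappa_{j}$ is exactly the delicate point you flagged; it uses the specific structure of $P(k)^{-1}$ and the Taylor expansions of $n$ on both sides of $\partial\D$.

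A second, more structural issue concerns the Fredholm/vanishing step. Your vanishing lemma (correctly) uses the $\mathcal{A}$- and $\mathcal{B}$-symmetries of the homogeneous solution, but then the standard singular-integral formulation on $L^{2}(\Gamma)$ is not enough: injectivity of $I-\mathcal{C}_{w}$ is only established on the subspace of functions obeying those symmetries, and you must check that $\mathcal{C}_{w}$ actually preserves this subspace. For the $\mathcal{A}$-symmetry this is routine, but the $\mathcal{B}$-symmetry involves $k\mapsto 1/k$, under which the Cauchy kernel does not transform nicely; the paper handles this by working in a closed symmetric subspace $H^{N}_{z,\mathrm{sym}}(\Gamma)$ and proving a separate lemma that $B\mathcal{C}_{w}f=\mathcal{C}_{w}f$ there (this requires the nontrivial identity $\int_{\Gamma}s^{-1}f(s)\,ds=0$ for symmetric $f$, which in turn relies on the null-space structure of $I-\mathcal{A}^{T}$ and $I+\mathcal{B}^{T}$). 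Without this, the Fredholm-plus-vanishing argument does not close.
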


An important ingredient in the proof of Theorem \ref{inverseth} is a vanishing lemma for RH problem \ref{RHn}; such a lemma is established in Section \ref{vanishinglemmasubsec}.
The proof of Theorem \ref{inverseth} is then given in Section \ref{inversethsubsec}.

\subsection{Solution of the initial value problem for (\ref{badboussinesq})}
By combining the solutions of the direct and inverse problems and recalling the relation between (\ref{badboussinesq}) and the system (\ref{boussinesqsystem}), it is possible to solve the initial value problem for the Boussinesq equation (\ref{badboussinesq}) via inverse scattering.

\begin{definition}\upshape
Let $T \in (0, \infty]$. We say that $u(x,t)$ is a {\it Schwartz class solution of \eqref{badboussinesq} on $\R \times [0,T)$ with initial data $u_0, u_1 \in \mathcal{S}(\R)$} if
\begin{enumerate}[$(i)$] 
  \item $u$ is a smooth real-valued functions of $(x,t) \in \R \times [0,T)$.

\item $u$ satisfies \eqref{badboussinesq} for $(x,t) \in \R \times [0,T)$ and 
$$u(x,0) = u_0(x), \quad u_t(x,0) = u_1(x), \qquad x \in \R.$$ 

  \item $u$ has rapid decay as $|x| \to \infty$ in the sense that, for each integer $N \geq 1$ and each $\tau \in [0,T)$,
$$\sup_{\substack{x \in \R \\ t \in [0, \tau]}} \sum_{i =0}^N (1+|x|)^N |\partial_x^i u(x,t)|  < \infty.$$
\end{enumerate} 
\end{definition}

The proof of the following theorem is presented in Section \ref{IVPsec}.

\begin{theorem}[Solution of (\ref{badboussinesq}) via direct and inverse scattering]\label{IVPth}
Let $u_0,u_1 \in \mathcal{S}(\R)$ be real-valued and suppose that
\begin{align}\label{u1zeromean}
\int_\R u_1(x) dx = 0.
\end{align}
Let $v_0(x) = \int_{-\infty}^x u_1(x')dx'$ and suppose $u_0, v_0$ are such that Assumptions \ref{solitonlessassumption} and \ref{originassumption} hold. Define the spectral functions $r_j(k)$, $j = 1,2$, in terms of $u_0, v_0$ by (\ref{r1r2def}). 
Define $T \in (0,\infty]$ by (\ref{Tdef}).
Then the initial value problem for (\ref{badboussinesq}) with initial data $u_0, u_1$ has a unique Schwartz class solution $u(x,t)$ on $\R \times [0,T)$. Moreover, $u(x,t)$ can be recovered from the solution $n(x,t,k)$ of RH problem \ref{RHn} via the representation formula (\ref{recoveruvn}) for any $(x,t) \in \R \times [0,T)$.
\end{theorem}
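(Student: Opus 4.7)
The plan is to combine the direct and inverse scattering results (Theorems \ref{directth} and \ref{inverseth}) with the standard equivalence between the scalar equation \eqref{badboussinesq} and the first-order system \eqref{boussinesqsystem}, and then supplement this with a separate uniqueness argument. First I would check that $v_0 \in \mathcal{S}(\R)$. Since $v_{0,x}=u_1, v_{0,xx}=u_{1,x},\dots$ are automatically Schwartz, the only issue is the decay of $v_0$ itself at $+\infty$, and this is exactly what the zero-mean hypothesis \eqref{u1zeromean} guarantees: $v_0(x)=-\int_x^{\infty}u_1(x')dx'$, whose rapid decay at $+\infty$ follows by repeated integration by parts using $u_1\in\mathcal{S}(\R)$.

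For existence, once $(u_0,v_0)\in\mathcal{S}(\R)^2$ is established and Assumptions \ref{solitonlessassumption}--\ref{originassumption} are in force, Theorem \ref{directth} produces spectral data $r_1,r_2$ with all the required properties (i)--(v), and Theorem \ref{inverseth} then yields a Schwartz class solution $(u(x,t),v(x,t))$ of \eqref{boussinesqsystem} on $\R\times[0,T)$, with $u$ recovered via \eqref{recoveruvn}. Differentiating $u_t=v_x$ in $t$ and substituting $v_t=u_x+(u^2)_x+u_{xxx}$ gives $u_{tt}=u_{xx}+(u^2)_{xx}+u_{xxxx}$, i.e.\ \eqref{badboussinesq}. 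The initial conditions match: the data $(u(\cdot,0),v(\cdot,0))$ read off from \eqref{recoveruvn} at $t=0$ coincide with $(u_0,v_0)$ because the RH problem at $t=0$ is by construction the image of $(u_0,v_0)$ under the direct transform, and hence $u_t(x,0)=v_x(x,0)=v_{0,x}(x)=u_1(x)$. Schwartz decay of $u$ in $x$ on each slab $[0,\tau]$ is inherited directly from that of the pair $(u,v)$ produced by Theorem \ref{inverseth}.

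Uniqueness is, in my view, the main obstacle. Let $\tilde u$ be any other Schwartz class solution of \eqref{badboussinesq} on $\R\times[0,T)$ with the same initial data $(u_0,u_1)$. Integrating \eqref{badboussinesq} over $x\in\R$ and using the Schwartz decay gives $\tfrac{d^2}{dt^2}\int_{\R}\tilde u\,dx=0$, hence $\int_{\R}\tilde u_t(x,t)\,dx=\int_{\R}u_1(x)dx=0$ for every $t$. Therefore $\tilde v(x,t):=\int_{-\infty}^{x}\tilde u_t(x',t)dx'$ is well-defined, Schwartz in $x$ for each $t$, and $(\tilde u,\tilde v)$ is a Schwartz class solution of \eqref{boussinesqsystem} with initial data $(u_0,v_0)$. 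I would then apply the direct transform of Theorem \ref{directth} to the slice $(\tilde u(\cdot,t),\tilde v(\cdot,t))$ for each fixed $t\in[0,T)$: a Lax-pair computation using the isospectral problem \eqref{isospectral} together with the companion $t$-part of the Lax pair from \cite{Z1974} shows that the eigenfunctions $X,X^A$ evolve in $t$ by conjugation with $e^{t\widehat{\mathcal{Z}(k)}}$, $\mathcal{Z}=\mathrm{diag}(z_1,z_2,z_3)$, so the spectral functions at time $t$ are obtained from those at $t=0$ by the explicit exponentials $e^{t(z_i-z_j)}$ on the appropriate entries. These exponentials are precisely the ones absorbed into $\theta_{ij}(x,t,k)$ in \eqref{vdef}; consequently $(\tilde u,\tilde v)$ is reconstructed from the same RH problem \ref{RHn} as $(u,v)$. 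The uniqueness of the RH solution stated in Theorem \ref{inverseth}, together with formula \eqref{recoveruvn}, then forces $\tilde u=u$.

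The hardest ingredients I expect to have to control carefully are: (a) the Lax-pair time-evolution of the scattering data, where the issue is not the formal computation but the justification that the eigenfunctions normalized at $x\to+\infty$ retain the right behavior under the $t$-flow—this is made delicate here by the non-decay of $p_1$ (the $-1/4$ coming from the $u_{xx}$-term), which is exactly the feature that creates the jump on $\partial\mathbb{D}$; (b) verifying that the direct transform $\tilde u\mapsto (r_1,r_2)$ commutes with restricting the Schwartz class solution to $t=0$, i.e.\ that the two compositions (direct-after-inverse and inverse-after-direct) are the identity on the appropriate data, which is what closes the uniqueness loop; and (c) keeping track of the generic behavior at $k=\pm 1$ and of the symmetries (iv)--(v) of Theorem \ref{directth} along the $t$-evolution, so that the hypotheses of Theorem \ref{inverseth} persist for all $t\in[0,T)$ and the same $T$ is obtained from \eqref{Tdef}.
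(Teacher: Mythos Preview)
Your proposal is correct and follows essentially the same route as the paper. The paper's proof (Section~\ref{IVPsec}) packages your steps (a) and (b) into Proposition~\ref{prop:construction of n} (which, given any Schwartz class solution $(\tilde u,\tilde v)$ of \eqref{boussinesqsystem}, directly builds a solution $\tilde n$ of RH problem~\ref{RHn} with the correct time dependence already included) and Proposition~\ref{reflectionprop}; uniqueness then follows at once from uniqueness of the RH solution. Your concern (c) is unnecessary: Assumptions~\ref{solitonlessassumption} and \ref{originassumption} are only invoked at $t=0$ to define $r_1,r_2$, and the time dependence enters the jump matrix $v$ solely through the explicit factors $e^{\pm\theta_{ij}}$, so nothing about the spectral data at later times needs to be re-verified.
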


The condition (\ref{u1zeromean}) is required to ensure that $v_0 \in \mathcal{S}(\R)$. Physically, it ensures that the total mass $\int_\R u dx$ does not grow linearly but is conserved in time. Indeed, (\ref{badboussinesq}) implies that 
\begin{align*}
\frac{d^2}{dt^2}\int_\R u dx = 0, \quad \text{i.e.} \quad \int_\R u dx = \bigg(\int_\R u_1 dx\bigg)t + \int_\R u_0 dx.
\end{align*}

\subsection{Blow-up}
Theorem \ref{inverseth} states that the solution of (\ref{boussinesqsystem}) exists at least as long as $t < T$ where $T$ is defined by (\ref{Tdef}). Our next result shows that the solution actually ceases to exist at the time $T$ if $T < \infty$.

\begin{theorem}[Blow-up]\label{blowupth}
Under the assumptions of Theorem \ref{inverseth}, if $T$ defined by (\ref{Tdef}) satisfies $T < \infty$, then the Schwartz class solution $\{u(x,t), v(x,t)\}$ defined in (\ref{recoveruvn}) blows up at time $T$.
\end{theorem}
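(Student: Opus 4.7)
The plan is to argue by contradiction. Suppose the Schwartz class solution $\{u(x,t),v(x,t)\}$ extends to a Schwartz class solution on $\R \times [0, T+\epsilon)$ for some $\epsilon > 0$. Fix $t_0 \in (T, T+\epsilon)$, choosing $t_0$ so that Assumptions \ref{solitonlessassumption}--\ref{originassumption} still hold for the data $(u(\cdot,t_0), v(\cdot, t_0))$; these conditions are generic and can be arranged within the open interval $(T, T+\epsilon)$. Applying Theorem \ref{directth} to $(u(\cdot,t_0), v(\cdot,t_0)) \in \mathcal{S}(\R)$ yields spectral functions $\tilde{r}_1, \tilde{r}_2$ satisfying properties $(i)$--$(v)$; in particular, by property $(iii)$, $\tilde{r}_1$ and all of its derivatives are rapidly decreasing as $\Gamma_1 \ni k \to \infty$.

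The key step is to identify $\tilde{r}_j$ with the time-evolved $r_j$. Because $\theta_{ij}(x,t,k) = (l_i(k)-l_j(k)) x + t (z_i(k)-z_j(k))$ is linear in $t$, the jump matrix $v(x, t_0+s, k)$ coincides with the jump matrix at ``time $s$'' associated with the modified reflection coefficients
\begin{align*}
\hat{r}_1(k) := e^{-t_0(z_2(k)-z_1(k))} r_1(k), \qquad \hat{r}_2(k) := e^{-t_0(z_3(k)-z_2(k))} r_2(k).
\end{align*}
Hence $\hat{n}(x,s,k) := n(x, t_0+s, k)$ solves RH problem \ref{RHn} with data $(\hat{r}_1, \hat{r}_2)$. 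On the other hand, Theorem \ref{inverseth} applied to $(\tilde{r}_1, \tilde{r}_2)$ produces a Schwartz class solution of \eqref{boussinesqsystem} which at $s = 0$ equals $(u(\cdot, t_0), v(\cdot, t_0))$; by uniqueness in Theorem \ref{IVPth} it must agree with $(u(x, t_0+s), v(x, t_0+s))$, and uniqueness of the RH problem (Theorem \ref{inverseth}) then forces $\tilde{r}_j = \hat{r}_j$. In particular, $r_1(k) = e^{t_0(z_2(k)-z_1(k))} \tilde{r}_1(k)$.

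To conclude, the explicit formulas \eqref{lmexpressions intro} yield $z_2(k)-z_1(k) = -(k^2-k^{-2})/4$, hence $(z_2-z_1)(1/k) = (k^2-k^{-2})/4$. On the outer ray of $\Gamma_1$ one has $\re(k^2) = -|k|^2$, so for any $t \le t_0$,
\begin{align*}
\bigl| e^{|k|^2 t/4} r_1(1/k) \bigr| = e^{|k|^2 t/4} \bigl| e^{t_0(z_2-z_1)(1/k)} \bigr| \, |\tilde{r}_1(1/k)| = e^{(t-t_0)|k|^2/4 + O(|k|^{-2})} \, |\tilde{r}_1(1/k)|
\end{align*}
as $\Gamma_1 \ni k \to \infty$. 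Since $\tilde{r}_1$ and all its derivatives extend continuously to $k=0$, the factor $\tilde{r}_1(1/k)$ together with all its $k$-derivatives is bounded, and for $t < t_0$ the exponential factor decays super-polynomially; the same argument applied to $\partial_k^N[e^{|k|^2 t/4} r_1(1/k)]$ gives rapid decrease of every derivative. Therefore $T \geq t$ for all $t < t_0$, whence $T \geq t_0 > T$, a contradiction. Hence the solution must cease to be Schwartz class at $t = T$.

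The principal obstacle will be the time-evolution identification in the second step: rigorously matching the spectral data obtained by direct scattering at time $t_0$ with the explicitly time-evolved data $(\hat{r}_1, \hat{r}_2)$. This is the assertion that the nonlinear Fourier transform of Theorems \ref{directth}--\ref{inverseth} intertwines the Boussinesq flow with a linear exponential flow on reflection coefficients; it is handled by combining uniqueness of the direct problem with the uniqueness parts of Theorems \ref{inverseth} and \ref{IVPth}. A secondary technical issue is the inheritance of Assumptions \ref{solitonlessassumption}--\ref{originassumption} at the later time $t_0$, which is dealt with by the genericity of these conditions and a slight adjustment of $t_0$ within $(T, T+\epsilon)$.
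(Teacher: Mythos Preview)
Your approach is the paper's: argue by contradiction, perform direct scattering at a time $t_0>T$, use the time evolution of the reflection coefficients to express $r_1$ in terms of the rapidly decaying $\tilde r_1$, and contradict the definition \eqref{Tdef} of $T$. The paper executes the middle step by simply citing Proposition~\ref{reflectionprop}, which already establishes $r_1(k;t)=r_1(k)\,e^{-\theta_{21}(0,t,k)}$; your RH/uniqueness detour is an ad hoc re-derivation of that proposition.

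Two places where your write-up should be corrected. First, the genericity claim used to secure Assumptions~\ref{solitonlessassumption}--\ref{originassumption} at time $t_0$ is unjustified as stated, and is in any case unnecessary: the only input needed from direct scattering at $t_0$ is the rapid decay of $r_2(\,\cdot\,;t_0)$ and its derivatives at infinity on $\Gamma_4$ (equivalently, via \eqref{r1r2 relation with kbar symmetry}, of $r_1(1/k;t_0)$ on the outer part of $\Gamma_1$), and this follows from Proposition~\ref{sAprop}(e) together with $s^A_{11}\to1$ at infinity, which hold for arbitrary Schwartz-class data. Second, your formula for $\hat r_2$ carries the wrong exponent; the correct relation is $\hat r_2(k)=r_2(k)\,e^{t_0(z_2(k)-z_1(k))}$ (compare \eqref{rel between rj at different time}). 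Since only $r_1$ enters the final contradiction this slip is harmless for the conclusion, but it does invalidate the claimed equality of jump matrices as written.
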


More precisely, the conclusion of Theorem \ref{blowupth} is that there exists no Schwartz class solution $\{\tilde{u}, \tilde{v}\}$ of (\ref{boussinesqsystem}) on $[0,\tilde{T})$ with $\tilde{T} > T$ which coincides with $\{u, v\}$ for $t \in [0,T)$. An analogous definition of blow-up is used also in the next theorem. 
%It is not difficult to construct spectral functions $r_1, r_2$ satisfying $(i)$--$(vii)$ of Theorem \ref{directth} such that $T$ takes on any positive value. This leads to the following result.

\begin{theorem}[Existence of blow-up solutions]\label{existenceblowupth}
For each $T > 0$, there exists a class of solutions of (\ref{badboussinesq}) that blow up at time $T$. 
\end{theorem}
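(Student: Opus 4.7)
The plan is to realize the prescribed blow-up time $T$ directly at the level of scattering data and then invoke the existing inverse-scattering machinery. Concretely, given $T>0$, I will construct a pair of spectral functions $(r_{1},r_{2})$ satisfying properties (i)--(v) of Theorem~\ref{directth} such that the number defined by \eqref{Tdef} equals exactly $T$. Theorem~\ref{inverseth} then produces a Schwartz class solution $\{u(x,t),v(x,t)\}$ of the system \eqref{boussinesqsystem} on $\R\times[0,T)$; differentiating the first equation of \eqref{boussinesqsystem} in $x$ and using $u_{t}=v_{x}$ shows that $u$ solves \eqref{badboussinesq} with Schwartz class initial data $u_{0}=u(\cdot,0)$, $u_{1}=v_{x}(\cdot,0)$. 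Finally, Theorem~\ref{blowupth} certifies that $\{u,v\}$ cannot be extended past $T$; since any Schwartz extension of $u$ alone to $[0,\tilde T)$, $\tilde T>T$, would yield a Schwartz extension of the system via $v(x,t)=\int_{-\infty}^{x}u_{t}(y,t)\,dy$, $u$ must blow up at $T$ as a solution of \eqref{badboussinesq}.

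The key observation is that the inside-disk segment of $\Gamma_{1}$ and its unbounded ray are interchanged by $k\mapsto 1/k$, with the vertex $0$ sent to $\infty$. I would therefore prescribe $r_{1}$ near $0$ on the inside segment to have the singular Gaussian form
\begin{equation*}
r_{1}(k)\sim \chi(k)\,e^{-T/(4|k|^{2})}\qquad\text{as }k\to 0\text{ along the inside segment of }\Gamma_{1},
\end{equation*}
with $\chi$ a smooth nonvanishing prefactor of at most polynomial growth in $1/|k|$. This gives $r_{1}(1/k)\sim \chi(1/k)\,e^{-T|k|^{2}/4}$ on the unbounded portion of $\Gamma_{1}$, so $e^{|k|^{2}t/4}r_{1}(1/k)$ and all of its $k$-derivatives are Schwartz on $\Gamma_{1}$ precisely for $t<T$, pinning the supremum in \eqref{Tdef} to the prescribed value. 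I then extend $r_{1}$ smoothly over the rest of $\hat{\Gamma}_{1}$, enforcing $r_{1}(\pm 1)=1$, $r_{1}(\kappa_{j})\neq 0$ at the sixth roots of unity, and rapid decay at infinity on $\Gamma_{1}$, which secures (i)--(iii).

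Next, define $r_{2}$ on $\hat{\Gamma}_{4}$ by the Schwarz-conjugation symmetry \eqref{r1r2 relation with kbar symmetry}, $r_{2}(k)=\tilde r(k)\,\overline{r_{1}(\bar k^{-1})}$, so (v) holds automatically. The factor $\tilde r(k)=(\omega^{2}-k^{2})/(1-\omega^{2}k^{2})$ supplies the simple poles at $\kappa_{2},\kappa_{5}$ and simple zeros at $\kappa_{3},\kappa_{6}$ required by (ii), together with $r_{2}(\pm 1)=-1$, while the Gaussian decay of $r_{1}$ near $0$ transfers (under $k\mapsto \bar k^{-1}$) to rapid decay of $r_{2}$ as $|k|\to\infty$ on $\Gamma_{4}$. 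The only remaining nontrivial constraint is the cubic relation \eqref{r1r2 relation on the unit circle} on $\partial\D$. To ensure it, I would build $(r_{1},r_{2})$ perturbatively from a concrete admissible pair $(r_{1}^{(0)},r_{2}^{(0)})$ obtained by applying the direct transform (Theorem~\ref{directth}) to some generic Schwartz initial datum: since the modification alters $r_{1}^{(0)}$ only in a small neighborhood of $0$ on the interior of $\Gamma_{1}$, which is disjoint from $\partial\D$, both $r_{1}|_{\partial\D}$ and (using $\bar k^{-1}=k$ on $\partial\D$) $r_{2}|_{\partial\D}$ are unchanged, so \eqref{r1r2 relation on the unit circle} continues to hold with $r_{2}$ defined by the symmetry.

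The step I expect to be most delicate is this perturbative replacement near $k=0$: one must smoothly interpolate between the preexisting singular behavior of $r_{1}^{(0)}$ and the prescribed Gaussian profile $\chi(k)e^{-T/(4|k|^{2})}$ while retaining $r_{1}\in C^{\infty}(\hat{\Gamma}_{1})$ with consistent finite limits at $\pm i$ in the sense of the footnote to (i). Choosing $\chi$ to match all derivatives of $r_{1}^{(0)}$ at a small base point on the inside segment and then splicing $r_{1}^{(0)}$ into $\chi(k)e^{-T/(4|k|^{2})}$ beyond that base point via a standard $C^{\infty}$ cutoff achieves this, and the resulting pair lies in the admissible class for Theorems~\ref{inverseth} and \ref{blowupth}, which together produce a solution of \eqref{badboussinesq} blowing up exactly at the prescribed time $T$.
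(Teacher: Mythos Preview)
Your proposal is correct and follows essentially the same route as the paper: start from the reflection coefficients $(r_1^{(0)},r_2^{(0)})$ of a generic Schwartz datum (so that properties (i)--(v) hold), modify $r_1$ only on the inside segment of $\Gamma_1$ near $k=0$ to impose the Gaussian behavior $\sim e^{-T/(4|k|^2)}$, update $r_2$ via the symmetry \eqref{r1r2 relation with kbar symmetry}, and observe that the unit-circle relation \eqref{r1r2 relation on the unit circle} is untouched since the modification occurs away from $\partial\D$; then invoke Theorems~\ref{inverseth} and \ref{blowupth}. You are somewhat more explicit than the paper about the precise form of the modification and the $C^\infty$ splicing, but the argument is the same.
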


The proofs of Theorems \ref{blowupth} and \ref{existenceblowupth} are presented in Section \ref{blowupsec}.

\subsection{Global solutions}
As explained in Section \ref{globalsubsec}, solutions of (\ref{badboussinesq}) without ``unstable nonlinear Fourier modes'' are of particular interest. Such solutions are obtained by assuming the following. 

\begin{assumption}\label{nounstablemodesassumption}
The function $r_{1}:\hat{\Gamma}_{1}\setminus \hat{\mathcal{Q}}\to \mathbb{C}$ satisfies $r_{1}(k) = 0$ for all $k \in [0,i]$, where $[0,i]$ denotes the vertical segment from $0$ to $i$.
\end{assumption}

Assumption \ref{nounstablemodesassumption} ensures that $T = \infty$ in (\ref{Tdef}) and leads to the existence of global solutions. The following theorem follows immediately from Theorem \ref{IVPth}.

\begin{theorem}[Global solutions]\label{globalth}
Let $u_0,u_1 \in \mathcal{S}(\R)$ be such that the assumptions of Theorem \ref{IVPth} are fulfilled. Assume also that Assumption \ref{nounstablemodesassumption} holds. 
Then the initial value problem for (\ref{badboussinesq}) with initial data $u_0, u_1$ has a unique global Schwartz class solution $u(x,t)$. Moreover, $u(x,t)$ can be recovered from the solution $n(x,t,k)$ of RH problem \ref{RHn} via the representation formula (\ref{recoveruvn}) for any $(x,t) \in \R \times [0,\infty)$.
\end{theorem}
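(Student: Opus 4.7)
\emph{Proof plan.} The statement is advertised as an immediate consequence of Theorem \ref{IVPth}, and the plan is to make that reduction precise: under the stated hypotheses, I verify that the cutoff time $T$ in (\ref{Tdef}) equals $+\infty$, after which Theorem \ref{IVPth} directly produces the unique global Schwartz class solution together with the representation formula (\ref{recoveruvn}).

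The first ingredient is geometric. Inspecting Figure \ref{fig: Dn}, the contour $\Gamma_1$ is the union of the bounded segment $[0,i]$ on the upper imaginary axis with the unbounded ray $\{-is : s \geq 1\}$ on the lower imaginary axis. Hence the only way for $k \in \Gamma_1$ to tend to infinity is $k = -is$ with $s \to \infty$, and for such $k$ one has $1/k = i/s \in (0,i] \subset [0,i]$. That is, the inversion $k \mapsto 1/k$ carries the unbounded portion of $\Gamma_1$ into precisely the segment on which Assumption \ref{nounstablemodesassumption} requires $r_1$ to vanish.

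The second step upgrades this pointwise vanishing to vanishing of all derivatives. By part $(i)$ of Theorem \ref{directth}, $r_1 \in C^\infty(\hat{\Gamma}_1)$, so the identical vanishing of $r_1$ on the segment $[0,i]$ implies the identical vanishing of each of its derivatives there; by the chain rule, every $k$-derivative of $r_1(1/k)$ then vanishes identically on $\{k \in \Gamma_1 : |k| \geq 1\}$, and multiplication by the smooth factor $e^{|k|^2 t/4}$ preserves this. Thus for every $t \geq 0$ the composite $e^{|k|^2 t/4} r_1(1/k)$ and all its derivatives are identically zero outside a compact portion of $\Gamma_1$, so the rapid-decrease condition in (\ref{Tdef}) is met trivially. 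I conclude $T = +\infty$, and a direct appeal to Theorem \ref{IVPth} completes the proof.

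There is no real obstacle. The only point requiring attention---and indeed the reason Assumption \ref{nounstablemodesassumption} singles out the specific segment $[0,i]$ rather than an arbitrary subset of $\hat{\Gamma}_1$---is the geometric matching between the unbounded piece of $\Gamma_1$ and the zero set of $r_1$ under the map $k \mapsto 1/k$; once this matching is noticed, the conclusion follows without any further analysis.
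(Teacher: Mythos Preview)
Your proof is correct and follows exactly the approach the paper intends: the paper simply states that Assumption \ref{nounstablemodesassumption} ensures $T=\infty$ in (\ref{Tdef}) and that the theorem ``follows immediately from Theorem \ref{IVPth}.'' You have made that one-line remark precise by spelling out the geometric fact that the inversion $k\mapsto 1/k$ carries the unbounded part of $\Gamma_1$ into $[0,i]$, which is the sole content of the reduction.
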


\subsection{Asymptotics}
For the class of global solutions constructed in Theorem \ref{globalth}, it makes sense to consider the large $t$ behavior of the solution. Our next theorem describes this behavior in detail. To state it, we need the following lemma whose proof is given in Section \ref{inequalitiessubsec}.

\begin{figure}[h!]
\begin{center}
\vspace{-.2cm}
\begin{tikzpicture}[master]
\node at (0,0) {\includegraphics[scale=0.20]{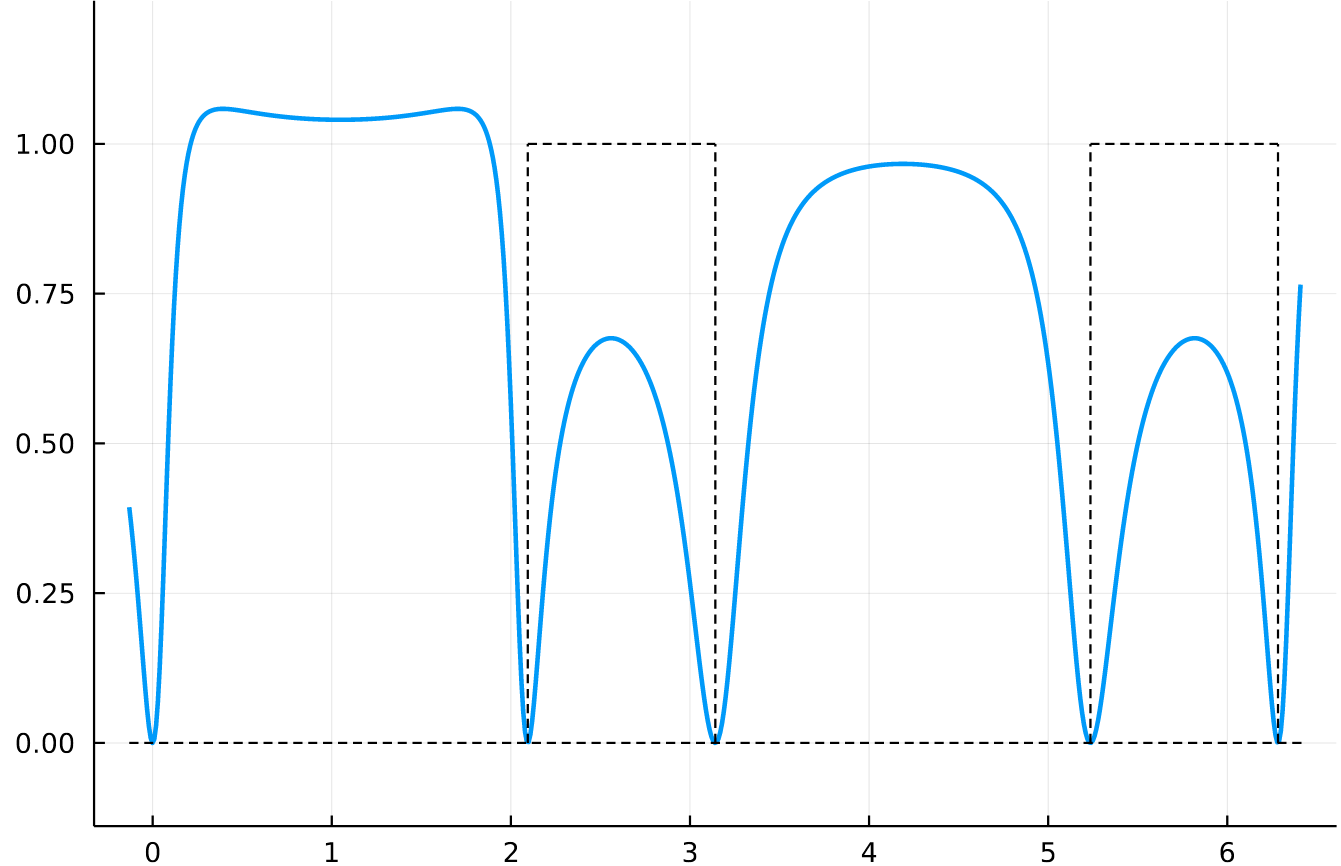}};
\node at (-2.95,2.53) {\tiny $f(e^{i\theta})$};
\node at (3.65,-2.1) {\tiny $\theta$};
\node at (-.25,1.1) {\tiny $(i)$};
\node at (2.72,1.1) {\tiny $(i)$};
\node at (3.56,-1.64) {\tiny $(i)$};
\end{tikzpicture}
\begin{tikzpicture}[slave]
\node at (0,0) {\includegraphics[scale=0.20]{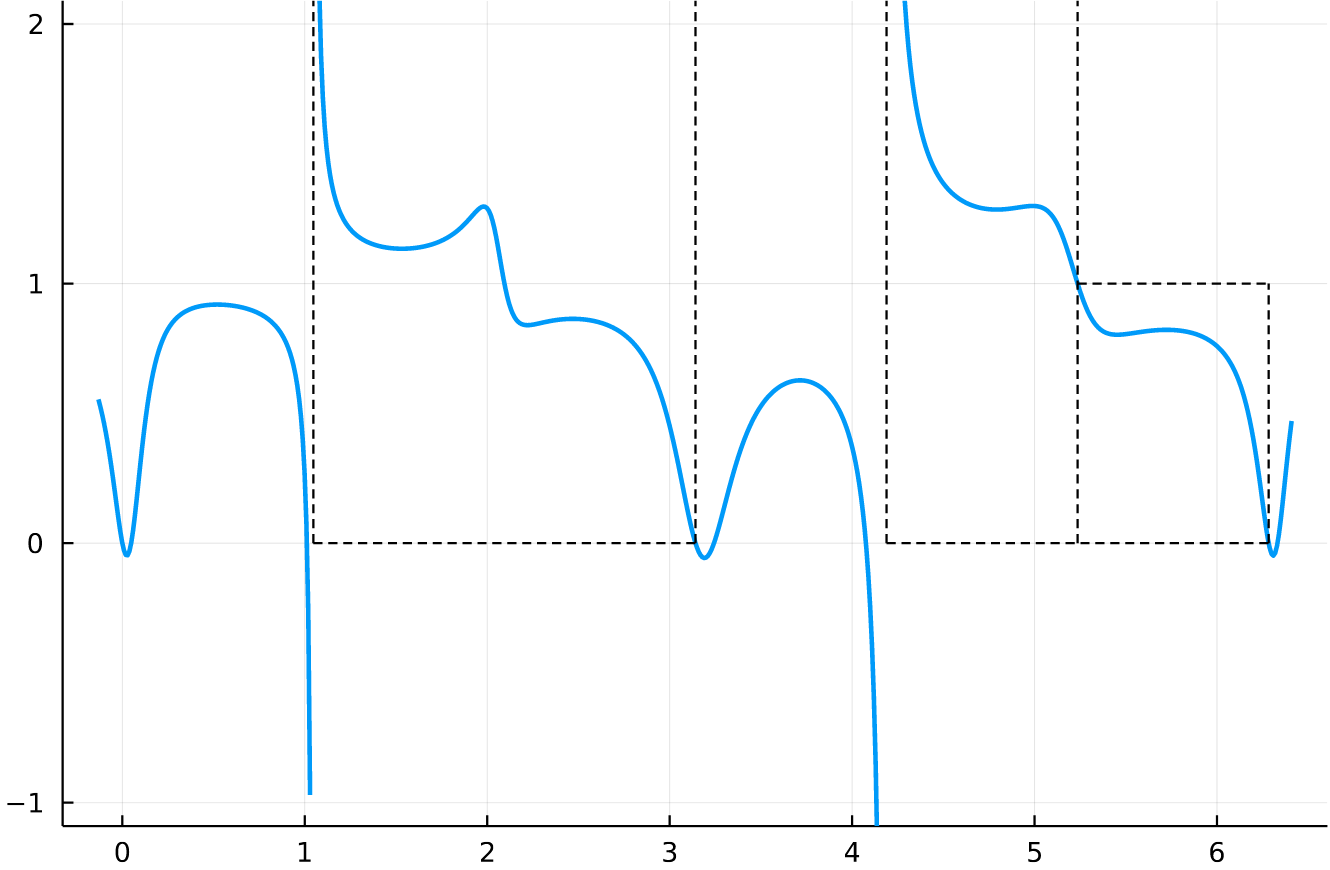}};
\node at (-2.95,2.53) {\tiny $1+r_{1}(e^{i\theta})r_{2}(e^{i\theta})$};
\node at (3.65,-2.1) {\tiny $\theta$};
\node at (-0.85,1.85) {\tiny $(ii)$};
\node at (1.67,1.85) {\tiny $(ii)$};
\node at (2.65,-0.35) {\tiny $(ii)$};
\node at (2.65,0.1) {\tiny $(iii)$};
\end{tikzpicture}
\end{center}
\begin{figuretext}
\label{fig:f}
The functions $\theta \mapsto f(e^{i\theta})$ (left) and $\theta \mapsto 1+r_{1}(e^{i\theta})r_{2}(e^{i\theta})$ (right) in the case of the initial data (\ref{numericscompactinitialdata}). The dashed lines illustrate assertions $(i)$--$(iii)$ of Lemma \ref{inequalitieslemma} as indicated.
\end{figuretext}
\end{figure}

\begin{lemma}[Inequalities satisfied by the spectral functions]\label{inequalitieslemma}
Suppose $u_0,v_0 \in \mathcal{S}(\R)$ are such that Assumptions \ref{solitonlessassumption} and \ref{originassumption} hold. Let $\{r_j\}_1^2$ be the associated reflection coefficients defined in (\ref{r1r2def}).
\begin{enumerate}[$(i)$]
\item\label{inequalitieslemmaitemi}
 The function $f:\partial \mathbb{D}\to \mathbb{C}$ defined in (\ref{def of f}) satisfies 
\begin{itemize}
\item[$(a)$] $f(k) \geq 0$ for all $k \in \partial \mathbb{D}$,

\item[$(b)$] $f(k)=0$ if and only if $k \in \{\pm 1, \pm \omega\}$, and 

\item[$(c)$] $f(k) \leq 1$ for all $k \in \partial \mathbb{D}$ with $\arg k \in (2\pi/3, \pi) \cup (5\pi/3, 2\pi)$.
\end{itemize}

\item\label{inequalitieslemmaitemii} $1+r_{1}(k)r_{2}(k) > 0$ for all $k \in \partial \mathbb{D}$ with $\arg k \in (\pi/3, \pi) \cup (4\pi/3, 2\pi)$.

\item\label{inequalitieslemmaitemiii} $-\frac{1}{2\pi}\ln(1+r_{1}(k)r_{2}(k)) \geq 0$ for all $k \in \partial \mathbb{D}$ with $\arg k \in (5\pi/3, 2\pi)$.

\item\label{inequalitieslemmaitemiv} The functions $\hat{\nu}_1, \hat{\nu}_{2}:\partial \mathbb{D}\to \mathbb{C}$ defined by
\begin{align}\label{hatnu12def}
\hat{\nu}_1(k) := \nu_3(k) - \nu_1(k), \qquad \hat{\nu}_{2}(k) = \nu_{2}(k) +\nu_3(k) -\nu_{4}(k),
\end{align}
where
\begin{align}\nonumber
& \nu_1(k) := - \frac{1}{2\pi}\ln(1+r_{1}(\omega k)r_{2}(\omega k)), 
&& \nu_2(k) := - \frac{1}{2\pi}\ln(1+r_{1}(\omega^{2} k)r_{2}(\omega^{2} k)), 
	\\ \label{nu12345def}
& \nu_3(k) := - \frac{1}{2\pi}\ln(f(\omega k)),  &&
\nu_4(k) := - \frac{1}{2\pi}\ln(f(\omega^{2} k)).
\end{align}
satisfy $\hat{\nu}_1(k) \geq 0$ for all $k \in \partial \mathbb{D}$ with $\arg k \in (5\pi/3, 2\pi)$, and $\hat{\nu}_{2}(k) \geq 0$ for all $k \in \partial \mathbb{D}$ with $\arg k \in (\pi, 4\pi/3)$.

\end{enumerate} 
\end{lemma}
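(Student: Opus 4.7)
\emph{Plan of proof.} Everything rests on the symmetry (\ref{r1r2 relation with kbar symmetry}). On $\partial\mathbb{D}$ one has $\bar{k}^{-1}=k$, so the relation collapses to $r_{2}(k)=\tilde{r}(k)\overline{r_{1}(k)}$, giving $r_{1}(k)r_{2}(k)=|r_{1}(k)|^{2}\tilde{r}(k)$. A direct computation (using $\bar{k}^{2}=1/k^{2}$ and cross-multiplying) shows that $\tilde{r}(k)=\overline{\tilde{r}(k)}$ on $|k|=1$, so $\tilde{r}$ is real-valued on $\partial\mathbb{D}$. From the factored form $\tilde{r}(k)=\omega(k^{2}-\omega^{2})/(k^{2}-\omega)$ together with the values $\tilde{r}(\pm 1)=-1$ and $\tilde{r}(\pm i)=1$, I read off that $\tilde{r}>0$ on $\arg k\in(\pi/3,2\pi/3)\cup(4\pi/3,5\pi/3)$ and $\tilde{r}<0$ on the other four open arcs, with simple zeros at $\pm\omega$ and simple poles at $\pm\omega^{2}$. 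A further key identity, which I would verify by placing over a common denominator, is $\tilde{r}(k)+\tilde{r}(\omega/k)=-1$; since $1/(\omega^{2}k)=\omega/k$, this rewrites $f$ on $\partial\mathbb{D}$ as $f(k)=(1-|r_{1}(\omega/k)|^{2})+\tilde{r}(k)(|r_{1}(k)|^{2}-|r_{1}(\omega/k)|^{2})$.

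Granted these reductions, parts (iii) and (i)(c) are immediate from sign analysis: on $\arg k\in(5\pi/3,2\pi)$, $\tilde{r}<0$ gives $r_{1}r_{2}\le 0$ and so $1+r_{1}r_{2}\le 1$; and on both arcs in (i)(c), $\arg k$ and $\arg(\omega/k)$ both land where $\tilde{r}<0$, so $f(k)-1=|r_{1}(k)|^{2}\tilde{r}(k)+|r_{1}(\omega/k)|^{2}\tilde{r}(\omega/k)\le 0$. Part (ii) reduces to (i)(a): where $\tilde{r}\ge 0$, one has $1+r_{1}r_{2}\ge 1>0$ directly; on the remaining arcs I use the identity $1+r_{1}r_{2}(k)=f(k)-|r_{1}(\omega/k)|^{2}\tilde{r}(\omega/k)$, noting that $\arg(\omega/k)$ there lies in an arc where $\tilde{r}<0$, so $1+r_{1}r_{2}\ge f>0$ (strict by (i)(a)--(b)). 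Part (i)(b) then follows by direct substitution at $\pm 1,\pm\omega$ using (\ref{r1r2at0}) and the simple zeros of $r_{2}$ at $\pm\omega$ from Theorem \ref{directth}(ii), with the converse coming from strictness of (i)(a).

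The main obstacle is thus (i)(a), the non-negativity $f(k)\ge 0$ on $\partial\mathbb{D}$. I do not see a way to conclude this from manipulation of $r_{1}$ and $\tilde{r}$ alone, so my plan is to trace $f$ back to the scattering entries: in the $3\times 3$ Jost-function framework of Section \ref{directsec}, the reality of the potentials in (\ref{isospectral}) induces a Hermitian-type constraint on $s(k)$ and $s^{A}(k)$ along $\partial\mathbb{D}$ (analogous to the classical $|r|^{2}+|t|^{2}=1$), which should manifest as a representation of $f$ as a principal minor---or a manifest $|\cdot|^{2}$-type combination---of scattering entries. The structural similarity of $f$ with the $(3,3)$-entry of $v_{7}$ in (\ref{vdef}) is no accident and guides the choice of minor. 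Part (iv) then follows by substitution: with $f(\omega k)=1+r_{1}r_{2}(\omega k)+r_{1}r_{2}(1/k)$ and $f(\omega^{2}k)=1+r_{1}r_{2}(\omega^{2}k)+r_{1}r_{2}(\omega^{2}/k)$, the inequality $\hat{\nu}_{1}\ge 0$ reduces to $r_{1}r_{2}(1/k)\le 0$, immediate from $\tilde{r}<0$ on $(0,\pi/3)$; and $\hat{\nu}_{2}\ge 0$ becomes an algebraic inequality in the four quantities $r_{1}r_{2}$ at $\omega k,\omega^{2}k,1/k,\omega^{2}/k$, which I expect to derive by applying (i)(a) at a shifted argument together with (\ref{r1r2 relation on the unit circle}) and the sign ledger for $\tilde{r}$.
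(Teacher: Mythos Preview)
Your plan is on the right track and aligns with the paper's approach. The key formula that realizes your anticipated ``$|\cdot|^2$-type combination'' is precisely
\[
f(k)=\frac{1}{|s_{11}(k)|^{2}}\qquad(k\in\partial\mathbb{D}),
\]
obtained by combining your expression $f(k)=1+\tilde{r}(k)|r_1(k)|^{2}+\tilde{r}(\tfrac{1}{\omega^{2}k})|r_1(\tfrac{1}{\omega^{2}k})|^{2}$ with the symmetries (\ref{symmetries of s}) and (\ref{sRsymm}). With this in hand, (i)(a)--(b) follow exactly as you outline (strict positivity away from $\{\pm1,\pm\omega\}$ because $s_{11}$ is finite there; vanishing at $\pm1$ because $s_{11}$ has simple poles there by Assumption~\ref{originassumption}; the point $\pm\omega^{2}$ is excluded via $f(k)=1/|s_{33}(\omega k)|^{2}$ and regularity of $s_{33}$ at $\pm1$). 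Your reductions of (i)(c), (ii), (iii), and $\hat\nu_1\ge0$ to sign analysis of $\tilde r$ match the paper's.

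The one place your plan is too optimistic is $\hat\nu_2\ge0$. Applying (i)(a) at a shifted argument together with (\ref{r1r2 relation on the unit circle}) and the sign ledger for $\tilde r$ is not sufficient: after the paper rewrites the claim as $\frac{1-\tilde r(\frac{1}{\omega k})|s_{12}(\frac{1}{\omega k})|^{2}}{|s_{11}(\omega k)|^{2}}\le 1$, it invokes a \emph{second} scattering identity beyond $f=1/|s_{11}|^{2}$, namely the $(1,1)$-entry of $\overline{s(k)}^{T}\mathcal{B}R(k)^{-1}s(k)R(k)\mathcal{B}=I$, which yields
\[
|s_{11}(k)|^{2}+\tilde r(1/k)|s_{21}(k)|^{2}+\tilde r(\omega^{2}k)|s_{31}(k)|^{2}=1.
\]
Rearranging this (and using $\tilde r(k)<0$ on the relevant arc) gives the upper bound. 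So for $\hat\nu_2$ you should plan to dip back into the scattering data a second time, not just once for (i)(a).
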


\begin{figure}
\begin{center}
\vspace{-.3cm}
\begin{tikzpicture}[master]
\node at (0,0) {\includegraphics[width=2.9cm]{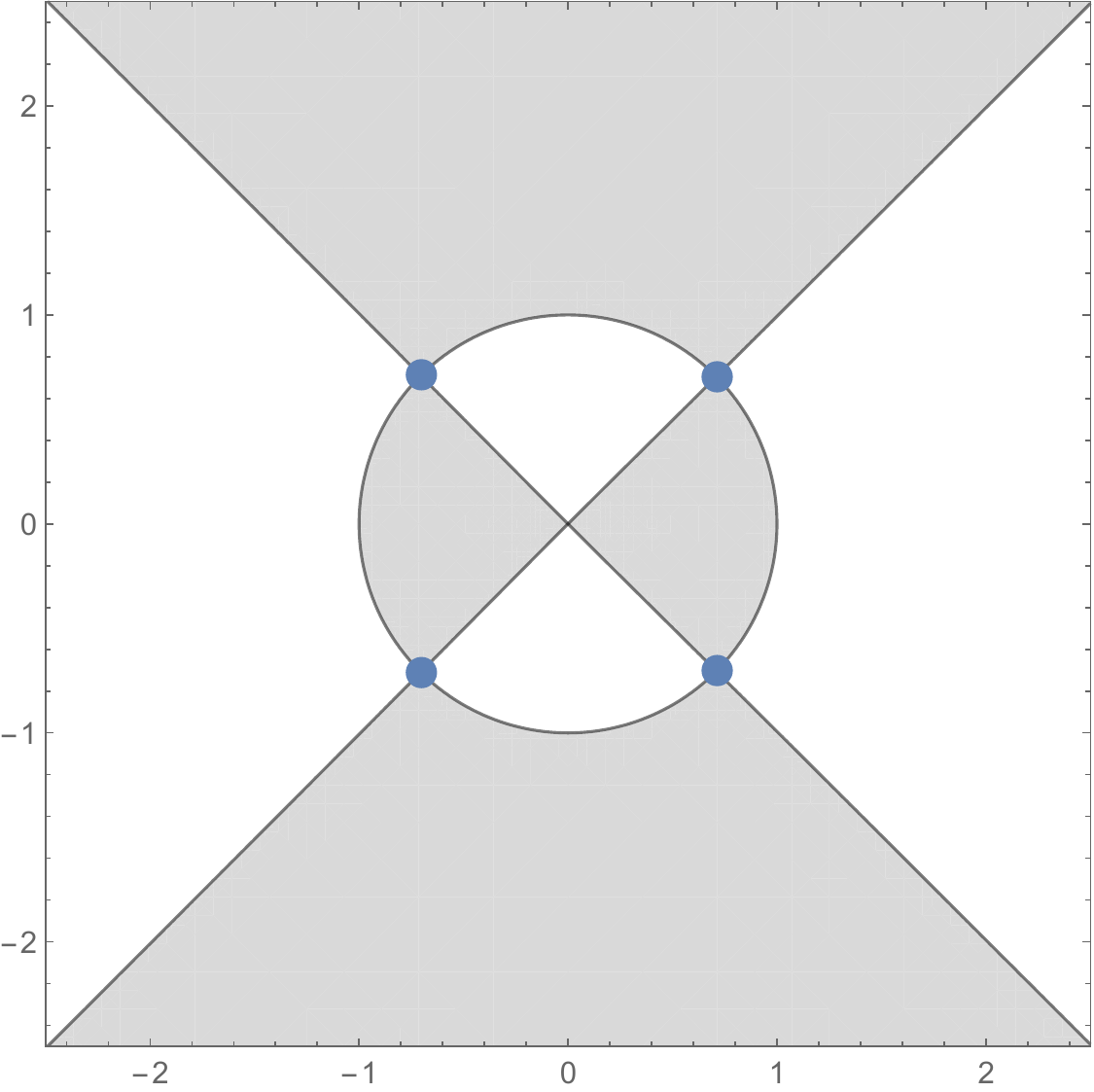}};
\node at (-0.33,0.65) {\footnotesize $k_{1}$};
\node at (-0.33,-0.5) {\footnotesize $k_{2}$};
\node at (0.43,0.65) {\footnotesize $k_{3}$};
\node at (0.46,-0.5) {\footnotesize $k_{4}$};
\end{tikzpicture} \hspace{-0.4cm}
\begin{tikzpicture}[slave]
\node at (0,0) {\includegraphics[width=2.9cm]{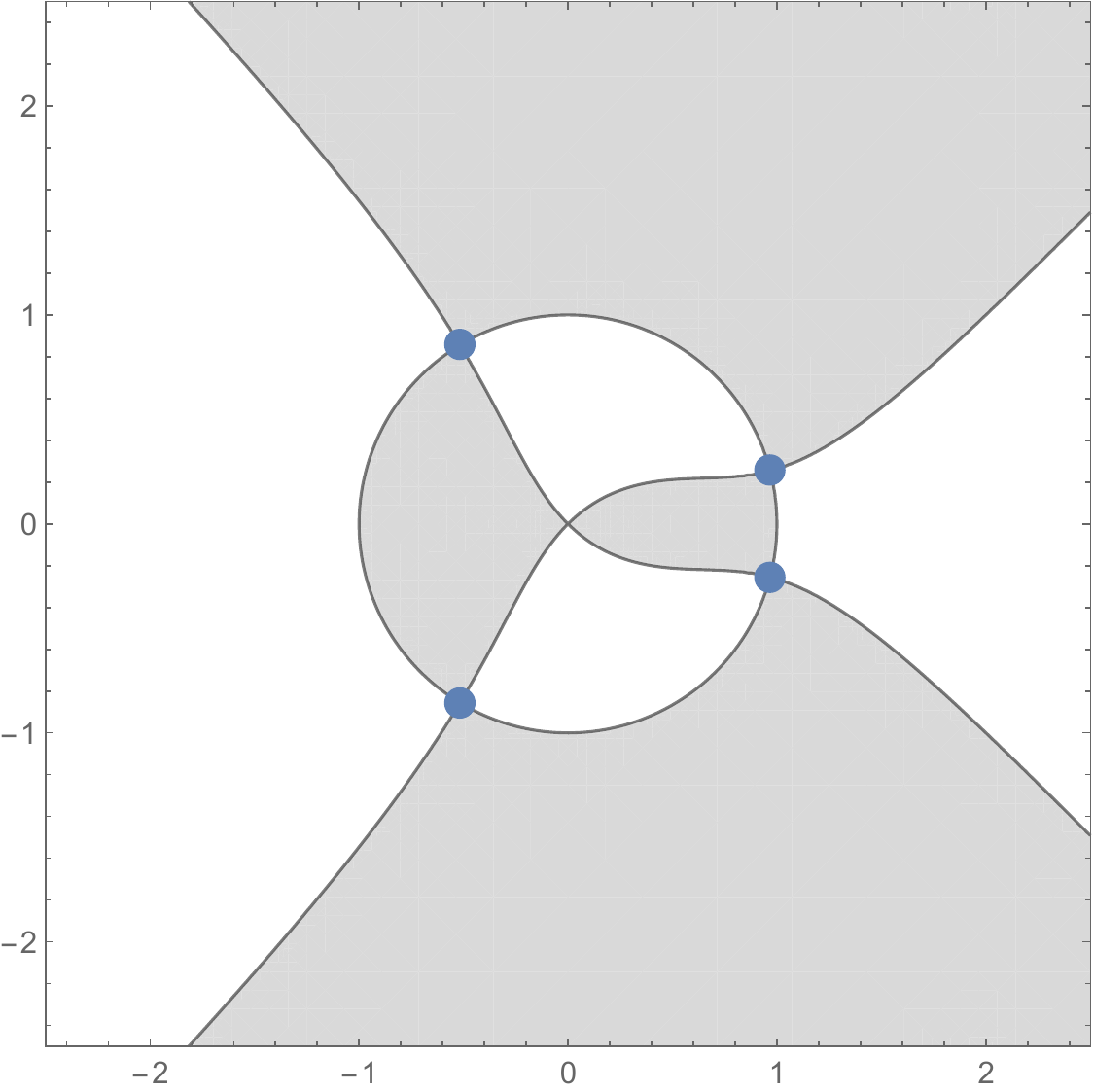}};
\node at (-0.23,0.73) {\footnotesize $k_{1}$};
\node at (-0.23,-0.55) {\footnotesize $k_{2}$};
\node at (0.6,0.4) {\footnotesize $k_{3}$};
\node at (0.6,-0.27) {\footnotesize $k_{4}$};
\end{tikzpicture} \hspace{-0.4cm} \begin{tikzpicture}[slave]
\node at (0,0) {\includegraphics[width=2.9cm]{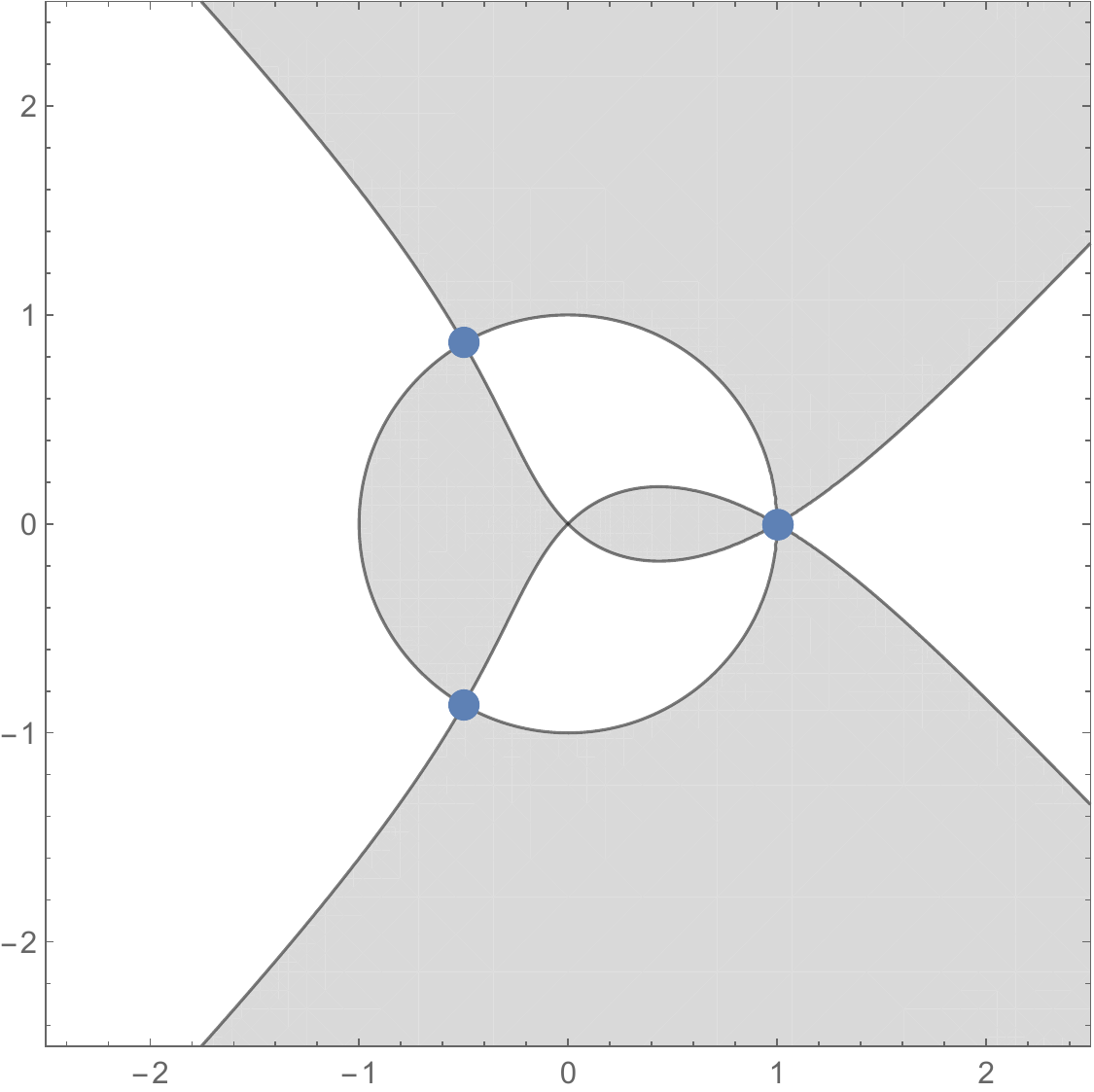}};
\node at (-0.2,0.75) {\footnotesize $k_{1}$};
\node at (-0.2,-0.58) {\footnotesize $k_{2}$};
\node at (0.7,0.3) {\footnotesize $k_{3} \hspace{-0.08cm}=\hspace{-0.08cm}k_{4}$};
\end{tikzpicture} \hspace{-0.4cm} \begin{tikzpicture}[slave]
\node at (0,0) {\includegraphics[width=2.9cm]{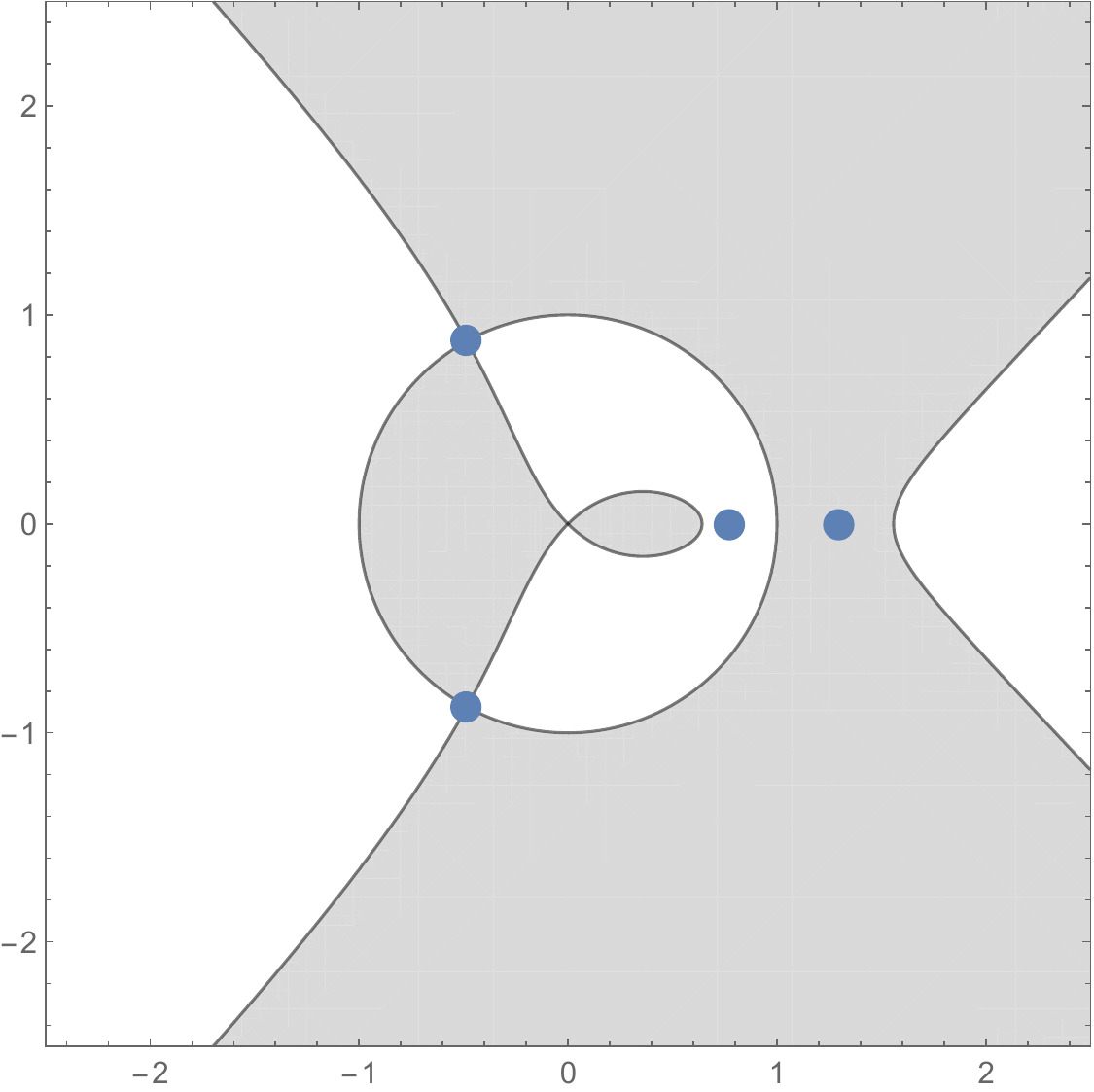}};
\node at (-0.17,0.75) {\footnotesize $k_{1}$};
\node at (-0.17,-0.62) {\footnotesize $k_{2}$};
\node at (0.82,0.25) {\footnotesize $k_{3}$};
\node at (0.45,0.25) {\footnotesize $k_{4}$};
\end{tikzpicture} \hspace{-0.4cm}
\begin{tikzpicture}[slave]
\node at (0,0) {\includegraphics[width=2.9cm]{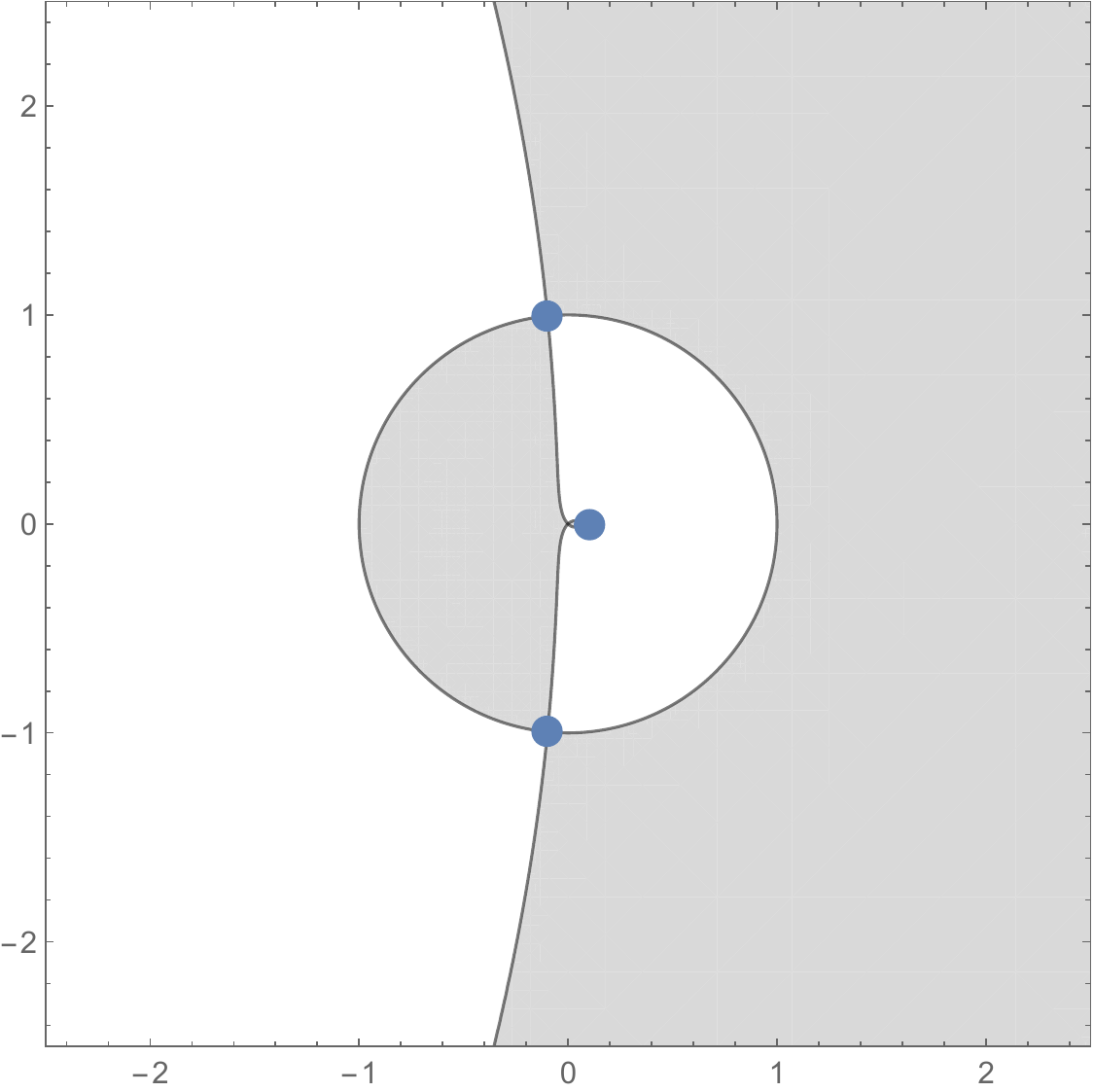}};
\node at (0,0.8) {\footnotesize $k_{1}$};
\node at (0,-0.67) {\footnotesize $k_{2}$};
\node at (0.35,0.05) {\footnotesize $k_{4}$};
\end{tikzpicture}
\end{center}
\begin{figuretext}\label{fig: Re Phi 21 for various zeta} From left to right: $\zeta = 0.01$, $\zeta=0.9$, $\zeta=1$, $\zeta=1.1$, and $\zeta = 10$. The shaded regions correspond to $\{k \, | \, \re \Phi_{21}(\zeta,k)>0\}$ and the white regions to $\{k \, | \, \re \Phi_{21}(\zeta,k)<0\}$. The dots represent the four saddle points $\{k_{j}(\zeta)\}_{j=1}^{4}$.
\end{figuretext}
\vspace{-.2cm}
\end{figure}

The statement of our next theorem, whose proof is given in Sections \ref{overviewsec}--\ref{othersectorssec}, involves several square roots and logarithms. Lemma \ref{inequalitieslemma} implies that the arguments of all these square roots and logarithms are $\geq 0$ and $> 0$, respectively. Furthermore, Theorem \ref{globalth} ensures that the solution $u(x,t)$ in the statement exists and is unique. The statement also involves  the saddle points $k_j = k_j(\zeta)$, $j = 1, \dots, 4$, of $\Phi_{21}$ given by (see Figure \ref{fig: Re Phi 21 for various zeta})
\begin{subequations}\label{def of kj}
\begin{align}
& k_{1} = k_{1}(\zeta) = \frac{1}{4}\bigg( \zeta - \sqrt{8+\zeta^{2}} + i \sqrt{2}\sqrt{4-\zeta^{2}+\zeta\sqrt{8+\zeta^{2}}} \bigg), \label{def of k1} \\
& k_{2} = k_{2}(\zeta) = \frac{1}{4}\bigg( \zeta - \sqrt{8+\zeta^{2}} - i \sqrt{2}\sqrt{4-\zeta^{2}+\zeta\sqrt{8+\zeta^{2}}} \bigg), \label{def of k2} \\
& k_{3} = k_{3}(\zeta) = \frac{1}{4}\bigg( \zeta + \sqrt{8+\zeta^{2}} + \sqrt{2}\sqrt{-4+\zeta^{2}+\zeta\sqrt{8+\zeta^{2}}} \bigg), \label{def of k3} \\
& k_{4} = k_{4}(\zeta) = \frac{1}{4}\bigg( \zeta + \sqrt{8+\zeta^{2}} - \sqrt{2}\sqrt{-4+\zeta^{2}+\zeta\sqrt{8+\zeta^{2}}} \bigg), \label{def of k4}
\end{align}
\end{subequations}
and the function $\tilde{r}$ defined in (\ref{def of tilde r}).
We let $u_{\HM}$ denote the Hasting--McLeod solution of the Painlev\'e II equation, i.e., $u_{\HM}$ is the unique solution of 
\begin{align}\label{painleveII}
u''(y) = yu(y) + 2u(y)^3
\end{align}
such that 
\begin{align}\label{uHMasymptotics}
u_{\HM}(y) = \begin{cases} \Ai(y)(1+o(1)), & y \to +\infty, \\
\sqrt{-y/2}(1+o(1)), & y \to -\infty.
\end{cases}
\end{align}

\begin{figure}[t]
\begin{center}
\begin{tikzpicture}[master]
\node at (0,0) {\includegraphics[scale=0.2]{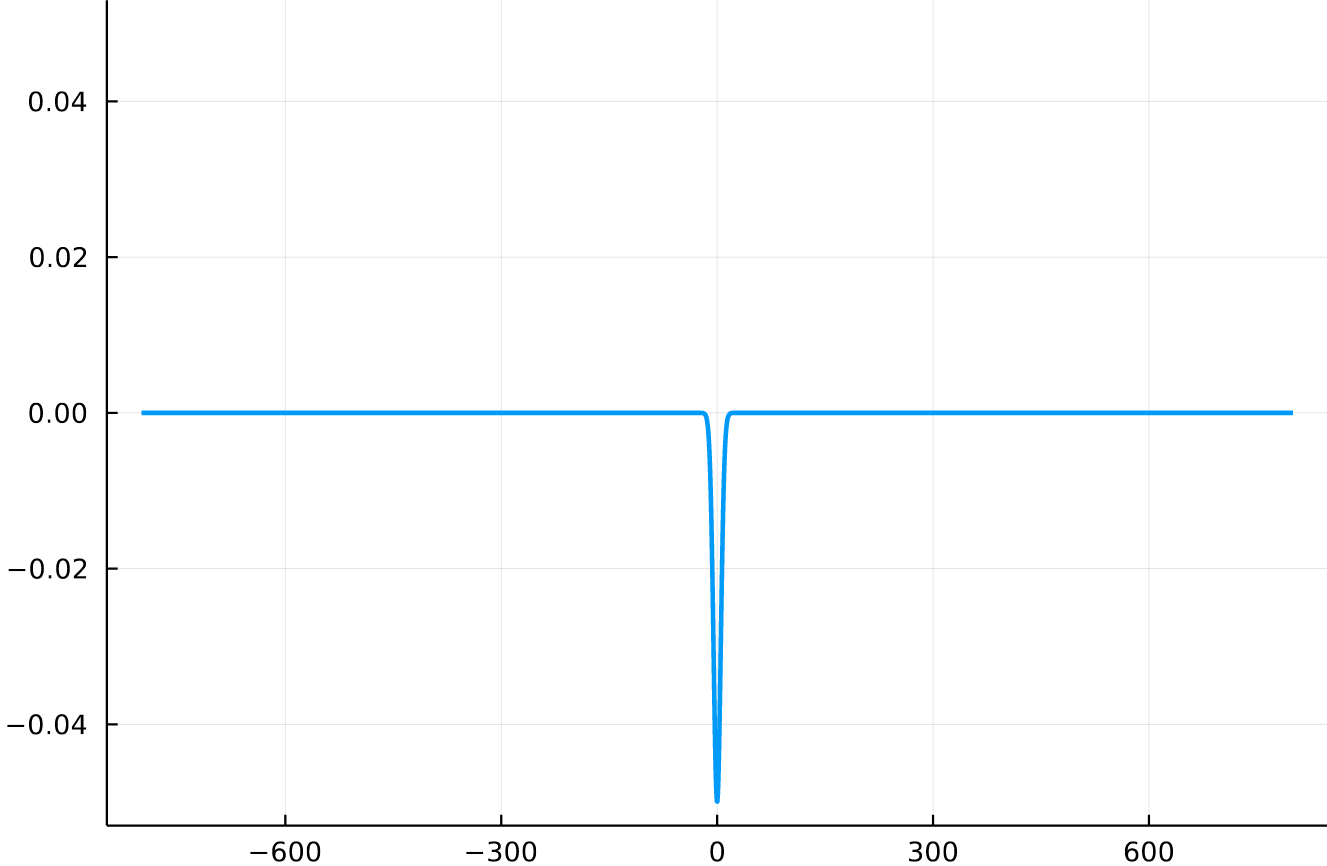}};
\node at (3.68,-2.07) {\tiny $x$};
\node at (-2.95,2.43) {\tiny $u$};
\node at (0.3,1.3) {\tiny $t=0$};
\end{tikzpicture}
\begin{tikzpicture}[slave]
\node at (0,0) {\includegraphics[scale=0.2]{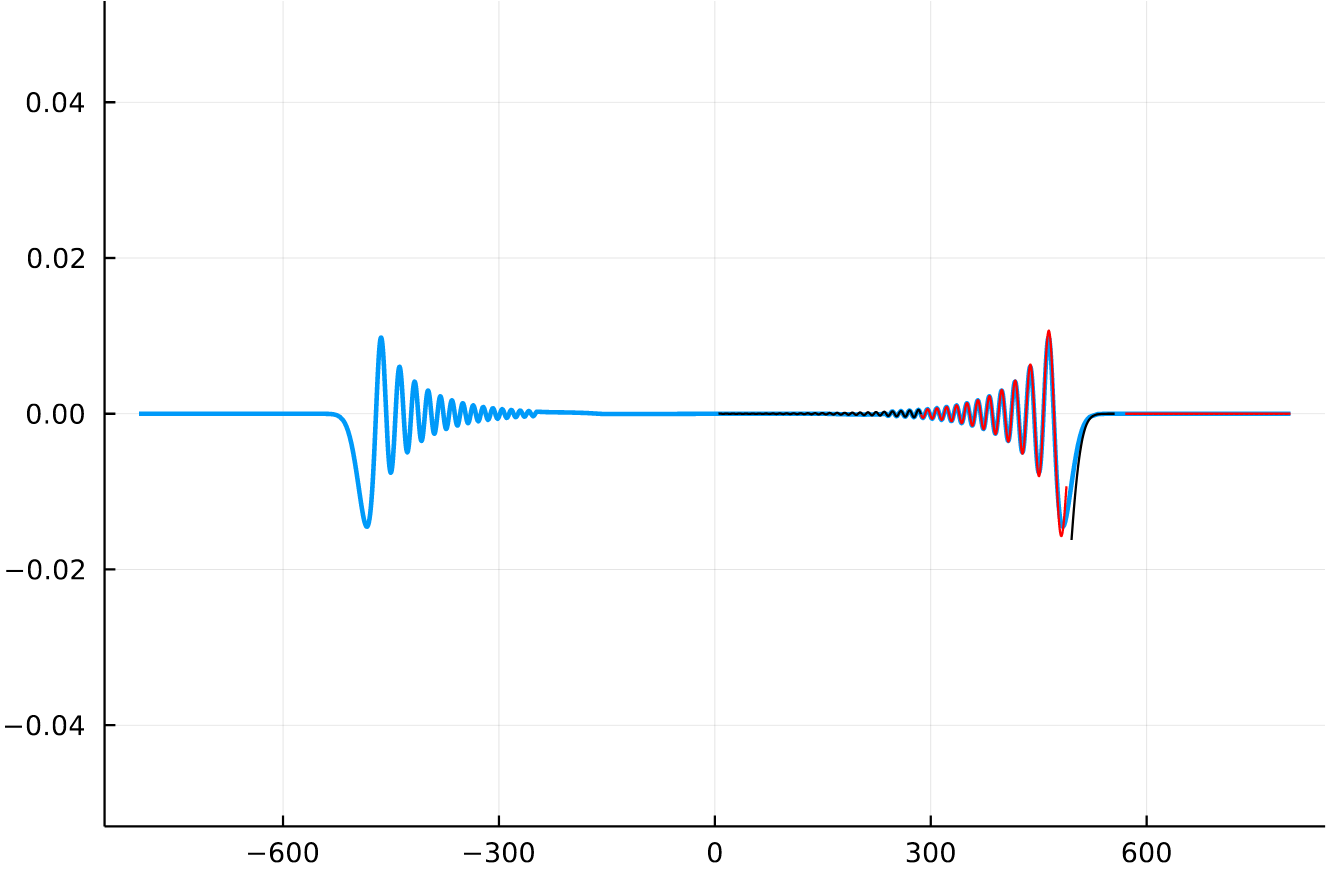}};
\node at (3.68,-2.07) {\tiny $x$};
\node at (-2.95,2.43) {\tiny $u$};
\node at (0.3,1.3) {\tiny $t=500$};
\node at (0.8,-.1) {\tiny Sector \V};
\node at (1.85,0.75) {\tiny \color{red} Sector \IV};

\draw[->-=1] (2.65,-0.97)--(2.22,-0.32);
\node at (2.7,-1.1) {\tiny Sector \III};

\node at (3,-0.1) {\tiny \color{red} Sector \II};
\end{tikzpicture}
\end{center}
\begin{figuretext}
\label{fig:asymp}
Numerical simulation of the solution $u(x,t)$ of (\ref{badboussinesq}) (blue) together with the asymptotic approximations provided by Theorem \ref{asymptoticsth} (black and red). The initial data is $u_{0}(x) = -0.05e^{-0.02x^{2}}$ and $u_1(x) = 0$. The approximations in Sectors $\III$ and $\V$ are drawn black, while those in Sectors $\II$ and $\IV$ are drawn red. 
%The convergence is the slowest for the left part of Sector $\III$ and the right part of Sector $\IV$; this is consistent with our expectation that there exist a number of narrow transition regions between Sectors $\III$ and $\IV$, including a collisionless shock region.
\end{figuretext}
\end{figure}

\begin{figure}[t]
\begin{center}
\vspace{-.3cm}
\begin{tikzpicture}[master]
\hspace{-0.17cm}
\node at (0,0) {\includegraphics[scale=0.343]{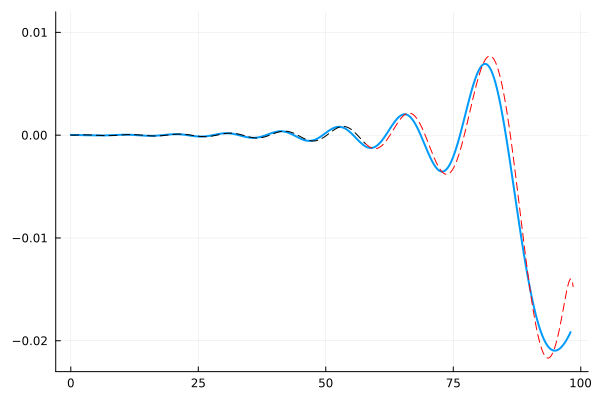}};
\node at (3.66,-2.08) {\tiny $x$};
\node at (-2.95,2.45) {\tiny $u$};
\node at (0.3,1.5) {\tiny $t=100$};
\node at (-1.2,.5) {\tiny Sector \V};
\node at (1.8,0) {\tiny \color{red} Sector \IV};
\end{tikzpicture}
\hspace{-0.4cm}
\begin{tikzpicture}[slave]
\node at (0,0) {\includegraphics[scale=0.343]{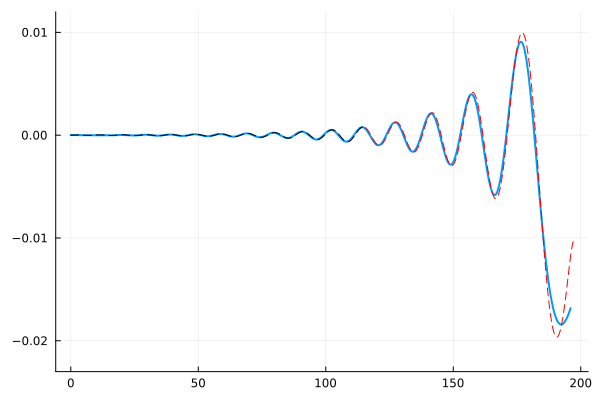}};
\node at (3.66,-2.08) {\tiny $x$};
\node at (-2.95,2.45) {\tiny $u$};
\node at (0.3,1.5) {\tiny $t=200$};
\node at (-1.2,.5) {\tiny Sector \V};
\node at (2.2,-0.25) {\tiny \color{red} Sector \IV};
\end{tikzpicture}
\end{center}
\begin{figuretext}
\label{fig:zoomasymp}
The same simulation as in Figure \ref{fig:asymp} but now zoomed in to show the solution $u(x,t)$ (blue) and the asymptotic approximations of Theorem \ref{asymptoticsth} in Sector $\IV$ (dashed red) and Sector $\V$ (dashed black) at times $t = 100$ and $t = 200$. 
\end{figuretext}
\end{figure}

\begin{theorem}[Asymptotics]\label{asymptoticsth}
Let $u_0,u_1 \in \mathcal{S}(\R)$ be such that the assumptions of Theorem \ref{globalth} are fulfilled. Then the global solution $u(x,t)$ of the initial value problem for (\ref{badboussinesq}) with initial data $u_0, u_1$ obeys the following asymptotic formulas as $(x,t) \to \infty$ in the half-plane $t \geq 0$ (see Figure \ref{sectors.pdf}):
\begin{align}\nonumber
\text{\upshape Sector I:} \ & u(x,t) = \frac{A(\zeta)}{\sqrt{t}} \cos \alpha(\zeta,t) 
+ O\bigg(\frac{1}{x^N} +\frac{C_N(\zeta) \ln x}{x}\bigg), && M \leq \zeta < \infty, 	
	\\\nonumber
\text{\upshape Sector II:} \ & u(x,t) = \frac{A(\zeta)}{\sqrt{t}} \cos \alpha(\zeta,t) + O\bigg(\frac{\ln t}{t}\bigg), && 1 + \frac{1}{M} \leq  \zeta \leq M, 
%	\\\nonumber
%	& v(x,t) = \frac{\zeta u_a(x,t)}{\sqrt{t}} + O\bigg(\frac{1}{t(1-k_0)^2} + \frac{\ln t}{t(1-k_0)} + e^{-c y^{3/2}}\bigg), \qquad 1 \leq \zeta \leq M, \ t > 2,
	\\\nonumber
\text{\upshape Sector III:} \ & u(x,t) = \frac{u_P\big((\frac{2}{3t})^{1/3}(x-t)\big)}{t^{2/3}} + O\bigg(\frac{1}{t^{5/6}}\bigg), && |\zeta -  1|\leq \frac{M}{t^{2/3}},
	\\\nonumber
\text{\upshape Sector IV:} \ & u(x,t) =  \frac{A_{1}(\zeta)}{\sqrt{t}}   \cos \alpha_{1}(\zeta,t) +\frac{A_{2}(\zeta)}{\sqrt{t}} \cos \alpha_{2}(\zeta,t) + O\bigg(\frac{\ln t}{t}\bigg),  &&
\frac{1}{\sqrt{3}}< \zeta < 1,
	\\\label{uasymptotics}
\text{\upshape Sector V:} \ & u(x,t) =
 \frac{\tilde{A}_{1}(\zeta)}{\sqrt{t}}   \cos \tilde{\alpha}_{1}(\zeta,t) +\frac{A_{2}(\zeta)}{\sqrt{t}} \cos \tilde{\alpha}_{2}(\zeta,t) + O\bigg(\frac{\ln t}{t}\bigg), && 0 < \zeta < \frac{1}{\sqrt{3}},
\end{align}
where 
\begin{itemize}
\item the formulas in Sectors I--III hold uniformly with respect to $\zeta = x/t$ in the stated intervals for any fixed $N \geq 1$ and $M > 1$;

\item the formulas in Sectors IV and V hold uniformly with respect to $\zeta = x/t$ in compact subsets of the stated intervals;

\item $C_N(\zeta)$ is rapidly decreasing as $\zeta\to \infty$ for each $N$;

\item in Sectors I and II, 
\begin{align}
& A(\zeta) := 2\sqrt{3} \frac{\sqrt{-\nu}\sqrt{-1-2\cos (2\arg k_{1})}}{-ik_{1}z_{\star}} \im k_1, 
	 \nonumber \\
& \alpha(\zeta,t) := \frac{3\pi}{4} + \arg r_{2}(k_{1}) + \arg \Gamma(i \nu) + \arg d_0 + t \, \im \Phi_{21}(\zeta,k_{1}), \nonumber
\end{align}
with $\Gamma(k)$ denoting the Gamma function, 
\begin{align*}
& \nu = \nu(k_1) := - \frac{1}{2\pi}\ln(1+r_{1}(k_{1})r_{2}(k_{1})) %= - \frac{1}{2\pi}\ln(1+|q|^{2}) 
\leq 0, \\
& z_{\star} = z_{\star}(\zeta) := \sqrt{2}e^{\frac{\pi i}{4}} \sqrt{\frac{4-3k_{1} \zeta - k_{1}^{3} \zeta}{4k_{1}^{4}}}, \qquad -ik_{1}z_{\star}>0,
	\\
& \arg d_0 = \arg d_0(\zeta,t) :=  \nu  \ln \bigg| \frac{(\frac{1}{\omega^{2}k_{1}}-k_{1})(\frac{1}{\omega k_{1}}-k_{1})}{3(\frac{1}{k_{1}}-k_{1})^{2}z_{\star}^{2}} \bigg| - \nu\ln t \\
& \hspace{3.4cm} + \frac{1}{2\pi} \int_{i}^{k_{1}} \ln \bigg| \frac{(k_{1}-s)^{2}(\frac{1}{\omega^{2}k_{1}}-s)(\frac{1}{\omega k_{1}}-s)}{(\frac{1}{k_{1}}-s)^{2}(\omega k_{1}-s)(\omega^{2} k_{1}-s)} \bigg| d \ln(1+r_{1}(s)r_{2}(s)),
\end{align*}
and the path of the integral $\int_{i}^{k_{1}}$ starts at $i$, follows the unit circle in the counterclockwise direction, and ends at $k_{1}$;

\item in Sector III, $u_P:\R \to \R$ is a smooth function given in terms of the Hastings--McLeod solution $u_{\HM}$ of the Painlev\'e II equation by
$$u_P(y) := 2^{2/3} 3^{1/3}(u_{\HM}'(y) - u_{\HM}(y)^2);$$

\item in Sectors IV and V, 
\begin{align}\nonumber
& A_{1}(\zeta) := \frac{4\sqrt{3}\sqrt{\hat{\nu}_1(k_4)}\im k_4}{-i\omega k_{4} z_{1,\star}|\tilde{r}(\frac{1}{k_{4}})|^{\frac{1}{2}}}\sin(\arg(\omega k_{4})),  
	\\ \nonumber
& \alpha_{1}(\zeta,t) := \frac{3\pi}{4}+\arg q_{3}+\arg\Gamma(i\hat{\nu}_1(k_4))+\arg d_{1,0}-t\, \im \Phi_{31}(\zeta,\omega k_{4}), 
	\\ \nonumber
& A_{2}(\zeta) := \frac{-4\sqrt{3}\sqrt{\hat{\nu}_2(k_2)}|\tilde{r}(\frac{1}{k_{2}})|^{\frac{1}{2}}\im k_2}{-i\omega^{2} k_{2} z_{2,\star}}\sin(\arg(\omega^{2} k_{2})), 
	\\ \nonumber
& \alpha_{2}(\zeta,t) := \frac{3\pi}{4}-\arg (q_{6}-q_{2}q_{5})+\arg\Gamma(i\hat{\nu}_2(k_2))+\arg d_{2,0}-t \, \im \Phi_{32}(\zeta,\omega^{2} k_{2}), 
	\\ \nonumber 
& \tilde{A}_{1}(\zeta) := \frac{4\sqrt{3}\sqrt{\nu_{1}(\omega^2 k_4)}\im k_4}{-i\omega k_{4} z_{1,\star}|\tilde{r}(\frac{1}{k_{4}})|^{\frac{1}{2}}}\sin(\arg(\omega k_{4})),
	\\ \nonumber
& \tilde{\alpha}_1(\zeta, t) := \frac{3\pi}{4}-\arg \tilde{q}_{1}+\arg\Gamma(i\nu_{1}(\omega^2 k_4))+\arg \tilde{d}_{1,0}-t\, \im \Phi_{31}(\zeta,\omega k_{4}),
	\\ \nonumber
& \tilde{\alpha}_{2}(\zeta,t) := \frac{3\pi}{4}-\arg (q_{6}-q_{2}q_{5})+\arg\Gamma(i\hat{\nu}_2(k_2))+\arg \tilde{d}_{2,0}-t \, \im \Phi_{32}(\zeta,\omega^{2} k_{2}), 
\end{align}
where % $\Gamma(k)$ denotes the Gamma function, 
$\hat{\nu}_1(k)$, $\hat{\nu}_2(k)$, and $\nu_1(k)$ are the functions defined in (\ref{hatnu12def}) and (\ref{nu12345def}),
\begin{align}\nonumber
& %q_{1} := \tilde{r}(\omega k_{4})^{\frac{1}{2}}r_{1}(\omega k_{4}), \qquad 
\tilde{q}_1 := |\tilde{r}(k_{4})|^{\frac{1}{2}}r_{1}(k_{4}), \qquad
q_{2} := \tilde{r}(\omega^{2}k_{2})^{\frac{1}{2}}r_{1}(\omega^{2}k_{2}), \qquad q_{3} := |\tilde{r}(\tfrac{1}{k_{4}})|^{\frac{1}{2}}r_{1}(\tfrac{1}{k_{4}}), 
	\\\nonumber
& q_{5} := |\tilde{r}(\omega k_{2})|^{\frac{1}{2}}r_{1}(\omega k_{2}), 
\qquad
q_{6} := |\tilde{r}(\tfrac{1}{k_{2}})|^{\frac{1}{2}}r_{1}(\tfrac{1}{k_{2}}),
	\\\nonumber
& z_{1,\star} = z_{1,\star}(\zeta) := \sqrt{2}e^{\frac{\pi i}{4}} \sqrt{\omega \frac{4-3k_{4} \zeta - k_{4}^{3} \zeta}{4k_{4}^{4}}}, \qquad -i\omega k_{4}z_{1,\star}>0, 
	\\ \label{z1starz2stardef}
& z_{2,\star} = z_{2,\star}(\zeta) := \sqrt{2}e^{\frac{\pi i}{4}} \sqrt{-\omega^{2} \frac{4-3k_{2} \zeta - k_{2}^{3} \zeta}{4k_{2}^{4}}}, \qquad -i\omega^{2} k_{2}z_{2,\star}>0,
\end{align}
and $d_{1,0}$, $d_{2,0}$, $\tilde{d}_{1,0}$, and $\tilde{d}_{2,0}$ are defined in \eqref{def of d10}, \eqref{def of d20}, \eqref{def of dt10}, and \eqref{def of dt20}, respectively.

\end{itemize}

\end{theorem}

\subsection{Numerics}
Most of the formulas of this paper have been verified numerically in some way. 
Our numerical computations for the direct and inverse scattering have been based on the Julia package ``ApproxFun" developed by S. Olver,\footnote{See \texttt{https://github.com/JuliaApproximation/ApproxFun.jl} and \cite{OT2013}.} which allows for a fast and accurate evaluation of the spectral functions. For example, Figures \ref{fig:r1 and r2} and \ref{fig:f}, which correspond to the initial data
\begin{align}\label{numericscompactinitialdata}
\text{$\begin{cases} u_{0}(x) = -e^{-x^{2}}(1-x^{2})^{2}, \\ v_{0}(x) = 2(e^{-x^{2}}+5(x-0.2))(1-x^{2})^{2}, \end{cases}$ \hspace{-.3cm} for $x \in [-1,1]$; \quad $\begin{cases} u_{0}(x) = 0, \\ v_{0}(x) = 0, \end{cases}$ \hspace{-.3cm}  for $x \notin [-1,1]$}
\end{align}  
have been generated in this way. The asymptotic formulas of Theorem \ref{asymptoticsth} (see e.g. Figures \ref{fig:asymp} and \ref{fig:zoomasymp})
have also been evaluated with the help of this package.

\section{The direct problem}\label{directsec}

\subsection{Lax pair} 
The Boussinesq system (\ref{boussinesqsystem}) is the compatibility condition of the Lax pair equations
\begin{equation}\label{Lax pair tilde}
\begin{cases}
 \tilde{X}_{x} = \tilde{L}\tilde{X}, \\
 \tilde{X}_{t} = \tilde{Z}\tilde{X},
\end{cases}
\end{equation}
where $\tilde{X}(x,t,\lambda)$ is a $3 \times 3$-matrix valued eigenfunction, $\lambda \in \C$ is the spectral parameter, and
$$\tilde{L} := \begin{pmatrix}
0 & 1 & 0 \\
0 & 0 & 1 \\
\frac{\lambda}{12 i \sqrt{3}}-\frac{u_{x}}{4}-\frac{iv}{4\sqrt{3}} & -\frac{1+2u}{4} & 0
\end{pmatrix}, \quad
\tilde{Z} := \begin{pmatrix}
-i \frac{1+2u}{2\sqrt{3}} & 0 & -i \sqrt{3} \\
- \frac{\lambda}{12} - i \frac{u_{x}}{4\sqrt{3}} - \frac{v}{4} & i \frac{1+2u}{4\sqrt{3}} & 0 \\
-i \frac{u_{xx}}{4\sqrt{3}}-\frac{v_{x}}{4} & - \frac{\lambda}{12} + \frac{iu_{x}}{4\sqrt{3}}-\frac{v}{4} & i \frac{1+2u}{4\sqrt{3}}
\end{pmatrix}.$$
Note that the $x$-part of (\ref{Lax pair tilde}) is a reformulation of the spectral problem (\ref{isospectral}) as a first-order system with $\lambda = 12 \sqrt{3} z^{3}$, and that $\tilde{L}$ and $\tilde{Z}$ are traceless. We define a new spectral parameter $k \in \mathbb{C}$ by
\begin{align}\label{lambdadef}
\lambda = \frac{k^3 + k^{-3}}{2},
\end{align}
and let $\{l_j(k), z_j(k)\}_1^3$ be given by (\ref{lmexpressions intro}).
The transformation 
\begin{equation}\label{X hat to X tilde}
\tilde{X}(x,t,k) = P(k) \hat{X}(x,t,k),
\end{equation}
where
\begin{equation}
P(k) := \begin{pmatrix}
1 & 1 & 1  \\
l_{1}(k) & l_{2}(k) & l_{3}(k) \\
l_{1}^{2}(k) & l_{2}^{2}(k) & l_{3}^{2}(k)
\end{pmatrix}, 
\end{equation}
transforms \eqref{Lax pair tilde} into
\begin{equation}\label{Xhatlax}
\begin{cases}
\hat{X}_{x} = L\hat{X}, \\
\hat{X}_{t} = Z\hat{X},
\end{cases} \qquad \text{where} \qquad 
\begin{cases}
L := P^{-1}\tilde{L}P, \\
Z := P^{-1}\tilde{Z}P.
\end{cases}
\end{equation}
The above transformation is valid only for $k \in \mathbb{C}\setminus \hat{\mathcal{Q}}$, because
\begin{equation}
\det P(k) = \frac{1 - k^6}{8k^3}.
\end{equation}
As $k \to \infty$, we have
\begin{equation}\label{limits of L and Z at k infty}
L(x,t,k) = \mathcal{L} + O(1/k), \qquad Z(x,t,k) = \mathcal{Z} + O(1), 
\end{equation}
where the diagonal matrices $\mathcal{L} = \mathcal{L}(k)$ and $\mathcal{Z} = \mathcal{Z}(k)$ are given by
\begin{equation}\label{def of mathcal L and mathcal Z}
\mathcal{L} = \lim_{x \to \pm\infty} L = \mbox{diag}(l_{1},l_{2},l_{3}), \qquad \mathcal{Z} = \lim_{x \to \pm\infty} Z = \mbox{diag}(z_{1},z_{2},z_{3}).
\end{equation}
Defining $\mathsf{U} := L - \mathcal{L}$ and $\mathsf{V} := Z - \mathcal{Z}$, we can rewrite \eqref{Xhatlax} as
\begin{equation}\label{Lax pair hat}
\begin{cases}
 \hat{X}_{x} - \mathcal{L}\hat{X} = \mathsf{U}\hat{X}, \\
 \hat{X}_{t} - \mathcal{Z}\hat{X} = \mathsf{V}\hat{X}.
\end{cases}
\end{equation}
The transformation \eqref{X hat to X tilde} diagonalizes the highest-order terms in $k$ as $k \to \infty$ of the Lax pair \eqref{Lax pair tilde} and ensures that the lower-order terms decay as $x \to \pm \infty$. The transformation 
\begin{equation}\label{XhatX}
\hat{X} = X e^{\mathcal{L}x + \mathcal{Z}t}
\end{equation}
transforms \eqref{Lax pair hat} into
\begin{equation}\label{Xlax}
\begin{cases}
 X_{x} - [\mathcal{L},X] = \mathsf{U} X, \\
 X_{t} - [\mathcal{Z},X] = \mathsf{V} X.
\end{cases}
\end{equation}

\subsubsection{Symmetries of $\mathcal{L}$, $\mathcal{Z}$, $\mathsf{U}$, and $\mathsf{V}$} The matrices $\mathcal{L}$, $\mathcal{Z}$, $\mathsf{U}$, and $\mathsf{V}$ satisfy the $\mathbb{Z}_{3}$-symmetry
\begin{align}
& \mathcal{L}(k) = \mathcal{A}\mathcal{L}(\omega k) \mathcal{A}^{-1}, & & \mathcal{Z}(k) = \mathcal{A}\mathcal{Z}(\omega k) \mathcal{A}^{-1}, \nonumber \\
& \mathsf{U}(x,t,k) = \mathcal{A}\mathsf{U}(x,t,\omega k) \mathcal{A}^{-1}, & & \mathsf{V}(x,t,k) = \mathcal{A}\mathsf{V}(x,t,\omega k) \mathcal{A}^{-1}, \label{Acaldef}
\end{align}
and the $\mathbb{Z}_{2}$-symmetry
\begin{align}
& \mathcal{L}(k) = \mathcal{B}\mathcal{L}(k^{-1}) \mathcal{B}^{-1}, & & \mathcal{Z}(k) = \mathcal{B}\mathcal{Z}(k^{-1}) \mathcal{B}^{-1}, \nonumber \\
& \mathsf{U}(x,t,k) = \mathcal{B}\mathsf{U}(x,t,k^{-1}) \mathcal{B}^{-1}, & & \mathsf{V}(x,t,k) = \mathcal{B}\mathsf{V}(x,t,k^{-1}) \mathcal{B}^{-1}, \label{Bcaldef}
\end{align}
where $\mathcal{A}$ and $\mathcal{B}$ are the matrices in (\ref{def of Acal and Bcal}). Since $L=\mathcal{L}+\mathsf{U}$ and $Z = \mathcal{Z} + \mathsf{V}$, the matrices $L$ and $Z$ also satisfy these symmetries.

\subsubsection{Pole structure of $\mathsf{U}$ and $\mathsf{V}$} 
By \eqref{limits of L and Z at k infty}, we have $\mathsf{U}(x,t,k) = O(k^{-1})$ and $\mathsf{V}(x,t,k) = O(1)$ as $k \to \infty$.
The symmetry \eqref{Bcaldef} then implies that $\mathsf{U}(x,t,k) = O(k)$ and $\mathsf{V}(x,t,k) = O(1)$ as $k \to 0$. 
A computation shows that $\mathsf{U}(x,t,k)$ and $\mathsf{V}(x,t,k)$ have simple poles at $k=1$ and $k=-1$. The symmetry \eqref{Acaldef} then implies that $\mathsf{U}(x,t,k)$ and $\mathsf{V}(x,t,k)$ in fact have  simple poles at each of the six points $\kappa_{j}$, $j=1,\ldots,6$.

\begin{remark}\upshape
The symmetries (\ref{Acaldef})--\eqref{Bcaldef} for the Lax pair (\ref{Xlax}) are the same as the symmetries of the Lax pair for the ``good'' Boussinesq equation without $u_{xx}$-term studied in \cite{CLgoodboussinesq}, see \cite[Eqs. (3.19) and (3.20)]{CLgoodboussinesq}. However, the pole structure of (\ref{Xlax}) is very different from the pole structure of the Lax pair in \cite{CLgoodboussinesq}. Indeed, whereas the matrices $\mathsf{U}$ and $\mathsf{V}$ in (\ref{Xlax}) have six simple poles at $\kappa_{j}$, $j=1,\ldots,6$, the analogs of $\mathsf{U}$ and $\mathsf{V}$ in \cite{CLgoodboussinesq} have double poles at the origin.  In what follows, we will omit some proofs when they are similar to their counterparts in \cite{CLgoodboussinesq}. 
\end{remark}

\subsection{The eigenfunctions $X$ and $Y$} 
From now until the end of Section \ref{sec: n at t=0}, we fix $t=0$ and abuse notation by writing $\mathsf{U}(x,k)$ for $\mathsf{U}(x,0,k)$. Consider the $x$-part of the Lax pair \eqref{Xlax} evaluated at $t = 0$:
\begin{align}\label{xpart}
X_x - [\mathcal{L}, X] = \mathsf{U} X.
\end{align}
We define two $3 \times 3$-matrix valued solutions $X(x,k)$ and $Y(x,k)$ of (\ref{xpart}) as the solutions of the linear Volterra integral equations
\begin{subequations}\label{XYdef}
\begin{align}  \label{XYdefa}
 & X(x,k) = I - \int_x^{\infty} e^{(x-x')\widehat{\mathcal{L}(k)}} (\mathsf{U}X)(x',k) dx',
  	\\ \label{XYdefb}
&  Y(x,k) = I + \int_{-\infty}^x e^{(x-x')\widehat{\mathcal{L}(k)}} (\mathsf{U}Y)(x',k) dx'.
\end{align}
\end{subequations}
We decompose the complex $k$-plane into the six open subsets $\{D_n\}_1^6$ defined by (see Figure \ref{fig: Dn})
\begin{align*}
&D_1 = \{k \in \C\,|\, \re l_1 < \re l_2 < \re l_3\},
	\qquad
D_2 = \{k \in \C\,|\, \re l_1 < \re l_3 < \re l_2\},
	\\
&D_3 = \{k \in \C\,|\, \re l_3 < \re l_1 < \re l_2\},
	\qquad
D_4 = \{k \in \C\,|\, \re l_3 < \re l_2 < \re l_1\},
	\\
&D_5 = \{k \in \C\,|\, \re l_2 < \re l_3 < \re l_1\},
	\qquad
D_6 = \{k \in \C\,|\, \re l_2 < \re l_1 < \re l_3\},
\end{align*}
and let $\hat{\mathcal{S}} = \partial \D \cup \bar{D}_{3}\cup\bar{D}_{4}$. Let $\mathcal{S}$ denote the interior of $\bar{D}_3 \cup \bar{D}_4$ (see Figure \ref{fig: S}), and recall that $\mathcal{Q}=\{\kappa_{j}\}_{j=1}^{6}$ and $\hat{\mathcal{Q}} = \mathcal{Q} \cup \{0\}$.

\begin{figure}
%\vspace{-0.2cm}
\begin{center}
\begin{tikzpicture}[master, scale=0.6]
\node at (0,0) {};
\draw[black,line width=0.45 mm] (30:2.5)--(30:4);
\draw[black,line width=0.45 mm] (0,0)--(90:2.5);
\draw[black,line width=0.45 mm] (150:2.5)--(150:4);
\draw[black,line width=0.45 mm] (0,0)--(-30:2.5);
\draw[black,line width=0.45 mm] (-90:2.5)--(-90:4);
\draw[black,line width=0.45 mm] (0,0)--(-150:2.5);

\draw[black,line width=0.45 mm] ([shift=(-180:2.5cm)]0,0) arc (-180:180:2.5cm);

\node at (30:1.4) {$\omega \mathcal{S}$};
\node at (90:3.3) {$\mathcal{S}$};
\node at (150:1.4) {$\omega^{2}\mathcal{S}$};
\node at (210:3.6) {$\omega \mathcal{S}$};
\node at (270:1.4) {$\mathcal{S}$};
\node at (330:3.6) {$\omega^{2} \mathcal{S}$};

\draw[fill] (0:2.5) circle (0.1);
\draw[fill] (60:2.5) circle (0.1);
\draw[fill] (120:2.5) circle (0.1);
\draw[fill] (180:2.5) circle (0.1);
\draw[fill] (240:2.5) circle (0.1);
\draw[fill] (300:2.5) circle (0.1);

\end{tikzpicture} \hspace{2.5cm} \begin{tikzpicture}[slave, scale=0.6]
\node at (0,0) {};
\draw[black,line width=0.45 mm] (0,0)--(30:2.5);
\draw[black,line width=0.45 mm] (90:2.5)--(90:4);
\draw[black,line width=0.45 mm] (0,0)--(150:2.5);
\draw[black,line width=0.45 mm] (-30:2.5)--(-30:4);
\draw[black,line width=0.45 mm] (0,0)--(-90:2.5);
\draw[black,line width=0.45 mm] (-150:2.5)--(-150:4);

\draw[black,line width=0.45 mm] ([shift=(-180:2.5cm)]0,0) arc (-180:180:2.5cm);

\node at (30:3.6) {$-\omega \mathcal{S}$};
\node at (90:1.4) {$-\mathcal{S}$};
\node at (150:3.8) {$-\omega^{2}\mathcal{S}$};
\node at (210:1.4) {$-\omega \mathcal{S}$};
\node at (270:3.2) {$-\mathcal{S}$};
\node at (330:1.4) {$-\omega^{2} \mathcal{S}$};

\draw[fill] (0:2.5) circle (0.1);
\draw[fill] (60:2.5) circle (0.1);
\draw[fill] (120:2.5) circle (0.1);
\draw[fill] (180:2.5) circle (0.1);
\draw[fill] (240:2.5) circle (0.1);
\draw[fill] (300:2.5) circle (0.1);

\draw[dashed] (-5.7,-3.8)--(-5.7,3.8);
\end{tikzpicture}
\end{center}
\begin{figuretext}\label{fig: S}The domains of definition of the columns of $X$ and $Y$. The dots represent the singularities $\kappa_{j}$, $j=1,\ldots,6$.
\end{figuretext}
\end{figure}

The following proposition summarizes some basic properties of $X$ and $Y$. The statement is
similar to \cite[Proposition 3.1]{CLgoodboussinesq}; the  main difference is that in our case $X$ and $Y$ are singular at the points $\{\kappa_{j}\}_1^6$. The proof is omitted.

\begin{proposition}\label{XYprop}
Suppose $u_0, v_0 \in \mathcal{S}(\R)$. 
Then the equations (\ref{XYdef}) uniquely define two $3 \times 3$-matrix valued solutions $X$ and $Y$ of (\ref{xpart}) with the following properties:
\begin{enumerate}[$(a)$]
\item The function $X(x, k)$ is defined for $x \in \R$ and $k \in (\omega^2 \hat{\mathcal{S}}, \omega \hat{\mathcal{S}}, \hat{\mathcal{S}}) \setminus \hat{\mathcal{Q}}$. For each $k \in (\omega^2 \hat{\mathcal{S}}, \omega \hat{\mathcal{S}}, \hat{\mathcal{S}}) \setminus \hat{\mathcal{Q}}$, $X(\cdot, k)$ is smooth and satisfies (\ref{xpart}).

\item The function $Y(x, k)$ is defined for $x \in \R$ and $k \in (-\omega^2 \hat{\mathcal{S}}, -\omega \hat{\mathcal{S}}, -\hat{\mathcal{S}}) \setminus \hat{\mathcal{Q}}$. For each $k \in (-\omega^2 \hat{\mathcal{S}}, -\omega \hat{\mathcal{S}}, -\hat{\mathcal{S}}) \setminus \hat{\mathcal{Q}}$, $Y(\cdot, k)$ is smooth and satisfies (\ref{xpart}).

\item For each $x \in \R$, $X(x,\cdot)$ is continuous for $k \in (\omega^2 \hat{\mathcal{S}}, \omega \hat{\mathcal{S}}, \hat{\mathcal{S}}) \setminus \hat{\mathcal{Q}}$ and analytic for $k \in (\omega^2 \mathcal{S}, \omega \mathcal{S}, \mathcal{S})$, while $Y(x,\cdot)$ is continuous for $k \in (-\omega^2 \hat{\mathcal{S}}, -\omega \hat{\mathcal{S}}, -\hat{\mathcal{S}})\setminus \hat{\mathcal{Q}}$ and analytic for $k \in (-\omega^2 \mathcal{S}, -\omega \mathcal{S}, -\mathcal{S})$.

\item For each $x \in \R$ and each $j = 1, 2, \dots$, $\partial_k^j X(x, \cdot)$ is well defined for $(\omega^2 \hat{\mathcal{S}}, \omega \hat{\mathcal{S}}, \hat{\mathcal{S}})\setminus \hat{\mathcal{Q}}$, while $\partial_k^j Y(x, \cdot)$ is well defined for $(-\omega^2 \hat{\mathcal{S}}, -\omega \hat{\mathcal{S}}, -\hat{\mathcal{S}})\setminus \hat{\mathcal{Q}}$.

\item For each $n \geq 1$ and $\epsilon > 0$, there are bounded smooth positive functions $f_+(x)$ and $f_-(x)$ of $x \in \R$ with rapid decay as $x \to +\infty$ and $x \to -\infty$, respectively, such that the following estimates hold for $x \in \R$ and $ j = 0, 1, \dots, n$:
\begin{subequations}\label{Xest}
\begin{align}\label{Xesta}
& |\partial_k^j (X(x,k) - I) | \leq
f_+(x), \qquad k \in (\omega^2 \hat{\mathcal{S}}, \omega \hat{\mathcal{S}}, \hat{\mathcal{S}}), \ \dist(k,\hat{\mathcal{Q}}) > \epsilon,
	\\ \label{Xestb}
& |\partial_k^j (Y(x,k) - I) | \leq
f_-(x), \qquad k \in (-\omega^2 \hat{\mathcal{S}}, -\omega \hat{\mathcal{S}}, -\hat{\mathcal{S}}), \ \dist(k,\hat{\mathcal{Q}}) > \epsilon.
\end{align}
\end{subequations}

\item $X$ and $Y$ obey the following symmetries for each $x \in \R$:
\begin{subequations}\label{XYsymm}
\begin{align}
&  X(x, k) = \mathcal{A} X(x,\omega k)\mathcal{A}^{-1} = \mathcal{B} X(x,k^{-1})\mathcal{B}, \qquad k \in (\omega^2 \hat{\mathcal{S}}, \omega \hat{\mathcal{S}}, \hat{\mathcal{S}})\setminus \hat{\mathcal{Q}},
	\\
&  Y(x, k) = \mathcal{A} Y(x,\omega k)\mathcal{A}^{-1} = \mathcal{B} Y(x,k^{-1})\mathcal{B}, \qquad k \in (-\omega^2 \hat{\mathcal{S}}, -\omega \hat{\mathcal{S}}, -\hat{\mathcal{S}})\setminus \hat{\mathcal{Q}}.
\end{align}
\end{subequations}

\item If $u_0(x), v_0(x)$ have compact support, then, for each  $x \in \R$,  $X(x, k)$ and $Y(x, k)$ are defined and analytic for $k \in \C \setminus \hat{\mathcal{Q}}$ and $\det X = \det Y = 1$.
\end{enumerate}
\end{proposition}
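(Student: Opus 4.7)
The plan is to solve the Volterra equations (\ref{XYdef}) by Neumann iteration, treating each column separately since the domain of definition of the $j$-th column of $X$ is determined by the requirement that the $(i,j)$-entry of the matrix exponential $e^{(x-x')\widehat{\mathcal{L}(k)}}$ be bounded for $x'\geq x$ and all $i\in\{1,2,3\}$. The condition $\re(l_i-l_1)\geq 0$ for all $i$ cuts out exactly $\omega^2\hat{\mathcal{S}}$, and cyclic shifts give the other columns; the analogous analysis with $x'\leq x$ yields the mirrored domains for $Y$. Setting $X_0 = I$ and
$$X_{n+1}(x,k) = -\int_x^\infty e^{(x-x')\widehat{\mathcal{L}(k)}}(\mathsf{U}X_n)(x',k)\,dx',$$
the standard induction gives $|X_n(x,k)|\leq\bigl(\int_x^\infty |\mathsf{U}(x',k)|\,dx'\bigr)^n/n!$. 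Since $u_0,v_0\in\mathcal{S}(\R)$ and $\mathsf{U}(\cdot,k)$ has at worst simple poles at the six points $\kappa_j$, the majorant is uniformly bounded on any compact subset of $(\omega^{2}\hat{\mathcal{S}}, \omega\hat{\mathcal{S}}, \hat{\mathcal{S}})\setminus\mathcal{Q}$, giving absolute and uniform convergence of $\sum X_n$ and hence existence, $x$-smoothness, interior analyticity in $k$, and boundary continuity as asserted in (a)--(c). The exponential decay estimate (e) is extracted from the same bound by choosing $f_+(x):=C\sup_k\int_x^\infty|\mathsf{U}(x',k)|\,dx'$ over the relevant compact set, and differentiation under the integral sign gives the analogous estimates for $\partial_k^j$.

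For the behavior at $k = 0$ (parts (a), (b), and especially (d)), the key observation is that $\mathsf{U}(x,k) = O(k)$ as $k\to 0$ while the off-diagonal exponentials $e^{(l_i-l_j)(x-x')}$ satisfy $l_i-l_j\sim c_{ij}/k$ with $\re c_{ij}(x-x')\leq 0$ on the relevant sector, so that the integral of such a factor against a Schwartz function of $x'$ is smaller than any power of $k$. A case-by-case analysis of the iterates — diagonal entries picking up one factor of $k$ from $\mathsf{U}$ alone, off-diagonal entries an additional factor from $\int e^{(l_i-l_j)(x-x')}\,dx'=O(k)$ — shows by induction that $X_n(x,k)$ vanishes to all orders at $k=0$ for every $n\geq 1$, so that $X(x,0)=I$ and $\partial_k^j X(x,0)=0$ for every $j\geq 1$ as claimed. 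The argument for $Y$ is identical after reversing the direction of integration.

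The symmetries in (f) follow from uniqueness applied to (\ref{XYdef}): the $\mathbb{Z}_3$-- and $\mathbb{Z}_2$--symmetries (\ref{Acaldef})--(\ref{Bcaldef}) of $\mathsf{U}$ and $\mathcal{L}$ imply that $\mathcal{A}X(x,\omega k)\mathcal{A}^{-1}$ and $\mathcal{B}X(x,k^{-1})\mathcal{B}$ each solve the same Volterra equation as $X(x,k)$ and must therefore coincide with it. For the compact-support statement (g), when $\mathsf{U}(\cdot,k)$ vanishes off a bounded interval the Neumann series converges for every $k\in\C\setminus\hat{\mathcal{Q}}$ (the exponential factor need only be bounded on the support of $\mathsf{U}$, not decaying at infinity), and $\det X\equiv 1$ follows from $\operatorname{tr}\mathsf{U}=0$ together with $\det X(x,k)\to 1$ as $x\to+\infty$ via Liouville's formula.

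The main obstacle is the interplay between the two singular sets of $\mathsf{U}$ and the apparent singularity of the exponentials at $k=0$. The six poles at $\mathcal{Q}$ are genuine obstructions and must be excluded uniformly (hence the condition $\dist(k,\mathcal{Q})>\epsilon$ throughout (e)), whereas the essential singularity of $e^{(l_i-l_j)(x-x')}$ at $k=0$ is compensated by the $O(k)$ vanishing of $\mathsf{U}$ there — but only after one verifies sector by sector that $\re(l_i-l_j)(x-x')\leq 0$ so that the exponential genuinely decays rather than oscillates. This sector analysis at the origin — absent from \cite{CLgoodboussinesq}, where the poles of $\mathsf{U}$ themselves sat at $0$ — is the genuinely new ingredient; once it is in place, the remainder of the argument is a routine adaptation of the proof of \cite[Proposition 3.1]{CLgoodboussinesq}.
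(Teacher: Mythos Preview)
Your overall strategy --- Neumann iteration column by column, with the domain of each column dictated by boundedness of $e^{(l_i-l_j)(x-x')}$ for $x'\geq x$ --- is the standard one the paper has in mind (the proof is omitted with a pointer to \cite[Proposition~3.1]{CLgoodboussinesq}), and your handling of (a)--(c) and (e)--(g) is correct.

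The argument for (d), however, does not work. You assert that the integral of an off-diagonal exponential against a Schwartz function is ``smaller than any power of $k$'' and that each iterate $X_n$ therefore vanishes to all orders at $k=0$; both claims are false. Already for the diagonal of the first iterate one has $-\int_x^\infty \mathsf{U}_{jj}(x',k)\,dx'$ with $\mathsf{U}_{jj}(x',k)\sim c_j\, u_0(x')\,k$ as $k\to 0$ (a direct computation from the Vandermonde structure of $P$), so this is $O(k)$ but not $o(k)$. For the off-diagonal piece, repeated integration by parts gives $\int_x^\infty e^{c(x-x')/k}f(x')\,dx'\sim (k/c)f(x)+(k/c)^2 f'(x)+\cdots$ when $\re(c/k)>0$, again $O(k)$ and no better. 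In fact the assertion $\partial_k^j X(x,0)=0$ for $j\geq 1$ as stated in the proposition is inconsistent with the rest of the paper: the $\mathcal{B}$-symmetry of part (f) together with the expansion $X\sim I+X_1(x)/k+\cdots$ of Proposition~\ref{XYprop2} forces $X(x,k)=I+\mathcal{B}X_1(x)\mathcal{B}\,k+O(k^2)$ as $k\to 0$, whence $\partial_k X(x,0)=\mathcal{B}X_1(x)\mathcal{B}\neq 0$ whenever $u_0\not\equiv 0$ (cf.\ (\ref{def of X1})). The statement that survives is that $\partial_k^j X$ extends continuously to $k=0$; this follows immediately from the $\mathcal{B}$-symmetry once the large-$k$ expansion is in hand, and no separate small-$k$ analysis of the kind you sketch is needed.
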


\subsection{Asymptotics of $X$ and $Y$ as $k \to \infty$}
We first consider formal power series solutions of (\ref{xpart}). We then show that these series are good approximations of $X$ and $Y$ as $k \to \infty$. 

Consider the series
\begin{align*}
& X_{formal}(x,k) = I + \frac{X_1(x)}{k} + \frac{X_2(x)}{k^2} + \cdots,
\qquad
Y_{formal}(x,k) =  I + \frac{Y_1(x)}{k} + \frac{Y_2(x)}{k^2} + \cdots,
\end{align*}
normalized at $x = \infty$ and $x = -\infty$, respectively:
\begin{align}\label{Fjnormalization}
\lim_{x\to \infty} X_j(x) = \lim_{x\to -\infty} Y_j(x) = 0, \qquad j \geq 1.
\end{align}
For any integer $n \geq 1$, $\mathsf{U}$ has an expansion at $k=\infty$ of the form
$$\mathsf{U}(x,k) = \sum_{j=1}^{n} \frac{\mathsf{U}_{j}(x)}{k^{j}} + O(k^{-n-1}), \qquad k \to \infty,$$
where
$$ \mathsf{U}_{1}(x) = \frac{i \, u_{0}(x)}{\sqrt{3}} \begin{pmatrix}
\omega^{2} & 1 & \omega \\
1 & \omega & \omega^{2} \\
\omega & \omega^{2} & 1
\end{pmatrix}, \qquad \mathsf{U}_{2}(x) = \left( \frac{i \, v_{0}(x)}{\sqrt{3}} + u_{0}'(x) \right) \begin{pmatrix}
\omega & \omega & \omega \\
\omega^{2} & \omega^{2} & \omega^{2} \\
1 & 1 & 1
\end{pmatrix}.
$$
Moreover, $\mathcal{L}$ can be written as
\begin{align*}
& \mathcal{L} = \frac{\mathcal{L}_{-1}}{k^{-1}} + \frac{\mathcal{L}_{1}}{k}, \qquad \mathcal{L}_{-1} := \frac{i}{2\sqrt{3}}\begin{pmatrix}
\omega & 0 & 0 \\
0 & \omega^{2} & 0 \\
0 & 0 & 1
\end{pmatrix}, \qquad \mathcal{L}_{1} := \frac{i}{2\sqrt{3}}\begin{pmatrix}
\omega^{2} & 0 & 0 \\
0 & \omega & 0 \\
0 & 0 & 1
\end{pmatrix}.
\end{align*}
Hence, substituting $X_{formal}$ into (\ref{xpart}), the terms of order $k^{-j}$ yield the relations
\begin{align}\label{xrecursive}
\begin{cases}
[\mathcal{L}_{-1}, X_{j+1}] = \partial_x X_{j}^{(o)} -[\mathcal{L}_{1}, X_{j-1}] - \mathsf{U}_{j}^{(o)} - \sum_{i=1}^{j-1}(\mathsf{U}_{j-i}X_{i})^{(o)},
	\\
\partial_x X_{j}^{(d)} = \mathsf{U}_{j}^{(d)} + \sum_{i=1}^{j-1}(\mathsf{U}_{j-i}X_{i})^{(d)},$$
\end{cases}
\end{align}
where $X_{0}=I$, $A^{(d)}$ and $A^{(o)}$ denote the diagonal and off-diagonal parts of a $3 \times 3$ matrix $A$, respectively.
The coefficients $\{X_j(x)\}$ are uniquely determined from (\ref{Fjnormalization})--(\ref{xrecursive}), the initial values
$X_{-1} = 0, X_0 = I$, and the normalization conditions \eqref{Fjnormalization}. Similarly, the coefficients $\{Y_j(x)\}$ are found by solving \eqref{xrecursive} with $\{X_j\}$ replaced by $\{Y_j\}$, together with
$Y_{-1} = 0, Y_0 = I$ and \eqref{Fjnormalization}. The first coefficients $X_{1},X_{2},Y_{1},Y_{2}$ are given by
\begin{align}
X_1(x) = & \; \frac{i}{\sqrt{3}}  \int_{\infty}^{x} u_0(x^{\prime}) dx' \begin{pmatrix} \omega^2 & 0 & 0 \\ 
0 & \omega & 0 \\ 
0 & 0 & 1
\end{pmatrix}, \label{def of X1}
	\\ \label{def of X2}
X_2(x) = &\; \frac{-2u_{0}(x)}{1-\omega} \begin{pmatrix} 0 & \omega^{2} & -\omega \\ 
-\omega^{2} & 0 & 1 \\ 
\omega & -1 & 0 \end{pmatrix}
 + \int_{\infty}^x \left(\frac{iv_0}{\sqrt{3}} + u_{0x} + \frac{iu_0}{\sqrt{3}}  (X_1)_{33}\right) dx'  \begin{pmatrix} \omega & 0 & 0 \\ 0 & \omega^2 & 0 \\ 0 & 0 & 1 \end{pmatrix}, 
	\\ \nonumber
Y_1(x) = &\; \frac{i}{\sqrt{3}}  \int_{-\infty}^x u_0(x^{\prime}) dx' \begin{pmatrix} \omega^2 & 0 & 0 \\ 
0 & \omega & 0 \\ 
0 & 0 & 1
\end{pmatrix},
	\\ \nonumber
Y_2(x) = &\; \frac{-2 u_0(x)}{1-\omega}\begin{pmatrix} 0 & \omega^{2} & -\omega \\ 
-\omega^{2} & 0 & 1 \\ 
\omega & -1 & 0 \end{pmatrix}
+ \int_{-\infty}^x \left(\frac{iv_0}{\sqrt{3}} + u_{0x} + \frac{iu_0}{\sqrt{3}}  (Y_1)_{33}\right)dx'   \begin{pmatrix} \omega & 0 & 0 \\ 0 & \omega^2 & 0 \\ 0 & 0 & 1 \end{pmatrix}.
\end{align}

The next proposition shows that the matrices $X$ and $Y$ coincide with $X_{formal}$ and $Y_{formal}$ to all orders as $k \to \infty$. The proof is similar to the proof of \cite[Proposition 3.2]{CLgoodboussinesq} and is omitted.

\begin{proposition}\label{XYprop2}
Suppose $u_0, v_0 \in \mathcal{S}(\R)$. 
As $k \to \infty$, $X$ and $Y$ coincide to all orders with $X_{formal}$ and $Y_{formal}$, respectively. More precisely, let $p \geq 0$ be an integer. Then the functions
\begin{align}\label{Xpdef}
&X_{(p)}(x,k) := I + \frac{X_1(x)}{k} + \cdots + \frac{X_{p}(x)}{k^{p}},
	\\ \nonumber
&Y_{(p)}(x,k) := I + \frac{Y_1(x)}{k} + \cdots + \frac{Y_{p}(x)}{k^{p}},
\end{align}
are well-defined and, for each integer $j \geq 0$,
\begin{subequations}\label{Xasymptotics}
\begin{align}\label{Xasymptoticsa}
& \bigg|\frac{\partial^j}{\partial k^j}\big(X - X_{(p)}\big) \bigg| \leq
\frac{f_+(x)}{|k|^{p+1}}, \qquad x \in \R, \  k \in (\omega^2 \bar{\mathcal{S}}, \omega \bar{\mathcal{S}}, \bar{\mathcal{S}}), \ |k| \geq 2,
	\\ \label{Xasymptoticsb}
& \bigg|\frac{\partial^j}{\partial k^j}\big(Y - Y_{(p)}\big) \bigg| \leq
\frac{f_-(x)}{|k|^{p+1}}, \qquad x \in \R, \  k \in (-\omega^2 \bar{\mathcal{S}}, -\omega \bar{\mathcal{S}}, -\bar{\mathcal{S}}), \ |k| \geq 2,
\end{align}
\end{subequations}
where $f_+(x)$ and $f_-(x)$ are bounded smooth positive functions of $x \in \R$ with rapid decay as $x \to +\infty$ and $x \to -\infty$, respectively.
\end{proposition}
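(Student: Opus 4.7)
My plan is to establish both estimates by a standard Volterra-iteration argument applied to the remainder $R_{p}(x,k) := X(x,k) - X_{(p)}(x,k)$. First, I would verify inductively that the coefficients $X_{j}(x)$ obtained from the recursion \eqref{xrecursive} together with the normalization $\lim_{x\to\infty}X_{j}(x)=0$ are well-defined Schwartz functions of $x$ whenever $u_{0},v_{0}\in\mathcal{S}(\mathbb{R})$: at each step the off-diagonal part of $X_{j+1}$ is determined algebraically from $X_{\le j}$ (using that $\mathrm{ad}\,\mathcal{L}_{-1}$ is invertible on off-diagonal matrices, since the diagonal entries of $\mathcal{L}_{-1}$ are distinct), while the diagonal part of $X_{j}$ is obtained by integrating a Schwartz function from $+\infty$ and hence remains Schwartz. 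In particular $X_{(p)}(x,k)$ is a well-defined rational function of $k$ for each $x$.

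Next, I would show that $X_{(p)}$ approximately solves \eqref{xpart}. By construction of the $X_{j}$ via \eqref{xrecursive}, all coefficients of $k^{-j}$ with $0\le j\le p$ in
$$\partial_{x}X_{(p)} - [\mathcal{L},X_{(p)}] - \mathsf{U}X_{(p)} =: E_{p}(x,k)$$
cancel exactly, so an explicit bookkeeping of the remaining terms (using that $u_{0},v_{0}\in\mathcal{S}(\mathbb{R})$ and that $\mathsf{U}$ is meromorphic in $k$ with $\mathsf{U}=O(k^{-1})$ as $k\to\infty$) yields a bound $|E_{p}(x,k)|\le f_{+}(x)|k|^{-p-1}$ uniformly for $|k|\ge 2$, where $f_{+}$ is Schwartz on $\mathbb{R}$. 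Since $X$ satisfies \eqref{xpart} exactly and $R_{p}(x,k)\to 0$ as $x\to\infty$, subtracting the two equations and integrating from $+\infty$ converts everything into the Volterra integral equation
$$R_{p}(x,k) = -\int_{x}^{\infty} e^{(x-x')\widehat{\mathcal{L}(k)}}\bigl[(\mathsf{U}R_{p})(x',k) + E_{p}(x',k)\bigr]\,dx'.$$

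The key structural point is that in each of the three column sectors singled out in \eqref{Xasymptoticsa}, the ordering of $\re l_{j}(k)$ guarantees that the exponentials $e^{(x-x')(l_{i}-l_{j})(k)}$ appearing entrywise in $e^{(x-x')\widehat{\mathcal{L}(k)}}$ are bounded by $1$ for $x'\ge x$; this is the defining property of the sectors $\mathcal{S}$, $\omega\mathcal{S}$, $\omega^{2}\mathcal{S}$. Combined with the uniform $L^{1}_{x'}$-bound on $\mathsf{U}(\cdot,k)$ for $|k|\ge 2$ (where $\mathsf{U}$ stays bounded away from its poles at $\kappa_{j}$), the Neumann series for the Volterra equation converges absolutely and delivers $|R_{p}(x,k)|\le \tilde{f}_{+}(x)|k|^{-p-1}$, which is \eqref{Xasymptoticsa} for $j=0$.

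Derivative estimates are obtained by differentiating the Volterra equation $j$ times in $k$. Each derivative falling on $e^{(x-x')\widehat{\mathcal{L}(k)}}$ produces an extra factor of $x-x'$ times a function of $k$ that is bounded for $|k|\ge 2$, while derivatives of $\mathsf{U}$ and $E_{p}$ remain Schwartz in $x$ and retain the same decay in $k$. The polynomial factors in $x'-x$ are absorbed by the Schwartz decay of the integrands, and the Neumann argument is repeated to yield \eqref{Xasymptoticsa} for every $j$. The estimate \eqref{Xasymptoticsb} for $Y$ is proved identically after exchanging $+\infty\leftrightarrow -\infty$ and using the sectors $-\mathcal{S}$, $-\omega\mathcal{S}$, $-\omega^{2}\mathcal{S}$. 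The only modestly delicate step is the bookkeeping establishing the $|k|^{-p-1}$ decay of $E_{p}$, which is complicated by the fact that $\mathcal{L}$ itself contains a term proportional to $k$ (so naive truncation of $X_{formal}$ does not automatically produce a remainder that is small in $k$); once the cancellations enforced by \eqref{xrecursive} are tracked carefully, the remainder of the argument is a direct transcription of the standard Volterra--Neumann scheme.
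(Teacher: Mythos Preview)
Your approach is the standard Volterra argument and is essentially what the paper has in mind (the paper omits the proof and refers to \cite[Proposition~3.2]{CLgoodboussinesq}, which carries out precisely this scheme). Two points need correction, however.

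First, the coefficients $X_j$ are \emph{not} Schwartz. Their diagonal parts are obtained by integrating Schwartz functions from $+\infty$ (see e.g.\ the explicit formula for $X_1$), so $X_j^{(d)}$ has rapid decay as $x\to+\infty$ but only tends to a constant as $x\to-\infty$. This is harmless for the argument---the proposition only requires $f_+$ to decay at $+\infty$, and the off-diagonal parts $X_j^{(o)}$ \emph{are} Schwartz---but the statement as written is inaccurate.

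Second, and more substantively, your claim that all coefficients of $k^{-j}$ with $0\le j\le p$ in $E_p$ cancel is wrong for $j=p$. Matching the equation at order $k^{-p}$ requires the term $[\mathcal L_{-1},X_{p+1}]$, which is absent from $X_{(p)}$; the off-diagonal residue at that order is exactly $[\mathcal L_{-1},X_{p+1}]\ne 0$ (only the diagonal part vanishes, by the second line of \eqref{xrecursive}). Hence $E_p=O(k^{-p})$, not $O(k^{-p-1})$, and your Volterra iteration would only yield $|R_p|\le f_+(x)|k|^{-p}$. The standard remedy is to run the whole argument with $p+1$ in place of $p$: one obtains $|X-X_{(p+1)}|\le f_+(x)|k|^{-p-1}$, and then $|X-X_{(p)}|\le |X-X_{(p+1)}|+|X_{p+1}(x)||k|^{-p-1}\le \tilde f_+(x)|k|^{-p-1}$ since $X_{p+1}$ is bounded with rapid decay at $+\infty$. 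With this adjustment your proof goes through.
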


Using the symmetries $X(x, k) = \mathcal{B} X(x,k^{-1})\mathcal{B}$ and $Y(x, k) = \mathcal{B} Y(x,k^{-1})\mathcal{B}$, we can also deduce from Proposition \ref{XYprop2} the asymptotics for $X$ and $Y$ as $k \to 0$. In particular, for each $x \in \R$, $X$ and $Y$ have continuous extensions to $k=0$ given by $X(x,0)=I$ and $Y(x,0) = I$.

\subsection{Asymptotics of $X$ and $Y$ as $k \to \kappa_j$} 
In this subsection, we obtain the asymptotics of $X(x, k)$ and $Y(x, k)$ as $k \to \kappa_{j}$, $j=1,\ldots,6$. Thanks to the symmetries $X(x, k) = \mathcal{A} X(x,\omega k)\mathcal{A}^{-1}$ and $Y(x, k) = \mathcal{A} Y(x,\omega k)\mathcal{A}^{-1}$, it is sufficient to analyze the asymptotics as $k \to \pm 1$. 

For compactly supported data $u_0, v_0$, Proposition \ref{XYat1prop} below shows that $X$ and $Y$ have at most simple poles at $k = \pm 1$ with leading Laurent series coefficients of the form (\ref{Cjpm1p})--(\ref{Cjp0p}). If $u_0, v_0 \in \mathcal{S}(\R)$ are not compactly supported, then all columns of $X$ and $Y$ are, in general, not defined in neighborhoods of $k = \pm 1$, so we have to give a more careful statement. The proof is omitted; see \cite[Proposition 3.3]{CLgoodboussinesq} for the proof of a similar (although different) statement.

\begin{proposition}\label{XYat1prop}
Suppose $u_0, v_0 \in \mathcal{S}(\R)$ and let $p \geq 0$ be an integer. 
Then there are $3 \times 3$-matrix valued functions $C_i^{(l)}(x)$, $i = 1, 2,3,4$, $l = -1,0, \dots, p$, with the following properties:
\begin{itemize}
\item For $x \in \R$ and $k \in (\omega^2 \bar{\mathcal{S}}, \omega \bar{\mathcal{S}}, \bar{\mathcal{S}})$, the function $X$ satisfies
\begin{subequations}\label{XYat1}
\begin{align}
& \bigg|\frac{\partial^j}{\partial k^j}\big(X - I - \sum_{l=-1}^p C_1^{(l)}(x)(k-1)^l\big) \bigg| \leq
f_+(x)|k-1|^{p+1-j}, \qquad |k-1| \leq \frac{1}{2}, \label{Xat1} \\
& \bigg|\frac{\partial^j}{\partial k^j}\big(X - I - \sum_{l=-1}^p C_2^{(l)}(x)(k+1)^l\big) \bigg| \leq
f_+(x)|k+1|^{p+1-j}, \qquad |k+1| \leq \frac{1}{2}, \label{Xatm1}
\end{align}
while, for $x \in \R$ and $k \in (-\omega^2 \bar{\mathcal{S}}, -\omega \bar{\mathcal{S}}, -\bar{\mathcal{S}})$, the function $Y$ satisfies
\begin{align}
& \bigg|\frac{\partial^j}{\partial k^j}\big(Y - I - \sum_{l=-1}^p C_3^{(l)}(x) (k-1)^l\big) \bigg| \leq
f_-(x)|k-1|^{p+1-j}, \qquad |k-1| \leq \frac{1}{2}, \label{Yat1} \\
& \bigg|\frac{\partial^j}{\partial k^j}\big(Y - I - \sum_{l=-1}^p C_4^{(l)}(x) (k+1)^l\big) \bigg| \leq
f_-(x)|k+1|^{p+1-j}, \qquad |k+1| \leq \frac{1}{2}, \label{Yatm1}
\end{align}
\end{subequations}
where $f_+(x)$ and $f_-(x)$ are smooth positive functions of $x \in \R$ with rapid decay as $x \to +\infty$ and $x \to -\infty$, respectively, and $j \geq 0$ is any integer.

\item For each $l \geq -1$, $C_1^{(l)}(x),C_2^{(l)}(x)$ and $C_3^{(l)}(x),C_4^{(l)}(x)$ are smooth functions of $x \in \R$ which have rapid decay as $x \to +\infty$ and $x \to -\infty$, respectively.

\item The leading coefficients have the form
\begin{align}
  & C_{i}^{(-1)}(x)
=  \begin{pmatrix}
\alpha_{i}(x) & \alpha_{i}(x) & \beta_{i}(x) \\
-\alpha_{i}(x) & -\alpha_{i}(x) & -\beta_{i}(x) \\
0 & 0 & 0
\end{pmatrix}, \label{Cjpm1p} \\
& C_{i}^{(0)}(x) = - I + \begin{pmatrix}
\gamma_{i,3}(x) & \gamma_{i,4}(x) & \gamma_{i,5}(x) \\
\gamma_{i,4}(x)+(-1)^{i}\alpha_{i}(x) & \gamma_{i,3}(x)+(-1)^{i}\alpha_{i}(x) & \gamma_{i,5}(x)+(-1)^{i}\beta_{i}(x) \\
\gamma_{i,1}(x) & \gamma_{i,1}(x) & \gamma_{i,2}(x)
\end{pmatrix}, \label{Cjp0p}
\end{align}
where $\alpha_{i}(x)$, $\beta_{i}(x)$, $\gamma_{i,j}(x)$, $i = 1,2,3,4$, $j=1,\ldots,5$, are  complex-valued functions of $x \in \R$. Furthermore, the functions $\{\alpha_{i},\beta_{i},\gamma_{i,1},\gamma_{i,2}-1,\gamma_{i,3}-1,\gamma_{i,4},\gamma_{i,5}\}$ have rapid decay at $+\infty$ for $i=1,2$, and rapid decay at $-\infty$ for $i=3,4$.
\end{itemize}
\end{proposition}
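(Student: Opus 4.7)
My approach would follow the strategy underlying \cite[Proposition 3.3]{CLgoodboussinesq}: expand the Volterra equations \eqref{XYdef} as Laurent series in the local variable $k \mp 1$ near each singularity. Since $\det P(k) = c(1-k^{6})/k^{3}$ vanishes simply at $k = \pm 1$ and $P(\pm 1)$ has rank two (its first two columns coincide because $l_{1}(\pm 1) = l_{2}(\pm 1)$), the inverse $P(k)^{-1}$ has a simple pole at $k = \pm 1$ whose residue is a rank-one matrix with column space equal to $\ker P(\pm 1) = \mathrm{span}\{(1,-1,0)^{T}\}$. Consequently $\mathsf{U}(x,k) = P(k)^{-1} U_{0}(x) P(k)$ admits a Laurent expansion $\mathsf{U}(x,k) = \mathsf{U}^{(-1)}_{\pm}(x)/(k \mp 1) + \mathsf{U}^{(0)}_{\pm}(x) + \cdots$ in which $\mathsf{U}^{(-1)}_{\pm}$ has precisely the structure displayed in \eqref{Cjpm1p}: first two columns equal, rows one and two negatives of each other, third row zero. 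Crucially, $\mathcal{L}(k)$ itself remains smooth at $k = \pm 1$ despite the coincidence of its eigenvalues, so $e^{(x-x')\widehat{\mathcal{L}(k)}}$ admits an ordinary Taylor expansion in $k \mp 1$. A direct calculation then shows that left multiplication by $e^{(x-x')\widehat{\mathcal{L}(1)}}$ preserves the three structural features above (equal first two columns, opposite first two rows, zero third row), because $[\mathcal{L}(1), \cdot]$ annihilates exactly the $(1,2)$ and $(2,1)$ entries.

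Inserting the ansatz $X(x,k) = I + \sum_{l \geq -1} C_{1}^{(l)}(x)(k-1)^{l}$ into \eqref{XYdefa} and equating like powers of $k - 1$, the coefficient of $(k-1)^{-2}$ yields the algebraic consistency identity $\mathsf{U}^{(-1)}_{+}(x)\, C_{1}^{(-1)}(x) = 0$, which forces the columns of $C_{1}^{(-1)}(x)$ into $\ker \mathsf{U}^{(-1)}_{+}(x)$. Combined with the $(k-1)^{-1}$ equation
$$C_{1}^{(-1)}(x) = -\int_{x}^{\infty} e^{(x-x')\widehat{\mathcal{L}(1)}} \Bigl[ \mathsf{U}^{(-1)}_{+}(x') + \mathsf{U}^{(0)}_{+}(x') C_{1}^{(-1)}(x') \Bigr] dx',$$
which is a linear Volterra equation whose source and kernel preserve the structure identified above, a Neumann series argument produces a unique solution $C_{1}^{(-1)}$ of the form \eqref{Cjpm1p}, with $\alpha_{1}(x), \beta_{1}(x)$ defined by explicit Volterra integrals exhibiting rapid decay at $+\infty$. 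The next-order equation determines $C_{1}^{(0)}$: the $-I$ term in \eqref{Cjp0p} arises from the $I$ on the right-hand side of \eqref{XYdefa}, while the repeated entries $\gamma_{1,3}, \gamma_{1,4}$ and the relations to $\alpha_{1}, \beta_{1}$ are forced by matching the one-term Taylor coefficients of $\mathsf{U}$ and of $e^{(x-x')\widehat{\mathcal{L}(k)}}$ together with the rank-one image structure. Once $C_{1}^{(-1)}, C_{1}^{(0)}$ are fixed, the higher coefficients $C_{1}^{(l)}$, $l \geq 1$, are obtained successively from triangular Volterra equations, with existence, smoothness, and rapid decay at $+\infty$ following from standard Neumann-series estimates as in Proposition \ref{XYprop}.

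The parallel analysis at $k = -1$ yields $C_{2}^{(l)}$, and the analysis of \eqref{XYdefb} near $k = \pm 1$ (normalized at $-\infty$) yields $C_{3}^{(l)}, C_{4}^{(l)}$ with rapid decay as $x \to -\infty$. The remainder bounds \eqref{Xat1}--\eqref{Yatm1} follow by carrying the ansatz one order further and controlling the tail uniformly on $|k \mp 1| \leq 1/2$, as in the argument used for $k \to \infty$ in Proposition \ref{XYprop2}. The main obstacle is the detailed algebraic bookkeeping needed to verify the fine internal structure of $C_{i}^{(0)}$ in \eqref{Cjp0p}: this requires computing the subleading term in the Laurent expansion of $P(k)^{-1}$ near $k = \pm 1$ explicitly and tracking how it interacts with $U_{0}(x)$, the rank-one residue $\mathsf{U}^{(-1)}_{\pm}$, and the first Taylor coefficient of the exponential, through the first two orders of the Volterra recursion.
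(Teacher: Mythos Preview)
The paper omits the proof of this proposition entirely, referring only to \cite[Proposition~3.3]{CLgoodboussinesq} for an analogous argument. Your approach is the natural one and is essentially what that reference does: expand $\mathsf{U}$ and $e^{(x-x')\widehat{\mathcal{L}(k)}}$ in Laurent/Taylor series at $k=\pm1$, substitute a formal Laurent ansatz for $X$ into the Volterra equation, and solve the resulting hierarchy. Your structural analysis of $\mathsf{U}^{(-1)}_{\pm}$ via the rank-one residue of $P(k)^{-1}$ is correct, and the observation that $e^{(x-x')\widehat{\mathcal{L}(1)}}$ acts entrywise and preserves the three structural features is the right mechanism.

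There is one genuine gap. The equation you display for $C_{1}^{(-1)}$ at order $(k-1)^{-1}$ is incomplete: the full coefficient of $(k-1)^{-1}$ in $e^{(x-x')\widehat{\mathcal{L}(k)}}\mathsf{U}X$ also contains the term $e^{(x-x')\widehat{\mathcal{L}(1)}}\mathsf{U}^{(-1)}_{+}(x')\,C_{1}^{(0)}(x')$, so $C_{1}^{(-1)}$ is coupled to $C_{1}^{(0)}$ and the equation is not closed as written. (The term involving $E_{1}\mathsf{U}^{(-1)}C^{(-1)}$ does vanish, by the $(k-1)^{-2}$ consistency condition $\mathsf{U}^{(-1)}C^{(-1)}=0$ that you correctly identified.) The fix is not difficult but does require care: since $\mathsf{U}^{(-1)}_{\pm}$ is nilpotent of order two (its image $\mathrm{span}\{(1,-1,0)^{T}\}$ lies in its own kernel), one can either solve the coupled system for $(C^{(-1)},C^{(0)})$ jointly as a block-triangular Volterra system, or first determine the combination $\mathsf{U}^{(-1)}_{+}(I+C^{(0)})$ from a closed scalar equation and then feed it back. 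Either way the hierarchy closes after finitely many steps and the structural form \eqref{Cjpm1p}--\eqref{Cjp0p} emerges; you should make this coupling explicit rather than suppress it.
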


\subsection{The spectral function $s(k)$}
The spectral function $s(k)$ was defined in \eqref{sdef intro}. The next proposition collects some properties of $s$. The proof is similar to that of \cite[Proposition 3.4]{CLgoodboussinesq}. 

\begin{proposition}\label{sprop}
Suppose $u_0,v_0 \in \mathcal{S}(\R)$. 
Then the spectral function $s(k)$ defined in \eqref{sdef intro} has the following properties:
\begin{enumerate}[$(a)$]
\item The entries of $s(k)$ are defined and continuous for $k$ in
\begin{align}\label{sdomainofdefinition}
 \begin{pmatrix}
 \omega^2 \hat{\mathcal{S}} & \hat{\Gamma}_{1} & \hat{\Gamma}_{3} \\
 \hat{\Gamma}_{1} & \omega \hat{\mathcal{S}} & \hat{\Gamma}_{5} \\
\hat{\Gamma}_{3} & \hat{\Gamma}_{5} & \hat{\mathcal{S}}
 \end{pmatrix}\setminus \hat{\mathcal{Q}},
\end{align}
that is, the $(11)$-entry of $s(k)$ is defined and continuous for $k \in \omega^2 \hat{\mathcal{S}}\setminus \hat{\mathcal{Q}}$, etc. 
 
\item The diagonal entries of $s(k)$  are analytic in the interior of their domains of definition as given in (\ref{sdomainofdefinition}). 
 
\item For $j = 1, 2, \dots$, the derivative $\partial_k^js(k)$ is well-defined and continuous for $k$ in (\ref{sdomainofdefinition}).

\item $s(k)$ obeys the symmetries
\begin{align}\label{symmetries of s}
&  s(k) = \mathcal{A} s(\omega k)\mathcal{A}^{-1} = \mathcal{B} s(k^{-1})\mathcal{B}.
\end{align}

\item $s(k)$ approaches the identity matrix as $k \to \infty$. More precisely, there are diagonal matrices $\{s_j\}_1^\infty$ such that, for any $N \geq 1$,
\begin{align*}\nonumber
\begin{cases}
\big|\partial_k^j \big(s(k) - I - \sum_{j=1}^N \frac{s_j}{k^j}\big)\big| = O(k^{-N-1}), & k \to \infty, \\
\big|\partial_k^j \big(s(k) - I - \sum_{j=1}^N \mathcal{B}s_j \mathcal{B}k^{j}\big)\big| = O(k^{N+1}), & k \to 0,
\end{cases}
\end{align*}
uniformly for $k$ as in \eqref{sdomainofdefinition} and $j = 0, 1, \dots, N$. In particular, the off-diagonal entries of $s(k)$ have rapid decay as $k \to \infty$ and as $k \to 0$.

\item For $k$ as in \eqref{sdomainofdefinition}, we have
\begin{align}
& s(k) = \frac{s_{1}^{(-1)}}{k-1} + s_{1}^{(0)} + s_{1}^{(1)}(k-1) + \cdots & & \mbox{as } k \to 1, \label{s at 1} \\
& s(k) = \frac{s_{-1}^{(-1)}}{k+1} + s_{-1}^{(0)} + s_{-1}^{(1)}(k+1) + \cdots & & \mbox{as } k \to -1, \label{s at -1}
\end{align}
where
\begin{align}
& s_{1}^{(-1)} = \begin{pmatrix}
\mathfrak{s}_{1} & \mathfrak{s}_{1} & \mathfrak{s}_{2} \\
-\mathfrak{s}_{1} & -\mathfrak{s}_{1} & -\mathfrak{s}_{2} \\
0 & 0 & 0
\end{pmatrix}, & & s_{-1}^{(-1)} = \begin{pmatrix}
\mathfrak{s}_{3} & \mathfrak{s}_{3} & \mathfrak{s}_{4} \\
-\mathfrak{s}_{3} & -\mathfrak{s}_{3} & -\mathfrak{s}_{4} \\
0 & 0 & 0
\end{pmatrix}, \label{s1pm1p and sm1pm1p} \\
& (s_{1}^{(0)})_{31} = (s_{1}^{(0)})_{32}, & & (s_{-1}^{(0)})_{31} = (s_{-1}^{(0)})_{32},
\end{align}
for certain constants $\mathfrak{s}_{1},\mathfrak{s}_{2},\mathfrak{s}_{3},\mathfrak{s}_{4} \in  \mathbb{C}$, and the expansions can be differentiated termwise any number of times.
\item If $u_0(x), v_0(x)$ have compact support, then $s(k)$ is defined and analytic for $k \in \C \setminus \hat{\mathcal{Q}}$, $\det s = 1$ for $k \in \C \setminus \hat{\mathcal{Q}}$, and
\begin{align}\label{XYs} 
X(x,k) = Y(x,k)e^{x\widehat{\mathcal{L}(k)}} s(k), \qquad k \in \C  \setminus \hat{\mathcal{Q}}.
\end{align}
\end{enumerate}
\end{proposition}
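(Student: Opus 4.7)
The plan is to derive each property of $s$ from the corresponding property of $X$ already established in Propositions \ref{XYprop}, \ref{XYprop2}, and \ref{XYat1prop}, together with direct analysis of the exponential factor appearing after expanding $e^{-x\widehat{\mathcal{L}(k)}}$. The starting point is the entrywise form of the defining formula (\ref{sdef intro}):
\begin{equation}\label{sijplan}
s_{ij}(k) = \delta_{ij} - \int_{\R} e^{-x(l_i(k)-l_j(k))}(\mathsf{U}X)_{ij}(x,k)\, dx,
\end{equation}
which shows that $s_{ij}$ depends only on the $j$-th column of $X$ and that the integral converges precisely when (i) $[X]_j$ is defined at $k$, and (ii) $\re(l_i(k)-l_j(k))=0$, so that the exponential factor is bounded on $\R$ and the $x$-Schwartz behavior of $\mathsf{U}$ guarantees integrability.

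For parts (a) and (c), I would describe the domain of $s_{ij}$ as the intersection of the column-$j$ domain from Proposition \ref{XYprop}(a) with the level set $\{\re(l_i-l_j)=0\}$; a direct computation identifies this intersection with the respective contour or region in (\ref{sdomainofdefinition}). Continuity and smoothness in $k$ follow by dominated convergence and differentiation under the integral sign, using the bounds (\ref{Xest}) uniformly on subsets at positive distance from $\mathcal{Q}$. For part (b), the integrand for $s_{ii}$ carries no exponential factor, so analyticity of $s_{ii}$ on the interior of its column-$i$ domain is inherited directly from that of $[X]_i$ in Proposition \ref{XYprop}(c).

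The symmetries in part (d) follow by conjugating (\ref{sijplan}) by $\mathcal{A}$ and $\mathcal{B}$, transporting the conjugation through $\mathsf{U}$ via (\ref{Acaldef})--(\ref{Bcaldef}) and through $e^{-x\widehat{\mathcal{L}}}$ via the corresponding symmetries of $\mathcal{L}$, then invoking the $X$-symmetries (\ref{XYsymm}) and changing variables $k\mapsto\omega^{-1}k$ or $k\mapsto k^{-1}$. For part (e), substituting the large-$k$ expansion of Proposition \ref{XYprop2} into (\ref{sijplan}) produces the diagonal coefficients $s_j$ term by term; rapid decay of off-diagonal entries comes from repeated integration by parts, each step converting $e^{-x(l_i-l_j)}$ back into itself while generating a factor $(l_i-l_j)^{-1}\sim k^{-1}$ together with derivatives of the $x$-Schwartz function $\mathsf{U}X$. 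The $k\to 0$ expansion then follows from the $\mathcal{B}$-symmetry $s(k)=\mathcal{B}s(k^{-1})\mathcal{B}$.

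The main technical step is part (f). Substituting the local expansions (\ref{Xat1})--(\ref{Xatm1}) of $X$ near $k=\pm 1$ into (\ref{sijplan}) and noting that $\mathsf{U}$ itself has simple poles at $\pm 1$, the integrand a priori carries a double pole. I would then verify that the specific form (\ref{Cjpm1p}) of $C_i^{(-1)}$, coupled with the residue structure of $\mathsf{U}$ inherited from (\ref{mathsfUdef intro}), forces the residues to cancel: the surviving simple-pole contribution has exactly the rank-one form (\ref{s1pm1p and sm1pm1p}), and the identities $(s_{\pm 1}^{(0)})_{31}=(s_{\pm 1}^{(0)})_{32}$ descend from the coincidence of the first two columns in the bottom row of (\ref{Cjp0p}). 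This algebraic cancellation of the double pole is the step I anticipate to be the main obstacle, as it requires pairing the explicit rank-one shape of $C_i^{(-1)}$ with the residue of $\mathsf{U}$. Finally, for part (g), the compact support hypothesis makes $X,Y$ entire off $\hat{\mathcal{Q}}$ by Proposition \ref{XYprop}(g), and $\det X=\det Y=1$ follows from $\tr\mathcal{L}=0$ together with $X(x,0)=Y(x,0)=I$; both $\hat{X}=Xe^{\mathcal{L}x}$ and $\hat{Y}=Ye^{\mathcal{L}x}$ solve the linear ODE $\phi_x=L\phi$ for each fixed $k$, hence differ by an $x$-independent matrix, which, identified via the telescoping identity $e^{-x\widehat{\mathcal{L}}}(\mathsf{U}X)=\tfrac{d}{dx}(e^{-x\widehat{\mathcal{L}}}X)$ and the $x\to+\infty$ asymptotics $X\to I$, is precisely the $s(k)$ defined in (\ref{sdef intro}); this yields (\ref{XYs}).
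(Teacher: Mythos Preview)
Your proposal is correct and follows the standard route; the paper itself simply refers to the analogous Proposition~3.4 of \cite{CLgoodboussinesq}, so there is no in-paper argument to contrast with yours.

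One suggestion for the step you flag as the main obstacle in part~(f): rather than multiplying out the residues of $\mathsf{U}$ and of $X$ and then hunting for a cancellation, take the $(k-1)^{-1}$ coefficient directly in the differential equation $X_x=[\mathcal{L},X]+\mathsf{U}X$ to obtain
\[
(\mathsf{U}X)^{(-1)}=(C_1^{(-1)})_x-[\mathcal{L}(1),C_1^{(-1)}].
\]
Since $l_1(1)=l_2(1)$, the commutator on the right visibly inherits the row structure of $C_1^{(-1)}$ in (\ref{Cjpm1p}) (second row equals minus the first, third row zero), and the would-be double pole never enters because $(\mathsf{U}X)^{(-1)}$ is already the full residue of $\mathsf{U}X$. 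Conjugation by $e^{-x\widehat{\mathcal{L}(1)}}$ preserves this structure for the same reason ($l_1(1)=l_2(1)$), and integrating in $x$ then yields (\ref{s1pm1p and sm1pm1p}) in one stroke; the identity $(s_1^{(0)})_{31}=(s_1^{(0)})_{32}$ follows by the same device applied to the next coefficient.
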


%\begin{remark}\upshape
%For the initial data in (\ref{numericscompactinitialdata}), a numerical calculation gives
%\begin{align*}
%& \mathfrak{s}_{1} \approx 0.101-0.0587 i, & & \mathfrak{s}_{2} \approx -0.117+0.17i, & & (s_{1}^{(0)})_{31} \approx -0.336+0.131i, & & (s_{1}^{(0)})_{33} \approx 0.933+0.405i, \\
%& \mathfrak{s}_{3} \approx 0.2-0.06 i, & & \mathfrak{s}_{4} \approx -0.0259+0.29i, & & (s_{-1}^{(0)})_{31} \approx 0.197-0.0339i, & & (s_{-1}^{(0)})_{33} \approx 0.936-0.291i,
%\end{align*}
%which is consistent with Assumption \ref{originassumption}.
%\end{remark}

\subsection{The eigenfunctions $X^A$ and $Y^A$} 
Let $X^A(x,k)$ and $Y^A(x,k)$ be the solutions of the linear Volterra integral equations
\begin{subequations}\label{XAYAdef}
\begin{align}  
 & X^A(x,k) = I + \int_x^{\infty} e^{-(x-x')\widehat{\mathcal{L}(k)}} (\mathsf{U}^T X^A)(x',k) dx',	
  	\\ 
 & Y^A(x,k) = I - \int_{-\infty}^x e^{-(x-x')\widehat{\mathcal{L}(k)}} (\mathsf{U}^T Y^A)(x',k) dx'.
\end{align}
\end{subequations}
Recall that $s^{A}$ was defined in \eqref{sAdef intro}. For compactly supported data $u_{0},v_{0}$, all entries of $X$, $X^{A}$, $Y$, $Y^{A}$, $s$, $s^{A}$ are well defined for all $k \in \mathbb{C}\setminus \hat{\mathcal{Q}}$, and we have $X^{A}=(X^{-1})^T$, $Y^{A}=(Y^{-1})^T$, and $s^{A}=(s^{-1})^T$. For general $u_{0},v_{0} \in \mathcal{S}(\mathbb{R})$, the properties of $X^{A}$, $Y^{A}$ are established in the following proposition.

\begin{proposition}\label{XAYAprop}
Suppose $u_0, v_0 \in \mathcal{S}(\R)$. 
Then the equations (\ref{XAYAdef}) uniquely define two $3 \times 3$-matrix valued functions $X^A$ and $Y^A$ with the following properties:
\begin{enumerate}[$(a)$]
\item The function $X^A(x, k)$ is defined for $x \in \R$ and $k \in (-\omega^2 \hat{\mathcal{S}}, -\omega \hat{\mathcal{S}}, -\hat{\mathcal{S}}) \setminus \hat{\mathcal{Q}}$. For each $k \in (-\omega^2 \hat{\mathcal{S}}, -\omega \hat{\mathcal{S}}, -\hat{\mathcal{S}}) \setminus \hat{\mathcal{Q}}$, $X^A(\cdot, k)$ is smooth and satisfies 
\begin{align}\label{xpartA}
(X^A)_x + [\mathcal{L}, X^A] = -\mathsf{U}^TX^A.
\end{align}

\item The function $Y^A(x, k)$ is defined for $x \in \R$ and $k \in (\omega^2 \hat{\mathcal{S}}, \omega \hat{\mathcal{S}}, \hat{\mathcal{S}}) \setminus \hat{\mathcal{Q}}$. For each $k \in (\omega^2 \hat{\mathcal{S}}, \omega \hat{\mathcal{S}}, \hat{\mathcal{S}}) \setminus \hat{\mathcal{Q}}$, $Y^A(\cdot, k)$ is smooth and satisfies (\ref{xpartA}).

\item For each $x \in \R$, $X^A(x,\cdot)$ is continuous for $k \in (-\omega^2 \hat{\mathcal{S}}, -\omega \hat{\mathcal{S}}, -\hat{\mathcal{S}})\setminus \mathcal{Q}$ and analytic for $k \in (-\omega^2 \mathcal{S}, -\omega \mathcal{S}, -\mathcal{S}) \setminus \hat{\mathcal{Q}}$, while $Y^A(x,\cdot)$ is continuous for $k \in (\omega^2 \hat{\mathcal{S}}, \omega \hat{\mathcal{S}}, \hat{\mathcal{S}})\setminus \mathcal{Q}$ and analytic for $k \in (\omega^2 \mathcal{S}, \omega \mathcal{S}, \mathcal{S}) \setminus \hat{\mathcal{Q}}$.

\item For each $x \in \R$ and each $j = 1, 2, \dots$, $\partial_k^j X^A(x, \cdot)$ is well defined for $(-\omega^2 \hat{\mathcal{S}}, -\omega \hat{\mathcal{S}}, -\hat{\mathcal{S}})\setminus \hat{\mathcal{Q}}$, while $\partial_k^j Y^A(x, \cdot)$ is well defined for $(\omega^2 \hat{\mathcal{S}}, \omega \hat{\mathcal{S}}, \hat{\mathcal{S}})\setminus \hat{\mathcal{Q}}$.

\item For each $n \geq 1$ and $\epsilon > 0$, there are bounded smooth positive functions $f_+(x)$ and $f_-(x)$ of $x \in \R$ with rapid decay as $x \to +\infty$ and $x \to -\infty$, respectively, such that the following estimates hold for $x \in \R$ and $j = 0, 1, \dots, n$:
\begin{align*}
& |\partial_k^j(X^A(x,k) - I) | \leq
f_+(x), \qquad k \in (-\omega^2 \hat{\mathcal{S}}, -\omega \hat{\mathcal{S}}, 
-\hat{\mathcal{S}}), \ \dist(k,\hat{\mathcal{Q}}) > \epsilon,
	\\ 
& |\partial_k^j(Y^A(x,k) - I) | \leq
f_-(x), \qquad k \in (\omega^2 \hat{\mathcal{S}}, \omega \hat{\mathcal{S}}, \hat{\mathcal{S}}), \ \dist(k,\hat{\mathcal{Q}}) > \epsilon.
\end{align*}

\item $X^A$ and $Y^A$ obey the following symmetries for each $x \in \R$:
\begin{align*}
&  X^A(x, k) = \mathcal{A} X^A(x,\omega k)\mathcal{A}^{-1} = \mathcal{B} X^A(x,k^{-1})\mathcal{B}, \qquad k \in (-\omega^2 \hat{\mathcal{S}}, -\omega \hat{\mathcal{S}}, -\hat{\mathcal{S}})\setminus \hat{\mathcal{Q}},
	\\\nonumber
&  Y^A(x, k) = \mathcal{A} Y^A(x,\omega k)\mathcal{A}^{-1} = \mathcal{B} Y^A(x,k^{-1})\mathcal{B}, \qquad k \in (\omega^2 \hat{\mathcal{S}}, \omega \hat{\mathcal{S}}, \hat{\mathcal{S}})\setminus \hat{\mathcal{Q}}.
\end{align*}
\item If $u_0(x), v_0(x)$ have compact support, then, for each  $x \in \R$,  $X^A(x, k)$ and $Y^A(x, k)$ are defined and analytic for $k \in \C \setminus \hat{\mathcal{Q}}$ and $\det X^A = \det Y^A = 1$.
\item $X$, $X^{A}$, $Y$ and $Y^{A}$ satisfy the following symmetries:
\begin{align}
& \overline{(X^{A})(x,\bar{k})} = \bigg\{ \frac{u_{0}(x)}{2}\begin{pmatrix}
1 & 1 & 1 \\
1 & 1 & 1 \\
1 & 1 & 1 
\end{pmatrix} + R(k)^{-1} \bigg\}X(x,k)R(k), \qquad k \in (\omega^2 \hat{\mathcal{S}}, \omega \hat{\mathcal{S}}, \hat{\mathcal{S}})\setminus \hat{\mathcal{Q}}, \label{XA X symmetry relation} 
	\\
& \overline{(Y^{A})(x,\bar{k})} = \bigg\{ \frac{u_{0}(x)}{2}\begin{pmatrix}
1 & 1 & 1 \\
1 & 1 & 1 \\
1 & 1 & 1 
\end{pmatrix} + R(k)^{-1} \bigg\}Y(x,k)R(k), \qquad k \in (-\omega^2 \hat{\mathcal{S}}, -\omega \hat{\mathcal{S}}, -\hat{\mathcal{S}})\setminus \hat{\mathcal{Q}},
\end{align}
where $R$ is given by
\begin{align}\label{def of R}
R(k) := -4k^{2} \begin{pmatrix}
0 & \frac{\omega}{(k^{2}-1)(k^{2}-\omega^{2})} & 0 \\
\frac{\omega^{2}}{(k^{2}-1)(k^{2}-\omega)} & 0 & 0 \\
0 & 0 & \frac{1}{(k^{2}-\omega)(k^{2}-\omega^{2})}
\end{pmatrix}.
\end{align}
\end{enumerate}
\end{proposition}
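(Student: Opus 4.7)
The plan is to handle parts (a)--(g) as cosmetic modifications of the arguments behind Proposition \ref{XYprop}, and then dedicate the real work to the symmetry identity in part (h). The Volterra equations \eqref{XAYAdef} differ from \eqref{XYdef} only through the sign reversal $e^{(x-x')\widehat{\mathcal{L}}}\mapsto e^{-(x-x')\widehat{\mathcal{L}}}$ and the replacement $\mathsf{U}\mapsto \mathsf{U}^{T}$. The sign reversal interchanges the half-planes where each off-diagonal exponential decays, so each column of $X^{A}$ (resp.\ $Y^{A}$) is defined on the sector opposite to the one used for the corresponding column of $X$ (resp.\ $Y$); this reversal is exactly why the domains in (a)--(b) appear as $-\omega^{?}\hat{\mathcal{S}}$ rather than $\omega^{?}\hat{\mathcal{S}}$. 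Transposition preserves Schwartz decay, the simple-pole structure of $\mathsf{U}$ at $\kappa_{j}$, and the $\mathbb{Z}_{3}$ and $\mathbb{Z}_{2}$ symmetries (since $\mathcal{A}^{T}=\mathcal{A}^{-1}$ and $\mathcal{B}^{T}=\mathcal{B}$, and conjugation of $\mathsf{U}$ by $\mathcal{A}$ transposes to conjugation of $\mathsf{U}^{T}$ by $\mathcal{A}^{-T}=\mathcal{A}$). With these two observations the contraction-mapping arguments, the $\partial_{k}^{j}$ estimates in (e), the symmetries in (f), and the compact-support analyticity statement in (g) all carry over verbatim.

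For part (h) I would prove the identity by showing that both sides satisfy the same linear ODE in $x$ with the same normalization at $x=+\infty$, then invoking uniqueness of the Volterra equation. Write $W(x,k):=\overline{X^{A}(x,\bar k)}$ and $\Phi(x,k):=\bigl\{\tfrac{u_{0}(x)}{2}\mathbf{1}+R(k)^{-1}\bigr\}X(x,k)R(k)$, where $\mathbf{1}$ denotes the all-ones $3\times 3$ matrix. From \eqref{xpartA}, $W$ satisfies
\begin{equation*}
W_{x}+[\overline{\mathcal{L}(\bar k)},W]=-\overline{\mathsf{U}^{T}(x,\bar k)}\,W.
\end{equation*}
A direct computation using $\overline{l_{j}(\bar k)}=-l_{-j}(k)$ gives the Schwartz-conjugation identities
\begin{equation*}
\overline{\mathcal{L}(\bar k)}=-\mathcal{B}\mathcal{L}(k)\mathcal{B}^{-1},\qquad \overline{P(\bar k)}=DP(k)\mathcal{B},\qquad D:=\diag(1,-1,1).
\end{equation*}
Using these together with the reality of $u_{0},v_{0}$ and the factorization $\mathsf{U}=P^{-1}NP$ from \eqref{mathsfUdef intro}, the ODE for $W$ transforms, after conjugation by $R(k)$ and subtraction of the $\tfrac{u_{0}}{2}\mathbf{1}$ correction, into \eqref{xpart}. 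Equivalently, $\Phi$ satisfies \eqref{xpart}. The boundary condition matches as $x\to+\infty$: $u_{0}(x)\to 0$ and $X(x,k)\to I$ give $\Phi\to R(k)^{-1}R(k)=I$, which coincides with the Volterra normalization $W\to I$. Uniqueness then yields $W\equiv\Phi$, which is \eqref{XA X symmetry relation}. The analogous argument normalized at $x=-\infty$ gives the formula for $Y^{A}$.

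The main obstacle is the algebraic verification that conjugation by $R(k)$, once augmented by the $\tfrac{u_{0}}{2}\mathbf{1}$ correction, converts $-\overline{\mathsf{U}^{T}(x,\bar k)}$ into $\mathsf{U}(x,k)$. This is delicate because $R(k)$ in \eqref{def of R} mixes the sixth roots of unity in an asymmetric way, while Schwartz conjugation both swaps the first two slots (via $\mathcal{B}$) and flips a sign (via $D$); the need for the additive correction $\tfrac{u_{0}}{2}\mathbf{1}$ arises precisely because $\mathbf{1}$ lies in the kernel of $\widehat{\mathcal{L}}$ and the naive conjugation of $\overline{\mathsf{U}^{T}(\bar k)}$ by $R(k)$ produces $\mathsf{U}(k)$ only modulo a rank-one piece absorbed by this term. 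Concretely, one expands
\begin{equation*}
\overline{\mathsf{U}^{T}(x,\bar k)}=(DP(k)\mathcal{B})^{T}\,\overline{N(x)}^{T}\,(DP(k)\mathcal{B})^{-T},
\end{equation*}
and simplifies using $\mathcal{B}^{T}=\mathcal{B}$, $D^{2}=I$, the explicit factorizations $(k^{2}-1)(k^{2}-\omega^{j})$ appearing in $\det P(k)$, and the structure of $N$. Once this identity is checked, everything else in the proof is routine.
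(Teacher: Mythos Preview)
Your handling of (a)--(g) matches the paper's one-line reduction to Proposition~\ref{XYprop}. For (h), however, the claim ``Equivalently, $\Phi$ satisfies \eqref{xpart}'' is false: $\Phi=\{\tfrac{u_0}{2}\mathbf{1}+R^{-1}\}XR$ carries an $x$-dependent prefactor, and since $[\mathcal{L},\mathbf{1}]\neq 0$ it cannot obey the same commutator equation as $X$. What your argument actually requires is that $\Phi$ satisfy the \emph{same} equation as $W$, namely $W_x+[\overline{\mathcal{L}(\bar k)},W]=-\overline{\mathsf{U}^T(\bar k)}\,W$; equivalently, with $Q:=\tfrac{u_0}{2}\mathbf{1}+R^{-1}$, you need the intertwining identity $\big(-\overline{\mathcal{L}(\bar k)}-\overline{\mathsf{U}^T(\bar k)}\big)Q-Q_x=Q\big(\mathcal{L}+\mathsf{U}\big)$ together with $R\,\overline{\mathcal{L}(\bar k)}\,R^{-1}=-\mathcal{L}$. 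You correctly flag the former as the ``main obstacle'' but route it through the wrong target equation and never verify it, so as written the argument does not close.

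The paper sidesteps this bookkeeping by working one level up, with $\tilde X=PXe^{\mathcal{L}x}$ and $\overline{\tilde X^{A}(\bar k)}=\overline{P^{A}(\bar k)}\,\overline{X^{A}(\bar k)}\,e^{-\overline{\mathcal{L}(\bar k)}x}$, which satisfy $\tilde X_x=\tilde L\tilde X$ and $\overline{\tilde X^{A}_x(\bar k)}=-\overline{\tilde L^T(\bar k)}\,\overline{\tilde X^{A}(\bar k)}$. Because $\tilde L$ is in companion form (only its bottom row is nontrivial), stacking the third row of $\overline{\tilde X^{A}(\bar k)}$ with its first two $x$-derivatives produces a matrix $\mathfrak{X}$, and the verification that $\mathfrak{X}_x=\tilde L\,\mathfrak{X}$ reduces to the two scalar identities $\overline{\tilde L(x,\bar k)}_{32}=\tilde L(x,k)_{32}$ and $\partial_x\overline{\tilde L(x,\bar k)}_{32}=\tilde L(x,k)_{31}+\overline{\tilde L(x,\bar k)}_{31}$, both immediate from the explicit form of $\tilde L$ and the reality of $u_0,v_0$. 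Then $\mathfrak{X}=\tilde X\,R(k)$ for some $x$-independent $R$, and sending $x\to+\infty$ \emph{computes} $R(k)$ rather than taking it as given; undoing the $P$-conjugation yields \eqref{XA X symmetry relation}. Your route can be salvaged by fixing the target equation and actually carrying out the intertwining check, but the paper's detour through $\tilde X$ replaces that matrix identity with two one-line scalar checks and derives $R$ for free.
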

\begin{proof}
Assertions $(a)$--$(g)$ follow in the same way as the analogous assertions of Proposition \ref{XYprop}. 
To prove $(h)$, we define, for $k \in (\omega^2 \hat{\mathcal{S}}, \omega \hat{\mathcal{S}}, \hat{\mathcal{S}})\setminus \hat{\mathcal{Q}}$, 
\begin{align*}
\tilde{X}(x,k) := P(k)X(x,k)e^{\mathcal{L}(k)x}, \qquad \overline{\tilde{X}^{A}(x,\bar{k})} := \overline{P^{A}(\bar{k})} \, \overline{X^{A}(x,\bar{k})}e^{-\overline{\mathcal{L}(\bar{k})}x}, \qquad P^A(k) := (P(k)^{-1})^T.
\end{align*}
The functions $\tilde{X}$ and $\tilde{X}^{A}$ satisfy the equations
\begin{align*}
\begin{cases}
\tilde{X}_{x}(x,k) = \tilde{L}(x,k)\tilde{X}(x,k), & \lim_{x \to + \infty}\tilde{X}(x,k) \big( P(k)e^{\mathcal{L}(k)x} \big)^{-1} = I, \\
\overline{\tilde{X}_{x}^{A}(x,\bar{k})} = -\overline{\tilde{L}^{T}(x,\bar{k})}\,\overline{\tilde{X}^{A}(x,\bar{k})}, & \lim_{x \to + \infty} \overline{\tilde{X}^{A}(x,\bar{k})} \big( \overline{P^{A}(\bar{k})}e^{-\overline{\mathcal{L}(\bar{k})}x} \big)^{-1} = I.
\end{cases}
\end{align*}
We also define
\begin{align*}
\mathfrak{X}(x,k) := \overline{\begin{pmatrix}
\tilde{X}^{A}_{31} & \tilde{X}^{A}_{32} & \tilde{X}^{A}_{33} \\
(\tilde{X}^{A}_{31})' & (\tilde{X}^{A}_{32})' & (\tilde{X}^{A}_{33})' \\
(\tilde{X}^{A}_{31})'' & (\tilde{X}^{A}_{32})'' & \tilde{X}^{A}_{33})''
\end{pmatrix}(x,\bar{k})} = \begin{pmatrix}
0 & 0 & 1 \\
0 & -1 & 0 \\
1 & 0 & \overline{\tilde{L}(x,\bar{k})}_{32}
\end{pmatrix}\overline{\tilde{X}^{A}(x,\bar{k})}.
\end{align*}
Using the relations
\begin{align*}
\overline{\tilde{L}(x,\bar{k})}_{32} = \tilde{L}(x,k)_{32}, \qquad \partial_{x}\overline{\tilde{L}(x,\bar{k})}_{32} = \tilde{L}(x,k)_{31}+\overline{\tilde{L}(x,\bar{k})}_{31},
\end{align*}
a direct computation shows that $\mathfrak{X}_{x}(x,k) = \tilde{L}(x,k)\mathfrak{X}(x,k)$.
Therefore, there exists a matrix $R(k)$, independent of $x$, such that $\mathfrak{X}(x,k) = \tilde{X}(x,k)R(k)$. Letting $x \to + \infty$, we obtain
\begin{align*}
R(k) & = \lim_{x \to + \infty}\tilde{X}(x,k)^{-1}\mathfrak{X}(x,k) 
 = \lim_{x \to + \infty} \big(P(k)e^{\mathcal{L}(k)x}\big)^{-1}\begin{pmatrix}
0 & 0 & 1 \\
0 & -1 & 0 \\
1 & 0 & -\frac{1}{4}
\end{pmatrix}\overline{P^{A}(\bar{k})}e^{-\overline{\mathcal{L}(\bar{k})}x},
\end{align*}
and we find \eqref{def of R}. We have proved that
\begin{align*}
\overline{\tilde{X}^{A}(x,\bar{k})} = \begin{pmatrix}
0 & 0 & 1 \\
0 & -1 & 0 \\
1 & 0 & -\frac{1+2u_{0}(x)}{4}
\end{pmatrix}^{-1}\tilde{X}(x,k)R(k),
\end{align*}
or equivalently that
\begin{align*}
\overline{P^{A}(\bar{k})} \, \overline{X^{A}(x,\bar{k})}e^{-\overline{\mathcal{L}(\bar{k})}x} = \begin{pmatrix}
0 & 0 & 1 \\
0 & -1 & 0 \\
1 & 0 & -\frac{1+2u_{0}(x)}{4}
\end{pmatrix}^{-1}P(k)X(x,k)e^{\mathcal{L}(k)x}R(k).
\end{align*}
Finally, straightforward calculations show that $e^{\mathcal{L}(k)x}R(k)e^{\overline{\mathcal{L}(\bar{k})}x} = R(k)$ and 
\begin{align*}
\overline{P^{A}(\bar{k})}^{-1}\begin{pmatrix}
0 & 0 & 1 \\
0 & -1 & 0 \\
1 & 0 & -\frac{1+2u_{0}(x)}{4}
\end{pmatrix}^{-1}P(k) = \frac{u_{0}(x)}{2}\begin{pmatrix}
1 & 1 & 1 \\
1 & 1 & 1 \\
1 & 1 & 1 
\end{pmatrix} + R(k)^{-1}
\end{align*}
which finishes the proof.
\end{proof}

The following two propositions establish the behavior of $X^A$ and $Y^A$ as $k \to \infty$ (Proposition \ref{XAYAprop2}) and as $k \to \pm 1$ (Proposition \ref{XAYAat1prop}). The proofs are analogous to the proofs of Propositions \ref{XYprop2} and \ref{XYat1prop}.

\begin{proposition}\label{XAYAprop2}
Suppose $u_0, v_0 \in \mathcal{S}(\R)$. 
As $k \to \infty$, $X^A$ and $Y^A$ coincide to all orders with $X^A_{formal}$ and $Y^A_{formal}$, respectively. More precisely, let $p \geq 1$ be an integer and let $X^A_{(p)}(x,k)$ and $Y^A_{(p)}(x,k)$ be the cofactor matrices of the functions in (\ref{Xpdef}). Then, for each integer $j \geq 0$,
\begin{subequations}\label{XAasymptotics}
\begin{align}\label{XAasymptoticsa}
& \bigg|\frac{\partial^j}{\partial k^j}\big(X^A - X^A_{(p)}\big) \bigg| \leq
\frac{f_+(x)}{|k|^{p+1}}, \qquad x \in \R, \  k \in (-\omega^2 \bar{\mathcal{S}}, -\omega \bar{\mathcal{S}}, -\bar{\mathcal{S}}), \ |k| \geq 2,
	\\ \label{XAasymptoticsb}
& \bigg|\frac{\partial^j}{\partial k^j}\big(Y^A - Y^A_{(p)}\big) \bigg| \leq
\frac{f_-(x)}{|k|^{p+1}}, \qquad x \in \R, \  k \in (\omega^2 \bar{\mathcal{S}}, \omega \bar{\mathcal{S}}, \bar{\mathcal{S}}), \ |k| \geq 2,
\end{align}
\end{subequations}
where $f_+(x)$ and $f_-(x)$ are bounded smooth positive functions of $x \in \R$ with rapid decay as $x \to +\infty$ and $x \to -\infty$, respectively.
\end{proposition}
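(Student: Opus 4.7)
The plan is to mirror the argument for Proposition \ref{XYprop2} but adapted to the adjoint problem \eqref{xpartA}. First I would construct the formal power series solutions
$$X^A_{formal}(x,k) = I + \sum_{j \geq 1} \frac{X^A_j(x)}{k^j}, \qquad Y^A_{formal}(x,k) = I + \sum_{j \geq 1} \frac{Y^A_j(x)}{k^j}$$
of $(X^A)_x + [\mathcal{L}, X^A] = -\mathsf{U}^T X^A$, normalized to vanish at $+\infty$ (respectively $-\infty$). The diagonal/off-diagonal splitting of this equation yields recursion relations of the same type as \eqref{xrecursive}, from which the coefficients $X^A_j(x),\,Y^A_j(x)$ are uniquely determined and are smooth with rapid decay at $+\infty$ (respectively $-\infty$).

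The key algebraic observation is that these formal coefficients agree with those of the cofactor matrices of $X_{formal}$ and $Y_{formal}$. I would verify this as follows: for compactly supported data, $\det X \equiv 1$ by Proposition \ref{XYprop}$(g)$, so $X^A = \operatorname{cof}(X)$ identically in $k$, and the formal expansions match term-by-term. For general Schwartz data the formal coefficients are obtained by the same local recursion (which only involves the Taylor coefficients of $\mathsf{U}$ at $k=\infty$, not the global behavior), so the identity $X^A_j = [\operatorname{cof}(X_{(p)})]_j$ for $j \leq p$ persists. Thus $X^A_{(p)}$ in the statement --- defined as $\operatorname{cof}(X_{(p)})$ --- captures exactly the first $p$ terms of $X^A_{formal}$.

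With this identification in hand, the error estimates \eqref{XAasymptotics} follow from the Volterra equations \eqref{XAYAdef} by a bootstrap exactly parallel to the one used in the proof of Proposition \ref{XYprop2}. Concretely, writing $R^A_p(x,k) := X^A(x,k) - X^A_{(p)}(x,k)$, one substitutes into \eqref{XAYAdef} and uses that $X^A_{(p)}$ satisfies the differential equation up to an error of order $k^{-(p+1)}$ with coefficients decaying rapidly at $+\infty$; the resulting Volterra equation for $R^A_p$ on $[x,\infty)$ is solved by iteration, with the exponential factor $e^{-(x-x')\widehat{\mathcal{L}(k)}}$ bounded uniformly on the relevant sectors $(-\omega^2\bar{\mathcal{S}}, -\omega \bar{\mathcal{S}}, -\bar{\mathcal{S}})$ (this is precisely why the column-wise domains are chosen as they are). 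Differentiation in $k$ is handled by differentiating the Volterra equation and absorbing the lower-order derivatives into the right-hand side, each differentiation costing only an extra factor of $f_+(x)$. The argument for $Y^A$ is symmetric, integrating from $-\infty$ instead.

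The main technical obstacle I anticipate is uniformity near the singularities $\mathcal{Q}$: the sectors $-\omega^{2-j}\bar{\mathcal{S}}$ touch the unit circle, where the exponentials $e^{(l_i-l_j)(x'-x)}$ cease to decay in the correct $x'$-direction at isolated points, and where $\mathsf{U}$ has simple poles at the sixth roots of unity. However, the restriction $|k|\geq 2$ in the statement places us well away from $\mathcal{Q}$, so on that region the Volterra kernels are uniformly exponentially controlled in $x'-x$ and the iteration converges geometrically. This matches exactly the geometric situation in the proof of Proposition \ref{XYprop2}, so no genuinely new estimate is needed beyond transposing the one there.
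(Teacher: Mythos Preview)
Your proposal is correct and follows essentially the same route as the paper, which simply states that the proof is analogous to that of Proposition~\ref{XYprop2}. Your additional remark explaining why the formal coefficients of $X^A_{formal}$ coincide with those of $\operatorname{cof}(X_{(p)})$ is a useful clarification of why the statement defines $X^A_{(p)}$ as a cofactor matrix, though the paper leaves this implicit.
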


\begin{proposition}\label{XAYAat1prop}
Suppose $u_0, v_0 \in \mathcal{S}(\R)$ and let $p \geq 0$ be an integer. 
Then there are $3 \times 3$-matrix valued functions $D_i^{(l)}(x)$, $i = 1,2,3,4$, $l = -1,0, \dots, p$, with the following properties:
\begin{itemize}
\item For $x \in \R$ and $k \in (-\omega^2 \bar{\mathcal{S}}, -\omega \bar{\mathcal{S}}, -\bar{\mathcal{S}})$, the function $X^A$ satisfies
\begin{subequations}\label{XAYAat1}
\begin{align}\label{XAYAat1a}
& \bigg|\frac{\partial^j}{\partial k^j}\big(X^A - I - \sum_{l=-1}^p D_1^{(l)}(x)(k-1)^l\big) \bigg| \leq
f_+(x)|k-1|^{p+1-j}, \qquad |k-1| \leq \frac{1}{2}, \\
& \bigg|\frac{\partial^j}{\partial k^j}\big(X^A - I - \sum_{l=-1}^p D_2^{(l)}(x)(k+1)^l\big) \bigg| \leq
f_+(x)|k+1|^{p+1-j}, \qquad |k+1| \leq \frac{1}{2},
\end{align}
while, for $x \in \R$ and $k \in (\omega^2 \bar{\mathcal{S}}, \omega \bar{\mathcal{S}}, \bar{\mathcal{S}})$, the function $Y^{A}$ satisfies
\begin{align}
& \bigg|\frac{\partial^j}{\partial k^j}\big(Y^A - I - \sum_{l=-1}^p D_3^{(l)}(x)(k-1)^l\big) \bigg| \leq
f_-(x)|k-1|^{p+1-j}, \qquad |k-1| \leq \frac{1}{2}, \\
& \bigg|\frac{\partial^j}{\partial k^j}\big(Y^A - I - \sum_{l=-1}^p D_4^{(l)}(x)(k+1)^l\big) \bigg| \leq
f_-(x)|k+1|^{p+1-j}, \qquad |k+1| \leq \frac{1}{2},
\end{align}
\end{subequations}
where $f_+(x)$ and $f_-(x)$ are smooth positive functions of $x \in \R$ with rapid decay as $x \to +\infty$ and $x \to -\infty$, respectively, and $j \geq 0$ is any integer.

\item For each $l\geq -1$, $D_1^{(l)}(x),D_2^{(l)}(x)$ and $D_3^{(l)}(x),D_4^{(l)}(x)$ are smooth functions of $x \in \R$ which have rapid decay as $x \to +\infty$ and $x \to -\infty$, respectively.

\item The leading coefficients have the form
\begin{align}
D_{i}^{(-1)}(x)
= & \;
\begin{pmatrix}
\tilde{\alpha}_{i}(x) & -\tilde{\alpha}_{i}(x) & 0 \\
\tilde{\alpha}_{i}(x) & -\tilde{\alpha}_{i}(x) & 0 \\
\tilde{\beta}_{i}(x) & -\tilde{\beta}_{i}(x) & 0
\end{pmatrix}, \label{Djpm1p} \\
D_{i}^{(0)}(x) = & \; -I+ \begin{pmatrix}
\tilde{\gamma}_{i,3}(x) & \tilde{\gamma}_{i,4}(x)+(-1)^{i}\tilde{\alpha}_{i}(x) & \tilde{\gamma}_{i,1}(x) \\
\tilde{\gamma}_{i,4}(x) & \tilde{\gamma}_{i,3}(x)+(-1)^{i}\tilde{\alpha}_{i}(x) & \tilde{\gamma}_{i,1}(x) \\
\tilde{\gamma}_{i,5}(x) & \tilde{\gamma}_{i,5}(x)+(-1)^{i}\tilde{\beta}_{i}(x) & \tilde{\gamma}_{i,2}(x)
\end{pmatrix}  \label{Djp0p}
\end{align}
where $\tilde{\alpha}_i(x)$, $\tilde{\beta}_i(x)$, $\tilde{\gamma}_{i,j}(x)$, $i = 1,2,3,4$, $j=1,2,3,4,5$ are complex-valued functions of $x \in \R$. Furthermore, the functions $\{\tilde{\alpha}_{i},\tilde{\beta}_{i},\tilde{\gamma}_{i,1},\tilde{\gamma}_{i,2}-1,\tilde{\gamma}_{i,3}-1,\tilde{\gamma}_{i,4},\tilde{\gamma}_{i,5}\}$ have rapid decay at $+\infty$ for $i=1,2$, and rapid decay at $-\infty$ for $i=3,4$.
\end{itemize}
\end{proposition}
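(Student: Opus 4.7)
The plan is to establish the Laurent expansions of $X^A$ and $Y^A$ at $k = \pm 1$ by a direct analysis of the Volterra integral equations \eqref{XAYAdef}, following the same playbook as the proof of Proposition \ref{XYat1prop}. The crucial input is that $\mathsf{U}$, and hence $\mathsf{U}^T$, has simple poles at $k = \pm 1$, as noted in the discussion of the pole structure of $\mathsf{U}$ and $\mathsf{V}$. Explicitly inverting $P(k)$ using $\det P(k) = i(\omega^{2}-\omega)(1-k^{6})/(8\sqrt{3}k^{3})$, we find that at $k=\pm 1$ the matrix $P^{-1}(k)$ acquires a simple pole with a rank-one residue, so $\mathsf{U}^T(x,k) = (k\mp 1)^{-1} U^T_{\pm 1,-1}(x) + U^T_{\pm 1,0}(x) + \cdots$, where all coefficients are Schwartz functions of $x$ with a rank structure that can be read off from $P(\pm 1)$ and the specific form of the middle matrix in \eqref{mathsfUdef intro}.

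First I would substitute the formal ansatz $X^A(x,k) = I + \sum_{l=-1}^{p} D_1^{(l)}(x)(k-1)^l + \cdots$ into \eqref{XAYAdef} and match powers of $(k-1)$ against the pole expansion of $\mathsf{U}^T$. The $(k-1)^{-1}$ relation reduces to a first-order linear ODE (with a source term determined by $U^T_{1,-1}$) whose unique solution decaying at $+\infty$ is given by an explicit integral in $x$; the rank-one structure of $U^T_{1,-1}$ then forces $D_1^{(-1)}$ into the form \eqref{Djpm1p} with explicit $\tilde{\alpha}_1, \tilde{\beta}_1$ expressible as integrals of quantities built from $u_0,v_0$. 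Matching the $(k-1)^0$ term gives $D_1^{(0)}$; the $-I$ contribution in \eqref{Djp0p} comes from the normalization $X^A \to I$ at $x=+\infty$, and the remaining structured matrix is determined by the interaction of $U^T_{1,-1}$ with $D_1^{(-1)}$ and by $U^T_{1,0}$. Higher $D_1^{(l)}$ are obtained inductively. The expansions at $k=-1$ yield $D_2^{(l)}$ by the same procedure (using the residue of $\mathsf{U}^T$ at $k=-1$), while those for $Y^A$ give $D_3^{(l)}, D_4^{(l)}$ by replacing integration from $+\infty$ with integration from $-\infty$, which is precisely why the decay of $D_1^{(l)}, D_2^{(l)}$ is at $+\infty$ and that of $D_3^{(l)}, D_4^{(l)}$ is at $-\infty$.

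To justify the expansion rigorously with the stated bounds \eqref{XAYAat1a}, I would define the remainder $R_{j,p}(x,k) := X^A(x,k) - I - \sum_{l=-1}^{p} D_j^{(l)}(x)(k \mp 1)^l$ and plug it into \eqref{XAYAdef}. This yields a Volterra integral equation for $R_{j,p}$ whose inhomogeneous term is $O(|k\mp 1|^{p+1})$ (uniformly on $|k\mp 1|\le 1/2$ with a Schwartz-decaying $x$-dependent factor), and a standard Neumann series argument bounds $R_{j,p}$ by $f_\pm(x)|k\mp 1|^{p+1}$. Differentiating the Volterra equation in $k$ a bounded number of times and repeating the Neumann estimate gives the $j$-fold derivative bounds in \eqref{XAYAat1a}–\eqref{XAYAat1}. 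Smoothness and rapid decay of each $D_i^{(l)}$ follow by induction from the smoothness and Schwartz decay of $u_0, v_0$ together with the explicit ODEs they satisfy.

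The main obstacle is the algebraic one: verifying that the leading residues and the next-order coefficients have precisely the structures \eqref{Djpm1p}–\eqref{Djp0p}. This requires computing the Laurent expansion of $P(k)^{-1}$ at $k = \pm 1$ (where a single eigenvalue coincidence produces a rank-one kernel), tracking how this combines with the middle factor in \eqref{mathsfUdef intro} to fix the residue of $\mathsf{U}^T$, and then chasing these patterns through the recursion. Once the patterns are identified, the symmetries of the problem (in particular the role of the $\mathcal{B}$-symmetry near the fixed points $k = \pm 1$ and the transpose relating $X^A$ to $X$) make the persistence of this structure at all orders transparent, exactly as in Proposition \ref{XYat1prop}.
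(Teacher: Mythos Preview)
Your proposal is correct and follows essentially the same approach as the paper: the paper simply states that the proof is analogous to that of Proposition \ref{XYat1prop} (which is itself omitted, with a reference to \cite{CLgoodboussinesq}), and your direct analysis of the Volterra equations \eqref{XAYAdef} via formal Laurent expansion, recursive determination of the $D_i^{(l)}$, and remainder estimates is precisely that analogous argument. The only minor remark is that the symmetry \eqref{XA X symmetry relation} you mention at the end could in principle provide a shortcut (deducing the structure of the $D_i^{(l)}$ from the already-known $C_i^{(l)}$ of Proposition \ref{XYat1prop}), but neither you nor the paper pursue that route in detail.
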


\subsection{The spectral function $s^A(k)$}

\begin{proposition}\label{sAprop}
Suppose $u_0,v_0 \in \mathcal{S}(\R)$. 
Then the spectral function $s^A(k)$ defined in (\ref{sAdef intro}) has the following properties:
\begin{enumerate}[$(a)$]
\item $s^A(k)$ is defined and continuous for $k$ in
\begin{align}\label{sAdomainofdefinition}
 \begin{pmatrix}
 -\omega^2 \hat{\mathcal{S}} & \hat{\Gamma}_{4} & \hat{\Gamma}_{6} \\
\hat{\Gamma}_{4} & -\omega \hat{\mathcal{S}} & \hat{\Gamma}_{2} \\
\hat{\Gamma}_{6} & \hat{\Gamma}_{2} & -\hat{\mathcal{S}}
 \end{pmatrix}\setminus \hat{\mathcal{Q}}.
\end{align}
 
\item The diagonal entries of $s^A(k)$  are analytic in the interior of their domains of definition as given in (\ref{sAdomainofdefinition}). 
 
\item For $j = 1, 2, \dots$, the derivative $\partial_k^js^A(k)$ is well-defined and continuous for $k$ in (\ref{sAdomainofdefinition}).

\item $s^A(k)$ obeys the symmetries
\begin{align}\label{symmetries of sA}
&  s^A(k) = \mathcal{A} s^A(\omega k)\mathcal{A}^{-1} = \mathcal{B} s^A(k^{-1})\mathcal{B}.
\end{align}

\item $s^{A}(k)$ approaches the identity matrix as $k \to \infty$. More precisely, there are diagonal matrices $\{s^{A}_j\}_1^\infty$ such that, for any $N \geq 1$,
\begin{align*}\nonumber
\begin{cases}
\big|\partial_k^j \big(s^{A}(k) - I - \sum_{j=1}^N \frac{s^{A}_j}{k^j}\big)\big| = O(k^{-N-1}), & k \to \infty,
\\
\big|\partial_k^j \big(s^{A}(k) - I - \sum_{j=1}^N \mathcal{B}s^{A}_j\mathcal{B}k^{j}\big)\big| = O(k^{N+1}), & k \to 0,
\end{cases}
\end{align*}
uniformly for $k$ as in \eqref{sAdomainofdefinition} and $j = 0, 1, \dots, N$. In particular, the off-diagonal entries of $s^{A}(k)$ have rapid decay as $k \to \infty$ and as $k \to 0$.

\item For $k$ as in \eqref{sAdomainofdefinition}, we have
\begin{align}
& s^{A}(k) = \frac{s_{1}^{A(-1)}}{k-1} + s_{1}^{A(0)} + s_{1}^{A(1)}(k-1) + \dots & & \mbox{as } k \to 1, \label{sA at 1} \\
& s^{A}(k) = \frac{s_{-1}^{A(-1)}}{k+1} + s_{-1}^{A(0)} + s_{-1}^{A(1)}(k+1) + \dots & & \mbox{as } k \to -1, \label{sA at -1}
\end{align}
where
\begin{align}
& s_{1}^{A(-1)} = \begin{pmatrix}
\mathfrak{s}_{1}^{A} & -\mathfrak{s}_{1}^{A} & 0 \\
\mathfrak{s}_{1}^{A} & -\mathfrak{s}_{1}^{A} & 0 \\
\mathfrak{s}_{2}^{A} & -\mathfrak{s}_{2}^{A} & 0
\end{pmatrix}, & & s_{-1}^{A(-1)} = \begin{pmatrix}
\mathfrak{s}_{3}^{A} & -\mathfrak{s}_{3}^{A} & 0 \\
\mathfrak{s}_{3}^{A} & -\mathfrak{s}_{3}^{A} & 0 \\
\mathfrak{s}_{4}^{A} & -\mathfrak{s}_{4}^{A} & 0
\end{pmatrix}, \label{sA1pm1p and sAm1pm1p} \\
& (s_{1}^{A(0)})_{13} = (s_{1}^{A(0)})_{23}, & & (s_{-1}^{A(0)})_{13} = (s_{-1}^{A(0)})_{23},
\end{align}
for certain constants $\mathfrak{s}_{1}^{A},\mathfrak{s}_{2}^{A},\mathfrak{s}_{3}^{A},\mathfrak{s}_{4}^{A} \in  \mathbb{C}$, and the expansions can be differentiated termwise any number of times.
\item If $u_0(x), v_0(x)$ have compact support, then $s^A(k)$ is defined and equals the cofactor matrix of $s(k)$ for all $k \in \C \setminus \hat{\mathcal{Q}}$.
\item For $k$ as in \eqref{sdomainofdefinition}, we have
\begin{align}\label{sRsymm}
\overline{s^{A}(\bar{k})} = R(k)^{-1}s(k)R(k).
\end{align}
\end{enumerate}
\end{proposition}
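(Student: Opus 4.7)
The plan is to treat Proposition \ref{sAprop} as the direct analogue of Proposition \ref{sprop} (which itself parallels \cite[Prop.~3.4]{CLgoodboussinesq}), transferring all structural assertions from $s$ to $s^A$ via the properties of $X^A$ and $Y^A$ just established in Propositions \ref{XAYAprop}--\ref{XAYAat1prop}. Items $(a)$--$(c)$ are read off entrywise from
$$s^A(k) = I + \int_\R e^{x\widehat{\mathcal{L}(k)}}(\mathsf{U}^T X^A)(x,k)\,dx:$$
the $(i,j)$-entry is the integral of $e^{x(l_i(k)-l_j(k))}(\mathsf{U}^T X^A)_{ij}$, so convergence as $x\to\pm\infty$ reduces to the sign of $\re(l_i - l_j)$ on each of the sectors $D_n$, combined with the rapid decay of $\mathsf{U}$ and the $X^A$-bounds of Proposition \ref{XAYAprop}$(e)$. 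This produces exactly the contour structure in \eqref{sAdomainofdefinition}, and analyticity of the diagonal entries together with smoothness of $\partial_k^j s^A$ then follow by differentiating under the integral using the same estimates.

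The $\mathbb{Z}_3$- and $\mathbb{Z}_2$-symmetries in $(d)$ are inherited termwise from the corresponding symmetries of $\mathcal{L}$, $\mathsf{U}$ and $X^A$ (Proposition \ref{XAYAprop}$(f)$). For $(e)$, I would substitute the formal expansion of $X^A$ provided by Proposition \ref{XAYAprop2}, observe that on the diagonal the integrand is a total $x$-derivative modulo a rapidly decaying remainder (exactly as in the proof for $s$), and use the $\mathcal{B}$-symmetry in $(d)$ to convert the $k\to\infty$ expansion into the $k\to 0$ expansion. The local expansions $(f)$ at $k=\pm 1$ come from plugging the expansions of Proposition \ref{XAYAat1prop} into the defining integral; the specific form \eqref{sA1pm1p and sAm1pm1p} of the residues and the relations $(s_{\pm 1}^{A(0)})_{13} = (s_{\pm 1}^{A(0)})_{23}$ follow directly from the structural forms \eqref{Djpm1p}--\eqref{Djp0p} of the leading coefficients $D_i^{(-1)}$ and $D_i^{(0)}$. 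Item $(g)$ is the standard observation that, for compactly supported data, $X^A$ is globally defined and $X^A = (X^{-1})^T$ (both satisfy \eqref{xpartA} with the same normalization at $+\infty$), hence $s^A$ is the cofactor matrix of $s$.

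The substantive new content is the symmetry $(h)$. My plan is to substitute \eqref{XA X symmetry relation} into the defining integral for $\overline{s^A(\bar k)}$, use the reality of $u_0$ and $v_0$ to relate $\overline{\mathsf{U}(x,\bar k)^T}$ to the matrices appearing in the definition of $s(k)$, and exploit that $R(k)$ intertwines the action of $e^{x\widehat{\mathcal{L}(k)}}$ in the sense dictated by its off-diagonal structure in \eqref{def of R}. The rank-one term $\tfrac{u_0(x)}{2}E$ in \eqref{XA X symmetry relation} (with $E$ the all-ones matrix) should integrate to zero against $e^{x\widehat{\mathcal{L}}}(\mathsf{U}^T \cdot)R$ because it is annihilated by the off-diagonal commutator structure, leaving only the $R(k)^{-1} s(k) R(k)$ contribution and hence \eqref{sRsymm}.

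The main obstacle will be $(h)$. The identity \eqref{sRsymm} carries nontrivial information about the pole structure of $s^A$: the poles of $R(k)$ on the unit circle must precisely cancel against those of $s(k)$ so that $\overline{s^A(\bar k)}$ inherits the regularity and pole structure in \eqref{sA at 1}--\eqref{sA at -1}. Verifying that the rank-one contribution $\tfrac{u_0}{2}E$ genuinely drops out after integration requires careful exploitation of the block structure of $P(k)$ and of $R(k)$, and the remaining bookkeeping of pole orders at $k=\pm 1$ and $k=\pm\omega^{\pm 1}$ is parallel to the argument for $s$ but must be executed on the ``opposite'' sectors specified in \eqref{sAdomainofdefinition}.
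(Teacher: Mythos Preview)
Your treatment of items $(a)$--$(g)$ is essentially identical to the paper's (which simply says these are analogous to the proof of Proposition~\ref{sprop}), and is fine.

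For $(h)$ there is a genuine gap. Your plan is to substitute the symmetry \eqref{XA X symmetry relation} directly into the defining integral \eqref{sAdef intro} for $\overline{s^A(\bar k)}$ and then argue that the rank-one contribution $\tfrac{u_0(x)}{2}E$ ``integrates to zero'' by some unspecified commutator-annihilation mechanism. This claim is neither justified nor evidently true: after substitution you obtain an integral involving $\overline{\mathsf{U}(x,\bar k)^T}\,\tfrac{u_0(x)}{2}E\,X(x,k)R(k)$ conjugated by exponentials, and there is no obvious cancellation forcing this to vanish. Moreover, even the ``good'' term $R(k)^{-1}X(x,k)R(k)$ does not directly produce $s(k)$ from the integral, since $s(k)$ is itself defined by an integral against $X$, not by $X$ alone.

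The paper's argument is cleaner and bypasses these difficulties entirely. It first reduces to compactly supported $u_0,v_0$ by a density argument (cf.\ Lemma~\ref{sequencelemma} and \cite[Lemma~4.5]{CLgoodboussinesq}). In that case $s^A=(s^{-1})^T$ globally, and one can use the pointwise relation $s(k)=e^{-x\mathcal{L}(k)}Y(x,k)^{-1}X(x,k)e^{x\mathcal{L}(k)}$ from \eqref{XYs}, valid for \emph{every} $x$. Writing $\overline{s^A(\bar k)}=e^{x\overline{\mathcal{L}(\bar k)}}\,\overline{Y(x,\bar k)}^T\,\overline{X^A(x,\bar k)}\,e^{-x\overline{\mathcal{L}(\bar k)}}$, one substitutes \eqref{XA X symmetry relation} and then evaluates at some $x<-M$ below the support of $u_0,v_0$. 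At such $x$ one has $u_0(x)=0$ (so the rank-one term vanishes pointwise, not by integration) and $Y(x,k)=I$; the identity $e^{x\mathcal{L}(k)}R(k)=R(k)e^{-x\overline{\mathcal{L}(\bar k)}}$ then collapses everything to $R(k)^{-1}s(k)R(k)$. The missing idea in your proposal is precisely this evaluation-outside-the-support trick, which eliminates both the rank-one term and the $Y$-factor simultaneously and avoids any delicate integral manipulation.
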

\begin{proof}
The proofs of $(a)$--$(g)$ are similar to the proofs of the analogous assertions of Proposition \ref{sprop}. To prove $(h)$, we may assume that $u_{0},v_{0}$ are compactly supported (cf. \cite[Lemma 4.5]{CLgoodboussinesq}). Then, by \eqref{XYs}, we have $s(k) = e^{-x \mathcal{L}(k)}Y(x,k)^{-1}X(x,k)e^{x\mathcal{L}(k)}$. Using also \eqref{XA X symmetry relation}, we obtain
\begin{align*}
\overline{s^{A}(\bar{k})} & = \overline{s(\bar{k})^{-1}}^{T} = e^{x \overline{\mathcal{L}(\bar{k})}}\overline{Y(x,\bar{k})}^{T}\overline{X^{A}(x,\bar{k})}e^{-x\overline{\mathcal{L}(\bar{k})}} \\
& = e^{x \overline{\mathcal{L}(\bar{k})}}\overline{Y(x,\bar{k})}^{T}\bigg\{ \frac{u_{0}(x)}{2}\begin{pmatrix}
1 & 1 & 1 \\
1 & 1 & 1 \\
1 & 1 & 1 
\end{pmatrix} + R(k)^{-1} \bigg\}X(x,k)R(k)e^{-x\overline{\mathcal{L}(\bar{k})}}.
\end{align*}
Since $u_{0},v_{0}$ are compactly supported, we have $u_{0}(x)=0$ and $Y(x,k)=I$ for all $x<-M$, where $M$ is a large positive constant. Hence,
\begin{align*}
\overline{s^{A}(\bar{k})} = e^{x \overline{\mathcal{L}(\bar{k})}} R(k)^{-1} X(x,k)R(k)e^{-x\overline{\mathcal{L}(\bar{k})}} \quad \mbox{for any } x<-M.
\end{align*}
Since $e^{x\mathcal{L}(k)}R(k) = R(k)e^{-\overline{x\mathcal{L}(\bar{k})}}$, we find 
$$\overline{s^{A}(\bar{k})} = R(k)^{-1} e^{-x \mathcal{L}(k)}X(x,k)e^{x\mathcal{L}(k)}R(k) = R(k)^{-1}s(k)R(k),$$ 
which is (\ref{sRsymm}).
\end{proof}

\subsection{Proof of Theorem \ref{directth}}
Recall from (\ref{r1r2def}) that $r_1 = s_{12}/s_{11}$ and $r_2 = s_{12}^A/s_{11}^A$. Assertions $(a)$ and $(c)$ of Propositions \ref{sprop} and \ref{sAprop} imply that $s_{12}(k)$ and $s_{11}(k)$ are smooth on $\hat{\Gamma}_{1}\setminus \hat{\mathcal{Q}}$, while $s_{12}^A(k)$ and $s_{11}^A(k)$ are smooth on $\hat{\Gamma}_{4}\setminus \hat{\mathcal{Q}}$. By Assumption \ref{solitonlessassumption}, $s_{11}$ is nonzero for $k \in \omega^{2}\hat{\mathcal{S}} \setminus \hat{\mathcal{Q}}$ and
%The symmetries (\ref{symmetries of s}) and (\ref{sRsymm}) show that $s_{11}(k) = s_{11}(\omega/k)$ and $s_{11}^A(k) = \overline{s_{11}(\bar{k}^{-1})}$, and hence $s_{11}(k)$ is nonzero for $k \in \omega^{2}\hat{\mathcal{S}} \setminus \hat{\mathcal{Q}}$ and 
$s_{11}^A(k)$ is nonzero for $k \in -\omega^{2}\hat{\mathcal{S}} \setminus \hat{\mathcal{Q}}$. It follows that $r_1 \in C^\infty(\hat{\Gamma}_{1}\setminus \hat{\mathcal{Q}})$ and $r_2 \in C^\infty(\hat{\Gamma}_{4}\setminus \hat{\mathcal{Q}})$. Assumption \ref{originassumption} implies that the coefficients $\mathfrak{s}_{1}$, $\mathfrak{s}_{3}$ and $\mathfrak{s}_{1}^{A}$, $\mathfrak{s}_{3}^{A}$ in (\ref{s1pm1p and sm1pm1p}) and (\ref{sA1pm1p and sAm1pm1p}) are all nonzero, ensuring that all four functions $s_{12}$, $s_{11}$, $s_{12}^A$, $s_{11}^A$ are of order $(k-1)^{-1}$ as $k \to 1$ and of order $(k+1)^{-1}$ as $k \to -1$. Assumption \ref{originassumption} together with (\ref{s1pm1p and sm1pm1p}) and (\ref{sA1pm1p and sAm1pm1p}) imply that all entries in the third row (resp. column) of $s_{\pm 1}^{(0)}$ (resp. $s_{\pm1}^{A(0)}$) are nonzero. For $k \in \partial \mathbb{D}\setminus \hat{\mathcal{Q}}$, all entries of $s$ and $s^{A}$ are well-defined and thus $s^{A}=(s^{-1})^{T}$. Using these observations, the symmetries \eqref{symmetries of s} and \eqref{symmetries of sA} together with the asymptotics of $s$ and $s^{A}$ as $k \to 1$ and as $k \to -1$ (see statement $(f)$ of Propositions \ref{sprop} and \ref{sAprop}) imply assertions $(\ref{Theorem2.3itemi})$, $(\ref{Theorem2.3itemii})$, and $(\ref{Theorem2.3itemiv})$. Moreover, statements $(e)$ of Propositions \ref{sprop} and \ref{sAprop} imply that $r_1(k)$ and $r_2(k)$ satisfy (\ref{r1r2rapiddecay}), which proves assertion $(\ref{Theorem2.3itemiii})$.

Let us prove $(\ref{Theorem2.3itemv})$. Since $s(k)=R(k)\overline{s^{A}(\bar{k})}R^{-1}(k)$ for $k \in \hat{\Gamma}_{1}$ by (\ref{sRsymm}), we infer that
\begin{align*}
r_{1}(k) = \frac{(s(k))_{12}}{(s(k))_{11}} = \frac{k^{2}-\omega}{\omega k^{2}-1} \frac{\overline{s^{A}(\bar{k})}_{21}}{\overline{s^{A}(\bar{k})}_{22}}.
\end{align*}
On the other hand, using the symmetry $s^A(k) = \mathcal{B} s^A(k^{-1})\mathcal{B}$, we obtain
\begin{align*}
\frac{\overline{s^{A}(\bar{k})}_{21}}{\overline{s^{A}(\bar{k})}_{22}} = \frac{\overline{s^{A}(\bar{k}^{-1})}_{12}}{\overline{s^{A}(\bar{k}^{-1})}_{11}} = \overline{r_{2}(\bar{k}^{-1})},
\end{align*}
which gives \eqref{r1r2 relation with kbar symmetry} and hence completes the proof of $(\ref{Theorem2.3itemv})$.

\subsection{Proof of Lemma \ref{inequalitieslemma}}\label{inequalitiessubsec}

In order to prove $(\ref{inequalitieslemmaitemi})$, we will first show that
\begin{align}\label{fs11expression}
f(k) = \frac{1}{|s_{11}(k)|^2} \qquad \text{for $k \in \partial \mathbb{D}$}.
\end{align}
The definition \eqref{def of f} of $f$ together with the symmetry \eqref{r1r2 relation with kbar symmetry} implies that, for $k \in \partial \D$,
\begin{align}\label{fr1}
f(k) = 1+\tilde{r}(k)|r_{1}(k)|^{2}+\tilde{r}(\tfrac{1}{\omega^{2}k})|r_{1}(\tfrac{1}{\omega^{2}k})|^{2},
\end{align}
where we recall that $\tilde{r}(k) = \frac{\omega^{2}-k^{2}}{1-\omega^{2}k^{2}}$. Moreover, by the symmetries (\ref{symmetries of s}) of $s$, for $k \in \partial \D$, 
\begin{align*}
& \overline{s_{12}(k)} = \overline{s_{21}(\bar{k})}, \quad \overline{s_{11}(k)} = \overline{s_{22}(\bar{k})}, \quad s_{12}(\omega \bar{k}) = s_{23}(\bar{k}) = s_{13}(k), \quad 
s_{11}(\omega \bar{k}) = s_{22}(\bar{k}) = s_{11}(k).
\end{align*}
Thus, we can write, for $k \in \partial \mathbb{D}$,
\begin{align}\nonumber
f(k) & = 1 + \tilde{r}(k) \frac{s_{12}(k) \overline{s_{12}(k)}}{s_{11}(k) \overline{s_{11}(k)}} 
+ \tilde{r}(\omega \bar{k}) \frac{s_{12}(\omega \bar{k}) \overline{s_{12}(\omega \bar{k})}}{s_{11}(\omega \bar{k}) \overline{s_{11}(\omega \bar{k})}}
	\\ \label{frs}
& = 1 + \tilde{r}(k) \frac{s_{12}(k) s_{21}^*(k)}{s_{11}(k) s_{22}^*(k)} 
+ \tilde{r}(\omega \bar{k}) \frac{s_{13}(k) s_{23}^*(k)}{s_{11}(k) s_{22}^*(k)},
\end{align}
where we recall that $s^{*}(k):=\overline{s(\bar{k})}$.
Since all entries of $s(k)$ are defined for $k \in \partial \mathbb{D} \setminus \mathcal{Q}$ we have, according to (\ref{sRsymm}), 
\begin{align}\label{sRsymm2}
\overline{s^{A}(\bar{k})} = R(k)^{-1}s(k)R(k), \qquad k \in \partial \mathbb{D} \setminus \mathcal{Q}.
\end{align}
In particular, taking the inverse transpose,
$$s^*(k) = R(k)^T (s(k)^{-1})^T (R(k)^{-1})^T, \qquad k \in \partial \mathbb{D} \setminus \mathcal{Q}.$$
Using this relation to eliminate all entries of $s^*(k)$ in (\ref{frs}) and simplifying, we obtain
$$f(k) = \frac{1}{(s(k))_{11} (s^A(k))_{11}} \qquad \text{for $k \in \partial \mathbb{D}$}.$$
Since $(s^A(k))_{11} = s_{22}^*(k)$ by (\ref{sRsymm2}) and $s_{22}^*(k) = \overline{s_{11}(k)}$ by (\ref{symmetries of s}), we arrive at (\ref{fs11expression}). 

It follows immediately from (\ref{fs11expression}) that $f(k) \geq 0$ for all $k \in \partial \mathbb{D}$. Furthermore, by Proposition \ref{sprop}, $s_{11}(k)$ is a continuous function of $k \in \partial \mathbb{D} \setminus \{\pm 1, \pm \omega,\pm \omega^2\}$.
In view of (\ref{fs11expression}), this means that $f(k) > 0$ for $k \in \partial \mathbb{D} \setminus \{\pm 1, \pm \omega,\pm \omega^2\}$.
On the other hand, Assumption \ref{originassumption} implies that $f(\pm 1) = 0$. The symmetry $f(k) = f(\frac{1}{\omega^{2}k})$ then implies that $f(\pm \omega) = 0$.
Moreover, using that $s_{11}(\omega^2 k) = s_{33}(k)$, we can write
$f(k) = \frac{1}{|s_{33}(\omega k)|^2}$ for $k \in \partial \mathbb{D}$.
By Proposition \ref{sprop}, $s_{33}(k)$ does not have a singularity at $k = \pm 1$, and hence $f(\pm \omega^2 ) > 0$. 
Finally, since, for $k \in \partial \D$,
\begin{align}\label{rtildesign}
\begin{cases}
\tilde{r}(k) < 0 \quad \text{for} \arg k \in (-\pi/3, \pi/3) \cup (2\pi/3, 4\pi/3),
	\\
 \tilde{r}(k) > 0 \quad \text{for} \arg k \in (\pi/3, 2\pi/3) \cup (4\pi/3, 5\pi/3),
 \end{cases}
\end{align}
both $\tilde{r}(k)$ and $\tilde{r}(\frac{1}{\omega^2 k})$ are strictly negative for all $k \in \partial \mathbb{D}$ with $\arg k \in (2\pi/3, \pi) \cup (5\pi/3, 2\pi)$, so we see from (\ref{fr1}) that $f(k) \leq 1$ for all such $k$. This completes the proof of $(\ref{inequalitieslemmaitemi})$.

We next prove $(\ref{inequalitieslemmaitemii})$.
We know from assertion $(\ref{inequalitieslemmaitemi})$ that $f(k) > 0$ for all $k \in \partial \mathbb{D}$ with $\arg k \in (\pi/3, 2\pi/3) \cup (2\pi/3, \pi) \cup (4\pi/3, 5\pi/3)\cup (5\pi/3, 2\pi)$. Since $\tilde{r}(\tfrac{1}{\omega^{2}k}) < 0$ for all $k$ in this range, it follows that
$$f(k) - \tilde{r}(\tfrac{1}{\omega^{2}k})|r_{1}(\tfrac{1}{\omega^{2}k})|^2 > 0, \qquad k \in \partial \mathbb{D}, \; \; \arg k \in (\pi/3, \pi) \cup (4\pi/3, 2\pi),$$
where we have used that $f(\pm \omega) = 0, \tilde{r}(\pm 1) = -1$, and $r_1(\pm 1) = 1$ to see that the inequality holds also for $k = \pm \omega$. Using that $r_2(k) = \tilde{r}(k) \overline{r_1(k)}$ for $k \in \partial \mathbb{D}$ as well as the definition \eqref{def of f} of $f(k)$, this implies that $1+r_{1}(k)r_{2}(k) > 0$ for $k \in \partial \mathbb{D}$ with $\arg k \in (\pi/3, \pi) \cup (4\pi/3, 2\pi)$, which proves $(\ref{inequalitieslemmaitemii})$.

We next prove $(\ref{inequalitieslemmaitemiii})$.
Let $k \in \partial \mathbb{D}$ with $\arg k \in (5\pi/3, 2\pi)$. 
By $(\ref{inequalitieslemmaitemii})$, $1+r_{1}(k)r_{2}(k) > 0$. Moreover, by (\ref{r1r2 relation with kbar symmetry}) and (\ref{rtildesign}),
$$1+r_{1}(k)r_{2}(k) = 1+ \tilde{r}(k) |r_{1}(k)|^2  \leq 1,$$
so it follows that $-\frac{1}{2\pi}\ln(1+r_{1}(k)r_{2}(k)) \geq 0$, which proves $(\ref{inequalitieslemmaitemiii})$.

Let us finally prove $(\ref{inequalitieslemmaitemiv})$. 
Let $k \in \partial \mathbb{D}$ with $\arg k \in (5\pi/3, 2\pi)$. Then $1+r_{1}(\omega k)r_{2}(\omega k) > 0$ and $f(\omega k) > 0$ by $(\ref{inequalitieslemmaitemii})$ and $(\ref{inequalitieslemmaitemi})$, and hence
$$\hat{\nu}_1(k) = \frac{1}{2\pi}\ln\bigg(\frac{1+r_{1}(\omega k)r_{2}(\omega k)}{f(\omega k)}\bigg) \in \R.$$
In fact, using (\ref{def of f}) and (\ref{r1r2 relation with kbar symmetry}) as well as the fact that $\tilde{r}(k^{-1}) < 0$, we obtain 
$$f(\omega k) - (1+r_{1}(\omega k)r_{2}(\omega k))
=  \tilde{r}(k^{-1}) |r_1(k^{-1})|^2 \leq 0,$$
so it follows that $\hat{\nu}_1(k) \geq 0$.
We next consider $\hat{\nu}_2$. 
Let $\mathcal{I}$ denote the subset of the unit circle consisting of all $k \in \partial \mathbb{D}$ with $\arg k \in (\pi, 4\pi/3)$. For $k \in \mathcal{I}$, we have $f(\omega^2 k) > 0$, $f(\omega k) > 0$, and $1+r_{1}(\omega^{2} k)r_{2}(\omega^{2} k) > 0$ by $(\ref{inequalitieslemmaitemi})$ and $(\ref{inequalitieslemmaitemii})$. Hence $\nu_2, \nu_3$, and $\nu_4$ are real-valued and well defined on $\mathcal{I}$ by (\ref{nu12345def}), and 
$$\hat{\nu}_2(k) = - \frac{1}{2\pi}\ln\frac{(1+r_{1}(\omega^{2} k)r_2(\omega^{2} k)) f(\omega k)}{f(\omega^{2} k)} \qquad \text{for} \quad k \in \mathcal{I},$$
so the inequality $\hat{\nu}_2 \geq 0$ will follow if we can show that
$$\frac{(1+r_{1}(\omega^{2} k)r_2(\omega^{2} k)) f(\omega k)}{f(\omega^{2} k)} \in (0, 1] \qquad \text{for} \quad  k \in  \mathcal{I}.$$
Recalling the definition \eqref{def of f} of $f(k)$, we can write this as
\begin{align*}
\bigg(1 - \frac{\tilde{r}(\frac{1}{\omega k})}{f(\omega^{2} k)}|r_{1}(\tfrac{1}{\omega k})|^2\bigg) f(\omega k) \in (0, 1] \qquad \text{for} \quad k \in \mathcal{I}.
\end{align*}
In view of (\ref{fs11expression}) and $s_{11}(\omega^{2}k) = s_{11}(\frac{1}{\omega k})$, it follows that it is sufficient to prove that
\begin{align}\label{1tilderfin01}
\frac{1 - \tilde{r}(\tfrac{1}{\omega k}) |s_{12}(\tfrac{1}{\omega k})|^2}{|s_{11}(\omega k)|^2} \in (0, 1] \qquad \text{for} \quad k \in \mathcal{I}.
\end{align}
Since $\tilde{r}(\frac{1}{\omega k}) < 0$ for $k \in \mathcal{I}$, the expression in (\ref{1tilderfin01}) is strictly positive for $k \in \mathcal{I}$.
To see that it is also bounded above by $1$, we first observe that the symmetries (\ref{sRsymm2}) and (\ref{symmetries of s}) imply that
\begin{align}\label{sRBconstraint}
\overline{s(k)}^T \mathcal{B} R(k)^{-1} s(k) R(k) \mathcal{B} = I, \qquad k \in \partial \mathbb{D},
\end{align}
where
$$ \mathcal{B} R(k)^{-1} s(k) R(k) \mathcal{B}
= \begin{pmatrix}
 s_{11}(k) & \tilde{r}(k) s_{12}(k)  &  \tilde{r}(\frac{1}{\omega^2 k}) s_{13}(k) 
 	\\
 \tilde{r}(\frac{1}{k}) s_{21}(k)  & s_{22}(k) & \tilde{r}(\omega k) s_{23}(k)  
 	\\
 \tilde{r}(\omega^2 k) s_{31}(k) & \tilde{r}(\frac{1}{\omega k}) s_{32}(k)  & s_{33}(k)
\end{pmatrix}.$$
In particular, the $(11)$-entry of (\ref{sRBconstraint}) yields
$$|s_{11}(k)|^2 + \tilde{r}(1/k) |s_{21}(k)|^2  + \tilde{r}(\omega^2 k) |s_{31}(k)|^2 = 1, \qquad k \in \partial \mathbb{D}.$$
Using the identity $s_{21}(k) = s_{12}(1/k)$ and then replacing $k$ by $\omega k$, this becomes after a simple rearrangement
$$1 - \tilde{r}(\tfrac{1}{\omega k}) |s_{12}(\tfrac{1}{\omega k})|^2  = |s_{11}(\omega k)|^2 +  \tilde{r}(k) |s_{31}(\omega k)|^2, \qquad k \in \partial \mathbb{D}.$$
Dividing by $|s_{11}(\omega k)|^2$ and noting that $\tilde{r}(k) < 0$ for $k \in \mathcal{I}$, we obtain the upper bound in (\ref{1tilderfin01}). This completes the proof of $(\ref{inequalitieslemmaitemiv})$.

\subsection{Construction of $n$}\label{sec: n at t=0}
In this subsection, we show how to construct a solution $n(x,t,k)$ of RH problem \ref{RHn} from a given solution $\{u,v\}$ of \eqref{boussinesqsystem}. 
More precisely, assuming that $\{u(x,t), v(x,t)\}$ is a Schwartz class solution of \eqref{boussinesqsystem} on $\R \times [0,T)$ with initial data $u_0, v_0 \in \mathcal{S}(\R)$, we will construct a solution $M(x,t,k)$ of a $3 \times 3$-matrix RH problem for $(x,t) \in \R \times [0,T)$. The solution $n$ of RH problem \ref{RHn} will then be obtained by premultiplying $M$ by the constant row vector $(1,1,1)$, i.e., $n=(1,1,1)M$. The results of this subsection will be used in Sections \ref{IVPsec} and \ref{blowupsec}; they also serve as motivation for the constructions in Section \ref{inversesec}.

We begin by considering the construction of $M$ at time $t = 0$. The restriction of $M$ to the open subset $D_n$, $n = 1,\dots, 6$, will be denoted by $M_n$, and we will slightly abuse notation and write $M(x,k)$ for $M(x,0,k)$. For each $n = 1, \dots, 6$, let $M_n(x,k)$ be a $3\times 3$-matrix valued solution of (\ref{xpart}) defined for $k \in \bar{D}_n\setminus \hat{\mathcal{Q}}$ by the following system of Fredholm integral equations: 
\begin{align}\label{Mndef}
(M_n)_{ij}(x,k) = \delta_{ij} + \int_{\gamma_{ij}^n} \left(e^{(x-x')\widehat{\mathcal{L}(k)}} (\mathsf{U}M_n)(x',k) \right)_{ij} dx', \qquad  i,j = 1, 2,3,
\end{align}
where $\gamma^n_{ij}$, $n = 1, \dots, 6$, $i,j = 1,2,3$, are given by
 \begin{align} \label{gammaijndef}
 \gamma_{ij}^n =  \begin{cases}
 (-\infty,x),  & \re  l_i(k) < \re  l_j(k), 
	\\
(+\infty,x),  \quad & \re  l_i(k) \geq \re  l_j(k),
\end{cases} \quad \text{for} \quad k \in D_n.
\end{align}
The definition of $\gamma_{ij}^n$ ensures that the exponential $e^{(l_i - l_j)(x-x')}$ appearing in the equation for $(M_n)_{ij}$ is bounded for $k \in D_n$ and $x' \in \gamma_{ij}^n$. 
%The definition (\ref{Mndef}) of $M_n$ can be extended by continuity to the boundary of $D_n$. 
The next proposition collects some basic properties of $M_{n}$. In particular, all entries of $M_n$ are well-defined for $k \in \bar{D}_n\setminus (\hat{\mathcal{Q}}\cup\mathcal{Z})$, where  $\mathcal{Z}$ denotes the set of zeros of the Fredholm determinants associated with (\ref{Mndef}). %(see also Remark \ref{fredholmremark} below). 
The proof is similar to the proof of \cite[Proposition 4.1]{CLgoodboussinesq} and is omitted.

\begin{proposition}\label{Mnprop}
If $u_0, v_0 \in \mathcal{S}(\R)$, then (\ref{Mndef}) uniquely defines six $3 \times 3$-matrix valued solutions $\{M_n\}_1^6$ of (\ref{xpart}) with the following properties:
\begin{enumerate}[$(a)$]
\item The function $M_n(x, k)$ is defined for $x \in \R$ and $k \in \bar{D}_n \setminus (\hat{\mathcal{Q}}\cup\mathcal{Z})$. For each $k \in \bar{D}_n  \setminus (\hat{\mathcal{Q}}\cup\mathcal{Z})$, $M_n(\cdot, k)$ is smooth and satisfies (\ref{xpart}).

\item For each $x \in \R$, $M_n(x,\cdot)$ is continuous for $k \in \bar{D}_n \setminus (\hat{\mathcal{Q}}\cup\mathcal{Z})$ and analytic for $k \in D_n\setminus (\hat{\mathcal{Q}}\cup\mathcal{Z})$.

\item For each $\epsilon > 0$, there exists a $C = C(\epsilon)$ such that $|M_n(x,k)| \leq C$ for $x \in \R$ and for all $k \in \bar{D}_n$ with $\dist(k, \hat{\mathcal{Q}}\cup\mathcal{Z}) \geq \epsilon$.

\item For each $x \in \R$ and each integer $j \geq 1$, $\partial_k^j M_n(x, \cdot)$ has a continuous extension to $\bar{D}_n \setminus (\hat{\mathcal{Q}}\cup\mathcal{Z})$.

\item $\det M_n(x,k) = 1$ for $x \in \R$ and $k \in \bar{D}_n \setminus (\hat{\mathcal{Q}}\cup\mathcal{Z})$.

\item For each $x \in \R$, the sectionally analytic function $M(x,k)$ defined by $M(x,k) = M_n(x,k)$ for $k \in D_n$ satisfies the symmetries
\begin{subequations}\label{Msymm}
\begin{align}\label{Msymma}
 & M(x, k) = \mathcal{A} M(x,\omega k)\mathcal{A}^{-1} = \mathcal{B} M(x,k^{-1}) \mathcal{B}, \qquad k \in \C \setminus (\hat{\mathcal{Q}}\cup\mathcal{Z}),
	\\
& \overline{(M^{A})(x,\bar{k})} = \bigg\{ \frac{u_{0}(x)}{2}\begin{pmatrix}
1 & 1 & 1 \\
1 & 1 & 1 \\
1 & 1 & 1 
\end{pmatrix} + R(k)^{-1} \bigg\}M(x,k)R(k), \qquad k \in \C \setminus (\hat{\mathcal{Q}}\cup\mathcal{Z}),
\end{align}
\end{subequations}
where $M^A := (M^{-1})^T$.
\end{enumerate}
\end{proposition}

%\begin{remark}\upshape\label{fredholmremark}
%The Fredholm determinant $f_{j}^{n}(k)$ associated with the $j$th column of $M_n$ is an analytic function of $k \in D_n$ with a continuous extension to $\bar{D}_n \setminus \mathcal{Q}$. Since $\mathsf{U}(x,k)=O(1/k)$ as $k \to \infty$, we can show as in \cite[Remark 4.2]{CLgoodboussinesq} that $f_{j}^{n} \to 1$ as $k \to \infty$, and therefore $f_{j}^{n}$ has at most countably many zeros in $D_n$.
%\end{remark}

The large $k$ asymptotics of $M_n$ can be derived by considering formal power series solutions of (\ref{xpart}). These solutions are of the form
\begin{align*}
& M_{n,formal}(x,k) = I + \frac{M_{n,1}(x)}{k} + \frac{M_{n,2}(x)}{k^2} + \cdots, \qquad n = 1, \dots, 6,
\end{align*}
and satisfy
\begin{align}\label{Mnlnormalization}
\begin{cases}
\displaystyle{\lim_{x\to -\infty}} (M_{n,l}(x))_{ij} = 0 & \text{if} \ \gamma_{ij}^n = (-\infty,x), 
	\\
\displaystyle{\lim_{x\to +\infty}} (M_{n,l}(x))_{ij} = 0 & \text{if} \ \gamma_{ij}^n = (+\infty,x),
\end{cases} \quad l \geq 1.
\end{align}
In the same way as for $X_{formal}$ and $Y_{formal}$, the coefficients $M_{n,j}$ are uniquely determined from (\ref{Mnlnormalization}), the recursive relations (\ref{xrecursive}) (with $X_{j}$ replaced by $M_{n,j}$), and the initial values $M_{n,-1} = 0$, $M_{n,0} = I$.
Because $\gamma_{ii}^n = (+\infty,x)$ for $i = 1,2,3$ and $n = 1, \dots,6$, we have in fact that $M_{n,j}(x) = X_j(x)$ for all $n$ and $j$. 
The following lemma is similar to \cite[Lemma 4.3]{CLgoodboussinesq} and we omit the proof.

\begin{lemma}\label{Matinftylemma}
Suppose $u_0, v_0 \in \mathcal{S}(\R)$ and $u_0,v_0 \not\equiv 0$. Given an integer $p \geq 1$, let $X_{(p)}(x,k)$ be the function defined in (\ref{Xpdef}). Then there exists an $R > 0$ such that
\begin{align*}
& \big|M(x,k) - X_{(p)}(x,k) \big| \leq
\frac{C}{|k|^{p+1}}, \qquad x \in \R, \  k \in \C \setminus \Gamma, \ |k| \geq R.
\end{align*}
\end{lemma}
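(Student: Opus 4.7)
The plan is to estimate $|M_n(x,k) - X_{(p)}(x,k)|$ uniformly in $x \in \R$ for each sector $D_n$, $n = 1,\ldots,6$ separately; since $M = M_n$ on $D_n$, this yields the desired bound on all of $\mathbb{C}\setminus\Gamma$. The key idea is to compare $M_n$ not with $X_{(p)}$ directly but with the slightly augmented approximation
\[
\tilde{X}_p(x,k) := X_{(p)}(x,k) + k^{-(p+1)} X_{p+1}^{(o)}(x),
\]
where $X_{p+1}^{(o)}$ denotes the off-diagonal part of the next coefficient $X_{p+1}$. The motivation for this augmentation is that the ODE residual produced by $X_{(p)}$ alone is only $O(k^{-p})$, whereas $\tilde{X}_p$ yields an $O(k^{-(p+1)})$ residual, and this is precisely what is needed to extract the claimed rate from a Fredholm integral equation of small operator norm.

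First I would verify that $\tilde{R}_p := (\tilde{X}_p)_x - [\mathcal{L},\tilde{X}_p] - \mathsf{U}\tilde{X}_p$ is indeed $O(k^{-(p+1)})$ with coefficients in $L^1(\R,dx)$. The recursion (\ref{xrecursive}) makes every coefficient of $k^{-n}$ in $R_p := (X_{(p)})_x - [\mathcal{L},X_{(p)}] - \mathsf{U}X_{(p)}$ vanish for $0\le n\le p-1$, while a short computation shows that the coefficient of $k^{-p}$ in $R_p$ equals $[\mathcal{L}_{-1}, X_{p+1}]$, which is purely off-diagonal and hence equals $[\mathcal{L}_{-1}, X_{p+1}^{(o)}]$. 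The added term $k^{-(p+1)}X_{p+1}^{(o)}$ is designed precisely so that $-[\mathcal{L}_{-1}, X_{p+1}^{(o)}]/k^p$ cancels this leading contribution in $\tilde{R}_p$. All coefficients in the resulting asymptotic expansion of $\tilde{R}_p$ are polynomials in the $\mathsf{U}_j$, $X_j$, and $X_j'$, all of which are Schwartz in $x$, so $\|\tilde{R}_p(\cdot,k)\|_{L^1(\R)} = O(|k|^{-(p+1)})$.

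Next I would set $\tilde{W}_n := M_n - \tilde{X}_p$ and show that each entry $(\tilde{W}_n)_{ij}$ vanishes at the appropriate endpoint of $\gamma_{ij}^n$. For $\gamma_{ij}^n = (\infty,x)$ this follows from $M_n \to I$ at $+\infty$ together with $X_j(\infty) = 0$ and the decay of $X_j^{(o)}$ at $\pm\infty$ (which holds for all $j$ by induction on the recursion, since the right-hand side of \eqref{xrecursive} is a Schwartz combination at every step). For $\gamma_{ij}^n = (-\infty,x)$ the boundary contribution $\tilde{W}_n(-\infty)_{ij}\cdot e^{(l_i-l_j)(x-(-\infty))}$ vanishes because $\re(l_i - l_j) < 0$ on $D_n$ in that case. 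Integrating the ODE satisfied by $\tilde{W}_n$ along $\gamma_{ij}^n$ then yields the Fredholm integral equation $\tilde{W}_n = g_n + T_n \tilde{W}_n$ where
\[
(T_n F)_{ij}(x,k) = \int_{\gamma_{ij}^n}\! e^{(l_i-l_j)(x-x')}(\mathsf{U} F)_{ij}(x',k)\,dx', \quad (g_n)_{ij}(x,k) = -\!\int_{\gamma_{ij}^n}\! e^{(l_i-l_j)(x-x')}(\tilde{R}_p)_{ij}(x',k)\,dx'.
\]

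To conclude, I would bound $T_n$ and $g_n$ in $L^\infty(\R)$. By the very choice of $\gamma_{ij}^n$ the exponential factors have modulus at most $1$, so both $\|T_n\|_{L^\infty\to L^\infty}$ and $\|g_n\|_{L^\infty}$ are controlled by the $L^1(\R,dx')$-norms of $|\mathsf{U}(\cdot,k)|$ and $|\tilde{R}_p(\cdot,k)|$ respectively. Since $|k|\ge R>1$ keeps $k$ bounded away from the poles of $\mathsf{U}$ at $\kappa_1,\ldots,\kappa_6$, one has $|\mathsf{U}(x,k)|\le C(x)/|k|$ with $C \in L^1(\R)$, and therefore $\|T_n\|_{L^\infty\to L^\infty}\le C'/|k|\le \tfrac{1}{2}$ for $R$ chosen large enough; likewise $\|g_n\|_{L^\infty}\le C/|k|^{p+1}$. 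Neumann summation then produces $\tilde{W}_n = (I-T_n)^{-1}g_n$ with $\|\tilde{W}_n\|_{L^\infty(\R)}\le 2\|g_n\|_{L^\infty}\le C/|k|^{p+1}$, and combining with the trivial bound $|\tilde{X}_p - X_{(p)}|\le C/|k|^{p+1}$ yields the claim on $D_n$; taking the union over $n = 1,\ldots,6$ concludes the proof. The main obstacle I see is precisely the fact that a naive comparison of $M_n$ with $X_{(p)}$ leaves an $O(k^{-p})$ off-diagonal residual in the ODE, losing exactly one power of $k$; the augmentation by $k^{-(p+1)}X_{p+1}^{(o)}$ is the cleanest device to restore the missing factor (an alternative would be to integrate by parts against the oscillating exponential to extract $1/(l_i-l_j)=O(1/k)$, but this is more cumbersome), and is ultimately responsible for the sharp exponent in the lemma.
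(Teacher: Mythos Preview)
Your proof is correct. The paper omits the proof entirely, deferring to \cite[Lemma 4.3]{CLgoodboussinesq}, and your argument is precisely the standard one used for such results: augment the truncated formal expansion by the off-diagonal part of the next coefficient so that the ODE residual drops one extra power of $k$, then pass to a Fredholm equation for the difference $M_n - \tilde{X}_p$ whose integral operator has norm $O(|k|^{-1})$ on $L^\infty(\R)$ and whose forcing is $O(|k|^{-(p+1)})$. All the ingredients you invoke are available in the paper: the recursion (\ref{xrecursive}) gives the cancellation at order $k^{-p}$; the off-diagonal coefficients $X_j^{(o)}$ are Schwartz in $x$ by induction; $\|\mathsf{U}(\cdot,k)\|_{L^1(\R)} = O(|k|^{-1})$ follows from the explicit form (\ref{mathsfUdef intro}); the a priori boundedness of $M_n$ for $|k| \geq R$ (needed to place $\tilde{W}_n$ in $L^\infty$ and to kill the boundary term at $-\infty$) is Proposition~\ref{Mnprop}(c) combined with Remark~\ref{fredholmremark}. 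One small imprecision: the statement ``$M_n \to I$ at $+\infty$'' holds only entrywise for those $(i,j)$ with $\gamma_{ij}^n = (\infty,x)$, but that is exactly what you use.
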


Our next lemma relates $M_n$ to $X$ and $Y$; the proof is nearly identical to the proof of \cite[Lemma 4.4]{CLgoodboussinesq} and is therefore omitted. 

\begin{lemma}\label{Snexplicitlemma}
If $u_0,v_0 \in \mathcal{S}(\R)$ have compact support, then
\begin{align*}
   M_n(x,k) & = Y(x, k) e^{x\widehat{\mathcal{L}(k)}} S_n(k)	
  = X(x, k) e^{x\widehat{\mathcal{L}(k)}} T_n(k), \qquad x \in \R, \ k \in \bar{D}_n\setminus (\hat{\mathcal{Q}}\cup\mathcal{Z}), \ n = 1, \dots, 6,
\end{align*}
where $S_n(k)$ and $T_n(k)$ are given in terms of the entries of $s(k)$ by
\begin{subequations}\label{SnTnexplicit}
\begin{align}\nonumber
&  S_1(k) = \begin{pmatrix}
 s_{11} & 0 & 0 \\
 s_{21} & \frac{m_{33}(s)}{s_{11}} & 0 \\
 s_{31} & \frac{m_{23}(s)}{s_{11}} & \frac{1}{m_{33}(s)} \\
  \end{pmatrix},
&&
  S_2(k) =  \begin{pmatrix}
 s_{11} & 0 & 0 \\
 s_{21} & \frac{1}{m_{22}(s)} & \frac{m_{32}(s)}{s_{11}} \\
 s_{31} & 0 & \frac{m_{22}(s)}{s_{11}} \\
\end{pmatrix},
	\\ \nonumber
&  S_3(k) = \begin{pmatrix}
 \frac{m_{22}(s)}{s_{33}} & 0 & s_{13} \\
 \frac{m_{12}(s)}{s_{33}} & \frac{1}{m_{22}(s)} & s_{23} \\
 0 & 0 & s_{33} \\
\end{pmatrix},
&&
  S_4(k) =  \begin{pmatrix}
  \frac{1}{m_{11}(s)} & \frac{m_{21}(s)}{s_{33}} & s_{13} \\
 0 & \frac{m_{11}(s)}{s_{33}} & s_{23} \\
 0 & 0 & s_{33} \\
\end{pmatrix},
	\\ \label{Snexplicit}
&  S_5(k) = \begin{pmatrix}
 \frac{1}{m_{11}(s)} & s_{12} & -\frac{m_{31}(s)}{s_{22}} \\
 0 & s_{22} & 0 \\
 0 & s_{32} & \frac{m_{11}(s)}{s_{22}} \\
  \end{pmatrix},
&&
  S_6(k) =  \begin{pmatrix}
 \frac{m_{33}(s)}{s_{22}} & s_{12} & 0 \\
 0 & s_{22} & 0 \\
 -\frac{m_{13}(s)}{s_{22}} & s_{32} & \frac{1}{m_{33}(s)} \\
 \end{pmatrix},	
\end{align}
and
\begin{align}\nonumber
&  T_1(k) = \begin{pmatrix}
  1 & -\frac{s_{12}}{s_{11}} & \frac{m_{31}(s)}{m_{33}(s)} \\
 0 & 1 & -\frac{m_{32}(s)}{m_{33}(s)} \\
 0 & 0 & 1
   \end{pmatrix},
&&
  T_2(k) =  \begin{pmatrix}
 1 & -\frac{m_{21}(s)}{m_{22}(s)} & -\frac{s_{13}}{s_{11}} \\
 0 & 1 & 0 \\
 0 & -\frac{m_{23}(s)}{m_{22}(s)} & 1 
\end{pmatrix},
	\\ \nonumber
&  T_3(k) = \begin{pmatrix}
 1 & -\frac{m_{21}(s)}{m_{22}(s)} & 0 \\
 0 & 1 & 0 \\
 -\frac{s_{31}}{s_{33}} & -\frac{m_{23}(s)}{m_{22}(s)} & 1 
\end{pmatrix},
&&
  T_4(k) =  \begin{pmatrix}
 1 & 0 & 0 \\
 -\frac{m_{12}(s)}{m_{11}(s)} & 1 & 0 \\
 \frac{m_{13}(s)}{m_{11}(s)} & -\frac{s_{32}}{s_{33}} & 1 
\end{pmatrix},
	\\ \label{Tnexplicit}
&  T_5(k) = \begin{pmatrix}
 1 & 0 & 0 \\
 -\frac{m_{12}(s)}{m_{11}(s)} & 1 & -\frac{s_{23}}{s_{22}} \\
 \frac{m_{13}(s)}{m_{11}(s)} & 0 & 1 
  \end{pmatrix},
&&
  T_6(k) =  \begin{pmatrix}
 1 & 0 & \frac{m_{31}(s)}{m_{33}(s)} \\
 -\frac{s_{21}}{s_{22}} & 1 & -\frac{m_{32}(s)}{m_{33}(s)} \\
 0 & 0 & 1 
 \end{pmatrix},
\end{align}
\end{subequations}
where $m_{ij}(s)$ denotes the $(ij)$th minor of the matrix $s$.
\end{lemma}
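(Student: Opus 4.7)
The plan is to exhibit $x$-independent matrices $S_{n}(k), T_{n}(k)$ with $\hat{M}_n = \hat{Y} S_n = \hat{X} T_n$, where $\hat{Q}(x,k) := Q(x,k)\, e^{x \mathcal{L}(k)}$ for $Q \in \{M_n, Y, X\}$, and then to read off their entries from the normalizations of $M_n$ together with the linkage $S_n = s\, T_n$. For existence, I would note that $\hat{M}_n$, $\hat{Y}$, and $\hat{X}$ are $3 \times 3$ matrix solutions of the same linear ODE $\hat{Q}_{x} = (\mathcal{L}+\mathsf{U})\hat{Q}$. A direct computation using $1+\omega+\omega^{2}=0$ gives $l_1+l_2+l_3=0$ and hence $\det e^{x\mathcal{L}(k)} \equiv 1$; together with Proposition \ref{XYprop}$(g)$ this yields $\det \hat{X} = \det \hat{Y} = 1$, so $\hat{X}$ and $\hat{Y}$ are fundamental matrix solutions on all of $\mathbb{R}$, and the existence of $S_n(k), T_n(k)$ is immediate. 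The identity $\hat{X} = \hat{Y} s$ from \eqref{XYs} then gives $S_n(k) = s(k) T_n(k)$.

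To extract the vanishing patterns of $S_n$ and $T_n$ I would use the compact support of $u_0, v_0$ crucially. Choose $M>0$ so that $\mathsf{U}\equiv 0$ outside $[-M,M]$; then $X \equiv I$ on $[M,\infty)$ and $Y \equiv I$ on $(-\infty,-M]$. On $[M,\infty)$, the identity $\hat{M}_n = \hat{X} T_n$ reduces to $(M_n)_{ij}(x,k) = e^{x(l_i-l_j)}(T_n)_{ij}(k)$. On the other hand, whenever $\gamma^n_{ij} = (+\infty,x)$, the integral in \eqref{Mndef} is supported on $[x,\infty)$ where $\mathsf{U}$ vanishes, forcing $(M_n)_{ij}(x,k)=\delta_{ij}$ for $x \geq M$. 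Comparing these two expressions yields $(T_n)_{ii}=1$ and $(T_n)_{ij}=0$ for all $i\neq j$ with $\re l_i>\re l_j$ on $D_n$. The symmetric argument at $x \leq -M$, using $Y\equiv I$ and those entries with $\gamma_{ij}^n = (-\infty,x)$, produces $(S_n)_{ij}=0$ for $i\neq j$ with $\re l_i<\re l_j$ on $D_n$.

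With these vanishing patterns in hand, $S_n = s\,T_n$ becomes a square linear system in the remaining unknowns. On $D_1$ (where $\re l_1 < \re l_2 < \re l_3$), $T_1$ must be upper unitriangular and $S_1$ lower triangular, so $s = S_1 T_1^{-1}$ is the standard LU factorization of $s$; applying Cramer's rule column by column expresses each entry of $S_1$ and $T_1$ as $s_{ij}$ or a ratio of $2 \times 2$ minors $m_{ij}(s)$, reproducing \eqref{Snexplicit}--\eqref{Tnexplicit} for $n=1$. The other five cases are handled identically, with the ordering of $\{\re l_j\}$ on $D_n$ prescribing a permuted unitriangular zero pattern for $T_n$ and a complementary pattern for $S_n$. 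I expect the main obstacle to be purely organizational rather than conceptual: one must match, for each of the six regions, the correct permuted zero pattern to the correct $2 \times 2$ cofactor of $s$ and keep the signs straight. The Cramer denominators that appear are exactly the leading principal minors of (the appropriately permuted) $s$, which are precisely the Fredholm determinants attached to \eqref{Mndef} and so nonzero for $k \notin \mathcal{Z}$. As a cross-check, one can substitute the candidate $Y e^{x\widehat{\mathcal{L}(k)}}S_n$ back into \eqref{Mndef} and invoke uniqueness of the Fredholm solution.
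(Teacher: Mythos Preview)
Your proposal is correct and follows the standard argument that the paper defers to (the paper omits the proof, citing it as ``nearly identical'' to \cite[Lemma 4.4]{CLgoodboussinesq}): existence of the connection matrices from the fundamental-solution property of $\hat{X},\hat{Y}$, the relation $S_n=sT_n$ from \eqref{XYs}, the triangular zero-patterns read off from the asymptotic normalizations at $x\to\pm\infty$ using compact support, and then solving the resulting permuted LU system. One small remark: your identification of the Cramer denominators with the Fredholm determinants is heuristic; the cleaner justification that these denominators are nonzero on $\bar D_n\setminus(\hat{\mathcal{Q}}\cup\mathcal{Z})$ is simply that $\det M_n=1$ (Proposition~\ref{Mnprop}(e)) forces $\det S_n=\det T_n=1$, so the diagonal entries of the triangular factors---and hence $s_{11}$, $m_{33}(s)$, etc.---cannot vanish there.
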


Let $\eta \in C_c^\infty(\R)$ be a bump function such that $\eta(x) = 1$ for $|x| \leq 1$ and $\eta(x) = 0$ for $|x| \geq 2$. For $j \geq 1$, let $\eta_j(x) = \eta(x/j)$. If $f \in \mathcal{S}(\R)$, then $\eta_jf$ is a sequence in $C_c^\infty(\R)$ which converges to $f$ in $\mathcal{S}(\R)$ as $j \to \infty$. The following lemma is obtained in the same way as \cite[Lemma 4.5]{CLgoodboussinesq}. 

\begin{lemma}\label{sequencelemma}
Let $u_0, v_0 \in \mathcal{S}(\R)$.
%$C^1(\R) \times C(\R)$, where $C^n(\R)$ has the standard norm $\|f\|_{C^n(\R)} := \max_{x\in \R} \sum_{j=0}^n |\partial^jf(x)|$.
Let $\{s(k), M_n(x,k)\}$ and $\{s^{(i)}(k), M_n^{(i)}(x,k)\}$ be the spectral functions and eigenfunctions associated with $(u_0, v_0)$ and 
\begin{align}\label{uvsequence}
(u_0^{(i)}(x), v_0^{(i)}(x)) := (\eta_iu_0, \eta_iv_0) \in C_c^\infty(\R) \times C_c^\infty(\R), %\mathcal{S}(\R) \times \mathcal{S}(\R),
\end{align} 
respectively. Then 
\begin{align}\label{slimiti}
& \lim_{i\to\infty} s^{(i)}(k) = s(k), \qquad k \in \begin{pmatrix}
 \omega^2 \hat{\mathcal{S}} & \hat{\Gamma}_{1} & \hat{\Gamma}_{3} \\
 \hat{\Gamma}_{1} & \omega \hat{\mathcal{S}} & \hat{\Gamma}_{5} \\
\hat{\Gamma}_{3} & \hat{\Gamma}_{5} & \hat{\mathcal{S}}
 \end{pmatrix}\setminus \hat{\mathcal{Q}},
 	\\\label{sAlimiti}
& \lim_{i\to\infty} (s^A)^{(i)}(k) = s^A(k), \qquad k \in	 \begin{pmatrix}
 -\omega^2 \hat{\mathcal{S}} & \hat{\Gamma}_{4} & \hat{\Gamma}_{6} \\
\hat{\Gamma}_{4} & -\omega \hat{\mathcal{S}} & \hat{\Gamma}_{2} \\
\hat{\Gamma}_{6} & \hat{\Gamma}_{2} & -\hat{\mathcal{S}}
 \end{pmatrix}\setminus \hat{\mathcal{Q}},
	\\ \label{Xlimiti}
& \lim_{i\to \infty} X^{(i)}(x,k) = X(x,k), \qquad x \in \R, \ k \in (\omega^2 \hat{\mathcal{S}}, \omega \hat{\mathcal{S}}, \hat{\mathcal{S}}) \setminus \hat{\mathcal{Q}},
	\\ \label{Ylimiti}
& \lim_{i\to \infty} Y^{(i)}(x,k) = Y(x,k), \qquad x \in \R, \ k \in (-\omega^2 \hat{\mathcal{S}}, -\omega \hat{\mathcal{S}}, -\hat{\mathcal{S}}) \setminus \hat{\mathcal{Q}},
	\\ \label{Mnlimiti}
& \lim_{i\to \infty} M_n^{(i)}(x,k) = M_n(x,k), \qquad x \in \R, \ k \in \bar{D}_n\setminus (\hat{\mathcal{Q}}\cup\mathcal{Z}), \ n = 1, \dots, 6.
\end{align}
\end{lemma}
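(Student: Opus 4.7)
The plan is to exploit three facts in combination: (i) the sequence $(u_0^{(i)}, v_0^{(i)}) = (\eta_i u_0, \eta_i v_0)$ converges to $(u_0, v_0)$ in $\mathcal{S}(\R)$, and in particular $\|\eta_i f - f\|_{L^1} \to 0$ together with all corresponding derivative norms; (ii) the matrix-valued potential $\mathsf{U}^{(i)}(x,k)$ built from $(u_0^{(i)}, v_0^{(i)})$ via \eqref{mathsfUdef intro} is a polynomial in $u_0, v_0, u_{0x}$ with coefficients that are rational in $k$ away from $\hat{\mathcal{Q}}$, so $\mathsf{U}^{(i)}(\cdot,k) \to \mathsf{U}(\cdot,k)$ in $L^1(\R)$ uniformly for $k$ on compact subsets of $\C \setminus \hat{\mathcal{Q}}$, with a uniform majorant of the form $|\mathsf{U}^{(i)}(x,k)| \le C(k)\bigl(|u_0(x)| + |v_0(x)| + |u_{0x}(x)|\bigr)$; (iii) all the Volterra and Fredholm estimates underlying Propositions \ref{XYprop}, \ref{XAYAprop}, and \ref{Mnprop} depend only on such Schwartz-class norms of $u_0, v_0$, and hence pass uniformly to the approximating sequence.

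First I would handle $X, Y, X^A, Y^A$. For \eqref{Xlimiti}, fix $k \in (\omega^2\hat{\mathcal{S}}, \omega\hat{\mathcal{S}}, \hat{\mathcal{S}}) \setminus \hat{\mathcal{Q}}$ and write
\begin{align*}
X^{(i)}(x,k) - X(x,k) = -\int_x^\infty e^{(x-x')\widehat{\mathcal{L}(k)}} \bigl[(\mathsf{U}^{(i)} - \mathsf{U})X^{(i)} + \mathsf{U}(X^{(i)} - X)\bigr](x',k)\, dx'.
\end{align*}
The Volterra iteration for the second term gives a bound by $e^{C\|\mathsf{U}(\cdot,k)\|_{L^1(x,\infty)}}$ times the $L^1$ norm of $(\mathsf{U}^{(i)}-\mathsf{U})X^{(i)}$ on $(x,\infty)$; the pointwise bound $|X^{(i)}(x',k)| \le 1 + f_+^{(i)}(x')$ from \eqref{Xest} is uniform in $i$ by (iii), so dominated convergence (using (i)) yields \eqref{Xlimiti}. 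The arguments for $Y^{(i)}$, $X^{A,(i)}$, $Y^{A,(i)}$ are identical after reversing the direction of integration. The convergences \eqref{slimiti} and \eqref{sAlimiti} then follow by applying dominated convergence to the integrals \eqref{sdef intro}, \eqref{sAdef intro}: the integrands converge pointwise in $x$ (by the $X, Y$ convergence) and admit a uniform Schwartz majorant.

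The delicate case is \eqref{Mnlimiti}, because $M_n$ is defined via Fredholm equations \eqref{Mndef} and the resolvent exists only away from the zero set $\mathcal{Z}$ of the associated Fredholm determinant. My plan is to fix $k \in \bar{D}_n \setminus (\hat{\mathcal{Q}} \cup \mathcal{Z})$, and note that the Fredholm determinants $f_j^{n,(i)}(k)$ associated to $M_n^{(i)}$ are holomorphic in $k$, converge pointwise to $f_j^n(k)$ as $i \to \infty$ (by the analog of Remark \ref{fredholmremark} combined with dominated convergence applied to the trace expansion of $\log\det$), and are locally uniformly bounded in $i$. Hence $f_j^{n,(i)}(k) \to f_j^n(k) \neq 0$, and Cramer's formula implies that the Fredholm resolvents of the operators in \eqref{Mndef} for $(u_0^{(i)}, v_0^{(i)})$ converge in operator norm on $L^\infty(\gamma_{ij}^n)$ to the resolvent for $(u_0, v_0)$. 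Applying these resolvents to the source term $\delta_{ij}$ gives \eqref{Mnlimiti}.

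The main obstacle is the Fredholm step for $M_n$: ensuring that the resolvent remains uniformly bounded in $i$ as long as $k$ stays in a fixed compact set avoiding $\hat{\mathcal{Q}} \cup \mathcal{Z}$. This is where one must invest a little care — essentially by combining Hurwitz-type reasoning for the Fredholm determinants with the uniform Schwartz bounds on $\mathsf{U}^{(i)}$. Everything else is a standard dominated-convergence argument powered by the uniformity described in (iii), and follows the pattern of \cite[Lemma 4.5]{CLgoodboussinesq}, with the only substantive modification being the handling of the poles of $\mathsf{U}$ and $\mathsf{V}$ at the sixth roots of unity, which is accommodated by restricting attention to $k$ with $\dist(k, \mathcal{Q}) \ge \epsilon$ throughout.
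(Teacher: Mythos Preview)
Your proposal is correct and follows essentially the same approach as the paper, which simply refers to \cite[Lemma 4.5]{CLgoodboussinesq} for the argument. You have correctly filled in the standard Volterra/dominated-convergence details for $X,Y,X^A,Y^A,s,s^A$ and the Fredholm-determinant argument for $M_n$, and you have rightly identified that the only modification relative to \cite{CLgoodboussinesq} is replacing the double pole of $\mathsf{U}$ at the origin by simple poles at the sixth roots of unity, which is handled by working on compact sets bounded away from $\mathcal{Q}$.
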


%Recall that $M(x,k)$ is the sectionally analytic function defined by $M(x,k) = M_n(x,k)$ for $k \in D_n$.

\begin{lemma}[Jump condition for $M$]\label{Mjumplemma}
Let $u_0, v_0 \in \mathcal{S}(\R)$.
For each $x \in \R$, $M(x,k)$ satisfies the jump condition
\begin{align*}
  M_+(x,k) = M_-(x, k) v(x, 0, k), \qquad k \in \Gamma \setminus (\hat{\mathcal{Q}}\cup\mathcal{Z}),
\end{align*}
where $v$ is the jump matrix defined in (\ref{vdef}).
\end{lemma}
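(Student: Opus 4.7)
The plan is to first prove the identity when $u_0,v_0$ have compact support, using the explicit factorizations of Lemma \ref{Snexplicitlemma}, and then extend to general Schwartz data by approximation via Lemma \ref{sequencelemma}. Assuming compact support, I would use
$$M_n(x,k) \;=\; Y(x,k) e^{x\widehat{\mathcal{L}(k)}} S_n(k) \;=\; X(x,k) e^{x\widehat{\mathcal{L}(k)}} T_n(k),\qquad k\in\bar{D}_n\setminus(\hat{\mathcal{Q}}\cup\mathcal{Z}),$$
with $S_n,T_n$ given by \eqref{Snexplicit}--\eqref{Tnexplicit} and $T_n=s^{-1}S_n$. Since, by Proposition \ref{XYprop}(g) and Proposition \ref{XAYAprop}(g), $X(x,\cdot)$ and $Y(x,\cdot)$ are analytic on $\mathbb{C}\setminus\hat{\mathcal{Q}}$ for compactly supported data, any jump of $M$ across $\Gamma$ can only come from the jumps of the piecewise-defined $S_n$ (or $T_n$). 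Note also that at $t=0$ one has $\theta_{ij}=(l_i-l_j)x$, so the exponentials $e^{\pm\theta_{ij}}$ of \eqref{vdef} are exactly the entries of $e^{x\widehat{\mathcal{L}(k)}}$ acting by conjugation.

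For $k$ on the interior of $\Gamma_j\setminus\partial\D$ with $j\in\{1,\ldots,6\}$, the adjacent domains $D_n,D_{n'}$ lie on the same side of the unit circle, and using the $Y$-representation on both sides gives
$$v(x,0,k) \;=\; M_-(x,k)^{-1} M_+(x,k) \;=\; e^{x\widehat{\mathcal{L}(k)}}\bigl(S_n(k)^{-1} S_{n'}(k)\bigr).$$
Plugging in \eqref{Snexplicit} and simplifying the resulting minors via the $\mathbb{Z}_2$-symmetry $s(k)=\mathcal{B}s(k^{-1})\mathcal{B}$ (so e.g.\ $s_{12}(1/k)=s_{21}(k)$ and $m_{33}(s)=s_{11}s_{22}-s_{12}s_{21}$), together with the cofactor identity $s^A=(s^{-1})^T$ available for compactly supported data (Proposition \ref{sAprop}(g)) and the definitions $r_1=s_{12}/s_{11}$, $r_2=s^A_{12}/s^A_{11}$, each entry of $S_n^{-1}S_{n'}$ collapses to the corresponding entry of $v_j$ in \eqref{vdef}. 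For $k$ on one of the unit-circle arcs $\Gamma_7,\Gamma_8,\Gamma_9$, the ordering of $\re\, l_j$ reverses as $k$ crosses $\partial\D$, so the $Y$-representation is appropriate on one side and the $X$-representation on the other; the relation $X=Y e^{x\widehat{\mathcal{L}}}s$ then injects the full matrix $s$ into the jump. Invoking \eqref{sRsymm}, which on $\partial\D$ relates the minors of $s$ to those of $s^A$ through the explicit matrix $R(k)$ whose nonzero entries produce precisely the rational factor $\tilde r(k)=(\omega^2-k^2)/(1-\omega^2 k^2)$, and further applying the $\mathbb{Z}_3$-symmetry of $s$, one recognizes the combinations $r_1 r_2$, $r_1(\tfrac{1}{\omega^2 k}) r_2(\tfrac{1}{\omega^2 k})$, $r_1(\omega k)r_2(\omega k)$, etc.\ assembling themselves into the function $f$ of \eqref{def of f} appearing in the diagonal entries of $v_7,v_8,v_9$, with the off-diagonal entries similarly reducing to the stated expressions involving $r_1,r_2$ and their Schwartz conjugates.

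For general $u_0,v_0\in\mathcal{S}(\R)$, I would apply the approximation $(u_0^{(i)},v_0^{(i)})=(\eta_i u_0,\eta_i v_0)$ of \eqref{uvsequence}; each $(u_0^{(i)},v_0^{(i)})$ has compact support, so $M_+^{(i)}=M_-^{(i)}v^{(i)}$ holds on $\Gamma\setminus(\hat{\mathcal{Q}}\cup\mathcal{Z})$ by the previous step. Lemma \ref{sequencelemma} yields pointwise convergence $M_n^{(i)}\to M_n$, $s^{(i)}\to s$, and $(s^A)^{(i)}\to s^A$, hence $r_j^{(i)}\to r_j$ and $v^{(i)}(x,0,k)\to v(x,0,k)$ pointwise, and passing to the limit delivers the desired jump relation. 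The main obstacle is the unit-circle part of Step 2: tracking which minors of $s$ appear in the hybrid product $S_n^{-1} T_{n'}$ (or $T_n^{-1}S_{n'}$) and verifying that, after imposing \eqref{sRsymm} and the $\mathbb{Z}_2$/$\mathbb{Z}_3$-symmetries, they collapse to precisely the $r_1,r_2,\tilde r,f$-combinations displayed in $v_7,v_8,v_9$. The off-unit-circle jumps and the limiting step are comparatively routine.
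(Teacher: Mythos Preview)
Your approach is essentially the paper's: establish the jump for compactly supported data via the explicit factorization $M_n = Y e^{x\widehat{\mathcal L}}S_n$ of Lemma~\ref{Snexplicitlemma}, then pass to general $u_0,v_0\in\mathcal S(\R)$ by Lemma~\ref{sequencelemma}. The paper carries this out by noting that both $M_n$ solve \eqref{xpart}, hence $M_{+}=M_{-}e^{x\widehat{\mathcal L}}J(k)$ for an $x$-independent $J$, and evaluates at large negative $x$ (where $M_n=e^{x\widehat{\mathcal L}}S_n$) to identify $J=S_{n}^{-1}S_{n'}$; this is exactly your $e^{x\widehat{\mathcal L}}(S_n^{-1}S_{n'})$.

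Two small corrections to your unit-circle discussion. First, for compactly supported data Proposition~\ref{XYprop}(g) makes the $Y$-representation valid on all of $\C\setminus\hat{\mathcal Q}$, so there is no need to switch to $X$ on one side; the paper computes $J_7=S_4^{-1}S_1$ directly. (Your route via $X=Ye^{x\widehat{\mathcal L}}s$ and $T_{n'}=s^{-1}S_{n'}$ of course lands at the same product, just with an extra step.) Second, the relation \eqref{sRsymm} and the appearance of $\tilde r$ or ``Schwartz conjugates'' are not what drives the simplification: those encode the complex-conjugation symmetry, whereas the entries of $v_7,v_8,v_9$ in \eqref{vdef} involve only $r_1,r_2$ at points $\omega^j k$ and $1/(\omega^j k)$, with no bars. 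The reduction of $S_n^{-1}S_{n'}$ to the displayed $v_j$ uses only the cofactor identity $(s^A)_{ij}=m_{ji}(s)$ (Proposition~\ref{sAprop}(g), valid for compact support) together with the $\mathbb Z_3$/$\mathbb Z_2$ symmetries \eqref{symmetries of s}, which rewrite $s_{ij}(\omega k)$, $s_{ij}(1/k)$, and hence $r_1,r_2$ at shifted arguments, in terms of entries of $s(k)$.
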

\begin{proof}
The proof that
\begin{align}\label{M1M6v1}
M_{n} = M_{n-1} v_n, \qquad k \in \Gamma_{n}\setminus (\hat{\mathcal{Q}}\cup\mathcal{Z}), \; n=1,\ldots,6 \qquad (M_{0}:=M_{6})
\end{align}
is similar to the proof of \cite[Lemma 4.6]{CLgoodboussinesq} (the only difference is that $r_{1}^{\star}(k)$ and $r_{2}^{\star}(k)$ in \cite{CLgoodboussinesq} should here be replaced by $r_{1}(\frac{1}{k})$ and $r_{2}(\frac{1}{k})$, respectively). 
Let us prove that 
\begin{align}\label{M1M4v7}
M_{1}(x,k) = M_{4}(x,k) v_7(x,0,k), \qquad k \in \Gamma_{7}\setminus (\hat{\mathcal{Q}}\cup\mathcal{Z}).
\end{align}
For each $k\in \bar{D}_{n}\setminus (\hat{\mathcal{Q}}\cup\mathcal{Z})$, $M_n(x,k)$ is a smooth function of $x \in \R$ which satisfies (\ref{xpart}). Hence there exists a matrix $J_7(k)$ independent of $x$  such that
\begin{align}\label{M1M4J7}
M_1(x,k) = M_4(x,k) e^{x\widehat{\mathcal{L}(k)}}J_7(k), \qquad k \in \Gamma_{7}\setminus (\hat{\mathcal{Q}}\cup\mathcal{Z}).
\end{align}
If $u_0, v_0$ are compactly supported, then for large negative $x$ we have $M_n(x,k) = e^{x\widehat{\mathcal{L}(k)}}S_n(k)$, where $S_n(k)$ is the matrix defined in (\ref{Snexplicit}). Hence, evaluating (\ref{M1M4J7}) at a large negative $x$ gives
\begin{align}\label{J7 computation}
J_7(k) = S_4(k)^{-1}S_1(k) = e^{-x\widehat{\mathcal{L}(k)}}v_7(x,0,k),
\end{align}
where we have used (\ref{vdef}) for the last equality.
An application of Lemma \ref{sequencelemma} shows that (\ref{J7 computation}) remains true when $u_0,v_0 \in \mathcal{S}(\R)$ are not compactly supported. This proves (\ref{M1M4v7}); the proofs that $M_5 = M_2 v_8$ for $k \in \Gamma_{8}\setminus (\hat{\mathcal{Q}}\cup\mathcal{Z})$ and $M_3 = M_6 v_9$ for $k \in \Gamma_{9}\setminus (\hat{\mathcal{Q}}\cup\mathcal{Z})$ are similar. 
\end{proof}

%The proof of the following lemma is similar to \cite[Lemma 4.7]{CLgoodboussinesq}
\begin{lemma}\label{M1XYlemma}
Let $u_0, v_0 \in \mathcal{S}(\R)$.
The functions $M_2$ and $M_2^A = (M_2^{-1})^T$ can be expressed in terms of the entries of $X,Y,X^A, Y^A, s$, and $s^A$ as follows:
\begin{align*}
M_2 = \begin{pmatrix} 
X_{11} & \frac{Y_{12}}{s_{22}^A} & \frac{Y_{21}^AX_{32}^A - Y_{31}^AX_{22}^A}{s_{11}}  \\
X_{21} & \frac{Y_{22}}{s_{22}^A} & \frac{Y_{31}^AX_{12}^A - Y_{11}^AX_{32}^A}{s_{11}}  \\
X_{31} & \frac{Y_{32}}{s_{22}^A} & \frac{Y_{11}^AX_{22}^A - Y_{21}^AX_{12}^A}{s_{11}} 
\end{pmatrix}, \qquad
M_2^A = \begin{pmatrix} 
\frac{Y_{11}^A}{s_{11}} & X_{12}^A & \frac{X_{21}Y_{32} - X_{31}Y_{22}}{s_{22}^A}  \\
\frac{Y_{21}^A}{s_{11}} & X_{22}^A & \frac{X_{31}Y_{12} - X_{11}Y_{32}}{s_{22}^A}  \\
\frac{Y_{31}^A}{s_{11}} & X_{32}^A  & \frac{X_{11}Y_{22} - X_{21}Y_{12}}{s_{22}^A}
\end{pmatrix},
\end{align*}
for all $x \in \R$ and $k \in \bar{D}_2 \setminus (\hat{\mathcal{Q}}\cup\mathcal{Z})$.
\end{lemma}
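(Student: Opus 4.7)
The plan is to first verify the two identities under the additional assumption that $u_0, v_0$ have compact support, where all eigenfunctions are globally defined and the usual adjoint identities $X^A = (X^{-1})^T$, $Y^A = (Y^{-1})^T$, $s^A = (s^{-1})^T$ hold, and then extend to arbitrary Schwartz data by a density argument based on Lemma \ref{sequencelemma}.

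Assume first that $u_0, v_0$ have compact support. Lemma \ref{Snexplicitlemma} furnishes the two representations $M_2(x,k) = Y(x,k) e^{x\widehat{\mathcal{L}(k)}} S_2(k) = X(x,k) e^{x\widehat{\mathcal{L}(k)}} T_2(k)$ with $S_2, T_2$ as in \eqref{SnTnexplicit}. Reading off the first column of $T_2 = (1,0,0)^T$ immediately gives $(M_2)_{i1} = X_{i1}$, since the diagonal exponential factor $e^{x(l_1-l_1)} = 1$. The second column of $S_2$ has $1/m_{22}(s)$ as its only nonzero entry (in row 2), so $(M_2)_{i2} = Y_{i2}/m_{22}(s)$; because $\det s = 1$ in the compactly supported case, $m_{22}(s) = (s^A)_{22} = s_{22}^A$, which yields the claim for column 2.

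Next I would take the inverse transpose of the two representations above. Interpreting $Y e^{x\widehat{\mathcal{L}}} S_2$ as $Y \cdot e^{x\mathcal{L}} S_2 e^{-x\mathcal{L}}$ and using the cofactor identities, one obtains the dual formulas
\begin{align*}
M_2^A = Y^A(x,k)\, e^{-x\widehat{\mathcal{L}(k)}} S_2^A(k) = X^A(x,k)\, e^{-x\widehat{\mathcal{L}(k)}} T_2^A(k),
\end{align*}
where $S_2^A := (S_2^{-1})^T$ and $T_2^A := (T_2^{-1})^T$. A direct computation of $S_2^{-1}$ (using that the first column of $S_2$ is $(s_{11},s_{21},s_{31})^T$ while the rest of row 1 vanishes) shows that the first row of $S_2^{-1}$ is $(1/s_{11}, 0, 0)$, so the first column of $S_2^A$ is $(1/s_{11},0,0)^T$, yielding $(M_2^A)_{i1} = Y_{i1}^A/s_{11}$. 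A similar direct inversion of $T_2$ shows that the second column of $T_2^A$ is $(0,1,0)^T$, yielding $(M_2^A)_{i2} = X_{i2}^A$.

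For the remaining third columns of $M_2$ and $M_2^A$, I would invoke $\det M_2 = 1$ from Proposition \ref{Mnprop}(e), which gives the classical cofactor identity: if $\vec{m}_j$ denotes the $j$-th column of $M_2$, then $M_2^A = [\vec{m}_2 \times \vec{m}_3,\; \vec{m}_3 \times \vec{m}_1,\; \vec{m}_1 \times \vec{m}_2]$. Equivalently, column 3 of $M_2$ is the cross product of the first two columns of $M_2^A$, and column 3 of $M_2^A$ is the cross product of the first two columns of $M_2$. Substituting the expressions for columns 1 and 2 already obtained and expanding the cross products componentwise reproduces exactly the six entries in column 3 of $M_2$ and $M_2^A$ stated in the lemma. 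Finally, to drop the compact support assumption, one approximates $u_0, v_0$ by $u_0^{(i)} = \eta_i u_0$, $v_0^{(i)} = \eta_i v_0$ and applies Lemma \ref{sequencelemma}: the pointwise convergence of $X^{(i)}, Y^{(i)}, X^{A,(i)}, Y^{A,(i)}, s^{(i)}, s^{A,(i)}, M_2^{(i)}$ (on the complement of $\hat{\mathcal{Q}}\cup \mathcal{Z}$) allows the identities to be passed to the limit entrywise. The main point requiring care is not any single algebraic manipulation — all of them are elementary — but rather the bookkeeping needed to verify that the exponential factors $e^{x(l_i-l_j)}$ cancel in the cross-product identity, which they do precisely because $\det M_2 = 1$ forces the unwanted off-diagonal exponential contributions to combine into determinantal minors of $X$, $Y$, $X^A$, $Y^A$.
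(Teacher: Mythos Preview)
Your proof is correct and follows essentially the same route as the paper: both arguments read off the first two columns of $M_2$ and $M_2^A$ from the explicit matrices $T_2$, $S_2$, $T_2^A$, $S_2^A$ in Lemma \ref{Snexplicitlemma} (under compact support), recover the third columns from the cofactor identity $M=(M^A)^A$ (your cross-product formulation), and then pass to general Schwartz data via Lemma \ref{sequencelemma}. One small remark: your closing concern about exponential factors $e^{x(l_i-l_j)}$ in the cross-product step is unnecessary, since by that stage columns 1 and 2 of $M_2$ and $M_2^A$ have already been identified with $[X]_1$, $[Y]_2/s_{22}^A$, $[Y^A]_1/s_{11}$, $[X^A]_2$, and no exponentials remain.
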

\begin{proof}
Let $(u_0^{(i)}(x), v_0^{(i)}(x))$ be as in (\ref{uvsequence}). By Lemma \ref{Snexplicitlemma} we have
\begin{align*}
   M_n^{(i)}(x,k) & = Y^{(i)}(x, k) e^{x\widehat{\mathcal{L}(k)}} S_n^{(i)}(k)	
= X^{(i)}(x, k) e^{x\widehat{\mathcal{L}(k)}} T_n^{(i)}(k), \qquad k \in \bar{D}_n\setminus (\hat{\mathcal{Q}}\cup\mathcal{Z}).
\end{align*}
In particular, the first two columns of $M_2^{(i)}$ are given by
\begin{align*}
\begin{cases}
 [M_2^{(i)}(x,k)]_1 = [X^{(i)}(x,k)]_1,
	\\
 [M_2^{(i)}(x,k)]_2 = \frac{[Y^{(i)}(x,k)]_2}{m_{22}(s^{(i)})},
\end{cases} \qquad x \in \R, \; k \in \bar{D}_2\setminus (\hat{\mathcal{Q}}\cup\mathcal{Z}), \; i \geq 1,
\end{align*}
where we recall that $[A]_j$ denotes the $j$th column of a matrix $A$. 
Using Lemma \ref{sequencelemma}  to take $i \to \infty$, we obtain
\begin{align}\label{M1XY}
\begin{cases}
 [M_2(x,k)]_1 = [X(x,k)]_1,
	\\
 [M_2(x,k)]_2 = \frac{[Y(x,k)]_2}{(s^A(k))_{22}},
 \end{cases} \quad x \in \R, \ k \in \bar{D}_2\setminus (\hat{\mathcal{Q}}\cup\mathcal{Z}).
\end{align} 
Analogous arguments using that
\begin{align*}
(S_2(k)^{-1})^T = \begin{pmatrix}
 \frac{1}{s_{11}} & s^{A}_{12} & -\frac{s_{31}}{s^{A}_{22}} \\
 0 & s^{A}_{22} & 0 \\
 0 & s^{A}_{32} & \frac{s_{11}}{s^{A}_{22}}
 \end{pmatrix}, \qquad
(T_{2}(k)^{-1})^T = \begin{pmatrix}
  1 & 0 & 0 \\
 \frac{s_{12}}{s_{11}} & 1 & -\frac{s^{A}_{23}}{s^{A}_{22}} \\
 \frac{s_{13}}{s_{11}} & 0 & 1 
 \end{pmatrix},
\end{align*} 
show that
\begin{align}\label{M1AXAYA}
\begin{cases}
 [M_2^A(x,k)]_1 = \frac{[Y^A(x,k)]_1}{s_{11}(k)},
	\\
 [M_2^A(x,k)]_2 = [X^A(x,k)]_2,
 \end{cases} \quad x \in \R, \ k \in \bar{D}_2\setminus (\hat{\mathcal{Q}}\cup\mathcal{Z}).
\end{align} 
The third columns of $M_2$ and $M_2^{A}$ can then be expressed as in the statement using that $M = ((M^A)^{-1})^T$, $M^A = (M^{-1})^T$, and $\det M = 1$.
\end{proof}

\begin{lemma}\label{QtildeQlemma}
Suppose $u_0,v_0 \in \mathcal{S}(\R)$ are such that Assumption \ref{solitonlessassumption} holds. Then the statements of Proposition \ref{Mnprop} and Lemmas \ref{Snexplicitlemma}-\ref{M1XYlemma} hold with $\mathcal{Z}$ replaced by the empty set.
\end{lemma}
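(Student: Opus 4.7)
The plan is to identify the Fredholm determinants $f_j^n(k)$ associated with the columns of $M_n$ (see Remark \ref{fredholmremark}) with specific diagonal entries of $s(k)$ and $s^A(k)$, and then to use Assumption \ref{solitonlessassumption} together with the $\mathbb{Z}_3$ and $\mathbb{Z}_2$ symmetries of Propositions \ref{sprop}$(d)$ and \ref{sAprop}$(d)$ to show that these entries are nonzero throughout $\bar{D}_n \setminus \hat{\mathcal{Q}}$.

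First I would propagate Assumption \ref{solitonlessassumption} around the six sectors. A direct computation from (\ref{symmetries of s}) gives $s_{22}(k) = s_{11}(\omega k)$, $s_{33}(k) = s_{11}(\omega^{-1}k)$, and $s_{11}(k) = s_{22}(k^{-1})$; the analogous relations hold for $s^{A}$ by (\ref{symmetries of sA}). Using $l_j(\omega k) = l_{j+1 \bmod 3}(k)$ and the fact that $k \mapsto k^{-1}$ transposes $l_1$ and $l_2$ while fixing $l_3$, one checks how the maps $k \mapsto \omega^{\pm 1}k$ and $k \mapsto k^{-1}$ permute the sectors $\{D_n\}_1^6$. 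Combining these permutations with the hypothesis yields that $s_{11}$, $s_{22}$, $s_{33}$ are nonzero on $\bar{D}_1 \cup \bar{D}_2$, $\bar{D}_5 \cup \bar{D}_6$, $\bar{D}_3 \cup \bar{D}_4$ respectively, while $s^A_{11}$, $s^A_{22}$, $s^A_{33}$ are nonzero on $\bar{D}_4 \cup \bar{D}_5$, $\bar{D}_2 \cup \bar{D}_3$, $\bar{D}_1 \cup \bar{D}_6$ respectively (all statements modulo $\hat{\mathcal{Q}}$).

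Next I would handle the compactly supported case. When $u_0, v_0$ have compact support, $\det s \equiv 1$ by Proposition \ref{sprop}$(g)$, so $m_{ii}(s) = s^A_{ii}$. Inspecting the matrices $S_n$ in (\ref{Snexplicit}), the only denominators appearing in column $j$ of $S_n$ are precisely the $s_{ii}$ and $s^A_{ii}$ shown above to be nonzero on $\bar{D}_n$. Since Lemma \ref{Snexplicitlemma} gives $M_n = Y e^{x \widehat{\mathcal{L}(k)}} S_n$ on $\bar{D}_n \setminus (\hat{\mathcal{Q}} \cup \mathcal{Z})$, the right-hand side furnishes an analytic continuation of $M_n$ to $\bar{D}_n \setminus \hat{\mathcal{Q}}$, and a direct inspection of the integral equations (\ref{Mndef}) shows that $f_j^n(k)$ vanishes exactly when the corresponding denominator in $[S_n]_j$ vanishes. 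This forces $\mathcal{Z} = \emptyset$ in the compactly supported case.

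For general Schwartz data I would pass to the limit using the sequence $(u_0^{(i)}, v_0^{(i)}) = (\eta_i u_0, \eta_i v_0)$ from Lemma \ref{sequencelemma}. For fixed $k \in \bar{D}_n \setminus \hat{\mathcal{Q}}$, the locally uniform convergences (\ref{slimiti})--(\ref{sAlimiti}), together with the non-vanishing of the limiting $s_{ii}$ and $s^A_{ii}$ established above, ensure that the relevant denominators stay bounded away from zero for all sufficiently large $i$; the contractions defining $M_n^{(i)}$ via (\ref{Mndef}) therefore remain uniformly invertible on a neighborhood of $k$. Passing to the limit in (\ref{Mndef}) by dominated convergence identifies $f_j^n(k)$ with the corresponding entry of $s$ or $s^A$, which is nonzero by Assumption \ref{solitonlessassumption}. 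Hence $\mathcal{Z} = \emptyset$ in general, and with $\mathcal{Z}$ removed all conclusions of Proposition \ref{Mnprop} and Lemmas \ref{Snexplicitlemma}--\ref{Mjumplemma} remain valid verbatim. The main obstacle is the identification of $f_j^n$ with entries of $s$, $s^A$ for non-compactly-supported data, since the contour-deformation trick underlying Lemma \ref{Snexplicitlemma} is unavailable there; the density argument via Lemma \ref{sequencelemma} is what circumvents this.
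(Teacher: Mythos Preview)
Your approach differs from the paper's and carries a genuine gap. The paper does not attempt to show that $\mathcal{Z}=\emptyset$; instead it invokes Lemma~\ref{M1XYlemma}, which expresses the columns of $M_2$ and $M_2^A$ \emph{directly} in terms of $X,Y,X^A,Y^A,s,s^A$ for general Schwartz data (the density argument via Lemma~\ref{sequencelemma} is already built into the proof of Lemma~\ref{M1XYlemma}). The only denominators appearing there are $s_{11}$ and $s^A_{22}=s^A_{11}(1/k)$, both nonzero on $\bar D_2\setminus\hat{\mathcal Q}$ by Assumption~\ref{solitonlessassumption} and symmetry. Hence $M_2$ is bounded and analytic on $\bar D_2\setminus\hat{\mathcal Q}$, and one simply \emph{extends} $M_n$ to any $k_0\in\mathcal Z$ by continuity. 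No statement about the Fredholm determinants $f_j^n$ is needed.

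The gap in your argument is the claim that ``a direct inspection of the integral equations~(\ref{Mndef}) shows that $f_j^n(k)$ vanishes exactly when the corresponding denominator in $[S_n]_j$ vanishes.'' This identification of the Fredholm determinant with a scattering entry is a genuine theorem (in the Beals--Deift--Tomei framework), not an inspection; it requires relating a nontrivial null solution of the homogeneous Fredholm equation to a decaying eigenfunction and hence to a zero of $s_{ii}$ or $s^A_{ii}$. Your subsequent density step compounds the issue: Lemma~\ref{sequencelemma} only asserts $M_n^{(i)}\to M_n$ for $k\notin\mathcal Z$, so you cannot directly conclude anything at points of $\mathcal Z$ from that convergence, and ``dominated convergence in~(\ref{Mndef})'' does not by itself yield nonvanishing of the limiting Fredholm determinant (a bounded solution can exist even when the determinant vanishes). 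What you would actually need is continuity of $f_j^n$ in the kernel together with the identification in the compactly supported case, neither of which you have justified. The paper's route via Lemma~\ref{M1XYlemma} bypasses all of this: it already delivers the explicit formula for general $u_0,v_0\in\mathcal S(\R)$, so the extension across $\mathcal Z$ is immediate once one checks the denominators are nonzero.
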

\begin{proof}
Lemma \ref{M1XYlemma} implies that if $u_0, v_0$ satisfy Assumption \ref{solitonlessassumption}, then $M$ has no singularities apart from $k=\kappa_{j}$, $j=1,\ldots,6$. Indeed, $s_{11}^{A}(k) \neq 0$ for $k \in \bar{D}_{5}\setminus \hat{\mathcal{Q}}$ so $s_{22}^{A}(k)=s_{11}^{A}(1/k) \neq 0$ for $k \in \bar{D}_{2}\setminus \hat{\mathcal{Q}}$ and thus $M_2$ has no singularities in $\bar{D}_2 \setminus \mathcal{Q}$. The symmetries in (\ref{Msymma}) then imply that $M_n$ has no singularities in $\bar{D}_n \setminus \mathcal{Q}$ for any $n$.
Hence, for any $n$, $M_{n}(x,k)$ can be extended to any $k_j \in (\mathcal{Z} \cap \bar{D}_n)  \setminus \mathcal{Q}$ by continuity, from which the claim follows.
\end{proof}

We now turn to the behavior of $M$ as $k \to \pm 1$.

\begin{lemma}\label{Mat1lemma}
Suppose $u_0,v_0 \in \mathcal{S}(\R)$ are such that Assumptions \ref{solitonlessassumption} and \ref{originassumption} hold.
Let $p \geq 1$ be an integer.
Then there are $3 \times 3$-matrix valued functions $\{\mathcal{M}_2^{(l)}(x),\widetilde{\mathcal{M}}_2^{(l)}(x), \mathcal{N}_2^{(l)}(x), \widetilde{\mathcal{N}}_2^{(l)}(x)\}$, $l = -1,0, \dots, p$, with the following properties:
\begin{enumerate}[$(a)$]
\item The function $M$ satisfies, for $x \in \R$,
\begin{align*}
\begin{cases}
& \big|M_2(x,k) - \sum_{l=-1}^p \mathcal{M}_{2}^{(l)}(x)(k-1)^l\big| \leq C
|k-1|^{p+1}, \qquad |k-1| \leq \frac{1}{2}, \ k \in \bar{D}_2, \\
& \big|M_2(x,k) - \sum_{l=-1}^p \widetilde{\mathcal{M}}_2^{(l)}(x)(k+1)^l\big| \leq C
|k+1|^{p+1}, \qquad |k+1| \leq \frac{1}{2}, \ k \in \bar{D}_2
\end{cases}
\end{align*}
\item The function $M^{-1}$ satisfies, for $x \in \R$,
\begin{align*}
\begin{cases}
& \big|M_2(x,k)^{-1} - \sum_{l=-1}^p \mathcal{N}_{2}^{(l)}(x)(k-1)^l\big| \leq C
|k-1|^{p+1}, \qquad |k-1| \leq \frac{1}{2}, \ k \in \bar{D}_2, \\
& \big|M_2(x,k)^{-1} - \sum_{l=-1}^p \widetilde{\mathcal{N}}_2^{(l)}(x)(k+1)^l\big| \leq C
|k+1|^{p+1}, \qquad |k+1| \leq \frac{1}{2}, \ k \in \bar{D}_2.
\end{cases}
\end{align*}
\item For each $l \geq -1$, $\{\mathcal{M}_2^{(l)}(x),\widetilde{\mathcal{M}}_2^{(l)}(x), \mathcal{N}_2^{(l)}(x), \widetilde{\mathcal{N}}_2^{(l)}(x)\}$ are smooth functions of $x \in \R$.
\item The first coefficients are given by
\begin{align*}
& \mathcal{M}_{2}^{(-1)}(x) = \begin{pmatrix}
\alpha(x) & 0 & \beta(x) \\
-\alpha(x) & 0 & -\beta(x) \\
0 & 0 & 0
\end{pmatrix}, & & \mathcal{M}_{2}^{(0)}(x) = \begin{pmatrix}
\star & \gamma(x) & \star \\
\star & -\gamma(x) & \star \\
\star & 0 & \star
\end{pmatrix}, \\
& \widetilde{\mathcal{M}}_{2}^{(-1)}(x) = \begin{pmatrix}
\tilde{\alpha}(x) & 0 & \tilde{\beta}(x) \\
-\tilde{\alpha}(x) & 0 & -\tilde{\beta}(x) \\
0 & 0 & 0
\end{pmatrix}, & & \widetilde{\mathcal{M}}_{2}^{(0)}(x) = \begin{pmatrix}
\star & \tilde{\gamma}(x) & \star \\
\star & -\tilde{\gamma}(x) & \star \\
\star & 0 & \star
\end{pmatrix}.
\end{align*}
Here $\star$ denotes an entry whose value is irrelevant for us and
\begin{align*}
\alpha = \alpha_{1}, \quad \beta = \frac{-\tilde{\alpha}_{3}\tilde{\beta}_{1}+\tilde{\alpha}_{1}\tilde{\beta}_{3}}{\mathfrak{s}_{1}}, \quad \gamma = -\frac{\alpha_{3}}{\mathfrak{s}_{1}^{A}}, \quad \tilde{\alpha} = \alpha_{2}, \quad \tilde{\beta} = \frac{-\tilde{\alpha}_{4}\tilde{\beta}_{2}+\tilde{\alpha}_{2}\tilde{\beta}_{4}}{\mathfrak{s}_{3}}, \quad \tilde{\gamma} = -\frac{\alpha_{4}}{\mathfrak{s}_{3}^{A}},
\end{align*}
where the functions $\{\alpha_{j},\tilde{\alpha}_{j},\tilde{\beta}_{j}\}_{j=1}^{4}$ are defined in \eqref{Cjpm1p} and \eqref{Djpm1p}, and the constants $\{\mathfrak{s}_{j},\mathfrak{s}^{A}_{j}\}_{j=1,3}$ are defined in \eqref{s1pm1p and sm1pm1p} and \eqref{sA1pm1p and sAm1pm1p}.
\item For each $x \in \mathbb{R}$, the function $k \mapsto \begin{pmatrix}
1 & 1 & 1
\end{pmatrix}M_{2}(x,k)$ is bounded as $k \to \pm 1$, $k \in \bar{D}_{2}$.
\end{enumerate}
\end{lemma}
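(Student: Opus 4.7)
The plan is to substitute the explicit formulas for $M_2$ and $M_2^A$ from Lemma \ref{M1XYlemma} into the expressions claimed in the lemma, and then to insert the local Laurent expansions at $k = \pm 1$ of $X, Y, X^A, Y^A, s, s^A$ furnished by Propositions \ref{XYat1prop}, \ref{XAYAat1prop}, \ref{sprop}, and \ref{sAprop}. Assumption \ref{originassumption} guarantees that $\mathfrak{s}_1, \mathfrak{s}_3, \mathfrak{s}_1^A, \mathfrak{s}_3^A$ are all nonzero, so the denominators $s_{11}$ and $s_{22}^A$ carry genuine simple poles, and the $\mathcal{B}$-symmetry of $s^A$ yields $s_{22}^A(k) = s_{11}^A(k^{-1}) = -\mathfrak{s}_1^A/(k-1) + O(1)$ near $k=1$ (with the analogous formula near $k=-1$). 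Since the input expansions can be differentiated any number of times and all ingredients are smooth in $x$, the expansions (a), (b) up to order $p$ and the smoothness in (c) then drop out by formal multiplication and division of Laurent series.

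For the explicit coefficients in (d), I would treat the three columns of $M_2$ separately. The first column equals $[X]_1$, whose leading singular part at $k=1$ is $(\alpha_1,-\alpha_1,0)^T/(k-1)$ by \eqref{Cjpm1p}, giving the first column of $\mathcal{M}_{2}^{(-1)}$ with $\alpha = \alpha_{1}$. The second column equals $[Y]_{2}/s_{22}^{A}$; since $[Y]_2$ has singular part $(\alpha_3,-\alpha_3,0)^T/(k-1)$ and $s_{22}^A$ has singular part $-\mathfrak{s}_1^A/(k-1)$, the quotient is regular at $k=1$ with value $(-\alpha_3/\mathfrak{s}_1^A)(1,-1,0)^T$, producing the second column of $\mathcal{M}_{2}^{(0)}$ with $\gamma = -\alpha_3/\mathfrak{s}_1^A$. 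For the third column, the $2\times 2$ minor $Y^{A}_{i_1,1}X^{A}_{i_2,2} - Y^{A}_{i_2,1}X^{A}_{i_1,2}$ has $(k-1)^{-2}$-coefficient equal to $\pm(-\tilde\alpha_3\tilde\beta_1+\tilde\alpha_1\tilde\beta_3)$ for the first two rows (from \eqref{Djpm1p} with $i=1,3$), and the structural identity $\tilde\alpha_3\tilde\alpha_1 - \tilde\alpha_3\tilde\alpha_1 = 0$ in the third row; dividing by $s_{11}(k) \sim \mathfrak{s}_1/(k-1)$ yields $(\beta,-\beta,0)^T/(k-1)$ in the third column of $\mathcal{M}_2^{(-1)}$ with $\beta = (-\tilde\alpha_3\tilde\beta_1+\tilde\alpha_1\tilde\beta_3)/\mathfrak{s}_1$. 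The analysis at $k=-1$ is identical after relabeling the indices $1\to 2$, $3\to 4$ in the $\alpha,\tilde\alpha,\tilde\beta$ and $\mathfrak{s}_1\to\mathfrak{s}_3$, $\mathfrak{s}_1^A\to\mathfrak{s}_3^A$. Assertion (b) is proved by the same reading applied to $M_2^A$ from Lemma \ref{M1XYlemma}, followed by transposition since $M_2^{-1} = (M_2^A)^T$.

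Assertion (e) is then immediate from the explicit form in (d): every column of $\mathcal{M}_{2}^{(-1)}$ and of $\widetilde{\mathcal{M}}_{2}^{(-1)}$ has its first two entries opposite in sign and its third entry zero, so its column sums vanish, i.e.\ $(1,1,1)\mathcal{M}_2^{(-1)} = (1,1,1)\widetilde{\mathcal{M}}_2^{(-1)} = (0,0,0)$. Hence the simple pole of $M_2$ at $k=\pm 1$ is annihilated by left-multiplication by $(1,1,1)$, and the remaining part is bounded by (a).

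The main obstacle will be verifying the structural zeros in the leading coefficient matrices---the vanishing second column of $\mathcal{M}_{2}^{(-1)}$, the zero entry in the third row of $\mathcal{M}_2^{(-1)}$ and the $(3,2)$-position of $\mathcal{M}_2^{(0)}$, and the $(k-1)^{-2}$-cancellation in the $(3,3)$-entry of the third column. All of these trace back to the rank-one block structure of $C_i^{(-1)}$ and $D_i^{(-1)}$ in \eqref{Cjpm1p} and \eqref{Djpm1p}---namely the vanishing third row of $C_i^{(-1)}$, the vanishing third column of $D_i^{(-1)}$, and the fact that the first two rows (resp.\ columns) coincide up to an overall sign. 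These structural features are themselves dictated by the rank-one singular behavior of $L(x,t,k)$ at $k=\pm 1$, and tracking them consistently across the three-term ratios in Lemma \ref{M1XYlemma} is what produces the cancellations responsible for (e).
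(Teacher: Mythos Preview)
Your proposal is correct and follows exactly the approach of the paper's proof, which is itself only a brief sketch: substitute the expressions from Lemma \ref{M1XYlemma} for $M_2$ and $M_2^A$, insert the Laurent expansions of $X, Y, X^A, Y^A, s, s^A$ at $k=\pm 1$ from Propositions \ref{XYat1prop}, \ref{XAYAat1prop}, \ref{sprop}, \ref{sAprop}, and use Assumption \ref{originassumption} to ensure the relevant $\mathfrak{s}$-coefficients are nonzero. Your column-by-column verification of the leading coefficients in (d) and the observation that the rank-one structure of $C_i^{(-1)}$, $D_i^{(-1)}$ forces the column sums of $\mathcal{M}_2^{(-1)}$ to vanish (giving (e)) are precisely the details the paper leaves implicit.
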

\begin{proof}
Lemma \ref{M1XYlemma} expresses $M_2$ and $M_2^A$ for $k \in \bar{D}_2\setminus \hat{\mathcal{Q}}$ in terms of $X, Y, s, X^A, Y^A, s^A$. Moreover, since $u_0,v_0$ satisfy Assumption \ref{originassumption}, $\mathfrak{s}_{1},\mathfrak{s}_{3} \neq 0$ and $\mathfrak{s}_{1}^{A},\mathfrak{s}_{3}^{A} \neq 0$. Hence, using the expansions of $X, Y, s, X^A, Y^A, s^A$ as $k \to \pm 1$ given in Propositions \ref{XYat1prop}, \ref{sprop}, \ref{XAYAat1prop}, and \ref{sAprop} and recalling Assumptions \ref{solitonlessassumption} and \ref{originassumption}, the lemma follows. 
\end{proof}

\begin{remark}\upshape
The statement of Lemma \ref{Mat1lemma} describes the asymptotics of $M_{2}(k)$ as $k \to \pm 1$, $k \in \bar{D}_{2}$. The asymptotics in the other sectors near the points $\kappa_{j}$, $j=1,...,6$, can then be obtained from the symmetries $M(x,k) = \mathcal{A}M(x,\omega k) \mathcal{A}^{-1} = \mathcal{B} M(x,1/k)\mathcal{B}$.
\end{remark}

We next consider the time dependence of $M$. Given a Schwartz class solution of (\ref{boussinesqsystem}) on $\R \times [0,T)$, we define time-dependent eigenfunctions $\{M_n(x,t,k)\}_{n=1}^6$ by replacing $\mathsf{U}(x,k)$ with the time-dependent matrix $\mathsf{U}(x,t,k)$ in the integral equations (\ref{Mndef}). We then define the sectionally analytic function $M(x,t,k)$ by setting $M(x,t,k) = M_n(x,t,k)$ for $k \in D_n$. 
We will show that $M(x,t,k)$ satisfies a $3 \times 3$-matrix RH problem. 
Due to the singularities of $M$ at the sixth roots of unity, the formulation of this RH problem is rather complicated. 

\begin{RHproblem}[RH problem for $M$]\label{RH problem for M}
Find $M(x,t,k)$ with the following properties:
\begin{enumerate}[(a)]
\item $M(x,t,\cdot) : \mathbb{C}\setminus \Gamma \to \mathbb{C}^{3 \times 3}$ is analytic.

\item The limits of $M(x,t,k)$ as $k$ approaches $\Gamma\setminus (\Gamma_\star \cup \mathcal{Q})$ from the left and right exist, are continuous on $\Gamma\setminus (\Gamma_\star \cup \mathcal{Q})$, and satisfy
\begin{align}\label{Mjumpcondition}
& M_{+}(x,t,k) = M_{-}(x,t,k)v(x,t,k), \qquad k \in \Gamma \setminus (\Gamma_\star \cup \mathcal{Q}),
\end{align}
where $v$ is defined by \eqref{vdef}.

\item As $k \to \infty$, 
\begin{align}\label{asymp for M at infty in RH def}
M(x,t,k) = I + \frac{M^{(1)}(x,t)}{k} + \frac{M^{(2)}(x,t)}{k^{2}} + O\bigg(\frac{1}{k^3}\bigg),
\end{align}
where the matrices $M^{(1)}$ and $M^{(2)}$ depend on $x$ and $t$ but not on $k$, and satisfy
\begin{align}\label{singRHMatinftyb}
M_{12}^{(1)} = M_{13}^{(1)} = M_{12}^{(2)} + M_{21}^{(2)} = 0.
\end{align}

\item There exist matrices $\{\mathcal{M}_2^{(l)}(x,t),\widetilde{\mathcal{M}}_2^{(l)}(x,t)\}_{l=-1}^{+\infty}$ depending on $x$ and $t$ but not on $k$ such that, for any $N \geq -1$,
\begin{align}\label{singRHMat0}
& M(x,t,k) = \sum_{l=-1}^{N} \mathcal{M}_2^{(l)}(x,t)(k-1)^{l} + O((k-1)^{N+1}) \qquad \text{as}\ k \to 1, \ k \in \bar{D}_2, \\
& M(x,t,k) = \sum_{l=-1}^{N} \widetilde{\mathcal{M}}_2^{(l)}(x,t)(k+1)^{l} + O((k+1)^{N+1}) \qquad \text{as}\ k \to -1, \ k \in \bar{D}_2.
\end{align}
Furthermore, there exist scalar coefficients $\alpha, \beta, \gamma, \tilde{\alpha}, \tilde{\beta}, \tilde{\gamma}$ depending on $x$ and $t$, but not on $k$, such that
\begin{align}\nonumber
& \mathcal{M}_{2}^{(-1)}(x,t) = \begin{pmatrix}
\alpha(x,t) & 0 & \beta(x,t) \\
-\alpha(x,t) & 0 & -\beta(x,t) \\
0 & 0 & 0
\end{pmatrix}, & & \mathcal{M}_{2}^{(0)}(x,t) = \begin{pmatrix}
\star & \gamma(x,t) & \star \\
\star & -\gamma(x,t) & \star \\
\star & 0 & \star
\end{pmatrix}, 
	\\ \label{mathcalMcoefficients}
& \widetilde{\mathcal{M}}_{2}^{(-1)}(x,t) = \begin{pmatrix}
\tilde{\alpha}(x,t) & 0 & \tilde{\beta}(x,t) \\
-\tilde{\alpha}(x,t) & 0 & -\tilde{\beta}(x,t) \\
0 & 0 & 0
\end{pmatrix}, & & \widetilde{\mathcal{M}}_{2}^{(0)}(x,t) = \begin{pmatrix}
\star & \tilde{\gamma}(x,t) & \star \\
\star & -\tilde{\gamma}(x,t) & \star \\
\star & 0 & \star
\end{pmatrix}.
\end{align}

\item $M$ satisfies the symmetries $M(x,t, k) = \mathcal{A} M(x,t,\omega k)\mathcal{A}^{-1} = \mathcal{B} M(x,t,\tfrac{1}{k})\mathcal{B}$ and
\begin{align*}
\overline{(M^{A})(x,t,\bar{k})} = \bigg\{ \frac{u(x,t)}{2}\begin{pmatrix}
1 & 1 & 1 \\
1 & 1 & 1 \\
1 & 1 & 1 
\end{pmatrix} + R(k)^{-1} \bigg\}M(x,t,k)R(k), \qquad k \in \C \setminus \Gamma,
\end{align*}
where $M^{A}=(M^{-1})^{T}$ and $R$ is given by \eqref{def of R}. 
\end{enumerate}
\end{RHproblem}

It can be shown that the conditions (\ref{singRHMatinftyb}) make the solution of RH problem \ref{RH problem for M} unique, but we will not need this fact.

\begin{proposition}\label{RHth}
Suppose $\{u(x,t), v(x,t)\}$ is a Schwartz class solution of (\ref{boussinesqsystem}) on $\R \times [0,T)$ and initial data $u_0, v_0 \in \mathcal{S}(\R)$ for some $T \in (0, +\infty]$ such that Assumptions \ref{solitonlessassumption} and \ref{originassumption} hold. Define $\{r_j(k)\}_1^2$ in terms of $u_0, v_0$ by (\ref{r1r2def}). Define the sectionally analytic function $M$ by $M(x,t,k) = M_n(x,t,k)$ for $k \in D_n$.
Then $M(x,t,k)$ satisfies RH problem \ref{RH problem for M} for each $(x,t) \in \R \times [0,T)$ and the formulas 
\begin{align}\label{recoveruv}
\begin{cases}
 \displaystyle{u(x,t) = -i\sqrt{3}\frac{\partial}{\partial x}\lim_{k\to \infty}k\big[(M(x,t,k))_{33} - 1\big] = \frac{1-\omega}{2} \lim_{k\to \infty}k^{2}(M(x,t,k))_{32},}
	\vspace{.1cm}\\
 \displaystyle{v(x,t) = -i\sqrt{3}\frac{\partial}{\partial t}\lim_{k\to \infty}k\big[(M(x,t,k))_{33} - 1\big],}
\end{cases}
\end{align}
 expressing $\{u(x,t), v(x,t)\}$ in terms of $M$ are valid for all $(x,t) \in \R \times [0,T)$.
\end{proposition}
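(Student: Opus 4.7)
\smallskip

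\noindent\textbf{Proof proposal.} The plan is to reduce the time-dependent statement to the frozen-time results of Sections \ref{directsec} and Section \ref{sec: n at t=0} by treating each $t \in [0,T)$ as a new initial time, and then to identify the time evolution explicitly using the $t$-part of the Lax pair. First, for each fixed $t \in [0,T)$, the hypotheses guarantee that $(u(\cdot,t),v(\cdot,t)) \in \mathcal{S}(\R) \times \mathcal{S}(\R)$, so Proposition \ref{Mnprop} and Lemma \ref{QtildeQlemma} (with $u_0,v_0$ replaced by $u(\cdot,t),v(\cdot,t)$) produce six sectionally defined eigenfunctions $\{M_n(x,t,\cdot)\}_{n=1}^6$ that are analytic on $D_n$ and continuous up to the boundary except at $\mathcal{Q}$. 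Properties $(a)$ and $(e)$ of RH problem \ref{RH problem for M}, as well as the expansion structure (d) at $k=\pm 1$ with residues of the form \eqref{mathcalMcoefficients}, then follow at once from the time-$t$ versions of Proposition \ref{Mnprop}(f) and Lemma \ref{Mat1lemma}. The large-$k$ expansion (c) follows from the time-$t$ version of Lemma \ref{Matinftylemma}; the constraints \eqref{singRHMatinftyb} come from the explicit formulas \eqref{def of X1}--\eqref{def of X2}, since $X_1(x,t)$ is diagonal (giving $M^{(1)}_{12} = M^{(1)}_{13} = 0$) and the off-diagonal $(12)$ and $(21)$ entries of $X_2(x,t)$ are negatives of each other.

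The only genuinely new ingredient is the jump condition $(b)$: at time $t$, Lemma \ref{Mjumplemma} produces a jump matrix $\tilde v(x,t,k)$ of the same shape as \eqref{vdef} but built from the \emph{time-}$t$ reflection coefficients $r_j(t,k)$ and without the exponential phases $e^{\pm\theta_{ij}}$. To match \eqref{vdef}, I will use the compatibility of the Lax pair \eqref{Xlax}: because $\{u,v\}$ solves \eqref{boussinesqsystem} on $\R\times[0,T)$, the time-dependent eigenfunctions $X(x,t,k)$ and $Y(x,t,k)$ defined by the integral equations \eqref{XYdef} (with time-$t$ potentials) satisfy also the $t$-part of \eqref{Xlax}. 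Passing to the limit $x\to-\infty$ and $x\to+\infty$ in \eqref{XYs} (for compactly supported data; the general case follows by the approximation argument of Lemma \ref{sequencelemma}) gives the scattering relation
\begin{align*}
s_t(t,k) = [\mathcal{Z}(k),s(t,k)], \qquad \mbox{i.e.} \qquad s(t,k) = e^{t\mathcal{Z}(k)}s(0,k)e^{-t\mathcal{Z}(k)},
\end{align*}
and the analogous formula for $s^A$. Consequently the time-dependent reflection coefficients satisfy $r_1(t,k) = e^{t(z_1-z_2)}r_1(0,k)$ and $r_2(t,k) = e^{t(z_1-z_2)}r_2(0,k)$, and the same type of transformation applies to the off-diagonal entries appearing in $v_7,v_8,v_9$. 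Absorbing the factor $e^{x\widehat{\mathcal{L}(k)}}$ appearing in $M_n = Xe^{x\widehat{\mathcal{L}(k)}}T_n$ and using $\theta_{ij}(x,t,k)=(l_i-l_j)x+(z_i-z_j)t$ rewrites $\tilde v(x,t,k)$ precisely as \eqref{vdef}, where the $r_j$'s are now the \emph{fixed} $t=0$ spectral functions.

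Finally, for the reconstruction formulas, the time-$t$ version of Proposition \ref{XYprop2} implies $M^{(1)}(x,t) = X_1(x,t)$ and $M^{(2)}(x,t) = X_2(x,t)$, with $X_1,X_2$ given by \eqref{def of X1}--\eqref{def of X2} evaluated at time $t$. Thus
\begin{align*}
(M^{(1)})_{33}(x,t) = \frac{i}{\sqrt{3}}\int_{\infty}^{x} u(x',t)\,dx', \qquad (M^{(2)})_{32}(x,t) = \frac{2u(x,t)}{1-\omega},
\end{align*}
which immediately yield the $u$-formulas in \eqref{recoveruv}. The $v$-formula follows by differentiating the first expression in $t$ and using $u_t=v_x$ together with the decay of $v$ at $+\infty$. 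The main obstacle here is the derivation of the scattering relation $s_t=[\mathcal{Z},s]$ for general Schwartz data: one must justify the interchange of $x\to\pm\infty$ limits with $t$-differentiation and treat carefully the poles of $\mathsf{V}$ at $\mathcal{Q}$. This is accomplished by a truncation/approximation argument parallel to Lemma \ref{sequencelemma}, using the continuous dependence of all eigenfunctions on the potentials guaranteed by the uniform bounds in Propositions \ref{XYprop} and \ref{XAYAprop}.
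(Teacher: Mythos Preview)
Your proposal is correct and follows essentially the same strategy as the paper: freeze $t$, apply the Section~\ref{sec: n at t=0} results to the time-$t$ potentials to obtain properties (a), (c), (d), (e), and then use the Lax pair to identify the jump matrix as \eqref{vdef}. The only organizational difference lies in how you establish (b). The paper shows directly that the $M_n$ satisfy both parts of \eqref{Xlax} (citing \cite[Lemma~5.1]{CLgoodboussinesq}), from which $e^{-x\mathcal{L}-t\mathcal{Z}}M_-^{-1}M_+e^{x\mathcal{L}+t\mathcal{Z}}$ is constant in $(x,t)$, yielding \eqref{relation between M at time t and time 0}; you instead push the Lax pair through $X,Y$ to derive $s(t,k)=e^{t\mathcal{Z}}s(0,k)e^{-t\mathcal{Z}}$ and then read off the evolution of the reflection coefficients. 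These are two sides of the same argument, and your route has the mild advantage of producing Proposition~\ref{reflectionprop} as a byproduct rather than a consequence.

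Two small points. First, there is a sign slip: from $s^A(t)=e^{-t\mathcal{Z}}s^A(0)e^{t\mathcal{Z}}$ one gets $r_2(t,k)=e^{t(z_2-z_1)}r_2(0,k)$, not $e^{t(z_1-z_2)}$; this is exactly the sign needed to match the $e^{+\theta_{21}}$ factors in \eqref{vdef}. Second, applying the time-$t$ version of Lemma~\ref{QtildeQlemma} requires Assumption~\ref{solitonlessassumption} for the time-$t$ data; this follows immediately from your evolution formula (the diagonal entries $s_{11},s^A_{11}$ are unchanged), but logically it should be noted \emph{before} invoking the lemma, so the cleanest order is: derive $s_t=[\mathcal{Z},s]$ first, deduce that the assumptions persist, then apply the frozen-time lemmas.
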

\begin{proof}

By Lemma \ref{Matinftylemma} and the definition (\ref{Xpdef}) of $X_{(p)}$, we have
\begin{align*}
& M^{(1)}(x,t)_{33} = \frac{i}{\sqrt{3}}  \int_{\infty}^{x} u(x^{\prime}, t) dx', \qquad  M^{(2)}(x,t)_{32} = \frac{2u(x,t)}{1-\omega}.
\end{align*}
Recalling that $u,v$ have rapid decay as $x \to \infty$ and that $u_t = v_x$, the formulas (\ref{recoveruv}) for $u$ and $v$ follow. Property $(c)$ of RH problem \ref{RH problem for M} related to the asymptotics of $M$ as $k \to \infty$ follows from Lemma \ref{Matinftylemma} and the definition (\ref{Xpdef}) of $X_{(p)}$.
Property $(d)$ related to the asymptotics of $M$ as $k \to \pm 1$ follows from Lemma \ref{Mat1lemma}, whereas property $(e)$ follows from the symmetries (\ref{Msymm}) of $M$ and Lemma \ref{QtildeQlemma}. It remains to prove that $M$ satisfies properties $(a)$ and $(b)$ of RH problem \ref{RH problem for M}.

The analyticity and the existence of continuous boundary values are a consequence of Proposition \ref{Mnprop} and Lemma \ref{QtildeQlemma}.
Moreover, as in \cite[Lemma 5.1]{CLgoodboussinesq}, one can show that $M_n(x,t,k)$, $n = 1, \dots, 6$, is a smooth function of $(x,t) \in \R \times [0,T)$ satisfying the Lax pair equations \eqref{Xlax} for each $k \in \bar{D}_n \setminus \hat{\mathcal{Q}}$. These equations imply that the functions $M_n$ are related by
\begin{align}\label{relation between M at time t and time 0}
M_{+}(x,t,k) = M_-(x,t,k) e^{\hat{\mathcal{L}}x + \hat{\mathcal{Z}}t}\big(M_-(0,0,k)^{-1}M_{+}(0,0,k)\big),
\end{align}
for $(x,t) \in \R \times [0,T)$ and $k \in \Gamma \setminus \hat{\mathcal{Q}}$. Equation (\ref{Mjumpcondition}) now follows from Lemma \ref{Mjumplemma}.
\end{proof}

\begin{proposition}[Time evolution of the scattering data]\label{reflectionprop}
Let $T \in (0, \infty]$ and suppose $\{u, v\}$ is a Schwartz class solution of \eqref{boussinesqsystem} on $\R \times [0,T)$ with initial data $u_0, v_0 \in \mathcal{S}(\R)$ such that Assumptions \ref{solitonlessassumption} and \ref{originassumption} hold.
Let $\{r_j(k)\}_1^2$ be the reflection coefficients associated to $\{u_0, v_0\}$ via (\ref{r1r2def}) and let $\{r_j(k; t)\}_1^2$ be the reflection coefficients associated to $\{u(\cdot,t), v(\cdot, t)\}$. Then
\begin{align}\label{rel between rj at different time}
r_1(k; t) = r_1(k) e^{-\theta_{21}(0,t,k)} \quad \text{and} \quad r_2(k; t) = r_2(k) e^{\theta_{21}(0,t,k)}.
\end{align}
The first and second identities in \eqref{rel between rj at different time} are valid for $k \in \hat{\Gamma}_{1}$ and $k \in \hat{\Gamma}_{4}\setminus \{\omega^{2}, -\omega^{2}\}$, respectively.
\end{proposition}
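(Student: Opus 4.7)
The plan is to use the standard IST argument based on simultaneous eigenfunctions of the Lax pair. For each fixed $t \in [0,T)$, let $X(x,t,k), Y(x,t,k)$ be defined by the time-$t$ analogues of the Volterra equations (\ref{XYdef}) (with $\mathsf{U}(x,k)$ replaced by $\mathsf{U}(x,t,k)$), and let $s(k;t), s^A(k;t)$ denote the associated spectral functions defined via \eqref{sdef intro}--\eqref{sAdef intro}, so that $r_1(k;t), r_2(k;t)$ are obtained from (\ref{r1r2def}) applied to these. Introduce the dressed eigenfunctions
$$\Psi_+(x,t,k) := X(x,t,k)\, e^{\mathcal{L}(k)x + \mathcal{Z}(k)t}, \qquad \Psi_-(x,t,k) := Y(x,t,k)\, e^{\mathcal{L}(k)x + \mathcal{Z}(k)t},$$
which satisfy $(\Psi_\pm)_x = L\Psi_\pm$ by construction. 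The central claim, which drives the whole proof, is that they also satisfy $(\Psi_\pm)_t = Z\Psi_\pm$, so that the matrix relating them is independent of $(x,t)$.

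To verify this, set $W := (\Psi_+)_t - Z\Psi_+$ and use the Lax compatibility condition $L_t - Z_x + [L,Z] = 0$ (which is equivalent to $\{u,v\}$ satisfying (\ref{boussinesqsystem})) to compute $W_x = LW$. Writing $W = F\, e^{\mathcal{L}x + \mathcal{Z}t}$ with $F = X_t + [X,\mathcal{Z}] - \mathsf{V}X$, this becomes $F_x - [\mathcal{L},F] = \mathsf{U}F$ with $F \to 0$ as $x \to +\infty$ (since $X \to I$, $X_t \to 0$, and $\mathsf{V} \to 0$). Column by column, this is the same Volterra equation with the same boundary condition as $X - I$, so the Volterra uniqueness that defines $X$ on each sectorial $k$-domain forces $F \equiv 0$, and hence $(\Psi_+)_t = Z\Psi_+$; the argument for $\Psi_-$ is identical. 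Consequently $\Psi_+(x,t,k) = \Psi_-(x,t,k) S(k)$ for some $S(k)$ independent of $(x,t)$; evaluating at $t=0$ and using Proposition \ref{sprop}$(g)$ (extended from compactly supported to Schwartz data via Lemma \ref{sequencelemma}) identifies $S(k) = s(k)$. Since $\mathcal{L}$ and $\mathcal{Z}$ are diagonal and commute, rearranging $\Psi_+ = \Psi_- s(k)$ yields
$$X(x,t,k) = Y(x,t,k)\, e^{x\widehat{\mathcal{L}(k)}}\, \bigl(e^{t\widehat{\mathcal{Z}(k)}} s(k)\bigr),$$
so that the time-$t$ analogue of Proposition \ref{sprop}$(g)$ forces
$$s(k;t) = e^{t\widehat{\mathcal{Z}(k)}} s(k), \qquad \text{i.e.\ } (s(k;t))_{ij} = e^{t(z_i(k)-z_j(k))}\,(s(k))_{ij}.$$
The identical argument applied to $X^A, Y^A$ (equivalently, taking the inverse-transpose and using that conjugation by the diagonal $e^{t\mathcal{Z}}$ scales the $(i,j)$-entry by $e^{t(z_i - z_j)}$) yields $(s^A(k;t))_{ij} = e^{t(z_j(k)-z_i(k))}(s^A(k))_{ij}$.

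Since the $(1,1)$ entries of both $s$ and $s^A$ are invariant in $t$ and $\theta_{21}(0,t,k) = t(z_2(k)-z_1(k))$, the definitions (\ref{r1r2def}) immediately give
$$r_1(k;t) = e^{-t(z_2-z_1)}\, r_1(k) = e^{-\theta_{21}(0,t,k)} r_1(k), \qquad r_2(k;t) = e^{t(z_2-z_1)}\, r_2(k) = e^{\theta_{21}(0,t,k)} r_2(k),$$
valid on $\hat{\Gamma}_1$ and $\hat{\Gamma}_4 \setminus \{\pm\omega^2\}$ respectively, matching the claimed domains. The main technical obstacle is the Volterra-uniqueness step forcing $F \equiv 0$ on each sectorial $k$-domain; although this can be handled directly by an inspection of the relevant Neumann series (as in the proof of Proposition \ref{XYprop}), the cleanest workaround is to first prove the identities for compactly supported initial data—where all eigenfunctions are entire in $k \in \mathbb{C}\setminus \hat{\mathcal{Q}}$ and Volterra uniqueness is global—and then pass to the limit via a time-dependent analogue of Lemma \ref{sequencelemma}, using continuity of both sides of \eqref{rel between rj at different time} with respect to the potentials on the contours $\hat{\Gamma}_1$ and $\hat{\Gamma}_4 \setminus \{\pm \omega^2\}$.
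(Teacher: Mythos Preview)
Your argument is correct and takes a genuinely different route from the paper. The paper's proof works at the level of the sectionally analytic eigenfunction $M(x,t,k)$: it observes that $M$ has two descriptions of its jump matrix on $\Gamma$---one coming from the time-$t$ reflection coefficients $r_j(k;t)$ via the proof of Lemma~\ref{Mjumplemma}, the other coming from the time-$0$ coefficients via the conjugation relation \eqref{relation between M at time t and time 0}---and reads off \eqref{rel between rj at different time} by equating specific entries (e.g.\ the $(12)$ and $(21)$ entries of $v_8$). Your approach is the classical Lax-pair argument: you show directly that the dressed eigenfunctions $\Psi_\pm = (X,Y)\,e^{\mathcal{L}x+\mathcal{Z}t}$ satisfy the $t$-part of the Lax pair (via the Volterra-uniqueness step $F\equiv 0$), which forces the connection matrix to be $(x,t)$-independent and hence $s(k;t)=e^{t\widehat{\mathcal{Z}(k)}}s(k)$ entrywise on its domain of definition. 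The paper's route is shorter once the $M$-machinery of Proposition~\ref{RHth} is in place, but yours is more self-contained and makes the mechanism (simple diagonal evolution of the scattering matrix) completely transparent. Your handling of the domain-of-definition issue---prove first for compactly supported data where everything is entire off $\hat{\mathcal{Q}}$, then pass to Schwartz data by a time-dependent version of Lemma~\ref{sequencelemma}---is exactly the right way to close the gap, and mirrors how the paper itself extends identities from compact support to $\mathcal{S}(\R)$.
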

\begin{proof}
Define $M(x,t,k)$ as in Proposition \ref{RHth}, and let $\tilde{v}(x, t, k)$ be the $3\times 3$ matrix obtained by replacing $\{r_j(k)\}_1^2$ by $\{r_j(k; t)\}_1^2$ in $v(x, 0, k)$. By following the proof of Lemma \ref{Mjumplemma} with $(u_{0}(x),v_{0}(x))$ replaced by $(u(x,t),v(x,t))$, we find $M_+ = M_- \tilde{v}$ for $k \in \Gamma \setminus (\Gamma_\star \cup \mathcal{Q})$.
On the other hand, we know from Proposition \ref{RHth} that $M$ satisfies RH problem \ref{RH problem for M}, and in particular \eqref{relation between M at time t and time 0}. This implies that $\tilde{v} = v$, where $v$ is given by \eqref{vdef}. Looking for example at the $(12)$ and $(21)$ entries of $v_{8}(x,t,k)$, we obtain the relations
\begin{align*}
& r_{1}(k,t)e^{-\theta_{21}(x,0,k)} = r_{1}(k)e^{-\theta_{21}(x,t,k)}, \quad r_{2}(k,t)e^{\theta_{21}(x,0,k)} = r_{2}(k)e^{\theta_{21}(x,t,k)}
\end{align*}
for $k \in \Gamma_{8}$. By \eqref{def of Phi ij}, we have $e^{\theta_{21}(x,t,k)} = e^{\theta_{21}(x,0,k)} e^{\theta_{21}(0,t,k)}$, which proves \eqref{rel between rj at different time}  for $k \in \Gamma_{8}$. The proof of \eqref{rel between rj at different time} for $k$ on the other indicated parts of $\Gamma$ can be proved similarly.
\end{proof}

In light of (\ref{mathcalMcoefficients}), the singularities of $M$ at the points $\kappa_j$ disappear when $M$ is multiplied from the left by $(1,1,1)$. This suggests considering the row vector $n =(1,1,1)M$.

\begin{proposition}\label{prop:construction of n}
Let $T \in (0, \infty]$ and suppose $\{u(x,t), v(x,t)\}$ is a Schwartz class solution of \eqref{boussinesqsystem} on $\R \times [0,T)$ with initial data $u_0, v_0 \in \mathcal{S}(\R)$ such that Assumptions \ref{solitonlessassumption} and \ref{originassumption} hold.
Let $M(x,t,k)$ be defined as in the proof of Proposition \ref{RHth}, and let $n(x,t,k):=(1,1,1)M(x,t,k)$. Then $n$ solves RH problem \ref{RHn}. Moreover,
\begin{align}\nonumber
& u(x,t) := -i\sqrt{3} \frac{\partial}{\partial x} n_3^{(1)}(x,t),
	\\ \label{uvdef}
& v(x,t) := -i\sqrt{3} \frac{\partial}{\partial x}\bigg(n_{3}^{(2)}(x,t) + u(x,t) + \frac{1}{6} \bigg( \int_{\infty}^{x}u(x',t)dx' \bigg)^{2}\bigg),
\end{align}
where $n^{(1)}$ and $n^{(2)}$ are defined through the expansion 
\begin{align}\label{expansion of n at inf}
n(x,t,k) = (1,1,1) + n^{(1)}(x,t) \, k^{-1} + n^{(2)}(x,t) \, k^{-2} + O(k^{-3}) \qquad \mbox{as } k \to \infty.
\end{align}
\end{proposition}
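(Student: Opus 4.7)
\medskip
\noindent\textbf{Proof proposal.}
The plan is to read off each property of RH problem \ref{RHn} from the corresponding property of RH problem \ref{RH problem for M} established in Proposition \ref{RHth}, using the fact that the row vector $(1,1,1)$ is a left-eigenvector of both $\mathcal{A}$ and $\mathcal{B}$ (with eigenvalue $1$) and annihilates the leading singular coefficients of $M$ at $k=\pm 1$. Analyticity (property $(a)$) is immediate since $n$ is a linear combination of entries of $M$. The jump relation (property $(b)$) follows by left-multiplying (\ref{Mjumpcondition}) by $(1,1,1)$:
\begin{equation*}
n_+(x,t,k)=(1,1,1)M_-(x,t,k)v(x,t,k)=n_-(x,t,k)v(x,t,k),\qquad k\in\Gamma\setminus\Gamma_\star.
\end{equation*}
The normalization (property $(e)$) follows directly from (\ref{asymp for M at infty in RH def}): $n(x,t,k)=(1,1,1)+O(k^{-1})$ as $k\to\infty$. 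The two symmetries in property $(d)$ are obtained from $M(x,t,k)=\mathcal{A}M(x,t,\omega k)\mathcal{A}^{-1}=\mathcal{B}M(x,t,k^{-1})\mathcal{B}$ together with the elementary identities $(1,1,1)\mathcal{A}=(1,1,1)$ and $(1,1,1)\mathcal{B}=(1,1,1)$.

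The one substantive point is the boundedness condition $n(x,t,k)=O(1)$ as $k\to k_\star\in\Gamma_\star$ (property $(c)$), since $M$ itself has simple poles at the sixth roots of unity. Here I use the expansions in property $(d)$ of RH problem \ref{RH problem for M}: by the explicit form (\ref{mathcalMcoefficients}) of $\mathcal{M}_2^{(-1)}$ and $\widetilde{\mathcal{M}}_2^{(-1)}$, one sees that the first row cancels the second row and the third row vanishes, so
\begin{equation*}
(1,1,1)\mathcal{M}_2^{(-1)}(x,t)=0,\qquad (1,1,1)\widetilde{\mathcal{M}}_2^{(-1)}(x,t)=0.
\end{equation*}
Consequently $n$ is bounded as $k\to\pm 1$ in $\bar{D}_2$, and the boundedness near the remaining $\kappa_j$ follows from the $\mathbb{Z}_3$-symmetry $n(x,t,k)=n(x,t,\omega k)\mathcal{A}^{-1}$ already verified. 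Boundedness at $k=0$ is automatic because each $M_n$ extends continuously to $k=0$ (e.g.\ by the explicit analogue of $X(x,0)=I$ derived in Proposition \ref{XYprop}(a)).

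For the recovery formulas (\ref{uvdef}), I match coefficients in the large-$k$ expansion. By Lemma \ref{Matinftylemma}, the coefficients $M^{(1)}$ and $M^{(2)}$ coincide with $X_1(x,t)$ and $X_2(x,t)$ (using time-dependent potentials in (\ref{def of X1})--(\ref{def of X2})). Reading off the third column and summing rows gives
\begin{equation*}
n_3^{(1)}(x,t)=M^{(1)}_{33}(x,t)=\tfrac{i}{\sqrt 3}\int_{\infty}^{x}u(x',t)\,dx',
\end{equation*}
so $-i\sqrt{3}\,\partial_x n_3^{(1)}=u(x,t)$, which reproves the first recovery formula. For the second, a short computation using (\ref{def of X2}) yields
\begin{equation*}
M^{(2)}_{13}+M^{(2)}_{23}=\tfrac{2(\omega-1)u}{1-\omega}=-2u,\qquad M^{(2)}_{33}=u(x,t)+\int_{\infty}^{x}\tfrac{iv(x',t)}{\sqrt 3}\,dx'-\tfrac{1}{6}\Bigl(\int_{\infty}^{x}u\,dx'\Bigr)^{2},
\end{equation*}
where the last term arises from integrating $\frac{iu}{\sqrt 3}(X_1)_{33}$ via the substitution $F(x)=\int_{\infty}^{x}u\,dx'$, $F\,F'=\frac{1}{2}(F^{2})'$. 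Adding these and applying $-i\sqrt{3}\,\partial_x$ to $n_3^{(2)}+u+\tfrac{1}{6}F^{2}$ cancels the $u$ and $F^{2}$ terms and produces $v(x,t)$, giving the second formula in (\ref{uvdef}).

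The only genuine difficulty is property $(c)$; the rest is a bookkeeping exercise once Proposition \ref{RHth} and the explicit structure of the singular coefficients in (\ref{mathcalMcoefficients}) are in hand. The recovery formulas then follow by a direct identification of the first two coefficients in the Laurent expansion of $M$ at $k=\infty$ with the coefficients $X_1,X_2$ from the formal power series solution of the Lax equation at time $t$.
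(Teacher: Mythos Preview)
Your proof is correct and follows the same approach as the paper's (very terse) argument: read off each property of RH problem \ref{RHn} from the corresponding property of $M$, and derive the recovery formulas by identifying $M^{(1)}, M^{(2)}$ with $X_1, X_2$ at time $t$. Your explicit computation of $n_3^{(2)}$ is exactly right.

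One small mislabeling worth fixing: the cancellation $(1,1,1)\mathcal{M}_2^{(-1)}=0$ is not what is needed for property $(c)$, since the sixth roots of unity $\kappa_j$ do \emph{not} lie in $\Gamma_\star=\{i\kappa_j\}_{j=1}^6\cup\{0\}$. Rather, the $\kappa_j$ are ordinary points of $\Gamma\setminus\Gamma_\star$, so this cancellation is what guarantees property $(b)$ --- that the boundary values of $n$ extend continuously through the $\kappa_j$, even though those of $M$ do not. Property $(c)$ itself (boundedness at the genuine intersection points $i\kappa_j$ and at $0$) is automatic: $M$ is already bounded near the $i\kappa_j$, and boundedness at $0$ follows from the $\mathcal{B}$-symmetry $M(x,t,k)=\mathcal{B}M(x,t,k^{-1})\mathcal{B}$ together with $M\to I$ at infinity.
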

\begin{proof}
We know from Proposition \ref{RHth} that $M$ solves RH problem \ref{RH problem for M}. The fact that $n$ solves RH problem \ref{RHn} follows directly from the definition $n(x,t,k)=(1,1,1)M(x,t,k)$. The existence of the expansion \eqref{expansion of n at inf} is a consequence of \eqref{asymp for M at infty in RH def}. The formulas \eqref{uvdef} follow directly from the relations $n^{(1)}(x,t)=(1,1,1)M^{(1)}(x,t)$, $n^{(2)}(x,t)=(1,1,1)M^{(2)}(x,t)$, and the fact that $M^{(1)}(x,t)$ and $M^{(2)}(x,t)$ are given by \eqref{def of X1}--\eqref{def of X2} with $(u_{0},v_{0})$ replaced by $(u(\cdot,t),v(\cdot,t))$ (recall that $M^{(1)}(x,t)=X_{1}(x,t)$ and $M^{(2)}(x,t)=X_{2}(x,t)$).
\end{proof}

\section{The inverse problem}\label{inversesec}
Throughout this section, we assume that $r_1:\hat{\Gamma}_{1}\setminus \hat{\mathcal{Q}} \to \C$ and $r_2: \hat{\Gamma}_{4}\setminus \hat{\mathcal{Q}} \to \C$ are two functions satisfying $(\ref{Theorem2.3itemi})$--$(\ref{Theorem2.3itemv})$ of Theorem \ref{directth}. We assume that $v$ is the jump matrix in (\ref{vdef}) and that $T \in (0, \infty]$ is defined by (\ref{Tdef}). The main goal of this section is to prove Theorem \ref{inverseth}. Direct calculations using (\ref{r1r2 relation on the unit circle}) and (\ref{r1r2 relation with kbar symmetry}) yield the following lemma. Recall that $R(k)$ is defined in (\ref{def of R}). 

\begin{lemma}[Complex conjugation symmetry of $v$]\label{vsymmlemma}
The jump matrix $v$ defined in (\ref{vdef}) satisfies $(\overline{v(\bar{k})^{-1}})^T = R(k)^{-1}  v(k) R(k)$ for $k \in \partial \D \setminus \Gamma_\star$ and $(\overline{v(\bar{k})^{-1}})^T = R(k)^{-1}  v(k)^{-1} R(k)$ for $k \in \Gamma \setminus \partial \D$.
\end{lemma}

\subsection{A vanishing lemma}\label{vanishinglemmasubsec}
The following vanishing lemma will be used to show existence of a solution $n(x,t,k)$ of RH problem \ref{RHn}. 
%The proof implicitly relies on the self-adjointness of the differential operator in (\ref{isospectral}). 

\begin{lemma}\label{vanishinglemma}
Let $(x,t) \in \R \times [0,T)$. Suppose $n^h(x,t,k)$ is a solution of the homogeneous version of RH problem \ref{RHn}, that is, suppose $n^h$ satisfies $(\ref{RHnitema})$--$(\ref{RHnitemd})$ of RH problem \ref{RHn} together with the homogeneous condition $n^h(x,t,k) = O(k^{-1})$ as $k \to \infty$.
Then $n^h$ vanishes identically. 
\end{lemma}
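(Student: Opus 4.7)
\emph{Strategy.} The plan is to execute a Zhou-style vanishing lemma argument adapted to the $1\times 3$ row-vector setting, driven by the Schwartz symmetries of the spectral data and by Lemma~\ref{inequalitieslemma}. I write $\tilde n^h(k):=\overline{n^h(\bar k)}$ for the Schwartz conjugate of $n^h$; because $\Gamma$ is invariant under $k\mapsto\bar k$, $\tilde n^h$ is piecewise analytic on $\C\setminus\Gamma$ with jumps that are explicitly computable from those of $n^h$. I then build the scalar auxiliary function
\[
H(k):=n^h(x,t,k)\,Q(k)\,\tilde n^h(x,t,k)^T,
\]
where $Q(k)$ is a $3\times 3$ weighting matrix, to be chosen so that $H$ carries no jump across the off-circle rays $\Gamma_1,\ldots,\Gamma_6$. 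The natural candidate is a $\mathcal B$-twist of the matrix $R(k)$ from~(\ref{def of R}), since $R$ is the intertwiner appearing in Propositions~\ref{XAYAprop} and~\ref{sAprop} that exchanges $M$ with the Schwartz conjugate of its cofactor matrix; this is precisely the object that turns the symmetries of Theorem~\ref{directth} into cancellations.

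The first step is to verify that the jumps of $H$ vanish off the unit circle. Using part $(v)$ of Theorem~\ref{directth}, i.e.\ $r_2(k)=\tilde r(k)\overline{r_1(\bar k^{-1})}$, together with the rational identities~(\ref{r1r2 relation on the unit circle new}) and the $\mathbb Z_3$-invariance of $v$, I expect to verify by direct matrix computation that
\[
v_j(k)\,Q(k)\,\overline{v_j(\bar k)}^T = Q(k)\qquad\text{on }\Gamma_j\setminus\partial\D,\quad j=1,\ldots,6,
\]
so that $H_+=H_-$ on each off-circle ray. Hence $H$ extends analytically across $\Gamma\setminus\partial\D$.

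The second and crucial step is the positivity computation on $\partial\D$. On each arc $\Gamma_m\subset\partial\D$, $m\in\{7,8,9\}$, the same calculation gives
\[
H_+(k)-H_-(k) = n^h_-(k)\,\bigl[v_m(k)Q(k)\overline{v_m(\bar k)}^T-Q(k)\bigr]\,\overline{n^h_-(\bar k)}^T,
\]
and I claim that the bracketed Hermitian form is positive semidefinite. The plan is to factor the jumps in~(\ref{vdef}) via a Gauss-type LDU decomposition and, invoking the identity $f(k)=|s_{11}(k)|^{-2}$ established in the proof of Lemma~\ref{inequalitieslemma}, rewrite the bracket as a sum of rank-one Hermitian matrices whose coefficients are precisely $f(k)$, $1+r_1(k)r_2(k)$, $\hat\nu_1(k)$ and $\hat\nu_2(k)$ on the appropriate subarcs. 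Parts $(i)$--$(iv)$ of Lemma~\ref{inequalitieslemma} then deliver the required nonnegativity.

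The third step turns this into vanishing. The homogeneous decay $n^h(k)=O(k^{-1})$ as $k\to\infty$ together with the symmetry $n^h(k)=n^h(k^{-1})\mathcal B$ forces $n^h(k)=O(k)$ as $k\to 0$, so $H$ is integrable at both $0$ and $\infty$. Possible singularities of $H$ at the sixth roots of unity $\kappa_j$, coming from $Q$ and from the $O(1)$ bound permitted at $\kappa_j$ by property~\ref{RHnitemc} of RH problem~\ref{RHn}, are integrable and contribute no residue. Cauchy's theorem applied to $H$ on the full contour $\Gamma$ therefore collapses to
\[
0 = \int_{\partial\D}\bigl(H_+(k)-H_-(k)\bigr)\,dk,
\]
which combined with the positivity of step two forces $n^h_-\equiv 0$ on $\partial\D\setminus\mathcal Q$. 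Analytic continuation through the (now trivial) off-circle jumps then propagates the vanishing to all of $\C\setminus\Gamma$.

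The main obstacle will be the positivity step. In contrast to the Deift--Tomei--Trubowitz setting of \cite{DTT1982}, where the jumps decouple into $2\times 2$ blocks and the corresponding Hermitian form is manifestly positive on a one-dimensional subspace, here $v_7,v_8,v_9$ act genuinely in all three dimensions, and the four inequalities of Lemma~\ref{inequalitieslemma} must conspire to give positivity of a full $3\times 3$ Hermitian form. This is what forces a careful choice of the weight $Q(k)$ and reflects, at the level of the Riemann--Hilbert problem, the self-adjoint structure of the spectral operator~(\ref{isospectral}).
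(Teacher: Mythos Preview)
Your broad strategy is correct, but there is a structural gap at precisely the step you flag as ``the main obstacle.'' A single algebraic weight such as a $\mathcal B$-twist of $R(k)$ does not make the argument close. The paper's proof passes to the uniformizing variable $\lambda=\tfrac12(k^3+k^{-3})$, which collapses the $\mathcal A$- and $\mathcal B$-symmetries onto the real $\lambda$-line, and builds $F(k)=n^h(k)\,\Delta(k)\,\overline{n^h(\bar k)}^{\,T}$ with a \emph{piecewise} weight $\Delta\in\{\Delta_2,\Delta_3\}$ depending on the sector. These weights are not rational: they involve the transcendental factor
\[
\delta_3(\lambda)=\exp\Bigl\{\tfrac{1}{2\pi i}\int_{-\infty}^{-1}\ln\bigl(\omega^2\tilde r(\omega k_2^h(\lambda'))\bigr)\,\tfrac{d\lambda'}{\lambda'-\lambda}\Bigr\},
\]
the solution of an auxiliary scalar RH problem. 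Its jump relation $\delta_{3+}/\delta_{3-}=(k^2-1)/(k^2-\omega^2)$ on $\lambda<-1$ is precisely what kills an off-diagonal entry in the corresponding Hermitian form; with a purely rational weight that entry survives and the form is indefinite. One then sets $G(\lambda)=F(k)/(\lambda+1)^{2/3}$ (the branch factor is needed both for decay and for the behaviour at $\lambda=-1$) and verifies $G_+-G_-\ge 0$ separately on each of the four intervals $(-\infty,-1)$, $(-1,0)$, $(0,1)$, $(1,\infty)$ via Sylvester's criterion; Lemma~\ref{inequalitieslemma} enters only at this final stage, and even then only parts $(i)$--$(ii)$ are used.

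A secondary issue: complex conjugation permutes the rays $\Gamma_j$ rather than fixing them (for instance $\overline{\Gamma_3\cap\{|k|>1\}}=\Gamma_2\cap\{|k|>1\}$), so the cancellation identity on the rays necessarily mixes two different jump matrices, e.g.\ $v_3(k)\,\Delta_3(k)\,(\overline{v_2(\bar k)}^{-1})^T=\Delta_3(k)$, rather than the $v_j\,Q\,\overline{v_j}^T=Q$ you wrote. This is bookkeeping rather than a conceptual obstacle, but it must be set up correctly before the positivity computation can even begin.
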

\begin{proof}
To shorten notation, let us write $n^h(x,t,k) \equiv \mathbf{u}(k) = (u_1(k), u_2(k), u_3(k))$. Recall the definition (\ref{lambdadef}) of the spectral parameter $\lambda = (k^3 + k^{-3})/2$.
The mapping $k \mapsto \lambda$ is six-to-one except at isolated points and is represented in Figure \ref{klambdamapfig}.
\begin{figure}
\begin{center}
\begin{tikzpicture}[scale=0.8]
\node at (0,0) {};
\draw[blue,line width=0.65 mm,-<-=0.2,->-=0.8] (0,0)--(0:4);
\draw[red,line width=0.65 mm,->-=0.3,-<-=0.7] (0,0)--(60:4);
\draw[blue,line width=0.65 mm,-<-=0.2,->-=0.8] (0,0)--(120:4);
\draw[red,line width=0.65 mm,->-=0.3,-<-=0.7] (0,0)--(180:4);
\draw[blue,line width=0.65 mm,-<-=0.2,->-=0.8] (0,0)--(-120:4);
\draw[red,line width=0.65 mm,->-=0.3,-<-=0.7] (0,0)--(-60:4);

\draw[black,line width=0.65 mm] ([shift=(-180:2cm)]0,0) arc (-180:-150:2cm);
\draw[green,line width=0.65 mm] ([shift=(-150:2cm)]0,0) arc (-150:-120:2cm);
\draw[green,line width=0.65 mm] ([shift=(-120:2cm)]0,0) arc (-120:-90:2cm);
\draw[black,line width=0.65 mm] ([shift=(-90:2cm)]0,0) arc (-90:-60:2cm);
\draw[black,line width=0.65 mm] ([shift=(-60:2cm)]0,0) arc (-60:-30:2cm);
\draw[green,line width=0.65 mm] ([shift=(-30:2cm)]0,0) arc (-30:0:2cm);
\draw[green,line width=0.65 mm] ([shift=(0:2cm)]0,0) arc (0:30:2cm);
\draw[black,line width=0.65 mm] ([shift=(30:2cm)]0,0) arc (30:60:2cm);
\draw[black,line width=0.65 mm] ([shift=(60:2cm)]0,0) arc (60:90:2cm);
\draw[green,line width=0.65 mm] ([shift=(90:2cm)]0,0) arc (90:120:2cm);
\draw[green,line width=0.65 mm] ([shift=(120:2cm)]0,0) arc (120:150:2cm);
\draw[black,line width=0.65 mm] ([shift=(150:2cm)]0,0) arc (150:180:2cm);
\draw[green,arrows={-Triangle[length=0.27cm,width=0.22cm]}]
($(10:2)$) --  ++(-75:0.001);
\draw[black,arrows={-Triangle[length=0.27cm,width=0.22cm]}]
($(40:2)$) --  ++(-45:0.001);
\draw[black,arrows={-Triangle[length=0.27cm,width=0.22cm]}]
($(80:2)$) --  ++(165:0.001);
\draw[green,arrows={-Triangle[length=0.27cm,width=0.22cm]}]
($(110:2)$) --  ++(195:0.001);
\draw[green,arrows={-Triangle[length=0.27cm,width=0.22cm]}]
($(130:2)$) --  ++(45:0.001);
\draw[black,arrows={-Triangle[length=0.27cm,width=0.22cm]}]
($(160:2)$) --  ++(75:0.001);
\draw[black,arrows={-Triangle[length=0.27cm,width=0.22cm]}]
($(200:2)$) --  ++(-75:0.001);
\draw[green,arrows={-Triangle[length=0.27cm,width=0.22cm]}]
($(230:2)$) --  ++(-45:0.001);
\draw[green,arrows={-Triangle[length=0.27cm,width=0.22cm]}]
($(250:2)$) --  ++(165:0.001);
\draw[black,arrows={-Triangle[length=0.27cm,width=0.22cm]}]
($(280:2)$) --  ++(195:0.001);
\draw[black,arrows={-Triangle[length=0.27cm,width=0.22cm]}]
($(320:2)$) --  ++(45:0.001);
\draw[green,arrows={-Triangle[length=0.27cm,width=0.22cm]}]
($(350:2)$) --  ++(75:0.001);

\node at (45:2.35) {\small{$10$}};

\node at (15:2.3) {\small{$9$}};

\node at (-15:2.3) {\small{$8$}};

\node at (-45:2.3) {\small{$7$}};

\node at (-5:3) {\small $1$};
\node at (55:3) {\small $2$};
\node at (115:3) {\small $3$};
\node at (175:3) {\small $4$};
\node at (235:3) {\small $5$};
\node at (295:3) {\small $6$};

\node at (30:3.5) {\small $U_1$};
\node at (90:3.5) {\small $U_2$};
\node at (150.5:3.5) {\small $U_3$};
\node at (210:3.5) {\small $U_4$};
\node at (-90:3.5) {\small $U_5$};
\node at (-30:3.5) {\small $U_6$};

\node at (30:1.3) {\small $U_7$};
\node at (90:1.3) {\small $U_8$};
\node at (151.3:1.3) {\small $U_9$};
\node at (210:1.3) {\small $U_{10}$};
\node at (-90:1.3) {\small $U_{11}$};
\node at (-30:1.3) {\small $U_{12}$};

\node at (0.6:4.5) {\small $\Gamma^h$};

\end{tikzpicture}
\\
\vspace{0.5cm}
\begin{tikzpicture}
\node at (0,0) {};
\draw[red,line width=0.65 mm,->-=0.6] (-4,0)--(-1,0);
\draw[black,line width=0.65 mm,->-=0.7] (-1,0)--(0,0);
\draw[green,line width=0.65 mm,->-=0.7] (0,0)--(1,0);
\draw[blue,line width=0.65 mm,->-=0.6] (1,0)--(4,0);

\node at (-1,-0.3) {\small $-1$};
\node at (0,-0.3) {\small $0$};
\node at (1,-0.3) {\small $1$};
\node at (4.3,0) {\small $\R$};
\end{tikzpicture}

\end{center}
\begin{figuretext}\label{klambdamapfig}
The map $k \mapsto \lambda = \frac{k^3 + k^{-3}}{2}$ maps the contour $\Gamma^h$ to the real axis. In particular, the unit circle is mapped to the interval $[-1, 1]$.
\end{figuretext}
\end{figure}
Let $\Gamma^h$ be the contour in Figure \ref{klambdamapfig}. Let $\Gamma^h_j$ be the part of $\Gamma^h$ labeled by $j$ in the same figure.
For $\lambda < -1$, let $k_2^h(\lambda) \in \Gamma_2^h$ be the unique solution of $\lambda = \frac{k^3 + k^{-3}}{2}$ in $\Gamma_2^h$.
Define $\delta_3:\C \setminus (-\infty,-1] \to \C$ by
\begin{align}\label{d3def}
\delta_3(\lambda) = e^{\frac{1}{2\pi i} \int_{-\infty}^{-1} \ln\big(\omega^2 \tilde{r}(\omega k_2^h(\lambda'))\big) \frac{d\lambda'}{\lambda' - \lambda}}, \qquad \lambda \in \C \setminus (-\infty,-1],
\end{align}
where the branch of the logarithm is fixed as follows. A calculation shows that if $\lambda < -1$, then
\begin{align}\label{omega2rtilde}
\omega^2 \tilde{r}(\omega k_2^h(\lambda)) = \frac{k_2^h(\lambda)^2 -1}{k_2^h(\lambda)^2 - \omega^2} = e^{i\arctan(\frac{\sqrt{3} (2 r^2+1)}{2 r^4+2 r^2-1})},
\end{align}
where $r = |k_2^h(\lambda)| > 1$. Thus, we can fix the branch of the logarithm in (\ref{d3def}) by requiring that
\begin{align}\label{d3integrandpositive}
\frac{1}{2\pi i} \ln(\omega^2 \tilde{r}(\omega k_2^h(\lambda))) = \frac{1}{2\pi} \arctan\bigg(\frac{\sqrt{3} \left(2 r^2+1\right)}{2 r^4+2 r^2-1}\bigg) \in (0, 1/6)
\end{align}
for $\lambda < -1$.
Define $\delta_2: \C \setminus \Gamma \to \C$ and $F:(U_1 \cup U_6)\setminus \Gamma \to \C$ by
\begin{align}\label{d2def}
& \delta_2(k) := \tilde{r}(\omega^2 k) \delta_3(\lambda), \qquad k \in \C \setminus \Gamma,
	\\ \nonumber
& F(k) := \begin{cases}
\mathbf{u}(k) \Delta_3(k) \overline{\mathbf{u}(\bar{k})}^T, \qquad k \in U_1 \setminus \Gamma,
	\\
\mathbf{u}(k) \Delta_2(k) \overline{\mathbf{u}(\bar{k})}^T, \qquad k \in U_6 \setminus \Gamma,
\end{cases}
\end{align}
where $U_1$ and $U_6$ are the open subsets shown in Figure \ref{klambdamapfig} and
\begin{align}\label{D2D3def}
\Delta_2(k) = \begin{pmatrix} 0 & \delta_2(k) & 0 \\ 0 & 0 & 0 \\ 0 & 0 & 0 \end{pmatrix}, \qquad 
\Delta_3(k) = \begin{pmatrix} 0 & \delta_2(k) & 0 \\ 0 & 0 & 0 \\ 0 & 0 & \delta_3(\lambda) \end{pmatrix}.
\end{align}

For $k \in \Gamma_3 \cap \{|k| > 1\}$ (with $\Gamma$  oriented as in Figure \ref{fig: Dn}), we have
\begin{align*}
F_+(k) - F_-(k) 
& = \mathbf{u}_+(k) \Delta_3(k) \overline{\mathbf{u}_-(\bar{k})}^T
- \mathbf{u}_-(k) \Delta_3(k) \overline{\mathbf{u}_+(\bar{k})}^T
	\\
& = \mathbf{u}_-(k) \Big(v_{3}(k) \Delta_3(k) (\overline{v_{2}(\bar{k})}^{-1})^T  
- \Delta_3(k) \Big)\overline{\mathbf{u}_+(\bar{k})}^T = 0
\end{align*}
as a consequence of (\ref{d2def}), (\ref{D2D3def}), and Lemma \ref{vsymmlemma}.
Similarly, for $k \in \Gamma_2 \cap \{|k| > 1\}$,
\begin{align*}
F_+(k) - F_-(k) =
\mathbf{u}_-(k) \Big(v_{2}(k) \Delta_2(k) (\overline{v_{3}(\bar{k})}^{-1})^T - \Delta_2(k)\Big)\overline{\mathbf{u}_+(\bar{k})}^T = 0.
\end{align*}
This shows that $F$ has no jump across $(\Gamma_3 \cup \Gamma_2) \cap \{|k| > 1\}$. Hence $F$ extends to an analytic function $F:U_1 \cup U_6 \to \C$.
We extend $F$ further to a function $\C \setminus \Gamma^h \to \C$ by means of the symmetries
$$F(k) = F(\omega k), \qquad F(k) = F(1/k).$$
These symmetries imply that $F$ can be viewed as a function of $\lambda$ for $\lambda \in \C \setminus \R$. Define the analytic function $G:\C \setminus \R \to \C$ by
$$G(\lambda) := \frac{F(k)}{(\lambda + 1)^{2/3}}, \qquad \lambda \in \C \setminus \R,$$
where the principal branch is used for the complex power.
The map $\lambda \mapsto (\lambda + 1)^{2/3}$ is analytic $\C \setminus (-\infty,-1] \to \C$ and satisfies the following jump relation on $(-\infty, 1)$:
\begin{align}\label{lambdajump new}
\omega ((\lambda + 1)^{2/3})_+ = ((\lambda + 1)^{2/3})_-, \qquad \lambda < -1.
\end{align}
We next compute the jump of $G$ on the various parts of the real axis. 

\subsubsection*{Jump for $\lambda > 1$}
The jump of $G$ across the contour in the $\lambda$-plane where $\lambda > 1$ can be computed using any of the blue contours in the $k$-plane in Figure \ref{klambdamapfig}. We choose to compute it across $\Gamma_1^h$. Letting $k \in \Gamma_1^h$ correspond to $\lambda > 1$, we find
\begin{align}\nonumber
G_+(\lambda) - G_-(\lambda) & = \frac{1}{(\lambda +1)^{2/3}}\Big(\mathbf{u}(k) \Delta_3(k) \overline{\mathbf{u}(\bar{k})}^T - \mathbf{u}(k) \Delta_2(k) \overline{\mathbf{u}(\bar{k})}^T\Big)
	\\ \label{Gjump1}
& = \frac{1}{|\lambda + 1|^{2/3}} \mathbf{u}(k) (\Delta_3(k) - \Delta_2(k)) \overline{\mathbf{u}(\bar{k})}^T =  \frac{\delta_3(\lambda)}{|\lambda + 1|^{2/3}} |u_3(k)|^2, \qquad \lambda > 1.
\end{align}
It follows from (\ref{d3integrandpositive}) that $\delta_3(\lambda) > 0$ for $\lambda > -1$.
In particular, $\delta_3(\lambda)|\lambda + 1|^{-2/3} > 0$ for $\lambda > 1$.

\subsubsection*{Jump for $\lambda < -1$}
Letting $k \in \Gamma_2^h$ correspond to $\lambda < -1$, we find
\begin{align*}
G_+(\lambda) - G_-(\lambda) & =
\frac{F_+(k)}{((\lambda+1)^{2/3})_+} - \frac{F_-(k)}{((\lambda+1)^{2/3})_-} 
= \frac{F_+(k)}{((\lambda+1)^{2/3})_+} - \frac{F_-(\omega^2 k)}{((\lambda+1)^{2/3})_-} 
 	\\
&
= \frac{\mathbf{u}(k) \Delta_{3+}(k) \overline{\mathbf{u}(\bar{k})}^T}{((\lambda+1)^{2/3})_+}
- \frac{\mathbf{u}(\omega^2 k) \Delta_{2-}(\omega^2 k) \overline{\mathbf{u}(\omega \bar{k})}^T}{((\lambda+1)^{2/3})_-}, \qquad k \in \Gamma_2^h.
\end{align*}
Using \eqref{lambdajump new}, the fact that $\bar{k} = \omega^2 k$ for $k \in \Gamma_2^h$, and the symmetry $\mathbf{u}(k) = \mathbf{u}(\omega k) \mathcal{A}^{-1}$ (which follows from \eqref{nsymm}), we obtain, for $k \in \Gamma_2^h$,
\begin{align*}
G_+(\lambda) - G_-(\lambda) & = \frac{\mathbf{u}(k) \Delta_{3+}(k) \overline{\mathbf{u}(\omega^2 k)}^T}{((\lambda+1)^{2/3})_+}
- \frac{\mathbf{u}(\omega^2 k) \Delta_{2-}(\omega^2 k) \overline{\mathbf{u}(k)}^T}{\omega ((\lambda+1)^{2/3})_+}
	\\
& = \frac{1}{|\lambda + 1|^{2/3}} \bigg(\frac{\mathbf{u}(k) \Delta_{3+}(k) \overline{\mathbf{u}(k)\mathcal{A}^2}^T}{\omega}
- \frac{\mathbf{u}(k)\mathcal{A}^2 \Delta_{2-}(\omega^2 k) \overline{\mathbf{u}(k)}^T}{\omega^2}\bigg)
	\\
& = \frac{1}{|\lambda + 1|^{2/3}} \mathbf{u}(k)\bigg( \frac{\Delta_{3+}(k) \mathcal{A}}{\omega}
- \frac{\mathcal{A}^2 \Delta_{2-}(\omega^2 k) }{\omega^2}\bigg)\overline{\mathbf{u}(k)}^T
	\\
& = \frac{1}{|\lambda + 1|^{2/3}} \mathbf{u}(k)
\begin{pmatrix} \omega^2 \delta_{2+}(k) & 0 &  0 \\ 0 & 0 & 0 \\ 0 & 
\frac{-(k^2 - \omega^2)\delta_{3+}(\lambda) + (k^2 -1)\delta_{3-}(\lambda)}{1 - \omega k^2} & 0 \end{pmatrix}
\overline{\mathbf{u}(k)}^T.
\end{align*}
Since $\delta_3(\lambda)$ satisfies the jump relation
$$\frac{\delta_{3+}(\lambda)}{\delta_{3-}(\lambda)} = \frac{k_2^h(\lambda)^2 -1}{k_2^h(\lambda)^2 - \omega^2}\qquad \text{for $\lambda < -1$},$$
the $(32)$-entry of the above matrix vanishes. Moreover, by the Plemelj formula, the $(11)$-entry is given by
\begin{align*}
\omega^2 \delta_{2+}(k) & = \omega^2 \tilde{r}(\omega^2 k) \delta_{3+}(\lambda)
= \omega^2 \tilde{r}(\omega^2 k) e^{\frac{1}{2} \ln\big(\omega^2 \tilde{r}(\omega k)\big) }
e^{\frac{1}{2\pi i} \dashint_{-\infty}^{-1} \ln\big(\omega^2 \tilde{r}(\omega k_2^h(\lambda'))\big) \frac{d\lambda'}{\lambda' - \lambda}}.
\end{align*}
By (\ref{d3integrandpositive}),
$$e^{\frac{1}{2\pi i} \dashint_{-\infty}^{-1} \ln\big(\omega^2 \tilde{r}(\omega k_2^h(\lambda'))\big) \frac{d\lambda'}{\lambda' - \lambda}} > 0 \qquad \text{for $\lambda < -1$}.$$ 
Furthermore, a direct calculation shows that, if $k = r e^{\frac{\pi i}{3}} \in \Gamma_2^h$ with  $r >1$, then
$$\omega^2 \tilde{r}(\omega^2 k) e^{\frac{1}{2} \ln\big(\omega^2 \tilde{r}(\omega k)\big) } = \sqrt{\frac{(r^2-1)^2}{r^4 + r^2 + 1}} > 0.$$
(To fix the branch, note that the branch of the logarithm was fixed in (\ref{d3integrandpositive}) so that $\arg\big(\omega^2 \tilde{r}(\omega k)\big) \in (0, \pi/3)$ for $k \in \Gamma_2^h$, and that $\arg(\omega^2 \tilde{r}(\omega^2 k)) \in (-\pi/6,0)$ for $k \in \Gamma_2^h$.) 
We conclude that $\omega^2 \delta_{2+}(k) > 0$ for $k \in \Gamma_2^h$. Thus, if $\lambda < -1$ corresponds to  $k \in \Gamma_2^h$, we have
\begin{align}\label{Gjump2}
G_+(\lambda) - G_-(\lambda) & = \frac{ \omega^2 \delta_{2+}(k) }{|\lambda + 1|^{2/3}} |u_1(k)|^2, \qquad \text{where} \quad \omega^2 \delta_{2+}(k) > 0.
\end{align}

\subsubsection*{Jump for $0 < \lambda < 1$}
Letting $k \in \Gamma_9^h$ correspond to $\lambda \in (0,1)$, we find
\begin{align*}
G_+(\lambda) - G_-(\lambda) & =
\frac{F_+(k) - F_-(k)}{(\lambda + 1)^{2/3}} 
= \frac{F_+(k) - F_-(1/k)}{|\lambda + 1|^{2/3}} 
 	\\
&
= \frac{1}{|\lambda + 1|^{2/3}} \bigg(\mathbf{u}^2(k) \Delta_3(k) \overline{\mathbf{u}^2(\bar{k})}^T
- \mathbf{u}^2(1/k) \Delta_2(1/k) \overline{\mathbf{u}^2(1/\bar{k})}^T\bigg), \qquad k \in \Gamma_9^h,
\end{align*}
where $\mathbf{u}^n$ denotes the restriction of $\mathbf{u}$ to $D_n$ with $D_n$ as in Figure \ref{fig: Dn}.
Since $\mathbf{u}(k) = \mathbf{u}(k^{-1})\mathcal{B}$ by \eqref{nsymm} and since $\bar{k} = k^{-1}$ for $k \in \Gamma_{9}^{h}$, we obtain
\begin{align*}
G_+(\lambda) - G_-(\lambda) & = \frac{1}{|\lambda + 1|^{2/3}} \bigg(\mathbf{u}^2(k) \Delta_3(k) (\overline{\mathbf{u}^5(k)\mathcal{B}})^T
- \mathbf{u}^5(k)\mathcal{B} \Delta_2(1/k) \overline{\mathbf{u}^2(k)}^T\bigg)
	\\
& = \frac{1}{|\lambda + 1|^{2/3}} \bigg(\mathbf{u}^2(k) \Delta_3(k) (\overline{\mathbf{u}^2(k) v_8(k)\mathcal{B}})^T
- \mathbf{u}^2(k)v_8(k)\mathcal{B} \Delta_2(1/k) \overline{\mathbf{u}^2(k)}^T\bigg)
	\\
& = \frac{1}{|\lambda + 1|^{2/3}} \mathbf{u}^2(k) \big( \Delta_3(k) (\overline{v_8(k)\mathcal{B}})^T
- v_8(k) \mathcal{B} \Delta_2(1/k) \big)\overline{\mathbf{u}^2(k)}^T.
\end{align*}
Using also that $v_8(k) = \mathcal{B}v_8(\bar{k})^{-1} \mathcal{B}$ for $|k| =1$, the jump becomes
\begin{align}\nonumber
G_+(\lambda) - G_-(\lambda) & = \frac{1}{|\lambda + 1|^{2/3}} \mathbf{u}^2(k) \big( \Delta_3(k) (\overline{\mathcal{B}v_8(\bar{k})^{-1}})^T
- v_8(k) \mathcal{B} \Delta_2(1/k) \big)\overline{\mathbf{u}^2(k)}^T
	\\ \label{Gjump3}
& = \frac{1}{|\lambda + 1|^{2/3}} \mathbf{u}^2(k) Q_1(k) \overline{\mathbf{u}^2(k)}^T,
\end{align}
where, since $(\overline{v_8(\bar{k})^{-1}})^T = R(k)^{-1}  v_8(k) R(k)$ by Lemma \ref{vsymmlemma},
$$Q_1(k) := \Delta_3(k) R(k)^{-1}  v_8(k) R(k) \mathcal{B}
- v_8(k) \mathcal{B} \Delta_2(1/k).$$

Direct calculations using the symmetry \eqref{r1r2 relation on the unit circle} show that if $k = e^{i \alpha}$, then 
\begin{align*}
Q_1(k) = \delta_3(\lambda) \begin{pmatrix}
\frac{\sqrt{3} \cot(\alpha) - 1}{2}f(k) & 0 & r_1(\frac{1}{\omega^2 k}) - r_1(k) r_1(\omega k) \\
0 & \frac{\sqrt{3} \cot(\alpha) + 1}{2} & 0 \\
\overline{r_1(\frac{1}{\omega^2 k}) }- \overline{r_1(k) r_1(\omega k)} & 0 & 1 + r_1(\omega k)r_2(\omega k) 
\end{pmatrix},
\end{align*}
where $f(k)$ is given by (\ref{def of f}). In particular, $Q_1$ is Hermitian. Moreover, a calculation gives
$$\det Q_1(k) = \bigg(\frac{3}{4\sin^2\alpha} -1\bigg) \delta_3(\lambda)^3 f(k^{-1}).$$
Recall that $\delta_3(\lambda) > 0$ for $\lambda > -1$ and that, by Lemma \ref{inequalitieslemma}, $f(k) > 0$, $f(k^{-1}) > 0$, and $1+r_{1}(\omega k)r_{2}(\omega k)>0$ for $\alpha \in (0, \pi/3)$. It follows from these inequalities and Sylvester's criterion that $Q_1(k)$ is positive definite for $\alpha \in (0, \pi/3)$.

\subsubsection*{Jump for $-1 < \lambda < 0$}
%The jump of $G$ across the black contour in the $\lambda$-plane where $\lambda \in (-1,0)$ can be computed using any of the black contours in the $k$-plane. We compute it across $\Gamma_{10}^h = \{k \in \C | |k|=1, \arg{k} \in (\pi/6, \pi/3)\}$ oriented clockwise as in Figure \ref{klambdamapfig}. 
Letting $k \in \Gamma_{10}^h$ correspond to $\lambda \in (-1,0)$, we find
\begin{align*}
G_+(\lambda) - G_-(\lambda) & =
\frac{F_+(k)}{(\lambda+1)^{2/3}} - \frac{F_-(k)}{(\lambda+1)^{2/3}} 
 = \frac{F_+(k) - F_-(1/k)}{|\lambda +1|^{2/3}}
 	\\
& = \frac{1}{|\lambda + 1|^{2/3}} \bigg(\mathbf{u}^3(k) \Delta_3(k) \overline{\mathbf{u}^1(\bar{k})}^T
- \mathbf{u}^1(1/k) \Delta_2(1/k) \overline{\mathbf{u}^3(1/\bar{k})}^T \bigg).
\end{align*}
Hence, using that $\bar{k} = k^{-1}$ and the symmetry $\mathbf{u}(k) = \mathbf{u}(k^{-1})\mathcal{B}$,
\begin{align*}
G_+(\lambda) - G_-(\lambda) & = \frac{1}{|\lambda + 1|^{2/3}} \big(\mathbf{u}^3(k) \Delta_3(k) (\overline{\mathbf{u}^6(k)\mathcal{B}})^T
- \mathbf{u}^6(k)\mathcal{B} \Delta_2(1/k) \overline{\mathbf{u}^3(k)}^T\big)
	\\
& = \frac{1}{|\lambda + 1|^{2/3}} \big(\mathbf{u}^6(k) v_9(k) \Delta_3(k) (\overline{\mathbf{u}^6(k)\mathcal{B}})^T
- \mathbf{u}^6(k)\mathcal{B} \Delta_2(1/k) (\overline{\mathbf{u}^6(k)v_9(k)})^T\big)
	\\
& = \frac{1}{|\lambda +1|^{2/3}} \mathbf{u}^6(k) \big(v_9(k) \Delta_3(k) \mathcal{B}
- \mathcal{B} \Delta_2(1/k) \overline{v_9(k)}^T \big)\overline{\mathbf{u}^6(k)}^T.
\end{align*}
Since $v_9(k) = \mathcal{B}v_7(\bar{k})^{-1}\mathcal{B}$, Lemma \ref{vsymmlemma} gives $\overline{v_9(k)}^T =  \mathcal{B}R(k)^{-1}  v_9(k) R(k)\mathcal{B}$ and hence
\begin{align}\label{Gjump4}
G_+(\lambda) - G_-(\lambda) & = \frac{1}{|\lambda +1|^{2/3}} \mathbf{u}^6(k) Q_2(k) \overline{\mathbf{u}^6(k)}^T,
\end{align}
where
$$Q_2(k) := v_9(k) \Delta_3(k) \mathcal{B}
- \mathcal{B} \Delta_2(1/k) \mathcal{B}R(k)^{-1}  v_9(k) R(k)\mathcal{B}.$$
Direct calculations using (\ref{r1r2 relation on the unit circle}) show that if $k = e^{i \alpha}$, then
$$Q_2(k) = \delta_3(\lambda) \begin{pmatrix} \frac{\sqrt{3}\cot(\alpha) -1}{2}(1 + r_1(\omega^2 k)r_2(\omega^2 k)) & 0 & -r_2(\omega^2 k) \\
0 & \frac{\sqrt{3}\cot(\alpha) + 1}{2}f(\frac{1}{k})
& 0 \\
-\overline{r_2(\omega^2 k)} & 0 & 1 \end{pmatrix}$$
and
$$\det Q_2(k) = \bigg(\frac{3}{4\sin^2\alpha} -1\bigg) \delta_3(\lambda)^3 f(k^{-1}).$$
As above, it follows from Sylvester's criterion and Lemma \ref{inequalitieslemma} that $Q_2(k)$ is positive definite for $k \in \Gamma_{10}^h$.

\subsubsection*{Final steps}
The function $G(\lambda)$ is analytic for $\lambda \in \C \setminus \R$ and has continuous boundary values on $\R \setminus \{-1,0,1\}$. 
As $\lambda \to \infty$, we have $G(\lambda) = O(\lambda^{-4/3})$, so Cauchy's theorem gives
$$\int_\R G_\pm(\lambda) d\lambda = 0 \qquad \text{and hence} \qquad 0 = \int_\R (G_+(\lambda) - G_-(\lambda))d\lambda.$$ 
But we have shown that $G_+ - G_- \geq 0$ on $\R \setminus \{-1,0,1\}$, so recalling the formulas (\ref{Gjump1})--(\ref{Gjump4}) for $G_+ - G_-$, we find that
\begin{align*}
& u_3(k) = 0 \;\; \text{for $k \in \Gamma_1^h$};
\quad u_1(k) = 0 \;\; \text{for $k \in \Gamma_2^h$};
\quad  \mathbf{u}^2(k) = 0 \;\; \text{for $k \in \Gamma_9^h$}; \quad
 \mathbf{u}^6(k) = 0 \;\; \text{for $k \in \Gamma_{10}^h$}.
\end{align*}	
These relations imply that $\mathbf{u}$ is identically zero for all $k \in \C \setminus \Gamma$. To see this, we can for example use the last relation and the jump relation $\mathbf{u}_+ = \mathbf{u}_- v$ to see that the boundary values $\mathbf{u}_\pm$ of $\mathbf{u}$ on $\Gamma_{10}^h$ vanish. An application of Morera's
theorem then shows that $\mathbf{u}$ is analytic at each point in $\Gamma_{10}^h$. But then it follows that $\mathbf{u} \equiv 0$ in the parts of $D_3$ and $D_6$ in the upper half-plane. Similarly, using that $\mathbf{u}^2 = 0$ on $\Gamma_9^h$, we conclude that $\mathbf{u} \equiv 0$ in the parts of $D_2$ and $D_5$ in the right half-plane.
Using the symmetry $\mathbf{u}(k) = \mathbf{u}(\omega k) \mathcal{A}^{-1}$, we see that $\mathbf{u}$ vanishes everywhere. 
\end{proof}

\subsection{Proof of Theorem \ref{inverseth}}\label{inversethsubsec}
Our first goal is to use the vanishing lemma (Lemma \ref{vanishinglemma}) together with Zhou's theory for RH problems in Sobolev spaces $H^N(\Gamma)$ \cite{Z1989} (see also \cite{TO2016}) to deduce existence of a solution $n(x,t,k)$ of RH problem \ref{RHn}. The main difficulty is that our vanishing lemma only applies to solutions satisfying the $\mathcal{A}$- and $\mathcal{B}$-symmetries (\ref{nsymm}). Thus, these symmetries have to be built into the functional analytic framework. This is not so difficult for the $\mathcal{A}$-symmetry, but requires new ideas for the $\mathcal{B}$-symmetry (which involves $k \to 1/k$).

Let $N \geq 1$ be an integer. Let $H^N(\Gamma)$ be the Sobolev space of functions $f \in L^2(\Gamma)$ with $N$ weak derivatives in $L^2$, see \cite[Definition 2.45]{TO2016}. Note that $\Gamma$ is an {\it admissible contour} in the sense of \cite[Definition 2.40]{TO2016}. 
Let $\Omega_+ = D_1 \cup D_3 \cup D_5$ and $\Omega_- = D_2 \cup D_4 \cup D_6$, so that $\C \setminus \Gamma$ is the disjoint union of $\Omega_+$ and $\Omega_-$. From now on in this proof, we assume that $\Gamma$ is oriented so that $\Omega_+$ lies on the left and $\Omega_-$ on the right (this orientation differs from Figure \ref{fig: Dn} only in that $\Gamma_{2}$, $\Gamma_{4}$, and $\Gamma_{6}$ are now oriented towards $0$). 
Define the closed linear subspace $H_z^N(\Gamma)$ of $H^N(\Gamma)$ by 
\begin{align}
  H_z^N(\Gamma) := \{h \in H^N(\Gamma) \, | \, \text{$h$ satisfies the $(N-1)$th-order zero-sum condition} \},
\end{align}
where the $(N-1)$th-order zero-sum condition is defined as in \cite[Definition 2.47]{TO2016}. Define the operator $\mathcal{C} = \mathcal{C}^\Gamma$ by
$$(\mathcal{C} f)(z) = \frac{1}{2\pi i} \int_\Gamma \frac{f(z')dz'}{z' - z}$$
and let $\mathcal{C}_\pm f$ denote the boundary values of $\mathcal{C}f$ on $\Gamma$.
The Cauchy operators $\mathcal{C}_\pm$ are bounded linear maps $H_z^N(\Gamma) \to H_z^N(\Gamma)$ (see \cite[Theorem 2.50]{TO2016}). If $f \in H_z^N(\Gamma)$, then $\mathcal{C}_\pm f \in H_z^N(\partial D)$ for every $D \Subset \Omega_\pm$, where $D \Subset \Omega_\pm$ means that $D$ is a connected component of $\Omega_\pm$.
Following Zhou \cite{Z1989}, define the Sobolev spaces $H_\pm^N(\Gamma)$ by (see also \cite[Eq. (2.41)]{TO2016})
$$H_\pm^N(\Gamma) := \{f \in L^2(\Gamma) \, | \, \text{$f \in H_z^N(\partial D)$ for every $D \Subset \Omega_\pm$}\}.$$
Throughout the proof, we assume that $(x,t) \in \R \times [0,T)$ with $T$ given by (\ref{Tdef}).

 %Since $f = \mathcal{C}_+ f - \mathcal{C}_- f$, we have $H_z^N(\Gamma) = H_+^N(\Gamma) + H_-^N(\Gamma)$, see \cite{TO2016}.

%Our jump matrix $v$ satisfies the {\it $(N-1)$th order product condition} for each $N \geq 1$ in the sense of \cite[Definition 2.55]{TO2016}. Moreover, $w := v - I$ and $v^{-1} - I$ lie in $H^N(\Gamma)$. Hence, by \cite[Theorem 2.56]{TO2016}, $v$ is {\it $N$-regular} in the sense of \cite[Definition 2.54]{TO2016}. This means that there exist $3 \times 3$-matrix valued functions $v^\pm$ on $\Gamma$ such that $v = (v^-)^{-1}v^+$ and $v^\pm, (v^\pm)^{-1} \in I + H_\pm^N(\Gamma)$.
%On the other hand, noting that $k\mapsto \omega k$ preserves the orientation of $\Gamma$ while $k \to k^{-1}$ reverses it, we see that the jump matrix $v$ obeys the symmetries
%$$v(x,t,k) = \mathcal{A} v(x,t,\omega k)\mathcal{A}^{-1} = \mathcal{B} v(x,t,k^{-1})^{-1} \mathcal{B}.$$
%Our next lemma shows that $v^\pm$ can be chosen such that they also obey these $\mathcal{A}$ and $\mathcal{B}$ symmetries.

\begin{lemma}\label{vpmlemma}
For any integer $N \geq 1$, there exist $3 \times 3$-matrix valued functions $v^\pm$ such that
\begin{enumerate}[$(a)$]
\item $v = (v^-)^{-1}v^+$ on $\Gamma  \setminus \Gamma_\star$, 
\item $v^\pm, (v^\pm)^{-1} \in I + H_{\pm}^N(\Gamma)$,
\item $v^\pm(k) = \mathcal{A} v^\pm(\omega k) \mathcal{A}^{-1}
= \mathcal{B}v^\mp(k^{-1})\mathcal{B}$ for $k \in \Gamma$, and 
\item $w^+ := v^+ - I$ and $w^{-} := I - v^{-}$ are nilpotent.
\end{enumerate}
\end{lemma}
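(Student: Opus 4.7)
The approach is to construct $v^\pm$ piecewise on each of the nine components of $\Gamma$, using explicit triangular factorizations of the jump matrices $v_1,\dots,v_9$ in \eqref{vdef}, and then to check that the six global requirements (factorization, Sobolev regularity, $\mathcal{A}$-symmetry, $\mathcal{B}$-symmetry, and nilpotency) fit together consistently.

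\textbf{Step 1 (the six rays).} On each $\Gamma_j$, $j=1,\dots,6$, the matrix $v_j$ is block-triangular and admits an obvious UL (or LU) factorization with unit diagonal. For instance, on $\Gamma_1$,
\begin{align*}
v_1 = \begin{pmatrix} 1 & 0 & 0 \\ r_1(\tfrac{1}{k})e^{\theta_{21}} & 1 & 0 \\ 0 & 0 & 1 \end{pmatrix}\begin{pmatrix} 1 & -r_1(k)e^{-\theta_{21}} & 0 \\ 0 & 1 & 0 \\ 0 & 0 & 1 \end{pmatrix},
\end{align*}
where the right factor extends analytically into the $D_2$-side of $\Gamma_1$ and the left factor into the $D_1$-side. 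I take the right factor as $v^+$ and the inverse of the left factor as $v^-$. Analogous triangular factorizations work for $v_2,\dots,v_6$. The $\mathbb{Z}_3$-symmetry $k \mapsto \omega k$ cycles $\Gamma_1\to\Gamma_2\to\Gamma_3$ and $\Gamma_4\to\Gamma_5\to\Gamma_6$, so making the choice $\mathcal{A}$-equivariantly on $\Gamma_1$ and $\Gamma_4$ forces the $\mathcal{A}$-symmetry on the remaining four rays. Similarly, $k \mapsto 1/k$ sends $\Gamma_j$ to $\Gamma_{j+3}$ (inside/outside) with reversed orientation, so once the factorizations on $\Gamma_1,\Gamma_2,\Gamma_3$ are chosen, the $\mathcal{B}$-symmetry dictates the ones on $\Gamma_4,\Gamma_5,\Gamma_6$ via $v^\mp \mapsto \mathcal{B}v^\pm(k^{-1})\mathcal{B}$; a direct check using the identity $r_2(k)=\tilde{r}(k)\overline{r_1(\bar k^{-1})}$ from Theorem \ref{directth}$(v)$ confirms that this matches the jump-matrix structure \eqref{vdef}.

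\textbf{Step 2 (the three arcs).} On each circular arc $\Gamma_j$, $j=7,8,9$, the matrix $v_j$ is full and I factor it by the standard Gauss LU procedure $v = L D U$ (with $D = I$), relabeling so the lower-triangular factor becomes $(v^-)^{-1}$ and the product $DU$ becomes $v^+$. The required nonvanishing of the leading principal minors of $v_j$ on the arc is supplied by Lemma \ref{inequalitieslemma}: $f(k)>0$, $1+r_1r_2>0$ and, via \eqref{r1r2 relation on the unit circle}, the $(1,1)$ and $(2,2)$ entries of $v_j$ do not vanish. The $\mathcal{B}$-symmetry $v^\pm(k)=\mathcal{B}v^\mp(k^{-1})\mathcal{B}$ on these arcs is precisely the statement that $k \mapsto 1/k$ is an involution on each arc that reverses the $\pm$-sides; this is built in by choosing the LU-factor on one half of the arc and defining it on the other half via $\mathcal{B}$-conjugation, and by using \eqref{r1r2 relation on the unit circle} one checks that the two definitions agree in the middle. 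The $\mathcal{A}$-symmetry is handled as in Step 1.

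\textbf{Step 3 (regularity and nilpotency).} By Theorem \ref{directth}$(i)$--$(ii)$, the entries of $v^\pm-I$ are smooth on $\Gamma\setminus(\Gamma_\star\cup\mathcal{Q})$. At $k\to\infty$ along $\Gamma_4,\Gamma_5,\Gamma_6$, the definition \eqref{Tdef} of $T$ together with the rapid decay \eqref{r1r2rapiddecay} of $r_1,r_2$ guarantees that $v^\pm-I$ and all their derivatives lie in $L^2$; the pole of $r_2$ at $\kappa_2,\kappa_5$ and the limits $r_1(\pm 1)=1$, $r_2(\pm 1)=-1$ in \eqref{r1r2at0} are compensated by the structure of the triangular factors so that $v^\pm-I$ is bounded near those points. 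The zero-sum condition at each node of $\Gamma_\star$ follows from the algebraic matching of the chosen triangular factors from the adjacent rays/arcs; this is where one uses the explicit relations \eqref{r1r2 relation on the unit circle new} together with Assumption \ref{originassumption}. Finally, by construction each $w^\pm=\pm(v^\pm-I)$ is strictly upper or strictly lower triangular (up to a fixed permutation inherited from the $\mathcal{B}$-symmetry), so $(w^\pm)^3=0$.

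\textbf{Main obstacle.} The delicate part is Step 3: verifying that the triangular factors chosen independently on adjacent components of $\Gamma$ glue together to give $v^\pm-I\in H^N_\pm(\Gamma)$, i.e.\ that the boundary values at the intersection points in $\Gamma_\star$ satisfy the $(N-1)$th-order zero-sum conditions. This is an algebraic compatibility check that hinges on the symmetry identities \eqref{r1r2 relation on the unit circle}--\eqref{r1r2 relation on the unit circle new}, the special values \eqref{r1r2at0}, and Assumption \ref{originassumption}; everything else reduces to linear algebra on $3\times 3$ matrices.
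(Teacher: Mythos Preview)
Your overall plan is sound, but Step~2 contains a genuine gap that propagates into Step~3 and breaks item~(d). A standard Gauss LDU factorization of $v_8$ does \emph{not} have $D=I$: the $(1,1)$ entry of $v_8$ is $f(k)=1+r_1(k)r_2(k)+r_1(\tfrac{1}{\omega^2 k})r_2(\tfrac{1}{\omega^2 k})$, which becomes the first diagonal entry of $D$ and is generically $\neq 1$ on $\Gamma_8$. If you set $v^+:=DU$, then $w^+=DU-I$ has nonzero diagonal and is not nilpotent. The appeal to Lemma~\ref{inequalitieslemma} only gives nonvanishing of the minors (so that $L,D,U$ exist), not $D=I$.

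The paper's factorization on the arcs is not triangular at all. Using the identity \eqref{r1r2 relation on the unit circle} one writes, for instance on $\Gamma_8$, an explicit product $v_8=(v_8^-)^{-1}v_8^+$ in which $v_8^-$ has unit diagonal and nonzero off-diagonal entries only at positions $(1,2),(1,3),(3,2)$, while $v_8^+$ has unit diagonal and nonzero off-diagonal entries only at $(2,1),(2,3),(3,1)$. Neither factor is upper or lower triangular, but one checks directly from these sparsity patterns that $(w^\pm)^3=0$. The same identity \eqref{r1r2 relation on the unit circle} is what makes the $\mathcal{B}$-symmetry $v_8^\pm(k)=\mathcal{B}v_8^\mp(k^{-1})\mathcal{B}$ hold exactly on the whole arc, not merely match ``in the middle'' as you suggest.

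This also affects your Step~3. In the paper the ray factors $v_j^\pm$ ($j=1,\dots,6$) are not chosen independently and then matched: they are \emph{defined} to coincide with the arc factors at the nodes (e.g.\ $v_3^+:=v_9^+$, $v_3^-:=v_8^-$, $v_6^+:=v_8^+$, $v_6^-:=v_9^-$ near $e^{i\pi/6}$), making the zero-sum condition automatic by construction. Your simple factorization of $v_1$ in Step~1, while correct as a factorization of $v_1$ alone, will not match the arc factors at the intersection points, so the $(N-1)$th-order zero-sum check you defer to Step~3 would actually fail.
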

\begin{proof}
Straightforward calculations using the definitions \eqref{vdef}--\eqref{def of f} show that
\begin{align*}
v = \begin{cases} 
(v_8^-)^{-1}v_8^+, & k \in \Gamma_8,
	\\
(v_9^-)^{-1}v_9^+, & k \in \Gamma_9,
\end{cases}
\end{align*}
where
\begin{subequations}\label{v8v9}
\begin{align}
& v_8^- := \begin{pmatrix}
 1 & [-r_1(\frac{1}{\omega^2 k})
   r_2(\omega k )-r_1(k)]e^{-\theta_{21}} &
   -r_1(\frac{1}{\omega^2 k})e^{-\theta_{31}} \\
 0 & 1 & 0 \\
 0 & r_2(\omega k )e^{\theta_{32}} & 1 
 \end{pmatrix}, \\
 & v_8^+ := \begin{pmatrix}
 1 & 0 & 0 \\
 r_2(k)e^{\theta_{21}} & 1 & -r_1(\omega k )e^{-\theta_{32}} \\
 r_2(\frac{1}{\omega^2 k})e^{\theta_{31}} & 0 & 1
\end{pmatrix},
	\\
& v_9^- := \begin{pmatrix}
 1 & 0 & r_2(\omega^2 k)e^{-\theta_{31}} \\
 -r_1(\frac{1}{k})e^{\theta_{21}} & 1 & [-r_1(\frac{1}{k})
   r_2(\omega^2 k)-r_1(\omega k )]e^{-\theta_{32}} \\
 0 & 0 & 1 
 \end{pmatrix}, \\
& v_9^+ := \begin{pmatrix}
 1 & r_2(\frac{1}{k})e^{-\theta_{21}} & 0 \\
 0 & 1 & 0 \\
 -r_1(\omega^2 k)e^{\theta_{31}} & r_2(\omega k )e^{\theta_{32}} & 1 
\end{pmatrix}.
\end{align}
\end{subequations}
Using (\ref{r1r2 relation on the unit circle}), we can write 
\begin{subequations}\label{v8plusv9plus}
\begin{align}
& v_8^+ =  
\begin{pmatrix}
1 & 0 & 0 \\
 [-r_1(\omega k ) r_2(\frac{1}{\omega^2 k})-r_1(\frac{1}{k})]e^{\theta_{21}} & 1 & -r_1(\omega k )e^{-\theta_{32}} \\
 r_2(\frac{1}{\omega^2 k})e^{\theta_{31}} & 0 & 1 \\
\end{pmatrix},
	\\
& v_9^+ = \begin{pmatrix}
 1 & [-r_1(\frac{1}{\omega^2 k})
   r_2(\omega k )-r_1(k)]e^{-\theta_{21}} & 0 \\
 0 & 1 & 0 \\
 -r_1(\omega^2 k)e^{\theta_{31}} & r_2(\omega k)e^{\theta_{32}} & 1 
 \end{pmatrix}.
 \end{align}
\end{subequations}
Let us define $v_3^\pm$ and $v_6^\pm$ by
$$v_3^+ = v_9^+, \quad v_3^- = v_8^-, \quad v_6^+ = v_8^+, \quad v_6^- = v_9^-,$$
where $v_8^-, v_9^-$ are given by (\ref{v8v9}) and $v_8^+, v_9^+$ are given by (\ref{v8plusv9plus}). Then it follows that (recall that $\Gamma_6$ is now oriented toward $0$)
\begin{align*}
v = \begin{cases}
(v_3^-)^{-1}v_3^+, & k \in \Gamma_3,
	\\
(v_6^-)^{-1}v_6^+, & k \in \Gamma_6,
\end{cases}
\end{align*}
as desired.
Moreover, using again the identity (\ref{r1r2 relation on the unit circle}), we find that $v_8^\pm$ obey the symmetry
$$v_8^\pm(k) = \mathcal{B}v_8^\mp(k^{-1})\mathcal{B}, \qquad k \in \Gamma_8.$$
By construction, $v^\pm|_{\partial D}$ is zero-sum (to all orders) at $e^{\pi i/6}$ if $D \Subset \C \setminus \Gamma$ is one of the four open connected components with a corner at $e^{\pi i/6}$.
Moreover, if we define 
\begin{align*}
& v_1^\pm(k) = \mathcal{A}v_3^\pm(\omega k)\mathcal{A}^{-1}, \qquad k \in \Gamma_1;
&& v_2^\pm(k) = \mathcal{B}v_6^\mp(k^{-1})\mathcal{B}, \qquad k \in \Gamma_2;
	\\
& v_4^\pm(k) = \mathcal{A}v_6^\pm(\omega k)\mathcal{A}^{-1}, \qquad k \in \Gamma_4;
&& v_5^\pm(k) = \mathcal{B}v_3^\mp(k^{-1})\mathcal{B}, \qquad k \in \Gamma_5;
	\\
& v_7^\pm(k) = \mathcal{A}v_9^\pm(\omega k)\mathcal{A}^{-1}, \qquad k \in \Gamma_7,
\end{align*}
then 
$$v = \begin{cases}
(v_1^-)^{-1} v_1^+,\qquad k \in \Gamma_1, 
	\\
(v_2^-)^{-1} v_2^+,\qquad k \in \Gamma_2, 
	\\
(v_4^-)^{-1} v_4^+,\qquad k \in \Gamma_4, 
	\\
(v_7^-)^{-1} v_7^+,\qquad k \in \Gamma_7, 
\end{cases}$$
as desired. We have shown that $v = (v^-)^{-1} v^+$ everywhere on $\Gamma \setminus \Gamma_\star$. Moreover, $\det v^\pm = 1$ on $\Gamma$, so by construction we have $v^\pm, (v^\pm)^{-1} \in I + H_{\pm}^N(\Gamma; \C^{3 \times 3})$ for every $N \geq 1$.
It is now straightforward to verify that $v^\pm(k) = \mathcal{A} v^\pm(\omega k) \mathcal{A}^{-1}
= \mathcal{B}v^\mp(k^{-1})\mathcal{B}$ for $k \in \Gamma$ and that $w^\pm = \pm v^\pm \mp I$ are nilpotent. 
\end{proof}

We define the closed linear subspaces $H_{z, \sym}^N(\Gamma; \C^{1 \times 3})$ and $H_{z, \sym}^N(\Gamma; \C^{3 \times 3})$ of $H_z^N(\Gamma; \C^{1 \times 3})$ and $H_z^N(\Gamma; \C^{ 3 \times 3})$, respectively, by
\begin{align*}
 & H_{z, \sym}^N(\Gamma; \C^{1 \times 3}) := \{f \in H_z^N(\Gamma; \C^{1 \times 3}) \, | \, \text{$f(k) = f(\omega k) \mathcal{A}^{-1} = f(k^{-1}) \mathcal{B}$ for all $k \in \Gamma$}\},
	\\
&  H_{z, \sym}^N(\Gamma; \C^{3 \times 3}) := \{f \in H_z^N(\Gamma; \C^{3 \times 3}) \, | \, \text{$f(k) = \mathcal{A} f(\omega k) \mathcal{A}^{-1} = \mathcal{B} f(k^{-1}) \mathcal{B}$ for all $k \in \Gamma$}\}.
\end{align*}
Similarly, we define
\begin{align*}
& H_{\pm, \sym}^N(\Gamma; \C^{1 \times 3}) := \{f \in H_{\pm}^N(\Gamma; \C^{1 \times 3})  \, | \, \text{$f(k) = f(\omega k) \mathcal{A}^{-1} = f(k^{-1}) \mathcal{B}$ for all $k \in \Gamma$}\},
	\\
& H_{\pm, \sym}^N(\Gamma; \C^{3 \times 3}) := \{f \in H_{\pm}^N(\Gamma; \C^{3 \times 3})  \, | \, \text{$f(k) = \mathcal{A} f(\omega k) \mathcal{A}^{-1} = \mathcal{B} f(k^{-1}) \mathcal{B}$ for all $k \in \Gamma$}\}.
\end{align*}
Let $v^\pm$ and $w^\pm = \pm v^\pm \mp I$ be as in Lemma \ref{vpmlemma}. Define the operator $\mathcal{C}_w$ by
$$\mathcal{C}_wf := \mathcal{C}_-(fw^+) + \mathcal{C}_+(fw^-).$$
Since $\mathcal{C}_\pm$ and $f \mapsto f w^\pm$ are bounded linear maps $H_z^N(\Gamma) \to H_z^N(\Gamma)$, $\mathcal{C}_w$ is a bounded linear map $H_z^N(\Gamma) \to H_z^N(\Gamma)$.

\begin{lemma}\label{BCwlemma}
If $f \in H_{z, \sym}^1(\Gamma; \C^{1 \times 3})$, then
\begin{align}\label{BCwf}
(\mathcal{C}_wf)(k^{-1})\mathcal{B} = \frac{1}{2\pi i} \int_\Gamma \frac{k}{u} \frac{f(u)w^+(u)}{u - k_-} du
+ \frac{1}{2\pi i} \int_\Gamma \frac{k}{u}\frac{f(u)w^-(u)}{u - k_+} du, \qquad k \in \Gamma \setminus \Gamma_\star,
\end{align}
and
\begin{align}\label{intfww0}
\int_\Gamma \frac{f(k)(w^+(k) + w^-(k))}{k} dk = 0.
\end{align}
\end{lemma}
\begin{proof}
It follows from Lemma \ref{vpmlemma} that
\begin{align}\label{sym of wpm}
w^\pm(k) = \mathcal{A} w^\pm(\omega k) \mathcal{A}^{-1}
= - \mathcal{B}w^\mp(k^{-1})\mathcal{B}, \qquad k \in \Gamma \setminus \Gamma_\star.
\end{align}
Hence,
\begin{align*}
(\mathcal{C}_-(fw^+))(k^{-1})\mathcal{B} = &\; \frac{1}{2\pi i} \int_\Gamma \frac{f(s)w^+(s)}{s - (k^{-1})_-}ds \mathcal{B}
= - \frac{k}{2\pi i} \int_\Gamma \frac{f(1/s)w^-(1/s)}{k_+ s - 1}ds 
	\\
= &\; - \frac{k}{2\pi i} \int_{-\Gamma} \frac{f(u)w^-(u)}{k_+/u - 1} \bigg(-\frac{du}{u^2}\bigg) 
= \frac{1}{2\pi i} \int_\Gamma \frac{k}{u} \frac{f(u)w^-(u)}{u - k_+} du
\end{align*}
and, similarly,
\begin{align*}
(\mathcal{C}_+(fw^-))(k^{-1})\mathcal{B} = \frac{1}{2\pi i} \int_\Gamma \frac{k}{u}\frac{f(u)w^+(u)}{u - k_-} du.
\end{align*}
This gives (\ref{BCwf}). Moreover, the symmetries $f(k) = f(\omega k) \mathcal{A}^{-1} = f(k^{-1}) \mathcal{B}$ and (\ref{sym of wpm}) imply
$$\int_\Gamma \frac{f(k)  w^\pm(k) dk}{k} = \int_\Gamma \frac{f(\omega s)  w^\pm(\omega s) ds}{s}
= \int_\Gamma \frac{f(s) w^\pm(s) ds}{s} \mathcal{A}$$
and, using also that the map $k \mapsto k^{-1}$ reverses the orientation of $\Gamma$,
$$\int_\Gamma \frac{f(k) w^\pm(k) dk}{k} 
= -\int_{-\Gamma} \frac{f(s^{-1}) w^\pm(s^{-1}) ds}{s}
= -\int_{\Gamma} \frac{f(s) w^\mp(s) ds}{s} \mathcal{B}.$$
In terms of the column vectors $\mathbf{v}_\pm := (\int_\Gamma \frac{f(k) w^\pm(k) dk}{k})^T$, we can write these equations as
$$(I - \mathcal{A})^T \mathbf{v}_\pm = 0, \qquad \mathbf{v}_\pm + \mathcal{B} \mathbf{v}_\mp = 0.$$
In particular, 
$$(I - \mathcal{A})^T(\mathbf{v}_+ + \mathbf{v}_-) = (I + \mathcal{B})(\mathbf{v}_+ + \mathbf{v}_-) = 0.$$
But the nullspaces of the matrices $(I - \mathcal{A})^T$ and $I + \mathcal{B}$ are one-dimensional and are spanned by the linearly independent vectors $(1,1,1)$ and $(1, -1, 0)$, respectively. Hence $\mathbf{v}_+ + \mathbf{v}_- = 0$, which is (\ref{intfww0}).
\end{proof}

The next lemma shows that $\mathcal{C}_w$ preserves the $\mathcal{A}$- and $\mathcal{B}$-symmetries and thus defines a bounded linear operator on $H_{z, \sym}^N(\Gamma; \C^{1 \times 3})$.

\begin{lemma}\label{lemma:prop of B and Cw}
If $f \in H_{z, \sym}^1(\Gamma; \C^{1 \times 3})$, then 
\begin{align}\label{Cwsymm}
(\mathcal{C}_w f)(k) = (\mathcal{C}_w f)(\omega k) \mathcal{A}^{-1} = (\mathcal{C}_w f)(k^{-1}) \mathcal{B}, \qquad k \in \Gamma \setminus \Gamma_\star.
\end{align}
In particular, $\mathcal{C}_w$ is a bounded linear operator $H_{z, \sym}^N(\Gamma; \C^{1 \times 3}) \to H_{z, \sym}^N(\Gamma; \C^{1 \times 3})$ for each $N \geq 1$.
\end{lemma}
\begin{proof}
Let $f \in H_{z, \sym}^1(\Gamma; \C^{1 \times 3})$. Then, by \eqref{sym of wpm},
\begin{align}\nonumber
(\mathcal{C}_-(fw^+))(k) 
& = \frac{1}{2\pi i} \int_\Gamma \frac{f(s)w^+(s)}{s - k_-} ds 
= \frac{1}{2\pi i} \int_\Gamma \frac{f(\omega s)w^+(\omega s)}{s - k_-} ds \mathcal{A}^{-1}
= \frac{1}{2\pi i} \int_\Gamma \frac{f(u)w^+(u)}{\omega^{-1} u - k_-} \frac{du}{\omega} \mathcal{A}^{-1}
	\\
& = \frac{1}{2\pi i} \int_\Gamma \frac{f(u)w^+(u)}{u - \omega k_-} du \mathcal{A}^{-1}
= (\mathcal{C}_-(fw^+))(\omega k) \mathcal{A}^{-1},
\end{align}
and, similarly, $(\mathcal{C}_+(fw^-))(k) = (\mathcal{C}_+(fw^-))(\omega k) \mathcal{A}^{-1}$, which proves that $\mathcal{C}_w f$ obeys the $\mathcal{A}$-symmetry. 
On the other hand, the identity (\ref{BCwf}) implies that, for $k \in \Gamma \setminus \Gamma_\star$,
\begin{align*}
(\mathcal{C}_wf)(k^{-1})\mathcal{B} = & -\frac{1}{2} f(k)w^+(k) + \frac{1}{2\pi i} \dashint_\Gamma \frac{k}{u} \frac{f(u)w^+(u)}{u - k} du
	\\
& + \frac{1}{2} f(k)w^-(k)
+ \frac{1}{2\pi i} \dashint_\Gamma \frac{k}{u}\frac{f(u)w^-(u)}{u - k} du.
\end{align*}
Utilizing (\ref{intfww0}), we can write this as
\begin{align*}
(\mathcal{C}_wf)(k^{-1})\mathcal{B} = & -\frac{1}{2} f(k)w^+(k) + \frac{1}{2\pi i} \dashint_\Gamma \frac{f(u)w^+(u)}{u}  \Big(\frac{k}{u - k}  + 1\Big)du
	\\
& + \frac{1}{2} f(k)w^-(k)
+ \frac{1}{2\pi i} \dashint_\Gamma \frac{f(u)w^-(u) }{u}\Big(\frac{k}{u - k}  + 1\Big) du
	\\
= & -\frac{1}{2} f(k)w^+(k) + \frac{1}{2\pi i} \dashint_\Gamma \frac{f(u)w^+(u)}{u-k} du
	\\
& + \frac{1}{2} f(k)w^-(k)
+ \frac{1}{2\pi i} \dashint_\Gamma \frac{f(u)w^-(u) }{u-k}du
=  (\mathcal{C}_wf)(k) \qquad \text{for} \quad k \in \Gamma \setminus \Gamma_\star,
\end{align*}
which shows that $\mathcal{C}_w f$ obeys also the $\mathcal{B}$-symmetry. 
Since $\mathcal{C}_w$ is a bounded linear map $H_z^N(\Gamma) \to H_z^N(\Gamma)$ which, by (\ref{Cwsymm}), preserves the linear subspace $H_{z, \sym}^N(\Gamma)$, it follows that $\mathcal{C}_w$ is a bounded linear operator on $H_{z, \sym}^N(\Gamma; \C^{1 \times 3})$ for each $N \geq 1$. 
\end{proof}

\begin{lemma}\label{fredholmlemma}
The linear operator $I - \mathcal{C}_w: H_{z, \sym}^N(\Gamma; \C^{1 \times 3}) \to H_{z, \sym}^N(\Gamma; \C^{1 \times 3})$ is Fredholm for any $N \geq 1$.
\end{lemma}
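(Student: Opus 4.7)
The plan is to invoke the Fredholm theory for Cauchy singular integral operators on admissible contours developed by Zhou \cite{Z1989} (in the Sobolev formulation of \cite{TO2016}), whose hypotheses are exactly what Lemmas \ref{vpmlemma} and \ref{lemma:prop of B and Cw} have assembled. The general theorem asserts that whenever one has a pointwise factorization $v = (v^-)^{-1}v^+$ with $w^\pm = \pm v^\pm \mp I$ nilpotent and lying in $H_\pm^N(\Gamma)$, the operator $I - \mathcal{C}_w$ is Fredholm on $H_z^N(\Gamma; \C^{1\times 3})$ for $N \geq 2$.

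The underlying mechanism, which I would unpack as needed in the proof, is to split $\mathcal{C}_w = K_+ + K_-$ with $K_\pm f := \mathcal{C}_\mp(f w^\pm)$ and to exploit the nilpotency of $w^\pm$ in order to invert the one-sided operators $I - K_\pm$ by terminating Neumann series. Composing these explicit inverses yields a bounded parametrix $R$ on $H_z^N$ satisfying $R(I - \mathcal{C}_w) = I - \mathsf{K}$, where $\mathsf{K}$ consists of mixed cross terms such as $\mathcal{C}_+\big(\mathcal{C}_-(\cdot\, w^+)\,w^-\big)$. The Sobolev regularity $N \geq 2$ ensures, via the Rellich--Kondrachov compactness of $H^N \hookrightarrow H^{N-1}$ on bounded subcontours, that $\mathsf{K}$ is compact on $H_z^N$. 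An analogous right parametrix then yields Fredholmness on $H_z^N$.

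The remaining step is to descend this Fredholmness to the symmetric subspace $H_{z,\sym}^N$. By Lemma \ref{lemma:prop of B and Cw}, $\mathcal{C}_w$ preserves $H_{z,\sym}^N$; I would then construct a bounded projector $\Pi : H_z^N \to H_{z,\sym}^N$ by averaging over the finite abelian group generated by $k \mapsto \omega k$ and $k \mapsto k^{-1}$, and use the symmetries $w^\pm(k) = \mathcal{A} w^\pm(\omega k)\mathcal{A}^{-1} = \mathcal{B} w^\mp(k^{-1})\mathcal{B}$ from \eqref{sym of wpm} to verify that $\Pi$ commutes with $R$ and with $\mathsf{K}$. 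Restricting the identity $R(I-\mathcal{C}_w) = I - \mathsf{K}$ to the closed invariant subspace $H_{z,\sym}^N$ then gives the Fredholm property there. The main obstacle is precisely this symmetry bookkeeping, especially the inversion symmetry $k \mapsto k^{-1}$, which mixes $w^+$ with $w^-$ and which was already the delicate point in Lemma \ref{lemma:prop of B and Cw}; the abstract Zhou Fredholm theorem itself is a comparatively standard ingredient.
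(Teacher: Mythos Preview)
Your overall strategy—parametrix plus compact remainder on $H_z^N$, then descend to $H_{z,\sym}^N$—is the right shape, and the first half is essentially the standard Zhou argument. But the descent step has a genuine gap, and it is precisely the $\mathcal{B}$-symmetry that breaks it.

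First, the averaging projector $\Pi$ you propose does not exist as a bounded operator on $H_z^N(\Gamma)$. The group element corresponding to the $\mathcal{B}$-symmetry is $(Bf)(k)=f(k^{-1})\mathcal{B}$, and since $\Gamma$ contains rays through $0$ and $\infty$, $B$ does not even map $H_z^N(\Gamma)$ into $L^2(\Gamma)$: a generic $f\in H_z^N$ has $f(0)\neq 0$, so $Bf$ is merely bounded near $\infty$. (Only after imposing the $\mathcal{B}$-symmetry does one get $f(0)=0$; see Lemma~\ref{fat0lemma}.) So you cannot average over the full group.

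Second, and more structurally, your parametrix $R$ built from $(I-K_\pm)^{-1}$ will not preserve $H_{z,\sym}^N$ either. The relation \eqref{sym of wpm} is $w^\pm(k)=\mathcal{B}w^\mp(k^{-1})\mathcal{B}$: the $\mathcal{B}$-symmetry \emph{interchanges} $w^+$ and $w^-$. Consequently $K_+$ and $K_-$ individually do not map $H_{z,\sym}^N$ to itself—only their sum $\mathcal{C}_w$ does, and the proof of Lemma~\ref{lemma:prop of B and Cw} uses this interchange essentially. So neither $(I-K_+)^{-1}$ nor the compact cross-term $\mathsf{K}$ has any reason to respect the subspace, and you cannot restrict the identity $R(I-\mathcal{C}_w)=I-\mathsf{K}$.

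The paper avoids both obstacles by choosing a different parametrix: the operator $I-\mathcal{C}_{\tilde w}$, where $\tilde w^+=(v^+)^{-1}-I$ and $\tilde w^-=I-(v^-)^{-1}$ come from the inverse factorization. Because $(v^\pm)^{-1}$ inherit the symmetries of Lemma~\ref{vpmlemma}(c), the pair $\tilde w^\pm$ satisfies the same interchange relation as $w^\pm$, and Lemma~\ref{lemma:prop of B and Cw}(ii) applies verbatim to $\mathcal{C}_{\tilde w}$. A direct computation gives $(I-\mathcal{C}_{\tilde w})(I-\mathcal{C}_w)=I+T_w$ and $(I-\mathcal{C}_w)(I-\mathcal{C}_{\tilde w})=I+T_{\tilde w}$ with $T_w,T_{\tilde w}$ compact on $H_z^N$. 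Now both factors on the left preserve $H_{z,\sym}^N$, hence so do $T_w$ and $T_{\tilde w}$; since restriction of a compact operator to a closed invariant subspace is compact, $I-\mathcal{C}_w$ is invertible modulo compacts on $H_{z,\sym}^N$. No projector is needed.
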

\begin{proof}
Recall that $w^+ = v^+ - I$ and $w^- = I-v^-$ correspond to the factorization $v = (v^-)^{-1}v^+$ of $v$. Let 
\begin{align}\label{tildewdef}
\tilde{w}^+ = (v^+)^{-1} - I, \qquad \tilde{w}^- = I - (v^-)^{-1}.
\end{align}
By Lemma \ref{vpmlemma} $(b)$, $\tilde{w}^\pm \in  H_{\pm}^N(\Gamma; \C^{3 \times 3})$.
Using that $\mathcal{C}_+ - \mathcal{C}_- = I$, we obtain, for $h \in H_z^N(\Gamma; \C^{1 \times 3})$,
\begin{align*}
  \mathcal{C}_{\tilde{w}}\mathcal{C}_w h
  = &\; \mathcal{C}_+((\mathcal{C}_+(hw^-) + \mathcal{C}_-(hw^+))\tilde{w}^-)
 +  \mathcal{C}_-((\mathcal{C}_+(hw^-) + \mathcal{C}_-(hw^+))\tilde{w}^+)
	\\
 = &\; \mathcal{C}_+((hw^- + \mathcal{C}_-(hw^-) + \mathcal{C}_-(hw^+))\tilde{w}^-)
 +  \mathcal{C}_-((\mathcal{C}_+(hw^-) - hw^+ + \mathcal{C}_+(hw^+))\tilde{w}^+)
	\\
= &\; T_wh + \mathcal{C}_+(hw^-\tilde{w}^-) - \mathcal{C}_-(hw^+\tilde{w}^+),
\end{align*}
where
$$T_w h := \mathcal{C}_+(\mathcal{C}_-(hw^-)\tilde{w}^-) 
+ \mathcal{C}_+(\mathcal{C}_-(hw^+)\tilde{w}^-)
 +  \mathcal{C}_-(\mathcal{C}_+(hw^-)\tilde{w}^+)  
 + \mathcal{C}_-(\mathcal{C}_+(hw^+)\tilde{w}^+).$$
In view of the identities $w^+ \tilde{w}^+ = - w^+ - \tilde{w}^+$ and $w^- \tilde{w}^- = w^- + \tilde{w}^-$, we infer that
$$\mathcal{C}_{\tilde{w}}\mathcal{C}_w h = T_wh + \mathcal{C}_wh + \mathcal{C}_{\tilde{w}} h.$$
Hence
\begin{align}\label{TwCw}
I + T_w = (I - \mathcal{C}_{\tilde{w}})(I - \mathcal{C}_w).
\end{align}
Interchanging $w$ and $\tilde{w}$ in the above argument, we find
\begin{align}\label{TtildewCw}
I + T_{\tilde{w}} = (I - \mathcal{C}_w)(I - \mathcal{C}_{\tilde{w}}),
\end{align}
where
$$T_{\tilde{w}} h := \mathcal{C}_+(\mathcal{C}_-(h\tilde{w}^-)w^-) 
+ \mathcal{C}_+(\mathcal{C}_-(h\tilde{w}^+)w^-)
 +  \mathcal{C}_-(\mathcal{C}_+(h\tilde{w}^-)w^+)  
 + \mathcal{C}_-(\mathcal{C}_+(h\tilde{w}^+)w^+).$$

By standard arguments (see e.g. \cite[Lemma 2.60]{TO2016}), $T_w$ and $T_{\tilde{w}}$ are compact as operators $H_z^N(\Gamma) \to H_z^N(\Gamma)$.
The identities (\ref{TwCw}) and (\ref{TtildewCw}), together with Lemma \ref{lemma:prop of B and Cw}, imply that the restrictions of $T_w$ and $T_{\tilde{w}}$ to the closed linear subspace $H_{z, \sym}^N(\Gamma)$ map into $H_{z, \sym}^N(\Gamma)$.
%Suppose $T:X \to Y$ is a compact linear operator between normed vector spaces. Then, by definition, there exists an open neighborhood $U$ of $0 \in X$ and a compact subset $K \subset Y$ such that $TU \subset K$. If $L$ is a closed linear subspace of $X$ such that $TL$ is contained in $L$, then $U\cap L$ is an open neighborhood of $0 \in L$ such that $T(U\cap L) \subset TU \cap L \subset K \cap L$. Since $K \cap L$ is compact, it follows that $T|_L:L \to L$ is also compact. 
We conclude that $T_w$ and $T_{\tilde{w}}$ are compact also as operators $H_{z, \sym}^N(\Gamma) \to H_{z, \sym}^N(\Gamma)$.
Thus, (\ref{TwCw}) and (\ref{TtildewCw}) show that $I - \mathcal{C}_w:H_{z, \sym}^N(\Gamma) \to H_{z, \sym}^N(\Gamma)$ is invertible modulo compact operators; hence $I - \mathcal{C}_w$ is Fredholm on $H_{z, \sym}^N(\Gamma)$.
\end{proof}

\begin{lemma}[Endpoint behavior of a Cauchy integral] \label{endpointlemma}
Let $\mathcal{N} \geq 0$ be an integer. Let $[a,b] \subset \R$ be a closed finite interval and suppose $g_0 \in \mathcal{C}^{\mathcal{N}} ([a,b],\C)$ is such that the $\mathcal{N}$th derivative of $g_0$ is uniformly H\"older continuous on $[a,b]$. Then the function
\begin{align}\label{f0def}
  f_0(z) := \int^b_a g_0(s)  \frac{ds}{s-z}, \qquad z \in \C \setminus [a, b],
\end{align} 
satisfies
\begin{align}\label{f0expansion} 
f_0(z) = \sum_{n=0}^\mathcal{N} \frac{g_0^{(n)}(b)}{n!}(z-b)^n\ln\frac{z-b}{z-a} 
+ \sum_{n=0}^{\mathcal{N}-1} C_n(z-b)^{n}
 + (z-b)^\mathcal{N} E(z), \qquad z\to b,
\end{align} 
uniformly for $\arg (z-b) \in (-\pi,\pi)$, where $E(z)$ is a bounded function tending to a definite limit as $z \to b$ along any path in $\C \setminus [a, b]$, and the complex constants $C_n$ are given by 
$$C_n = \int_{a}^{b} g_{n+1}(s) ds,  \qquad n = 0,1, \dots, \mathcal{N}-1,$$
with the functions $g_n\in C^{\mathcal{N}-n} ([a,b],\C)$ defined by
\begin{align}\label{gndef}
g_n(s) = \frac{g_{n-1}(s) - g_{n-1}(b)}{s-b} 
= \frac{g_0(s) - \sum_{l = 0}^{n-1}\frac{g_0^{(l)}(b)}{l!}(s-b)^l}{(s-b)^n}, \qquad n = 1, 2, \dots, \mathcal{N}.
\end{align}
\end{lemma}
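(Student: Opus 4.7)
The plan is to iterate an elementary decomposition that strips off one power of $s-b$ at each step. The two ingredients are the algebraic identity
$$\frac{s-b}{s-z}=1+\frac{z-b}{s-z}$$
and the recursion $g_n(s)=g_n(b)+(s-b)\,g_{n+1}(s)$, which is immediate from \eqref{gndef}. Since $g_0\in C^{\mathcal{N}}([a,b])$, Taylor's theorem gives $g_n\in C^{\mathcal{N}-n}([a,b])$ with $g_n(b)=g_0^{(n)}(b)/n!$ for $n=0,1,\ldots,\mathcal{N}$, so every $g_n$ is continuous on $[a,b]$ and the Cauchy integrals $f_n(z):=\int_a^b g_n(s)/(s-z)\,ds$ are well defined on $\C\setminus[a,b]$.

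Substituting the recursion for $g_n$ into $f_n$ and applying the identity, together with $\int_a^b ds/(s-z)=\ln\frac{z-b}{z-a}$ (the branch of the logarithm being fixed by $\arg(z-b)\in(-\pi,\pi)$), yields
$$f_n(z)=\frac{g_0^{(n)}(b)}{n!}\ln\frac{z-b}{z-a}+\int_a^b g_{n+1}(s)\,ds+(z-b)\,f_{n+1}(z),\qquad 0\le n\le\mathcal{N}-1.$$
Iterating this $\mathcal{N}$ times starting from $n=0$ produces
$$f_0(z)=\sum_{n=0}^{\mathcal{N}-1}\frac{g_0^{(n)}(b)}{n!}(z-b)^n\ln\frac{z-b}{z-a}+\sum_{n=0}^{\mathcal{N}-1}C_n(z-b)^n+(z-b)^{\mathcal{N}}f_{\mathcal{N}}(z)$$
with $C_n=\int_a^b g_{n+1}(s)\,ds$ (so the stated formula for $C_n$ in fact extends to $n=0$). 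Decomposing $g_{\mathcal{N}}(s)=g_{\mathcal{N}}(b)+h(s)$ with $h(s):=g_{\mathcal{N}}(s)-g_{\mathcal{N}}(b)$ once more, and using $g_{\mathcal{N}}(b)=g_0^{(\mathcal{N})}(b)/\mathcal{N}!$, gives $f_{\mathcal{N}}(z)=\frac{g_0^{(\mathcal{N})}(b)}{\mathcal{N}!}\ln\frac{z-b}{z-a}+E(z)$ with $E(z):=\int_a^b h(s)/(s-z)\,ds$, producing the expansion \eqref{f0expansion} modulo verification of the properties of $E$.

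The final task is to show that $E$ is bounded on $\C\setminus(-\infty,b]$ near $b$ and tends to a definite limit as $z\to b$ along any path. The H\"older hypothesis on $g_0^{(\mathcal{N})}$, combined with the integral form of the Taylor remainder
$$g_{\mathcal{N}}(s)=\frac{1}{(\mathcal{N}-1)!}\int_0^1 (1-u)^{\mathcal{N}-1}\,g_0^{(\mathcal{N})}(b+u(s-b))\,du,$$
implies that $h$ is H\"older continuous of some exponent $\alpha\in(0,1]$ on $[a,b]$ with $h(b)=0$, so $|h(s)|\le C|s-b|^{\alpha}$. The claimed properties of $E$ then follow from a classical Plemelj--Privalov analysis: $E$ is analytic on $\C\setminus[a,b]$, its boundary values $E_\pm$ on $(a,b)$ are H\"older continuous, and the vanishing of the jump at $b$, $E_+(b)-E_-(b)=2\pi i\,h(b)=0$, forces $E$ to extend continuously through $b$ with a single limiting value $\int_a^b h(s)/(s-b)\,ds$, approached from every direction in $\C\setminus(-\infty,b]$.

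The main obstacle will be the control of $E$ as $z$ approaches $b$ from directions close to the cut $\arg(z-b)=\pm\pi$: the elementary estimate $|s-z|\ge|z-b|\,|\sin\arg(z-b)|$ degenerates there, so uniform boundedness requires genuinely exploiting the H\"older vanishing $h(b)=0$ to generate the cancellation needed near the point $\mathrm{Re}(z)=b-|z-b|\cos\arg(z-b)$ where $s-z$ is smallest. This is the delicate Plemelj--Privalov step; once $E$ is controlled, the identification of the polynomial coefficients $C_n$ follows mechanically from the iteration.
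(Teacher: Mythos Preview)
Your proposal is correct and follows essentially the same approach as the paper: both arguments use the identical recursion $f_n(z)=g_n(b)\ln\frac{z-b}{z-a}+C_n+(z-b)f_{n+1}(z)$, iterate it $\mathcal{N}$ times, and then invoke the classical Plemelj--Privalov/Muskhelishvili endpoint result (the paper cites \cite[Eq.~(29.4)]{M1992}) to control $E(z)=\int_a^b h(s)/(s-z)\,ds$ once $h=g_{\mathcal{N}}-g_{\mathcal{N}}(b)$ is known to be H\"older. The one minor difference is in establishing that H\"older continuity: the paper gives a direct hands-on estimate of $g_{\mathcal{N}}(s_2)-g_{\mathcal{N}}(s_1)$ starting from the same integral Taylor remainder you wrote down, while you obtain it more compactly by reading it off from the representation $g_{\mathcal{N}}(s)=\frac{1}{(\mathcal{N}-1)!}\int_0^1(1-u)^{\mathcal{N}-1}g_0^{(\mathcal{N})}(b+u(s-b))\,du$ itself.
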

\begin{proof}
We first show that $g_\mathcal{N}$ is uniformly H\"{o}lder continuous on $[a,b]$. Let $s_1 \leq s_2$ be any two points in $[a, b]$. Let us first consider the case $\mathcal{N} \geq 2$.
By Taylor's theorem for $g_0$ with the integral form of the remainder, we have
$$(\mathcal{N}-1)! g_\mathcal{N}(s) = \frac{\int_b^s g_0^{(\mathcal{N})}(t) (s - t)^{\mathcal{N}-1} dt}{(s-b)^\mathcal{N}} 
= \frac{\int_b^s (g_0^{(\mathcal{N})}(t) - g_0^{(\mathcal{N})}(b)) (s - t)^{\mathcal{N}-1} dt}{(s-b)^\mathcal{N}} 
+ \frac{g_0^{(\mathcal{N})}(b)}{\mathcal{N}}$$
and hence
\begin{align}\nonumber
  (\mathcal{N}-1)!( g_\mathcal{N}(s_2) - g_\mathcal{N}(s_1))
 = &\; \bigg(\frac{1}{(s_2-b)^\mathcal{N}}  - \frac{1}{(s_1-b)^\mathcal{N}}\bigg) \int_b^{s_2} (g_0^{(\mathcal{N})}(t) - g_0^{(\mathcal{N})}(b)) (s_2 - t)^{\mathcal{N}-1} dt
	\\\nonumber
&  +\frac{1}{(s_1-b)^\mathcal{N}}\int_b^{s_2} (g_0^{(\mathcal{N})}(t) - g_0^{(\mathcal{N})}(b)) [(s_2 - t)^{\mathcal{N}-1} - (s_1 - t)^{\mathcal{N}-1}]dt
	\\\label{gMs1gMs2}
&  - \frac{1}{(s_1-b)^\mathcal{N}} \int_{s_2}^{s_1} (g_0^{(\mathcal{N})}(t) - g_0^{(\mathcal{N})}(b)) (s_1 - t)^{\mathcal{N}-1} dt.
\end{align}
By the uniform H\"older continuity of $g_0^{(\mathcal{N})}$, there are constants $C_1 > 0$ and $\alpha \in (0,1]$ such that $|g_0^{(\mathcal{N})}(t) - g_0^{(\mathcal{N})}(b)| \leq C_1(b-t)^\alpha$ for all $t \in [a,b]$. Using this in (\ref{gMs1gMs2}), straightforward estimates yield 
\begin{align*}
  |g_\mathcal{N}(s_2) - g_\mathcal{N}(s_1)|
 \leq &\; C \bigg|\frac{(s_1-b)^\mathcal{N} - (s_2-b)^\mathcal{N}}{(s_2-b)^\mathcal{N}(s_1-b)^\mathcal{N}} \bigg| 
 \int_{s_2}^b (b-t)^\alpha (t-s_2)^{\mathcal{N}-1} dt
	\\
&  + \frac{C}{(b-s_1)^\mathcal{N}}\int_{s_2}^b (b-t)^\alpha (s_2-s_1) (b-s_1)^{\mathcal{N}-2}dt
	\\
&  + \frac{C}{(b-s_1)^\mathcal{N}} \int_{s_1}^{s_2} (b-t)^\alpha (t-s_1)^{\mathcal{N}-1} dt
	\\
\leq &\; C \frac{(s_2-s_1) (b-s_1)^{\mathcal{N}-1}}{(b-s_2)^\mathcal{N}(b-s_1)^\mathcal{N}} (b-s_2)^{\alpha+\mathcal{N}}
  + \frac{C}{(b-s_1)^\mathcal{N}}(b-s_1)^\alpha (s_2-s_1) (b-s_1)^{\mathcal{N}-1}
	\\
&  + \frac{C}{(b-s_1)^\mathcal{N}} (b-s_1)^\alpha  (s_2 - s_1)^\mathcal{N}
	\\
\leq &\; \frac{C(s_2-s_1)}{(b-s_1)^{1-\alpha}}
\leq  C(s_2-s_1)^\alpha,
\end{align*}
showing that $g_\mathcal{N}$ is uniformly H\"older continuous on $[a,b]$ with the same H\"older exponent $\alpha$ as $g_0^{(\mathcal{N})}$. The same conclusion holds for $\mathcal{N} = 0$ and $\mathcal{N} = 1$ by similar but simpler arguments.

For $n = 0, 1, \dots, \mathcal{N}$, let
\begin{align*}
  f_n(z) := \int_{a}^{b} g_n(s) \frac{ds}{s-z}
\end{align*}
where $\{g_n\}_1^\mathcal{N}$ are defined in \eqref{gndef}. Using the relation
\begin{equation*}
	\frac{g_n(s)}{s-z} = \frac{g_{n+1}(s)}{s-z}(z-b)
	+ \frac{g_n(b)}{s-z} + g_{n+1}(s), \qquad n = 0,1, \dots, \mathcal{N}-1,
\end{equation*}
we see that
\begin{align*}
f_n(z) = f_{n+1}(z)(z-b) + g_n(b) \ln\frac{z-b}{z-a} + C_n, \qquad n = 0,1, \dots, \mathcal{N}-1,
\end{align*}
and recursive use of this equation gives
\begin{align}\nonumber
f_0(z) & = f_1(z) (z-b) + g_0(b)\ln\frac{z-b}{z-a} + C_0
	\\\nonumber
& = \Big(f_2(z) (z-b) + g_1(b)\ln\frac{z-b}{z-a}  + C_1\Big) (z-b) + g_0(b)\ln\frac{z-b}{z-a}  + C_0 = \cdots
	\\ \label{f0recursive}
 & =  f_\mathcal{N}(z)(z-b)^\mathcal{N}
+ \sum_{n=0}^{\mathcal{N}-1}  g_n(b)(z-b)^n\ln\frac{z-b}{z-a} 
+ \sum_{n=0}^{\mathcal{N}-1} C_n (z-b)^n.
\end{align}
Since $g_{\mathcal{N}}$ is uniformly H\"older continuous on $[a,b]$, \cite[Eq. (29.4)]{M1992} implies that
\begin{equation*}
	f_{\mathcal{N}}(z) = g_{\mathcal{N}}(b)\ln \frac{z-b}{z-a} + E(z),
\end{equation*}
where $E(z)$ is a bounded function tending to a definite limit as $z \to b$ along any path. Since
\begin{equation*}
g_n(b) = \frac{g_0^{(n)}(b)}{n!}, \qquad n = 0, 1, \dots, \mathcal{N},	
\end{equation*}
the expansion \eqref{f0expansion} follows. 
\end{proof}

\begin{lemma}\label{Cfsymmlemma}
If $f \in H_{z, \sym}^1(\Gamma; \C^{1 \times 3})$ is a row-vector valued function and $h := f(w^+ + w^-)$, then $\mathcal{C}h$ obeys the $\mathcal{A}$- and $\mathcal{B}$-symmetries, i.e., for all $k \in \C \setminus \Gamma$,
$$(\mathcal{C}h)(k) = (\mathcal{C}h)(\omega k) \mathcal{A}^{-1} = (\mathcal{C}h)(k^{-1}) \mathcal{B}.$$
\end{lemma}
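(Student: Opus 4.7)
The proof will combine natural changes of variable in the Cauchy integral with the symmetries of $f$ and the vanishing integral established in Lemma \ref{fat0lemma}. For the $\mathcal{A}$-symmetry, I start from $(\mathcal{C}f)(\omega k) = \frac{1}{2\pi i}\int_\Gamma f(u)\,du/(u - \omega k)$ and substitute $u = \omega s$. The contour $\Gamma$ is invariant under $k \mapsto \omega k$ both as a set and as an oriented contour; indeed, the identity $l_j(\omega k) = l_{j+1}(k)$ shows that this rotation cyclically permutes the domains $D_1 \to D_3 \to D_5$ and $D_2 \to D_4 \to D_6$, thereby preserving the partition $(\Omega_+, \Omega_-)$. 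The $\mathcal{A}$-symmetry of $f$ in the form $f(\omega s) = f(s)\mathcal{A}$ then yields $(\mathcal{C}f)(\omega k) = (\mathcal{C}f)(k)\mathcal{A}$, which is the first identity.

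For the $\mathcal{B}$-symmetry, I instead substitute $u = 1/s$ (so that $du = -ds/s^2$) in the integral representation of $(\mathcal{C}f)(k^{-1})\mathcal{B}$, using the relation $f(u)\mathcal{B} = f(1/u)$ that follows from $\mathcal{B}^2 = I$. After simplifying $\frac{du}{u - k^{-1}} = \frac{(k/s)\,ds}{s - k}$ and accounting for the orientation of $1/\Gamma$ relative to $\Gamma$, one obtains an integral over $\Gamma$ whose integrand is (up to overall sign) $\frac{k\, f(s)}{s(s-k)}$. The algebraic decomposition
\begin{align*}
\frac{k}{s(s-k)} = \frac{1}{s-k} - \frac{1}{s}
\end{align*}
then splits this expression as $(\mathcal{C}f)(k)$ minus $\frac{1}{2\pi i}\int_\Gamma f(s)/s\,ds$, and the second integral vanishes by Lemma \ref{fat0lemma}(ii). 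Combining these pieces gives the desired identity $(\mathcal{C}f)(k) = (\mathcal{C}f)(k^{-1})\mathcal{B}$.

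The main technical obstacle lies in the careful tracking of the orientation of $\Gamma$ under the inversion $k \mapsto 1/k$. Unlike the rotation $k \mapsto \omega k$, the inversion swaps $\Omega_+$ with $\Omega_-$: from $l_1(1/k) = l_2(k)$, $l_2(1/k) = l_1(k)$, $l_3(1/k) = l_3(k)$ one reads off that inversion permutes $D_1 \leftrightarrow D_6$, $D_2 \leftrightarrow D_5$, and $D_3 \leftrightarrow D_4$, which affects the induced orientation on $1/\Gamma$. Reconciling this with the Jacobian sign from $du = -ds/s^2$ and with the orientation conventions of the radial and arc components of $\Gamma$ requires bookkeeping that must be done piece by piece. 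The partial-fraction step is equally essential: it is precisely the vanishing of the spurious $1/s$ term, which holds for functions in $H_{z, \sym}^2(\Gamma)$ by Lemma \ref{fat0lemma}(ii), that allows the naive change of variable to close into the stated symmetry of $\mathcal{C}f$.
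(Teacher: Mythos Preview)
Your proposal is correct and follows essentially the same route as the paper: the $\mathcal{A}$-symmetry via the substitution $u=\omega s$ and the rotational invariance of $\Gamma$, and the $\mathcal{B}$-symmetry via $u=1/s$ together with the partial-fraction identity $\frac{k}{s(s-k)}=\frac{1}{s-k}-\frac{1}{s}$ and the vanishing of $\int_\Gamma s^{-1}f(s)\,ds$ from Lemma~\ref{fat0lemma}(ii). Your explicit flagging of the orientation swap $\Omega_+\leftrightarrow\Omega_-$ under inversion is in fact more careful than the paper's own treatment, which passes over this point silently.
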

\begin{proof}
Using (\ref{sym of wpm}), we see that $h(k) = h(\omega k)\mathcal{A}^{-1} = -h(k^{-1})\mathcal{B}$ for $k \in \Gamma$.
Thus, for $k \in \C \setminus \Gamma$,
\begin{align*}
(\mathcal{C}h)(\omega k) \mathcal{A}^{-1} 
= \frac{1}{2\pi i} \int_\Gamma \frac{h(u) du}{u - \omega k}\mathcal{A}^{-1} 
= \frac{1}{2\pi i} \int_\Gamma \frac{h(\omega s) \omega ds}{\omega s - \omega k}\mathcal{A}^{-1} 
= \frac{1}{2\pi i} \int_\Gamma \frac{h(s) ds}{s - k}
= (\mathcal{C}h)(k),
\end{align*}
and, recalling that $\int_\Gamma s^{-1}h(s)ds = 0$ by Lemma \ref{BCwlemma},
\begin{align*}
(\mathcal{C}h)(k^{-1}) \mathcal{B}
& = \frac{1}{2\pi i} \int_\Gamma \frac{h(u) du}{u - k^{-1}}\mathcal{B} 
= - \frac{1}{2\pi i} \int_{-\Gamma} \frac{h(s^{-1}) s^{-2} ds}{s^{-1} - k^{-1}}\mathcal{B}
= -\frac{1}{2\pi i} \int_\Gamma \frac{k}{s} \frac{h(s) ds}{k - s}
	\\
& = - \frac{1}{2\pi i} \int_\Gamma \Big(\frac{k}{s} \frac{1}{k - s} - \frac{1}{s}\Big)h(s) ds
= - \frac{1}{2\pi i} \int_\Gamma \frac{1}{k-s} h(s) ds
= (\mathcal{C}h)(k).
\end{align*}
\end{proof}

\begin{lemma}\label{wpmdecaylemma}
  Let $T$ be defined by (\ref{Tdef}). For each $(x,t) \in \R \times [0,T)$, the functions $k \mapsto w^\pm(x,t,k)$ and their derivatives are rapidly decreasing as $\Gamma \ni k \to \infty$ and as $\Gamma \ni k \to 0$. 
\end{lemma}
\begin{proof}
 The matrices $w^\pm$ involve $r_1(k)e^{- \theta_{21}}$ and $r_1(1/k)e^{\theta_{21}}$ on $\Gamma_1$, and $r_2(1/k)e^{- \theta_{21}}$ and $r_2(k)e^{\theta_{21}}$ on $\Gamma_4$. For $k \in i\R$, we have
$$e^{\theta_{21}(x,t,k)} = e^{\pm i\frac{|k|^2 + 1}{2|k|}x} e^{\frac{|k|^4 - 1}{4|k|^2}t},\qquad \mp ik > 0,$$
and hence the definition of $T$ implies that $r_1(1/k)e^{\theta_{21}(x,t,k)}$ and its derivatives have rapid decay as $k \in (-i, -i\infty) \subset \Gamma_1$ tends to infinity for any $(x,t) \in \R \times [0, T)$. For $k \in (0, i) \subset \Gamma_1$, $|e^{\theta_{21}(x,t,k)}| \leq 1$ for $t \geq 0$, and so $r_1(1/k)e^{\theta_{21}(x,t,k)}$ 
and its derivatives have rapid decay as $\Gamma_1 \ni k \to 0$ thanks to (\ref{r1r2rapiddecay}). Since $\theta_{21}(x,t,k) = - \theta_{21}(x,t,1/k)$, it follows that $r_1(k)e^{-\theta_{21}} $ is also rapidly decreasing as $k \in \Gamma_1$ tends to $0$ and $\infty$. By the symmetry (\ref{r1r2 relation with kbar symmetry}), it then follows that $r_2(k) e^{\theta_{21}}$ and $r_2(1/k) e^{-\theta_{21}}$ are rapidly decreasing as $k \in \Gamma_4$ tends to $0$ and $\infty$ for any $(x,t) \in \R \times [0, T)$. By the $\mathcal{A}$-symmetry, these decay properties extend so that $w^\pm$ have the appropriate decay properties as $k \to \infty$ and as $k \to 0$ along any ray in $\Gamma$.
\end{proof}

\begin{lemma}\label{atinfinitylemma}
If $f \in H_{z, \sym}^2(\Gamma; \C^{1 \times 3})$ and $h := f(w^+ + w^-)$, then
$$(\mathcal{C}h)(k) = O(k^{-1}), \qquad k \in \C \setminus \Gamma, \;\; k \to \infty,$$
uniformly with respect to $\arg k \in [0, 2\pi]$. 
More generally, if $f \in H_{z, \sym}^N(\Gamma; \C^{1 \times 3})$ for some integer $N \geq 2$, then there exist constants $\{c_j\}_1^{N-2} \subset \C$  such that
$$(\mathcal{C}h)(k) = \frac{c_1}{k} + \frac{c_2}{k^2} + \cdots + \frac{c_{N-2}}{k^{N-2}} + O\Big(\frac{1}{k^{N-1}}\Big), \qquad k \in \C \setminus \Gamma, \;\; k \to \infty,$$
uniformly with respect to $\arg k \in [0, 2\pi]$. 
\end{lemma}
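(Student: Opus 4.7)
The plan is to use the $\mathcal{B}$-symmetry of $\mathcal{C}f$ from Lemma~\ref{Cfsymmlemma}, namely $(\mathcal{C}f)(k)=(\mathcal{C}f)(1/k)\mathcal{B}$ for $k\in\C\setminus\Gamma$, to transfer the required asymptotic at $k=\infty$ into an expansion of $(\mathcal{C}f)(K)$ as $K:=1/k\to 0$ in $\C\setminus\Gamma$. Concretely, if one can show that there exist row vectors $a_1,\dots,a_{N-2}\in\C^{1\times 3}$ with
\begin{align*}
(\mathcal{C}f)(K)=\sum_{n=1}^{N-2}a_n K^{n}+O(K^{N-1}) \qquad \text{as $K\to 0$, $K\in\C\setminus\Gamma$,}
\end{align*}
uniformly in $\arg K$, then substituting $K=1/k$ and setting $c_n:=a_n\mathcal{B}$ yields the conclusion; the first, simpler assertion of the lemma corresponds to $N=2$, where the sum is empty.

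To produce such an expansion at $K=0$, fix a small $\epsilon>0$ and split $\Gamma=\Gamma^{\mathrm{near}}\cup\Gamma^{\mathrm{far}}$ with $\Gamma^{\mathrm{near}}:=\Gamma\cap\overline{D_\epsilon(0)}$. The near part consists of six radial segments emanating from the origin, namely the inner portions of $\Gamma_1,\dots,\Gamma_6$, which we parametrize as $u=se^{i\theta_j}$ with $s\in[0,\epsilon]$ for $j=1,\dots,6$. The Cauchy integral over $\Gamma^{\mathrm{far}}$ is analytic in $|K|<\epsilon/2$ and therefore has a genuine Taylor expansion to any order with remainder $O(K^{N-1})$. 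For each ray of $\Gamma^{\mathrm{near}}$, the substitution $z:=Ke^{-i\theta_j}$ and $g_0(s):=f(se^{i\theta_j})$ reduces the corresponding Cauchy integral to $\int_0^\epsilon g_0(s)(s-z)^{-1}\,ds$, to which we apply Lemma~\ref{endpointlemma} with $\mathcal{N}=N-1$, noting that the expansion at the left endpoint $a=0$ is obtained from the stated one by a routine reflection $s\mapsto \epsilon-s$ of the interval. The Sobolev regularity of $f$ together with the hypothesis $f^{(n)}(0)=0$ for $0\le n\le N-1$ implies $g_0^{(n)}(0)=0$ for the same range of $n$, so every logarithmic coefficient $g_0^{(n)}(0)/n!$ in Lemma~\ref{endpointlemma} vanishes, and the expansion collapses to a polynomial in $z$ of degree $N-2$ plus $z^{N-1}E_j(z)$, with $E_j$ bounded as $z\to 0$ uniformly in $\arg z$. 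Re-expressing in $K$, summing the six rays, and adding the Taylor series from $\Gamma^{\mathrm{far}}$ yields $(\mathcal{C}f)(K)=\sum_{n=0}^{N-2}a_n K^{n}+O(K^{N-1})$ with remainder uniform in $\arg K$.

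Two checks finish the argument. First, $a_0$ equals $\lim_{K\to 0,\,K\in\C\setminus\Gamma}(\mathcal{C}f)(K)$, which coincides with the common boundary value $(\mathcal{C}_\pm f)(0)$; by Lemma~\ref{fat0lemma}~$(iii)$ this value is $0$, so $a_0=0$. Second, the coefficients $a_1,\dots,a_{N-2}$ must not depend on the sector of $\C\setminus\Gamma$ from which $K$ approaches $0$. This holds because the jump of $\mathcal{C}f$ across any ray near $0$ equals $f|_\Gamma$, which vanishes to order $N-1$ at $0$ by hypothesis; hence any two sectorial polynomial parts differ by a function of size $O(K^{N-1})$ and so must coincide up to degree $N-2$. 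The main technical obstacle is precisely this coordination of the contributions from the six rays meeting at the origin, together with ensuring the uniformity of the remainder in the direction of approach; both are ultimately delivered by the strong vanishing hypothesis on $f$ at $0$, which kills every logarithmic term in Lemma~\ref{endpointlemma} and reduces the problem to a clean polynomial expansion controlled uniformly by the $E_j$'s.
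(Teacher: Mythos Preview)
Your proposal is correct and follows essentially the same approach as the paper: both use the $\mathcal{B}$-symmetry of $\mathcal{C}f$ from Lemma~\ref{Cfsymmlemma} to convert the question at $k\to\infty$ into an expansion at $K=1/k\to 0$, invoke Sobolev embedding to obtain the H\"older regularity needed for Lemma~\ref{endpointlemma} with $\mathcal{N}=N-1$, note that the vanishing of $f^{(j)}(0)$ kills the logarithmic terms, and use Lemma~\ref{fat0lemma} to conclude that the constant term $a_0$ vanishes. Your version is somewhat more explicit about the near/far decomposition and the consistency of the expansion across sectors, but the strategy is identical.
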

\begin{proof}
Let $f \in H_{z, \sym}^2(\Gamma; \C^{1 \times 3})$. By Lemma \ref{wpmdecaylemma}, the functions $w^\pm$ vanish to all orders as $k \in \Gamma$ tends to $0$. Furthermore, since $f \in H_z^1(\Gamma)$, $\mathcal{C} f$ is uniformly $1/2$-H\"older continuous in each $D \Subset D_\epsilon(0) \setminus \Gamma$, and hence in $\bar{D}$, whenever $\epsilon > 0$ is small enough (see \cite[Lemma 2.51]{TO2016}). Thus, taking $k \to 0$ and then using (\ref{intfww0}), we obtain
\begin{align}\label{Cpmhat0}
\lim_{k\to 0, k \in \C \setminus \Gamma} (\mathcal{C} h)(k) =  \frac{1}{2\pi i} \int_\Gamma \frac{h(u) du}{u} = 0.
\end{align}
The function $h$ is in $H^2(\Gamma)$, and hence, by standard Sobolev embeddings, it is $C^1$ on $\Gamma \setminus \Gamma_\star$ and its derivative is uniformly H\"older continuous with exponent $1/2$ on each subarc of $\Gamma$. Since $h$ and its derivative vanish at the origin, it follows (using Lemma \ref{endpointlemma} with $\mathcal{N} = 1$ and (\ref{Cpmhat0})) that $(\mathcal{C}h)(k) = O(k)$ as $k \to 0$. The symmetry $(\mathcal{C}h)(k) = (\mathcal{C}h)(k^{-1})\mathcal{B}$, established in Lemma \ref{Cfsymmlemma}, then shows that $(\mathcal{C}h)(k) = O(k^{-1})$ as $k \to \infty$. 

More generally, suppose $f \in H_{z, \sym}^{N}(\Gamma; \C^{1 \times 3})$ with $N \geq 2$.
Then, by Sobolev embeddings, $h$ belongs to the H\"older space $C^{N-1, 1/2}(\Gamma\setminus \Gamma_\star)$. Since $\lim_{k\to 0} h^{(j)}(k) = 0$ for $j = 0, 1, \dots, N-1$, Lemma \ref{endpointlemma} with $\mathcal{N} = N-1$ together with (\ref{Cpmhat0}) shows that there exist complex constants $\{C_j\}_1^{N-2}$ such that $(\mathcal{C}h)(k) = \sum_{j=1}^{N-2} C_j k^j + O(k^{N-1})$ as $k \to 0$, $k \notin \Gamma$, uniformly for $\arg k \in [0,2\pi]$. The desired assertion then follows from the symmetry $(\mathcal{C}h)(k) = (\mathcal{C}h)(k^{-1})\mathcal{B}$.
\end{proof}

\begin{lemma}\label{injectivelemma}
For each integer $N \geq 2$, the operator $I - \mathcal{C}_w: H_{z, \sym}^N(\Gamma; \C^{1 \times 3}) \to H_{z, \sym}^N(\Gamma; \C^{1 \times 3})$ is injective. 
\end{lemma}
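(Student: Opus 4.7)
The plan is as follows. Suppose $f \in H^N_{z,\sym}(\Gamma;\mathbb{C}^{1\times 3})$ lies in the kernel of $I - \mathcal{C}_w$, so that
\begin{equation*}
f = \mathcal{C}_w f = \mathcal{C}_-(fw^+) + \mathcal{C}_+(fw^-).
\end{equation*}
To $f$ I will associate a row-vector valued function $n^h$ on $\mathbb{C} \setminus \Gamma$, show that $n^h$ solves the homogeneous version of RH problem \ref{RHn}, invoke the vanishing lemma (Lemma \ref{vanishinglemma}) to conclude $n^h \equiv 0$, and then read off $f \equiv 0$ from the boundary values of $n^h$.

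Concretely, I would set
\begin{equation*}
n^h(k) := \mathcal{C}\bigl(f(w^+ + w^-)\bigr)(k), \qquad k \in \mathbb{C}\setminus \Gamma,
\end{equation*}
which is manifestly analytic on $\mathbb{C}\setminus \Gamma$. Combining $\mathcal{C}_+ - \mathcal{C}_- = I$ with the identity $\mathcal{C}_-(fw^+) + \mathcal{C}_+(fw^-) = f$, a short computation yields
\begin{equation*}
n^h_+ = f(I+w^+) = fv^+, \qquad n^h_- = f(I-w^-) = fv^-,
\end{equation*}
so that $n^h_+ = n^h_- \,v$ on $\Gamma \setminus \Gamma_\star$, which is condition $(b)$ of RH problem \ref{RHn}. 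Since both $f$ and $w^{\pm}$ enjoy the symmetries $h(k) = \mathcal{A}h(\omega k)\mathcal{A}^{-1}$ and the paired symmetry $f(k) = f(k^{-1})\mathcal{B}$, $w^\pm(k) = \mathcal{B}w^\mp(k^{-1})\mathcal{B}$ (from Lemma \ref{vpmlemma}), the density $g := f(w^+ + w^-)$ belongs to $H^N_{z,\sym}(\Gamma;\mathbb{C}^{1\times 3})$; Lemma \ref{Cfsymmlemma} then gives the $\mathcal{A}$ and $\mathcal{B}$ symmetries of $n^h = \mathcal{C}g$, i.e.\ condition $(d)$. Because $g \in L^1(\Gamma)$, we have $n^h(k) = O(k^{-1})$ as $k \to \infty$ (the homogeneous analogue of $(e)$), and because the Cauchy operators $\mathcal{C}_{\pm}$ send $H^N_z(\Gamma)$ to itself, the boundary values $n^h_\pm$ are bounded on all of $\Gamma$, including at the intersection set $\Gamma_\star$, which yields condition $(c)$. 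Lemma \ref{vanishinglemma} therefore forces $n^h \equiv 0$; since $v^+$ is invertible on $\Gamma \setminus \Gamma_\star$ (its inverse lies in $I + H^N_+(\Gamma)$ by Lemma \ref{vpmlemma}), the relation $fv^+ = n^h_+ = 0$ finally gives $f \equiv 0$.

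The step I expect to require the most care is the verification of condition $(c)$ at the intersection points in $\Gamma_\star$. A generic Cauchy integral picks up logarithmic singularities at contour corners, and on our nine-ray-plus-circle contour such corners abound. The key point is that the density $g = f(w^++w^-)$ is not merely $L^2$ but lies in the refined Sobolev class $H^N_{z,\sym}$, whose defining $(N-1)$th-order zero-sum condition is precisely what is needed to cancel those logarithms, ensuring that $\mathcal{C}_\pm g$ extends continuously through each corner. Beyond this, the argument is structural: once $n^h$ is placed inside the hypotheses of the vanishing lemma, the self-adjoint RH machinery of Lemma \ref{vanishinglemma} does the real work, and the conclusion $f \equiv 0$ is immediate.
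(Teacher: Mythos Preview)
Your proof is correct and follows essentially the same route as the paper's: define $n^h = \mathcal{C}(f(w^++w^-))$, verify it satisfies the homogeneous RH problem, apply the vanishing lemma, and read off $f=0$ from $n^h_+ = fv^+$. One small point: your justification ``$g\in L^1(\Gamma)$ gives $n^h(k)=O(k^{-1})$'' is not quite enough for \emph{uniform} decay on this unbounded contour (as $k\to\infty$ near one of the six rays, $\mathrm{dist}(k,\Gamma)/|k|$ is not bounded below); the paper handles this via Lemma~\ref{atinfinitylemma}, which uses the $\mathcal{B}$-symmetry of $g\in H^N_{z,\sym}$ to transfer the question to the behavior of $\mathcal{C}g$ at $k=0$, where the rapid vanishing of $w^\pm$ (inherited from the rapid decay of $r_1,r_2$ at infinity) makes the endpoint analysis clean.
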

\begin{proof}
Suppose $f \in H_{z, \sym}^N(\Gamma; \C^{1 \times 3})$ satisfies $(I - \mathcal{C}_w)f = 0$. Let $n^h = \mathcal{C}(f (w^+ + w^-))$. 
Since $f (w^+ + w^-) \in H_z^1(\Gamma)$, $n^h(k)$ is an analytic function of $k \in \C \setminus \Gamma$ with continuous boundary values on $\Gamma \setminus \Gamma_\star$ such that $n^h(k) = O(1)$ as $k$ approaches any point in $\Gamma_\star$ (see \cite[Lemma 2.51]{TO2016} for the behavior near the intersection points). 
Moreover, since $(I - \mathcal{C}_w)f = 0$,
$$n^h_+ = \mathcal{C}_+ (f (w^+ + w^-)) 
= \mathcal{C}_+ (f w^+) + f -  \mathcal{C}_- (f w^+)
= f w^+ + f = f v^+,$$
$$n^h_- = \mathcal{C}_- (f (w^+ + w^-)) 
= f -  \mathcal{C}_+ (f w^-) + \mathcal{C}_- (f w^-)
= f - fw^-
= fv^-,$$
so it follows that $n^h_+ = n^h_-v$ a.e. on $\Gamma$. 
Lemma \ref{Cfsymmlemma} shows that $n^h$ obeys the $\mathcal{A}$- and $\mathcal{B}$-symmetries, and Lemma \ref{atinfinitylemma} shows that $n^h(k) = O(k^{-1})$ as $k \to \infty$. 
We conclude that $n^h$ satisfies the assumptions of Lemma \ref{vanishinglemma}. Applying Lemma \ref{vanishinglemma}, we infer that $n^h = 0$ and thus $f = n^h_-(v^-)^{-1} = 0$ on $\Gamma$, showing that $I - \mathcal{C}_w$ is injective.
\end{proof}

\begin{lemma}\label{index0lemma}
For each integer $N \geq 2$, the operator $I - \mathcal{C}_w: H_{z, \sym}^N(\Gamma; \C^{1 \times 3}) \to H_{z, \sym}^N(\Gamma; \C^{1 \times 3})$ has Fredholm index $0$.
\end{lemma}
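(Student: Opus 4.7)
The plan is to deduce $\text{ind}(I-\mathcal{C}_w)=0$ on $H^N_{z,\sym}(\Gamma;\C^{1\times 3})$ by a linear homotopy connecting $I-\mathcal{C}_w$ to the identity. For $s\in[0,1]$, set $w^{s,\pm}:=sw^\pm$, so that $\mathcal{C}_{w^s}=s\mathcal{C}_w$ and the interpolating family is simply $I-s\mathcal{C}_w$, which is norm-continuous in $s$. Since $w^{s,\pm}$ inherits both the $\mathcal{A}$-symmetry and the swap $\mathcal{B}$-symmetry of $w^\pm$ from Lemma \ref{vpmlemma}, the argument of Lemma \ref{lemma:prop of B and Cw} applies verbatim to show that $s\mathcal{C}_w$ preserves the closed subspace $H^N_{z,\sym}$ for every $s$.

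The key point is that $I-s\mathcal{C}_w$ is Fredholm on $H^N_{z,\sym}$ for every $s\in[0,1]$. To see this, define $v^{s,+}:=I+w^{s,+}$, $v^{s,-}:=I-w^{s,-}$, and
$$\tilde{w}^{s,+}:=(v^{s,+})^{-1}-I,\qquad \tilde{w}^{s,-}:=I-(v^{s,-})^{-1}.$$
Because $w^\pm$ are nilpotent (Lemma \ref{vpmlemma}(d)), so are $w^{s,\pm}$, and hence $\tilde w^{s,\pm}$ are \emph{polynomials} in $s$ with values in $H^N_{\pm}(\Gamma;\C^{3\times 3})$. The symmetries $v^{s,\pm}(k)=\mathcal{B}v^{s,\mp}(k^{-1})\mathcal{B}=\mathcal{A}v^{s,\pm}(\omega k)\mathcal{A}^{-1}$ pass to the inverses $(v^{s,\pm})^{-1}$, and therefore $\tilde w^{s,\pm}$ satisfies the analogs of \eqref{sym of wpm}; consequently $\mathcal{C}_{\tilde w^s}$ also preserves $H^N_{z,\sym}$. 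Repeating the computation in the proof of Lemma \ref{fredholmlemma} with $(w,\tilde w)$ replaced by $(w^s,\tilde w^s)$ yields
$$(I-\mathcal{C}_{\tilde w^s})(I-s\mathcal{C}_w)=I+T_{w^s},\qquad (I-s\mathcal{C}_w)(I-\mathcal{C}_{\tilde w^s})=I+T_{\tilde w^s},$$
with $T_{w^s},T_{\tilde w^s}$ compact on $H^N_z$. Restricted to $H^N_{z,\sym}$, these operators are still compact, so $I-s\mathcal{C}_w$ is a two-sided regularizer modulo compacts of itself, hence Fredholm on $H^N_{z,\sym}$ for all $s\in[0,1]$.

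Given a norm-continuous one-parameter family of Fredholm operators on a Banach space, the Fredholm index is constant. Applied here,
$$\text{ind}(I-\mathcal{C}_w)=\text{ind}(I-s\mathcal{C}_w)\big|_{s=1}=\text{ind}(I-s\mathcal{C}_w)\big|_{s=0}=\text{ind}(I)=0,$$
which is the claim.

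The main obstacle is the bookkeeping needed to verify that $\tilde w^{s,\pm}$ lies in the correct symmetry class for every $s$, so that the Fredholm identities genuinely restrict to $H^N_{z,\sym}$. This is immediate for the linear scaling $w^{s,\pm}=sw^\pm$ but is less trivial for $\tilde w^{s,\pm}$, whose definition involves the inversion $(v^{s,\pm})^{-1}$; the finite nilpotency of $w^\pm$ resolves this by making $(v^{s,\pm})^{-1}$ a finite sum $\sum_{j\ge 0}(\mp s w^{\pm})^{j}$ whose each term manifestly transforms correctly under the $\mathcal{A}$- and $\mathcal{B}$-actions. An alternative route, combining the injectivity of $I-\mathcal{C}_w$ from Lemma \ref{injectivelemma} with injectivity of $I-\mathcal{C}_{\tilde w}$ (which would require repeating the vanishing argument of Lemma \ref{vanishinglemma} for the RH problem with jump $v^{-1}$), would also yield both indices $\le 0$, and then $\text{ind}(I-\mathcal{C}_w)+\text{ind}(I-\mathcal{C}_{\tilde w})=\text{ind}(I+T_w)=0$ forces both to vanish.
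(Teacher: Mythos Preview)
Your proof is correct and follows essentially the same strategy as the paper: a linear homotopy $w^{s,\pm}=sw^\pm$ together with the nilpotency of $w^\pm$ (guaranteeing that $(v^{s,\pm})^{-1}$ exist and lie in the right space with the right symmetries), so that the Fredholm argument of Lemma~\ref{fredholmlemma} applies for each $s$ and the constancy of the index yields the conclusion. Your explicit remark that $(v^{s,\pm})^{-1}=\sum_{j\ge 0}(\mp s w^{\pm})^{j}$ is a finite sum, and hence manifestly inherits the $\mathcal{A}$- and $\mathcal{B}$-symmetries, makes transparent a point the paper leaves implicit.
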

\begin{proof}
Since $w^\pm$ are nilpotent, for every $t \in [0, 1]$, $w_t^\pm := tw^\pm \in H_{\pm}^N(\Gamma; \C^{3 \times 3})$ satisfy $\det(w_t^+ + I) = \det(I - w_t^-) = 1$. 
For each $t \in [0,1]$, define $v_t^\pm$ by $tw^\pm = \pm v_t^\pm \mp I$ and define $\tilde{w}_t^\pm$ by (cf. (\ref{tildewdef}))
$$\tilde{w}_t^+ = (v_t^+)^{-1} - I, \qquad \tilde{w}_t^- = I - (v_t^-)^{-1}.$$
By Lemma \ref{vpmlemma} (b), $\tilde{w}_t^\pm$ belong to $H_{\pm}^N(\Gamma; \C^{3 \times 3})$. 
Hence we can repeat the proof of Lemma \ref{fredholmlemma} to conclude that the bounded linear operator $I - \mathcal{C}_{tw}: H_{z, \sym}^N(\Gamma) \to H_{z, \sym}^N(\Gamma)$ is Fredholm for every $t \in [0,1]$.
Since the Fredholm index is constant on connected components and the Fredholm index of $(I - \mathcal{C}_{tw})|_{t=0} = I$ is $0$, this proves that $I - \mathcal{C}_w: H_{z, \sym}^N(\Gamma; \C^{1 \times 3}) \to H_{z, \sym}^N(\Gamma; \C^{1 \times 3})$ has Fredholm index $0$.
\end{proof}

By Lemma \ref{injectivelemma} and Lemma \ref{index0lemma}, the operator $I - \mathcal{C}_w: H_{z, \sym}^N(\Gamma; \C^{1 \times 3}) \to H_{z, \sym}^N(\Gamma; \C^{1 \times 3})$ is bijective for any $N \geq 2$; by the open mapping theorem, its inverse $(I - \mathcal{C}_w)^{-1}$ is bounded on $H_{z, \sym}^N(\Gamma; \C^{1 \times 3})$. We define $\mu \in (1,1,1) + H_{z, \sym}^2(\Gamma; \C^{1 \times 3})$ by
$$\mu = (1,1,1) + (I - \mathcal{C}_w)^{-1}\mathcal{C}_w (1,1,1)$$
and note that $\mu \in (1,1,1) + H_{z, \sym}^N(\Gamma; \C^{1 \times 3})$ for each $N \geq 2$.

\begin{lemma}\label{nexistencelemma}
The row-vector valued function $n = (1,1,1) + \mathcal{C}(\mu(w^+ + w^-))$ is the unique solution of RH problem \ref{RHn}.
\end{lemma}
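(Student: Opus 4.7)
The plan is to verify directly that the function $n = (1,1,1) + \mathcal{C}(\mu(w^+ + w^-))$ satisfies each of the five requirements of RH problem \ref{RHn}, and then appeal to Lemma \ref{vanishinglemma} for uniqueness.

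First I would check that the Cauchy density $\mu(w^+ + w^-)$ belongs to $H_{z,\sym}^N(\Gamma;\C^{1\times 3})$ for every $N\ge 2$. The piece $(1,1,1)(w^+ + w^-)$ is symmetric because $(1,1,1)\mathcal{A}^{-1}=(1,1,1)=(1,1,1)\mathcal{B}$ and because, by Lemma \ref{vpmlemma}(c), the sum $w^+ + w^-$ satisfies the $\mathcal{A}$-symmetry $(w^++w^-)(k)=\mathcal{A}(w^++w^-)(\omega k)\mathcal{A}^{-1}$ and the $\mathcal{B}$-symmetry $(w^++w^-)(k)=\mathcal{B}(w^++w^-)(k^{-1})\mathcal{B}$; the piece $(\mu-(1,1,1))(w^++w^-)$ inherits these symmetries from $\mu-(1,1,1)\in H_{z,\sym}^N$ by matrix conjugation. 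Once this is in place, Lemma \ref{Cfsymmlemma} immediately gives the $\mathcal{A}$- and $\mathcal{B}$-symmetries for $n$, which is item (\ref{RHnitemd}). Analyticity on $\C\setminus\Gamma$ (item (\ref{RHnitema})) is automatic from the Cauchy integral.

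For the jump relation (\ref{RHnitemb}), I would apply the Plemelj formulas $\mathcal{C}_+ - \mathcal{C}_- = I$ together with the fixed-point identity $(I-\mathcal{C}_w)\mu = (1,1,1)$, which rewrites as $\mu = (1,1,1) + \mathcal{C}_-(\mu w^+) + \mathcal{C}_+(\mu w^-)$. A direct calculation then gives
\begin{align*}
n_+ &= (1,1,1) + \mathcal{C}_+(\mu(w^++w^-)) = (1,1,1) + \mu w^+ + \mathcal{C}_-(\mu w^+) + \mathcal{C}_+(\mu w^-) = \mu + \mu w^+ = \mu v^+, \\
n_- &= (1,1,1) + \mathcal{C}_-(\mu(w^++w^-)) = (1,1,1) - \mu w^- + \mathcal{C}_-(\mu w^+) + \mathcal{C}_+(\mu w^-) = \mu - \mu w^- = \mu v^-,
\end{align*}
so $n_+ = \mu v^+ = (n_- (v^-)^{-1})v^+ = n_- v$ on $\Gamma\setminus\Gamma_\star$, using the factorization from Lemma \ref{vpmlemma}(a). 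The boundedness at the self-intersection points (item (\ref{RHnitemc})) comes from the standard fact that a Cauchy integral with $H^1$ density has at worst logarithmic blow-up at such points, combined with the zero-sum condition built into $H_z^N$ and Lemma \ref{fat0lemma} (which eliminates the log at $k=0$). The normalization at infinity (item (\ref{RHniteme})) follows from Lemma \ref{atinfinitylemma}, applied with $f=\mu(w^++w^-)\in H_{z,\sym}^N$: since $w^\pm$ have rapid decay at $\infty$ (coming from the rapid decay of $r_1,r_2$ and the appropriate exponential factors $e^{\theta_{ij}}$ in their respective half-planes), the hypotheses are met and we obtain $n(x,t,k) = (1,1,1) + O(k^{-1})$.

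For uniqueness, suppose $n^{(1)}$ and $n^{(2)}$ are two solutions. Their difference $n^h := n^{(1)} - n^{(2)}$ is analytic off $\Gamma$, has the same jump so the jump of $n^h$ is trivial, inherits both symmetries (\ref{nsymm}), is bounded at $\Gamma_\star$, and satisfies $n^h = O(k^{-1})$ as $k\to\infty$. Thus $n^h$ satisfies the hypotheses of Lemma \ref{vanishinglemma}, which forces $n^h \equiv 0$. The main technical obstacle is the bookkeeping around the point $k=0$, where the $\mathcal{B}$-symmetry $k\mapsto k^{-1}$ produces a singularity in the pull-back: this is exactly what forces us to work in the symmetric Sobolev space $H_{z,\sym}^N$ and to use the zero-sum/integral identity of Lemma \ref{fat0lemma}, but once that framework is in place the verification is essentially mechanical.
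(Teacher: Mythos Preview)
Your proposal is correct and follows essentially the same route as the paper: verify (a)--(e) of RH problem \ref{RHn} using Lemmas \ref{vpmlemma}, \ref{fat0lemma}, \ref{Cfsymmlemma}, and \ref{atinfinitylemma}, compute $n_\pm = \mu v^\pm$ from the fixed-point identity $(I-\mathcal{C}_w)\mu = (1,1,1)$ together with Plemelj, and then deduce uniqueness from the vanishing lemma. One phrasing slip: in the uniqueness argument you write that the difference $n^h$ ``has the same jump so the jump of $n^h$ is trivial'' --- the jump of $n^h$ is \emph{not} trivial; rather, because the jump relation is linear in the row vector, $n^h$ satisfies the \emph{same} jump $n^h_+ = n^h_- v$, which is precisely the homogeneous problem treated by Lemma \ref{vanishinglemma}. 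Your invocation of that lemma is correct, but the intervening clause should be deleted or rephrased.
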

\begin{proof}
Since $\mu(w^+ + w^-) \in H_{z}^{1}(\Gamma; \C^{1 \times 3})$, $n$ is analytic in $\C \setminus \Gamma$, has continuous boundary values on $\Gamma \setminus \Gamma_\star$, and satisfies $n(x,t,k) = O(1)$ as $k \to k_{\star} \in \Gamma_\star$.
%The conclusions of Lemma \ref{Cfsymmlemma} and \ref{atinfinitylemma} are clearly valid also for $f \in f_0 + H_{z, \sym}^2(\Gamma; \C^{1 \times 3})$ with $f_0 = (1,1,1)$. 
By Lemma \ref{atinfinitylemma}, $n(x,t,k) = (1,1,1) + O(k^{-1})$ as $k \to \infty$.
Moreover, Lemma \ref{Cfsymmlemma} shows that $n(k) = n(\omega k)\mathcal{A}^{-1} = n(k^{-1}) \mathcal{B}$ for $k \in \C \setminus \Gamma$.
Finally, the definition of $\mu$ implies that $\mathcal{C}_w\mu = \mu - (1,1,1)$. Hence, 
\begin{align}
& n_+ = (1,1,1) + (\mathcal{C}_+ - \mathcal{C}_-)(\mu w^+) + \mathcal{C}_w\mu
  = \mu (w^+ + I) = \mu v^+,
  	\\
& n_- = (1,1,1) - (\mathcal{C}_+ - \mathcal{C}_-)(\mu w^-) + \mathcal{C}_w\mu
  = \mu (I - w^-) = \mu v^-.
\end{align}
Since $v = (v^-)^{-1}v^+$, these relations show that $n$  obeys the jump relation (\ref{njump}).

Uniqueness follows because if $n$ and $\tilde{n}$ are two solutions, then $n^h := n - \tilde{n}$ satisfies the homogeneous RH problem of Lemma \ref{vanishinglemma}, and hence $n^h = 0$.
\end{proof}

Henceforth, let 
$$n(x,t,k) = (1,1,1) + \frac{1}{2\pi i} \int_\Gamma \frac{(\mu(w^+ + w^-))(x,t,s) ds}{s-k}$$ 
be the solution of Lemma \ref{nexistencelemma}.
It follows from Lemma \ref{atinfinitylemma} that there exist functions $n^{(1)}(x,t)$ and $n^{(2)}(x,t)$ such that
\begin{align}\label{nexpansion}
n(x,t,k) = (1,1,1) + \frac{n^{(1)}(x,t)}{k} + \frac{n^{(2)}(x,t)}{k^2} + O(k^{-3}), \qquad k \to \infty.
\end{align}
%(If we show that $\mu \in H_{z, \sym}^4(\Gamma)$, then Lemma \ref{atinfinitylemma} shows that there is no need to assume that the sector is nontangential.)

\begin{lemma}\label{n1n2lemma}
The functions $n^{(1)}(x,t)$ and $n^{(2)}(x,t)$ are smooth functions of $(x,t) \in \R \times [0,T)$ which have rapid decay as $x \to \pm \infty$ in the sense that, for each $\tau \in [0,T)$ and each integer $M \geq 1$,
$$\sup_{(x,t) \in \R \times [0, \tau]} \sum_{i =0}^M (1+|x|)^M(|\partial_x^i n^{(1)}| + |\partial_x^i n^{(2)}| ) < \infty.$$
\end{lemma}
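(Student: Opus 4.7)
The plan is to exploit the integral representation $n = (1,1,1) + \mathcal{C}(\mu(w^+ + w^-))$ from Lemma~\ref{nexistencelemma} in three stages: smoothness of $\mu$ in $(x,t)$, extraction of $n^{(1)}, n^{(2)}$ as explicit Cauchy-type integrals, and the rapid $x$-decay via integration by parts.

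For smoothness, I would first check that $(x,t)\mapsto w^\pm(x,t,\cdot)$ is a smooth map $\R\times[0,T)\to H^N_\pm(\Gamma;\C^{3\times 3})$ for every $N\ge 1$. This is a consequence of the explicit formulas~\eqref{v8v9}--\eqref{v8plusv9plus} (and their analogues on the rays obtained from the symmetries in Lemma~\ref{vpmlemma}): each entry of $w^\pm$ is a product of a smooth rapidly-decreasing factor involving $r_1, r_2$ with an exponential $e^{\pm\theta_{ij}(x,t,k)}$, and the definition~\eqref{Tdef} of~$T$ is calibrated exactly so that these products retain Schwartz behavior on $\Gamma$ together with all $(x,t)$-derivatives, for every $t<T$. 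Since Lemmas~\ref{injectivelemma}--\ref{index0lemma} and the open mapping theorem imply that $(I-\mathcal{C}_w)^{-1}$ is a bounded operator on $H^N_{z,\sym}$, iterated application of the chain-rule identity $\partial(I-\mathcal{C}_w)^{-1} = (I-\mathcal{C}_w)^{-1}\,\mathcal{C}_{\partial w}\,(I-\mathcal{C}_w)^{-1}$ to $\mu - (1,1,1) = (I-\mathcal{C}_w)^{-1}\mathcal{C}_w(1,1,1)$ yields that $(x,t)\mapsto \partial_x^\alpha\partial_t^\beta\mu(x,t,\cdot)\in(1,1,1)+H^N_{z,\sym}$ is continuous for every multi-index $(\alpha,\beta)$. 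Expanding $1/(s-k)$ as $k\to\infty$ and invoking Lemma~\ref{atinfinitylemma} (whose hypothesis at $k=0$ follows from Lemma~\ref{fat0lemma} applied to $\mu w^\pm$, with the decay of $w^\pm$ at $0$ inherited from the Schwartz decay of $r_1, r_2$ at $\infty$ through the $\mathcal{B}$-symmetry) then produces
\begin{equation*}
n^{(1)}(x,t) = -\frac{1}{2\pi i}\int_\Gamma \mu(w^++w^-)\,ds, \qquad n^{(2)}(x,t) = -\frac{1}{2\pi i}\int_\Gamma s\,\mu(w^++w^-)\,ds,
\end{equation*}
and smoothness of $n^{(j)}$ in $(x,t)\in\R\times[0,T)$ follows by differentiating under the integral sign.

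For the rapid $x$-decay, the key mechanism is the identity
\begin{equation*}
x\,e^{\theta_{ij}(x,t,k)} = \frac{1}{(l_i-l_j)'(k)}\,\partial_k e^{\theta_{ij}(x,t,k)} - \frac{t\,(z_i-z_j)'(k)}{(l_i-l_j)'(k)}\,e^{\theta_{ij}(x,t,k)},
\end{equation*}
valid wherever the rational function $(l_i-l_j)'(k)$ does not vanish. Substituting this into the above integrals and integrating by parts in $s$ transfers each power of $x$ onto a $\partial_k$-derivative of $\mu$ and the reflection coefficients; by the first step these derivatives remain controlled in the relevant Sobolev norms uniformly for $(x,t)\in\R\times[0,\tau]$ with $\tau<T$. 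Iterating the procedure $M$ times produces $(1+|x|)^M|\partial_x^i n^{(j)}(x,t)|\le C$, provided all boundary contributions vanish at each step. The main obstacle is exactly the control of these boundary terms, which are localized at the intersection points $\Gamma_\star$ (where $(l_i-l_j)'(k)$ has its zeros on $\Gamma$) and at the sixth roots of unity $\mathcal{Q}$ (where the reflection coefficients $r_1,r_2$ have the singularities described in Theorem~\ref{directth}$(ii)$). The resolution uses the cancellation produced by the reduction $n=(1,1,1)M$: Lemma~\ref{Mat1lemma} implies that $(1,1,1)\mathcal{M}_2^{(-1)}(x,t) = (1,1,1)\widetilde{\mathcal{M}}_2^{(-1)}(x,t) = 0$, so that $n$ is regular at $\pm 1$, with analogous cancellations at the remaining elements of $\mathcal{Q}$ produced by the $\mathcal{A}$-symmetry. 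Tracking these cancellations carefully through the iterated integration by parts, combined with the Schwartz decay of $\mu(w^++w^-)$ at both $0$ and $\infty$, closes the estimate.
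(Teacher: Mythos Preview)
Your smoothness argument is correct and matches the paper's. The rapid-decay argument, however, has a genuine gap. After one integration by parts you obtain an integral involving $\partial_k\big(\mu\cdot r/(l_i-l_j)'\big)e^{\theta_{ij}}$; iterating the scheme $M$ times requires $\|\partial_k^j\mu\|_{L^2(\Gamma)}$ to be bounded \emph{uniformly} as $|x|\to\infty$ for $j\le M$. Your first step does not give this: it shows $\mu\in(1,1,1)+H^N_{z,\sym}$ for each fixed $(x,t)$, with smooth dependence, but smoothness only yields local uniformity on compacta. In fact the relevant norms blow up: since $\partial_k e^{\theta_{ij}}=\big(x(l_i-l_j)'+t(z_i-z_j)'\big)e^{\theta_{ij}}$ and the exponentials are unimodular on $\partial\D$, one has $\|w^\pm\|_{H^N(\Gamma)}\sim |x|^N$, and the bound $\|\mu-(1,1,1)\|_{H^N}\le\|(I-\mathcal{C}_w)^{-1}\|_{\mathcal{B}(H^N)}\|\mathcal{C}_w(1,1,1)\|_{H^N}$ inherits this growth. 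So each integration by parts gains a factor $|x|^{-1}$ only to lose it again. There is a secondary obstacle as well: $(l_2-l_1)'(k)$ vanishes at $k=\pm i\in\Gamma_\star$, making the weight $1/(l_i-l_j)'$ singular on the contour; your proposed cure via Lemma~\ref{Mat1lemma} is not available here, since that lemma belongs to the direct problem and describes a matrix $M$ that has not been constructed in the inverse-problem setting of Lemma~\ref{n1n2lemma}.

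The paper bypasses both issues by obtaining the rapid decay from a Deift--Zhou steepest-descent analysis, as in (but simpler than) the treatment of Sector~I in Theorem~\ref{asymptoticsth}. The contour deformations convert the problem to one with jump $\hat w$ satisfying $\|\hat w\|_{L^\infty}=O(|x|^{-N})$; then $I-\mathcal{C}_{\hat w}$ is uniformly invertible by Neumann series, and the decay of $n^{(1)},n^{(2)}$ follows directly from the integral representation, with no integration by parts and no delicate analysis at $\Gamma_\star$ or $\mathcal{Q}$.
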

\begin{proof}
It follows from Lemma \ref{wpmdecaylemma} that the maps
\begin{align}\label{xttowpm}
(x,t) \mapsto \big( k \mapsto (|k|^{-j} + |k|^j) w^\pm(x,t,k)\big): \R \times [0,T) \to H_{z}^N(\Gamma)
 \end{align}
are smooth for any $j \geq 0$ and $N \geq 2$. Since
\begin{align*}
n^{(1)}(x,t) = - \frac{1}{2\pi i} \int_\Gamma (\mu(w^+ + w^-))(x,t,s) ds,
	\quad
n^{(2)}(x,t) = - \frac{1}{2\pi i} \int_\Gamma s (\mu(w^+ + w^-))(x,t,s) ds,
\end{align*}
standard estimates now show that $n^{(1)}$ and $n^{(2)}$ are smooth on $\R \times [0,T)$ (see e.g. \cite[pp. 47--51]{LNonlinearFourier} for a detailed proof in a similar situation). Finally, the rapid decay as $x \to \pm \infty$ follows from a nonlinear steepest descent analysis similar to (but simpler than) the one used to establish the asymptotics in Sector I in Theorem  \ref{asymptoticsth}.
\end{proof}

Thanks to Lemma \ref{n1n2lemma}, we may define the functions $u(x,t)$ and $v(x,t)$ by (\ref{uvdef}) and these functions satisfy properties $(i)$ and $(iii)$ of Definition \ref{Schwartzsolutiondef}.
In what follows, we show that $(u,v)$ also satisfy property $(ii)$ of Definition \ref{Schwartzsolutiondef}, and that $v$ can be expressed in terms of $n_3^{(1)}$ by (\ref{recoveruvn}).

\begin{lemma}\label{constructMlemma}
The function $M(x,t,k)$ defined by
\begin{align}\label{Mdef}
M(x,t,k) := P(k)^{-1} \begin{pmatrix}
ne^{\mathcal{L}x + \mathcal{Z}t}  \\
(ne^{\mathcal{L}x + \mathcal{Z}t})_x \\
(ne^{\mathcal{L}x + \mathcal{Z}t})_{xx}
\end{pmatrix} e^{-(\mathcal{L}x+\mathcal{Z}t)}
= P(k)^{-1} \begin{pmatrix}
n  \\
n_x + n\mathcal{L} \\
n_{xx} + 2n_x\mathcal{L} + n\mathcal{L}^2
\end{pmatrix}
\end{align}
satisfies the Lax pair equations (\ref{Xlax}) for $(x,t) \in \R \times [0,T)$ and $k \in \C \setminus \Gamma$, with $\mathsf{U}$ and $\mathsf{V}$ defined in terms of the functions $u,v$ in (\ref{uvdef}). Moreover, as $k \to \infty$,
\begin{align}\label{Mlargek}
M = I + O(k^{-1}).
\end{align}
\end{lemma}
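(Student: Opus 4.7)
My plan is to use the standard dressing-method argument: pull the exponential factors out of the jump so that the conjugated eigenfunction has $(x,t)$-independent jumps, conclude that the matrices $\tilde M_x \tilde M^{-1}$ and $\tilde M_t \tilde M^{-1}$ are meromorphic on $\hat{\C}$, and then identify them with $L$ and $Z$ by matching their behavior at $\infty$, $0$, and $\kappa_j$ against the structure of the Lax pair. First, I would verify that the jump matrix $v(x,t,k)$ in \eqref{vdef} factorizes as $v = e^{\mathcal{L}x+\mathcal{Z}t}V(k)e^{-(\mathcal{L}x+\mathcal{Z}t)}$ for some $V$ independent of $(x,t)$, because every exponential $e^{\pm \theta_{ij}}$ in $v$ is precisely such a conjugation. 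Setting $\tilde n := n e^{\mathcal{L}x+\mathcal{Z}t}$, the jump becomes $\tilde n_+ = \tilde n_- V$; since $V$ is independent of $(x,t)$, the row vectors $\tilde n_x$, $\tilde n_{xx}$ (and their $t$-analogues) satisfy the same jump. Consequently $\tilde M := M e^{\mathcal{L}x+\mathcal{Z}t} = P^{-1}(\tilde n;\tilde n_x;\tilde n_{xx})^T$ also satisfies $\tilde M_+ = \tilde M_- V$, so the products
\begin{align*}
\Phi := \tilde M_x \tilde M^{-1} = M_x M^{-1} + M\mathcal{L}M^{-1}, \qquad \Psi := \tilde M_t \tilde M^{-1} = M_tM^{-1} + M\mathcal{Z}M^{-1}
\end{align*}
are analytic across $\Gamma \setminus \Gamma_\star$ and free of essential singularities, so they extend to meromorphic functions on $\hat{\C}$. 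The lemma reduces to showing $\Phi = L(x,t,k)$ and $\Psi = Z(x,t,k)$ with $u,v$ given by \eqref{uvdef}.

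Next I would establish \eqref{Mlargek} by a direct asymptotic computation. Denoting by $W$ the column bracket in \eqref{Mdef}, its $j$th column is $(n_j,\,l_j n_j+(n_j)_x,\,l_j^2 n_j+2l_j(n_j)_x+(n_j)_{xx})^T$, whose leading $k\to\infty$ part reproduces the $j$th column $(1,l_j,l_j^2)^T$ of $P$ and whose correction from $n-(1,1,1) = O(k^{-1})$ is of order $O(k^{-1})$, $O(1)$, $O(k)$ in the three rows. Since $P$ is a rescaled Vandermonde in $\alpha_j := i\omega^j/(2\sqrt{3})$, the columns of $P^{-1}$ decay as $O(1), O(k^{-1}), O(k^{-2})$, and the three contributions balance to give $M = I + M^{(1)}/k + M^{(2)}/k^2 + O(k^{-3})$ with $M^{(1)} = \diag(n^{(1)})$ and an explicit formula for $M^{(2)}$ in terms of $n^{(1)}_x$ and $n^{(2)}$; in particular $(M^{(1)})_{33} = n^{(1)}_3$, ensuring compatibility with the reconstruction formula \eqref{recoveruv}.

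With the behavior at $\infty$ in hand, I would pin down the behavior of $\Phi,\Psi$ at the remaining candidate singularities. The symmetry $M(k) = \mathcal{B}M(k^{-1})\mathcal{B}$, which follows from $n(k)=n(k^{-1})\mathcal{B}$ together with the elementary identity $P(k^{-1})=P(k)\mathcal{B}$, transfers the expansion at $\infty$ into $M(0) = I + O(k)$ there, so $\Phi$ has at most a simple pole at $k = 0$ whose residue matches that of $\mathcal{L}$. At the sixth roots of unity, the $\mathcal{A}$-symmetry reduces matters to $k = \pm 1$, and a local expansion of $P^{-1}$ near these points together with boundedness of $n$ at $\Gamma_\star$, argued exactly as in Lemma \ref{Mat1lemma}, shows that $M$ has at most simple poles at each $\kappa_j$, and consequently so do $\Phi$ and $\Psi$. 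Thus $\Phi$ is a rational matrix-valued function on $\hat{\C}$ with at worst simple poles at $\{0\}\cup\{\kappa_j\}_{j=1}^6$ and linear growth $\mathcal{L}(k) + O(1)$ at $\infty$, which is exactly the pole structure of $L = \mathcal{L} + \mathsf{U}$ derived from \eqref{mathsfUdef intro} and \eqref{limits of L and Z at k infty}. Subtracting and applying Liouville's theorem, $\Phi - L$ is a constant matrix; matching the $O(1)$ coefficient at infinity, with $u,v$ defined by \eqref{uvdef}, forces that constant to vanish and yields $\Phi = L$. The analogous argument with $\Psi$ gives $\Psi = Z$, and recasting the Lax-pair equations in the form \eqref{Xlax} completes the verification.

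The main obstacle I foresee is the local analysis at the points $\kappa_j$. There, $P^{-1}$ has simple poles and $n$ is only assumed bounded, so to conclude that $\Phi$ has exactly simple poles at $\kappa_j$—and not higher-order poles that would spoil the Liouville comparison with $L$—the tensor structure of the residue of $M$ at $\pm 1$ must be shown to match exactly the rank-one factored form of the residue of $\mathsf{U}$ encoded by \eqref{mathsfUdef intro}. This calls for the same kind of careful coefficient-by-coefficient expansion carried out in Lemma \ref{Mat1lemma}, now performed for the function $M$ built from the inverse-problem vector $n$ rather than from the forward scattering data. Everything else—the jump cancellation, the expansion at infinity, and the use of the $\mathcal{B}$-symmetry to control $k\to 0$—is essentially routine by comparison.
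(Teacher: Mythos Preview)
Your strategy is the same dressing-plus-Liouville argument the paper uses; the only substantive difference is the gauge in which you carry it out. You work with $\hat M := M e^{\mathcal L x + \mathcal Z t}$ and aim to identify $\Phi = \hat M_x \hat M^{-1}$ with $L = \mathcal L + \mathsf U$, which has simple poles at each $\kappa_j$. The paper instead passes one step further to $\tilde M := P(k)\hat M$, which satisfies the \emph{original} Lax pair \eqref{Lax pair tilde}; there the target $\tilde L$ is a polynomial in $k,k^{-1}$ with \emph{no} singularities at $\kappa_j$, so the task at those points collapses to showing $\tilde M_x \tilde M^{-1} = O(1)$ as $k\to\kappa_j$ (the paper's ``long but straightforward calculations''). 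Since $\Phi = P^{-1}(\tilde M_x \tilde M^{-1})P$, the two computations are conjugate, but the paper's gauge is cleaner: $\tilde M$ is built from $n,n_x,n_{xx},\mathcal L$ alone and is therefore regular at $\kappa_j$, with $\det\tilde M = \det P$ vanishing simply there, so $\tilde M_x\tilde M^{-1}$ has \emph{a priori} only simple poles to kill.

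One place where your write-up needs correction: the inference ``$M$ has at most simple poles at $\kappa_j$, and consequently so do $\Phi$ and $\Psi$'' is not valid as stated. If $M$ has a simple pole then $M_x M^{-1}$ and $M\mathcal L M^{-1}$ can each have double poles; the reduction to simple poles requires exactly the structural cancellation you flag in your final paragraph. The cleanest way to establish it is the paper's: conjugate by $P$ and show boundedness in the $\tilde M$ gauge. Also, two small points: the $\kappa_j$ are smooth points of $\partial\mathbb D$ and are \emph{not} in $\Gamma_\star$, so what you need there is smoothness of the boundary values $n_\pm$, not ``boundedness of $n$ at $\Gamma_\star$''; and before inverting $M$ (or $\tilde M$) you should first run a separate Liouville argument to get $\det M \equiv 1$, as the paper does explicitly.
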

\begin{proof}
The jump matrix $v$ has the form $v(x,t,k) = e^{x\widehat{\mathcal{L}(k)} + t\widehat{\mathcal{Z}(k)}}v_0(k)$, where $v_0(k)$ is independent of $x$ and $t$. 
Hence $(ne^{\mathcal{L}x + \mathcal{Z}t})_x$, $(ne^{\mathcal{L}x + \mathcal{Z}t})_{xx}$, and $(ne^{\mathcal{L}x + \mathcal{Z}t})_t$ have the same jumps as $ne^{\mathcal{L}x + \mathcal{Z}t}$.
In particular, $\tilde{M}$ defined by
\begin{align}\label{Mtildedef}
\tilde{M} := P(k) M e^{\mathcal{L}x + \mathcal{Z}t}
\end{align}
satisfies $\tilde{M}_+ = \tilde{M}_- v_0$ on $\Gamma \setminus \Gamma_\star$ (or, in other words, $M_+ = M_- v$). Substituting the expansion (\ref{nexpansion}), which can be differentiated termwise with respect to $x$, into (\ref{Mdef}), we obtain (\ref{Mlargek}).
We conclude that $\det M$  is entire and tends to $1$ at infinity, so by Liouville's theorem, $\det M \equiv 1$. In particular, $\det \tilde{M} = \det P$, so $\tilde{M}$ is invertible for all $k \in \C \setminus \hat{\mathcal{Q}}$. It follows that the functions $\tilde{M}_x  \tilde{M}^{-1}$ and $\tilde{M}_t \tilde{M}^{-1}$ are analytic for $k \in \C \setminus \hat{\mathcal{Q}}$. 
In fact, using that $n$ has well-defined Taylor expansions as $k \to \kappa_j$ from either side of $\partial \D$, long but straightforward calculations show that
$$\tilde{M}_x  \tilde{M}^{-1}
= \begin{pmatrix}
n_x + n\mathcal{L}  \\
n_{xx} + 2n_x\mathcal{L} + n\mathcal{L}^2 \\
n_{xxx} + 3n_{xx}\mathcal{L} + 3n_x\mathcal{L}^2 + n\mathcal{L}^3
\end{pmatrix}\begin{pmatrix}
n  \\
n_x + n\mathcal{L} \\
n_{xx} + 2n_x\mathcal{L} + n\mathcal{L}^2
\end{pmatrix}^{-1}
$$
satisfies $\tilde{M}_x  \tilde{M}^{-1} = O(1)$ as $k \to \kappa_j$, $j = 1, \dots, 6$; hence $\tilde{M}_x  \tilde{M}^{-1}$ extends to an analytic function of $k \in \C \setminus \{0\}$.
Similar calculations employing the symmetries $n(k) = n(\omega k)\mathcal{A}^{-1} = n(k^{-1}) \mathcal{B}$ and (\ref{nexpansion}) show that
$$\tilde{M}_x  \tilde{M}^{-1} =  
\begin{cases}
\begin{pmatrix}
0 & 1 & 0 \\
0 & 0 & 1 \\
-\frac{ik^3}{24\sqrt{3}} - \frac{u_x}{4}-\frac{iv}{4\sqrt{3}} & -\frac{1}{4}-\frac{u}{2} & 0
\end{pmatrix} 
+ O(k^{-1}), \qquad k \to \infty,
	\\
\begin{pmatrix}
0 & 1 & 0 \\
0 & 0 & 1 \\
-\frac{i}{24\sqrt{3}k^3} - \frac{u_x}{4}-\frac{iv}{4\sqrt{3}} & -\frac{1}{4}-\frac{u}{2} & 0
\end{pmatrix} + O(k), \qquad k \to 0,
\end{cases}$$
where $u$ and $v$ are given by (\ref{uvdef}). Since $\tilde{M}_x  \tilde{M}^{-1}$ is analytic for $k \in \C \setminus \{0\}$, we conclude that $\tilde{M}$ satisfies the $x$-part in (\ref{Lax pair tilde}). Analogous arguments give the $t$-part in (\ref{Lax pair tilde}). Recalling the transformations (\ref{X hat to X tilde}) and (\ref{XhatX}), this shows that $M$ satisfies (\ref{Xlax}).
\end{proof}

We next use the reality condition (\ref{r1r2 relation with kbar symmetry}) to show that $u$ and $v$ are real-valued. 

\begin{lemma}\label{uvreallemma}
 The functions $u$ and $v$ defined in (\ref{uvdef}) are real-valued.
  Moreover, $v$ is given by the expression in (\ref{recoveruvn}).
\end{lemma}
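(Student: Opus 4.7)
The plan is to transfer the reality condition $(v)$ of Theorem \ref{directth} from the scattering data onto the matrix $M$ constructed in Lemma \ref{constructMlemma}, and then to read off the conclusions from the asymptotics of $M$ at $k = \infty$. From the explicit form of $v(x,t,k)$ in \eqref{vdef}, the matrix $R(k)$ in \eqref{def of R}, the identity $\overline{\theta_{ij}(x,t,\bar{k})} = -\theta_{ji}(x,t,k)$ for $(x,t)\in\R^2$, and the reality symmetry \eqref{r1r2 relation with kbar symmetry}, a sector-by-sector computation on each $v_j$ yields the jump-level identity
\begin{equation*}
\overline{v(x,t,\bar{k})}^{-T} = R(k)^{-1}\,v(x,t,k)\,R(k), \qquad k \in \Gamma\setminus(\Gamma_\star\cup\mathcal{Q}).
\end{equation*}

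Using this together with the vanishing lemma (Lemma \ref{vanishinglemma}), I would promote it to the matrix-level symmetry
\begin{equation*}
\overline{M^A(x,t,\bar{k})} = \Bigl[\tfrac{u(x,t)}{2}\mathbf{1} + R(k)^{-1}\Bigr]\,M(x,t,k)\,R(k), \qquad k\in\C\setminus\Gamma,
\end{equation*}
where $\mathbf{1}$ is the $3\times 3$ all-ones matrix and $u$ is given by \eqref{uvdef}. Indeed, both sides satisfy the same jump on $\Gamma$ (by the preceding display), the same $\mathcal{A}$ and $\mathcal{B}$ symmetries, and the same normalization at $k = \infty$ (since $R(k) = O(k^{-2})$ while $M, M^A \to I$). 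The additive $\tfrac{u}{2}\mathbf{1}$ correction is chosen precisely so that the rank-one leading coefficients $\mathcal{M}_2^{(-1)}$, $\widetilde{\mathcal{M}}_2^{(-1)}$ from Lemma \ref{Mat1lemma} cancel the poles of $R(k)^{-1}$ at $\kappa_1,\ldots,\kappa_6$, which ensures that the difference of the two sides meets the hypotheses of Lemma \ref{vanishinglemma} row by row and therefore vanishes.

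With this symmetry in hand, I substitute the expansions $M = I + M^{(1)}/k + M^{(2)}/k^2 + O(k^{-3})$ and $R(k) = -\tfrac{4}{k^2}\mathcal{B}\diag(\omega^2,\omega,1) + O(k^{-4})$ and match the $k^{-1}$ and $k^{-2}$ coefficients. Combined with the $\mathbb{Z}_3$-structure $n^{(1)} = a_1(1,\omega^2,\omega)$ forced by $n(k) = n(\omega k)\mathcal{A}^{-1}$, these relations yield $\overline{n^{(1)}_3} = -n^{(1)}_3$ together with an analogous purely-imaginary identity for the combination $n^{(2)}_3 + u + \tfrac{1}{6}\bigl(\int_\infty^x u\,dx'\bigr)^2$, from which \eqref{uvdef} delivers the reality of both $u$ and $v$. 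Finally, the identification $v = -i\sqrt{3}\,\partial_t n^{(1)}_3$ will follow from the $t$-part of the Lax pair $M_t - [\mathcal{Z}, M] = \mathsf{V}M$ satisfied by $M$ (Lemma \ref{constructMlemma}): reading off the $O(k^{-1})$ coefficient at $k=\infty$ gives $-i\sqrt{3}\,\partial_t n^{(1)}_3 = v + i\sqrt{3}\,u_x + \tfrac{iu}{\sqrt{3}}\int_\infty^x u\,dx'$, and a short computation using the entries $(X_2)_{i3}$ from \eqref{def of X2} shows this agrees with $-i\sqrt{3}\,\partial_x\bigl(n^{(2)}_3 + u + \tfrac{1}{6}(\int_\infty^x u\,dx')^2\bigr)$.

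The main obstacle will be the singular bookkeeping in the matrix-level identity above: $R(k)^{-1}$ has simple poles at each of the six points $\kappa_j$, $M$ has simple poles at the same points with the very specific rank-one leading coefficients of Lemma \ref{Mat1lemma}, and $R(k)$ vanishes at $k = \pm 1, \pm\omega$. Verifying that the combination $\bigl[\tfrac{u}{2}\mathbf{1} + R(k)^{-1}\bigr]MR$ has exactly the right singular behavior near each $\kappa_j$ for the difference with $\overline{M^A(\bar{k})}$ to lie in the domain of the vanishing lemma is the technical heart of the argument.
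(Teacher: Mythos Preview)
The central gap is the step ``the difference of the two sides meets the hypotheses of Lemma \ref{vanishinglemma} row by row and therefore vanishes.'' Lemma \ref{vanishinglemma} is stated for row vectors obeying the symmetries $n(k)=n(\omega k)\mathcal{A}^{-1}=n(k^{-1})\mathcal{B}$, and these symmetries are used throughout its proof. The rows of $M$ do not individually satisfy them: the matrix identity $M(k)=\mathcal{A}M(\omega k)\mathcal{A}^{-1}$ means that row $j$ of $M(k)$ equals row $\sigma(j)$ of $M(\omega k)\mathcal{A}^{-1}$ for a nontrivial cyclic permutation $\sigma$, so no single row is invariant. You therefore cannot invoke the vanishing lemma row by row, and without a matrix-level uniqueness statement the promotion from jump identity to matrix identity fails. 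A secondary issue: your jump identity cannot hold uniformly on $\Gamma\setminus(\Gamma_\star\cup\mathcal{Q})$; the map $k\mapsto\bar{k}$ swaps the $\pm$ sides on the radial parts of $\Gamma$, so on $\Gamma\setminus\partial\D$ one gets $(\overline{v(\bar{k})^{-1}})^T=R^{-1}v^{-1}R$ rather than $R^{-1}vR$ (the paper records both cases).

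The paper sidesteps both difficulties by never aiming for a matrix identity. It defines the row vector $\hat{n}=(1,1,1)R(k)\overline{M^A(\bar{k})}R(k)^{-1}$ and immediately rewrites it as $(0,0,1)\overline{\tilde{M}^A(\bar{k})}\mathcal{B}e^{-(\mathcal{L}x+\mathcal{Z}t)}\mathcal{B}R(k)^{-1}$; since $R^{-1}$ is analytic on $\C\setminus\{0\}$ and $\tilde{M}^A$ is built from $n$ and its $x$-derivatives, this second form exhibits analyticity and boundedness near each $\kappa_j$ with none of the singular bookkeeping you flag as the main obstacle. One then checks directly that $\hat{n}$ satisfies all of RH problem \ref{RHn} (using $R(k)=\mathcal{A}R(\omega k)\mathcal{A}=\mathcal{B}R(k^{-1})\mathcal{B}$ for the symmetries) and invokes the uniqueness already proved in Lemma \ref{nexistencelemma}. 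Comparing $O(1/k)$ coefficients in $\hat{n}=n$ gives $n_3^{(1)}\in i\R$. For the last assertion, the paper uses the one-line route $u_t=v_x$ (from the Lax-pair compatibility of Lemma \ref{constructMlemma}) together with $n_3^{(1)}=\tfrac{i}{\sqrt{3}}\int_\infty^x u\,dx'$, rather than your longer computation through $X_2$.
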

\begin{proof}
The relation $u_t = v_x$ implies that $v$ is given by the expression in (\ref{recoveruvn}).

Let $M$ and $\tilde{M}$ be given by (\ref{Mdef}) and (\ref{Mtildedef}), let $M^A = (M^{-1})^T$ and $\tilde{M}^A = (\tilde{M}^{-1})^T$, and define
$$\hat{n}(x,t,k) := (1,1,1)R(k) \overline{M^A(x,t,\bar{k})} R(k)^{-1}= (0,0,1)\overline{\tilde{M}^A(x,t,\bar{k})} \mathcal{B} e^{-(\mathcal{L}x + \mathcal{Z}t)}\mathcal{B} R(k)^{-1}.$$
Since
%\begin{align*}
% \hat{n}(x,t,k) 
% = & (1,1,1) R(k) \overline{P(\bar{k})}^T \overline{\tilde{M}^A(\bar{k})} \mathcal{B} e^{-(\mathcal{L}x + \mathcal{Z}t)}\mathcal{B} R(k)^{-1} 	\\
%= & \begin{pmatrix} 0 & 0 & 1\end{pmatrix} \overline{\tilde{M}^A(\bar{k})} \mathcal{B} e^{-(\mathcal{L}x + \mathcal{Z}t)}\mathcal{B} R(k)^{-1},
%\end{align*}
$R(k)^{-1}$ is analytic for $k \in \C \setminus \{0\}$, we see from the second expression that $\hat{n}$ is analytic on $\C \setminus \Gamma$, has continuous boundary values on $\Gamma \setminus \Gamma_\star$, and is $O(1)$ as $k \to k_{\star} \in \Gamma_\star$. Furthermore, Lemma \ref{vsymmlemma} implies that the jump matrix $v$ obeys the symmetry $\overline{v^A(\bar{k})} = R(k)^{-1}  v(k) R(k)$ for $k \in \Gamma$ where $v^A = (v^{-1})^T$ (recall that $\Gamma_{2}$, $\Gamma_{4}$, and $\Gamma_{6}$ are oriented towards $0$ in this subsection). We conclude that $\hat{n}$ obeys the jump relation $\hat{n}_+ = \hat{n}_- v$ on $\Gamma \setminus \Gamma_\star$.
Indeed, for $k \in \Gamma \setminus \Gamma_\star$,
\begin{align*}
\hat{n}_+(k) & = (1,1,1) R(k) \overline{M_+^A(\bar{k})} R(k)^{-1} 
= (1,1,1) R(k) \overline{M_-^A(\bar{k})} \overline{v^A(\bar{k})} R(k)^{-1} 
	\\
& = (1,1,1) R(k) \overline{M_-^A(\bar{k})}R(k)^{-1}  v(k)  = \hat{n}_-(k) v(k).
\end{align*}
Straightforward calculations utilizing the symmetries (\ref{nsymm}) of $n$ imply that $M$ obeys the $\mathcal{A}$- and $\mathcal{B}$-symmetries (\ref{XYsymm}). Using also that $R(k) = \mathcal{A} R(\omega k) \mathcal{A} = \mathcal{B} R(k^{-1}) \mathcal{B}$, we find that $\hat{n}$ obeys the symmetries in (\ref{nsymm}).
From (\ref{Mlargek}), we see that $\hat{n}(x,t,k) = (1,1,1) + O(k^{-1})$ as $k \to \infty$.
This shows that $\hat{n}$ solves RH problem \ref{RHn}; since the solution is unique by Lemma \ref{nexistencelemma}, we have $\hat{n} = n$. By comparing the terms of $O(1/k)$ in the large $k$ expansion of $\hat{n}$ and $n$, we conclude that $n_3^{(1)}(x,t) \in i\R$ for all  $x,t$. In particular, $u$ and $v$ are real-valued.
\end{proof}

The next lemma completes the proof of Theorem \ref{inverseth}.

\begin{lemma}
$\{u,v\}$ is a Schwartz class solution of (\ref{boussinesqsystem}) on $\R \times [0,T)$. 
\end{lemma}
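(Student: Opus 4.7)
The plan is to derive the Boussinesq system for $\{u,v\}$ as the zero curvature condition of the Lax pair that $M$ has already been shown to satisfy in Lemma \ref{constructMlemma}. Properties (i) and (iii) of Definition \ref{Schwartzsolutiondef} have in fact already been established: Lemma \ref{n1n2lemma} shows that $n^{(1)}, n^{(2)}$ are smooth on $\R \times [0,T)$ with rapid decay in $x$, hence so are $u,v$ defined by \eqref{uvdef}; Lemma \ref{uvreallemma} ensures that these functions are real-valued. What remains is property (ii), namely that $u,v$ satisfy the system \eqref{boussinesqsystem} (and, along the way, the improved representation \eqref{recoveruvn} for $v$).

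First I would invoke Lemma \ref{constructMlemma}, which tells us that the matrix $M(x,t,k)$ in \eqref{Mdef} satisfies the Lax pair \eqref{Xlax} with coefficients $L,Z$ built from the $u,v$ defined via \eqref{uvdef}. Reversing the gauge transformations \eqref{X hat to X tilde}--\eqref{XhatX}, the matrix $\tilde M := P(k) M\, e^{\mathcal{L}x+\mathcal{Z}t}$ then satisfies $\tilde M_x = \tilde L\,\tilde M$ and $\tilde M_t = \tilde Z\,\tilde M$ for $k \in \C\setminus \hat{\mathcal{Q}}$. Since $\det M \equiv 1$ was established inside Lemma \ref{constructMlemma}, the matrix $\tilde M$ is invertible for all such $k$.

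The key step is to cross-differentiate: the identity $\tilde M_{xt} = \tilde M_{tx}$ combined with the invertibility of $\tilde M$ forces the zero curvature equation
\begin{equation*}
\tilde L_t - \tilde Z_x + [\tilde L, \tilde Z] = 0
\end{equation*}
identically in $(x,t,k) \in \R \times [0,T) \times (\C \setminus \hat{\mathcal{Q}})$. Because $\tilde L$ and $\tilde Z$ in \eqref{Lax pair tilde} were designed precisely as the Lax pair of the Boussinesq system, reading off the coefficients of the various powers of the spectral parameter $\lambda = (k^3+k^{-3})/2$ in this matrix identity reproduces the two scalar equations $u_t = v_x$ and $v_t = u_x + (u^2)_x + u_{xxx}$; this is a purely algebraic check on the explicit entries of $\tilde L, \tilde Z$, and was already implicit in the derivation recorded in Section \ref{directsec}.

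Finally, having established $u_t = v_x$, the argument sketched at the beginning of the proof of Lemma \ref{uvreallemma} (integrate $u_t = v_x$ once in $x$, using the decay of $u,v$ at $x=+\infty$ given by Lemma \ref{n1n2lemma}) upgrades the defining formula \eqref{uvdef} for $v$ to the cleaner representation $v = -i\sqrt{3}\,\partial_t n_3^{(1)}$ of \eqref{recoveruvn}, completing the proof of Theorem \ref{inverseth}. There is no substantial analytic obstacle in this step; the only bookkeeping is the verification that the zero curvature equation unpacks to the Boussinesq system, which is guaranteed by construction of the Lax pair.
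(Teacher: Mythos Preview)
Your proposal is correct and follows essentially the same approach as the paper: both invoke Lemma \ref{constructMlemma} to obtain the Lax pair for $M$, then use the compatibility (zero curvature) condition to extract the Boussinesq system, with the regularity and decay properties supplied by Lemmas \ref{n1n2lemma} and \ref{uvreallemma}. The paper compresses this into a single sentence citing ``the compatibility condition of the Lax pair equations \eqref{Xhatlax},'' whereas you unpack the gauge transformations and the cross-differentiation explicitly; you also place the derivation of the formula \eqref{recoveruvn} for $v$ after establishing $u_t=v_x$, which is logically tidier than the paper's ordering (where that sentence appears at the start of the proof of Lemma \ref{uvreallemma} as a forward reference).
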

\begin{proof}
The claim follows from the compatibility condition of the Lax pair equations (\ref{Xhatlax}) together with Lemma \ref{n1n2lemma}, Lemma \ref{constructMlemma}, Lemma \ref{uvreallemma}, and the definition (\ref{uvdef}) of $u$ and $v$.
\end{proof}

\section{Solution of the initial value problem}\label{IVPsec}
In this section, we prove Theorem \ref{IVPth}.
Let $u_0, v_0 \in \mathcal{S}(\R)$ and $r_j(k)$, $j = 1,2$, be as in the statement of the theorem.
By Theorem \ref{directth} and Theorem \ref{inverseth}, $\{u(x,t), v(x,t)\}$ defined in (\ref{recoveruvn}) is a Schwartz class solution of the system (\ref{boussinesqsystem}) on $\R \times [0,T)$. If we can show that $u(x,0) = u_0(x)$ and $v(x,0) = v_0(x)$ for $x \in \R$, then it follows that $u(x,t)$ is a Schwartz class solution of (\ref{badboussinesq}) on $\R \times [0,T)$ with initial data $u_0, u_1$. 
To see that $u(x,0) = u_0(x)$ and $v(x,0) = v_0(x)$, let $n(x,t,k)$ be the solution of RH problem \ref{RHn} constructed in Lemma \ref{nexistencelemma}.
The functions $u(x,0)$ and $v(x,0)$ are given in terms of $n$ by the formulas (\ref{uvdef}) evaluated at $t = 0$. On the other hand, starting from the initial data $\{u_0, v_0\}$, we can construct another solution $\tilde{n}(x,0,k)$ of RH problem \ref{RHn} as in Proposition \ref{prop:construction of n} such that $u_0$ and $v_0$ are given in terms of $\tilde{n}$ by (\ref{uvdef}) evaluated at $t = 0$.
Since the solution of RH problem \ref{RHn} is unique by Lemma \ref{nexistencelemma}, we must have $n(x,0,k) = \tilde{n}(x,0, k)$ for all $x \in \R$. This shows that $u(x,0) = u_0(x)$ and $v(x,0) = v_0(x)$. Finally, uniqueness of the solution $u(x,t)$ follows in a similar way: if $\tilde{u}(x,t)$ is another solution with the same initial data, then Proposition \ref{prop:construction of n} shows that $u$ and $\tilde{u}$ can be recovered via (\ref{uvdef}) from the associated solutions $n(x,t,k)$ and $\tilde{n}(x,t,k)$ of RH problem \ref{RHn}. By Lemma \ref{nexistencelemma}, we have $n = \tilde{n}$, and hence $u = \tilde{u}$ by (\ref{uvdef}). This completes the proof of Theorem \ref{IVPth}.

\section{Blow-up}\label{blowupsec}

\subsection{Proof of Theorem \ref{blowupth}}
Let $\{u, v\}$ be the Schwartz class solution of (\ref{boussinesqsystem}) on $\R \times [0,T)$ defined in (\ref{recoveruvn}), where $T$ is given by (\ref{Tdef}). Suppose that $\{u, v\}$ can be extended to a Schwartz class solution on $[0,\tilde{T})$ for some $\tilde{T} > T$.
Choose $t \in (T, \tilde{T})$. Then Theorem \ref{directth} applied to $\{u(\cdot, t), v(\cdot, t)\}$ implies that $r_2(k; t)$ and its derivatives have rapid decay as $\Gamma_4 \ni k  \to \infty$, and hence, by (\ref{r1r2 relation with kbar symmetry}), $r_1(1/k; t)$ and its derivatives have rapid decay as $\Gamma_1 \ni k \to \infty$. But Proposition \ref{reflectionprop} shows that 
$$r_1(1/k; t) = r_1(1/k) e^{\theta_{21}(0,t,k)} = r_1(1/k) e^{\frac{|k|^4 - 1}{4|k|^2}t} \quad \text{for} \quad k \in \Gamma_1,$$ 
so then $r_1(1/k) e^{\frac{|k|^2t}{4}}$ and its derivatives have rapid decay as $\Gamma_1 \ni k \to \infty$. Since $t > T$, this contradicts the definition (\ref{Tdef}) of $T$.

\subsection{Proof of Theorem \ref{existenceblowupth}}
Let $T > 0$. If $\{u,v\}$ is a Schwartz class solution of (\ref{boussinesqsystem}) that blows up at time $T$, then $u$ is a Schwartz class solution of (\ref{badboussinesq}) which also blows up at time $T$.
Hence, according to Theorems \ref{inverseth} and \ref{blowupth}, it is enough to construct functions $\{r_j(k)\}_1^2$ such that (a) properties $(i)$--$(v)$ of Theorem \ref{directth} hold and (b) $\exp(|k|^2t/4)r_1(1/k)$ and its derivatives are rapidly decreasing as $\Gamma_1 \ni k \to \infty$ for any $t <T$ but not for $t = T$. 
To construct such functions, let $u_0, v_0$ be any real-valued functions in the Schwartz class. By Theorem \ref{inverseth}, the associated reflection coefficients $\{r_j(k)\}_1^2$ satisfy $(i)$--$(v)$ of Theorem \ref{directth}. By changing $r_1(k)$ for $k \in \Gamma_1$ near $0$ and making the corresponding change in $r_2(k)$ for $k \in \Gamma_4$ as dictated by (\ref{r1r2 relation with kbar symmetry}) (but leaving $r_1$ and $r_2$ unchanged elsewhere), we easily find reflection coefficients fulfilling both (a) and (b).

\section{Asymptotics: Overview}\label{overviewsec}
Since the derivations of the asymptotic formulas of Theorem \ref{asymptoticsth} are rather involved, we first give an overview of the key ideas entering the proof. The main idea is to use Deift--Zhou steepest descent arguments to analyze RH problem \ref{RHn} for $n(x,t,k)$ as $(x,t) \to \infty$. The jump matrix $v(x,t,k)$ for this RH problem is defined in (\ref{vdef}) and it depends on the parameters $(x,t)$ only via the exponentials $e^{\pm \theta_{ij}} = e^{\pm t \Phi_{ij}}$, where the phase functions $\Phi_{ij}$ are given by (\ref{def of Phi ij}). The main contributions to the asymptotics of $n$ will come from the saddle points of these phase functions.

\subsection{Analysis of the phase functions}
The relations 
\begin{align}\label{relations between the different Phi}
\Phi_{31}(\zeta,k) = - \Phi_{21}(\zeta,\omega^{2}k), \qquad \Phi_{32}(\zeta,k) = \Phi_{21}(\zeta,\omega k),
\end{align}
allow us to restrict our attention to the analysis of $\Phi_{21}$. We define the saddle points of $\Phi_{21}$ as the points $k$ for which $\partial_{k}\Phi_{21}(\zeta,k)=0$. It is a simple computation to verify that $\Phi_{21}$ admit four saddle points, which are denoted by $\{k_{j}\}_{j=1}^{4}$ and are given in \eqref{def of kj}. The next proposition summarizes some basic properties of $\Phi_{21}$ and $\smash{\{k_{j}\}_{j=1}^{4}}$.

\begin{proposition}\label{prop: phase of Phi21 for various zeta}
The phase function $\Phi_{21}$ has the following properties (see also Figure \ref{fig: Re Phi 21 for various zeta}):
\begin{enumerate}[$-$]
\item For each $\zeta \in [0,+\infty)$, $\Phi_{21}$ has four saddle points $\{k_{j}=k_{j}(\zeta)\}_{j=1}^{4}$ given by \eqref{def of kj}.

\item $\re \Phi_{21}(\zeta,k) = 0$ for all $k \in \partial \mathbb{D}$ and all $\zeta \in [0,+\infty)$.

\item For each $\zeta \in [0,+\infty)$, $k_{1},k_{2} \in \partial \mathbb{D}$, $k_{2} = \bar{k}_1$, $\re k_{1} \in [-1/\sqrt{2},0)$, $k_{1}k_{2}=1$, and $k_{3}k_{4}=1$. 

\item If $\zeta = 0$, then $k_{1} = e^{\frac{3\pi i}{4}}$, $k_{2} = e^{-\frac{3\pi i}{4}}$, $k_{3}=e^{\frac{\pi i}{4}}$, and $k_{4}=e^{-\frac{\pi i}{4}}$.

\item If $\zeta \in (0,1)$, then $k_{3},k_{4} \in \partial \mathbb{D}$, $k_{4}=\bar{k}_3$, $\re k_{3} \in (1/\sqrt{2},1)$, and $\re k_{1} \in (-1/\sqrt{2},-1/2)$. In particular, $\re \Phi_{21}(\zeta,k_{j})=0$ for $j = 1,\dots,4$. Furthermore, if $\zeta \in (1/\sqrt{3},1)$, then $\re k_{3} \in (\sqrt{3}/2,1)$; if $\zeta = 1/\sqrt{3}$, then $k_{3} = e^{\frac{\pi i}{6}}$, $k_{4} = e^{-\frac{\pi i}{6}}$; and if $\zeta \in (0,1/\sqrt{3})$, then $\re k_{3} \in (1/\sqrt{2},\sqrt{3}/2)$.

\item If $\zeta=1$, then $k_{1} = \omega$, $k_{2} = \omega^{2}$, $k_{3}=k_{4}=1$, and $\Phi_{21}$ has a double saddle at $k=1$, that is, $\partial_{k}\Phi_{21}(1,1)=\partial_{k}^{2}\Phi_{21}(1,1)=0$.
\item If $\zeta > 1$, $k_{3} \in (1,+\infty)$, $k_{4} \in (0,1)$, $\re \Phi_{21}(\zeta,k_{3})>0$, $\re \Phi_{21}(\zeta,k_{4})<0$, and $\re k_{1} \in (-\frac{1}{2},0)$. 
\end{enumerate}
\end{proposition}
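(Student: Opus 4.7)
The plan is a direct and fairly computational attack on the quartic saddle-point equation. First I would compute $\Phi_{21}$ explicitly: using $\omega^{2}-\omega = -i\sqrt{3}$ together with the definitions (\ref{lmexpressions intro}) of $l_{j}$ and $z_{j}$, a short calculation gives
\begin{equation*}
\Phi_{21}(\zeta,k) = \tfrac{\zeta}{2}(k - k^{-1}) - \tfrac{1}{4}(k^{2} - k^{-2}).
\end{equation*}
The saddle equation $\partial_{k}\Phi_{21}=0$, after multiplication by $2k^{3}$, becomes the reciprocal quartic
\begin{equation*}
k^{4} - \zeta k^{3} - \zeta k + 1 = 0,
\end{equation*}
and the key algebraic observation is that it factors as $(k^{2}-a_{+}k+1)(k^{2}-a_{-}k+1)$ where $a_{\pm} := \tfrac{1}{2}(\zeta \pm \sqrt{\zeta^{2}+8})$ are the roots of $x^{2}-\zeta x - 2 = 0$. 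Applying the quadratic formula to each factor produces exactly the four expressions (\ref{def of kj}), with $k_{3},k_{4}$ arising from $a_{+}$ and $k_{1},k_{2}$ from $a_{-}$. Because each factor has constant term $1$, the relations $k_{1}k_{2} = k_{3}k_{4} = 1$ are immediate.

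Next I would establish the unit-circle picture. The identity $\re \Phi_{21}(\zeta,k)=0$ for $k \in \partial\mathbb{D}$ is immediate from the explicit formula, since on $\partial\mathbb{D}$ one has $k^{-1}=\bar{k}$, so both $k-k^{-1}$ and $k^{2}-k^{-2}$ are purely imaginary. The discriminant of the $a_{-}$-factor equals $-\tfrac{1}{2}(4-\zeta^{2}+\zeta\sqrt{\zeta^{2}+8})$, and the bracketed quantity is strictly positive for every $\zeta\geq 0$ (for $\zeta\leq 2$ both summands are nonnegative, and for $\zeta>2$ one has $\zeta\sqrt{\zeta^{2}+8}>\zeta^{2}>\zeta^{2}-4$). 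Hence $k_{1},k_{2}$ are genuine complex conjugates, so $|k_{1}|^{2}=k_{1}\bar{k}_{1}=k_{1}k_{2}=1$ and $k_{1},k_{2}\in\partial\mathbb{D}$. For the $a_{+}$-factor the discriminant equals $\tfrac{1}{2}(\zeta^{2}-4+\zeta\sqrt{\zeta^{2}+8})$; the map $\zeta\mapsto \zeta^{2}-4+\zeta\sqrt{\zeta^{2}+8}$ is strictly increasing and vanishes exactly at $\zeta=1$, so $k_{3},k_{4}\in\partial\mathbb{D}$ with $k_{4}=\bar{k}_{3}$ for $\zeta\in[0,1)$, $k_{3}=k_{4}=1$ at $\zeta=1$, and a reciprocal pair of positive reals with $k_{3}>1>k_{4}$ for $\zeta>1$.

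The precise intervals for $\re k_{1}$ and $\re k_{3}$ then follow from monotonicity. For $\zeta\in[0,1]$, both $(k_{1},k_{2})$ and $(k_{3},k_{4})$ are conjugate pairs on $\partial\mathbb{D}$, giving $\re k_{1}=a_{-}/2=(\zeta-\sqrt{\zeta^{2}+8})/4$ and $\re k_{3}=a_{+}/2=(\zeta+\sqrt{\zeta^{2}+8})/4$; differentiating shows both are strictly increasing in $\zeta$ (using $\zeta < \sqrt{\zeta^{2}+8}$). Evaluating at $\zeta=0,1/\sqrt{3},1$ gives $\re k_{1}|_{0}=-1/\sqrt{2}$, $\re k_{3}|_{0}=1/\sqrt{2}$, $\re k_{3}|_{1/\sqrt{3}}=\sqrt{3}/2$ (so $k_{3}=e^{i\pi/6}$), $\re k_{1}|_{1}=-1/2$, $k_{3}|_{1}=1$, and combined with the asymptotic $\re k_{1}=-2/(\zeta+\sqrt{\zeta^{2}+8})\to 0^{-}$ as $\zeta\to\infty$ these endpoints partition the sectors as stated. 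The double-saddle assertion at $(\zeta,k)=(1,1)$ is verified by direct differentiation: $\partial_{k}^{2}\Phi_{21}(1,1) = -1 - \tfrac{1}{2}(1-3)=0$.

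Finally, for $\zeta>1$ I would use the factorisation
\begin{equation*}
\Phi_{21}(\zeta,k) = \tfrac{1}{2}(k - k^{-1})\bigl[\zeta - \tfrac{1}{2}(k + k^{-1})\bigr],
\end{equation*}
combined with $k_{3}+k_{3}^{-1}=k_{4}+k_{4}^{-1}=a_{+}$, to obtain
\begin{equation*}
\Phi_{21}(\zeta,k_{j}) = \tfrac{1}{4}(k_{j}-k_{j}^{-1})(2\zeta - a_{+}) \qquad (j=3,4).
\end{equation*}
The inequality $2\zeta-a_{+}=\tfrac{1}{2}(3\zeta-\sqrt{\zeta^{2}+8})>0$ for $\zeta>1$ (since $9\zeta^{2}>\zeta^{2}+8$ iff $\zeta>1$) together with $k_{3}>1>k_{4}>0$ yields the sign statements $\re\Phi_{21}(\zeta,k_{3})>0>\re\Phi_{21}(\zeta,k_{4})$. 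This proof is essentially a sequence of elementary algebraic manipulations; the only real bookkeeping obstacle is to verify the three sign inequalities $4-\zeta^{2}+\zeta\sqrt{\zeta^{2}+8}>0$, $\zeta^{2}-4+\zeta\sqrt{\zeta^{2}+8}\lessgtr 0$, and $3\zeta-\sqrt{\zeta^{2}+8}>0$ with sharp thresholds at $\zeta=1$, so that each interval endpoint in the statement is matched exactly.
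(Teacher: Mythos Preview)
Your proof is correct and is precisely the direct analysis of the explicit formulas \eqref{def of Phi ij} and \eqref{def of kj} that the paper's one-line proof invites the reader to perform. The key reciprocal-quartic factorisation $(k^{2}-a_{+}k+1)(k^{2}-a_{-}k+1)$, the monotonicity of $a_{\pm}(\zeta)$, and the factorisation $\Phi_{21}=\tfrac12(k-k^{-1})[\zeta-\tfrac12(k+k^{-1})]$ are exactly the computations underlying the paper's claim, so your approach coincides with the intended one.
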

\begin{proof}
The claims follow from an analysis of \eqref{def of Phi ij} and \eqref{def of kj}.
\end{proof}

\subsection{Asymptotic sectors}
From (\ref{relations between the different Phi}) and Proposition \ref{prop: phase of Phi21 for various zeta}, we see that there are twelve saddle points that are relevant for the large $(x,t)$ asymptotics of $n$: the four saddles $\{k_{j}\}_{j=1}^{4}$ of $\Phi_{21}$, the four saddles $\{\omega k_{j}\}_{j=1}^{4}$ of $\Phi_{31}$, and the four saddles $\{\omega^2 k_{j}\}_{j=1}^{4}$ of $\Phi_{32}$. For $0 < \zeta < 1$, all these saddle points lie on the unit circle, whereas only six lie on the unit circle for $1 < \zeta < \infty$, see Figure \ref{fig: Re Phi 21 for various zeta}. For $\zeta = 1$, the twelve saddle points merge in three groups of four at $1, \omega$, and $\omega^2$. This gives rise to the asymptotic sectors I--V of Theorem \ref{asymptoticsth} as follows:

\begin{enumerate}[$-$]
\item Sectors I and II correspond to $1 < \zeta < \infty$. In these sectors, it turns out that the six saddle points on the unit circle $\partial \D$ contribute to the large $(x,t)$  behavior of $n(x,t,k)$, whereas the contributions from the six saddle points on $\R \cup \omega \R \cup \omega^2 \R$ are exponentially small. The contributions from the six saddle points on $\partial \D$ can be computed with the help of local parametrices involving parabolic cylinder functions. This generates the leading term $A(\zeta) \cos \alpha(\zeta,t) t^{-1/2} $ in (\ref{uasymptotics}). The reason for splitting the interval $1 < \zeta < \infty$ into two sectors is that $t \to +\infty$ in Sector II, whereas Sector I describes the rapid decay as $x \to +\infty$ even if $t$ stays bounded.

\item Sector III corresponds to $\zeta \approx 1$. In this sector, the twelve saddle points cluster in three groups of four at the third roots of unity $1, \omega, \omega^2$. The local analysis near these three points requires the use of a novel model RH problem, which is too complicated to be solved exactly. The complications stem from the fact that there are two different structures nonlinearly superimposed on each other. Moreover, these structures are associated with different spatial and temporal scales. One of these structure involves the Hastings--McLeod solution of the Painlev\'{e} II equation, while the second structure involves the error function. The jump structure associated with the error function is truly $3\times 3$ in the sense that it involves all three rows and columns in a nontrivial way. The large $t$ behavior of this model RH problem is derived in Appendix \ref{IIIappendix} which constitutes a significant part of this work. 
%Our strategy for deriving these asymptotics is outlined in Section \ref{outline steepest model RHP transition}.

\item Sector IV corresponds to $1/\sqrt{3} < \zeta < 1$. All twelve saddle points contribute to the leading term in this case: six of them generate the right-moving modulated wave $A_{1}(\zeta)  \cos \alpha_{1}(\zeta,t) t^{-1/2}$, while the other six generate the left-moving modulated wave $A_{2}(\zeta) \cos \alpha_{2}(\zeta,t) t^{-1/2}$ in (\ref{uasymptotics}). 
%The local saddle point analysis involves two model RH problems built in terms of parabolic cylinder functions. 
%These model RH problems are more complicated than the ones already available in the literature and are therefore stated in detail in Appendix \ref{IVappendix}. 

\item Sector V corresponds to $0 < \zeta < 1/\sqrt{3}$. The asymptotic analysis follows the same general pattern as in Sector IV, but compared to that sector some of the saddle points now belong to other subcontours of $\Gamma$. For example, the saddle point $\omega k_4$ of $\Phi_{31}$ belongs to $\Gamma_7$ for $1/\sqrt{3} < \zeta < 1$ but to $\Gamma_9$ for $0 < \zeta < 1/\sqrt{3}$. This means that a different local model is required.

\end{enumerate}

The derivation of the asymptotic formula in Sectors III is presented in detail in Section \ref{transitionsec}. In Section \ref{othersectorssec}, we comment on the changes that are needed to handle Sectors I, II, IV, and V.

\subsection{Transformations of the RH problem}\label{transsec}
Theorem \ref{inverseth} together with Assumption \ref{nounstablemodesassumption} ensures that the solution $n(x,t,k)$ exists and is unique for each $(x,t) \in \R \times [0, +\infty)$, and that $u(x,t)$ is given by (\ref{recoveruvn}). 
By performing a number of transformations, we will bring RH problem \ref{RHn} for $n$ to a form suitable for determining the asymptotics. More precisely, starting with $n$, we will define functions $n^{(j)}(x,t,k)$, $j = 1,2, \dots$, such that the RH problem satisfied by each $n^{(j)}$ is equivalent to the original RH problem \ref{RHn}. The jump matrix obtained after the final transformation will tend to the identity matrix as $(x,t) \to \infty$ everywhere except near the saddle points, so that the $(x,t)$ asymptotics can be computed by considering the local contributions from these points.
The jump contour for the RH problem for $n^{(j)}$ will be denoted by $\Gamma^{(j)}$ and the jump matrix by $v^{(j)}$, so that the jump relation is $n^{(j)}_+ = n^{(j)}_- v^{(j)}$ on $\Gamma^{(j)} \setminus \Gamma^{(j)}_\star$, where $\Gamma^{(j)}_\star$ denotes the points of self-intersection of $\Gamma^{(j)}$. Just like $n$, each $n^{(j)}$ will obey the normalization condition $n^{(j)} \to (1,1,1)$ as $k \to \infty$. 

The jump matrix $v$ obeys the symmetries
\begin{align}\label{vsymm}
v(x,t,k) = \mathcal{A} v(x,t,\omega k)\mathcal{A}^{-1}
 = \mathcal{B} v(x,t, k^{-1})^{-1}\mathcal{B}, \qquad k \in \Gamma,
\end{align}
where $\mathcal{A}$ and $\mathcal{B}$ are given by \eqref{def of Acal and Bcal}, and accordingly the solution $n$ obeys the symmetries (\ref{nsymm}). 
It is convenient to implement the transformations so that these symmetries are preserved. This means that for each $j$ we will have
\begin{align}\label{vjsymm}
& v^{(j)}(x,t,k) = \mathcal{A} v^{(j)}(x,t,\omega k)\mathcal{A}^{-1}
 = \mathcal{B} v^{(j)}(x,t,k^{-1})^{-1}\mathcal{B}, && k \in \Gamma^{(j)},
	\\ \label{njsymm}
& n^{(j)}(x,t, k) = n^{(j)}(x,t,\omega k)\mathcal{A}^{-1}
 = n^{(j)}(x,t, k^{-1}) \mathcal{B}, && k \in \C \setminus \Gamma^{(j)},
\end{align}
(except in Sector III where we will only preserve the $\mathcal{A}$-symmetry).
These symmetries imply that we only have to specify the transformations of $n$ for $k$ in a part of the complex plane, because they can then be extended to the whole complex $k$-plane by symmetry. 

We mention that for Sectors IV and V, the transformations $n^{(j)}\to n^{(j+1)}$ can only be implemented at the cost of introducing poles in the jump matrices. This, in turn, complicates significantly the construction of the global parametrix.
% and means that the jump relation $n^{(j)}_+ = n^{(j)}_- v^{(j)}$ fails to be valid at certain points of $\Gamma^{(j)} \setminus \Gamma^{(j)}_\star$. 
%Nevertheless, the general structure of the proof remains the same. 

\subsection{Assumptions}
For the remainder of the paper, we assume that the initial data $u_0, v_0 \in \mathcal{S}(\R)$ are such that Assumptions \ref{solitonlessassumption}, \ref{originassumption}, and \ref{nounstablemodesassumption} hold, where $r_1(k)$ and $r_2(k)$ are defined by (\ref{r1r2def}). 
As can be seen from Figure \ref{fig: Re Phi 21 for various zeta}, Assumption \ref{nounstablemodesassumption} ensures that $v_{1}$ does not to blow up as $t \to \infty$; by symmetry, $v_{3}$ and $v_{5}$ then also remain bounded as $t \to \infty$. Moreover, the symmetry \eqref{r1r2 relation with kbar symmetry} together with Assumption \ref{nounstablemodesassumption} implies that $r_{2}(k) = 0$ for $k \in [i,i\infty)$. This in turn implies that $v_{4}$ does not blow up as $t \to \infty$. By symmetry, $v_{2}$ and $v_{6}$ also remain bounded as $t \to \infty$. The matrices $v_{7},v_{8},v_{9}$ are automatically bounded. 

For $j = 1, \dots, 6$, let us write $\Gamma_j = \Gamma_{j'} \cup \Gamma_{j''}$, where $\Gamma_{j'} = \Gamma_j \setminus \D$ and $\Gamma_{j''} := \Gamma_j \setminus \Gamma_{j'}$ with $\Gamma_j$ as in Figure \ref{fig: Dn}. Let $v_{j'}$ and $v_{j''}$ be the jumps of $n$ on $\Gamma_{j'}$ and $\Gamma_{j''}$, respectively. Assumption \ref{nounstablemodesassumption} implies that the jumps for $n$ on $\cup_{j=1}^{6}\Gamma_{j} = \cup_{j=1}^{6}(\Gamma_{j'} \cup \Gamma_{j''})$ are as follows:
%\footnotesize
\begin{align*}\nonumber
& v_{1'} = \begin{pmatrix}
1 & -r_{1}(k)e^{-\theta_{21}} & 0 \\
0 & 1 & 0 \\
0 & 0 & 1
\end{pmatrix}, & & \hspace{-.2cm} v_{1''} = \begin{pmatrix}
1 & 0 & 0 \\
r_{1}(\frac{1}{k})e^{\theta_{21}} & 1 & 0 \\
0 & 0 & 1
\end{pmatrix}, & & \hspace{-.2cm}  v_{2'} = \begin{pmatrix}
1 & 0 & 0 \\
0 & 1 & -r_{2}(\frac{1}{\omega k})e^{-\theta_{32}} \\
0 & 0 & 1
\end{pmatrix},  \\
& v_{2''} = \begin{pmatrix}
1 & 0 & 0 \\
0 & 1 & 0 \\
0 & r_{2}(\omega k)e^{\theta_{32}} & 1
\end{pmatrix}, & & \hspace{-.2cm}  v_{3'} = \begin{pmatrix}
1 & 0 & 0 \\
0 & 1 & 0 \\
-r_{1}(\omega^{2}k)e^{\theta_{31}} & 0 & 1
\end{pmatrix}, & & \hspace{-.2cm} v_{3''} = \begin{pmatrix}
1 & 0 & r_{1}(\frac{1}{\omega^{2}k})e^{-\theta_{31}} \\
0 & 1 & 0 \\
0 & 0 & 1
\end{pmatrix}, \\
& v_{4'} = \begin{pmatrix}
1 & -r_{2}(\frac{1}{k})e^{-\theta_{21}} & 0 \\
0 & 1 & 0 \\
0 & 0 & 1
\end{pmatrix}, & & \hspace{-.2cm} v_{4''} = \begin{pmatrix}
1 & 0 & 0 \\
r_{2}(k)e^{\theta_{21}} & 1 & 0 \\
0 & 0 & 1
\end{pmatrix}, & & \hspace{-.2cm} v_{5'} = \begin{pmatrix}
1 & 0 & 0 \\
0 & 1 & -r_{1}(\omega k)e^{-\theta_{32}} \\
0 & 0 & 1
\end{pmatrix}, \\
& v_{5''} = \begin{pmatrix}
1 & 0 & 0 \\
0 & 1 & 0 \\
0 & r_{1}(\frac{1}{\omega k})e^{\theta_{32}} & 1
\end{pmatrix}, & & \hspace{-.2cm}  v_{6'} = \begin{pmatrix}
1 & 0 & 0 \\
0 & 1 & 0 \\
-r_{2}(\frac{1}{\omega^{2} k})e^{\theta_{31}} & 0 & 1
\end{pmatrix}, & & \hspace{-.2cm} v_{6''} = \begin{pmatrix}
1 & 0 & r_{2}(\omega^{2}k)e^{-\theta_{31}} \\
0 & 1 & 0 \\
0 & 0 & 1
\end{pmatrix}.
\end{align*}

\section{Asymptotics: Sector III}\label{transitionsec}
In this section, we establish the asymptotic formula (\ref{uasymptotics}) for $u(x,t)$ as $(x,t) \to \infty$ in Sector \III, which is given by 
$$\III := \{(x,t) \in \R \times [2,\infty) \, | \, |\zeta - 1|\leq M t^{-2/3}\},$$
where $\zeta := x/t$ and $M > 0$ is a constant. For the purpose of the proof, it is necessary to split this sector into two halves and write
%\footnote{A similar splitting was adopted in an analogous situation already in the paper \cite{DZ1993} by Deift \& Zhou.}
$$\III = \III_\geq \cup \III_\leq \quad \text{where} \quad \III_\geq := \III \cap \{\zeta \geq 1\} \quad \text{and} \quad \III_\leq := \III \cap \{\zeta \leq 1\}.$$
This is because the saddle points $\omega k_3$ and $\omega k_4$ lie on the line $\omega \R$ for $\zeta \geq 1$ (see Figure \ref{saddlepointsfig}), while they lie on $\partial \D$ for $\zeta \leq 1$, and this difference changes the contour deformations slightly. Since the proofs for Sectors $\III_\geq$ and $\III_\leq$ are very similar, we will only give the proof for $\III_\geq$.

In Sector $\III_\geq$, we have $\zeta \geq 1$ and $\zeta = 1+ O(t^{-2/3})$ which implies that there are four saddle points merging at $k=\omega$ as follows:
\begin{itemize}
\item $\Phi_{21}$ has one saddle point $k_1$ near $\omega$ such that (blue dot in Figure \ref{saddlepointsfig}) 
$$k_1 = \omega -i\omega \frac{1}{3\sqrt{3}}(\zeta -1) + O((\zeta-1)^2) \qquad \text{as} \;  \zeta \downarrow 1.$$

\item $\Phi_{32}$ has one saddle point $\omega^2 k_2$ near $\omega$ such that (green dot in Figure \ref{saddlepointsfig})
$$\omega^2 k_2 = \omega + i \omega \frac{1}{3\sqrt{3}}(\zeta -1) + O((\zeta-1)^2) \qquad \text{as} \;  \zeta \downarrow 1.$$

\item $\Phi_{31}$ has two saddle points $\omega k_3$ and $\omega k_4$ near $\omega$ such that (red dots in Figure \ref{saddlepointsfig})
$$\omega k_3 = \omega + \omega \sqrt{\frac{2}{3}} \sqrt{\zeta -1} + O(\zeta-1) \;\; \text{and}\;\; \omega k_4 = \omega - \omega \sqrt{\frac{2}{3}} \sqrt{\zeta -1} + O(\zeta-1) \qquad \text{as} \; \zeta \downarrow 1,$$
\end{itemize}
Thus, the saddle points $k_1$ and $\omega^2 k_2$ approach $\omega$ at least as fast as $t^{-2/3}$, while the saddle points 
$\omega k_3$ and $\omega k_4$ of $\Phi_{31}$ approach $\omega$ at least as fast as $t^{-1/3}$, i.e., 
$$|k_1 - \omega| \leq C t^{-2/3}, \quad |\omega^2 k_2 - \omega| \leq C t^{-2/3}, \quad |\omega k_3 - \omega| \leq C t^{-1/3}, \quad |\omega k_4 - \omega| \leq C t^{-1/3}.$$

\begin{figure}
\begin{center}
\begin{tikzpicture}[master,scale=0.8]
\node at (0,0) {};
\draw[black,line width=0.55 mm] (0,0)--(30:4);
\draw[black,line width=0.55 mm] (0,0)--(90:4);
\draw[black,line width=0.55 mm] (0,0)--(150:4);

\draw[black,line width=0.55 mm] ([shift=(30:3cm)]0,0) arc (30:150:3cm);

\draw[blue,fill] (113:3) circle (0.1cm);
\draw[green,fill] (-113+240:3) circle (0.1cm);

\draw[red,fill] (120:3.8) circle (0.1cm);
\draw[red,fill] (120:2.25) circle (0.1cm);

\node at (110:3.3) {\small $k_1$};
\node at (-106.5+240:3.5) {\small $\omega^2 k_2$};
\node at (120:4.2) {\small $\omega k_3$};
\node at (120:1.9) {\small $\omega k_4$};

\end{tikzpicture}
\end{center}
\begin{figuretext}
\label{saddlepointsfig}
The saddle points $k_1$ (blue), $\omega^2 k_2$ (green), and $\omega k_3, \omega k_4$ (red) of $\Phi_{21}$, $\Phi_{32}$, and $\Phi_{31}$, respectively, in the case of Sector $\III_\geq$.
For visual convenience, these dots have been moved slightly from their exact locations here and in many figures below (see Figure \ref{II fig: Re Phi 21 31 and 32 for zeta=1.2} for their true locations).
\end{figuretext}
\end{figure}

We will see that the two saddle points of $\Phi_{31}$ give rise to the leading term in the asymptotics of $u(x,t)$. This term is $O(t^{-2/3})$ and is given in terms of the Hastings--McLeod solution of the Painlev\'e II equation. In Sector IV, the saddle points of $\Phi_{21}$ and $\Phi_{32}$ correspond to parabolic cylinder model problems. In analogy with for example the mKdV equation \cite{DZ1993}, one might expect that this pair of merging saddle points would give rise to another Painlev\'e II model problem in Sector III. However, these two saddle points belong to two different phase functions $\Phi_{21}$ and $\Phi_{32}$. This means that the approximation of the jumps on the contours emanating from these two saddle points does not involve Painlev\'e II. In fact, the contributions from these two saddle points are of subleading order compared to the term involving the Hastings--McLeod solution.

In addition to the four saddle points near $k= \omega$, there are four saddle points near $k = 1$ and four saddle points near $k = \omega^2$; however, by symmetry, it is enough to consider the saddle points near $k = \omega$. Hence, to obtain asymptotics in Sector $\III_\geq$, we need to construct a local parametrix which approximates the (appropriately transformed) RH problem near $k=\omega$. The construction of this local parametrix is complicated by the fact that it involves two different structures nonlinearly superimposed on each other near $\omega$. Furthermore, these structures are associated with different spatial and temporal scales.

The first structure corresponds to the two merging saddle points of $\Phi_{31}$. This part of the RH problem, which involves $\Phi_{31}$, is approximated by the RH problem corresponding to the Hastings--McLeod solution formulated in terms of the variables $y$ and $z$ which scale as
\begin{align*}
y  \sim t^{-1/3}(x-t), \qquad 
z \sim t^{1/3} (k-\omega).
\end{align*}

The second structure corresponds to the saddle points of $\Phi_{21}$ and $\Phi_{32}$. This part of the RH problem is approximated by a model problem of a new type which can be solved exactly in terms of the error function. This model problem involves all three rows and columns in a nontrivial way. It is  naturally formulated in terms of the variables $\tilde{y}$ and $w$ which scale as
\begin{align*}
\tilde{y} \sim t^{-1}(x-t), \qquad w \sim t^{1/2}(k-\omega).
\end{align*}

Thus the local parametrix near $\omega$ can be viewed as the nonlinear superposition of two different local parametrices. This is also essentially how we will construct it: We will first quotient out the solution of the Hastings--McLeod RH problem; we will then quotient out the solution of the $3 \times 3$ error function problem mentioned above appropriately coupled to the Hastings--McLeod solution. 

In this section, we assume that $N \geq 1$ is an integer. The signature tables of $\Phi_{21}$, $\Phi_{31}$, and $\Phi_{32}$ are shown in Figure \ref{II fig: Re Phi 21 31 and 32 for zeta=1.2}. 

\begin{figure}[t]
\begin{center}
\begin{tikzpicture}[master]
\node at (0,0) {\includegraphics[width=4.5cm]{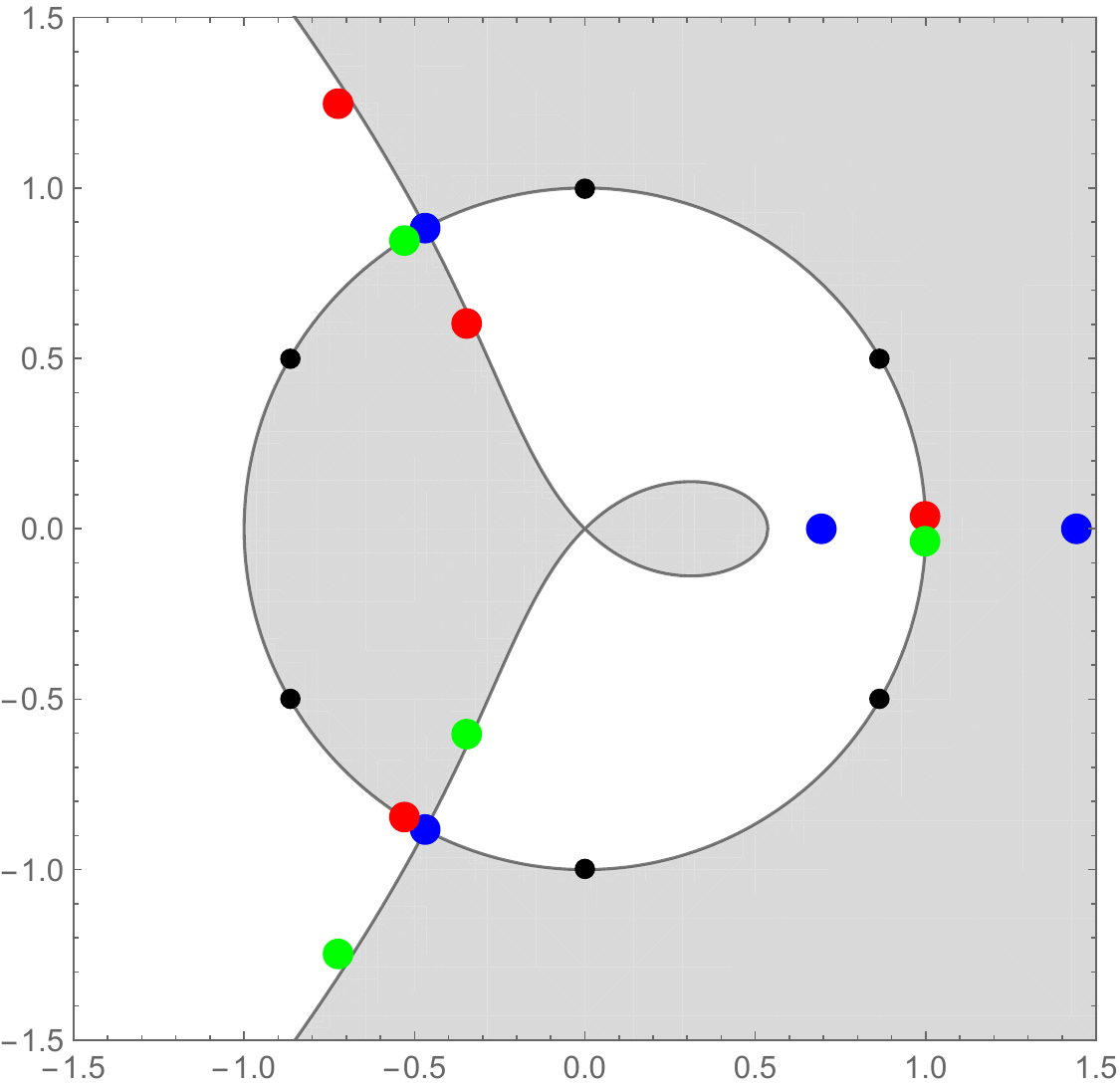}};
\end{tikzpicture} \hspace{0.1cm} \begin{tikzpicture}[slave]
\node at (0,0) {\includegraphics[width=4.5cm]{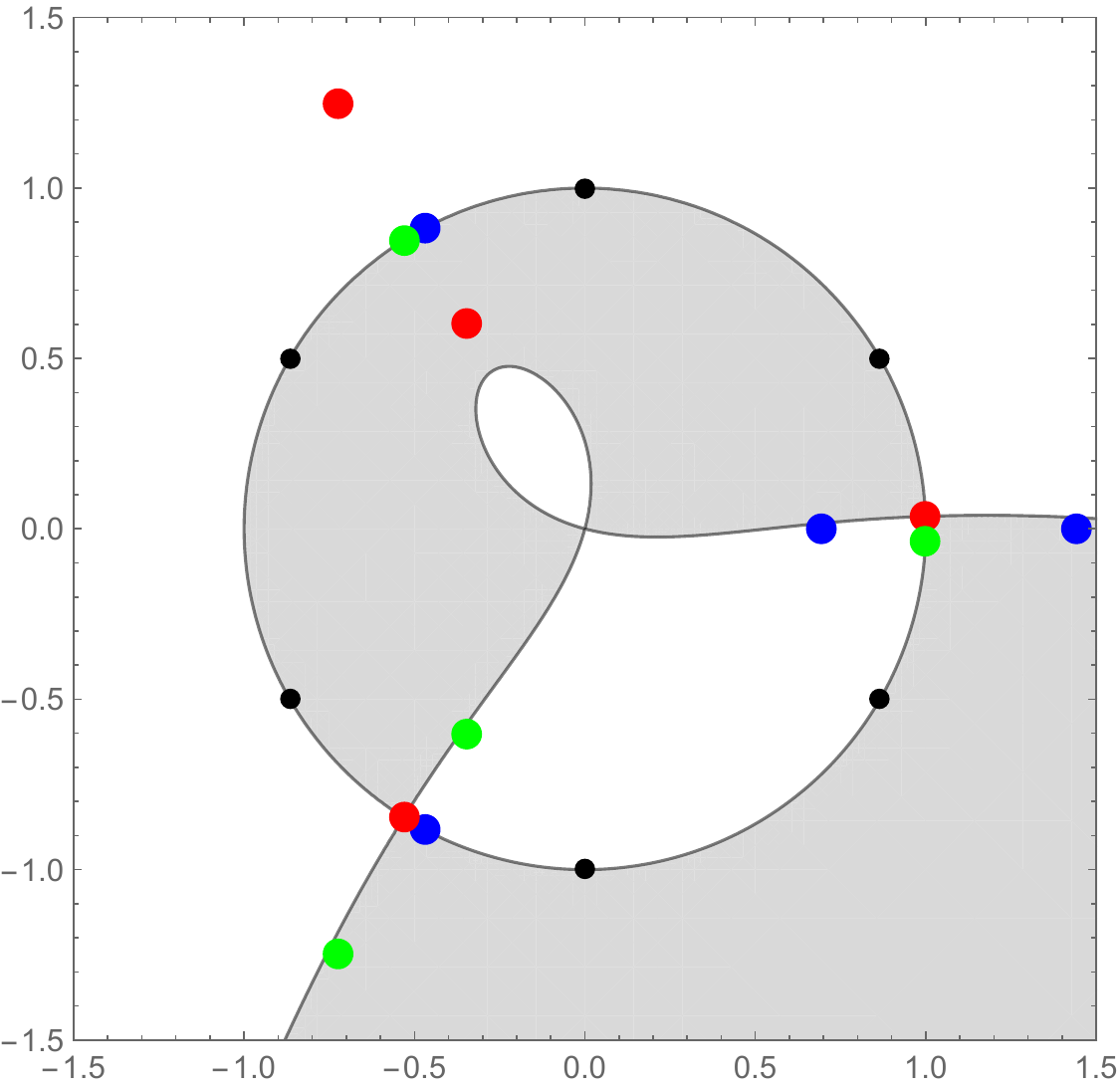}};
\end{tikzpicture} \hspace{0.1cm} \begin{tikzpicture}[slave]
\node at (0,0) {\includegraphics[width=4.5cm]{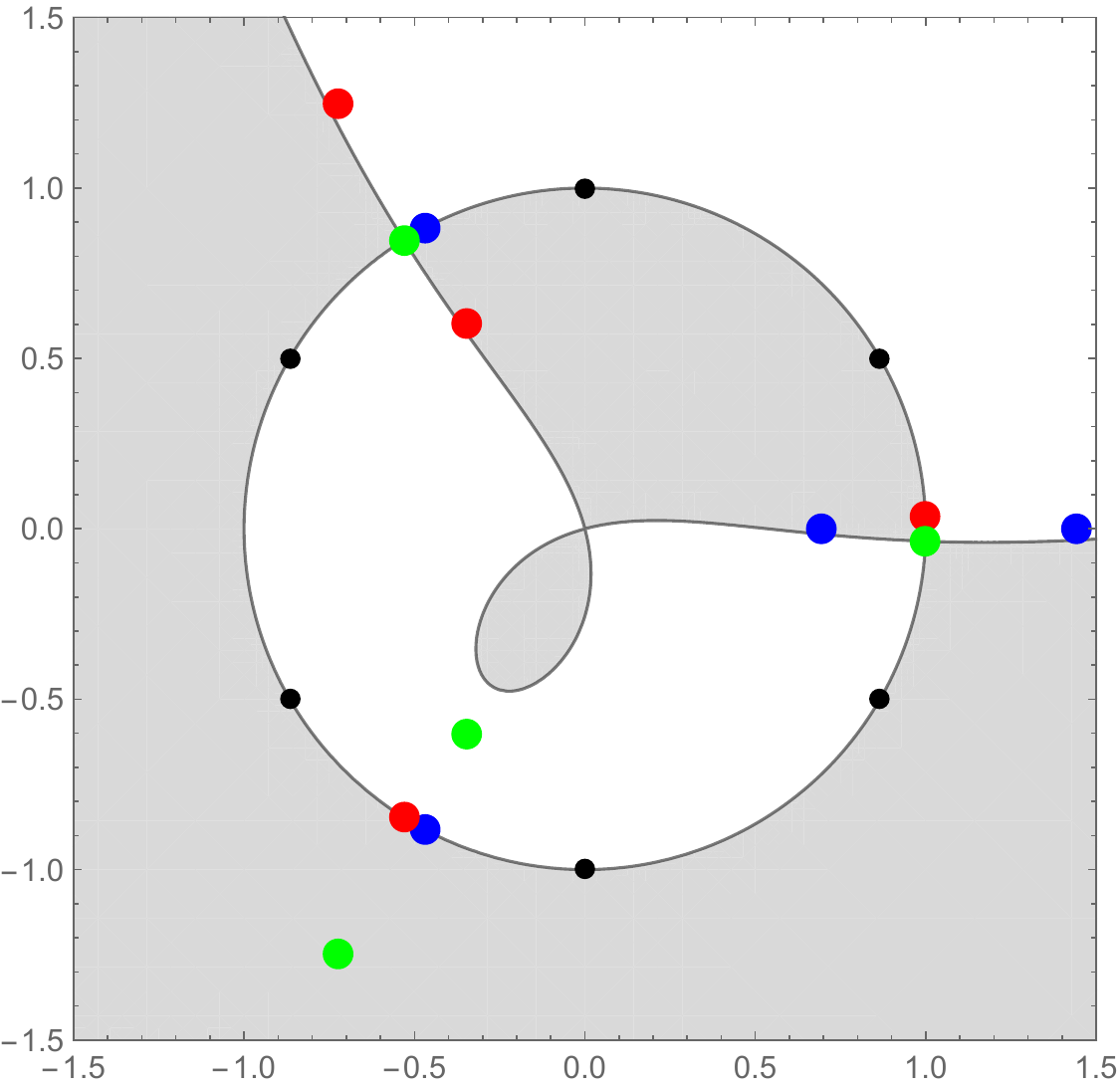}};
\end{tikzpicture}
\end{center}
\begin{figuretext}
\label{II fig: Re Phi 21 31 and 32 for zeta=1.2} From left to right: The signature tables for $\Phi_{21}$, $\Phi_{31}$, and $\Phi_{32}$ for $\zeta=1.2$. In all images, the shaded regions correspond to $\{k \, | \, \re \Phi_{ij}>0\}$ and the white regions to $\{k \, | \, \re \Phi_{ij}<0\}$. The points $k_{1},k_{2}, k_{3}, k_{4}$ are represented in blue, $\omega k_{1},\omega k_{2},\omega k_{3},\omega k_{4}$ in red, and $\omega^{2} k_{1},\omega^{2} k_{2},\omega^{2} k_{3},\omega^{2} k_{4}$ in green. The smaller black dots are the points $i\kappa_j$, $j=1,\ldots,6$.
\end{figuretext}
\end{figure}

\begin{figure}[t]
\begin{center}
\begin{tikzpicture}[master]
\node at (0,0) {\includegraphics[width=5cm]{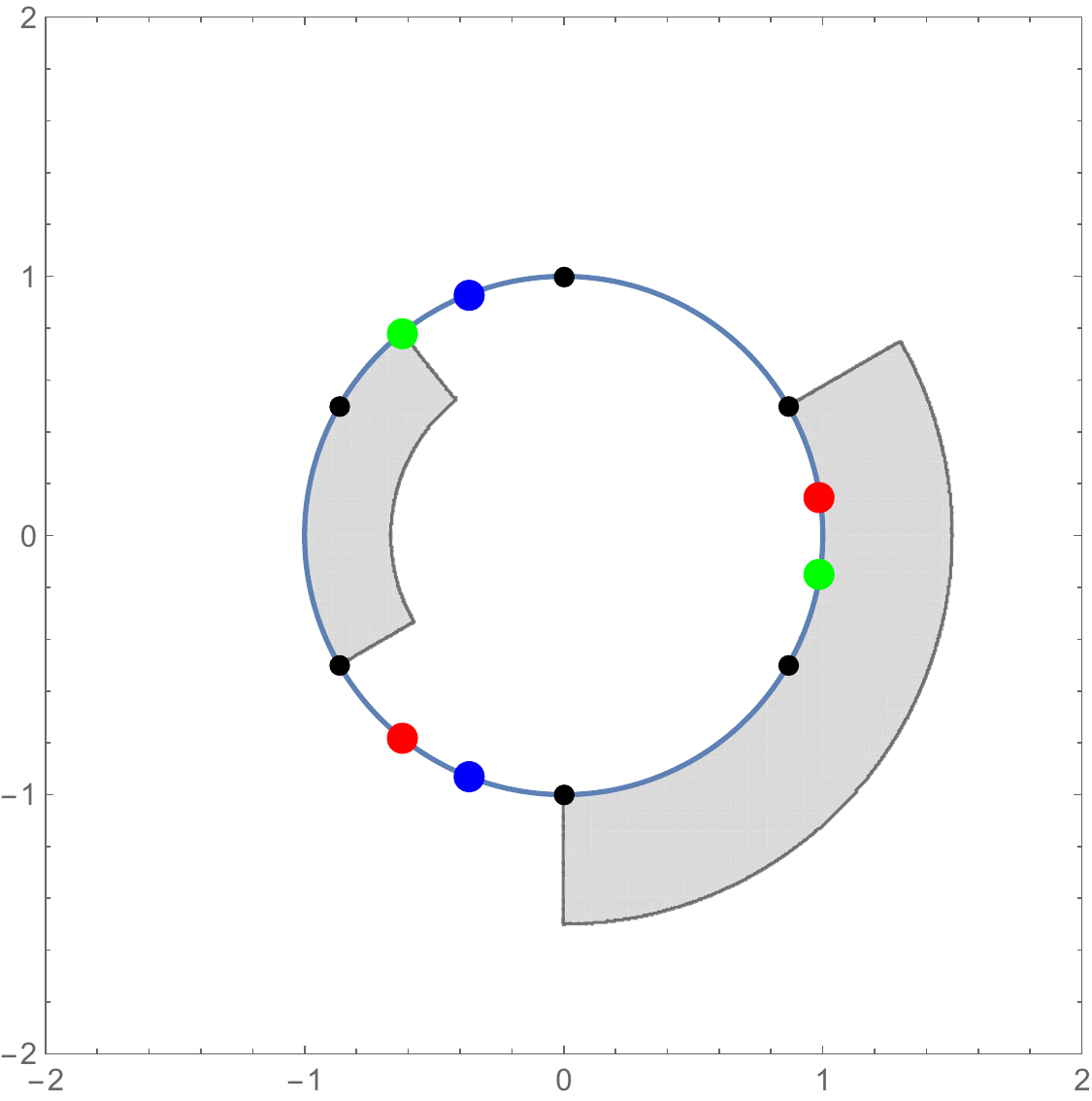}};
\node at (-0.88,0.2) {\small $U_1$};
\node at (1.4,-0.6) {\small $U_1$};
\end{tikzpicture} \hspace{0.1cm} 
\begin{tikzpicture}[slave]
\node at (0,0) {\includegraphics[width=5cm]{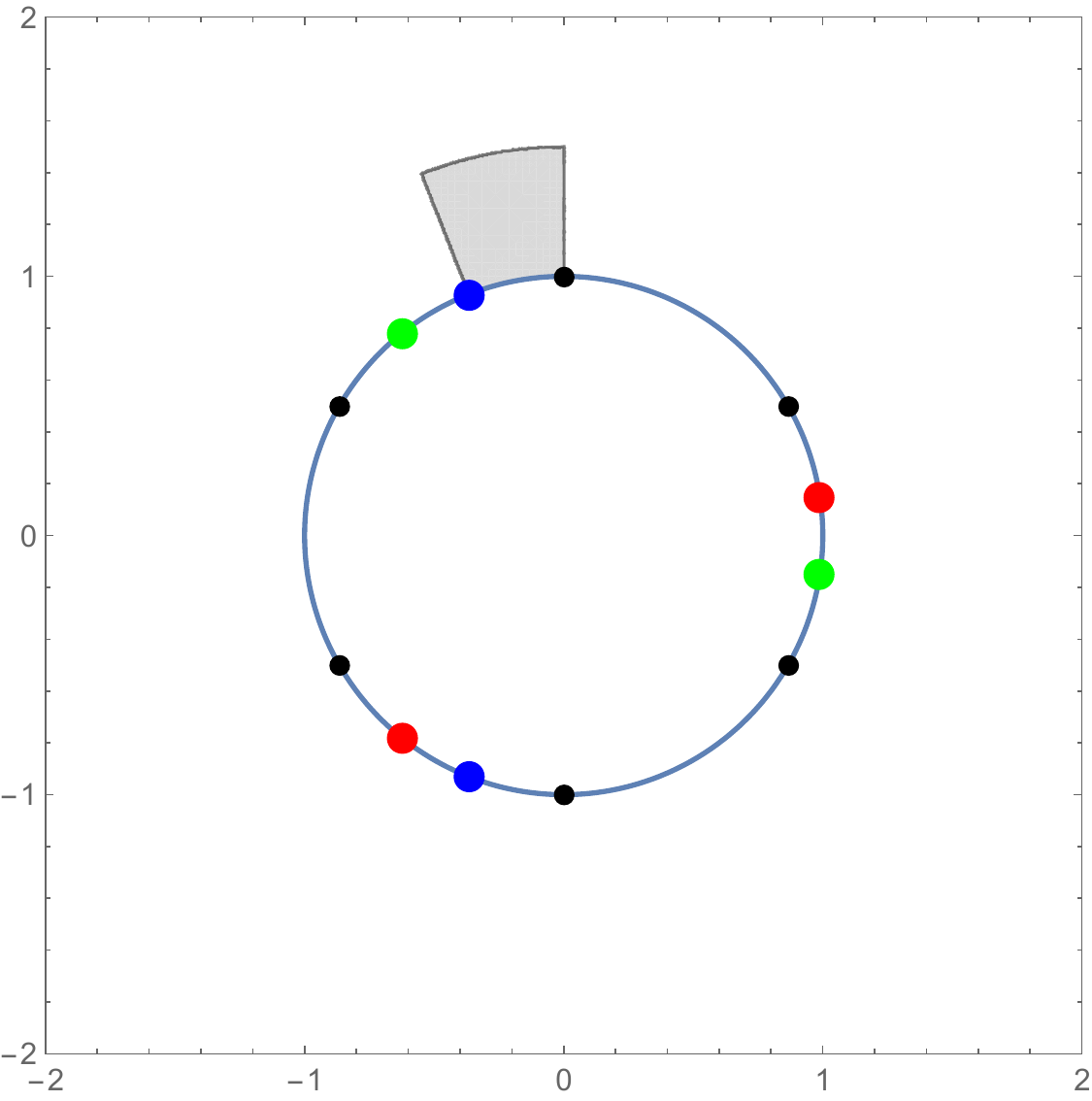}};
\node at (-0.17,1.52) {\small $\hat{U}_1$};
\end{tikzpicture}
\end{center}
\begin{figuretext}
\label{fig: U1 and U2} The open subsets $U_1$ and $\hat{U}_1$ of the complex $k$-plane. 
\end{figuretext}
\end{figure}

\subsection{Decomposition lemma}
Define $\hat{r}_1(k)$  and $\hat{r}_2(k)$ by
\begin{align*}
\hat{r}_{j}(k) := \frac{r_{j}(k)}{1+r_{1}(k)r_{2}(k)} = \frac{r_{j}(k)}{1+\tilde{r}(k)|r_{1}(k)|^{2}}, \qquad j = 1,2.
\end{align*}
Before we can define the first transformation, we need to decompose each of the spectral functions $r_1$, $\hat{r}_1$, $r_2$, and $\hat{r}_2$ into an analytic part and a small remainder. To this end, we let $M>1$ and introduce open sets $U_1 = U_1(\zeta,M)$ and $\hat{U}_1 = \hat{U}_1(\zeta,M)$ as follows (see Figure \ref{fig: U1 and U2}):
\begin{align*}
U_{1} = & \; \big(\{k \,|\, \arg k \in (\arg (\omega^{2}k_{2}),\tfrac{7\pi}{6}), \; M^{-1}<|k|<1 \} 
	\\
& \; \cup \{k \,|\, \arg k \in (-\tfrac{\pi}{2}, \tfrac{\pi}{6}), \; 1<|k|<M\}\big) \cap \{k \,|\,  \re \Phi_{21}>0\}, 
	\\
\hat{U}_{1} = & \; \{k \,|\,  \arg k \in (\tfrac{\pi}{2},\arg k_{1}), \; 1<|k|<M \} \cap \{k \,|\,  \re \Phi_{21}>0\}.
\end{align*}
We will first construct decompositions of $r_{1}$ and $\hat{r}_{1}$; the decompositions of $r_{2}$ and $\hat{r}_{2}$ will then be obtained from the symmetry (\ref{r1r2 relation with kbar symmetry}). 

\begin{lemma}\label{decompositionlemmaIII}
There exist $M>1$ and decompositions
\begin{align}
& r_1(k) = r_{1,a}(x, t, k) + r_{1,r}(x, t, k), & & k \in \partial U_{1} \cap \partial \mathbb{D}, \nonumber \\
& \hat{r}_1(k) = \hat{r}_{1,a}(x, t, k) + \hat{r}_{1,r}(x, t, k), & & k \in \partial \hat{U}_{1} \cap \partial \mathbb{D}, \label{decomposition lemma analytic + remainder}
\end{align}
such that the functions $r_{1,a},r_{1,r},\hat{r}_{1,a},\hat{r}_{1,r}$ have the following properties:
\begin{enumerate}[$(a)$]
\item 
For each $(x,t) \in \III_\geq$, $r_{1,a}(x, t, k)$ is defined and continuous for $k \in \bar{U}_1$ and analytic for $k \in U_1$, while $\hat{r}_{1,a}(x, t, k)$ is defined and continuous for $k \in \bar{\hat{U}}_1$ and analytic for $k \in \hat{U}_1$.

\item For $(x,t) \in \III_\geq$, the functions $r_{1,a}$ and $\hat{r}_{1,a}$ satisfy
\begin{subequations}\label{bounds on analytic part}
\begin{align} \label{lemma: r1a approx at points} 
& \Big| r_{1,a}(x, t, k)-\sum_{j=0}^{N}\frac{r_{1}^{(j)}(k_{\star})}{j!}(k-k_{\star})^{j}  \Big| \leq C |k-k_{\star}|^{N+1} e^{\frac{t}{4}|\re \Phi_{21}(\zeta,k)|}, & & k \in \bar{U}_{1}, \, k_{\star} \in \mathcal{R}, 
	\\ \label{lemma: r1ahat approx at points}
& \Big| \hat{r}_{1,a}(x, t, k)-\sum_{j=0}^{N}\frac{\hat{r}_{1}^{(j)}(k_{\star})}{j!}(k-k_{\star})^{j} \Big| \leq C|k-k_{\star}|^{N+1} e^{\frac{t}{4}|\re \Phi_{21}(\zeta,k)|},  & & k \in \bar{\hat{U}}_{1}, \, k_{\star} \in \hat{\mathcal{R}}, 
\end{align}
\end{subequations}
where $\mathcal{R} := \{1, \omega, -i, \pm e^{\frac{\pi i}{6}}, \pm e^{-\frac{\pi i}{6}}\}$, $\hat{\mathcal{R}} :=\{i, \omega\}$, and the constant $C$ is independent of $\zeta, t, k$. 

\item For each $1 \leq p \leq \infty$, the $L^p$-norm of $r_{1,r}(x,t,\cdot)$ on $\partial U_{1} \cap \partial \mathbb{D}$ is $O(t^{-N})$ and the $L^p$-norm of $\hat{r}_{1,r}(x, t, \cdot)$ on $\partial \hat{U}_{1} \cap \partial \mathbb{D}$ is $O(t^{-N})$ uniformly for $(x,t) \in \III_\geq$ as $t \to \infty$.

\end{enumerate}
\end{lemma}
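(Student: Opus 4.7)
The plan is to adapt the standard Deift--Zhou template for splitting reflection coefficients (as used, e.g., in \cite{CLgoodboussinesq}). I describe only the decomposition of $r_1$ on $\partial U_1\cap\partial\mathbb{D}$; the one for $\hat r_1$ is completely analogous, and the decompositions of $r_2,\hat r_2$ are not needed here (they will follow later via the symmetry \eqref{r1r2 relation with kbar symmetry}). The construction proceeds in three stages.

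\emph{Stage one: rational interpolation.} By Theorem \ref{directth}, $r_1$ is smooth on $\partial U_1\cap\partial\mathbb{D}$ (recall Assumption \ref{nounstablemodesassumption} eliminates blow-up issues). I would fix a rational function $R(k)$, with poles placed off $\bar U_1$, whose Taylor expansion at each $k_\star\in\mathcal{R}$ agrees with that of $r_1$ up to and including order $N$. This is a finite Hermite interpolation problem and such an $R$ exists. Setting $g:=r_1-R$, the function $g$ is smooth on $\partial\mathbb{D}$ and satisfies $g^{(j)}(k_\star)=0$ for $j=0,\dots,N$ at every $k_\star\in\mathcal{R}$.

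\emph{Stage two: Fourier splitting and $t$-dependent truncation.} Expand $g(e^{i\theta})=\sum_{n\in\mathbb{Z}}\hat g_n e^{in\theta}$. Because $g$ is smooth and vanishes to order $N+1$ at finitely many points, $N+2$-fold integration by parts in $\theta$ gives $|\hat g_n|\leq C(1+|n|)^{-N-2}$. Split $g$ into positive- and negative-frequency halves $g_+(k)=\sum_{n\geq 0}\hat g_n k^n$ and $g_-(k)=\sum_{n<0}\hat g_n k^n$, analytic inside and outside $\mathbb{D}$ respectively. Then fix a small $\alpha>0$ and truncate at level $\lfloor \alpha t\rfloor$:
\begin{equation*}
  g_\pm(k)=\underbrace{\sum_{|n|\leq \alpha t,\,\pm n\geq 0}\hat g_n k^n}_{g_{\pm,a}(t,k)}+\underbrace{\sum_{|n|>\alpha t,\,\pm n\geq 0}\hat g_n k^n}_{g_{\pm,r}(t,k)} .
\end{equation*}
Since $g_{+,a}$ is a polynomial in $k$ and $g_{-,a}$ a polynomial in $k^{-1}$, both extend as entire/meromorphic functions of $k$ (with at worst a pole at $0$) and in $\bar U_1$ one has the crude bound $|g_{\pm,a}(t,k)|\leq C|k|^{\pm\lfloor\alpha t\rfloor}$. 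Choosing $\alpha$ so that $\alpha|\log|k||\leq \tfrac14|\re\Phi_{21}(\zeta,k)|$ uniformly on $\bar U_1$, the exponential factor in \eqref{lemma: r1a approx at points} is produced. The tails $g_{\pm,r}$ satisfy $\|g_{\pm,r}\|_{L^p(\partial U_1\cap\partial \mathbb{D})}=O(t^{-N})$ by the Fourier-coefficient bound. Setting $r_{1,a}:=R+g_{+,a}+g_{-,a}$ on the appropriate piece of $U_1$ (inside $\mathbb{D}$ for $g_{+,a}$, outside for $g_{-,a}$) and $r_{1,r}:=g_{+,r}+g_{-,r}$ on $\partial U_1\cap\partial\mathbb{D}$ yields the claimed decomposition, since the pointwise estimate \eqref{lemma: r1a approx at points} reduces to combining Taylor's theorem for $R$ with the vanishing of $g$ to order $N+1$ at each $k_\star$.

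\emph{Main obstacle.} The delicate point is uniformity as $\zeta\downarrow 1$ and $(x,t)$ ranges over $\III_\geq$. Near the coalescence of the four saddle points at $k=\omega$ the function $\re\Phi_{21}$ degenerates (its second derivative in $k$ tends to zero), so the basic estimate $\alpha|\log|k||\leq \tfrac14|\re\Phi_{21}|$ may fail in a shrinking neighborhood of $\omega$. The workaround is to shrink $U_1$ (equivalently, enlarge the threshold $M$ or introduce an additional cutoff at distance $O(t^{-1/3})$ from $\omega$) so that this pointwise inequality holds uniformly in $\zeta\in[1,1+Mt^{-2/3}]$; this is compatible with the construction of the local parametrix, which will in any case replace the jump inside that smaller disk. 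A secondary technicality is that $U_1$ has two connected components, one inside and one outside $\mathbb{D}$; this is handled automatically by the positive/negative frequency split, provided the Fourier decomposition is performed on the full unit circle. The same scheme with $\mathcal{R}$ replaced by $\hat{\mathcal{R}}$ and $U_1$ by $\hat U_1$ gives the decomposition of $\hat r_1$.
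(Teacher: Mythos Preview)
Your construction has two concrete gaps, both of which the paper's approach (Fourier transform in the \emph{phase} variable rather than in $\theta$, as in the standard Deift--Zhou lemma) avoids automatically.

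\textbf{Gap 1: the prefactor $|k-k_\star|^{N+1}$ in (b) is not produced.} Write $r_{1,a}-\sum_{j=0}^N \frac{r_1^{(j)}(k_\star)}{j!}(k-k_\star)^j = (R-P_{k_\star}(R)) + (g_{+,a}+g_{-,a})$. The first bracket is $O(|k-k_\star|^{N+1})$ by Taylor, but the truncated Laurent polynomial $g_{+,a}+g_{-,a}$ does \emph{not} inherit the $(N{+}1)$-fold vanishing of $g$ at $k_\star$. On the unit circle (where $\re\Phi_{21}=0$ and the exponential in \eqref{lemma: r1a approx at points} equals $1$) you have $g_{+,a}+g_{-,a}=g-(g_{+,r}+g_{-,r})$, hence $|g_{+,a}+g_{-,a}|\le C|k-k_\star|^{N+1}+Ct^{-N-1}$; this violates \eqref{lemma: r1a approx at points} whenever $|k-k_\star|\lesssim t^{-1}$. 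The standard remedy is to factor out the zeros before Fourier-expanding: set $g(k)=\prod_{k_\star\in\mathcal{R}}(k-k_\star)^{N+1}\tilde g(k)$ with $\tilde g$ smooth on $\partial\mathbb{D}$, truncate the Fourier series of $\tilde g$, and multiply back. You do not do this, and without it the estimate fails.

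\textbf{Gap 2: the comparison $\alpha|\log|k||\le \tfrac14|\re\Phi_{21}(\zeta,k)|$ fails near the saddle endpoint of $\hat U_1$.} The right endpoint of $\partial\hat U_1\cap\partial\mathbb{D}$ is the saddle $k_1$ of $\Phi_{21}$. At a saddle, $\partial_k\Phi_{21}=0$, so by Cauchy--Riemann the normal derivative $\partial_r\re\Phi_{21}$ vanishes at $k_1$; consequently $|\re\Phi_{21}(\zeta,k)|\sim |k-k_1|^2$ for $k\in\bar{\hat U}_1$ near $k_1$, whereas $|\log|k||\sim \big||k|-1\big|$ can be as large as $|k-k_1|$. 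Thus your inequality requires $\alpha|k-k_1|\lesssim |k-k_1|^2$, which is impossible for small $|k-k_1|$. This is not a $\zeta\to 1$ coalescence phenomenon as you suggest in your ``main obstacle''---it occurs for every fixed $\zeta$. The paper's method sidesteps this entirely: using $\psi=\im\Phi_{21}(\zeta,e^{i\theta})$ as the Fourier variable (which is monotone on each arc, as the paper notes), the analytic extension of $e^{is\psi}$ is $e^{s\Phi_{21}}$, so the factor $e^{\frac t4|\re\Phi_{21}|}$ appears directly with no comparison inequality needed.
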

\begin{proof}
The proof follows the standard approach of \cite{DZ1993}. Indeed, using the fact that $\theta \mapsto \im \Phi_{21}(\zeta,e^{i\theta})$ $= (\zeta - \cos\theta)\sin\theta$ is monotonic on the intervals $(\arg (\omega^{2}k_{2}),\tfrac{7\pi}{6})$, $(-\tfrac{\pi}{2}, \tfrac{\pi}{6})$, and $(\tfrac{\pi}{2},\arg k_{1})$, it is not hard to construct functions with the desired properties using the method of \cite{DZ1993}.
%; see \cite[Lemma 4.8]{Lnonlinearsteepest} for a proof of a similar lemma. 
\end{proof}

The decompositions $r_{2} = r_{2,a} + r_{2,r}$ and $\hat{r}_{2} = \hat{r}_{2,a} + \hat{r}_{2,r}$ involve the open sets $U_{2} := \{k \,|\, \bar{k}^{-1} \in U_{1}\}$ and $\hat{U}_{2} := \{k \,|\, \bar{k}^{-1} \in \hat{U}_{1}\}$ and are given by
\begin{align}\label{def of r2a and r2r}
& r_{2,a}(k) := \tilde{r}(k)\overline{r_{1,a}(\bar{k}^{-1})}, \quad k \in U_{2},
& & r_{2,r}(k) := \tilde{r}(k)\overline{r_{1,r}(\bar{k}^{-1})}, \quad k \in \partial U_{2}\cap \partial \mathbb{D},
	\\
& \hat{r}_{2,a}(k) := \tilde{r}(k)\overline{\hat{r}_{1,a}(\bar{k}^{-1})}, \quad k \in \hat{U}_{2},
& & \hat{r}_{2,r}(k) := \tilde{r}(k)\overline{\hat{r}_{1,r}(\bar{k}^{-1})}, \quad k \in \partial \hat{U}_{2}\cap \partial \mathbb{D}.
\end{align}

\subsection{Transformations of the RH problem}

%New notation: 4_a' -> 1, 9_7 -> 2, 1_a -> 3, 2_a -> 4, 8_7 -> 5, 5_a' -> 6
\begin{figure}
\begin{center}
\begin{tikzpicture}[master,scale=0.85]
\node at (0,0) {};
\draw[black,line width=0.55 mm,->-=0.2,->-=0.47,->-=0.65,->-=0.9] (0,0)--(30:6);
\draw[black,line width=0.55 mm,->-=0.26,->-=0.53,->-=0.75,->-=0.98] (0,0)--(90:5.2);
\draw[black,line width=0.55 mm,->-=0.2,->-=0.47,->-=0.65,->-=0.9] (0,0)--(150:6);
\draw[black,line width=0.55 mm,->-=0.2,->-=0.47,->-=0.65,->-=0.9] (0,0)--(-30:6);
\draw[black,line width=0.55 mm,->-=0.26,->-=0.53,->-=0.75,->-=0.98] (0,0)--(-90:5.2);
\draw[black,line width=0.55 mm,->-=0.2,->-=0.47,->-=0.65,->-=0.9] (0,0)--(-150:6);
%\draw[dashed,black,line width=0.15 mm] (0,0)--(60:6);
%\draw[dashed,black,line width=0.15 mm] (0,0)--(120:6);

\draw[black,line width=0.55 mm] ([shift=(-180:3cm)]0,0) arc (-180:180:3cm);
\draw[black,arrows={-Triangle[length=0.27cm,width=0.18cm]}]
($(3:3)$) --  ++(90:0.001);
\draw[black,arrows={-Triangle[length=0.27cm,width=0.18cm]}]
($(57:3)$) --  ++(-30:0.001);
\draw[black,arrows={-Triangle[length=0.27cm,width=0.18cm]}]
($(123:3)$) --  ++(210:0.001);
\draw[black,arrows={-Triangle[length=0.27cm,width=0.18cm]}]
($(177:3)$) --  ++(90:0.001);
\draw[black,arrows={-Triangle[length=0.27cm,width=0.18cm]}]
($(243:3)$) --  ++(330:0.001);
\draw[black,arrows={-Triangle[length=0.27cm,width=0.18cm]}]
($(297:3)$) --  ++(210:0.001);

%\node at (0:3.22) {\small $8$};
\node at (60:3.3) {\small $9_r$};
\node at (119:3.25) {\small $7$};
%\node at (180:3.22) {\small $8$};
%\node at (240:3.22) {\small $9$};
%\node at (300:3.22) {\small $7$};

\node at (72:1.23) {\small $1''$};
%\node at (140.5:1.15) {\small $2''$};
%\node at (219:1.15) {\small $3''$};
%\node at (-98:1.15) {\small $4''$};
%\node at (-39:1.15) {\small $5''$};
%\node at (39:1.15) {\small $6''$};

\node at (82:2.55) {\small $1_{r}''$};
%\node at (156:2.6) {\small $2_{r}$};
%\node at (203:2.6) {\small $3_{r}$};
%\node at (-84:2.6) {\small $4_{r}$};
%\node at (-36:2.6) {\small $5_{r}$};
%\node at (37:2.6) {\small $6_{r}$};
%
\node at (109:2.25) {\small $3$};
\node at (130:2.35) {\small $4$};
%\node at (230:2.4) {\small $3_{a}$};
%\node at (-110:2.4) {\small $4_{a}$};
%\node at (-11:2.4) {\small $5_{a}$};
%\node at (11:2.4) {\small $6_{a}$};

\node at (86:4.94) {\small $4'$};
%\node at (145.5:5.3) {\small $5'$};
%\node at (215:5.3) {\small $6'$};
%\node at (-94:5.3) {\small $1'$};
%\node at (-35:5.3) {\small $2'$};
%\node at (35:5.3) {\small $3'$};

\node at (84.4:3.72) {\small $4_{r}'$};
%\node at (155.5:3.8) {\small $5_{r}'$};
%\node at (205:3.8) {\small $6_{r}'$};
%\node at (-86:3.8) {\small $1_{r}'$};
%\node at (-35:3.8) {\small $2_{r}'$};
%\node at (35:3.8) {\small $3_{r}'$};

\node at (104:3.8) {\small $1$};
\node at (136.5:3.8) {\small $6$};
%\node at (224:3.8) {\small $6_{a}'$};
%\node at (-104:3.8) {\small $1_{a}'$};
%\node at (-16:3.8) {\small $2_{a}'$};
%\node at (16:3.8) {\small $3_{a}'$};

\node at (97:3.24) {\small $2$};
\node at (143:3.27) {\small $5$};
%\node at (97+120:3.24) {\small $8_{9}$};
%\node at (143+120:3.27) {\small $7_{9}$};
%\node at (97+240:3.27) {\small $7_{8}$};
%\node at (143+240:3.27) {\small $9_{8}$};

\draw[black,line width=0.55 mm,->-=0.27,->-=0.75] (90:2)--(113.175:3)--(90:4.5);
\draw[black,line width=0.55 mm,->-=0.27,->-=0.75] (90+120:2)--(113.175+120:3)--(90+120:4.5);
\draw[black,line width=0.55 mm,->-=0.27,->-=0.75] (90+240:2)--(113.175+240:3)--(90+240:4.5);

\draw[black,line width=0.55 mm,->-=0.27,->-=0.75] (-90:2)--(-113.175:3)--(-90:4.5);
\draw[black,line width=0.55 mm,->-=0.27,->-=0.75] (-90+120:2)--(-113.175+120:3)--(-90+120:4.5);
\draw[black,line width=0.55 mm,->-=0.27,->-=0.75] (-90+240:2)--(-113.175+240:3)--(-90+240:4.5);

\draw[black,arrows={-Triangle[length=0.27cm,width=0.18cm]}]
($(18:3)$) --  ++(18-90:0.001);
\draw[black,arrows={-Triangle[length=0.27cm,width=0.18cm]}]
($(95:3)$) --  ++(95-90:0.001);
\draw[black,arrows={-Triangle[length=0.27cm,width=0.18cm]}]
($(18+120:3)$) --  ++(18+120-90:0.001);
\draw[black,arrows={-Triangle[length=0.27cm,width=0.18cm]}]
($(95+120:3)$) --  ++(95+120-90:0.001);
\draw[black,arrows={-Triangle[length=0.27cm,width=0.18cm]}]
($(18+240:3)$) --  ++(18+240-90:0.001);
\draw[black,arrows={-Triangle[length=0.27cm,width=0.18cm]}]
($(95+240:3)$) --  ++(95+240-90:0.001);

\draw[black,line width=0.55 mm,-<-=0.47] ([shift=(30:2cm)]0,0) arc (30:90:2cm);
\draw[black,line width=0.55 mm,-<-=0.47] ([shift=(30:4.5cm)]0,0) arc (30:90:4.5cm);

\draw[black,line width=0.55 mm,-<-=0.47] ([shift=(30+120:2cm)]0,0) arc (30+120:90+120:2cm);
\draw[black,line width=0.55 mm,-<-=0.47] ([shift=(30+120:4.5cm)]0,0) arc (30+120:90+120:4.5cm);

\draw[black,line width=0.55 mm,-<-=0.47] ([shift=(30+240:2cm)]0,0) arc (30+240:90+240:2cm);
\draw[black,line width=0.55 mm,-<-=0.47] ([shift=(30+240:4.5cm)]0,0) arc (30+240:90+240:4.5cm);

\node at (60:2.3) {\small $9_{L}$};
\node at (60:4.9) {\small $9_{R}$};
%\node at (180:2.25) {\small $8_{L}$};
%\node at (180:4.8) {\small $8_{R}$};
%\node at (300:2.25) {\small $7_{L}$};
%\node at (300:4.8) {\small $7_{R}$};

\draw[blue,fill] (113.175:3) circle (0.12cm);
\draw[red,fill] (113.175+120:3) circle (0.12cm);
\draw[green,fill] (113.175+240:3) circle (0.12cm);
\draw[blue,fill] (-113.175:3) circle (0.12cm);
\draw[red,fill] (-113.175+120:3) circle (0.12cm);
\draw[green,fill] (-113.175+240:3) circle (0.12cm);

%New dots
\draw[blue,fill] (0:3.7) circle (0.12cm);
\draw[blue,fill] (0:2.35) circle (0.12cm);
\draw[red,fill] (120:3.7) circle (0.12cm);
\draw[red,fill] (120:2.35) circle (0.12cm);
\draw[green,fill] (240:3.7) circle (0.12cm);
\draw[green,fill] (240:2.35) circle (0.12cm);

\end{tikzpicture}
\end{center}
\begin{figuretext}
\label{IIIgeqGamma1fig}
The contour $\Gamma^{(1)}$ for Sector $\III_\geq$. The saddle points of $\Phi_{21}$, $\Phi_{32}$, and $\Phi_{31}$ are colored blue, green, and red, respectively. 
\end{figuretext}
\end{figure}

\subsubsection{First transformation}
The jump matrices $v_7$  and $v_9$ admit the factorizations
$$v_7^{-1} = (v_3^{(1)})^{-1} v_2^{(1)} v_1^{(1)}, \qquad
%v_7^{-1} = v_4^{(1)} v_5^{(1)} (v_6^{(1)})^{-1}, \qquad
v_9 = v_{9_L}^{(1)} v_{9_r}^{(1)} v_{9_R}^{(1)},$$
where
\begin{align}\nonumber
& v_{1}^{(1)} 
= \begin{pmatrix}
1 & \hat{r}_{1,a}(k)e^{-\theta_{21}} & 0 \\
0 & 1 & 0 \\
r_{2,a}(\frac{1}{\omega^{2}k})e^{\theta_{31}} & -r_{1,a}(\frac{1}{\omega k})e^{\theta_{32}} & 1
\end{pmatrix}, 
\quad v_{2}^{(1)} = \begin{pmatrix}
1+r_{1}(k)r_{2}(k) & 0 & 0 \\
0 & \frac{1}{1+r_{1}(k)r_{2}(k)} & 0 \\
0 & 0 & 1
\end{pmatrix} + v_{2,r}^{(1)},
	\\ \nonumber
& v_3^{(1)} 
= \begin{pmatrix}
1 & 0 & -r_{1,a}(\frac{1}{\omega^{2} k})e^{-\theta_{31}} \\
-\hat{r}_{2,a}(k)e^{\theta_{21}} & 1 & (r_{2,a}(\frac{1}{\omega k}) +r_{1,a}(\frac{1}{\omega^2 k})\hat{r}_{2,a}(k))e^{-\theta_{32}} \\
0 & 0 & 1
\end{pmatrix},
	\\ \nonumber
& v_{9_L}^{(1)} = \begin{pmatrix}
1 & 0 & -r_{2,a}(\omega^{2}k)e^{-\theta_{31}} \\
r_{1,a}(\frac{1}{k})e^{\theta_{21}} & 1 & r_{1,a}(\omega k)e^{-\theta_{32}} \\
0 & 0 & 1
\end{pmatrix}, \quad v_{9_R}^{(1)} = \begin{pmatrix}
1 & r_{2,a}(\frac{1}{k})e^{-\theta_{21}} & 0 \\
0 & 1 & 0  \\
-r_{1,a}(\omega^{2}k)e^{\theta_{31}} & r_{2,a}(\omega k)e^{\theta_{32}} & 1
\end{pmatrix}, 
	\\ \label{IIIv1jumps}
& v_{9_r}^{(1)} = \begin{pmatrix}
1+r_{1,r}(\omega^{2}k)r_{2,r}(\omega^{2}k) & g_{2}(\omega k)e^{-\theta_{21}} & -r_{2,r}(\omega^{2}k)e^{-\theta_{31}} \\
g_{1}(\omega k)e^{\theta_{21}} & g(\omega k) & h_{1}(\omega k)e^{-\theta_{32}} \\
-r_{1,r}(\omega^{2}k)e^{\theta_{31}} & h_{2}(\omega k)e^{\theta_{32}} & 1
\end{pmatrix},
\end{align}
where $f$ is defined in \eqref{def of f} and $h_1, h_2, g_1, g_2, g$ are defined by
\begin{align*}
h_{1}(k) = & \; r_{1,r}(k) + r_{1,a}(\tfrac{1}{\omega^{2}k})r_{2,r}(\omega k), 
	\qquad
h_{2}(k) = r_{2,r}(k) + r_{2,a}(\tfrac{1}{\omega^{2}k})r_{1,r}(\omega k), 
	\\
g_{1}(k) = & \; r_{1,r}(\tfrac{1}{\omega^{2}k})-r_{1,r}(\omega k) \big( r_{1,r}(k)+r_{1,a}(\tfrac{1}{\omega^{2}k})r_{2,r}(\omega k) \big), 
	\\
g_{2}(k) = & \; r_{2,r}(\tfrac{1}{\omega^{2}k})-r_{2,r}(\omega k) \big( r_{2,r}(k)+r_{2,a}(\tfrac{1}{\omega^{2}k})r_{1,r}(\omega k) \big), 
	\\
g(k) = & \; 1+r_{1,r}(k)\big(r_{1,r}(\omega k)r_{2,a}(\tfrac{1}{\omega^{2}k})+r_{2,r}(k)\big) \\
& +r_{1,a}(\tfrac{1}{\omega^{2}k})r_{2,r}(\omega k)\big( r_{1,r}(\omega k)r_{2,a}(\tfrac{1}{\omega^{2}k})+r_{2,r}(k) \big) + r_{1,r}(\tfrac{1}{\omega^{2}k})r_{2,r}(\tfrac{1}{\omega^{2}k}).
\end{align*}
The matrix $v_{2,r}^{(1)}$ in (\ref{IIIv1jumps}) is given by
\begin{align}\label{IIIv2r}
& v_{2,r}^{(1)} := \resizebox{0.5\hsize}{!}{$\left( \begin{array}{c}
c_{13,r}c_{31,r} \\
\big( c_{21,r}+(c_{23,r}-c_{13,r}c_{21,a})c_{31,r}+c_{21,r}r_{1}(k)r_{2}(k) \big)e^{\theta_{21}}  \\
c_{31,r}e^{\theta_{31}}
\end{array} \right. \dots $} 
	\\ \nonumber
& \resizebox{0.91\hsize}{!}{$ \dots \left. \begin{array}{c c}
\big( c_{12,r}+c_{13,r}(c_{32,r}-c_{12,a}c_{31,r}) + c_{12,r}r_{1}(k)r_{2}(k) \big)e^{-\theta_{21}} & c_{13,r}e^{-\theta_{31}} \\
c_{12,r}c_{21,r}+(c_{13,r}c_{21,a}-c_{23,r})(c_{12,a}c_{31,r}-c_{32,r})+c_{12,r}c_{21,r}r_{1}(k)r_{2}(k) & (c_{23,r}-c_{13,r}c_{21,a})c^{-\theta_{32}} \\
(c_{32,r}-c_{12,a}c_{31,r})e^{\theta_{32}} & 0
\end{array} \right),$}
\end{align}
where
\begin{align*}
& c_{12} := \hat{r}_{1}(k), & & c_{13} := r_{1}(\tfrac{1}{\omega^{2}k}), & & c_{23} := -r_{2}(\tfrac{1}{\omega k}), 
& & c_{21} := \hat{r}_{2}(k), & & c_{31} := r_{2}(\tfrac{1}{\omega^{2}k}), & & c_{32} := -r_{1}(\tfrac{1}{\omega k}).
\end{align*}
Here $c_{12, a} := \hat{r}_{1,a}(k)$ and $c_{12,r} := \hat{r}_{1,r}(k)$ denote the analytic approximation and the remainder, respectively, in a decomposition $c_{12} = c_{12, a} + c_{12, r}$ of $c_{12}$, and a similar notation is used also for the other coefficients. Let $\Gamma^{(1)}$ be the contour shown in Figure \ref{IIIgeqGamma1fig}, and let $\Gamma_{j}^{(1)}$ be the subcontour of $\Gamma^{(1)}$ labeled by $j$ in Figure \ref{IIIgeqGamma1fig}.
As a consequence of Lemma \ref{decompositionlemmaIII}, we have $\|v_{2,r}^{(1)}\|_{(L^{1}\cap L^{\infty})(\Gamma_2^{(1)})}=O(t^{-1})$ as $ t \to \infty$.

Let $\Gamma^{(1)}_\star$ be the self-intersection points of $\Gamma^{(1)}$.
In what follows, we define the jump matrix $v^{(1)}$ on $\Gamma^{(1)} \setminus \Gamma^{(1)}_\star$. For $j = 1, 2, 3, 9_L, 9_r, 9_R$, let $v_j^{(1)}$ be given by (\ref{IIIv1jumps}), and let
\begin{align}\label{v1oniRplusdef}
& v_7^{(1)} = v_{7}, \quad v_{1''}^{(1)} = v_{1''}, \quad 
v_{1_r''}^{(1)} = (v_{9_L}^{(1)})^{-1}v_{1''} (v_3^{(1)})^{-1}, \quad 
v_{4_r'}^{(1)} = v_{9_R}^{(1)} v_{4'} (v_1^{(1)})^{-1}, \quad 
v_{4'}^{(1)} = v_{4'},
\end{align}
where $v_j^{(1)}$ denotes the restriction of $v^{(1)}$ to $\Gamma_j^{(1)}$.
We extend $v^{(1)}$ to $\Gamma_4^{(1)} \cup \Gamma_5^{(1)} \cup \Gamma_6^{(1)}$ by means of the symmetries in (\ref{vjsymm}); more precisely, the jump matrices $v_j^{(1)}$, $j = 4,5,6$, are defined by
\begin{align}\nonumber
& v_4^{(1)} = \mathcal{A} \mathcal{B} v_1^{(1)}\Big(\frac{1}{\omega k}\Big)^{-1}\mathcal{B}\mathcal{A}^{-1} 
= \begin{pmatrix} 1 & (r_{1,a}(k) + \hat{r}_{1,a}(\frac{1}{\omega k})r_{2,a}(\omega^2 k))e^{-\theta_{21}} & -r_{2,a}(\omega^2 k)e^{-\theta_{31}} \\
0 & 1 & 0 \\
0 & -\hat{r}_{1,a}(\frac{1}{\omega k})e^{\theta_{32}} & 1 \end{pmatrix},
	\\ \nonumber
& v_5^{(1)} = \mathcal{A} \mathcal{B} v_2^{(1)}\Big(\frac{1}{\omega k}\Big)^{-1}\mathcal{B} \mathcal{A}^{-1} =
\begin{pmatrix}
1 & 0 & 0 \\
0 & 1+r_1(\frac{1}{\omega k})r_2(\frac{1}{\omega k}) & 0 \\
0 & 0 & \frac{1}{1+r_1(\frac{1}{\omega k})r_2(\frac{1}{\omega k})}
\end{pmatrix} + v_{5,r}^{(1)},
	\\ \label{IIIv1456}
& v_6^{(1)} = \mathcal{A} \mathcal{B} v_3^{(1)}\Big(\frac{1}{\omega k}\Big)^{-1} \mathcal{B} \mathcal{A}^{-1} 
= \begin{pmatrix} 1 & 0 & 0 \\
-r_{2,a}(k)e^{\theta_{21}}  & 1 & \hat{r}_{2,a}(\frac{1}{\omega k})e^{-\theta_{32}} \\
r_{1,a}(\omega^2 k)e^{\theta_{31}} & 0 & 1 \end{pmatrix},
\end{align}
where
$$v_{5,r}^{(1)} := \mathcal{A} \mathcal{B} \bigg\{v_{2}^{(1)}\Big(\frac{1}{\omega k}\Big)^{-1} - \Big[(v_{2}^{(1)}-v_{2,r}^{(1)})\Big(\frac{1}{\omega k}\Big)\Big]^{-1} \bigg\} \mathcal{B} \mathcal{A}^{-1}.$$
We define the sectionally analytic function $n^{(1)}$ by
\begin{align}
n^{(1)}(x,t,k) = n(x,t,k)G^{(1)}(x,t,k),
\end{align}
where $G^{(1)}$ is defined for $k \in \{k \in \C \, | \, \arg k \in [\frac{\pi}{6},\frac{5\pi}{6}]\} \setminus \Gamma^{(1)}$ by
\begin{align}\label{G1def}
G^{(1)} = \begin{cases} 
v_{9_L}^{(1)}, & k \mbox{ on the $-$ side of }\Gamma_{9_r}^{(1)}, 
	 \\
(v_{9_R}^{(1)})^{-1}, & k \mbox{ on the $+$ side of }\Gamma_{9_r}^{(1)}, 
	 \\
(v_{3}^{(1)})^{-1}, & k \mbox{ on the $-$ side of }\Gamma_2^{(1)}, 
	 \\
(v_{1}^{(1)})^{-1}, & k \mbox{ on the $+$ side of }\Gamma_2^{(1)}, 
\end{cases}
\qquad
G^{(1)} = \begin{cases} 
v_{4}^{(1)}, & k \mbox{ on the $-$ side of }\Gamma_5^{(1)}, 
	 \\
v_{6}^{(1)}, & k \mbox{ on the $+$ side of }\Gamma_5^{(1)}, 
	\\
I, & \mbox{otherwise},
\end{cases}
\end{align}
and is extended to $\C \setminus \Gamma^{(1)}$ by means of the symmetry
\begin{align}\label{G1symmIII}
G^{(1)}(x,t, k) = \mathcal{A} G^{(1)}(x,t,\omega k)\mathcal{A}^{-1}.
\end{align}
By construction, $n^{(1)}$ obeys the jump relation $n^{(1)}_+ = n^{(1)}_-v^{(1)}$ on the part of $\Gamma^{(1)}$ that lies in the open sector $\{k \in \C \, | \, \arg k \in (\frac{\pi}{6},\frac{5\pi}{6})\}$. We extend the definition of $v^{(1)}$ by setting $v^{(1)} := (n^{(1)})_-^{-1}n^{(1)}_+$ on all of $\Gamma^{(1)}\setminus \Gamma^{(1)}_\star$. Then $v^{(1)}$ satisfies the $\mathcal{A}$-symmetry in (\ref{vjsymm}). 

\begin{remark}\upshape
The jump matrix $v^{(1)}$ does in general not satisfy the $\mathcal{B}$-symmetry in (\ref{vjsymm}). An attempt to preserve the $\mathcal{B}$-symmetry introduces additional jumps on parts of the lines $\omega \R \cup \omega^2 \R \cup \R$ that are not desirable in Sector \III. In the other asymptotic sectors, it is convenient to preserve the $\mathcal{B}$-symmetry at each stage of the transformations, but in Sector III it is not.% see also Remark \ref{symmetryjumpsremark}. 
\end{remark}

It follows immediately from the definition of $G^{(1)}$ and Lemma \ref{decompositionlemmaIII} that $G^{(1)}$ is analytic on $\C \setminus \Gamma^{(1)}$. 
The next lemma follows from Lemma \ref{decompositionlemmaIII} and the signature tables of $\Phi_{21}$, $\Phi_{31}$, $\Phi_{32}$ (see Figure \ref{II fig: Re Phi 21 31 and 32 for zeta=1.2}).

\begin{lemma}
$G^{(1)}(x,t,k)$ and $G^{(1)}(x,t,k)^{-1}$ are uniformly bounded for $k \in \mathbb{C}\setminus \Gamma^{(1)}$ and $(x,t)\in \III_\geq$. Furthermore, $G^{(1)}(x,t,k)=I$ whenever $|k|$ is large enough.
\end{lemma}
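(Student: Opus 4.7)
The second assertion that $G^{(1)} = I$ for $|k|$ large follows immediately from the definition \eqref{G1def}: $G^{(1)}$ differs from $I$ only in the six lens-shaped regions bordering $\Gamma^{(1)}_{9_r}$, $\Gamma^{(1)}_2$, and $\Gamma^{(1)}_5$ near $\omega$ (together with the $\mathcal{A}$-rotations of these lenses near $1$ and $\omega^2$, produced by the symmetry \eqref{G1symmIII}). Each of these lenses is contained in a fixed bounded annulus because the underlying open sets $U_1, \hat{U}_1$ of Lemma \ref{decompositionlemmaIII} and their Schwarz-reflected analogues $U_2, \hat{U}_2$ from \eqref{def of r2a and r2r} all lie inside $\{M^{-1} < |k| < M\}$.

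For the uniform boundedness claim, the $\mathcal{A}$-symmetry \eqref{G1symmIII} reduces the task to the three lenses near $\omega$. Inspection of \eqref{IIIv1jumps} and \eqref{IIIv1456} shows that each of the six candidate matrices $v_1^{(1)}, v_3^{(1)}, v_4^{(1)}, v_6^{(1)}, v_{9_L}^{(1)}, v_{9_R}^{(1)}$ has the form $I + \mathfrak{N}$, with $\mathfrak{N}$ having only off-diagonal entries arranged in a pattern making $\mathfrak{N}$ nilpotent of index at most three; its inverse is therefore $I - \mathfrak{N} + \mathfrak{N}^2$, and uniform boundedness of both $G^{(1)}$ and $(G^{(1)})^{-1}$ reduces to uniform boundedness of the individual off-diagonal entries of $\mathfrak{N}$. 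Each such entry is the product of an analytic piece $\rho \in \{r_{1,a}, r_{2,a}, \hat{r}_{1,a}, \hat{r}_{2,a}\}$, evaluated at one of the shifted arguments $k$, $1/k$, $\omega k$, $\omega^2 k$, $1/(\omega k)$, $1/(\omega^2 k)$, with an exponential $e^{\pm\theta_{ij}}$. Applying Lemma \ref{decompositionlemmaIII}$(b)$ with $N=0$ at the relevant base point $k_\star \in \mathcal{R} \cup \hat{\mathcal{R}}$ (namely $\omega$, or $1$ or $\omega^2$ after the $\mathcal{A}$-symmetry), and using the Schwarz-reflection formulas \eqref{def of r2a and r2r} whenever $\rho = r_{2,a}$ or $\hat{r}_{2,a}$, yields $|\rho(\cdot)| \leq C\, e^{\frac{t}{4}|\re \Phi_{21}(\zeta,\cdot)|}$ throughout the lens. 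The identities \eqref{relations between the different Phi} together with the signature tables in Figure \ref{II fig: Re Phi 21 31 and 32 for zeta=1.2} then guarantee that each lens sits in the half-plane where $\pm t\, \re\Phi_{ij} \leq -\tfrac14 t|\re\Phi_{21}|$, so that the combined exponent is non-positive and the product is bounded by a constant independent of $(x,t) \in \III_\geq$ and $k$.

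The main obstacle is purely bookkeeping: there are six lenses near $\omega$, each contributing several off-diagonal entries of $\mathfrak{N}$, and every entry involves a reflection coefficient evaluated at a distinct $\omega$- or Schwarz-shifted argument. For each one must (i) identify the correct base point $k_\star$ so that Lemma \ref{decompositionlemmaIII}$(b)$ applies to the appropriate piece of $\rho$, (ii) use the correct instance of \eqref{relations between the different Phi} (for example $\Phi_{31}(\zeta, \omega k) = -\Phi_{21}(\zeta, k)$, $\Phi_{32}(\zeta, \omega^2 k) = \Phi_{21}(\zeta, k)$, and the Schwarz identity $\Phi_{21}(\zeta, 1/k) = -\Phi_{21}(\zeta, k)$ coming from $l_j(1/k) = l_{-j}(k)$) to express $\re\Phi_{ij}$ at the shifted argument in terms of $\pm\re\Phi_{21}(\zeta,k)$, and then (iii) read off the sign of $\re\Phi_{21}$ on the lens from the signature table. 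Each individual check is a one-line calculation, but all of them must be carried out consistently across the lenses; this is precisely what the careful construction of the domains $U_1, \hat{U}_1$ and of the decompositions in Lemma \ref{decompositionlemmaIII} was designed to make automatic.
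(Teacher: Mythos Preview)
Your proposal is correct and follows precisely the route the paper indicates: the paper's own proof is a one-line remark that the lemma ``follows from Lemma~\ref{decompositionlemmaIII} and the signature tables of $\Phi_{21}$, $\Phi_{31}$, $\Phi_{32}$,'' and you have simply spelled out what that means---the nilpotent structure of the off-diagonal parts, the $e^{\frac{t}{4}|\re\Phi_{21}|}$ bounds on the analytic pieces from Lemma~\ref{decompositionlemmaIII}$(b)$ with $N=0$, and the sign information on $\re\Phi_{ij}$ from the signature tables to kill the remaining exponential growth.
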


The function $n^{(1)}$ satisfies the following RH problem for $j = 1$. 

\begin{RHproblem}[RH problem for $n^{(j)}$]\label{RHnj}
Find $n^{(j)}(x,t,k)$ with the following properties:
\begin{enumerate}[$(a)$]
\item\label{RHnjitema} $n^{(j)}(x,t,\cdot) : \C \setminus \Gamma^{(j)} \to \mathbb{C}^{1 \times 3}$ is analytic.

\item\label{RHnjitemb} On $\Gamma^{(j)} \setminus \Gamma^{(j)}_\star$, the boundary values of $n^{(j)}$ exist, are continuous, and satisfy $n^{(j)}_+ = n^{(j)}_-v^{(j)}$.

\item\label{RHnjitemc} $n^{(j)}(x,t,k) = O(1)$ as $k \to k_{\star} \in \Gamma^{(j)}_\star$.

\item\label{RHnjitemd} $n^{(j)}$ obeys the symmetry $n^{(j)}(x,t, k) = n^{(j)}(x,t,\omega k)\mathcal{A}^{-1}$ for $k \in \C \setminus \Gamma^{(j)}$.

\item\label{RHnjiteme} $n^{(j)}(x,t,k) = (1,1,1) + O(k^{-1})$ as $k \to \infty$.
\end{enumerate}
\end{RHproblem}

\subsubsection{Second transformation}
The purpose of the second transformation is to isolate the jumps that will later be approximated by the Painlev\'e II model problem corresponding to the Hastings--McLeod solution. 
Let $V_1$ and $V_2$ be the open subsets displayed in Figure \ref{IIIgeqGamma2fig}. 
Define $v_{1_Y}^{(2)}$ and $v_{4_Y}^{(2)}$ by
\begin{align}\label{v1Yv4Ydef}
v_{1_Y}^{(2)} = \begin{pmatrix}
1 & 0 & 0 \\
0 & 1 & 0 \\
r_{2,a}(\frac{1}{\omega^{2}k})e^{\theta_{31}} & 0 & 1
\end{pmatrix},
\qquad
v_{4_Y}^{(2)} = \begin{pmatrix}
1 & 0 & -r_{1,a}(\frac{1}{\omega^{2} k})e^{-\theta_{31}} \\
0 & 1 & 0 \\
0 & 0 & 1
\end{pmatrix}.
\end{align} 
Let $\Gamma^{(2)}$ be the contour shown for $\arg k \in [\pi/6, 5\pi/6]$ in Figure \ref{IIIgeqGamma2fig} and extended to $\C$ by symmetry. We define the sectionally analytic function $n^{(2)}$ by
\begin{align}\label{n2defIIIgeq}
n^{(2)}(x,t,k) = n^{(1)}(x,t,k)G^{(2)}(x,t,k), \qquad  k \in \mathbb{C}\setminus \Gamma^{(2)},
\end{align}
where $G^{(2)}$ is defined for $k \in \{k \in \C \, | \, \arg k \in [\frac{\pi}{6},\frac{5\pi}{6}]\}  \setminus \Gamma^{(2)}$ by
\begin{align*}
G^{(2)}(x,t,k) = \begin{cases}
(v_{1_Y}^{(2)})^{-1}, & k \in V_1,
	\\
(v_{4_Y}^{(2)})^{-1}, & k \in V_2,
	\\	
I, &  \mbox{otherwise},
\end{cases}
\end{align*}
and is extended to $\mathbb{C}\setminus \Gamma^{(2)}$ by the $\mathcal{A}$-symmetry (as in (\ref{G1symmIII})). 
Then $n^{(2)}$ satisfies RH problem \ref{RHnj} for $j=2$ with $v^{(2)} = (G^{(2)}_-)^{-1} v^{(1)} G^{(2)}_+$. More explicitly, $v_{1_Y}^{(2)}$ and $v_{4_Y}^{(2)}$ are given by (\ref{v1Yv4Ydef}), and
\begin{align*}
& v_{2_Y}^{(2)} = (v_{1_Y}^{(2)})^{-1}, \qquad  v_{3_Y}^{(2)} = (v_{4_Y}^{(2)})^{-1},
\qquad
 v_{5_Y}^{(2)} =  v_1^{(1)} (v_{1_Y}^{(2)})^{-1}
= \begin{pmatrix}
1 & \hat{r}_{1,a}(k)e^{-\theta_{21}} & 0 \\
0 & 1 & 0 \\
0 & -r_{1,a}(\frac{1}{\omega k})e^{\theta_{32}} & 1
\end{pmatrix},
	\\
& v_{6_Y}^{(2)} =  v_{1_Y}^{(2)} v_6^{(1)}
= \begin{pmatrix}
1 & 0 & 0 \\
-r_{2,a}(k)e^{\theta_{21}}  & 1 & \hat{r}_{2,a}(\frac{1}{\omega k})e^{-\theta_{32}} \\
(r_{1,a}(\omega^2 k) + r_{2,a}(\frac{1}{\omega^2 k}))e^{\theta_{31}} & 0 & 1
\end{pmatrix},
	\\ 
& v_{7_Y}^{(2)} =  v_{4_Y}^{(2)} v_4^{(1)}
=  \resizebox{0.82\hsize}{!}{$\begin{pmatrix}
1 & (r_{1,a}(k) + \hat{r}_{1,a}(\frac{1}{\omega k})(r_{1,a}(\frac{1}{\omega^2 k}) + r_{2,a}(\omega^2 k)))e^{-\theta_{21}} & -(r_{1,a}(\frac{1}{\omega^2 k}) + r_{2,a}(\omega^2 k))e^{-\theta_{31}} \\
0 & 1 & 0 \\
0 & -\hat{r}_{1,a}(\frac{1}{\omega k})e^{\theta_{32}}  & 1
\end{pmatrix}$},
	\\
& v_{8_Y}^{(2)} =  v_3^{(1)} (v_{4_Y}^{(2)})^{-1}
= \begin{pmatrix}
1 & 0 & 0 \\
-\hat{r}_{2,a}(k)e^{\theta_{21}}  & 1 & r_{2,a}(\frac{1}{\omega k})e^{-\theta_{32}} \\
0 & 0 & 1
\end{pmatrix},
\qquad v_{9_Y}^{(2)} = v_{4_Y}^{(2)} (v_7^{(1)})^{-1} (v_{1_Y}^{(2)})^{-1}.
\end{align*}

Our next lemma shows that the jumps of $n^{(2)}$ on $\Gamma_{4_r'}^{(2)}$ and $\Gamma_{1_r''}^{(2)}$ are small as a consequence of Assumption \ref{nounstablemodesassumption}. 

\begin{lemma}\label{vsmallnearilemma}
The $L^\infty$-norm of $v^{(2)} - I$ on $\Gamma_{4_r'}^{(2)} \cup \Gamma_{1_r''}^{(2)}$ is $O(t^{-N-1})$ as $t \to \infty$ uniformly for $(x,t) \in \III_\geq$.
\end{lemma}
\begin{proof}
We give the proof for $j = 4_r'$; the proof for $j = 1_r''$ is similar. By \eqref{v1oniRplusdef} and \eqref{n2defIIIgeq}, we have
\begin{align*}
& v_{4_r'}^{(2)} = v_{4_r'}^{(1)} = v_{9_R}^{(1)} v_{4'} (v_1^{(1)})^{-1}
 = \begin{pmatrix}
1 & -\hat{r}_{1,a}(k)e^{-t \Phi_{21}} & 0 \\
0 & 1 & 0 \\
-\big(r_{1,a}(\omega^2 k) + r_{2,a}(\frac{1}{\omega^2 k})\big)e^{t \Phi_{31}} 
&  (v_{4_r'}^{(2)})_{32} & 1
\end{pmatrix},
	\\
& (v_{4_r'}^{(2)})_{32} := \big(r_{1,a}(\tfrac{1}{\omega k}) + r_{2,a}(\omega k) + r_{1,a}(\omega^2 k)(\hat{r}_{1,a}(k) + r_{2,a}(\tfrac{1}{k})) + \hat{r}_{1,a}(k) r_{2,a}(\tfrac{1}{\omega^2 k}) \big)e^{t \Phi_{32}}.
\end{align*}
In particular, each entry of $v_{4_r'}^{(2)} - I$ is suppressed by an exponential of the form $e^{-t |\re \Phi_{ij}|}$ where $|\re \Phi_{ij}(\zeta, k)| \geq c |k - i|$ on $\Gamma_{4_r'}^{(2)}$. 
Thus, $v_{4_r'}^{(2)} - I$ is small as $t \to \infty$ on $\Gamma_{4_r'}^{(2)}$ except near $i$. 
Let us temporarily assume that the spectral functions $r_1$ and $r_2$ have analytic continuations to a neighborhood of $\partial \D$, so that we can choose $r_{1,a} = r_1$ and $r_{2,a} = r_2$. Then the $L^\infty$-norm of the $(12)$-entry $(v_{4_r'}^{(2)})_{12} = -\hat{r}_{1}(k)e^{-t \Phi_{21}}$ is $O(t^{-N})$, because $r_1(k)$ vanishes to all orders at $i$ due to Assumption \ref{nounstablemodesassumption}. By (\ref{r1r2 relation with kbar symmetry}), $r_2(k)$ also vanishes to all orders at $i$. Thus, using the identity (\ref{r1r2 relation on the unit circle}) to write
$(v_{4_r'}^{(2)})_{31} = r_{1}(\frac{1}{\omega k}) r_{2}(k) e^{t \Phi_{31}}$ we see that the $L^\infty$-norm of the $(31)$-entry also is $O(t^{-N})$. Applying (\ref{r1r2 relation on the unit circle}) again, we conclude that $\smash{(v_{4_r'}^{(2)})_{32}}$ also vanishes to all orders at $i$, so that $\smash{(v_{4_r'}^{(2)})_{32}}$ also is $O(t^{-N})$ (for any $N$). This completes the proof of the lemma in the case when $r_{1,a} = r_1$ and $r_{2,a} = r_2$. In the general case, analogous arguments together with Lemma \ref{decompositionlemmaIII} show that the coefficients of the exponentials $e^{-t \Phi_{ij}}$ in $v_{4_r'}^{(2)} - I$ are $O((k-i)^{N+1} e^{\frac{t}{2}|\re \Phi_{21}(\zeta,k)|})$ as $k \to i$.
Thus 
$$\| v_{4_r'}^{(2)} - I \|_{L^\infty(\Gamma_{4_r'}^{(2)})} \leq C \sup_{k \in \Gamma_{4_r'}^{(2)}} |k-i|^{N+1} e^{-\frac{c}{2}t|k-i|} \leq C t^{-N-1},$$
uniformly for $(x,t) \in \III_\geq$, and the lemma follows.
\end{proof}

\begin{figure}
\begin{center}
\begin{tikzpicture}[master,scale=0.9]
\node at (0,0) {};
\draw[black,line width=0.65 mm,->-=0.25,->-=0.53,->-=0.80,->-=0.98] (0,0)--(30:7.5);
\draw[black,line width=0.65 mm,->-=0.25,->-=0.53,->-=0.80,->-=0.98] (0,0)--(90:7.5);
\draw[black,line width=0.65 mm,->-=0.25,->-=0.53,->-=0.80,->-=0.98] (0,0)--(150:7.5);

\draw[black,line width=0.65 mm] ([shift=(30:3*1.5cm)]0,0) arc (30:150:3*1.5cm);
\draw[black,arrows={-Triangle[length=0.27cm,width=0.2cm]}]
($(57:3*1.5)$) --  ++(-32:0.001);
\draw[black,arrows={-Triangle[length=0.27cm,width=0.2cm]}]
($(118:3*1.5)$) --  ++(30:0.001);

\node at (60:3.2*1.5) {\small $9_r$};
\node at (118.5:3.15*1.5) {\small $9_Y$};

\node at (77:1.15*1.5) {\small $1''$};
\node at (84:2.5*1.5) {\small $1_r''$};
%\node at (140.5:1.15*1.5) {\small $2$};
%\node at (39:1.15*1.5) {\small $6$};

%\node at (86:2.6*1.5) {\small $1_{r}$};
%\node at (154:2.6*1.5) {\small $2_{r}$};
%\node at (34:2.6*1.5) {\small $6_{r}$};

%\node at (92:4.9*1.5) {\small $4'$};
%\node at (148:4.9*1.5) {\small $5'$};
%\node at (32:4.9*1.5) {\small $3'$};

\node at (86.2:3.85*1.5) {\small $4_{r}'$};
\node at (87.1:4.78*1.5) {\small $4'$};
%\node at (153.5:3.8*1.5) {\small $5_{r}'$};
%\node at (33.5:3.8*1.5) {\small $3_{r}'$};
	
\node at (96:3.56*1.5) {\small $5_Y$};
\node at (143.5:3.6*1.5) {\small $6_Y$};
\node at (140:2.6*1.5) {\small $7_Y$};
\node at (101:2.6*1.5) {\small $8_Y$};

\node at (97:3.2*1.5) {\small $2$};
\node at (142:3.18*1.5) {\small $5$};

\draw[black,line width=0.65 mm,->-=0.25,->-=0.75] (90:2*1.5)--(113.175:3*1.5)--(90:4.5*1.5);

\draw[black,line width=0.65 mm,->-=0.25,->-=0.75] (-90+240:2*1.5)--(-113.175+240:3*1.5)--(-90+240:4.5*1.5);

\draw[black,arrows={-Triangle[length=0.27cm,width=0.2cm]}]
($(95:3*1.5)$) --  ++(95-90:0.001);
\draw[black,arrows={-Triangle[length=0.27cm,width=0.2cm]}]
($(18+120:3*1.5)$) --  ++(18+120-90:0.001);

\draw[black,line width=0.65 mm,-<-=0.47] ([shift=(30:2*1.5cm)]0,0) arc (30:90:2*1.5cm);
\draw[black,line width=0.65 mm,-<-=0.47] ([shift=(30:4.5*1.5cm)]0,0) arc (30:90:4.5*1.5cm);

\node at (60:2.2*1.5) {\small $9_{L}$};
\node at (60:4.75*1.5) {\small $9_{R}$};

\draw[blue,fill] (113.175:3*1.5) circle (0.12cm);
\draw[green,fill] (-113.175+240:3*1.5) circle (0.12cm);

%New contours
\draw[black,line width=0.65 mm,->-=0.5] (120:3.61*1.5)--(90:4.5*1.5);
\draw[black,line width=0.65 mm,->-=0.5] (120:3.61*1.5)--(150:4.5*1.5);
\draw[black,line width=0.65 mm,->-=0.6] (90:2*1.5)--(120:2.41*1.5);
\draw[black,line width=0.65 mm,->-=0.6] (150:2*1.5)--(120:2.41*1.5);

\node at (106:4.06*1.5) {\small $1_Y$};
\node at (134:4.06*1.5) {\small $2_Y$};
\node at (129:1.95*1.5) {\small $3_Y$};
\node at (107:1.95*1.5) {\small $4_Y$};

\draw[red,fill] (120:3.61*1.5) circle (0.12cm);
\draw[red,fill] (120:2.41*1.5) circle (0.12cm);

\node at (127:3.38*1.5) {\small $V_1$};
\node at (125:2.65*1.5) {\small $V_2$};

\end{tikzpicture}
\end{center}
\begin{figuretext}
\label{IIIgeqGamma2fig}
The contour $\Gamma^{(2)} \cap \{k \,|\, \arg k \in [\frac{\pi}{6},\frac{5\pi}{6}]\}$ for Sector $\III_\geq$. The blue dot is the saddle point $k_1$ of $\Phi_{21}$; the green dot is the saddle point $\omega^2 k_2$ of $\Phi_{32}$; the red dots are the saddle points $\omega k_3$ and $\omega k_4$ of $\Phi_{31}$.
\end{figuretext}
\end{figure}

\subsubsection{Third transformation}
Let us define $\delta(k)$ by
\begin{align}\label{deltadefIII}
\delta(k) = e^{\frac{1}{2\pi i} \int_{\gamma_{(\omega, i)}} \frac{\ln(1 + r_1(s)r_{2}(s))}{s - k} ds}, \qquad k \in \C \setminus \gamma_{(\omega, i)},
\end{align}
where $\gamma_{(\omega, i)}$ denotes the clockwise subarc of the unit circle going from $\omega$ to $i$. 

\begin{lemma}
The function $\delta(k)$ has the following properties:
\begin{enumerate}[$(a)$]

\item $\delta(k)$ and $\delta(k)^{-1}$ are bounded and analytic functions of $k \in \C \setminus \gamma_{(\omega, i)}$ with continuous boundary values on $\gamma_{(\omega, i)} \setminus \{\omega, i\}$.

\item On $\gamma_{(\omega, i)} \setminus \{\omega, i\}$, $\delta$ satisfies the jump condition 
\begin{align}
\delta_+(k) = \delta_-(k)(1 + r_1(k)r_2(k)), \qquad k \in \gamma_{(\omega, i)}\setminus \{\omega, i\}.
\end{align}

\item As $k \to \infty$, $\delta(k) = 1 + O(k^{-1})$.

\end{enumerate}
\end{lemma}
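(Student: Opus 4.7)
\emph{Proof proposal.} The strategy is the standard one for scalar Riemann--Hilbert (RH) problems: realize $\delta$ as the exponential of a Cauchy transform and read off the three properties from Plemelj's formula, after first checking that the density $\ln(1+r_1r_2)$ is well-defined and sufficiently regular, with appropriate endpoint behavior.

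First I would verify that the integrand is a well-defined smooth function on $\gamma_{(\omega,i)}$. The arc has $\arg k \in [\pi/2, 2\pi/3]$, which lies in the range $(\pi/3,\pi)$ covered by Lemma \ref{inequalitieslemma}$(ii)$, so $1+r_1(s)r_2(s) > 0$ on $\gamma_{(\omega,i)}$ and the principal branch of the logarithm is real-valued and unambiguous. By Theorem \ref{directth}$(i)$, $r_1 \in C^\infty(\partial\mathbb{D})$, while $r_2 \in C^\infty(\partial\mathbb{D}\setminus\{\omega^2,-\omega^2\})$ and neither exceptional point lies on $\gamma_{(\omega,i)}$; hence $\ln(1+r_1r_2) \in C^\infty(\gamma_{(\omega,i)})$. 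A key point is the endpoint behavior: at $s=\omega=\kappa_3$, Theorem \ref{directth}$(ii)$ gives $r_2(\omega)=0$; at $s=i$, Assumption \ref{nounstablemodesassumption} forces $r_1(i)=0$, and the symmetry \eqref{r1r2 relation with kbar symmetry} (with $\bar i^{-1}=i$) then yields $r_2(i)=\tilde r(i)\overline{r_1(i)}=0$. Consequently $\ln(1+r_1(s)r_2(s))$ vanishes at both endpoints of $\gamma_{(\omega,i)}$.

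With this smoothness and vanishing of the density at the endpoints in hand, part $(a)$ follows from classical Cauchy integral theory: the function
\begin{align*}
\Psi(k) := \frac{1}{2\pi i}\int_{\gamma_{(\omega,i)}} \frac{\ln(1+r_1(s)r_2(s))}{s-k}\,ds
\end{align*}
is analytic on $\mathbb{C}\setminus \gamma_{(\omega,i)}$ with continuous boundary values on $\gamma_{(\omega,i)}\setminus\{\omega,i\}$. Since the density vanishes to all orders of smoothness at $\omega$ and $i$ (indeed it is smooth and equals $0$ there), Lemma \ref{endpointlemma} (with $\mathcal{N}=0$ and $g_0(\omega)=g_0(i)=0$) shows that $\Psi$ remains bounded as $k\to \omega$ and $k\to i$ along any path. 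Therefore $\delta=e^{\Psi}$ and $\delta^{-1}=e^{-\Psi}$ are analytic, bounded, and continuous up to $\gamma_{(\omega,i)}\setminus\{\omega,i\}$, which gives $(a)$.

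For $(b)$, the Plemelj--Sokhotski formula yields $\Psi_+(k)-\Psi_-(k) = \ln(1+r_1(k)r_2(k))$ on $\gamma_{(\omega,i)}\setminus\{\omega,i\}$, and exponentiating gives $\delta_+(k)/\delta_-(k) = 1+r_1(k)r_2(k)$. For $(c)$, expanding the geometric series $(s-k)^{-1} = -k^{-1}\sum_{n\geq 0}(s/k)^n$ as $k\to\infty$ shows that $\Psi(k)=O(k^{-1})$, and therefore $\delta(k)=e^{\Psi(k)}=1+O(k^{-1})$ uniformly in $\arg k$. No step presents a genuine obstacle here; the only item requiring some care is the endpoint analysis in $(a)$, which is handled cleanly by the vanishing of $r_2$ at $\omega$ and of both $r_1$ and $r_2$ at $i$.
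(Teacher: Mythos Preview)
Your proof is correct and follows the same standard Cauchy-transform approach that the paper has in mind (the paper's own proof is simply ``All assertions follow easily from the definition''). You have supplied considerably more detail than the paper, including the verification that $\ln(1+r_1r_2)$ vanishes at both endpoints $\omega$ and $i$, which is precisely what makes the boundedness in $(a)$ go through cleanly.
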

\begin{proof}
All assertions follow easily from (\ref{deltadefIII}).
\end{proof}

We define $n^{(3)}$ by
\begin{align*}
n^{(3)}(x,t,k) = n^{(2)}(x,t,k)\Delta(k), \qquad  k \in \mathbb{C}\setminus \Gamma^{(3)},
\end{align*}
where $\Gamma^{(3)}=\Gamma^{(2)}$ and
$$\Delta(k) = \begin{pmatrix}
\frac{\delta(\omega^{2}k)}{\delta(k)}\frac{\delta(\frac{1}{k})}{\delta(\frac{1}{\omega^{2}k})} & 0 & 0 \\
0 & \frac{\delta(k)}{\delta(\omega k)}\frac{\delta(\frac{1}{\omega k})}{\delta(\frac{1}{k})} & 0 \\
0 & 0 & \frac{\delta(\omega k)}{\delta(\omega^{2} k)}\frac{\delta(\frac{1}{\omega^{2} k})}{\delta(\frac{1}{\omega k})}
\end{pmatrix}.$$
Note that $\Delta(k) = \mathcal{A} \Delta(\omega k) \mathcal{A}^{-1}$. It follows that $n^{(3)}$ satisfies RH problem \ref{RHnj} for $j = 3$, where the jump matrix is given by $v^{(3)} =  \Delta_-^{-1} v^{(2)} \Delta_+$.
This transformation ensures that 
$$v_{2}^{(3)} = I + v_{2,r}^{(3)}, \qquad v_{5}^{(3)} = I + v_{5,r}^{(3)},$$
where $v_{2,r}^{(3)}$ and $v_{5,r}^{(3)}$ are small.
Moreover, we have
\begin{align}\label{vjY3}
v_{j_Y}^{(3)} = e^{x\widehat{\mathcal{L}(k)} + t\widehat{\mathcal{Z}(k)}}v_{j_Y,0}^{(3)}, \qquad j = 1, \dots, 9,
\end{align}
where
\begin{align*}
& v_{1_Y,0}^{(3)} = (v_{2_Y,0}^{(3)})^{-1} = \begin{pmatrix}
1 & 0 & 0 \\
0 & 1 & 0 \\
\frac{\Delta_{11}}{\Delta_{33}}r_{2,a}(\frac{1}{\omega^{2}k}) & 0 & 1
\end{pmatrix},
\qquad v_{4_Y,0}^{(3)} = (v_{3_Y,0}^{(3)})^{-1} = \begin{pmatrix}
1 & 0 & -\frac{\Delta_{33}}{\Delta_{11}}r_{1,a}(\frac{1}{\omega^{2} k}) \\
0 & 1 & 0 \\
0 & 0 & 1
\end{pmatrix},
	\\
& v_{5_Y,0}^{(3)} = \begin{pmatrix}
1 & \frac{\Delta_{22}}{\Delta_{11}}\hat{r}_{1,a}(k) & 0 \\
0 & 1 & 0 \\
0 & -\frac{\Delta_{22}}{\Delta_{33}}r_{1,a}(\frac{1}{\omega k}) & 1
\end{pmatrix},
\qquad v_{6_Y,0}^{(3)} 
= \begin{pmatrix}
1 & 0 & 0 \\
-\frac{\Delta_{11}}{\Delta_{22}}r_{2,a}(k)  & 1 & \frac{\Delta_{33}}{\Delta_{22}}\hat{r}_{2,a}(\frac{1}{\omega k}) \\
\frac{\Delta_{11}}{\Delta_{33}}(r_{1,a}(\omega^2 k) + r_{2,a}(\frac{1}{\omega^2 k})) & 0 & 1
\end{pmatrix},
	\\
& v_{7_Y,0}^{(3)} =  \begin{pmatrix}
1 & \frac{\Delta_{22}}{\Delta_{11}}(r_{1,a}(k) + \hat{r}_{1,a}(\frac{1}{\omega k})(r_{1,a}(\frac{1}{\omega^2 k}) + r_{2,a}(\omega^2 k))) & -\frac{\Delta_{33}}{\Delta_{11}}(r_{1,a}(\frac{1}{\omega^2 k}) + r_{2,a}(\omega^2 k)) \\
0 & 1 & 0 \\
0 & -\frac{\Delta_{22}}{\Delta_{33}}\hat{r}_{1,a}(\frac{1}{\omega k})  & 1
\end{pmatrix},
	\\
& v_{8_Y,0}^{(3)}
= \begin{pmatrix}
1 & 0 & 0 \\
-\frac{\Delta_{11}}{\Delta_{22}}\hat{r}_{2,a}(k)  & 1 & \frac{\Delta_{33}}{\Delta_{22}}r_{2,a}(\frac{1}{\omega k}) \\
0 & 0 & 1
\end{pmatrix}, \qquad v_{9_Y}^{(3)} = \Delta_-^{-1} v_{9_Y}^{(2)} \Delta_+.
\end{align*}

\subsection{Local parametrix}
The RH problem for $n^{(3)}$ has the property that the matrix $v^{(3)} - I$ is uniformly small as $(x,t) \to \infty$ everywhere on $\Gamma^{(3)}$ except near the points $1, \omega$, and $\omega^2$. 
Hence we only have to consider neighborhoods of these three points when computing the long-time asymptotics of $n^{(3)}$. In fact, by the $\mathcal{A}$-symmetry, it is enough to compute the contribution from a neighborhood of $\omega$. In this subsection, we find a local solution $m^\omega$ which approximates $n^{(3)}$ near $\omega$. 

The first step consists of finding approximations of the exponentials $e^{\pm \theta_{ij}}$ appearing in the jump matrices.
Introduce the variables $y$ and $z$ by
\begin{align}\label{yzdef}
y = \frac{2}{3} \bigg(\frac{3t}{2}\bigg)^{2/3} (\zeta -1) 
= \bigg(\frac{2}{3t}\bigg)^{1/3}(x-t), \qquad 
z = \bigg(\frac{3t}{2}\bigg)^{1/3} \frac{i  (k-\omega ) (k+\omega )}{4 k \omega}.
\end{align}
These definitions of $y$ and $z$ are chosen such that
\begin{align}\label{tPhiminusyzIII}
x\mathcal{L}(k) + t\mathcal{Z}(k) - (x\mathcal{L}(\omega) + t\mathcal{Z}(\omega))
 = \alpha \tau + \beta \sigma + O((k-\omega)^4), \qquad k \to \omega,
\end{align}
where $\alpha = \alpha(y,t,z)$ and $\beta = \beta(y,z)$ are defined by
\begin{align}\label{alphabetadef}
\alpha := \frac{i}{\sqrt{3}}\bigg(2\Big(\frac{3 t}{2}\Big)^{1/3} + \Big(\frac{3 t}{2}\Big)^{-1/3}y\bigg)z^2, \qquad
\beta := - i \bigg(yz + \frac{4z^3}{3}\bigg),
\end{align}
and $\tau$ and $\sigma$ are the constant matrices
\begin{align}\label{def of tau and sigma}
\tau := \begin{pmatrix} 1 & 0 & 0 \\ 0 & -2 & 0 \\ 0 & 0 & 1 \end{pmatrix}, \qquad
\sigma := \begin{pmatrix} 1 & 0 & 0 \\ 0 & 0 & 0 \\ 0 & 0 & -1 \end{pmatrix}.
\end{align}

Fix $\epsilon > 0$ small. The inverse image of the open disk $|z| < \epsilon$ under the map $k \mapsto \frac{i  (k-\omega ) (k+\omega )}{k \omega}$ consists of two components: one containing $\omega$ and one containing $-\omega$; we let $D(\epsilon)$ denote the component containing $\omega$. 
Then $k \mapsto z = z(t,k)$ is a biholomorphism from $D(\epsilon)$ onto the open disk of radius $(\frac{3t}{2})^{1/3}\frac{\epsilon}{4}$ centered at the origin.

The map $k \mapsto z$ maps the unit circle onto the real interval $(\frac{3t}{2})^{1/3}[-1/2,1/2] \subset \R$. More precisely, if $|k| = 1$, then
$$z = \frac{(\frac{3t}{2})^{1/3}}{4}(\sqrt{3}\cos(\arg k) + \sin(\arg k)).$$
In particular, the saddle point $k_1$ is mapped to a point $z_1 \geq 0$. Since $k_1 = 1/k_2$, it follows that $\omega^2 k_2$ is mapped to $-z_1 \leq 0$. As $t\to \infty$, since $|k_1 - \omega| = O(t^{-2/3})$, we have $z_1 = O(t^{-1/3})$.
Similarly, the map $k \mapsto z$ maps the ray $\omega \R_{> 0}$ onto the imaginary axis. More precisely, if $\arg k = 2\pi/3$, then
$$z = \frac{i (\frac{3t}{2})^{1/3}}{4}(|k| - |k|^{-1}).$$
Thus $\omega k_3 \in \omega [1,+\infty)$ is mapped to a point $iz_2$ with $z_2 := \frac{(\frac{3t}{2})^{1/3}}{4}(k_3 - k_3^{-1}) \geq 0$.
Since $k_3 = 1/k_4$, $\omega k_4$ is mapped to $-iz_2$.
As $t\to \infty$, since $|k_3 - 1| = O(t^{-1/3})$, we have $z_2 = O(1)$.

As a consequence of (\ref{tPhiminusyzIII}), $\theta_{ij}(k) - \theta_{ij}(\omega)$ agrees with $(\alpha \tau + \beta \sigma)_{ii} - (\alpha \tau + \beta \sigma)_{jj}$ to order three as $k \to \omega$. 
The next lemma shows that by including higher order powers of $z$, we can approximate $\theta_{ij}(k) - \theta_{ij}(\omega)$ to any order at $k = \omega$.

\begin{lemma}
We have
\begin{align}\label{Phi21expansion}
& t(\Phi_{21}(\zeta, k) - \Phi_{21}(\zeta, \omega)) = 
-3\alpha - \beta  + z^3(\zeta q_o(zt^{-1/3}) + q_e(zt^{-1/3})),
	\\\label{Phi31expansion}
& t(\Phi_{31}(\zeta, k) - \Phi_{31}(\zeta, \omega)) = -2\beta + z^3p_e(zt^{-1/3}),
	\\\label{Phi32expansion}
& t(\Phi_{32}(\zeta, k) - \Phi_{32}(\zeta, \omega)) = 3\alpha - \beta 
+ z^3(-\zeta q_o(zt^{-1/3}) + q_e(zt^{-1/3})),
\end{align}
where $q_o(\tilde{z})$ is an odd analytic function of $\tilde{z} = zt^{-1/3}$ in a neighborhood of $\tilde{z} = 0$, and $q_e(\tilde{z})$, $p_e(\tilde{z})$ are even analytic functions of $\tilde{z} = zt^{-1/3}$ in a neighborhood of $\tilde{z} = 0$ such that $q_e(0) = p_e(0) = 0$. 
\end{lemma}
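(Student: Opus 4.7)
My plan is to compute the three differences $\Phi_{ij}(\zeta,k)-\Phi_{ij}(\zeta,\omega)$ in closed form as analytic functions of $\tilde z:=zt^{-1/3}$, subtract the leading combinations $-3\alpha-\beta$, $-2\beta$, $3\alpha-\beta$ predicted by (\ref{tPhiminusyzIII}), and identify the remainders as $z^3$ times an analytic function with the advertised parity. The key observations are: (i) each $\Phi_{ij}(\zeta,k)$ is a rational function of $k$ that is regular at $k=\omega$; (ii) the map $k\mapsto \tilde z$ is a biholomorphism of a neighborhood of $\omega$ onto a neighborhood of $0$, with explicit inverse $k/\omega = \sqrt{1-4\tilde z^2} - 2i\tilde z$; and (iii) each difference is linear in $\zeta$. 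Analyticity together with linearity will make it clear that the remainders can at most take the form $z^3(\zeta q_o(\tilde z)+q_e(\tilde z))$ and $z^3 p_e(\tilde z)$; the real content of the lemma is the parity of the coefficients.

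Setting $S(\tilde z):=\sqrt{1-4\tilde z^2}$, an even analytic function of $\tilde z$ with $S(0)=1$, substitution of the above inverse parametrization into the elementary identities $l_2(k)-l_1(k)=(k-k^{-1})/2$ and $z_2(k)-z_1(k)=-(k^2-k^{-2})/4$ (and the analogous identity $l_3-l_1=(-\omega^2 k+\omega k^{-1})/2$, etc.) will produce the closed forms
\begin{align*}
\Phi_{21}(\zeta,k)-\Phi_{21}(\zeta,\omega) &= i\tilde z(\zeta-S) + \tfrac{i\sqrt{3}}{2}\bigl(\zeta(S-1)-4\tilde z^2\bigr),\\
\Phi_{31}(\zeta,k)-\Phi_{31}(\zeta,\omega) &= 2i\tilde z(\zeta-S).
\end{align*}
The corresponding formula for $\Phi_{32}$ can be deduced without further computation from the identity $\Phi_{32}(\zeta,k)=-\Phi_{21}(\zeta,\omega^2/k)$, which follows from the fact that the involution $k\mapsto \omega^2/k$ exchanges $(l_1,z_1)\leftrightarrow(l_3,z_3)$ while fixing $(l_2,z_2)$; under this involution $\tilde z$ flips sign, so the expansion for $\Phi_{32}$ is read off from that for $\Phi_{21}$ by $\tilde z\mapsto -\tilde z$, which explains structurally why the roles of $q_o$ and $q_e$ acquire the opposite sign in front of $\zeta$.

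Next I will rewrite $-3\alpha-\beta$, $-2\beta$, $3\alpha-\beta$ in terms of $\tilde z$ using $\tau_1:=(3t/2)^{1/3}$ and the relations $y\tau_1=t(\zeta-1)$, $z=\tau_1\tilde z$, $\tau_1^3=3t/2$; this gives
\begin{align*}
-3\alpha-\beta &= it\bigl[(\zeta-1)\tilde z-\sqrt 3(\zeta+2)\tilde z^2+2\tilde z^3\bigr],\\
-2\beta &= 2it\bigl[(\zeta-1)\tilde z+2\tilde z^3\bigr],\\
3\alpha-\beta &= it\bigl[(\zeta-1)\tilde z+\sqrt 3(\zeta+2)\tilde z^2+2\tilde z^3\bigr].
\end{align*}
Subtracting these from the closed forms above and using the factorizations $1-S=2\tilde z^2 R(\tilde z)$ and $R(\tilde z)-1=\tilde z^2\tilde R(\tilde z)$, with $R,\tilde R$ even analytic and $R(0)=1$, $\tilde R(0)=2$, a short algebraic simplification yields
\begin{align*}
t\bigl(\Phi_{21}(\zeta,k)-\Phi_{21}(\zeta,\omega)\bigr)-(-3\alpha-\beta) &= it\tilde z^4\tilde R(\tilde z)\bigl(2\tilde z-\sqrt 3\zeta\bigr),\\
t\bigl(\Phi_{31}(\zeta,k)-\Phi_{31}(\zeta,\omega)\bigr)-(-2\beta) &= 4it\tilde z^5\tilde R(\tilde z),
\end{align*}
together with the analogue for $\Phi_{32}$ obtained by $\tilde z\mapsto -\tilde z$. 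Dividing each by $z^3=(3t/2)\tilde z^3$ then produces the explicit formulas $q_o(\tilde z)=-\tfrac{2i}{\sqrt 3}\tilde z\,\tilde R(\tilde z)$, $q_e(\tilde z)=\tfrac{4i}{3}\tilde z^2\tilde R(\tilde z)$, $p_e(\tilde z)=\tfrac{8i}{3}\tilde z^2\tilde R(\tilde z)$, from which the analyticity near $\tilde z=0$, the odd/even parity, and the vanishings $q_e(0)=p_e(0)=0$ all follow immediately since $\tilde R$ is even analytic.

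The only genuinely nontrivial step is the last algebraic simplification that produces the factored form with $\tilde R$; it is elementary but requires careful tracking of signs and of the linear-in-$\zeta$ structure. The fact that the same $\tilde R$ governs all three remainders (up to rational multiples) is not accidental: it reflects the rigid relations (\ref{relations between the different Phi}) between the three phase functions together with the parity of $\alpha$ (even in $\tilde z$) and $\beta$ (odd in $\tilde z$).
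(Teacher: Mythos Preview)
Your approach is conceptually sound and rests on the same key observation as the paper's proof: the involution $k\mapsto\omega^2/k$ fixes $\omega$ and sends $\tilde z\mapsto -\tilde z$, so the parity of the remainder functions is a direct reflection of how the rational expressions in $k$ transform under this involution. The paper's proof, however, is considerably shorter: rather than inverting the map $k\mapsto\tilde z$ and carrying out the algebra in the $\tilde z$-variable, it simply writes down the quotients $p_e$, $q_o$, $q_e$ as explicit rational functions of $k$ (for instance $p_e = -\tfrac{8i(k-\omega)^2}{3(k+\omega)^2}$ and $q_o = \tfrac{8k\omega(k-\omega)}{\sqrt{3}(k+\omega)^3}$) and checks the parity directly on those expressions via $k\mapsto\omega^2/k$. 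This sidesteps the inversion and all the bookkeeping with $S$, $R$, $\tilde R$; your route buys you explicit power-series formulas in $\tilde z$, at the cost of more computation.

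A few arithmetic slips in your write-up need fixing, though none affect the logic. First, your inverse formula $k/\omega=\sqrt{1-4\tilde z^2}-2i\tilde z$ is missing the normalization constant: since $\tilde z=zt^{-1/3}=(3/2)^{1/3}\tfrac{i}{4}(k/\omega-\omega/k)$, the correct inversion carries an extra factor $(2/3)^{1/3}$ in front of $\tilde z$. Second, with your intended normalization the closed form for $\Phi_{21}$ has $-2\tilde z^2$ rather than $-4\tilde z^2$ in the last bracket (the error comes from the $-\tfrac14(k^2-k^{-2})$ term). Third, from $R=2/(1+S)$ one gets $R-1=\tilde z^2 R^2$, so $\tilde R(0)=1$, not $2$. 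Once these constants are corrected, your explicit formulas for $q_o$, $q_e$, $p_e$ will match (up to harmless rescaling) the paper's rational-in-$k$ expressions, and the parity and vanishing claims follow exactly as you describe.
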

\begin{proof}
A direct calculation shows that
$$p_e(zt^{-1/3}) := \frac{t(\Phi_{31}(\zeta, k) - \Phi_{31}(\zeta, \omega)) - 2i(y z + \frac{4z^3}{3})}{z^3} = -\frac{8 i (k-\omega )^2}{3 (k+\omega )^2}.$$
Since the right-hand side is invariant under $k \mapsto \omega^2/k$, $p_e$ is even. The assertion (\ref{Phi31expansion}) for $\Phi_{31}$ follows because $\tilde{z}=0$ corresponds to $k = \omega$ and the right-hand side has a double zero at $k = \omega$. 
Similarly, letting 
$$q_o(zt^{-1/3}) := \frac{8 k \omega  (k-\omega )}{\sqrt{3} (k+\omega )^3}, \qquad q_e(zt^{-1/3}) := -\frac{4 i (k-\omega )^2}{3 (k+\omega )^2},$$
a direct calculation shows that (\ref{Phi21expansion}) and (\ref{Phi32expansion}) hold. Since $q_e$ is invariant under $k \mapsto \omega^2/k$ while $q_o$ changes sign under this transformation, $q_e$ is even and $q_o$ is odd. Clearly, $q_e$ vanishes for $k = \omega$. 
\end{proof}

Let $N \geq 1$ be an integer. Let us write
$$q_e(\tilde{z}) = q_{e,N}(\tilde{z}) + q_{e,err}(\tilde{z}), \quad
q_o(\tilde{z}) = q_{o,N}(\tilde{z}) + q_{o,err}(\tilde{z}), \quad p_e(\tilde{z}) = p_{e,N}(\tilde{z}) + p_{e,err}(\tilde{z}),$$
where $q_{e,N}(\tilde{z})$, $q_{o,N}(\tilde{z})$, and $p_{e,N}(\tilde{z})$ denote the Taylor expansions of $q_e$, $q_o$, and $p_e$ to order $N$, i.e., 
\begin{align}
& q_{e,N}(\tilde{z}) = \frac{q_e''(0)}{2}\tilde{z}^2 + \cdots + \frac{q_e^{(N)}(0)}{N!}\tilde{z}^N, \qquad
q_{o,N}(\tilde{z}) = q_o'(0)\tilde{z} + \cdots + \frac{q_o^{(N)}(0)}{N!}\tilde{z}^N, 
	\\
& p_{e,N}(\tilde{z}) = \frac{p_e''(0)}{2}\tilde{z}^2 + \cdots + \frac{p_e^{(N)}(0)}{N!}\tilde{z}^N.
\end{align}
Then, for $\tilde{z}$ in a small and fixed neighborhood of $0$,
$$|q_{e,err}(\tilde{z})| \leq C|\tilde{z}|^{N+1}, \quad
|q_{o,err}(\tilde{z})| \leq C|\tilde{z}|^{N+1}, \quad
|p_{e,err}(\tilde{z})| \leq C|\tilde{z}|^{N+1}.$$
Define the approximations $e_{21,N}(x,t,z)$, $e_{31,N}(t,z)$, and $e_{32,N}(x,t,z)$ of the three exponentials
$$e^{-\beta + z^3(\zeta q_o(zt^{-1/3}) + q_e(zt^{-1/3}))}, \quad e^{z^3p_e(zt^{-1/3})}, \quad e^{-\beta + z^3(-\zeta q_o(zt^{-1/3}) + q_e(zt^{-1/3}))},$$
respectively, by
\begin{subequations}\label{eNdefIII}
\begin{align}
&e_{21,N}(x,t,z) = \sum_{j=0}^{N} \frac{[-\beta + z^3(\zeta q_{o,N}(zt^{-1/3}) + q_{e,N}(zt^{-1/3}))]^j}{j!},
	\\
&e_{31,N}(t,z) = \sum_{j=0}^{N} \frac{[z^3p_{e,N}(zt^{-1/3})]^j}{j!},
	\\
&e_{32,N}(x,t,z) = \sum_{j=0}^{N} \frac{[-\beta + z^3(-\zeta q_{o,N}(zt^{-1/3}) + q_{e,N}(zt^{-1/3}))]^j}{j!}.
\end{align}
\end{subequations}

\begin{lemma}\label{elemmaIII}
For any $a > 0$, the functions $e_{21,N}$, $e_{31,N}$, $e_{32,N}$ obey the following estimates uniformly for $\zeta \in [1, 2]$ and $t \geq 1$:
\begin{subequations}
\begin{align}\label{eeNestimatea}
&  \big|e^{-\beta + z^3(\zeta q_o(zt^{-1/3}) + q_e(zt^{-1/3}))} - e_{21,N}(x,t,z)\big| e^{-at^{1/3}|z|^2}
  \leq C|z|^{N+1}, && |z| \leq t^{\frac{1}{12}},
	\\\label{eeNestimateb}
&  \big|e^{z^3 p_e(zt^{-1/3})} - e_{31,N}(t,z)\big| e^{-a|z|^3}
  \leq C|zt^{-1/3}|^{N+1}, && |z| \leq t^{\frac{2}{15}},
	\\\label{eeNestimatec}
&  \big|e^{-\beta + z^3(-\zeta q_o(zt^{-1/3}) + q_e(zt^{-1/3}))} - e_{32,N}(x,t,z)\big| e^{-at^{1/3}|z|^2}
  \leq C|z|^{N+1}, && |z| \leq t^{\frac{1}{12}},
\end{align}
\end{subequations}
and
\begin{subequations}
\begin{align}\label{eNestimatea}
& |e_{21,N}(x,t,z)|e^{-at^{1/3}|z|^2} \leq C, && |z| \leq t^{1/3},
	\\\label{eNestimateb}
& |e_{31,N}(t,z)|e^{-a|z|^3} \leq C, && |z| \leq t^{1/3},
	\\\label{eNestimatec}
& |e_{32,N}(x,t,z)|e^{-at^{1/3}|z|^2} \leq C, &&  |z| \leq t^{1/3}.
\end{align}
\end{subequations}
\end{lemma}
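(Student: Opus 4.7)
The plan is to derive each of the six estimates from two ingredients: the Taylor remainders of the analytic functions $q_o, q_e, p_e$ at $\tilde z = 0$, and standard integral-form bounds for the remainder of the exponential series. I will describe the argument for $e_{21,N}$ in detail; the estimates for $e_{31,N}$ and $e_{32,N}$ follow by parallel arguments, with the distinct scaling $|z| \leq t^{2/15}$ in \eqref{eeNestimateb} reflecting the fact that the natural exponential damping scale for $\Phi_{31}$ is $|z|^3$ rather than $t^{1/3}|z|^2$.

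Setting $X := -\beta + z^3(\zeta q_o(zt^{-1/3}) + q_e(zt^{-1/3}))$ and $X_N := -\beta + z^3(\zeta q_{o,N}(zt^{-1/3}) + q_{e,N}(zt^{-1/3}))$, I would decompose the approximation error as
\begin{align*}
e^X - e_{21,N}(x,t,z) = (e^X - e^{X_N}) + \Big(e^{X_N} - \sum_{j=0}^N \tfrac{1}{j!}X_N^j\Big).
\end{align*}
The first piece is bounded by $|e^X - e^{X_N}| \leq |X - X_N|\, e^{\max(\re X,\,\re X_N)}$ via the mean value theorem, and the Taylor remainder bounds $|q_{o,err}(\tilde z)|,\, |q_{e,err}(\tilde z)| \leq C|\tilde z|^{N+1}$ give $|X - X_N| \leq C|z|^{N+4}\, t^{-(N+1)/3}$. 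The second piece is handled by the integral form of the Taylor remainder for $e^{X_N}$, yielding $|e^{X_N} - \sum_{j=0}^N X_N^j/j!| \leq \frac{|X_N|^{N+1}}{(N+1)!}\, e^{(\re X_N)_+}$.

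The principal obstacle is a uniform upper bound of the form $\re X,\, \re X_N \leq at^{1/3}|z|^2 + C_{a,N}$ in the stated parameter range. Starting from $-\beta = i(yz + \tfrac{4}{3}z^3)$ one computes $\re(-\beta) = -y\,\im(z) - 4(\re z)^2\,\im(z) + \tfrac{4}{3}(\im z)^3$, which vanishes on the real axis. The mixed term $y\,\im(z)$ is treated by Young's inequality, $|y\,\im(z)| \leq at^{1/3}|z|^2 + |y|^2/(4at^{1/3})$, which absorbs the linear-in-$z$ contribution into the damping factor $e^{-at^{1/3}|z|^2}$ at the cost of a controllable additive constant. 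The remaining cubic and quartic corrections---namely $\tfrac{4}{3}(\im z)^3$ together with the contributions from $z^3 q_o$ and $z^3 q_e$, which by the vanishing of $q_o$ and $q_e$ at the origin are $O(|z|^4 t^{-1/3})$ and $O(|z|^5 t^{-2/3})$ respectively---are dominated by $at^{1/3}|z|^2$ up to a bounded additive constant on $|z| \leq t^{1/12}$.

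Assembling these pieces yields $|e^X - e_{21,N}|\,e^{-at^{1/3}|z|^2} \leq C|z|^{N+1}$: the factor $|X - X_N| \leq C|z|^{N+4}\, t^{-(N+1)/3}$ provides $|z|^{N+1}$ times the bounded quantity $|z|^3 t^{-(N+1)/3}$, and $|X_N|^{N+1}/(N+1)!$ is absorbed via the same Young-type estimates. The bounds \eqref{eNestimatea}--\eqref{eNestimatec} follow from an analogous but simpler computation: since $e_{jk,N}$ is a polynomial in $X_N$ of degree at most $N$, the triangle inequality gives $|e_{jk,N}| \leq e^{|X_N|}$, and the bound $|X_N| \leq |y||z| + C|z|^3 \leq at^{1/3}|z|^2 + C_a$ (again via Young's inequality) yields $|e_{jk,N}| \leq C\,e^{at^{1/3}|z|^2}$ on $|z| \leq t^{1/3}$.
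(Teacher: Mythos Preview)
Your argument for the first group of estimates \eqref{eeNestimatea}--\eqref{eeNestimatec} is correct and close to the paper's. The paper uses the algebraically equivalent splitting
\[
e^{X}-e_{21,N}=(e^{X-X_N}-1)\,e_{21,N}+e^{X-X_N}\!\!\sum_{j>N}\frac{X_N^{\,j}}{j!},
\]
which keeps the \emph{polynomial} $e_{21,N}$ in the first term rather than the exponential $e^{X_N}$ as in your version. Both routes succeed on $|z|\le t^{1/12}$, since you correctly establish $\re X,\re X_N\le \tfrac{a}{2}t^{1/3}|z|^2+C_a$ there; the paper's choice just avoids having to control $\re X_N$ at this step.

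There is, however, a genuine gap in your proof of \eqref{eNestimatea}--\eqref{eNestimatec}. You bound $|e_{21,N}|\le e^{|X_N|}$ and then claim $|X_N|\le |y||z|+C|z|^3\le at^{1/3}|z|^2+C_a$ on $|z|\le t^{1/3}$. The second inequality is false for small $a$: at $|z|=t^{1/3}$ the cubic term contributes $Ct$, while $at^{1/3}|z|^2=at$, so for any fixed $a$ below the cubic coefficient the difference $C|z|^3-at^{1/3}|z|^2$ tends to $+\infty$ with $t$. Young's inequality handles the linear term $|y||z|$ but cannot rescue the cubic one on this larger $z$-range.

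The fix is immediate and is exactly what the paper does: do not pass to $e^{|X_N|}$. Since $e_{21,N}=\sum_{j=0}^{N}X_N^{\,j}/j!$ is a polynomial of degree $N$ in $X_N$, and $|X_N|\le C(|z|+|z|^3)$ on $|z|\le t^{1/3}$ (using only that $y$ and $q_{o,N},q_{e,N}$ are bounded there), one has $|e_{21,N}|\le C(1+|z|^{3N})$. Any fixed polynomial in $|z|$ multiplied by $e^{-at^{1/3}|z|^2}$ is bounded uniformly for $t\ge 1$, which gives \eqref{eNestimatea}; \eqref{eNestimateb} and \eqref{eNestimatec} follow in the same way.
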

\begin{proof}
Let us first show (\ref{eeNestimateb}).
We have
\begin{align}\nonumber
& \big|e^{z^3 p_e(zt^{-1/3})} - e_{31,N}(t,z)\big| = \bigg|e^{z^3 p_{e,err}(zt^{-1/3})} \bigg(e_{31,N} + \sum_{j=N +1}^\infty \frac{(z^3p_{e,N}(zt^{-1/3}))^j}{j!}\bigg)- e_{31,N}\bigg|
	\\ \label{ez3pe}
& \leq  |e^{z^3 p_{e,err}(zt^{-1/3})} - 1| |e_{31,N}| + \bigg|e^{z^3 p_{e,err}(zt^{-1/3})} \sum_{j=N +1}^\infty \frac{(z^3p_{e,N}(zt^{-1/3}))^j}{j!}\bigg|.
\end{align}
For $\delta \in (0, 2/15]$, it holds that
$$
\begin{cases}
|z^3 p_{e,N}(zt^{-1/3})| \leq C|z|^3 |zt^{-1/3}|^2 \leq Ct^{5\delta - \frac{2}{3}} \leq C,
	\\
|z^3 p_{e,err}(zt^{-1/3})| \leq C|z|^3 |zt^{-1/3}|^{N+1} \leq Ct^{(N+4)\delta - \frac{N+1}{3}} \leq C,
\end{cases} \ t \geq 1, \  |z| \leq t^\delta.$$
Using the general inequality
\begin{align}\label{ewminus1estimate}  
|e^w - 1| \leq |w| \max(1, e^{\re w}), \qquad w \in \C,
\end{align}
we obtain
\begin{align}\nonumber
 & \big|e^{z^3 p_e(zt^{-1/3})}  - e_{31,N}(t,z)\big|
 \leq C |z^3 p_{e,err}(zt^{-1/3})| + C \bigg|\sum_{j = N +1}^\infty \frac{(z^3p_{e,N}(zt^{-1/3}))^j}{j!}\bigg|
 	\\\nonumber
& \leq C |z|^3 |zt^{-1/3}|^{N+1} + C |z^3p_{e,N}(zt^{-1/3})|^{N +1} e^{|z^3p_{e,N}(zt^{-1/3})|} 
 	\\ 
& \leq C |z|^3 |zt^{-1/3}|^{N+1} + C |z|^{3(N +1)} |zt^{-1/3}|^{2(N+1)}, \qquad t \geq 1, \ |z| \leq t^\delta.
\end{align}
The estimate (\ref{eeNestimateb}) follows.
If $|z| \leq t^{1/3}$, then $|p_{e,N}(zt^{-1/3})| \leq C$, and so 
$$|e_{31,N}(t,z)|e^{-a|z|^3} \leq C\sum_{j=0}^{N} |z|^{3j} e^{-a|z|^3} \leq C, \qquad |z| \leq t^{1/3},$$
which proves (\ref{eNestimateb}).

Let us now show (\ref{eeNestimatea}).
In analogy with (\ref{ez3pe}), we have
\begin{align}\label{eqoqeJ1J2}
& \big|e^{-\beta + z^3(\zeta q_o(zt^{-1/3}) + q_e(zt^{-1/3}))} - e_{21,N}(x,t,z)\big| 	
 \leq  J_1 + J_2,
\end{align}
where
\begin{align*}
& J_1 := |e^{z^3(\zeta q_{o,err}(zt^{-1/3}) + q_{e,err}(zt^{-1/3}))} - 1| |e_{21,N}|,
	\\
& J_2 := \bigg|e^{z^3(\zeta q_{o,err}(zt^{-1/3}) + q_{e,err}(zt^{-1/3}))} \sum_{j=N +1}^\infty \frac{[-\beta + z^3(\zeta q_{o,N}(zt^{-1/3}) + q_{e,N}(zt^{-1/3}))]^j}{j!}\bigg|.
\end{align*}
Suppose $\delta \in (0, 1/12]$. Then
$$
\begin{cases}
|z^3 q_{e,N}(zt^{-1/3})| \leq C|z|^3 |zt^{-1/3}|^2 \leq Ct^{5\delta - \frac{2}{3}} \leq C,
	\\
|z^3 q_{e,err}(zt^{-1/3})| \leq C|z|^3 |zt^{-1/3}|^{N+1} \leq Ct^{(N+4)\delta - \frac{N+1}{3}} \leq C,
	\\
|z^3 q_{o,N}(zt^{-1/3})| \leq C|z|^3 |zt^{-1/3}| \leq Ct^{4\delta - \frac{1}{3}} \leq C,
	\\
|z^3 q_{o,err}(zt^{-1/3})| \leq C|z|^3 |zt^{-1/3}|^{N+1} \leq Ct^{(N+4)\delta - \frac{N+1}{3}} \leq C,
\end{cases} \ t \geq 1, \  |z| \leq t^\delta.$$
Hence, 
$$|e_{21,N}| \leq C(1 + |z|^{3N}), \qquad t \geq 1, \  |z| \leq t^\delta,$$
and we find
\begin{align}\label{J1estimate}
J_1 \leq C|z|^3 |zt^{-1/3}|^{N+1} (1 + |z|^{3N}) , \qquad t \geq 1, \  |z| \leq t^\delta.
\end{align}
Moreover,
\begin{align*}
J_2 & \leq C|-\beta + z^3(\zeta q_{o,N}(zt^{-1/3}) + q_{e,N}(zt^{-1/3}))|^{N+1} e^{|-\beta + z^3(\zeta q_{o,N}(zt^{-1/3}) + q_{e,N}(zt^{-1/3}))|}
	\\
& \leq C(|yz| + |z|^3 + |z|^3|zt^{-1/3}|)^{N+1} e^{|yz| + \frac{4}{3}|z|^3 + C|z|^3|zt^{-1/3}|},
\end{align*}
so
\begin{align}\label{J2estimate}
J_2 e^{-a t^{1/3}|z|^2} 
\leq C |z|^{N+1}e^{(Ct^{\delta} - \frac{a}{2} t^{1/3})|z|^2} 
\leq C|z|^{N+1}, \qquad t \geq 1, \  |z| \leq t^\delta.
\end{align}
The estimate (\ref{eeNestimatea}) follows from (\ref{eqoqeJ1J2}), (\ref{J1estimate}), and (\ref{J2estimate}).
If $|z| \leq t^{1/3}$, then 
$$|-\beta + z^3(\zeta q_{o,N}(zt^{-1/3}) + q_{e,N}(zt^{-1/3}))| \leq C(|z|+ |z|^3),$$
and so 
$$|e_{21,N}(x,t,z)|e^{-a t^{1/3}|z|^2} \leq C\sum_{j=0}^{N} (|z|+ |z|^3)^{j} e^{-a t^{1/3}|z|^2} \leq C, \qquad |z| \leq t^{1/3},$$
which proves (\ref{eNestimatea}). The proofs of (\ref{eeNestimatec}) and (\ref{eNestimatec}) are analogous.
\end{proof}

In the same way as in (\ref{eNdefIII}), we define approximations $e_{12,N}(x,t,z)$, $e_{13,N}(t,z)$, and $e_{23,N}(x,t,z)$ of the three exponentials
$$e^{\beta - z^3(\zeta q_o(zt^{-1/3}) + q_e(zt^{-1/3}))}, \quad e^{-z^3p_e(zt^{-1/3})}, \quad e^{\beta - z^3(-\zeta q_o(zt^{-1/3}) + q_e(zt^{-1/3}))}$$
by
\begin{subequations}\label{eNdef2}
\begin{align}\label{eNdef2a}
&e_{12,N}(x,t,z) = \sum_{j=0}^{N} \frac{[\beta - z^3(\zeta q_{o,N}(zt^{-1/3}) + q_{e,N}(zt^{-1/3}))]^j}{j!},
	\\
&e_{13,N}(t,z) = \sum_{j=0}^{N} \frac{[-z^3p_{e,N}(zt^{-1/3})]^j}{j!},
	\\
&e_{23,N}(x,t,z) = \sum_{j=0}^{N} \frac{[\beta - z^3(-\zeta q_{o,N}(zt^{-1/3}) + q_{e,N}(zt^{-1/3}))]^j}{j!}.
\end{align}
\end{subequations}
Obvious analogs of the estimates in Lemma \ref{elemmaIII} hold for the functions $e_{12,N}$, $e_{13,N}$, and $e_{23,N}$; for example, $e_{12,N}$ satisfies
\begin{subequations}\label{e12Nestimates}
\begin{align}
&  \big|e^{\beta - z^3(\zeta q_o(zt^{-1/3}) + q_e(zt^{-1/3}))} - e_{12,N}(x,t,z)\big| e^{-at^{1/3}|z|^2}
  \leq C|z|^{N+1}, && |z| \leq t^{\frac{1}{12}},
	\\
& |e_{12,N}(x,t,z)|e^{-at^{1/3}|z|^2} \leq C, && |z| \leq t^{1/3}.
\end{align}
\end{subequations}

Our next goal is to derive approximations of the off-diagonal entries of the matrices $v_{j_Y}^{(3)}$, $j = 1, \dots, 8$, for $k$ near $\omega$. 
The next lemma establishes the existence of expansions at $k = \omega$ to all orders of the quotients $\Delta_{ll}/\Delta_{jj}$, $1 \leq j < l \leq 3$.

\begin{lemma}\label{deltaat1lemmaIII}
There exist complex constants $\{c_{21,j}, d_{21,j}^\pm, c_{31,j}, d_{31,j}^\pm, c_{32,j}, d_{32,j}^\pm \}_{j=1}^N \subset \C$ such that
\begin{align}\nonumber
\frac{\Delta_{22}(k)}{\Delta_{11}(k)} =&\; \frac{\delta(\omega)^3}{\delta(\omega^2)^3} + \sum_{j=1}^N (c_{21,j}  \ln_\omega(k-\omega) + d_{21,j}^\pm) (k-\omega)^j 
	\\
&+ O((k-\omega)^{N+1}\ln(k-\omega)), \qquad k \to \omega, \ |k| \gtrless 1,
	\\\nonumber
\frac{\Delta_{33}(k)}{\Delta_{11}(k)} = &\; 1 + \sum_{j=1}^N (c_{31,j} \ln_\omega(k-\omega)
+ d_{31,j}^\pm) (k-\omega)^j 
	\\
& + O((k-\omega)^{N+1}\ln(k-\omega)), \qquad k \to \omega, \ |k| \gtrless 1,
	\\\nonumber
\frac{\Delta_{33}(k)}{\Delta_{22}(k)} = &\; \frac{\delta(\omega^2)^3}{\delta(\omega)^3} + \sum_{j=1}^N (c_{32,j} \ln_\omega(k-\omega)
+  d_{32,j}^\pm) (k-\omega)^j 
	\\
& + O((k-\omega)^{N+1}\ln(k-\omega)), \qquad k \to \omega, \ |k| \gtrless 1,
\end{align}
where $\ln_{\omega}(\cdot -\omega)$ has a cut along $\{e^{i \theta}: \theta \in [\frac{\pi}{2},\arg \omega]\}\cup(i,i\infty)$ and satisfies $\ln_{\omega}(1)=2\pi i$.
The leading coefficients in the expansion of $\Delta_{33}/\Delta_{11}$ are given by
\begin{align}\label{c311}
  c_{31,1} = & -\frac{r_1(\omega) r_2'(\omega)}{\pi i},
  	\\\nonumber
  d_{31,1}^\pm = & \mp \frac{r_1(\omega) r_2'(\omega)}{2}
  + 2\omega\bigg(\frac{\delta'(\omega^2)}{\delta(\omega^2)} - 2\omega \frac{\delta'(1)}{\delta(1)}\bigg)
  + r_1(\omega) r_2'(\omega) \Big(\frac{\ln(2 - \sqrt{3})}{2\pi i} + \frac{13}{12}\Big) 
  	\\ \label{d311pm}
&  +\frac{1}{\pi i}  \int_{\gamma_{(\omega, i)}}  \frac{\frac{\ln(1 + r_1(s)r_2(s))}{s-\omega} - r_1(\omega) r_2'(\omega)}{s-\omega} ds.
\end{align}
\end{lemma}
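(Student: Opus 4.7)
The plan is to reduce everything to an application of Lemma~\ref{endpointlemma} (endpoint behavior of a Cauchy integral) at $k=\omega$, once the correct vanishing condition for the integrand has been identified. The key observation is that by the symmetry $r_2(k)=\tilde{r}(k)\overline{r_1(\bar k^{-1})}$ of Theorem~\ref{directth}$(v)$ together with $\tilde{r}(\omega)=(\omega^2-\omega^2)/(1-\omega^4)=0$, one has $r_2(\omega)=0$ and hence $r_1(\omega)r_2(\omega)=0$. Consequently the density
\[
g(s):=\ln\!\bigl(1+r_1(s)r_2(s)\bigr)
\]
in the definition \eqref{deltadefIII} of $\delta$ satisfies $g(\omega)=0$, with $g'(\omega)=r_1(\omega)r_2'(\omega)$. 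Thus in the endpoint expansion of $\frac{1}{2\pi i}\int_{\gamma_{(\omega,i)}}g(s)(s-k)^{-1}ds$ at $k=\omega$ provided by Lemma~\ref{endpointlemma} (applied with the arc-length parametrisation and oriented from $i$ to $\omega$ so that $\omega$ plays the role of the ``upper'' endpoint $b$), the leading $\ln\tfrac{k-\omega}{k-i}$ term drops out and the expansion begins at order $(k-\omega)\ln(k-\omega)$. Exponentiating yields, to every order $N\ge 1$,
\[
\delta(k)=\delta(\omega)\Bigl(1+\sum_{j=1}^{N}\bigl(\alpha_j\ln_\omega(k-\omega)+\beta_j^{\pm}\bigr)(k-\omega)^j+O((k-\omega)^{N+1}\ln(k-\omega))\Bigr),
\]
as $k\to\omega$ with $|k|\gtrless 1$, where $\alpha_1=-g'(\omega)/(2\pi i)$ and $\beta_1^{\pm}$ is an explicit constant depending on $g'(\omega)$, the regular Cauchy principal value $\int_{\gamma_{(\omega,i)}}\bigl[(s-\omega)^{-1}g(s)\bigr](s-\omega)^{-1}\,ds$, the value $\ln(\omega-i)$, and the choice of branch of $\ln(-1)=\pm i\pi$ dictated by $|k|\gtrless 1$.

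Next I will inspect which of the six $\delta$-evaluations appearing in $\Delta_{11},\Delta_{22},\Delta_{33}$ actually become singular as $k\to\omega$. Writing out $\omega^2 k,1/k,k,1/(\omega^2 k),\omega k,1/(\omega k)$ at $k=\omega$ gives the values $1,\omega^2,\omega,1,\omega^2,\omega$, of which only $\delta(k)$ and $\delta(1/(\omega k))$ involve arguments tending to the endpoint $\omega$ of the jump arc (none of the arguments ever tends to the other endpoint $i$). Every remaining factor is smooth and analytic near $k=\omega$ and may be Taylor expanded to order $N$ without difficulty. For instance, the relation
\[
\ln\!\bigl(\Delta_{33}/\Delta_{11}\bigr)=\ln\delta(k)-\ln\delta(1/(\omega k))+\Phi_{\mathrm{reg}}(k),
\]
where $\Phi_{\mathrm{reg}}$ is analytic in a neighborhood of $\omega$, holds. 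Substituting the expansion above together with the elementary identity $1/(\omega k)-\omega=-(k-\omega)+O((k-\omega)^2)$ and then exponentiating produces the claimed asymptotics for $\Delta_{33}/\Delta_{11}$. The expansions of $\Delta_{22}/\Delta_{11}$ and $\Delta_{33}/\Delta_{22}$ are obtained by exactly the same mechanism, with the constant terms $\delta(\omega)^{\pm 3}/\delta(\omega^2)^{\pm 3}$ arising from evaluating the remaining smooth $\delta$-factors at $k=\omega$.

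Finally, to extract $c_{31,1}$ and $d_{31,1}^\pm$ explicitly, I add the $(k-\omega)\ln(k-\omega)$ coefficients from $\ln\delta(k)$ and $-\ln\delta(1/(\omega k))$. Each contributes $-g'(\omega)/(2\pi i)$ (the minus sign in the second one being produced by the factor $-(k-\omega)$, which cancels the sign from the orientation of the arc), giving
\[
c_{31,1}=-\frac{g'(\omega)}{\pi i}=-\frac{r_1(\omega)r_2'(\omega)}{\pi i}.
\]
The coefficient $d_{31,1}^{\pm}$ collects four contributions: (i) the $\pm i\pi g'(\omega)/(2\pi i)=\pm r_1(\omega)r_2'(\omega)/2$ from $\ln(-1)=\pm i\pi$, which accounts for the first summand $\mp r_1(\omega)r_2'(\omega)/2$ (note the sign flip because the $-\ln\delta(1/(\omega k))$ piece carries the $\pm i\pi$); (ii) the term $2\omega(\delta'(\omega^2)/\delta(\omega^2)-2\omega\delta'(1)/\delta(1))$ coming from the Taylor expansion at $k=\omega$ of the smooth $\delta$-factors $\delta(\omega^2 k),\delta(1/k),\delta(1/(\omega^2 k))$ and the cancellation $\delta(1)/\delta(1)$; (iii) the logarithmic constant $r_1(\omega)r_2'(\omega)(\ln(2-\sqrt 3)/(2\pi i)+\tfrac{13}{12})$ produced by $\ln(\omega-i)$ and by the subleading expansion of $-\omega(k-\omega)/k$; and (iv) the regular Cauchy principal value $C_1/(2\pi i)$ from Lemma~\ref{endpointlemma}, where $g_2(s)=(g(s)-g'(\omega)(s-\omega))/(s-\omega)^2=[(s-\omega)^{-1}g(s)-g'(\omega)]/(s-\omega)$, yielding the last integral term in \eqref{d311pm}. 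The main obstacle will be the meticulous bookkeeping of signs, branch choices of $\ln_\omega$ and $\ln(-1)$, and the disentangling of contributions from the six Taylor expansions, so as to land on exactly the formula \eqref{d311pm}; none of the individual steps are deep, but the final identification of the constants requires care.
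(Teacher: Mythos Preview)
Your proposal is correct and follows essentially the same approach as the paper: both arguments hinge on the observation $r_2(\omega)=0$ (whence $g(\omega)=0$), apply the endpoint Lemma~\ref{endpointlemma} to expand $\delta(k)$ near $\omega$, single out $\delta(k)$ and $\delta(1/(\omega k))$ as the only singular factors in $\Delta_{33}/\Delta_{11}$, and then carry out the explicit bookkeeping for $c_{31,1}$ and $d_{31,1}^\pm$. The paper organizes the last step by writing $\Delta_{33}/\Delta_{11}=\frac{\delta(k)}{\delta(1/(\omega k))}h(k)$ with $h$ analytic at $\omega$ and expanding the quotient $\delta(k)/\delta(1/(\omega k))$ directly via the $g_0,g_1,g_2$ recursion, whereas you work additively with $\ln\delta(k)-\ln\delta(1/(\omega k))+\Phi_{\mathrm{reg}}$; these are equivalent, and in particular your contributions (iii) and (iv) each pick up a factor of $2$ (one from each of the two singular $\delta$-factors), accounting for the $\frac{1}{\pi i}$ rather than $\frac{1}{2\pi i}$ in \eqref{d311pm}.
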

\begin{proof}
The function $\ln(1 + r_1(s)r_2(s))$ is smooth on $\gamma_{(\omega,i)}$ and vanishes at $s = \omega$. Hence, using Lemma \ref{endpointlemma},\footnote{Lemma \ref{endpointlemma} only treats the case where the integral is taken on $[a,b]\subset \mathbb{R}$, but it can easily be adapted to prove \eqref{asymp of delta as k to omega}.} there exist complex constants $\{\tilde{c}_j, \tilde{d}_j \}_1^N \subset \C$ such that
\begin{align}\label{asymp of delta as k to omega}
\delta(k) = &\; \delta(\omega) + \sum_{j=1}^N (\tilde{c}_j  \ln_\omega(k-\omega) +  \tilde{d}_j) (k-\omega)^j 
+ O((k-\omega)^{N+1}\ln(k-\omega))
\end{align}
as $k \to \omega$. 
The first half of the lemma follows. To find explicit expressions for $c_{31,1}$ and $d_{31,1}^\pm$, we observe that
\begin{align*}
\frac{\Delta_{33}(k)}{\Delta_{11}(k)}
= \frac{\delta(k)}{\delta(\frac{1}{\omega k})}h(k), \qquad h(k) := \frac{\delta(\frac{1}{\omega^2 k})^2 \delta(\omega k)}{\delta(\frac{1}{k})\delta(\omega^2 k)^2},
\end{align*}
where $h(k)$ is analytic at $k = \omega$ and satisfies
$$h(k) = 1 + 2\omega\bigg(\frac{\delta'(\omega^2)}{\delta(\omega^2)} - 2\omega \frac{\delta'(1)}{\delta(1)}\bigg)(k-\omega) + O((k-\omega)^2), \qquad k \to \omega.$$
Letting
$$g_0(s) = \ln(1+r_1(s)r_{2}(s)), \qquad g_1(s) = \frac{g_0(s) - g_0(\omega)}{s-\omega}, \qquad g_2(s) = \frac{g_1(s) - g_1(\omega)}{s-\omega},$$
we obtain
$$\frac{g_0(s)}{s-k} = \frac{g_0(\omega)}{s-k} + g_1(s) +  (k-\omega)\frac{g_1(s)}{s-k}, \quad \frac{g_1(s)}{s-k} = \frac{g_1(\omega)}{s-k} + g_2(s) + (k-\omega)\frac{g_2(s)}{s-k},$$
and so, since $r_2(\omega) = 0$ and hence also $g_0(\omega) = 0$,
\begin{align*}
\int_{\gamma_{(\omega, i)}} \frac{g_0(s)ds}{s-k}
 =&\; \int_{\gamma_{(\omega, i)}}g_1(s)ds + (k-\omega)\int_{\gamma_{(\omega, i)}} \frac{g_1(s)ds}{s-k}
	\\
= &\; \int_{\gamma_{(\omega, i)}}g_1(s)ds 
+ (k-\omega) \Big(g_1(\omega)(\ln_i(k-i) - \ln_\omega(k-\omega))
+ \int_{\gamma_{(\omega, i)}} g_2(s)ds\Big)
	\\
& + (k-\omega)^2\int_{\gamma_{(\omega, i)}} \frac{g_2(s)}{s-k}ds,
\end{align*}
where $\ln_{i}(\cdot -i)$ has a cut along $[i,i\infty)$ and satisfies $\ln_{i}(1)=2\pi i$. 
It follows that
\begin{align*}
\frac{\delta(k)}{\delta(\frac{1}{\omega k})} 
=&\; \frac{e^{\frac{k-\omega}{2\pi i} \big(g_1(\omega)(\ln_i(\omega-i) - \ln_\omega(k-\omega))
+ \int_{\gamma_{(\omega, i)}} g_2(s)ds\big)
 + O((k-\omega)^2\ln(k-\omega))}}{e^{\frac{\frac{1}{\omega k}-\omega}{2\pi i} \big(g_1(\omega)(\ln_i(\omega-i) - \ln_\omega(\frac{1}{\omega k}-\omega))
+ \int_{\gamma_{(\omega, i)}} g_2(s)ds\big)
 + O((k-\omega)^2\ln(k-\omega))}}
 	\\
%= &\; 1 + \frac{k-\omega}{2\pi i} \bigg(g_1(\omega)(\ln_i(\omega-i) - \ln_\omega(k-\omega))
%+ \int_{\gamma_{(\omega, i)}} g_2(s)ds\bigg)
%	\\
%& - \frac{\frac{1}{\omega k}-\omega}{2\pi i} \bigg(g_1(\omega)(\ln_i(\omega-i) - \ln_\omega(\frac{1}{\omega k}-\omega))
%+ \int_{\gamma_{(\omega, i)}} g_2(s)ds\bigg)
% + O((k-\omega)^2\ln(k-\omega))
% 	\\
= &\; 1 + \frac{k-\omega}{2\pi i} \bigg(g_1(\omega)\big[2\ln_i(\omega-i) - \ln_\omega(k-\omega) - \ln_\omega(\omega-k)\big]
+ 2\int_{\gamma_{(\omega, i)}} g_2(s)ds\bigg)
	\\
& + O((k-\omega)^2\ln(k-\omega)).
 \end{align*}
Noting that $g_1(\omega) = r_1(\omega) r_2'(\omega)$,
$\ln_i(\omega-i) = \frac{1}{2}\ln(2 - \sqrt{3}) + \frac{13\pi}{12}i$, and $\ln_{\omega}(\omega-k) =  \ln_{\omega}(k-\omega) \pm \pi i$ for $|k|\gtrless 1$ (valid for $|k-\omega|$ small), the expressions in (\ref{c311}) and (\ref{d311pm}) follow.
\end{proof}

Recall from Theorem \ref{directth} that 
$$r_2(\omega) = r_2(-\omega) = 0, \qquad
r_{1}(1) = r_{1}(-1) = 1, \qquad r_{2}(1) = r_{2}(-1) = -1.$$
Together with Lemma \ref{deltaat1lemmaIII}, this implies that to leading order
\begin{align}\label{vjY03leadingorder}
v_{j_Y,0}^{(3)}(k) = v_{j,0}^Y + O((k-\omega)\ln(k-\omega)), \qquad k\to \omega, \;\; j = 1, \dots, 8,
\end{align}
where $v_{j,0}^Y$ are the matrices defined in (\ref{vYj0def}) with $s$ given by
\begin{align}\label{sdef}
s := \frac{\delta(\omega)^3}{\delta(\omega^2)^3} r_1(\omega).
\end{align}

In what follows, we consider the higher-order terms in the expansions (\ref{vjY03leadingorder}) as well as suitable approximations of the exponentials $e^{\theta_{ij}}$ appearing in the jump matrices. We will focus on the $(31)$-entry of $v_{1_Y}^{(3)}$, the $(12)$-entry of $v_{5_Y}^{(3)}$, and the $(13)$-entry of $v_{7_Y}^{(3)}$; we denote these three entries by
\begin{align}\label{fgdef}
& f(k) := \frac{\Delta_{11}(k)}{\Delta_{33}(k)}r_{2,a}\Big(\frac{1}{\omega^{2}k}\Big),\quad  g(k) := \frac{\Delta_{22}(k)}{\Delta_{11}(k)}\hat{r}_{1,a}(k), 
	\\ \label{hdef}
& h(k) := -\frac{\Delta_{33}(k)}{\Delta_{11}(k)}\Big(r_{1,a}\Big(\frac{1}{\omega^2 k}\Big) + r_{2,a}(\omega^2 k)\Big).
\end{align}
The other entries can be treated similarly.
(The function $f(k)$ in (\ref{fgdef}) should not be confused with the function in (\ref{def of f}).)
The next lemma provides higher-order approximations of $f$, $g$, and $h$ near $k = \omega$. 

\begin{lemma}
There exist complex coefficients $\{a_j, b_j, c_j, d_j, c_j', d_j'\}_{j=1}^N$ such that the functions
\begin{subequations}\label{fNgNdef}
\begin{align}\label{fNgNdefa}
& f_N(\tilde{z}) := -1 + \sum_{j=1}^N (a_{j}\ln_0 \tilde{z} + b_{j}) \tilde{z}^j,
	\\ \label{fNgNdefb}
& g_N(\tilde{z}) := s + \sum_{j=1}^N (c_{j}\ln_0 \tilde{z} + d_{j}) \tilde{z}^j,
	\\ \label{fNgNdefc}
& h_N(\tilde{z}) := \omega^2(r_1'(1) - r_2'(1)) 2^{\frac{4}{3}}3^{-\frac{1}{3}}e^{\frac{\pi i}{6}} \tilde{z}
+ \sum_{j=2}^N (c_j' \ln_0 \tilde{z} + d_j') \tilde{z}^j,
\end{align}
\end{subequations}
where $\ln_0$ has a branch cut along $(0,+\infty)$ and $\im \ln_0 \tilde{z} \in [0,2\pi)$, satisfy
\begin{subequations}\label{fgestimates}
\begin{align}\label{fgestimatesa}
|f(k) - f_N(zt^{-1/3})| \leq C(1+|\ln|k-\omega||) |k - \omega|^{N+1} e^{\frac{t}{4}|\re \Phi_{31}(\zeta,k)|}, \quad k \in \Gamma^{(3)}_{1_Y},
	\\\label{fgestimatesb}
|g(k) - g_N(zt^{-1/3})| \leq C(1+|\ln|k-\omega||) |k - \omega|^{N+1} e^{\frac{t}{4}|\re \Phi_{21}(\zeta,k)|}, \quad k \in \Gamma^{(3)}_{5_Y},
	\\\label{fgestimatesc}
|h(k) - h_N(zt^{-1/3})| \leq C(1+|\ln|k-\omega||) |k - \omega|^{N+1} e^{\frac{t}{4}|\re \Phi_{31}(\zeta,k)|}, \quad k \in \Gamma^{(3)}_{7_Y},
\end{align}
\end{subequations}
uniformly for $t \geq 1$, $\zeta \in [1, 2]$, and $k$ in the given ranges.
The leading coefficients in \eqref{fNgNdefa} are given by
\begin{align}\label{a1b1explicit}
a_1 = 2^{\frac{4}{3}}3^{-\frac{1}{3}}e^{\frac{\pi i}{6}}c_{31,1}, \quad 
b_1 = 2^{\frac{4}{3}}3^{-\frac{1}{3}}e^{\frac{\pi i}{6}}\Big(c_{31,1} (\tfrac{4}{3}\ln{2} - \tfrac{1}{3}\ln{3} + \tfrac{\pi i}{6}) + d_{31,1}^+ - \omega^2 r_2'(1)\Big).
\end{align}
\end{lemma}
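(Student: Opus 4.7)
\medskip

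\noindent\textbf{Proof proposal.} The plan is to obtain each of $f,g,h$ as a product of three types of factors, expand each factor separately near $k=\omega$ (using Lemma \ref{decompositionlemmaIII} for the analytic-part pieces $r_{j,a},\hat r_{j,a}$ and Lemma \ref{deltaat1lemmaIII} for the quotients $\Delta_{ll}/\Delta_{jj}$), multiply the expansions and truncate at order $N$ in $k-\omega$, and finally convert the result into an expansion in $\tilde z$ by inverting the biholomorphism defined by (\ref{yzdef}). Recall that near $k=\omega$ one has
\begin{equation*}
k-\omega = 2^{4/3}3^{-1/3}e^{\pi i/6}\,\tilde z + O(\tilde z^{2}),
\end{equation*}
and the map is invertible to all orders, so any polynomial-logarithmic expansion in $k-\omega$ transfers to one in $\tilde z$.

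The preparatory values needed are $r_{2}(\omega)=0$ (from Theorem \ref{directth}(ii)), $r_{1}(1)=1$, $r_{2}(1)=-1$, together with $r_{1,a}(1)=1$, $r_{2,a}(1)=-1$, and, from Lemma \ref{deltaat1lemmaIII}, $(\Delta_{33}/\Delta_{11})(\omega)=1$ and $(\Delta_{22}/\Delta_{11})(\omega)=\delta(\omega)^{3}/\delta(\omega^{2})^{3}$. These immediately yield $f(\omega)=-1$, $g(\omega)=s$, and $h(\omega)=0$, matching the constant terms of (\ref{fNgNdef}). For the exponential error factors, one uses the simple identities $\Phi_{21}(\zeta,1/(\omega^{2}k))=\Phi_{31}(\zeta,k)$ and $\Phi_{21}(\zeta,\omega^{2}k)=-\Phi_{31}(\zeta,k)$ (derived from the permutation action of $k\mapsto \omega k$ and $k\mapsto 1/k$ on $l_{1},l_{2},l_{3}$), so that the estimates from Lemma \ref{decompositionlemmaIII}(b) applied to $r_{2,a}(1/(\omega^{2}k))$ (for $f$) and to $r_{1,a}(1/(\omega^{2}k))$, $r_{2,a}(\omega^{2}k)$ (for $h$) produce the factor $\exp(\tfrac t4|\re\Phi_{31}|)$ in (\ref{fgestimatesa}) and (\ref{fgestimatesc}), while for $g$ one keeps $\Phi_{21}$ as in (\ref{fgestimatesb}). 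The logarithmic prefactor $1+|\ln|k-\omega||$ is inherited from the $O((k-\omega)^{N+1}\ln(k-\omega))$ error in Lemma \ref{deltaat1lemmaIII}; it is preserved under multiplication because the $r_{j,a}$-factors are uniformly bounded on the relevant subcontours by Lemma \ref{decompositionlemmaIII}, and Taylor remainders multiplied by $\log$-bounded factors still satisfy the claimed bound.

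The main technical nuisance—though not a conceptual obstacle—is keeping track of all the cross-terms between $\log$-valued and analytic coefficients when multiplying the expansions up to order $N$, and recording the correct side $\pm$ in $d_{ij,l}^{\pm}$ depending on which side of $\partial\mathbb D$ the local arc $\Gamma^{(3)}_{j_{Y}}$ lies (in Sector $\III_{\geq}$, $\Gamma^{(3)}_{1_{Y}}$, $\Gamma^{(3)}_{5_{Y}}$, $\Gamma^{(3)}_{7_{Y}}$ all approach $\omega$ from a definite side, fixing the choice of $+$). As a sanity check, the leading $\tilde z$-coefficient of $h$ is computed directly from the linearizations $\partial_{k}[r_{1}(1/(\omega^{2}k))+r_{2}(\omega^{2}k)]\big|_{k=\omega}=\omega^{2}(r_{2}'(1)-r_{1}'(1))$ and the change of variables, giving $\omega^{2}(r_{1}'(1)-r_{2}'(1))\cdot 2^{4/3}3^{-1/3}e^{\pi i/6}\tilde z$, in agreement with (\ref{fNgNdefc}). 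For $f$, the expansion
\begin{equation*}
\tfrac{\Delta_{11}}{\Delta_{33}}(k)=1-(c_{31,1}\ln_{\omega}(k-\omega)+d_{31,1}^{+})(k-\omega)+O((k-\omega)^{2}\ln(k-\omega))
\end{equation*}
combined with $r_{2,a}(1/(\omega^{2}k))=-1-\omega^{2}r_{2}'(1)(k-\omega)+\dots$ and the substitution of $\ln(k-\omega)=\ln\tilde z+\tfrac{4}{3}\ln2-\tfrac{1}{3}\ln3+\tfrac{\pi i}{6}+O(\tilde z)$ yields exactly the expressions for $a_{1}$ and $b_{1}$ in (\ref{a1b1explicit}). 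The remaining coefficients $a_{j},b_{j},c_{j},d_{j},c_{j}',d_{j}'$ for $2\le j\le N$ are defined implicitly by this collecting procedure, and the estimates (\ref{fgestimates}) follow from the triangle inequality applied to the truncation errors of each factor together with the uniform boundedness of the remaining factors on the relevant subcontours.
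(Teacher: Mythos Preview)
Your proposal is correct and follows essentially the same approach as the paper: expand each factor (the $\Delta$-ratio via Lemma~\ref{deltaat1lemmaIII}, the $r_{j,a}$-piece via Lemma~\ref{decompositionlemmaIII}) in $(k-\omega)$, multiply and truncate, then convert to $\tilde z$ via the local biholomorphism; your verification of the leading coefficients and your identification of the phase identities $\Phi_{21}(\zeta,1/(\omega^2 k))=\Phi_{31}(\zeta,k)$ and $\Phi_{21}(\zeta,\omega^2 k)=-\Phi_{31}(\zeta,k)$ are in fact more explicit than what the paper writes. One small correction: $\Gamma^{(3)}_{7_Y}$ lies \emph{inside} the unit circle, so for $h$ the relevant side in Lemma~\ref{deltaat1lemmaIII} is $d_{31,1}^{-}$, not $d_{31,1}^{+}$ (the paper makes this distinction); this does not affect the leading $\tilde z$-coefficient in (\ref{fNgNdefc}), only the unspecified higher-order $d_j'$.
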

\begin{proof}
Lemma \ref{decompositionlemmaIII} and Lemma \ref{deltaat1lemmaIII} imply that there exist coefficients $\{\hat{a}_j, \hat{b}_j\}_{j=1}^N$ such that the function
$$\hat{f}_N(k) = -1 + \sum_{j=1}^N (\hat{a}_{j}\ln_\omega(k-\omega) + \hat{b}_{j}) (k-\omega)^j$$
satisfies
$$|f(k) - \hat{f}_N(k)| \leq C (1 + |\ln|k-\omega||)  |k-\omega|^{N+1} e^{\frac{t}{4}|\re \Phi_{31}(\zeta,k)|}$$
for $t \geq 1$, $k \in \Gamma^{(3)}_{1_Y}$, and $\zeta \in [1, 2]$.
Since $zt^{-1/3} = (\frac{3}{2})^{1/3} \frac{i  (k-\omega ) (k+\omega )}{4 k \omega}$, the existence of the function $f_N(\tilde{z})$ follows.
The proof of the existence of $g_N(\tilde{z})$ is similar. On the other hand,
$$r_{2,a}\Big(\frac{1}{\omega^{2}k}\Big) = -1 - \omega^2 r_2'(1) (k-\omega) + O((k-\omega)^2), \qquad k \to \omega,$$
and so
$$f(k) = -1 + \big(c_{31,1} \ln_\omega(k-\omega) + d_{31,1}^+ - \omega^2 r_2'(1)\big)(k-\omega) + O((k-\omega)^2), \qquad k \to \omega, \;\; k \in \Gamma^{(3)}_{1_Y}.$$
Letting $\tilde{z} = zt^{-1/3}$, we have
$$k-\omega = 2^{\frac{4}{3}}3^{-\frac{1}{3}}e^{\frac{\pi i}{6}} \tilde{z} + O(\tilde{z}^2), \qquad k \to \omega,$$
and hence (\ref{a1b1explicit}) follows.
Using that $r_1(1) + r_2(1) = 0$, we find, for $k \in \Gamma^{(3)}_{7_Y}$ approaching $\omega$,
\begin{align*}
h(k) = & - \Big(1 + (c_{31,1} \ln_\omega(k-\omega) + d_{31,1}^-)(k-\omega) + O((k-\omega)^2\ln_\omega(k-\omega))\Big)
	\\
&\times \Big(\omega^2(-r_1'(1) + r_2'(1))(k-\omega) + O((k-\omega)^2e^{\frac{t}{4}|\re \Phi_{31}(\zeta,k)|})\Big)
\end{align*}
and hence similar arguments also give the existence of $h_N(\tilde{z})$.
\end{proof}

Define $p_1(t,z)$, $q_1(t,z)$, and $\hat{p}_2(t,z)$ by
\begin{align}\nonumber
& - 1 - p_1(t,z) = f_N(zt^{-1/3}) e_{31,N}(t,z), \qquad
s -isyz + q_1(t,z) = g_N(zt^{-1/3}) e_{12,N}(x,t,z),
	\\ \label{pNdefIII}
& \hat{p}_2(t,z) = h_N(zt^{-1/3}) e_{13,N}(t,z),
\end{align}
where $f_N$, $g_N$, $h_N$, $e_{31,N}$, $e_{12,N}$, and  $e_{13,N}$ are the functions in (\ref{fNgNdef}), (\ref{eNdefIII}), and (\ref{eNdef2}). 
Then 
\begin{align}\label{p1explicit}
p_1(t,z) = -1 - \bigg(-1 +\sum_{j=1}^N (a_{j}\ln_0(zt^{-1/3}) + b_{j}) z^jt^{-j/3}\bigg)\bigg(1  + \sum_{j=1}^{N} \frac{[z^3p_{e,N}(zt^{-1/3})]^j}{j!}\bigg)
\end{align}
\begin{align*}
q_1(t,z) = & -s + isyz + \bigg(s +\sum_{j=1}^N (c_{j}\ln_0(zt^{-1/3}) + d_{j}) z^jt^{-j/3}\bigg)
	\\
& \times \bigg(1  + \sum_{j=1}^{N} \frac{[\beta - z^3(\zeta q_{o,N}(zt^{-1/3}) + q_{e,N}(zt^{-1/3}))]^j}{j!}\bigg),
\end{align*}
and
\begin{align*}
\hat{p}_2(t,z) = & \bigg(\omega^2(r_1'(1) - r_2'(1)) 2^{\frac{4}{3}}3^{-\frac{1}{3}}e^{\frac{\pi i}{6}} z t^{-1/3}
+ \sum_{j=2}^N (c_j' \ln_0(z t^{-1/3}) + d_j') z^jt^{-j/3}\bigg)
	\\
& \times \bigg(1  + \sum_{j=1}^{N} \frac{[-z^3p_{e,N}(zt^{-1/3})]^j}{j!}\bigg).
\end{align*}
It follows that $p_1$ has the form (\ref{psumIII}) for some sufficiently large $n$. 
Moreover, expressing $\zeta$ in terms of $y$ and noting that $isyz + s\beta = -is 4z^3/3$, it follows that $q_1$ has the form (\ref{qsumIII}) for some sufficiently large $n$. Similarly, we see that $\hat{p}_2$ has the structure required by (\ref{phatsumIII}).
This shows that the $(31)$-entry of $v_{1_Y}^{(3)}$, the $(12)$-entry of $v_{5_Y}^{(3)}$, and the $(13)$-entry of $v_{7_Y}^{(3)}$ have the appropriate approximations required for an application of Lemma \ref{YlemmaIII}. Analogous arguments apply to the other entries of the jump matrices $v_{j_Y}^{(3)}$, $j = 1, \dots, 8$ and the expression (\ref{v9Ydef}) for $v_9^Y$ then follows by consistency; detailed estimates will be provided in Lemma \ref{momegalemma} below.

Let $\mathcal{Y}^\epsilon = (\Gamma^{(3)} \cap D(\epsilon))\setminus (\Gamma^{(3)}_2 \cup \Gamma^{(3)}_5)$. Deforming the contour $\Gamma^{(3)}$ slightly if necessary, we may assume that the map $k \mapsto z$ takes $\mathcal{Y}^\epsilon$ onto $Y \cap \{|z| < (\frac{3t}{2})^{1/3}\frac{\epsilon}{4}\}$, where $Y$ is the contour defined in (\ref{YdefIII}).
We write $\mathcal{Y}^\epsilon = \cup_{j=1}^9 \mathcal{Y}_j^\epsilon$, where $\mathcal{Y}_j^\epsilon$ denotes the part of $\mathcal{Y}^\epsilon$ that maps into $Y_j$.

Let $m^Y(y, t, z)$ be the solution of RH problem \ref{RHmYIII} of Lemma \ref{YlemmaIII} with jump contour $Y = Y(z_1, z_2)$, where $y,z$ are defined in terms of $x,t,k$ by (\ref{yzdef}), $s$ is defined by (\ref{sdef}), $z_1$ and $iz_2$ are the images of the two saddle points $k_1$ and $\omega k_3$ under the map $k \mapsto z$, and the functions $p_k$, $q_k$, and $\hat{p}_k$ are given by the expansions of the entries of $v_{j_Y}^{(3)}$, $j = 1, \dots, 8$, as described in detail above for $p_1$, $q_1$, and $\hat{p}_2$.
The local parametrix $m^\omega(x,t,k)$ at $\omega$ is defined for $k \in D(\epsilon)$ by
\begin{align}\label{m0defIII}
m^\omega(x, t, k) = Y(x,t)m^Y(y, t, z)Y(x,t)^{-1}, \qquad k \in D(\epsilon),
\end{align}
where
\begin{align}\label{Ydef}
Y(x,t) := e^{x\mathcal{L}(\omega) + t\mathcal{Z}(\omega)} = \begin{pmatrix}
 e^{-\frac{i (t+2 x)}{4 \sqrt{3}}} & 0 & 0 \\
 0 & e^{\frac{i (t+2 x)}{2 \sqrt{3}}} & 0 \\
 0 & 0 & e^{-\frac{i (t+2 x)}{4 \sqrt{3}}}
\end{pmatrix}.
\end{align}
By Lemma \ref{YlemmaIII}, we can choose $T \geq 1$ such that $m^\omega$ is well-defined whenever $(x,t) \in \III_\geq^T$, where $\III_\geq^T$ denotes the part of III where $t\geq T$:
$$\III_\geq^T := \III_\geq \cap \{t \geq T\}.$$ 

In order to show that $(1,1,1)m^\omega$ approximates $n^{(3)}$ well in $D(\epsilon)$, we need the following lemma. 

\begin{lemma} 
If $a > 0$ and $\epsilon > 0$ are sufficiently small, then
\begin{subequations}
\begin{align}\label{rePhiIIIa}
& |\re \Phi_{21}(\zeta, k)| \geq c |k-k_1|^2, && \zeta \in [1, 1+a], \ k \in \mathcal{Y}_5^\epsilon \cup  \mathcal{Y}_8^\epsilon,
	\\\label{rePhiIIIb}
& |\re \Phi_{21}(\zeta, k)| \geq c |k-\omega^2 k_2|^2, && \zeta \in [1, 1+a], \ k \in \mathcal{Y}_6^\epsilon \cup  \mathcal{Y}_7^\epsilon,
	\\\label{rePhiIIIc}
& |\re \Phi_{31}(\zeta, k)| \geq c |k-\omega|^3, && \zeta \in [1, 1+a], \ k \in \cup_{j=1}^4 \mathcal{Y}_j^\epsilon,
	\\\label{rePhiIIIe}
& |\re \Phi_{32}(\zeta, k)| \geq c  |k-k_1|^2, && \zeta \in [1, 1+a], \ k \in \mathcal{Y}_5^\epsilon \cup  \mathcal{Y}_8^\epsilon,
	\\\label{rePhiIIIf}
& |\re \Phi_{32}(\zeta, k)| \geq c  |k- \omega^2 k_2|^2, && \zeta \in [1, 1+a], \ k \in \mathcal{Y}_6^\epsilon \cup  \mathcal{Y}_7^\epsilon,
\end{align}
\end{subequations}
where $c > 0$ is independent of $\zeta$ and $k$.
\end{lemma}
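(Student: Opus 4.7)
The proof is a direct computation based on Taylor expansions of $\Phi_{21},\Phi_{31},\Phi_{32}$ at the relevant saddle points, using the construction of $\mathcal{Y}^{\epsilon}$ as (a perturbation of) a steepest descent contour, together with the observation from Proposition~\ref{prop: phase of Phi21 for various zeta} that $\re\Phi_{21}\equiv 0$ on $\partial\mathbb{D}$, which via the relations (\ref{relations between the different Phi}) also gives $\re\Phi_{32}\equiv 0$ on $\partial\mathbb{D}$. I would also need the direct fact, obtained from \eqref{lmexpressions intro} using $\omega^{3}=1$, that $l_{3}(\omega)=l_{1}(\omega)$ and $z_{3}(\omega)=z_{1}(\omega)$, so $\Phi_{31}(\zeta,\omega)=0$.

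Cases (a) and (f) are the prototypical simple-saddle bounds. On $\mathcal{Y}_{5}^{\epsilon}\cup\mathcal{Y}_{8}^{\epsilon}$ the contour $\Gamma^{(3)}$ is chosen, by the second transformation, to be a pair of steepest descent rays of $\Phi_{21}(\zeta,\cdot)$ through the simple saddle $k_{1}$; similarly $\mathcal{Y}_{6}^{\epsilon}\cup\mathcal{Y}_{7}^{\epsilon}$ is steepest descent for $\Phi_{32}$ through $\omega^{2}k_{2}$. Since $k_{1}\in\partial\mathbb{D}$ gives $\re\Phi_{21}(\zeta,k_{1})=0$, Taylor expansion yields
\begin{equation*}
\re\Phi_{21}(\zeta,k)=\tfrac{1}{2}\re\!\bigl[\partial_{k}^{2}\Phi_{21}(\zeta,k_{1})(k-k_{1})^{2}\bigr]+O(|k-k_{1}|^{3}),
\end{equation*}
and on the steepest direction the leading quadratic has definite sign with absolute value $\geq c_{0}|k-k_{1}|^{2}$. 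By Proposition~\ref{prop: phase of Phi21 for various zeta} and continuity, $c_{0}$ can be taken uniform over $\zeta\in[1,1+a]$, and shrinking $\epsilon$ absorbs the cubic remainder. Case (f) is identical with $(\Phi_{21},k_{1})$ replaced by $(\Phi_{32},\omega^{2}k_{2})$.

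Cases (b) and (e) are transverse-vanishing bounds. For (e), since $|\omega k_{1}|=1$ and $\re\Phi_{32}\equiv 0$ on $\partial\mathbb{D}$, we have $\re\Phi_{32}(\zeta,k_{1})=0$. The contour $\mathcal{Y}_{5}^{\epsilon}\cup\mathcal{Y}_{8}^{\epsilon}$ is steepest descent for $\Phi_{21}$ at $k_{1}$, hence not tangent to $\partial\mathbb{D}$, which is the local zero set of both $\re\Phi_{21}$ and $\re\Phi_{32}$ near $k_{1}$. Consequently $\re\Phi_{32}$ restricted to this contour vanishes only simply at $k_{1}$, with linear coefficient bounded below uniformly in $\zeta\in[1,1+a]$. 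This gives $|\re\Phi_{32}(\zeta,k)|\geq c'|k-k_{1}|\geq(c'/\epsilon)|k-k_{1}|^{2}$ for $|k-k_{1}|\leq\epsilon$, which is (e); (b) is obtained analogously with the roles of $\Phi_{21},\Phi_{32}$ exchanged and $k_{1}$ replaced by $\omega^{2}k_{2}$.

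The substantive case is (c), where $\omega$ degenerates into a triple zero of $\Phi_{31}-\Phi_{31}(\zeta,\omega)$ at $\zeta=1$. A direct Taylor expansion at $\omega$, using the explicit formulas for $l_{j},z_{j}$ together with $\omega^{3}=1$, gives
\begin{equation*}
\Phi_{31}(\zeta,k)-\Phi_{31}(\zeta,\omega)=-\omega^{2}(\zeta-1)(k-\omega)+\tfrac{\omega}{2}(\zeta-1)(k-\omega)^{2}+\tfrac{2-\zeta}{2}(k-\omega)^{3}+O((k-\omega)^{4}),
\end{equation*}
with $\re\Phi_{31}(\zeta,\omega)=0$. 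The contour $\cup_{j=1}^{4}\mathcal{Y}_{j}^{\epsilon}$ passes through the simple saddles $\omega k_{3},\omega k_{4}\in\omega\mathbb{R}$ in such a way that, pulled back via $k\mapsto z$ (which is a biholomorphism with $|z|\asymp t^{1/3}|k-\omega|$), it becomes the four outgoing rays from $\pm iz_{2}$ of the Painlev\'e II contour. Parametrizing the contour one ray at a time and inserting into the above expansion, one verifies that the linear, quadratic, and cubic terms combine with the right signs so that $|\re\Phi_{31}(\zeta,k)|\geq c\,(r^{3}+(\zeta-1)r)$ with $r=|k-\omega|$, which is strictly stronger than (c). The main obstacle is to carry out this verification uniformly across the entire range $r\in[0,\epsilon]$ and $\zeta\in[1,1+a]$: near $\omega$ (where $r\ll\sqrt{\zeta-1}$) the linear contribution $(\zeta-1)r$ dominates, near the saddles (where $r\sim\sqrt{\zeta-1}$) both terms are comparable, and far from $\omega$ the cubic dominates; one must check that no cancellation between these regimes occurs on the chosen contour, which amounts to a careful choice of the Painlev\'e II ray directions in the $z$-plane and then pulling back the corresponding elementary trigonometric inequality.
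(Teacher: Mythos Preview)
Your treatment of (a) and (f) via the quadratic Taylor expansion at the saddle matches the paper's approach, and your outline for the cubic bound (c) is in fact more explicit than the paper's ``similar arguments.'' The gap lies in cases (b) and (e).

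You claim that $\re\Phi_{32}$ restricted to $\mathcal{Y}_5^\epsilon\cup\mathcal{Y}_8^\epsilon$ vanishes only \emph{simply} at $k_1$, with linear coefficient bounded below uniformly in $\zeta\in[1,1+a]$. This fails at the endpoint $\zeta=1$. There $k_1$ and $\omega^2k_2$ both equal $\omega$, so $k_1$ is itself a saddle of $\Phi_{32}$ and $\partial_k\Phi_{32}(1,k_1)=0$; the restriction to the contour therefore vanishes to order two, not one. Equivalently, at $\zeta=1$ the zero set of $\re\Phi_{32}$ near $k_1=\omega$ is not just $\partial\mathbb{D}$ but the full pair of anti-Stokes lines through the saddle, so mere transversality of the contour to $\partial\mathbb{D}$ does not force a simple zero. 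For $\zeta$ slightly above $1$ the linear coefficient is of order $|k_1-\omega^2k_2|=O(\zeta-1)$, so a uniform linear lower bound $|\re\Phi_{32}|\geq c'|k-k_1|$ is impossible.

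The remedy is to run the same quadratic argument as in (a): expand $\Phi_{32}$ to second order at $k_1$, observe that $\partial_k^2\Phi_{32}(\zeta,k_1)\to\omega^2\,\partial_k^2\Phi_{21}(1,\omega^2)\neq 0$ as $\zeta\to1$, and check by computing its argument that the contour direction makes $\re\bigl[\partial_k^2\Phi_{32}(\zeta,k_1)(k-k_1)^2\bigr]$ of a definite sign bounded away from zero. The residual linear term is $O(\zeta-1)\,|k-k_1|$, and since each of $\mathcal{Y}_5^\epsilon$, $\mathcal{Y}_8^\epsilon$ is constructed to lie in a region where $\re\Phi_{32}$ has a fixed sign, this linear piece necessarily carries the same sign as the quadratic and only strengthens the estimate. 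The same correction applies to (b) with the roles of $\Phi_{21}$ and $\Phi_{32}$ interchanged.
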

\begin{proof}
Since $\partial_k\Phi_{21}(\zeta, k_1) = 0$, Taylor's formula implies that
$$\Phi_{21}(\zeta, k) = \Phi_{21}(\zeta, k_1) + \frac{\partial_k^2\Phi_{21}(\zeta, k_1)}{2!}(k-k_1)^2 +  \frac{\partial_k^3\Phi_{21}(\zeta, k_\star)}{3!}(k-k_1)^3$$
for all $\zeta \in [1, 1+a]$ and $k \in \mathcal{Y}_5^\epsilon$, where $k_\star$ (which depends on $\zeta$ and $k$) stays close to $\omega$ if $a$ and $\epsilon$ are small. Since $\partial_k^3\Phi_{21}(\zeta, k)$ depends continuously on $(\zeta, k)$ for $\zeta$ near $1$ and $k$ near $\omega$, we conclude that $\partial_k^3\Phi_{21}(\zeta, k_\star)$ is uniformly bounded for all $\zeta \in [1, 1+a]$ and $k \in \mathcal{Y}_5^\epsilon$.
Using also that $\re \Phi_{21}(\zeta, k_1) = 0$, we conclude that if $a > 0$ and $\epsilon > 0$ are sufficiently small, then
$$\re \Phi_{21}(\zeta, k) = \re\bigg( \frac{\partial_k^2\Phi_{21}(\zeta, k_1)}{4}(k-k_1)^2\bigg) + O((k-k_1)^3),$$
uniformly for $\zeta \in [1, 1+a]$ and $k \in \mathcal{Y}_5^\epsilon$.
Straightforward calculations show that
$$|\partial_k^2\Phi_{21}(\zeta, k_1)|
= \frac{\sqrt{(\zeta^2+8) (-\zeta^2+ \zeta \sqrt{\zeta^2+8} +4)}}{2   \sqrt{2}} \geq \frac{3^{3/2}}{2}$$
for $\zeta \in [1,2]$, and that
$$\arg \partial_k^2\Phi_{21}(\zeta, k_1) 
= \arctan\bigg(\frac{\sqrt{2} \zeta }{\sqrt{\zeta (\sqrt{\zeta^2+8}-\zeta)+4}}\bigg) -\pi \in \bigg[-\frac{5\pi}{6}, -\frac{3\pi}{4}\bigg]$$
for $\zeta \in [1, 3/2]$. Since $\arg(k -k_1) \approx \pi/3$ for $k \in \mathcal{Y}_5^\epsilon$ if $a > 0$ and $\epsilon > 0$ are small, the estimate (\ref{rePhiIIIa}) follows for $k \in \mathcal{Y}_5^\epsilon$. The other assertions of the lemma follow by similar arguments.
\end{proof}

\begin{lemma}\label{momegalemma}
Let $N \geq 1$ be an integer. For each $(x,t) \in \III_\geq^T$, the function $m^\omega(x,t,k)$ defined in (\ref{m0defIII}) is an analytic function of $k \in D(\epsilon) \setminus \mathcal{Y}^\epsilon$ such that
\begin{align}\label{m0boundIII}
\begin{cases}
|m^\omega(x,t,k) - I| \leq C, 
	\\
|\partial_x m^\omega(x,t,k)| \leq C, 
\end{cases} (x,t) \in \III_\geq^T, \ k \in D(\epsilon) \setminus \mathcal{Y}^\epsilon.
\end{align}
On $\mathcal{Y}^\epsilon$, $m^\omega$ obeys the jump condition $m_+^\omega =  m_-^\omega v^\omega$, where the jump matrix $v^\omega$ satisfies, for each $1 \leq p \leq \infty$,
\begin{subequations}\label{v3vomegaestimates}
\begin{align}\label{v3vomegaestimatesa}
& \|v^{(3)} - v^\omega\|_{L^p(\mathcal{Y}^\epsilon)} \leq Ct^{-\frac{N+1}{6}},
	\\ 
& \|\partial_x(v^{(3)} - v^\omega)\|_{L^p(\mathcal{Y}^\epsilon)} \leq Ct^{-\frac{N+1}{6}},
\end{align}	
\end{subequations}
uniformly for $(x,t) \in \III_\geq^T$. Furthermore, as $t \to \infty$,
\begin{align}\nonumber
  m^\omega(x,t,k) = &\; I + \frac{m_1^P(y)}{t^{1/3}f_\omega(k)} +  \frac{Ym^P(y,0)m_1^W(\tilde{y})m^P(y,0)^{-1}Y^{-1}}{\frac{2^{1/3}}{3^{1/12}} t^{1/2} f_\omega(k)}
+ \frac{m_2^P(y)}{t^{2/3}f_\omega(k)^2}
	\\\label{momegaexpansion}
&+ \frac{\hat{m}_{1, \ln}^Y(y,\tilde{y})\ln t + \hat{m}_1^Y(y,\tilde{y})}{t^{2/3}f_\omega(k)}
+ \frac{Y\hat{m}_{2, \ln}^Y(y,\tilde{y})Y^{-1} \ln t}{t^{5/6}f_\omega(k)} + O\big(t^{-\frac{5}{6}}\big),
\end{align}
uniformly for $(x,t) \in \III_\geq^T$ and $k \in \partial D(\epsilon)$, where $m_1^P$, $m^P$, $m_1^W$, $m_2^P$, $\hat{m}_{1,\ln}^Y$, $\hat{m}_1^Y$, $\hat{m}_{2, \ln}^Y$ are as in Lemma \ref{YlemmaIII}, $\tilde{y}$ is defined by (\ref{tildeywdef}), and
\begin{align}\label{fomegadef}
f_\omega(k) := \Big(\frac{3}{2}\Big)^{1/3} \frac{i  (k-\omega ) (k+\omega )}{4 k \omega}.
\end{align}
In particular, 
\begin{align}\label{m0LinftyestimateIII}
& \|m^\omega(x,t,\cdot) - I\|_{L^\infty(\partial D(\epsilon))} = O\big(t^{-1/3}\big), 
	\\\label{m0xLinftyestimateIII}
& \|\partial_x m^\omega(x,t,\cdot)\|_{L^\infty(\partial D(\epsilon))} = O\big(t^{-1/2}\big).
\end{align}
The expansion (\ref{momegaexpansion}) can be differentiated termwise with respect to $x$ without increasing the error term.
\end{lemma}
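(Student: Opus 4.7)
The strategy is to transport every assertion back to a corresponding property of $m^Y$ via the biholomorphism $k\mapsto z$ of \eqref{yzdef} and the conjugation by the $k$-independent diagonal matrix $Y(x,t)$, then invoke Lemma \ref{YlemmaIII}. I would first record that $k\mapsto z$ maps $D(\epsilon)$ biholomorphically onto the disk $\{|z|<(\tfrac{3t}{2})^{1/3}\epsilon/4\}$ and, after a slight deformation of $\Gamma^{(3)}\cap D(\epsilon)$, maps $\mathcal{Y}^\epsilon$ onto the corresponding truncation of $Y(z_1,z_2)$. Analyticity of $m^\omega$ on $D(\epsilon)\setminus\mathcal{Y}^\epsilon$ is then immediate from analyticity of $m^Y$ on $\mathbb{C}\setminus Y$. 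The bound $|m^\omega-I|\le C$ in \eqref{m0boundIII} follows because the diagonal entries of $Y$ are unimodular, so $|Y A Y^{-1}|\le C|A|$, and $y=(\tfrac{2}{3t})^{1/3}(x-t)$ stays in a compact set on $\III_\geq^T$; the derivative bound $|\partial_x m^\omega|\le C$ comes from
$$\partial_x m^\omega=[\mathcal{L}(\omega),m^\omega]+Y\big((\partial_y m^Y)\partial_x y+(\partial_z m^Y)\partial_x z\big)Y^{-1},$$
together with $\partial_x y=O(t^{-1/3})$, $\partial_x z=O(t^{1/3})$ on $D(\epsilon)$, and the bounds on $\partial_y m^Y,\partial_z m^Y$ provided by Lemma \ref{YlemmaIII}.

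The main work is the jump estimate \eqref{v3vomegaestimates}. The jump of $m^\omega$ on $\mathcal{Y}^\epsilon$ is $v^\omega=Y v^Y Y^{-1}$, where $v^Y$ is built from exactly the polynomial-cum-exponential data $e_{ij,N},f_N,g_N,h_N,\hat h_N$ that feed the $p_k,q_k,\hat p_k$ of Lemma \ref{YlemmaIII}; thus $v^\omega$ is, entry-by-entry, the natural approximation of $v^{(3)}$ on the mapped contour. I would split $\mathcal{Y}^\epsilon=\mathcal{Y}^{\mathrm{in}}\cup\mathcal{Y}^{\mathrm{out}}$ with $\mathcal{Y}^{\mathrm{in}}=\mathcal{Y}^\epsilon\cap\{|z|\le t^{1/12}\}$ (i.e. $|k-\omega|\lesssim t^{-1/4}$) and $\mathcal{Y}^{\mathrm{out}}$ its complement in $\mathcal{Y}^\epsilon$. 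On $\mathcal{Y}^{\mathrm{in}}$, combining the Taylor estimates \eqref{eeNestimatea}--\eqref{eeNestimatec}, \eqref{e12Nestimates} from Lemma \ref{elemmaIII} with the error bounds \eqref{fgestimatesa}--\eqref{fgestimatesc} and Lemma \ref{decompositionlemmaIII}(c), and using the factorization of $v^{(3)}_{j_Y}$ through the exponentials $e^{\pm\theta_{ij}}$ absorbed into $v^\omega$ via $e_{ij,N}$, one gets pointwise bounds of the form $|v^{(3)}-v^\omega|\le C(1+|\ln|k-\omega||)|k-\omega|^{N+1}$ which in the $z$-variable read $C|z|^{N+1}t^{-(N+1)/3}$; the $L^p$-norm over $\mathcal{Y}^{\mathrm{in}}$ is then at most $C t^{-(N+1)/6}$ after changing variables. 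On $\mathcal{Y}^{\mathrm{out}}$, both $v^{(3)}-I$ and $v^\omega-I$ are bounded by constants times $e^{-ct|\re\Phi_{ij}|}$; the lower bounds \eqref{rePhiIIIa}--\eqref{rePhiIIIf} together with $|k-\omega|\gtrsim t^{-1/4}$ make each such factor smaller than any fixed power of $t^{-1}$, hence negligible compared with $t^{-(N+1)/6}$. The derivative version is obtained by the same partition: on $\mathcal{Y}^{\mathrm{in}}$, $\partial_x$ introduces at worst a factor $\partial_x\theta_{ij}\lesssim|k-\omega|\lesssim t^{-1/4}$ times the exponential that has already been matched to order $N$, while the polynomial remainder produces an extra multiplicative $|z|\cdot t^{-1/3}$; on $\mathcal{Y}^{\mathrm{out}}$, $\partial_x$ produces at worst a factor $t$ which is absorbed by the exponential decay. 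Increasing the internal $N$ by one (which is permitted since $N$ is arbitrary) absorbs all these losses.

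The expansion \eqref{momegaexpansion} on $\partial D(\epsilon)$ is read off from the large-$z$ expansion of $m^Y$ given by Lemma \ref{YlemmaIII}, which schematically has the form
$$m^Y(y,t,z)=I+\frac{m_1^P(y)}{z}+\frac{m^P(y,0)m_1^W(\tilde y)m^P(y,0)^{-1}}{c_\star t^{1/6}z}+\frac{m_2^P(y)}{z^2}+\frac{\hat m_{1,\ln}^Y\ln t+\hat m_1^Y}{t^{1/3}z}+\cdots,$$
with $\tilde y$ the parabolic cylinder variable and $c_\star$ an explicit constant reflecting the mismatch between the Painlev\'e scale $z\sim t^{1/3}$ and the error-function scale $w\sim t^{1/6}z$. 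Substituting $z=(\tfrac{3t}{2})^{1/3}f_\omega(k)$ converts each $z^{-j}$ into $(\tfrac{3}{2})^{j/3}/(t^{j/3}f_\omega(k)^j)$ and conjugation by $Y$, which is trivial on the diagonal Painlev\'e contributions $m_j^P$ but nontrivial on the off-diagonal $m_1^W$, reproduces the explicit $Y\cdots Y^{-1}$ factor in the second term of \eqref{momegaexpansion}. The estimates \eqref{m0LinftyestimateIII}--\eqref{m0xLinftyestimateIII} are then direct: on $\partial D(\epsilon)$ the leading term is $(\tfrac{3}{2})^{1/3}m_1^P(y)/(t^{1/3}f_\omega)=O(t^{-1/3})$, while for $\partial_x m^\omega$ the dominant contribution is $[\mathcal{L}(\omega),\cdot]$ acting on the $t^{-1/2}$ parabolic cylinder term and gives $O(t^{-1/2})$; termwise differentiation of \eqref{momegaexpansion} is justified since each coefficient is smooth in $y$ and $\tilde y$ and each inner derivative costs only a positive power of $t^{-1/3}$ or $t^{-1}$.

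The hard step will be the $L^p$ bound \eqref{v3vomegaestimates}: bookkeeping the interplay of two spatial scales while matching polynomial approximations of orders $N$ in both the reflection coefficients and the exponentials, and in particular verifying that the logarithmic terms $\ln_\omega(k-\omega)$ produced by Lemma \ref{deltaat1lemmaIII} are absorbed by the factor $(1+|\ln|k-\omega||)$ in the pointwise estimates \eqref{fgestimates} and still give an $L^p$ bound of the right order. Everything else is a mechanical transfer from Lemma \ref{YlemmaIII}.
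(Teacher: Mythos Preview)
Your overall strategy---transport everything through the biholomorphism $k\mapsto z$ and the conjugation by $Y(x,t)$, then quote Lemma~\ref{YlemmaIII}---is exactly the paper's approach, and the treatment of the boundary expansion \eqref{momegaexpansion} and of the derivative estimates \eqref{m0LinftyestimateIII}--\eqref{m0xLinftyestimateIII} is correct. (Minor correction: $z$ depends only on $(t,k)$, so $\partial_x z=0$, not $O(t^{1/3})$; this only simplifies your derivative bound.)

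There is, however, a genuine gap in the jump estimate on the parabolic-cylinder contours $\mathcal{Y}_5^\epsilon,\dots,\mathcal{Y}_8^\epsilon$. On these pieces the difference $v^{(3)}-v^\omega$ splits into two contributions: one from approximating the reflection-type quantities (e.g.\ $g-g_N$), which is indeed bounded by $C(1+|\ln|k-\omega||)|k-\omega|^{N+1}$, and a second from approximating the exponential (e.g.\ $e^{\beta-z^3(\cdots)}-e_{12,N}$). For this second piece, Lemma~\ref{elemmaIII} only gives $|e^{\beta-\cdots}-e_{12,N}|\le C|z|^{N+1}e^{a t^{1/3}|z|^2}$, so after multiplying by $|e^{3\alpha}|\le e^{-ct^{1/3}|z|^2}$ one obtains $C|z|^{N+1}e^{-c' t^{1/3}|z|^2}$, \emph{not} $C|k-\omega|^{N+1}$. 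Since $|z|\sim t^{1/3}|k-\omega|$, this is $Ct^{(N+1)/3}|k-\omega|^{N+1}e^{-c' t|k-\omega|^2}$, which for $|k-\omega|\sim t^{-1/2}$ is of order $t^{-(N+1)/6}$---much larger than the $t^{-(N+1)/2}$ your claimed pointwise bound would predict. Thus a single uniform pointwise bound of the form you state does not hold on $\mathcal{Y}^{\mathrm{in}}\cap\mathcal{Y}_5^\epsilon$, and the exponential weight must be retained. The paper handles this by keeping the two pieces separate and estimating each with its own exponential (quadratic decay $e^{-ct|k-k_1|^2}$ via \eqref{rePhiIIIa} for the first, $e^{-c't^{1/3}|z|^2}$ for the second); with that splitting, the $L^p$ bound $Ct^{-(N+1)/6}$ follows exactly as you conclude. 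Note also that the paper distinguishes the Painlev\'e contours $\mathcal{Y}_1^\epsilon,\dots,\mathcal{Y}_4^\epsilon$ (cubic decay $e^{-c|z|^3}$, split at $|z|=t^{2/15}$) from the parabolic-cylinder contours (split at $|z|=t^{1/12}$); your uniform cutoff at $t^{1/12}$ happens to work for both, but the two families still require separate treatment of the exponential weights.
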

\begin{proof}
The analyticity of $m^\omega$ and the bounds (\ref{m0boundIII}) are a consequence of Lemma \ref{YlemmaIII} and (\ref{Ydef}).
Using (\ref{vjY3}), we see that, for $k \in \mathcal{Y}_j^\epsilon$ with $j = 1, \dots, 8$,
\begin{align}\nonumber
 v^{(3)} - v^\omega = &\; e^{x\widehat{\mathcal{L}(k)} + t\widehat{\mathcal{Z}(k)}}v_{j_Y,0}^{(3)} - e^{x\widehat{\mathcal{L}(\omega)} + t\widehat{\mathcal{Z}(\omega)}} v^Y 
	\\ \label{v3minusvomega}
= &\; e^{x\widehat{\mathcal{L}(\omega)} + t\widehat{\mathcal{Z}(\omega)}}\Big(e^{x\widehat{\mathcal{L}(k)} + t\widehat{\mathcal{Z}(k)} - x\widehat{\mathcal{L}(\omega)} - t\widehat{\mathcal{Z}(\omega)}}v_{j_Y,0}^{(3)} -  v^Y\Big).
\end{align}
Let us first prove (\ref{v3vomegaestimates}) on $\mathcal{Y}_1^\epsilon$. For $k \in \mathcal{Y}_1^\epsilon$, we find 
\begin{align*}
 v^{(3)} - v^\omega = &\; e^{x\widehat{\mathcal{L}(\omega)} + t\widehat{\mathcal{Z}(\omega)}}
 \begin{pmatrix}
0 & 0 & 0 \\
0 & 0 & 0 \\
f(k)e^{t(\Phi_{31}(\zeta,k) - \Phi_{31}(\zeta, \omega))} - (-1-p_1(t,z))e^{-2\beta}& 0 & 0
\end{pmatrix},
\end{align*}
where $f$ is the $(31)$-entry of $v_{1_Y,0}^{(3)}$ as defined in (\ref{fgdef}) and $p_1(t,z)$ is given by (\ref{psumIII}). The matrices $\mathcal{L}(\omega)$ and $\mathcal{Z}(\omega)$ are pure imaginary. Moreover, by (\ref{pNdefIII}), 
$$- 1 - p_1(t,z) = f_N(zt^{-1/3}) e_{31,N}(t,z).$$
Hence, utilizing (\ref{fgestimatesa}) and (\ref{Phi31expansion}), we obtain
\begin{align*}
 |v^{(3)} - v^\omega| = &\; \big| f(k)e^{t\Phi_{31}(\zeta,k) - t\Phi_{31}(\zeta, \omega)} - f_N(zt^{-1/3}) e_{31,N}(t,z)e^{-2\beta}\big|
 	\\
\leq &\; \big| f(k) - f_N(zt^{-1/3}) \big| \big| e^{t\Phi_{31}(\zeta,k) - t\Phi_{31}(\zeta, \omega)} \big|
	\\
& +\big|f_N(zt^{-1/3}) \big| \big|e^{t\Phi_{31}(\zeta,k) - t\Phi_{31}(\zeta, \omega)} 
- e_{31,N}(t,z)e^{-2\beta}\big|
 	\\
\leq &\; C(1+|\ln|k-\omega||) |k - \omega|^{N+1} e^{-\frac{3t}{4}|\re \Phi_{31}(\zeta,k)|}
	\\
& +\big|f_N(zt^{-1/3}) \big| \big|e^{-2\beta + z^3p_e(zt^{-1/3})} 
- e_{31,N}(t,z)e^{-2\beta}\big|
, \qquad (x,t) \in \III_\geq^T, \;\; k \in \mathcal{Y}_1^\epsilon.
\end{align*}
For $k \in \mathcal{Y}_1^\epsilon$, we have $|f_N(zt^{-1/3})| \leq C$ and 
$|e^{-2\beta}| = |e^{2 i (yz + \frac{4z^3}{3})}| \leq e^{-c |z|^3}$ uniformly for $(x,t) \in \III_\geq^T$.
Recalling also the estimate (\ref{rePhiIIIc}), we conclude that, for $k \in \mathcal{Y}_1^\epsilon$, 
\begin{align*}
 |v^{(3)} - v^\omega| \leq &\; C |k - \omega|^{N} e^{-c t |k-\omega|^3}
 + C\big|e^{z^3p_e(zt^{-1/3})} 
- e_{31,N}(t,z)\big| e^{-c |z|^3}.
\end{align*}
Applying the inequalities (\ref{eeNestimateb}) and (\ref{eNestimateb}) with $a = c/2$, this gives
\begin{align*}
 |v^{(3)} - v^\omega| \leq &\; \begin{cases}
 C |k - \omega|^{N} e^{-c t |k-\omega|^3} + C|zt^{-1/3}|^{N+1} e^{-\frac{c}{2} |z|^3}, & |z| \leq t^{\frac{2}{15}}, 
	\\
C |k - \omega|^{N} e^{-c t |k-\omega|^3}
 + C e^{-\frac{c}{2} |z|^3}, & |z| \geq t^{\frac{2}{15}},
\end{cases} \quad k \in \mathcal{Y}_1^\epsilon.
\end{align*}
Recalling that $z \sim t^{1/3} (k-\omega)$, we deduce that 
\begin{align*}
 |v^{(3)} - v^\omega| \leq &\; \begin{cases}
 C |zt^{-1/3}|^{N} e^{-c |z|^3}, & |z| \leq t^{\frac{2}{15}}, 
	\\
C e^{-c |z|^3}, & |z| \geq t^{\frac{2}{15}},
\end{cases} \quad k \in \mathcal{Y}_1^\epsilon,
\end{align*}
uniformly for $(x,t) \in \III_\geq^T$. 
It follows that
$$\|v^{(3)} - v^\omega\|_{L^\infty(\mathcal{Y}_1^\epsilon)} \leq C t^{-N/3}
$$
and
\begin{align*}
\|v^{(3)} - v^\omega\|_{L^1(\mathcal{Y}_1^\epsilon)} 
& \leq C t^{-N/3} \int_0^{t^{\frac{2}{15}}} u^N e^{-c u^3} \frac{du}{t^{1/3}} 
+ C \int_{t^{\frac{2}{15}}}^\infty e^{-c u^3} \frac{du}{t^{1/3}} 
\leq Ct^{-\frac{N+1}{3}},
\end{align*}
which proves (\ref{v3vomegaestimatesa}) on $\mathcal{Y}_1^\epsilon$; similar arguments apply on the contours $\mathcal{Y}_j^\epsilon$ with $j = 2,3,4$ and for the $x$-derivatives.

We next prove (\ref{v3vomegaestimates}) on $\mathcal{Y}_5^\epsilon$. For $k \in \mathcal{Y}_5^\epsilon$, (\ref{v3minusvomega}) yields
\begin{align*}
|v^{(3)} - v^\omega| = &\; \left|
 \begin{pmatrix}
0 & \frac{\Delta_{22}}{\Delta_{11}}\hat{r}_{1,a}(k)e^{-t(\Phi_{21}(\zeta,k) - \Phi_{21}(\zeta, \omega))}- (s-isyz + q_1)e^{3\alpha } & 0 \\
0 & 0 & 0 \\
0 & -\frac{\Delta_{22}}{\Delta_{33}}r_{1,a}(\frac{1}{\omega k})e^{t(\Phi_{32}(\zeta,k) - \Phi_{32}(\zeta, \omega))} - (-s-isyz + q_2)e^{3\alpha } & 0
\end{pmatrix}\right|.
\end{align*}
We will estimate the $(12)$-element of the above matrix; the $(32)$-element can be estimated in a similar way. In view of (\ref{Phi21expansion}) and \eqref{pNdefIII}, the $(12)$-element, which we denote by $X_{12}$, can be written as
\begin{align}\nonumber
X_{12} :=&\; g(k)e^{-t(\Phi_{21}(\zeta,k) - \Phi_{21}(\zeta, \omega))} - g_N(zt^{-1/3}) e_{12,N}(x,t,z) e^{3\alpha }
	\\\nonumber
=&\; (g(k) - g_N(zt^{-1/3}))e^{-t(\Phi_{21}(\zeta,k) - \Phi_{21}(\zeta, \omega))}
	\\\label{X12def}
& + g_N(zt^{-1/3})e^{3\alpha}\Big(e^{\beta  - z^3(\zeta q_o(zt^{-1/3}) + q_e(zt^{-1/3}))} - e_{12,N}(x,t,z)\Big),
\end{align}
where $g$ is the function in (\ref{fgdef}) and $g_{N}$ is given in \eqref{fNgNdefb}. 
For $k \in \mathcal{Y}_5^\epsilon$, we have $|g_N(zt^{-1/3})| \leq C$ and 
$|e^{3\alpha}| \leq e^{-c t^{1/3} |z|^2}$ uniformly for $(x,t) \in \III_\geq^T$.
Employing the estimates (\ref{fgestimatesb}) and (\ref{e12Nestimates}), we infer that
\begin{align}\label{X12est}
|X_{12}| \leq C |k - \omega|^{N} e^{-\frac{3t}{4}|\re \Phi_{21}(\zeta,k)|}
+ Ce^{-\frac{c}{2} t^{1/3} |z|^2}\times 
\begin{cases}
|z|^{N+1}, &  |z| \leq t^{\frac{1}{12}},
	\\
1, & |z| \geq t^{\frac{1}{12}},
\end{cases} \quad k \in \mathcal{Y}_5^\epsilon.
\end{align}
In view of (\ref{rePhiIIIa}) and the estimate
$$|k - \omega|^{N} \leq C(|k - k_1|^{N} + |k_1 - \omega|^{N}) \leq C(|k - k_1|^{N} + t^{-2N/3}),$$
this implies that the following estimates hold uniformly for $(x,t) \in \III_\geq^T$ and $k \in \mathcal{Y}_5^\epsilon$:
\begin{align*}
|X_{12}| \leq C (|k - k_1|^N + t^{-2N/3}) e^{-c t |k-k_1|^2}
+ Ce^{-c t^{1/3} |z|^2}\times 
\begin{cases}
|z|^{N+1}, &  |z| \leq t^{\frac{1}{12}},
	\\
1, & |z| \geq t^{\frac{1}{12}},
\end{cases}
\end{align*}
and hence
\begin{align*}
\|X_{12}\|_{L^\infty(\mathcal{Y}_5^\epsilon)} 
\leq C t^{-N/2} + C t^{-\frac{N+1}{6}} \leq C t^{-\frac{N+1}{6}}
\end{align*}
and
\begin{align*}
\|X_{12}\|_{L^1(\mathcal{Y}_5^\epsilon)} 
\leq &\; 
C \int_0^\infty  (u^{N}+ t^{-2N/3}) e^{-c t u^2} du
+ C\int_0^{t^{\frac{1}{12}}} e^{-c t^{1/3} v^2} v^{N+1} \frac{dv}{t^{1/3}}
	\\
& + C\int_{t^{\frac{1}{12}}}^\infty e^{-c t^{1/3} v^2} \frac{dv}{t^{1/3}}
	\\
 \leq 
&\; Ct^{-N/2}
+ Ct^{-\frac{N+1}{6}} \int_0^\infty e^{-c w^2} w^{N+1} \frac{dw}{t^{1/2}}
+ Ce^{-c t^{1/2}}
\leq Ct^{-\frac{N+1}{6}},
\end{align*}
which proves (\ref{v3vomegaestimatesa}) on $\mathcal{Y}_5^\epsilon$; similar arguments apply on the contours $\mathcal{Y}_j^\epsilon$ with $j = 6,7,8$ and for the $x$-derivatives.

We next prove (\ref{v3vomegaestimates}) on $\mathcal{Y}_9^\epsilon$. On $\mathcal{Y}_9^\epsilon \cap \{0 < \arg k < 2\pi/3\}$, we have
$$v^{(3)} = (\check{v}_{8_Y}^{(3)})_-^{-1} (\check{v}_{5_Y}^{(3)})_+,$$
where $\check{v}_{5_Y}^{(3)}$ and $\check{v}_{8_Y}^{(3)}$ are given by the same expressions as $v_{5_Y}^{(3)}$ and $v_{8_Y}^{(3)}$ except that $r_{1,a}$, $\hat{r}_{1,a}$, $r_{2,a}$, $\hat{r}_{2,a}$ are replaced by $r_1$, $\hat{r}_1$, $r_2$, $\hat{r}_2$, respectively, i.e., 
\begin{align*}
\check{v}_{5_Y}^{(3)}
= \begin{pmatrix}
1 & \frac{\Delta_{22}}{\Delta_{11}}\hat{r}_1(k)e^{-\theta_{21}} & 0 \\
0 & 1 & 0 \\
0 & -\frac{\Delta_{22}}{\Delta_{33}}r_1(\frac{1}{\omega k})e^{\theta_{32}} & 1
\end{pmatrix}, \qquad
\check{v}_{8_Y}^{(3)}
= \begin{pmatrix}
1 & 0 & 0 \\
-\frac{\Delta_{11}}{\Delta_{22}}\hat{r}_2(k) e^{\theta_{21}} & 1 & \frac{\Delta_{33}}{\Delta_{22}}r_2(\frac{1}{\omega k})e^{-\theta_{32}} \\
0 & 0 & 1
\end{pmatrix}.
\end{align*}
Thus, for $k \in \mathcal{Y}_9^\epsilon$ with $\arg k < 2\pi/3$, we have
\begin{align}\label{v3minusvomegaY9}
 v^{(3)} - v^\omega = &\; (\check{v}_{8_Y}^{(3)})_-^{-1} (\check{v}_{5_Y}^{(3)})_+ - (v_8^\omega)_-^{-1}(v_5^\omega)_+,
 \end{align}
where we have used (\ref{v9Ydef}).
We claim that
\begin{align}\label{vecheckminusvomega85}
\big|(\check{v}_{8_Y}^{(3)})_- - (v_8^\omega)_-\big| \leq C |k - \omega|^N, \qquad
\big|(\check{v}_{5_Y}^{(3)})_+ - (v_5^\omega)_+\big| \leq C |k - \omega|^N
\end{align}
uniformly for $(x,t) \in \III_\geq^T$ and for $k \in \mathcal{Y}_9^\epsilon$ with $\arg k < 2\pi/3$.
Indeed, consider for example the $(12)$-entry of $(\check{v}_{5_Y}^{(3)})_+ - (v_5^\omega)_+$ whose absolute value is bounded above by $|\check{X}_{12}|$, where $\check{X}_{12}$ is given by the same expression as $X_{12}$ above except that $\hat{r}_{1,a}$ is replaced by $\hat{r}_1$, i.e.,
$$\check{X}_{12} := \frac{\Delta_{22+}}{\Delta_{11+}}\hat{r}_1(k)
e^{-t(\Phi_{21}(\zeta,k) - \Phi_{21}(\zeta, \omega))} - g_N(zt^{-1/3}) e_{12,N}(x,t,z) e^{3\alpha }.$$
Estimating as in (\ref{X12est}), we obtain the claimed estimate.
From (\ref{v3minusvomegaY9}) and (\ref{vecheckminusvomega85}), we get (\ref{v3vomegaestimates}) on $\mathcal{Y}_9^\epsilon \cap \{0 < \arg k < 2\pi/3\}$; similar arguments apply to the part of $\mathcal{Y}_9^\epsilon$ with $\arg k > 2\pi/3$.

We next apply Lemma \ref{YlemmaIII} to determine the large $t$ asymptotics of the solution $m^\omega$. 
The variable $z = (\frac{3t}{2})^{1/3} \frac{i  (k-\omega ) (k+\omega )}{4 k \omega}$ tends to infinity as $t\to \infty$ if $k \in \partial D(\epsilon)$. In fact, for $k \in \partial D(\epsilon)$, we have $|z| = (\frac{3t}{2})^{1/3}\frac{\epsilon}{4}$.
In terms of the function $f_\omega$ defined in (\ref{fomegadef}), we can write 
$$z = t^{1/3}f_\omega(k).$$
Thus equation (\ref{mYasymptoticsIII}) yields, as $t \to \infty$,
\begin{align}\nonumber
m^\omega(x,t,k) = &\; I + \frac{Ym_1^P(y)Y^{-1}}{t^{1/3}f_\omega(k)} +  \frac{Ym^P(y,0)m_1^W(\tilde{y})m^P(y,0)^{-1}Y^{-1}}{\frac{2^{1/3}}{3^{1/12}} t^{1/2} f_\omega(k)}
+ \frac{Ym_2^P(y)Y^{-1}}{t^{2/3}f_\omega(k)^2}
	\\ \label{m0expansionmYIII}
& + Y\frac{\hat{m}_{1, \ln}^Y(y,\tilde{y})\ln t + \hat{m}_1^Y(y,\tilde{y})}{t^{2/3}f_\omega(k)} Y^{-1}
+ \frac{Y\hat{m}_{2, \ln}^Y(y,\tilde{y})Y^{-1} \ln t}{t^{5/6}f_\omega(k)} + O\big(t^{-\frac{5}{6}}\big),
\end{align}
uniformly for $(x,t) \in \III_\geq^T$ and $k \in \partial D(\epsilon)$. From (\ref{mPasymptotics}) and Lemma \ref{hatmYlemma}, we infer that the coefficients $m_1^P$, $m_2^P$, $\hat{m}_{1, \ln}^Y$, and $\hat{m}_1^Y$ all have the form 
$\begin{pmatrix} 
* & 0 & * \\
0 & 0 &0 \\
* & 0 & * 
\end{pmatrix}$, i.e., their (12), (21), (22), (23), and (32) entries vanish, so that they all commute with $Y$. Hence (\ref{momegaexpansion}) follows from (\ref{m0expansionmYIII}).
The estimate (\ref{m0LinftyestimateIII}) is a consequence of (\ref{momegaexpansion}).  
By part ($\ref{YlemmaIIIitemd}$) of Lemma \ref{YlemmaIII}, the expansion (\ref{momegaexpansion}) can be differentiated termwise with respect to $x$, which also leads to (\ref{m0xLinftyestimateIII}).
\end{proof}

\begin{figure}
\begin{center}
\begin{tikzpicture}[master, scale=0.9]
\node at (0,0) {};
\draw[black,line width=0.65 mm,->-=0.25,->-=0.53,->-=0.80,->-=0.98] (0,0)--(30:7.5);
\draw[black,line width=0.65 mm,->-=0.25,->-=0.53,->-=0.80,->-=0.98] (0,0)--(90:7.5);
\draw[black,line width=0.65 mm,->-=0.25,->-=0.53,->-=0.80,->-=0.98] (0,0)--(150:7.5);

\draw[black,line width=0.65 mm] ([shift=(30:3*1.5cm)]0,0) arc (30:150:3*1.5cm);
\draw[black,arrows={-Triangle[length=0.27cm,width=0.2cm]}]
($(57:3*1.5)$) --  ++(-32:0.001);
\draw[black,arrows={-Triangle[length=0.27cm,width=0.2cm]}]
($(118:3*1.5)$) --  ++(30:0.001);

\node at (60:3.2*1.5) {\small $9_r$};
\node at (118.5:3.15*1.5) {\small $9_Y$};

\node at (77:1.15*1.5) {\small $1''$};
\node at (84:2.5*1.5) {\small $1_r''$};
%\node at (140.5:1.15*1.5) {\small $2$};
%\node at (39:1.15*1.5) {\small $6$};

%\node at (86:2.6*1.5) {\small $1_{r}$};
%\node at (154:2.6*1.5) {\small $2_{r}$};
%\node at (34:2.6*1.5) {\small $6_{r}$};

%\node at (92:4.9*1.5) {\small $4'$};
%\node at (148:4.9*1.5) {\small $5'$};
%\node at (32:4.9*1.5) {\small $3'$};

\node at (86.2:3.85*1.5) {\small $4_{r}'$};
\node at (87.1:4.78*1.5) {\small $4'$};
%\node at (153.5:3.8*1.5) {\small $5_{r}'$};
%\node at (33.5:3.8*1.5) {\small $3_{r}'$};
	
\node at (96:3.56*1.5) {\small $5_Y$};
\node at (143.5:3.6*1.5) {\small $6_Y$};
\node at (140:2.55*1.5) {\small $7_Y$};
\node at (99:2.5*1.5) {\small $8_Y$};

\node at (97:3.2*1.5) {\small $2$};
\node at (142:3.18*1.5) {\small $5$};

\draw[black,line width=0.65 mm,->-=0.21,->-=0.75] (90:2*1.5)--(113.175:3*1.5)--(90:4.5*1.5);

\draw[black,line width=0.65 mm,->-=0.21,->-=0.75] (-90+240:2*1.5)--(-113.175+240:3*1.5)--(-90+240:4.5*1.5);

\draw[black,arrows={-Triangle[length=0.27cm,width=0.2cm]}]
($(95:3*1.5)$) --  ++(95-90:0.001);
\draw[black,arrows={-Triangle[length=0.27cm,width=0.2cm]}]
($(18+120:3*1.5)$) --  ++(18+120-90:0.001);

\draw[black,line width=0.65 mm,-<-=0.47] ([shift=(30:2*1.5cm)]0,0) arc (30:90:2*1.5cm);
\draw[black,line width=0.65 mm,-<-=0.47] ([shift=(30:4.5*1.5cm)]0,0) arc (30:90:4.5*1.5cm);

\node at (60:2.2*1.5) {\small $9_{L}$};
\node at (60:4.75*1.5) {\small $9_{R}$};

\draw[blue,fill] (113.175:3*1.5) circle (0.12cm);
\draw[green,fill] (-113.175+240:3*1.5) circle (0.12cm);

%New contours
\draw[black,line width=0.65 mm,->-=0.5] (120:3.61*1.5)--(90:4.5*1.5);
\draw[black,line width=0.65 mm,->-=0.5] (120:3.61*1.5)--(150:4.5*1.5);
\draw[black,line width=0.65 mm,->-=0.6] (90:2*1.5)--(120:2.41*1.5);
\draw[black,line width=0.65 mm,->-=0.6] (150:2*1.5)--(120:2.41*1.5);

\node at (106:4.06*1.5) {\small $1_Y$};
\node at (134:4.06*1.5) {\small $2_Y$};
\node at (129:1.95*1.5) {\small $3_Y$};
\node at (107:1.95*1.5) {\small $4_Y$};

\draw[red,fill] (120:3.61*1.5) circle (0.12cm);
\draw[red,fill] (120:2.41*1.5) circle (0.12cm);

%Small circle
\draw[black,line width=0.65 mm, ->=0.35] (120:1.5*3) circle (1.22cm);
\draw[black,arrows={-Triangle[length=0.3cm,width=0.2cm]}]
($(118:1.5*3+1.22)$) --  ++(120-93:0.001);

\end{tikzpicture}
\end{center}
\begin{figuretext}
\label{Gammahatfig}
The contour $\hat{\Gamma} \cap \{k \,|\, \arg k \in [\frac{\pi}{6},\frac{5\pi}{6}]\}$ in the case of Sector $\III_\geq$. 
\end{figuretext}
\end{figure}

\subsection{The small-norm solution $\hat{n}$} 
Let $\mathcal{D}$ denote the union of the open set $D(\epsilon)$ and the sets obtained by letting the symmetry $k \mapsto \omega k$ act repeatedly on $D(\epsilon)$. 
Let $\hat{\Gamma} = \Gamma^{(3)} \cup \partial \mathcal{D}$ and assume that the boundary of each of the three components of $\mathcal{D}$ is oriented clockwise, see Figure \ref{Gammahatfig}. 
Using the $\mathcal{A}$-symmetry, we extend the definition of $m^\omega$ to all of $\mathcal{D}$.
The function $\hat{n}(x,t,k)$ defined by 
\begin{align}\label{mhatdef}
\hat{n}(x,t,k) = \begin{cases}
n^{(3)}(x, t, k)m^\omega(x,t,k)^{-1}, & k \in \mathcal{D},\\
n^{(3)}(x, t, k), & k \in \C \setminus \mathcal{D},
\end{cases} \quad (x,t) \in \III_\geq^T,
\end{align}
satisfies the jump relation $\hat{n}_+ = \hat{n}_- \hat{v}$ on $\hat{\Gamma} \setminus \hat{\Gamma}_\star$, where the jump matrix $\hat{v}$ is given by 
\begin{align}\label{vhatdef}
\hat{v}(x, t, k) 
=  \begin{cases}
 m_-^\omega(x, t, k) v^{(3)}(x, t, k) m_+^\omega(x,t,k)^{-1}, & k \in \hat{\Gamma} \cap \mathcal{D}, \\
m^\omega(x, t, k), & k \in \partial \mathcal{D}, \\
v^{(3)}(x, t, k),  & k \in \hat{\Gamma} \setminus \bar{\mathcal{D}}.
\end{cases}
\end{align}
For a function $h:\hat{\Gamma}\to \mathbb{C}$, we define $(\hat{\mathcal{C}}h)(k) := \frac{1}{2\pi i}\int_{\hat{\Gamma}}\frac{h(k')dk'}{k'-k}$ and $\hat{\mathcal{C}}_{\hat{w}}h := \hat{\mathcal{C}}_{-}(h \hat{w})$.
By construction, $\hat{n}$ satisfies a RH problem with jump contour $\hat{\Gamma}$ in the $L^2$-sense. Hence, standard arguments show that, for $(x,t) \in \III_\geq^T$ and $k \in \C \setminus \hat{\Gamma}$,
\begin{align}\label{mhatrepresentation}
\hat{n}(x, t, k) = (1,1,1) + \hat{\mathcal{C}}(\hat{\mu} \hat{w}) = (1,1,1) + \frac{1}{2\pi i}\int_{\hat{\Gamma}} (\hat{\mu} \hat{w})(x, t, s) \frac{ds}{s - k},
\end{align}
where $\hat{\mu} := \hat{n}_- \in (1,1,1) + L^2(\hat{\Gamma})$ solves the singular integral equation
$$\hat{\mu} = (1,1,1) + \hat{\mathcal{C}}_{\hat{w}}\hat{\mu} \quad \text{in} \quad L^2(\hat{\Gamma}).$$

We write $\hat{\Gamma}$ as the union of three subcontours as follows:
$$\hat{\Gamma} = \partial \mathcal{D} \cup \hat{\mathcal{Y}}^\epsilon \cup \hat{\Gamma}',$$
where $\hat{\mathcal{Y}}^\epsilon := \cup_{j=0}^2 \omega^j \mathcal{Y}^\epsilon$ and $\hat{\Gamma}' := \hat{\Gamma}\setminus (\partial \mathcal{D} \cup \hat{\mathcal{Y}}^\epsilon)$.

\begin{lemma}\label{wlemmaIII}
Let $\hat{w} = \hat{v} - I$. For each $1 \leq p \leq \infty$ and $j =0,1$, the following estimates hold uniformly for $(x,t) \in \III_\geq^T$:
\begin{subequations}\label{westimateIII}
\begin{align}\label{westimateIIIa}
& \|\partial_x^j\hat{w}\|_{L^p(\partial \mathcal{D})} \leq Ct^{-\frac{1}{3}-\frac{j}{6}},
	\\\label{westimateIIIb}
& \|\partial_x^j\hat{w}\|_{L^p(\hat{\mathcal{Y}}^\epsilon)} \leq Ct^{-\frac{N+1}{6}}, 
	\\\label{westimateIIIc}
& \|\partial_x^j\hat{w}\|_{L^p(\hat{\Gamma}')} \leq Ct^{-N}.
\end{align}
\end{subequations}
\end{lemma}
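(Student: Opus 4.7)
The plan is to handle the three pieces of $\hat\Gamma$ separately, using the decomposition of $\hat v$ given in \eqref{vhatdef}.

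On $\partial \mathcal{D}$, we have $\hat v = m^\omega$, so $\hat w = m^\omega - I$. Lemma \ref{momegalemma} gives $\|m^\omega - I\|_{L^\infty(\partial D(\epsilon))} = O(t^{-1/3})$ and $\|\partial_x m^\omega\|_{L^\infty(\partial D(\epsilon))} = O(t^{-1/2})$, and by the $\mathcal{A}$-symmetric extension of $m^\omega$ to all of $\mathcal{D}$, the same bounds hold on $\partial \mathcal{D}$. Since $\partial \mathcal{D}$ is a compact contour of fixed finite length, the $L^p$ bounds for any $1 \le p \le \infty$ follow immediately from the $L^\infty$ bounds, giving \eqref{westimateIIIa} for both $j=0$ (which yields $t^{-1/3}$) and $j=1$ (which yields $t^{-1/2} = t^{-1/3-1/6}$).

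On $\hat{\mathcal{Y}}^\epsilon$, the jump matrix is $\hat v = m_-^\omega v^{(3)} (m_+^\omega)^{-1}$. Since $v^\omega = (m_-^\omega)^{-1} m_+^\omega$ on $\mathcal{Y}^\epsilon$, we can write
\begin{equation*}
\hat w = m_-^\omega (v^{(3)} - v^\omega)(m_+^\omega)^{-1}.
\end{equation*}
Because $m^\omega$ and $(m^\omega)^{-1}$ are uniformly bounded on $\mathcal{D}$ by \eqref{m0boundIII}, and the same for their $x$-derivatives, the estimate \eqref{westimateIIIb} reduces to the corresponding $L^p$ bound for $v^{(3)} - v^\omega$. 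This is precisely \eqref{v3vomegaestimates} of Lemma \ref{momegalemma}, which gives $O(t^{-(N+1)/6})$ in every $L^p$-norm and likewise for the $x$-derivative. The $\mathcal{A}$-symmetric extension to $\hat{\mathcal{Y}}^\epsilon = \bigcup_{j=0}^2 \omega^j \mathcal{Y}^\epsilon$ is immediate.

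On $\hat\Gamma'$, which consists of the portions of $\Gamma^{(3)}$ outside $\bar{\mathcal{D}}$, we have $\hat v = v^{(3)}$. Here $\hat\Gamma'$ avoids all neighborhoods of the saddle points and of the points $1,\omega,\omega^2$, so the exponentials $e^{\pm \theta_{ij}}$ appearing in $v^{(3)}-I$ decay exponentially in $t$ on the parts where $\mp \re \Phi_{ij}>0$, while on the remaining arcs the jumps $v_{2,r}^{(3)}$, $v_{5,r}^{(3)}$, $v_{9_r}^{(1)}$ and the contributions from $r_{1,r}, r_{2,r}, \hat r_{1,r}, \hat r_{2,r}$ in $v_{j_Y,0}^{(3)}$ are controlled by the remainder bounds of Lemma \ref{decompositionlemmaIII}$(c)$ (together with the analogous bounds for $r_{2,r}$ and $\hat r_{2,r}$ obtained from \eqref{def of r2a and r2r}), each of which is $O(t^{-N})$ in every $L^p$-norm. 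Combining these two mechanisms and doing the same for $\partial_x \hat w$ (the $x$-differentiation at most multiplies by powers of $k$ and $t$ that are absorbed into the exponential decay away from the saddles) yields \eqref{westimateIIIc}.

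The main obstacle is the bookkeeping for the estimate on $\hat{\mathcal{Y}}^\epsilon$: one must verify that the approximations of $v^{(3)}$ constructed jump-by-jump in the derivation of the local parametrix (the polynomial approximations of $r_{j,a}$, of $\Delta_{ii}/\Delta_{jj}$, and of the exponentials via Lemma \ref{elemmaIII}) fit together consistently with the jumps $v_j^Y$ of $m^Y$, so that the matrix difference $v^{(3)}-v^\omega$ really does obey the bound \eqref{v3vomegaestimatesa} uniformly on all nine subcontours $\mathcal{Y}_j^\epsilon$; the proof of Lemma \ref{momegalemma} already carries out this matching for representative entries, so the remaining entries go through by the same argument and the present lemma then follows routinely.
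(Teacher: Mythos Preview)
Your proof is correct and follows essentially the same route as the paper's: invoke \eqref{m0LinftyestimateIII}--\eqref{m0xLinftyestimateIII} on $\partial\mathcal{D}$, write $\hat w = m_-^\omega(v^{(3)}-v^\omega)(m_+^\omega)^{-1}$ on $\hat{\mathcal{Y}}^\epsilon$ and appeal to \eqref{m0boundIII} and \eqref{v3vomegaestimates}, and on $\hat\Gamma'$ use either the exponential decay of $e^{\pm\theta_{ij}}$ away from the saddle points or the $O(t^{-N})$ remainder bounds from Lemma~\ref{decompositionlemmaIII}. One small inaccuracy: $\hat\Gamma'$ is not entirely outside $\bar{\mathcal{D}}$, since $\mathcal{Y}^\epsilon$ excludes $\Gamma_2^{(3)}\cup\Gamma_5^{(3)}$, so the arcs $\Gamma_2^{(3)}\cap\mathcal{D}$ and $\Gamma_5^{(3)}\cap\mathcal{D}$ also belong to $\hat\Gamma'$; there $\hat w = m^\omega v_{j,r}^{(3)}(m^\omega)^{-1}$ and one needs the boundedness \eqref{m0boundIII} of $m^\omega$ in addition to the remainder estimate, exactly as the paper notes parenthetically.
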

\begin{proof}
The estimates (\ref{westimateIIIa}) follow from (\ref{m0LinftyestimateIII})--(\ref{m0xLinftyestimateIII}).
For $k \in \mathcal{Y}^\epsilon$, we have
$$\hat{w} = m_-^\omega (v^{(3)} - v^\omega) (m_+^\omega)^{-1},$$
so equations (\ref{m0boundIII}) and (\ref{v3vomegaestimates}) yield (\ref{westimateIIIb}).
On $\hat{\Gamma}'$, the jump matrix $v^{(3)}$ involves either (i) the small remainders $r_{j,r}$ and $\hat{r}_{j,r}$, (ii) the matrices $v_{4_r'}^{(3)}$ and $v_{1_r''}^{(3)}$ or matrices related to these by symmetry, or (iii) exponentials which are uniformly exponentially small. In case (i), the estimate (\ref{westimateIIIc}) holds as a consequence of Lemma \ref{decompositionlemmaIII} and (for the part of $\hat{\Gamma}'$ that lies in $\mathcal{D}$) the boundedness (\ref{m0boundIII}) of $m^\omega$. In case (ii), (\ref{westimateIIIc}) follows from Lemma \ref{vsmallnearilemma}.
In case (iii), (\ref{westimateIIIc}) holds as a consequence of the exponential decay.
\end{proof}

The estimates in Lemma \ref{wlemmaIII} show that, for $1 \leq p \leq \infty$ and $N$ sufficiently large,
\begin{align}\label{hatwLinftyIII}
\begin{cases}
\|\hat{w}\|_{L^p(\hat{\Gamma})} \leq Ct^{-1/3},
	\\
\|\partial_{x}\hat{w}\|_{L^p(\hat{\Gamma})} \leq C t^{-1/2},
\end{cases}	 \qquad (x,t) \in \III_\geq^T.
\end{align}
In particular, $\hat{w}$ obeys $\|\hat{w}\|_{(L^2 \cap L^\infty)(\hat{\Gamma})} \to 0$ uniformly as $t \to \infty$. Thus, increasing $T$ if necessary, we may assume that $\|\hat{\mathcal{C}}_{\hat{w}}\|_{\mathcal{B}(L^2(\hat{\Gamma}))} \leq 1/2$ for all $(x,t) \in \III_\geq^T$, where $\mathcal{B}(L^{2}(\hat{\Gamma}))$ denotes the space of bounded linear operators on $L^{2}(\hat{\Gamma})$. Hence $I - \hat{\mathcal{C}}_{\hat{w}}$ is invertible on $L^2(\hat{\Gamma})$, $\hat{\mu} = (1,1,1) + (I - \hat{\mathcal{C}}_{\hat{w}})^{-1}\hat{\mathcal{C}}_{\hat{w}}(1,1,1)$,
and
\begin{align}\label{muestimateIII}
\|\hat{\mu}(x,t,\cdot) - (1,1,1)\|_{L^2(\hat{\Gamma})} 
\leq  \frac{C\|\hat{w}\|_{L^2(\hat{\Gamma})}}{1 - \|\hat{\mathcal{C}}_{\hat{w}}\|_{\mathcal{B}(L^2(\hat{\Gamma}))}}\leq C t^{-1/3}
\end{align}
for $(x,t) \in \III_\geq^T$. 
On $\partial \mathcal{D}$, we have $\hat{w} = m^\omega - I$. Hence, by (\ref{momegaexpansion}) and (\ref{westimateIII}), 
\begin{align}\nonumber
\hat{w}(x, t,k) = &\; \frac{\hat{w}_1(y,k)}{t^{1/3}} + \frac{\hat{w}_2(x,t,k)}{t^{1/2}}
+ \frac{\hat{w}_3(y,\tilde{y},k)\ln t}{t^{2/3}}  
+ \frac{\hat{w}_4(y,\tilde{y},k)}{t^{2/3}}  
	\\\label{whatexpansion}
& + \frac{\hat{w}_5(x,y,k)\ln t}{t^{5/6}}  
+ \frac{\hat{w}_{err}(x,t,k)}{t^{5/6}}, \quad (x,t) \in \III_\geq^T, \ k \in \hat{\Gamma},
\end{align}
where the coefficients $\hat{w}_j$, $j = 1,\dots, 5$, are nonzero only for $k \in \partial \mathcal{D}$ and the following uniform bounds hold for $1 \leq p \leq \infty$ and $l = 0,1$,
\begin{align}\label{hatwerrestIII}
\begin{cases}
\|\partial_x^l \hat{w}_j\|_{L_k^p(\hat{\Gamma})}  \leq C, & 
	\\
 \|\partial_x^l\hat{w}_{err}\|_{L_k^p(\hat{\Gamma})} \leq C,  &
  \end{cases} \ (x,t) \in \III_\geq^T, \ k \in \hat{\Gamma}.
\end{align}
The function $\hat{w}$ (and hence also all the coefficients $\hat{w}_j$, $j = 1,\dots, 5$) obey the symmetry 
\begin{align}\label{whatsymm}
\hat{w}(x,t,k) = \mathcal{A} \hat{w}(x,t,\omega k)\mathcal{A}^{-1},
\end{align}
and for $k \in \partial D(\epsilon)$, we have 
\begin{align}\label{what1what2explicit}
& \hat{w}_1(y, k) = \frac{m_1^P(y)}{f_\omega(k)}, \qquad
\hat{w}_2(x,t, k) = \frac{Ym^P(y,0)m_1^W(\tilde{y})m^P(y,0)^{-1}Y^{-1}}{\frac{2^{1/3}}{3^{1/12}} f_\omega(k)},
	\\
& \hat{w}_3(y,\tilde{y},k) = \frac{\hat{m}_{1, \ln}^Y(y,\tilde{y})}{f_\omega(k)},
\qquad \hat{w}_4(y,\tilde{y},k) = \frac{m_2^P(y)}{f_\omega(k)^2} + \frac{\hat{m}_1^Y(y,\tilde{y})}{f_\omega(k)}.
	\\
& \hat{w}_5(x,t,k)=\frac{Y\hat{m}_{2, \ln}^Y(y,\tilde{y})Y^{-1}}{f_\omega(k)}.
\end{align}

Since $\hat{\mu} = (1,1,1) + (I - \hat{\mathcal{C}}_{\hat{w}})^{-1}\hat{\mathcal{C}}_{\hat{w}}(1,1,1) = (1,1,1) + \hat{\mathcal{C}}_{\hat{w}}(1,1,1) + \hat{\mathcal{C}}_{\hat{w}}^2 (1,1,1) + \cdots$ and
\begin{align*}
\hat{\mathcal{C}}_{\hat{w}} = &\; \frac{\hat{\mathcal{C}}_{\hat{w}_1}}{t^{1/3}} + \frac{\hat{\mathcal{C}}_{\hat{w}_2}}{t^{1/2}}
+ \frac{\hat{\mathcal{C}}_{\hat{w}_3} \ln t}{t^{2/3}} 
+ \frac{\hat{\mathcal{C}}_{\hat{w}_4}}{t^{2/3}} 
+ \frac{\hat{\mathcal{C}}_{\hat{w}_5} \ln t}{t^{5/6}} 
+ \frac{\hat{\mathcal{C}}_{\hat{w}_{err}}}{t^{5/6}},
\end{align*}
it follows that 
\begin{align}\label{hatmuexpansion}
\hat{\mu}(x, t,k) = &\; (1,1,1) + \frac{\hat{\mu}_1(y,k)}{t^{1/3}} 
+ \frac{\hat{\mu}_{err}(x,t,k)}{t^{1/2}}, \qquad (x,t) \in \III_\geq^T, \ k \in \hat{\Gamma},
\end{align}
where $\hat{\mu}_1(y,k) := (\hat{\mathcal{C}}_{\hat{w}_1(y,\cdot)}(1,1,1))(k)$ and, for $l = 0,1$,
\begin{align}\label{hatmuerrestIII}
\begin{cases}
 \|\partial_x^l \hat{\mu}_1 \|_{L^2(\partial D(\epsilon))} \leq C, &
	\\ 
\|\partial_x^l \hat{\mu}_{err} \|_{L^2(\partial D(\epsilon))} \leq C, &
\end{cases} \quad (x,t) \in \III_\geq^T.
\end{align}

\subsection{Asymptotics of $u$}
Let
\begin{align}\label{limkm}
I(x,t) := \lim_{k\to \infty} k(n(x,t,k)  - (1,1,1)) = -\frac{1}{2\pi i}\int_{\hat{\Gamma}} \hat{\mu}(x, t, k) \hat{w}(x, t, k) dk.
\end{align}
We will obtain the asymptotics of $u(x,t)$ from the first formula in \eqref{recoveruvn}, which can be written in terms of $I(x,t)$ as
\begin{align}
u(x,t) = -i\sqrt{3}\frac{\partial}{\partial x} \lim_{k\to\infty} k (n_3(x,t,k) - 1) = -i\sqrt{3} \frac{\partial  I_{3}(x,t)}{\partial x}.
\end{align} 
Let $I_{\hat{\mathcal{Y}}^\epsilon}$, $I_{\hat{\Gamma}'}$, and $I_{\partial D(\epsilon)}$ denote the contributions to the right-hand side of (\ref{limkm}) from $\hat{\mathcal{Y}}^\epsilon$, $\hat{\Gamma}'$, and $\partial D(\epsilon)$, respectively. 
By (\ref{westimateIIIb}) and (\ref{muestimateIII}), we have\footnote{In the rest of this section, all error terms $O(\cdot)$ are uniform with respect to $(x,t) \in \III_\geq^T$ as $t \to \infty$.}
\begin{align*}
 I_{\hat{\mathcal{Y}}^\epsilon} = -\frac{1}{2\pi i}\int_{\hat{\mathcal{Y}}^\epsilon} \hat{\mu} \hat{w} dk
= O\big(\|\hat{w}\|_{L^1(\hat{\mathcal{Y}}^\epsilon)}
& + \|\hat{\mu} - I\|_{L^2(\hat{\mathcal{Y}}^\epsilon)}\|\hat{w}\|_{L^2(\hat{\mathcal{Y}}^\epsilon)}\big)
  = O(t^{-(N+1)/6}),
\end{align*}
and, by (\ref{westimateIIIc}) and (\ref{muestimateIII}), we have $I_{\hat{\Gamma}'} = O(t^{-N})$, and both these formulas can be differentiated with respect to $x$: 
$$\partial_x I_{\hat{\mathcal{Y}}^\epsilon} =  O(t^{-(N+1)/6}), \qquad \partial_x I_{\hat{\Gamma}'} = O(t^{-N}).$$

The symmetry $\hat{w}(x,t,k) = \mathcal{A} \hat{w}(x,t,\omega k)\mathcal{A}^{-1}$ implies that the contribution from $\partial \mathcal{D}$ to the right-hand side of (\ref{limkm}) is 
$$I_{\partial \mathcal{D}} := \sum_{l=0}^2 \omega^l  I_{\partial D(\epsilon)} \mathcal{A}^l,$$
so it only remains to consider $I_{\partial D(\epsilon)}$.
By (\ref{whatexpansion}) and (\ref{hatmuexpansion}), we have
\begin{align}\nonumber
 I_{\partial D(\epsilon)} = & -\frac{1}{2\pi i} \int_{\partial D(\epsilon)} (1,1,1)\hat{w} dk	
-  \frac{1}{2\pi i} \int_{\partial D(\epsilon)} (\hat{\mu} - (1,1,1)) \hat{w} dk
	\\\nonumber
=& -\frac{1}{2\pi i}\int_{\partial D(\epsilon)} (1,1,1)(m^\omega - I) dk
 -  \frac{1}{2\pi i} \int_{\partial D(\epsilon)} \bigg(\frac{\hat{\mu}_1}{t^{1/3}} 
+ \frac{\hat{\mu}_{err}}{t^{1/2}} \bigg) 
	\\ \label{partialDcontribution}
&\times
 \bigg(\frac{\hat{w}_1}{t^{1/3}} + \frac{\hat{w}_2}{t^{1/2}}
+ \frac{\hat{w}_3\ln t}{t^{2/3}}  
+ \frac{\hat{w}_4}{t^{2/3}}  
+ \frac{\hat{w}_5\ln t}{t^{5/6}}  
+ \frac{\hat{w}_{err}}{t^{5/6}} \bigg)  dk.
\end{align}
Plugging in the expansion (\ref{momegaexpansion}) of $m^\omega$ and using the bounds (\ref{hatwerrestIII}) and (\ref{hatmuerrestIII}), as well as the expressions
$$\underset{k = \omega}{\res} \frac{1}{f_\omega(k)}= -2i\omega \Big(\frac{2}{3}\Big)^{1/3},
\qquad 
\underset{k = \omega}{\res} \frac{1}{f_\omega(k)^2}= -2i \Big(\frac{2}{3}\Big)^{1/3} \underset{k = \omega}{\res} \frac{1}{f_\omega(k)},$$
we obtain
\begin{align}\nonumber
I_{\partial D(\epsilon)} = & -2i\omega \Big(\frac{2}{3}\Big)^{1/3} (1,1,1)\bigg(\frac{m_1^P(y)}{t^{1/3}} 
+ \frac{Ym^P(y,0)m_1^W(\tilde{y})m^P(y,0)^{-1}Y^{-1}}{\frac{2^{1/3}}{3^{1/12}} t^{1/2}}
	\\\nonumber
&-2i \Big(\frac{2}{3}\Big)^{1/3} \frac{m_2^P(y)}{t^{2/3}} + \frac{\hat{m}_{1,\ln}^Y(y,\tilde{y}) \ln t}{t^{2/3}}
+ \frac{\hat{m}_1^Y(y,\tilde{y})}{t^{2/3}}
+ \frac{Y\hat{m}_{2,\ln}^Y(y,\tilde{y}) Y^{-1}\ln t}{t^{5/6}} \bigg)
	\\\label{IpartialDepsilon}
& -  \frac{1}{2\pi i} \frac{\int_{\partial D(\epsilon)} \hat{\mu}_1 \hat{w}_1 dk}{t^{2/3}}
 + O\bigg(\frac{1}{t^{5/6}}\bigg).
\end{align}

We now make two important observations. First, note that the explicit expressions for $m^P(y,0)$ and $m_1^W$ in (\ref{mPat0expression}) and (\ref{m1Wexpression}) show that $(1,1,1)Ym^P(y,0)m_1^W(\tilde{y})m^P(y,0)^{-1}Y^{-1} = 0$. We conclude that the term of order $t^{-1/2}$ in (\ref{IpartialDepsilon}) does not contribute to $u(x,t)$. 
Second, since
\begin{align}\label{partialxpartialy}
\frac{\partial}{\partial x}= \bigg(\frac{2}{3t}\bigg)^{1/3}\frac{\partial}{\partial y}, \qquad
\frac{\partial}{\partial x}=  \frac{1}{3t} \frac{\partial}{\partial \tilde{y}},
\end{align}
the contributions to $u(x,t)$ from the terms in (\ref{IpartialDepsilon}) involving $m_2^P(y)$, $\hat{m}_{1,\ln}^Y(y,\tilde{y})$, $\hat{m}_{1}^Y(y,\tilde{y})$, and $\int_{\partial D(\epsilon)} \hat{\mu}_1 \hat{w}_1 dk$ will all be of order $O(t^{-1})$. 
%Furthermore, using the symmetry (\ref{whatsymm}) and the expression for $\hat{w}_1$ in (\ref{what1what2explicit}), we see that, for $k \in \partial D(\epsilon)$,
%\begin{align}\nonumber
%\hat{\mu}_1(y, k) = &\; (\hat{\mathcal{C}}_{\hat{w}_1}(1,1,1))(k)
% =  (1,1,1)\sum_{l=0}^2 \omega^l \mathcal{A}^{-l} \frac{1}{2\pi i}\int_{\partial D(\epsilon)} \frac{\hat{w}_1(k')}{(\omega^l k'-k_-)} dk' \mathcal{A}^l
%	\\\nonumber
% = &\;  (1,1,1)\sum_{l=0}^2 \mathcal{A}^{-l}m_1^P(y) \mathcal{A}^l 
%\frac{1}{2\pi i}\int_{\partial D(\epsilon)} \frac{1}{f_\omega(k')(k'-\omega^{-l}k_-)} dk' 
%	\\ \label{muhat1explicit}
%= &\; 2\Big(\frac{2}{3}\Big)^{1/3} (1,1,1)\bigg(\frac{e^{\frac{\pi i}{6}}}{k-\omega} m_1^P(y)
%+ \frac{e^{\frac{5\pi i}{6}}}{k-\omega^2}  \mathcal{A}^{-1}m_1^P(y) \mathcal{A}
%- \frac{i}{k-1} \mathcal{A}^{-2}m_1^P(y) \mathcal{A}^2\bigg).
%\end{align}
%In particular, $\int_{\partial D(\epsilon)} \hat{\mu}_1 \hat{w}_1 dk$ is a smooth function of $y$, implying that the contribution to the asymptotics of $u(x,t)$ from this integral also is of order $O(t^{-1})$. 

The above observations imply that for the purposes of computing the asymptotics of $u(x,t)$ up to an error of order $O(t^{-5/6})$, we may replace $I_{\partial D(\epsilon)}$ by 
\begin{align}\label{IpartialDepsilon2}
I_{\partial D(\epsilon)}' = &\; -2i\omega \Big(\frac{2}{3}\Big)^{1/3}\frac{(1,1,1)m_1^P(y)}{t^{1/3}}
-2i\omega \Big(\frac{2}{3}\Big)^{1/3} \frac{(1,1,1)Y\hat{m}_{2,\ln}^Y(y,\tilde{y}) Y^{-1}\ln t}{t^{5/6}}.
\end{align}
To summarize, we have shown that
\begin{align}\label{recoveru2}
u(x,t) = -i\sqrt{3}  \frac{\partial  }{\partial x}\bigg[\sum_{l=0}^2 \omega^l I_{\partial D(\epsilon)}' \mathcal{A}^l\bigg]_3
+ O\bigg(\frac{1}{t^{5/6}}\bigg), \qquad t \to \infty,
\end{align}
with $I_{\partial D(\epsilon)}'$ given by (\ref{IpartialDepsilon2}). 
Our next lemma shows that the second term on the right-hand side of (\ref{IpartialDepsilon2}) actually vanishes.

\begin{lemma}\label{hatm2lnlemma}
$(1,1,1)Y(x,t)\hat{m}_{2,\ln}^Y(y,\tilde{y}) = 0$ for all $(x,t) \in \III_\geq^T$.
\end{lemma}
\begin{proof}
By comparing (\ref{p1explicit}) and (\ref{psumIII}), we see that all coefficients $a_{1l}^{(1)}$ in the expression (\ref{psumIII}) for $p_1(t,z)$ vanish except for $a_{11}^{(1)}$, and that $a_{11}^{(1)} = -a_1$ where $a_1$  is the constant in (\ref{a1b1explicit}). A similar calculation shows that 
all coefficients $a_{1l}^{(2)}$ in the expression (\ref{psumIII}) for $p_2(t,z)$ vanish except for $a_{11}^{(2)}$, and that $a_{11}^{(2)} = a_1$. Thus, by (\ref{wYexpansionPleadingcoeffb}), (\ref{hatm1Ym2Yc}), and (\ref{A1def}),
\begin{align*}
\hat{m}_{2,\ln}^Y =&\; \frac{1}{6\pi i} \bigg(\int_{Y_1 \cup Y_2} 
a_{11}^{(1)} z \big[m_-^P(y,z) m_+^P(y,z)^{-1}, B_1\big] dz
+ \int_{Y_3 \cup Y_4} a_{11}^{(2)} z \big[m_-^P(y,z) m_+^P(y,z)^{-1}, B_1\big]dz\bigg)
	\\
 = &\; -\frac{a_1}{6\pi i} \frac{s e^{\frac{3\pi i}{4}} e^{U_{\HM}(y)}}{\frac{2^{1/3}}{3^{1/12}} \sqrt{12 \pi} \sqrt{1 + \tilde{y}}}  \Bigg\{\int_{Y_1 \cup Y_2} 
 \Bigg[m_-^P(y,z) m_+^P(y,z)^{-1}, \begin{pmatrix} 0 & 1 & 0 \\
0 & 0 & 0 \\
0 & -1 & 0 \end{pmatrix}\Bigg] dz
	\\
& - \int_{Y_3 \cup Y_4}  \Bigg[m_-^P(y,z) m_+^P(y,z)^{-1}, \begin{pmatrix} 0 & 1 & 0 \\
0 & 0 & 0 \\
0 & -1 & 0 \end{pmatrix}\Bigg]dz\Bigg\}.
\end{align*}

\begin{figure}
\begin{center}
\begin{tikzpicture}[master, scale=0.7]
\node at (0,0) {};

\draw[black,line width=0.55 mm,->-=0.6] (0,0.5)--($(0,0.5)+(30:3)$);
\draw[black,line width=0.55 mm,->-=0.6] (0,0.5)--($(0,0.5)+(150:3)$);
\draw[black,line width=0.55 mm,-<-=0.45] (0,-0.5)--($(0,-0.5)+(-30:3)$);
\draw[black,line width=0.55 mm,-<-=0.45] (0,-0.5)--($(0,-0.5)+(-150:3)$);

\node at (1.5,0.85) {\small $Y_1$};
\node at (-1.5,0.85) {\small $Y_2$};
\node at (-1.5,-0.93) {\small $Y_3$};
\node at (1.5,-0.93) {\small $Y_4$};

\node at (0,0.25) {\small $i$};
\node at (0,-0.3) {\small $-i$};

\node at (0,2.05) {\small $\gamma_V$};
\node at (0,-2.05) {\small $-\gamma_V$};

%draw \gamma_V
\draw[black,line width=0.55 mm,->-=0.55]  ($(0,0.5)+(150:3)+(0,0.3)$) to[out=-25,in=-155] ($(0,0.5)+(30:3)+(0,0.3)$);
%draw \gamma_\Lambda
\draw[black,line width=0.55 mm,->-=0.55]  ($(0,-0.5)+(-30:3)+(0,-0.3)$) to[out=155,in=25] ($(0,-0.5)+(-150:3)+(0,-0.3)$);

\end{tikzpicture}
\end{center}
\begin{figuretext}
\label{gammaVfig}
The contours $\gamma_V$ and $-\gamma_V$.
\end{figuretext}
\end{figure}

Let $\gamma_V$ denote a contour from $e^{5\pi i/6} \infty$ to $e^{\pi i/6} \infty$ which lies above the contour $Y_1 \cup Y_2$, see Figure \ref{gammaVfig}.
A calculation using the jump relation for $m^P$ shows that
\begin{align*}
\hat{m}_{2,\ln}^Y(y,\tilde{y}) 
= & -\frac{a_1}{6\pi i} \frac{s e^{\frac{3\pi i}{4}} e^{U_{\HM}(y)}}{\frac{2^{1/3}}{3^{1/12}} \sqrt{12 \pi} \sqrt{1 + \tilde{y}}}  \Bigg\{\int_{\gamma_V}  e^{2iyz + \frac{8iz^3}{3}} (m^P_{13} + m^P_{33})  \begin{pmatrix} 0 & m^P_{13} & 0 \\
0 & 0 & 0 \\
0 & m^P_{33} & 0 \end{pmatrix}dz
	\\
& - \int_{-\gamma_V}  e^{-2iyz - \frac{8iz^3}{3}} (m^P_{11} + m^P_{31})  \begin{pmatrix} 0 & -m^P_{11} & 0 \\
0 & 0 & 0 \\
0 & -m^P_{31} & 0 \end{pmatrix} dz\Bigg\}.
\end{align*}
The symmetry (\ref{mPsymm}) implies that $m^P(y,z)_{11} = m^P(y,-z)_{33}$ and $m^P(y,z)_{13} = m^P(y,-z)_{31}$. Thus changing variables $z \to -z$ in the integral along $-\gamma_V$, we find
\begin{align*}
\hat{m}_{2,\ln}^Y(y,\tilde{y}) 
%= &\; \frac{1}{6\pi i} \frac{s e^{\frac{3\pi i}{4}} e^{U_{\HM}(y)}}{\frac{2^{1/3}}{3^{1/12}} \sqrt{12 \pi} \sqrt{1 + \tilde{y}}}  \bigg\{\int_{\gamma_V}  e^{2iyz + \frac{8iz^3}{3}} (m^P_{13} + m^P_{33})  \begin{pmatrix} 0 & m^P_{13} & 0 \\
%0 & 0 & 0 \\
%0 & m^P_{33} & 0 \end{pmatrix}dz
%	\\
%& - \int_{\gamma_V}  e^{2iyz + \frac{8iz^3}{3}} (m^P_{33} + m^P_{13})  \begin{pmatrix} 0 & -m^P_{33} & 0 \\
%0 & 0 & 0 \\
%0 & -m^P_{13} & 0 \end{pmatrix} (-dz)\bigg\}
%	\\
= & -\frac{a_1}{6\pi i} \frac{s e^{\frac{3\pi i}{4}} e^{U_{\HM}(y)}}{\frac{2^{1/3}}{3^{1/12}} \sqrt{12 \pi} \sqrt{1 + \tilde{y}}}  \int_{\gamma_V}  e^{2iyz + \frac{8iz^3}{3}} (m^P_{13} + m^P_{33})(m^P_{13} - m^P_{33}) dz \begin{pmatrix} 0 & 1 & 0 \\
0 & 0 & 0 \\
0 & -1 & 0 \end{pmatrix}.
\end{align*}
Since $Y$ is diagonal and $Y_{11} = Y_{33}$, the claim follows.
\end{proof}

Substituting (\ref{IpartialDepsilon2}) into (\ref{recoveru2}) and using Lemma \ref{hatm2lnlemma}, we arrive at
\begin{align*}
 u(x,t) =&\; \frac{2^{1/3} 3^{2/3} \partial_x\left(u_{\HM}(y) - \int_{+\infty}^y u_{\HM}(y)^2 dy\right)}{t^{1/3}} + O\bigg(\frac{1}{t^{5/6}}\bigg).
\end{align*}
Using (\ref{partialxpartialy}) to compute the $x$-derivative, we obtain the formula in (\ref{uasymptotics}) for Sector $\III$.

\section{Asymptotics: Sectors I, II, IV, and V}\label{othersectorssec}
In Section \ref{transitionsec}, we gave a detailed derivation of the asymptotic formula of Theorem \ref{asymptoticsth} valid in Sector III. The formulas of Theorem \ref{asymptoticsth} for Sectors I, II, IV,  and V can be derived using similar tools. Since the derivations are rather long, we do not provide full proofs here but refer to the companion papers \cite{CLsectorI, CLsectorII, CLsectorIV, CLsectorV} for detailed derivations; here we only highlight the main differences compared to Sector III.

\subsection{Sectors I and II} 
Only six of the twelve saddle points contribute to the asymptotics in Sectors I and II. Indeed, for $\zeta > c > 1$, the saddle point $k_4$ lies in the domain where $\re \Phi_{21} < 0$, see Figure \ref{fig: Re Phi 21 for various zeta}. This means that the contributions from $k_4$ (and hence, by symmetry, also from $k_3 = k_4^{-1}$, $\omega k_3, \omega^2 k_3, \omega k_4$, and $\omega^2 k_4$) are uniformly exponentially small. 
The contributions from the six saddle points on  $\partial \D$ can be evaluated with the help of a parabolic cylinder local model, and this eventually leads to the formulas of Theorem \ref{asymptoticsth}, see \cite{CLsectorI, CLsectorII} for details.

The derivation in Sector I is different from that of the other sectors in that we use $x$ as the large parameter instead of $t$. As $\zeta \to \infty$, $k_1$ approaches $i$ and by Assumption \ref{nounstablemodesassumption}, the symmetry (\ref{r1r2 relation with kbar symmetry}), and Theorem \ref{directth} $(i)$, both $r_1(k)$ and $r_2(k)$ vanish to all orders at $k = i$. This leads to the rapid decay observed in Sector I as $\zeta \to \infty$, see \cite{CLsectorI}.

\subsection{Sector IV} 
In Sector IV, all twelve saddle points contribute to the asymptotics. 
The local saddle point analysis involves two rather complicated model RH problems built in terms of parabolic cylinder functions. The contributions from six of the saddle points combine with the global parametrix to yield the right-moving modulated wave $A_{1}(\zeta) t^{-1/2}  \cos \alpha_{1}(\zeta,t) $ in the asymptotic formula (\ref{uasymptotics}), while the other six yield the left-moving modulated wave $A_{2}(\zeta) t^{-1/2} \cos \alpha_{2}(\zeta,t)$, see the companion paper \cite{CLsectorIV} for details.
While the global parametrix $\Delta^{-1}$ employed in Sector III is built out of the single $\zeta$-independent function $\delta(k)$ in (\ref{deltadefIII}), the definition of the global parametrix in Sector IV involves five functions $\delta_j(\zeta, k)$, $j = 1, \dots, 5$, given by
\begin{align}\nonumber
& \delta_{1}(\zeta, k) = \exp \bigg\{\hspace{-0.02cm} \frac{-1}{2\pi i} \int_{i}^{\omega k_{4}} \frac{\ln(1 + r_1(s)r_{2}(s))}{s - k} ds \hspace{-0.02cm}\bigg\},  \; \delta_{2}(\zeta, k) \hspace{-0.02cm}=\hspace{-0.02cm} \exp \bigg\{\hspace{-0.02cm} \frac{1}{2\pi i} \int_{\omega k_{4}}^{\omega^{2}k_{2}} \frac{\ln(1 + r_1(s)r_{2}(s))}{s - k} ds \hspace{-0.02cm}\bigg\},
	 \\\nonumber
& \delta_{3}(\zeta, k) = \exp \bigg\{ \frac{1}{2\pi i} \int_{\omega k_{4}}^{\omega^{2}k_{2}} \frac{\ln f(s)}{s - k} ds \bigg\}, \quad
\delta_{4}(\zeta, k) = \exp \bigg\{ \frac{1}{2\pi i} \int_{\omega^{2}k_{2}}^{\omega} \frac{\ln f(s)}{s - k} ds \bigg\},
  	\\ \label{IVdeltadef}
&  \delta_{5}(\zeta, k) = \exp \bigg\{ \frac{1}{2\pi i} \int_{\omega^{2}k_{2}}^{\omega} \frac{\ln f(\omega^{2}s)}{s - k} ds \bigg\},
\end{align}
where the paths follow the unit circle in the counterclockwise direction, the principal branch is used for the logarithms, and $f(k)$ is defined in (\ref{def of f}).
In Sector IV, we have $\arg k_{2} \in (-\frac{3\pi}{4},-\frac{2\pi}{3})$ and $\arg k_{4} \in (-\frac{\pi}{6},0)$, so Lemma \ref{inequalitieslemma} implies that the arguments of all the logarithms appearing in (\ref{IVdeltadef}) are $>0$.
 
The phase shifts $\arg d_{1,0}$ and $\arg d_{2,0}$ that appear in the formula for Sector IV in Theorem \ref{asymptoticsth} are generated by certain quotients of the functions $\{\delta_j\}_{j=1}^5$ evaluated at the saddle points.  More precisely, the definitions of $\arg d_{1,0}$ and $\arg d_{2,0}$ involve the functions
\begin{align}\nonumber
& \mathcal{D}_{1}(k) := \frac{\delta_{1}(\omega k)\delta_{1}(\frac{1}{\omega^{2}k})^{2}}{\delta_{1}(\omega^{2}k)^{2}\delta_{1}(\frac{1}{\omega k})\delta_{1}(\frac{1}{k})} \frac{\delta_{2}(\omega^{2}k)\delta_{2}(\frac{1}{k})^{2}}{\delta_{2}(\omega k)^{2} \delta_{2}(\frac{1}{\omega^{2}k})\delta_{2}(\frac{1}{\omega k})} \frac{\delta_{3}(\omega k) \delta_{3}(\omega^{2} k) \delta_{3}(\frac{1}{\omega k})^{2}}{\delta_{3}(\frac{1}{k})\delta_{3}(\frac{1}{\omega^{2} k})}  
	\\ \nonumber
& \hspace{1.5cm} \times \frac{\delta_{4}(\omega^{2}k)^{2} \delta_{4}(\frac{1}{k})\delta_{4}(\frac{1}{\omega k})}{\delta_{4}(k)\delta_{4}(\omega k)\delta_{4}(\frac{1}{\omega^{2} k})^{2}} \frac{\delta_{5}(\omega k)^{2} \delta_{5}(\frac{1}{\omega k})\delta_{5}(\frac{1}{\omega^{2}k})}{\delta_{5}(k) \delta_{5}(\frac{1}{k})^{2}\delta_{5}(\omega^{2}k)}, 
	\\ \nonumber
& \mathcal{D}_{2}(k) := \frac{\delta_{1}(\omega k)^{2}\delta_{1}(\frac{1}{\omega^{2}k})\delta_{1}(\frac{1}{k})}{\delta_{1}(\omega^{2}k)\delta_{1}(\frac{1}{\omega k})^{2}\delta_{1}(k)} \frac{\delta_{2}(\frac{1}{k})\delta_{2}(\frac{1}{\omega k})}{\delta_{2}(\omega k) \delta_{2}(\frac{1}{\omega^{2}k})^{2}\delta_{2}(\omega^{2} k)} \frac{\delta_{3}(\omega^{2} k)^{2}  \delta_{3}(\frac{1}{\omega k})\delta_{3}(\frac{1}{\omega^{2} k})}{\delta_{3}(\frac{1}{k})^{2}\delta_{3}(\omega k)}  
	\\ \label{mathcalPdef}
& \hspace{1.5cm} \times \frac{\delta_{4}(\omega^{2}k) \delta_{4}(\frac{1}{\omega k})^{2}}{\delta_{4}(\omega k)^{2}\delta_{4}(\frac{1}{\omega^{2} k})\delta_{4}(\frac{1}{k})} \frac{\delta_{5}(\omega k) \delta_{5}(\frac{1}{\omega^{2}k})^{2}\delta_{5}(\omega^{2} k)}{\delta_{5}(\frac{1}{k}) \delta_{5}(\frac{1}{\omega k})},
\end{align}
where we have suppressed the $\zeta$-dependence for conciseness, as well as the functions $\chi_{j}(\zeta,k)$, $j = 1, 2, 3$, and $\tilde{\chi}_{j}(\zeta,k)$, $j = 2,3,4,5$, defined by
\begin{align}
\chi_{1}(\zeta,k) =&\; - \frac{1}{2\pi i} \int_{i}^{\omega k_{4}}  \ln_{s}(k-s) d\ln(1+r_1(s)r_{2}(s)), \nonumber \\
\chi_{2}(\zeta,k) =&\; \frac{1}{2\pi i} \int_{\omega k_{4}}^{\omega^{2}k_{2}}  \ln_{s}(k-s) d\ln(1+r_1(s)r_{2}(s)), \nonumber \\
\chi_{3}(\zeta,k) =&\; \frac{1}{2\pi i} \int_{\omega k_{4}}^{\omega^{2}k_{2}}  \ln_{s}(k-s) d\ln(f(s)), \nonumber \\
\tilde{\chi}_{2}(\zeta,k) =&\; \frac{1}{2\pi i} \int_{\omega k_{4}}^{\omega^{2}k_{2}}  \tilde{\ln}_{s}(k-s) d\ln(1+r_1(s)r_{2}(s)), \nonumber \\
\tilde{\chi}_{3}(\zeta,k) =&\; \frac{1}{2\pi i} \int_{\omega k_{4}}^{\omega^{2}k_{2}}  \tilde{\ln}_{s}(k-s) d\ln(f(s)), \nonumber \\
\tilde{\chi}_{4}(\zeta,k) =&\;\frac{1}{2\pi i} \dashint_{\omega^{2}k_{2}}^{\omega}  \tilde{\ln}_{s}(k-s) d\ln(f(s)) \nonumber \\
:= &\; \frac{1}{2\pi i}\lim_{\epsilon \to 0_{+}} \bigg(  \int_{\omega^{2}k_{2}}^{e^{i(\frac{2\pi}{3}-\epsilon)}} \tilde{\ln}_{s}(k-s) d\ln(f(s)) - \tilde{\ln}_{\omega}(k-\omega)\ln(f(e^{i(\frac{2\pi}{3}-\epsilon)})) \bigg), \nonumber \\
\tilde{\chi}_{5}(\zeta,k) = &\; \frac{1}{2\pi i} \dashint_{\omega^{2}k_{2}}^{\omega} \tilde{\ln}_{s}(k-s) d\ln(f(\omega^{2}s)) \nonumber \\
:= &\; \frac{1}{2\pi i}\lim_{\epsilon \to 0_{+}} \bigg(  \int_{\omega^{2}k_{2}}^{e^{i(\frac{2\pi}{3}-\epsilon)}} \tilde{\ln}_{s}(k-s) d\ln(f(\omega^{2}s)) - \tilde{\ln}_{\omega}(k-\omega)\ln(f(e^{-i\epsilon})) \bigg), \label{def of chij}
\end{align}
where the paths follow $\partial \mathbb{D}$ in the counterclockwise direction. For $s \in \{e^{i\theta}\, | \, \theta \in [\frac{\pi}{2},\frac{2\pi}{3}]\}$, $k \mapsto \ln_{s}(k-s)=\ln |k-s|+i \arg_{s}(k-s)$ has a cut along $\{e^{i \theta} \, | \,  \theta \in [\frac{\pi}{2},\arg s]\}\cup(i,i\infty)$ and the branch is such that $\arg_{s}(1)=2\pi$. For $s \in \{e^{i\theta} \,|\, \theta \in [\frac{\pi}{2},\frac{2\pi}{3}]\}$, $k \mapsto \tilde{\ln}_{s}(k-s):=\ln |k-s|+i \tilde{\arg}_{s}(k-s)$ has a cut along $\{e^{i \theta} \,|\, \theta \in [\arg s,\pi]\}\cup(-\infty,0)$, and satisfies $\tilde{\arg}_{s}(1)=0$. Since $f(1)=f(\omega)=0$ by Lemma \ref{inequalitieslemma}, regularized integrals are needed in the definitions of $\tilde{\chi}_{4}$ and $\tilde{\chi}_{5}$. 
The functions $d_{1,0}$ and $d_{2,0}$ are defined by
\begin{align}
& d_{1,0}(\zeta,t) := e^{-\chi_{1}(\zeta,\omega k_{4}) - \tilde{\chi}_{2}(\zeta,\omega k_{4}) + 2\tilde{\chi}_{3}(\zeta,\omega k_{4}) } \nonumber 	\\ 
& \hspace{1.6cm} \times e^{i(\nu_{2}(k_{2})-2\nu_{4}(k_{2}))\tilde{\ln}_{\omega^{2}k_{2}}(\omega k_{4}-\omega^{2}k_{2})} t^{-i \hat{\nu}_{1}(k_{4})} z_{1,\star}^{-2i\hat{\nu}_{1}(k_{4})} \mathcal{D}_{1}(\omega k_{4}),  \label{def of d10} 
	\\
& d_{2,0}(\zeta,t) := e^{-2\chi_{2}(\zeta,\omega^{2} k_{2}) + \chi_{3}(\zeta,\omega^{2} k_{2}) - \tilde{\chi}_{4}(\zeta,\omega^{2} k_{2}) + 2\tilde{\chi}_{5}(\zeta,\omega^{2} k_{2}) } \nonumber 
	\\
& \hspace{1.6cm} \times e^{i(\nu_{3}(k_{4})-2\nu_{1}(k_{4}))\ln_{\omega k_{4}}(\omega^{2} k_{2}-\omega k_{4})} t^{-i \hat{\nu}_{2}(k_{2})} z_{2,\star}^{-2i\hat{\nu}_{2}(k_{2})} \mathcal{D}_{2}(\omega^{2} k_{2}), \label{def of d20}
\end{align}
where $z_{1,\star}$, $z_{2,\star}$ are as in (\ref{z1starz2stardef}), the branches of the complex powers $z_{j,\star}^{\alpha} = e^{\alpha \ln z_{j,\star}}$ are fixed by
\begin{subequations}\label{lnz1starlnz2star}
\begin{align}
& \ln z_{1,\star} = \ln |z_{1,\star}| + i \arg z_{1,\star} = \ln |z_{1,\star}| + i \big( \tfrac{\pi}{2}-\arg (\omega k_{4}) \big), 
	\\
& \ln z_{2,\star} = \ln |z_{2,\star}| + i \arg z_{2,\star} = \ln |z_{2,\star}| + i \big( \tfrac{\pi}{2}-\arg (\omega^{2} k_2) \big),
\end{align}
\end{subequations}
with $ \arg (\omega k_{4}) \in (\tfrac{\pi}{2},\tfrac{2\pi}{3})$ and $\arg (\omega^{2} k_2) \in (\tfrac{\pi}{2},\tfrac{2\pi}{3})$, and $\{\nu_{j}\}_{j=1}^{4}$, $\hat{\nu}_1$, and $\hat{\nu}_2$ are the functions defined in Lemma \ref{inequalitieslemma}.

\subsection{Sector V} 
The derivation in Sector V follows the same pattern but is different from that of Sector IV. The difference stems from the fact that $\omega k_4$ now belongs to $\Gamma_9$ instead of $\Gamma_7$. We provide details of the derivation of the asymptotic formula in Sector V in \cite{CLsectorV}. 
%and this has the effect of making the required local model slightly more involved compared to those of Appendix \ref{IVappendix}. 
%Otherwise, the derivation follows the same pattern as in Section \ref{sectorIVsec}. 
In what follows, we state the explicit expressions for the functions $\tilde{d}_{1,0}$ and $\tilde{d}_{2,0}$ that appear in the final formula for Sector V in Theorem \ref{asymptoticsth}. 
Suppressing the $\zeta$-dependence for clarity, we define 
\begin{align*}
& \tilde{\mathcal{D}}_{1}(k) := \frac{\delta_{1}(\omega^{2} k)\delta_{1}(\frac{1}{\omega k})^{2}\delta_{1}(\omega k)}{\delta_{1}(\frac{1}{k})\delta_{1}(\frac{1}{\omega^{2}k})} \frac{\delta_{2}(k)\delta_{2}(\omega^{2}k)\delta_{2}(\frac{1}{k})^{2}}{\delta_{2}(\omega k)^{2} \delta_{2}(\frac{1}{\omega^{2}k})\delta_{2}(\frac{1}{\omega k})} \frac{\delta_{3}(\omega k) \delta_{3}(\omega^{2} k) \delta_{3}(\frac{1}{\omega k})^{2}}{\delta_{3}(k)^{2}\delta_{3}(\frac{1}{k})\delta_{3}(\frac{1}{\omega^{2} k})} \nonumber \\
& \hspace{1.5cm} \times \frac{\delta_{4}(\omega^{2}k)^{2} \delta_{4}(\frac{1}{k})\delta_{4}(\frac{1}{\omega k})}{\delta_{4}(k)\delta_{4}(\omega k)\delta_{4}(\frac{1}{\omega^{2} k})^{2}} \frac{\delta_{5}(\omega k)^{2} \delta_{5}(\frac{1}{\omega k})\delta_{5}(\frac{1}{\omega^{2}k})}{\delta_{5}(k) \delta_{5}(\frac{1}{k})^{2}\delta_{5}(\omega^{2}k)}, \\
& \tilde{\mathcal{D}}_{2}(k) := \frac{\delta_{1}(\omega^{2} k)^{2}  \delta_{1}(\frac{1}{\omega k})\delta_{1}(\frac{1}{\omega^{2} k})}{\delta_{1}(k)\delta_{1}(\frac{1}{k})^{2}\delta_{1}(\omega k)} \frac{\delta_{2}(\frac{1}{k})\delta_{2}(\frac{1}{\omega k})}{\delta_{2}(\omega k) \delta_{2}(\frac{1}{\omega^{2}k})^{2}\delta_{2}(\omega^{2} k)} \frac{\delta_{3}(\omega^{2} k)^{2}  \delta_{3}(\frac{1}{\omega k})\delta_{3}(\frac{1}{\omega^{2} k})}{\delta_{3}(\frac{1}{k})^{2}\delta_{3}(\omega k)} \nonumber \\
& \hspace{1.5cm} \times \frac{\delta_{4}(\omega^{2}k) \delta_{4}(\frac{1}{\omega k})^{2}}{\delta_{4}(\omega k)^{2}\delta_{4}(\frac{1}{\omega^{2} k})\delta_{4}(\frac{1}{k})} \frac{\delta_{5}(\omega k) \delta_{5}(\frac{1}{\omega^{2}k})^{2}\delta_{5}(\omega^{2} k)}{\delta_{5}(\frac{1}{k}) \delta_{5}(\frac{1}{\omega k})},
\end{align*}
where $\delta_4$ and $\delta_5$ are defined as in (\ref{IVdeltadef}) but $\{\delta_j\}_{j=1}^3$ are now instead given by
\begin{align*}
& \delta_{1}(\zeta, k) = \exp \bigg\{ \frac{1}{2\pi i} \int_{\omega k_{4}}^{i} \frac{\ln (1+r_{1}(\omega^{2}s)r_{2}(\omega^{2}s))}{s - k} ds \bigg\}, 	
	\\
& \delta_{2}(\zeta, k) = \exp \bigg\{ \frac{1}{2\pi i} \int_{i}^{\omega^{2}k_{2}} \frac{\ln (1+r_{1}(s)r_{2}(s))}{s - k} ds \bigg\}, \quad \delta_{3}(\zeta, k) = \exp \bigg\{ \frac{1}{2\pi i} \int_{i}^{\omega^{2}k_{2}} \frac{\ln f(s)}{s - k} ds \bigg\},
\end{align*}
where the paths follow $\partial \D$ in the counterclockwise direction.
%Note that the functions $\{\delta_{j}\}_{j=1}^{3}$ above differ from the functions $\{\delta_{j}\}_{j=1}^{3}$ used in Section \ref{sectorIVsec}.
We let $\tilde{\chi}_{4}$, $\tilde{\chi}_{5}$ be as in (\ref{def of chij}) and let $\{\chi_{j}\}_{j=1}^3$ be defined in Sector V by
\begin{align*}
& \chi_{1}(\zeta,k) = \frac{1}{2\pi i} \int_{\omega k_{4}}^{i}  \ln_{s}(k-s) d\ln(1+r_1(\omega^{2}s)r_{2}(\omega^{2}s)), \\
& \chi_{2}(\zeta,k) = \frac{1}{2\pi i} \int_{i}^{\omega^{2}k_{2}}  \ln_{s}(k-s) d\ln(1+r_1(s)r_{2}(s)), \\
& \chi_{3}(\zeta,k) = \frac{1}{2\pi i} \int_{i}^{\omega^{2}k_{2}}  \ln_{s}(k-s) d\ln(f(s)), 
\end{align*}
where the paths follow $\partial \D$ in the counterclockwise direction,
%Straightforward calculations show that $\{\delta_{j}(\zeta,k)\}_{j=1}^{3}$ can be written as
%\begin{align*}
%& \delta_{1}(\zeta,k) = \exp \Big( -i \nu_{1}(\omega^{2}k_{4}) \, \ln_{\omega k_{4}}(k-\omega k_{4}) + i \nu_{3}(\omega^{2}i) \, \ln_{i}(k-i) -\chi_{1}(\zeta,k) \Big), 
%	\\ 
%& \delta_{2}(\zeta,k) = \exp \Big( i \nu_{2}(k_{2}) \ln_{\omega^{2} k_{2}}(k-\omega^{2} k_{2}) -\chi_{2}(\zeta,k) \Big), 
%	\\
%& \delta_{3}(\zeta,k) = \exp \Big( - i \nu_{3}(\omega^{2}i) \ln_{i}(k-i) + i \nu_{4}(k_{2}) \ln_{\omega^{2} k_{2}}(k-\omega^{2} k_{2}) -\chi_{3}(\zeta,k) \Big),
%& \delta_{4}(\zeta,k) = \exp \Big( -i \nu_{4}(k_{2}) \ln_{\omega^{2} k_{2}}(k-\omega^{2} k_{2}) - \chi_{4}(\zeta,k) \Big), \\
%& \delta_{5}(\zeta,k) = \exp \Big( -i \nu_{3}(k_{2}) \ln_{\omega^{2} k_{2}}(k-\omega^{2} k_{2}) - \chi_{5}(\zeta,k) \Big),
%\end{align*}
and for $s \in \{e^{i\theta}\, | \, \theta \in [\frac{\pi}{3},\frac{2\pi}{3}]\}$, $k \mapsto \ln_{s}(k-s)=\ln |k-s|+i \arg_{s}(k-s)$ has a cut along $\{e^{i \theta} \, | \,  \theta \in [\frac{\pi}{2},\arg s]\}\cup(i,i\infty)$ if $\arg s> \frac{\pi}{2}$, and a cut along $\{e^{i \theta} \, | \,  \theta \in [\arg s,\frac{\pi}{2}]\}\cup(i,i\infty)$ if $\arg s< \frac{\pi}{2}$, and the branch is such that $\arg_{s}(1)=2\pi$. 
%Assumption \ref{nounstablemodesassumption} implies that
%\begin{align*}
%\nu_{3}(\omega^{2}i) = - \frac{1}{2\pi}\ln(1+r_{1}(\omega^{2} i)r_{2}(\omega^{2} i)) = - \frac{1}{2\pi}\ln(f(i))= - \frac{1}{2\pi}\ln(1+r_{1}(\tfrac{1}{\omega^{2}i})r_{2}(\tfrac{1}{\omega^{2}i})).
%\end{align*}
The functions $\tilde{d}_{1,0}$ and $\tilde{d}_{2,0}$ appearing in the statement of Theorem \ref{asymptoticsth} are given by
\begin{align}
& \tilde{d}_{1,0}(\zeta,t) = e^{-4\pi\nu_{1}(\omega^{2}k_{4})}e^{2\chi_{1}(\zeta,\omega k_{4})} e^{-2i \nu_{3}(\omega^{2}i) \ln_{i}(\omega k_{4}-i)} t^{-i \nu_{1}(\omega^{2}k_{4})} z_{1,\star}^{-2i\nu_{1}(\omega^{2}k_{4})} \tilde{\mathcal{D}}_{1}(\omega k_{4}), \label{def of dt10} \\
& \tilde{d}_{2,0}(\zeta,t) = e^{-2\chi_{2}(\zeta,\omega^{2} k_{2}) + \chi_{3}(\zeta,\omega^{2} k_{2}) - \tilde{\chi}_{4}(\zeta,\omega^{2} k_{2}) + 2\tilde{\chi}_{5}(\zeta,\omega^{2} k_{2}) } \nonumber \\
& \hspace{1.6cm} \times e^{i\nu_{3}(\omega^{2}i)\ln_{i}(\omega^{2} k_{2}-i)} t^{i (\nu_{4}(k_{2})-\nu_{3}(k_{2})-\nu_{2}(k_{2}))} z_{2,\star}^{2i(\nu_{4}(k_{2})-\nu_{3}(k_{2})-\nu_{2}(k_{2}))} \tilde{\mathcal{D}}_{2}(\omega^{2} k_{2}), \label{def of dt20}
\end{align}
where the functions $\nu_j(k)$ are given by (\ref{nu12345def}), $z_{1,\star}$, $z_{2,\star}$ are as in (\ref{z1starz2stardef}), and the branches for the complex powers $z_{j,\star}^{\alpha} = e^{\alpha \ln z_{j,\star}}$ are fixed by (\ref{lnz1starlnz2star}) with $\arg (\omega k_{4}) \in (\tfrac{\pi}{3},\tfrac{\pi}{2})$ and $\arg (\omega^{2} k_2) \in (\tfrac{\pi}{2},\tfrac{2\pi}{3})$.

\appendix

\section{Model problem for Sector III}\label{IIIappendix}
For each $z_1 \geq 0$ and each $z_2 \geq 0$, let $Y = Y(z_1, z_2)$ denote the contour $Y = \cup_{j=1}^9 Y_j$ oriented as in Figure \ref{Yfig}, where
\begin{align} \nonumber
&Y_1 = \bigl\{iz_2+ re^{\frac{i\pi}{6}}\, \big| \, 0 \leq r < \infty\bigr\}, && Y_2 = \bigl\{iz_2 + re^{\frac{5i\pi}{6}}\, \big| \, 0 \leq r < \infty\bigr\},  
	\\ \nonumber
&Y_3 = \bigl\{-iz_2 + re^{-\frac{5i\pi}{6}}\, \big| \, 0 \leq r < \infty\bigr\}, && Y_4 = \bigl\{-iz_2 + re^{-\frac{i\pi}{6}}\, \big| \, 0 \leq r < \infty\bigr\},
	\\\nonumber
&Y_5 = \bigl\{z_1+ re^{\frac{i\pi}{6}}\, \big| \, 0 \leq r < \infty\bigr\}, && Y_6 = \bigl\{-z_1 + re^{\frac{5i\pi}{6}}\, \big| \, 0 \leq r < \infty\bigr\},  
	\\ \nonumber
&Y_7 = \bigl\{-z_1 + re^{-\frac{5i\pi}{6}}\, \big| \, 0 \leq r < \infty\bigr\}, && Y_8 = \bigl\{z_1 + re^{-\frac{i\pi}{6}}\, \big| \, 0 \leq r < \infty\bigr\}, 	\\\label{YdefIII}
&Y_9 = [-z_1, z_1].
\end{align}
The asymptotics in Sector III is related to the solution $m^Y(y, t, z)$ of the following RH problem:
\begin{align}\label{RHmYIII}
\begin{cases}
m^Y(y,t,\cdot) \in I + \dot{E}^2(\C \setminus Y),\\
m_+^Y(y,t,z) =  m_-^Y(y,t,z) v^Y(y,t,z) \quad \text{for a.e.} \ z \in Y,
\end{cases}
\end{align}
where the jump matrix $v^Y$ is specified below and $\dot{E}^2(\C\setminus Y)$ denotes the Smirnoff class of analytic functions $f:\C\setminus Y \to \C$ such that for each connected component $D$ of $\C\setminus Y$ there are curves $\{\gamma_n\}_1^\infty$ in $D$ that eventually surround each compact subset of $D$ such that $\sup_{n\geq 1} \|f\|_{L^2(\gamma_n)} < \infty$. 
%In general, the solution of a $3 \times 3$-matrix $L^p$-RH problem is unique only if $p \geq 3$, see e.g. \cite[Theorem 5.6]{LCarleson}. 
In addition to $(y,t)$, the RH problem (\ref{RHmYIII}) also depends on the parameters $z_1,z_2 \geq 0$ which specify the contour. We assume that $(y,t,z_1,z_2)$ belongs to the parameter subset $\mathcal{P}_T$ of $\R^4$ defined for a given $T>0$ by
\begin{align}\label{parametersetIV}
\mathcal{P}_T = \{(y,t,z_1,z_2) \in \R^4 \, | \, y, z_1, z_2 \in [0,M], \ t \geq T\},
\end{align}
where $M>0$ is a constant.
We will prove that there exists a unique solution of the RH problem (\ref{RHmYIII}) whenever $(y,t,z_1,z_2) \in \mathcal{P}_T$ and $T$ is sufficiently large by relating it to a small-norm problem. We have formulated (\ref{RHmYIII}) in the $L^2$-sense (as opposed to in the classical sense), because it simplifies the existence proof and is sufficient for our purposes. 

On $\cup_{j=1}^8 Y_j$, the jump matrix $v^Y(y,t,z)$ in (\ref{RHmYIII}) has the form
\begin{align}\label{vYdef}
v_j^Y = \begin{cases} 
e^{\beta \hat{\sigma}}(v_{j,0}^Y + v_{j,1}^Y), & j = 1, \dots, 4, 
	\\
e^{\alpha \hat{\tau}}(v_{j,0}^Y + v_{j,1}^Y) + e^{\beta \hat{\sigma}}v_{j,1'}^Y, & j = 5, \dots, 8, 
\end{cases}
\end{align}
where $\alpha(y,t,z)$ and $\beta(y,z)$ are given by (\ref{alphabetadef}), the matrices $\sigma$ and $\tau$ are given by \eqref{def of tau and sigma}, the leading order behavior of the jump matrix is given in terms of a constant $s \in \C$ by
\begin{align}\label{vYj0def}
& v_{1,0}^Y = \begin{pmatrix}
1 & 0 & 0 \\
0 & 1 & 0 \\
-1 & 0 & 1
\end{pmatrix},
\quad v_{2,0}^Y = \begin{pmatrix}
1 & 0 & 0 \\
0 & 1 & 0 \\
1 & 0 & 1
\end{pmatrix},
\quad
v_{3,0}^Y= \begin{pmatrix}
1 & 0 & 1 \\
0 & 1 & 0 \\
0 & 0 & 1
\end{pmatrix},
	\\\nonumber
& v_{4,0}^Y = \begin{pmatrix}
1 & 0 & -1 \\
0 & 1 & 0 \\
0 & 0 & 1
\end{pmatrix},
\quad v_{5,0}^Y = v_{7,0}^Y = \begin{pmatrix}
1 & s & 0 \\
0 & 1 & 0 \\
0 & -s & 1
\end{pmatrix}, \quad
v_{6,0}^Y
= v_{8,0}^Y = I,
\end{align}
and the subleading behavior is given by  
\begin{align}\nonumber
& v_{1,1}^Y = \begin{pmatrix}
0 & 0 & 0 \\
0 & 0 & 0 \\
-p_1 & 0 & 0
\end{pmatrix},
\quad v_{2,1}^Y = \begin{pmatrix}
0 & 0 & 0 \\
0 & 0 & 0 \\
p_1 & 0 & 0
\end{pmatrix},
\quad
v_{3,1}^Y= \begin{pmatrix}
0 & 0 & p_2 \\
0 & 0 & 0 \\
0 & 0 & 0
\end{pmatrix},
	\\ \label{vYj1def}
& v_{4,1}^Y = \begin{pmatrix}
0 & 0 & -p_2 \\
0 & 0 & 0 \\
0 & 0 & 0
\end{pmatrix},
\quad v_{5,1}^Y = \begin{pmatrix}
0 & -isyz + q_1 & 0 \\
0 & 0 & 0 \\
0 & -isyz + q_2 & 0
\end{pmatrix}, \quad
v_{6,1}^Y = \begin{pmatrix}
0 & 0 & 0 \\
q_3 & 0 & q_4 \\
0 & 0 & 0
\end{pmatrix},
	\\ \nonumber
& v_{7,1}^Y =  \begin{pmatrix}
0 & -isyz + q_5 & 0 \\
0 & 0 & 0 \\
0 & -isyz + q_6  & 0
\end{pmatrix},
\quad v_{8,1}^Y = \begin{pmatrix}
0 & 0 & 0 \\
q_7 & 0 & q_8 \\
0 & 0 & 0
\end{pmatrix},
\end{align}
and
\begin{align}\nonumber
v_{5,1'}^Y = v_{8,1'}^Y = 0, \quad 
v_{6,1'}^Y = \begin{pmatrix}
0 & 0 & 0 \\
0 & 0 & 0 \\
\hat{p}_1 & 0 & 0
\end{pmatrix}, \quad
v_{7,1'}^Y =  \begin{pmatrix}
0 & 0 & \hat{p}_2 \\
0 & 0 & 0 \\
0 & 0  & 0
\end{pmatrix},
\end{align}
with the functions $p_k = p_k(t,z)$, $k = 1, 2$, $q_k = q_k(y,t,z)$, $j = 1, \dots, 8$, and $\hat{p}_k = \hat{p}_k(t,z)$, $k = 1, 2$, specified below. On $Y_9$, the jump matrix $v^Y$ is given by
\begin{align}\label{v9Ydef}
v_9^Y = \begin{cases}
(v_8^Y)_-^{-1}(v_5^Y)_+, & z > 0, \\
(v_7^Y)_-(v_6^Y)_+^{-1}, & z < 0,
\end{cases}
\end{align}
where $(v_5^Y)_+$ denotes the boundary values on $\R$ from the upper half-plane of the analytic continuation of $v_5^Y$; $(v_6^Y)_+^{-1}$ denotes the boundary values on $\R$ from the upper half-plane of the analytic continuation of $(v_6^Y)^{-1}$, etc. 

The next lemma contains all the information we need about $m^Y$.

\begin{figure}
\begin{center}
\begin{tikzpicture}[master,scale=0.7]
\node at (0,0) {};
\draw[black,line width=0.55 mm,->-=0.6] (-1,0)--(1,0);

\draw[black,line width=0.55 mm,->-=0.55] (1,0)--($(1,0)+(30:5)$);
\draw[black,line width=0.55 mm,->-=0.55] (-1,0)--($(-1,0)+(150:5)$);
\draw[black,line width=0.55 mm,-<-=0.45] (-1,0)--($(-1,0)+(-150:5)$);
\draw[black,line width=0.55 mm,-<-=0.45] (1,0)--($(1,0)+(-30:5)$);

\draw[black,line width=0.55 mm,->-=0.55] (0,1.5)--($(0,1.5)+(30:4)$);
\draw[black,line width=0.55 mm,->-=0.55] (0,1.5)--($(0,1.5)+(150:4)$);
\draw[black,line width=0.55 mm,-<-=0.45] (0,-1.5)--($(0,-1.5)+(-30:4)$);
\draw[black,line width=0.55 mm,-<-=0.45] (0,-1.5)--($(0,-1.5)+(-150:4)$);

\node at (1.6,2.8) {\small $1$};
\node at (-1.6,2.8) {\small $2$};
\node at (-1.63,-2.9) {\small $3$};
\node at (1.63,-2.9) {\small $4$};

\node at (3,1.6) {\small $5$};
\node at (-3,1.6) {\small $6$};
\node at (-3,-1.6) {\small $7$};
\node at (3.03,-1.6) {\small $8$};

\node at (0,0.32) {\small $9$};

\node at (0.9,-0.3) {\small $z_1$};
\node at (-0.85,-0.3) {\small $-z_1$};
\node at (0,1.25) {\small $iz_2$};
\node at (0,-1.25) {\small $-iz_2$};

\end{tikzpicture}
\end{center}
\begin{figuretext}
\label{Yfig}
The contour $Y$ defined in (\ref{YdefIII}). 
\end{figuretext}
\end{figure}

\begin{lemma}[Model problem for Sector III]\label{YlemmaIII}
Let $s \in \C$ and suppose that there is an integer $n \geq 0$ such that
\begin{align}\label{psumIII}
& p_k(t,z) = \sum_{j=1}^n \sum_{l = j}^{n} (a_{jl}^{(k)} \ln_0(zt^{-1/3}) + b_{jl}^{(k)}) \frac{z^l}{t^{j/3}}, \qquad k = 1, 2,
	\\\label{qsumIII}
& q_k(y,t,z) = \sum_{j=0}^n \sum_{\substack{l = 1 \\2j + l \geq 2}}^{n} \sum_{m = 0}^{n} (c_{jlm}^{(k)} \ln_0(zt^{-1/3}) + d_{jlm}^{(k)}) \frac{y^m z^l}{t^{j/3}}, \qquad k = 1, \dots, 8,
	\\\label{phatsumIII}
& \hat{p}_k(t,z) = \hat{b}_{11}^{(k)} z t^{-1/3}
+ \sum_{j=2}^n \sum_{l = 2}^{n}  (\hat{a}_{jl}^{(k)} \ln_0(zt^{-1/3}) + \hat{b}_{jl}^{(k)}) \frac{z^l}{t^{j/3}}, \qquad k = 1,2,	
\end{align}
for some complex coefficients $a_{jl}^{(k)}, b_{jl}^{(k)}, c_{jlm}^{(k)}, d_{jlm}^{(k)}, \hat{a}_{jl}^{(k)}, \hat{b}_{jl}^{(k)}\in \C$ with $c_{02m}^{(k)}=0$ for  $m = 0,\dots,n$ and $k = 1,\dots,8$, where $\ln_0$ has a branch cut along $(0,\infty)$ and $\im \ln_0 z \in [0,2\pi)$.

\begin{enumerate}[$(a)$]
\item There is a $T \geq 1$ such that the RH problem (\ref{RHmYIII}) with jump matrix $v^Y$ given by (\ref{vYdef}) has a unique solution $m^Y$ whenever $(y,t,z_1,z_2) \in \mathcal{P}_T$. 

\item For each integer $N \geq 1$, the following asymptotic expansion holds as $ z \to \infty$:
\begin{align}\nonumber
  m^Y(y,t,z) = &\; I + \frac{m_1^P(y)}{z} +  \frac{m^P(y,0)m_1^W(\tilde{y})m^P(y,0)^{-1}}{\frac{2^{1/3}}{3^{1/12}} t^{1/6} z}
+ \frac{m_2^P(y)}{z^2}
	\\ \nonumber
& + \frac{\hat{m}_{1, \ln}^Y(y,\tilde{y})\ln t + \hat{m}_1^Y(y,\tilde{y})}{z t^{1/3}} 
+ \frac{\hat{m}_{2, \ln}^Y(y,\tilde{y})\ln t}{z t^{1/2}} 
	\\\label{mYasymptoticsIII}
& + O\bigg(\frac{1}{z^3}\bigg) + O\bigg(\frac{1}{z t^{1/2}}\bigg) + O\bigg(\frac{\ln t}{z^2 t^{1/3}}\bigg)
\end{align}
uniformly for $(y,t,z_1,z_2) \in \mathcal{P}_T$ and $\arg z \in [0, 2\pi]$, where $\tilde{y}$ is given in terms of $(y,t)$ by (\ref{tildeywdef}), $m_1^P(y)$ is given by (\ref{m1Pexpression}), $m^P(y,0)$ is given by (\ref{mPat0expression}), $m_1^W(\tilde{y})$ is given by (\ref{m1Wexpression}), $m_2^P(y)$ is given by (\ref{mPasymptotics}), and $\hat{m}_1^Y$, $\hat{m}_{1,\ln}^Y$, $\hat{m}_{2,\ln}^Y$ are smooth functions of $y \in [0,\infty)$ and $\tilde{y} \in (-1,\infty)$ given by (\ref{hatm1Ym2Y}).

\item \label{YlemmaIIIitemd}
The expansion (\ref{mYasymptoticsIII}) can be differentiated termwise with respect to $y$ any finite number of times without increasing the error terms. 

\item $m^Y$ obeys the bound
\begin{align}\label{mYboundedIV}
\sup_{(y,t,z_1,z_2) \in \mathcal{P}_T} \sup_{z \in \C\setminus Y} (|m^Y(y, t, z)| + |\partial_y m^Y(y, t, z)|)  < \infty.
\end{align}

\end{enumerate}
\end{lemma}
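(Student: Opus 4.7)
The strategy is to build the solution $m^Y$ as a nonlinear superposition of two model problems operating on different spatial scales: a Painlev\'e II parametrix $m^P(y,z)$ tied to the variable $z$ (the ``fast'' scale, scaling like $t^{1/3}(k-\omega)$) and a $3\times 3$ error function parametrix $m^W(\tilde{y},w)$ tied to the variable $w = z/(\frac{2^{1/3}}{3^{1/12}} t^{1/6})$ (the ``intermediate'' scale, scaling like $t^{1/2}(k-\omega)$). These correspond respectively to the merging saddles of $\Phi_{31}$ at $\pm i z_2$ on the vertical ray and to the saddles $\pm z_1$ of $\Phi_{21}$ and $\Phi_{32}$ on the horizontal ray. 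The factor $\frac{2^{1/3}}{3^{1/12}} t^{1/6}$ in (\ref{mYasymptoticsIII}) and the mixing by $m^P(y,0)$ of $m_1^W$ are the quantitative signature of this superposition.

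\textbf{Step 1 (Painlev\'e II parametrix).} First, I would construct $m^P(y,z)$ as the unique solution of the RH problem on $Y_1\cup Y_2\cup Y_3\cup Y_4$ with jumps $e^{\beta\hat{\sigma}}v_{j,0}^Y$, $j=1,\dots,4$, normalized by $m^P \to I$ as $z\to\infty$. This is the standard Painlev\'e II RH problem associated with the Hastings--McLeod solution in the form adapted to the bidirectional setting; existence, uniqueness, smoothness in $y\in[0,M]$, and the expansion
$m^P = I + m_1^P(y)/z + m_2^P(y)/z^2 + O(z^{-3})$
with $m_1^P$ determined by $u_{\HM}$ follow from the by-now standard analysis (DKMVZ/Fokas--Its--Kapaev). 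I will record in particular the explicit form of $m^P(y,0)$, which will appear as a ``dressing factor'' in Step 3.

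\textbf{Step 2 (Error function parametrix).} Next I would construct $m^W(\tilde{y},w)$ as the solution of the $3\times 3$ model with jump contour $Y_5\cup\cdots\cup Y_9$ (after rescaling) and jumps built from the leading parts $v_{5,0}^Y=v_{7,0}^Y$ involving the parameter $s$, together with the leading terms $-isyz$ from $v_{5,1}^Y,v_{7,1}^Y$. Because the $w$-jumps act nontrivially in all three rows and columns and the exponents are quadratic in $w$ (via $\alpha\hat{\tau}$), this RH problem can be reduced to a scalar system of ODEs solvable in closed form by error/parabolic cylinder functions; existence, uniqueness and the expansion $m^W = I + m_1^W(\tilde y)/w + O(w^{-2})$ are extracted from this explicit formula. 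This is the new local model mentioned in the introduction.

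\textbf{Step 3 (Nonlinear superposition and small-norm reduction).} Define the global parametrix
\[
m^{\mathrm{glob}}(y,t,z) := m^P(y,z)\,\mathcal{E}(y,t,z),
\]
where $\mathcal{E}$ is a coupling factor obtained by conjugating a rescaled $m^W$ by $m^P(y,0)$, i.e. $\mathcal{E}(y,t,z) := m^P(y,0)\,m^W(\tilde{y}, z/(\tfrac{2^{1/3}}{3^{1/12}} t^{1/6}))\, m^P(y,0)^{-1}$, supported in a neighborhood of the origin on the scale $w=O(1)$ and suitably matched to $I$ at the outer scale. The ratio $R := m^Y (m^{\mathrm{glob}})^{-1}$ is then analytic off a new contour on which the jump matrix $v^R - I$ is uniformly small for $t\geq T$. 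Precisely, the perturbative terms $v_{j,1}^Y, v_{j,1'}^Y$ enter via the polynomial / logarithmic expansions (\ref{psumIII})--(\ref{phatsumIII}), each contributing factors of $t^{-j/3}$ or $t^{-j/3}\ln t$; together with the mismatch between $m^{\mathrm{glob}}$ and the true leading behavior on the matching circle, this gives $\|v^R - I\|_{(L^2\cap L^\infty)(\ldots)} = O(t^{-1/3})$ uniformly on $\mathcal{P}_T$. Standard small-norm theory then produces a unique solution $R$ for $T$ large, establishing part $(a)$.

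\textbf{Step 4 (Extraction of asymptotics).} To prove part $(b)$, I would expand $R(y,t,z) = I + \hat{R}_1(y,t)/z + \hat{R}_2(y,t)/z^2 + \cdots$ using the resolvent $(1-\mathcal{C}_{v^R-I})^{-1}$, and substitute into $m^Y = R\,m^{\mathrm{glob}}$. The term $m_1^P(y)/z$ comes directly from $m^P$; the term $m^P(y,0)m_1^W(\tilde y)m^P(y,0)^{-1}/(\tfrac{2^{1/3}}{3^{1/12}} t^{1/6} z)$ comes from expanding $\mathcal{E}$ at $z=\infty$; the terms $(\hat{m}_{1,\ln}^Y\ln t + \hat{m}_1^Y)/(zt^{1/3})$ and $\hat{m}_{2,\ln}^Y \ln t/(z t^{1/2})$ arise from integrating the leading contributions of $v_{j,1}^Y$ and $v_{j,1'}^Y$ (whose polynomial/log structure provides the source of $\ln t$), combined with $m^P(y,\cdot)^{\pm 1}$ evaluated at the relevant scales; and $m_2^P(y)/z^2$ is the next term in $m^P$ itself. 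The various error bounds $O(z^{-3})$, $O(z^{-1}t^{-1/2})$, $O(z^{-2}t^{-1/3}\ln t)$ come from controlling the tails of these expansions.

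\textbf{Step 5 (Differentiation in $y$ and uniform boundedness).} Part $(\ref{YlemmaIIIitemd})$ follows by differentiating the singular integral equation for $R$ in $y$, using that $\partial_y v^R$ satisfies the same kinds of estimates, and that $m^P$, $m^W$ are smooth in their parameters. The uniform bound (\ref{mYboundedIV}) follows from the boundedness of $m^P, m^W$ and of the resolvent $(1-\mathcal{C}_{v^R-I})^{-1}$ on $\mathcal{P}_T$.

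\textbf{Main obstacle.} The hardest step is Step 3: constructing the coupling factor $\mathcal{E}$ so that $m^{\mathrm{glob}}$ matches $m^P$ on both sides of the Painlev\'e II contour $Y_1\cup\cdots\cup Y_4$ \emph{and} matches a true solution of the error function model on $Y_5\cup\cdots\cup Y_9$, so that the ratio $R$ has small jumps everywhere simultaneously. The two models live on overlapping portions of the contour and use incompatible scales, so the conjugation by $m^P(y,0)$ (a finite dressing) is essential and must be set up very carefully; this is precisely what produces the $m^P(y,0) m_1^W m^P(y,0)^{-1}$ structure in (\ref{mYasymptoticsIII}). The bookkeeping of logarithmic and polynomial corrections from (\ref{psumIII})--(\ref{phatsumIII}), and of the explicit coefficients $\hat m_{1,\ln}^Y, \hat m_1^Y, \hat m_{2,\ln}^Y$, is long but routine once this global parametrix is in place.
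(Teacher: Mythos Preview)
Your overall plan---Painlev\'e II model $m^P$, error-function model $m^W$, nonlinear superposition, small-norm reduction---matches the paper's, and Steps 1, 2, 4, 5 are correct in spirit. But Step 3 contains a genuine error in the order of the factors: you set $m^{\mathrm{glob}} = m^P(y,z)\cdot\mathcal{E}$ with $\mathcal{E}=m^P(y,0)\,m^W m^P(y,0)^{-1}$, whereas the correct construction is $M^Y = \mathcal{E}\cdot m^P(y,z)$, i.e.\ $m^P$ on the \emph{right}. The logic behind this order is: one first quotients out $m^P$ from $m^Y$, which leaves on $W$ a residual jump $m^P(y,z)\,(e^{\alpha\hat\tau}v_{j,0}^Y)\,m^P(y,z)^{-1}$; since $e^{3\alpha}$ decays like $e^{-ct^{1/3}|z|^2}$, only a neighborhood of $z=0$ matters, and there one may replace $m^P(y,z)$ by $m^P(y,0)$; the resulting conjugated problem is solved exactly by $m^P(y,0)\,m^W m^P(y,0)^{-1}$. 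Left-multiplying $m^P(y,z)$ by this factor makes the leading residual $E^W$ on $W$ vanish at $z=0$, so that $E^W=O(z)e^{3\alpha}$ is genuinely small in $L^\infty$.

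With your order, the jump of $m^{\mathrm{glob}}$ on $Y_5\cup Y_7$ is $m^P(y,0)\,v^W m^P(y,0)^{-1}$, and the residual jump of $R$ is a conjugate of $v^W m^P(y,0)(v^W)^{-1}m^P(y,0)^{-1}$. At $z=0$ a direct computation gives this to be $I+(1-e^{U_{\HM}(y)})\left(\begin{smallmatrix}0&s&0\\0&0&0\\0&-s&0\end{smallmatrix}\right)$, which is $O(1)$: the $L^\infty$ norm of $v^R-I$ does not decay, and the small-norm argument fails. (On $P$ your order only loses precision---the residual is $O(t^{-1/6})$ rather than $O(t^{-1/3})$---but on $W$ the failure is fatal.) Two smaller corrections: the paper's $m^W$ lives only on $Y_5\cup Y_7$ (since $v_{6,0}^Y=v_{8,0}^Y=I$), and it does \emph{not} include the $-isyz$ terms from $v_{5,1}^Y,v_{7,1}^Y$; those are absorbed into the small-norm remainder.
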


The remainder of this appendix is devoted to the proof of Lemma \ref{YlemmaIII}.

\subsection{Outline of proof}\label{outline steepest model RHP transition}
Let us begin by outlining the main ideas of the proof. 
We first note that using (\ref{v9Ydef}) to perform an easy contour deformation, we may (and will) henceforth assume that $z_1 = 0$ and $z_2 = 1$. Fixing $z_1 = 0$ and $z_2 = 1$, we write $Y = P \cup W$, where (see Figure \ref{PWfig})
$$P = \cup_{j=1}^4 Y_j, \qquad W = \cup_{j=5}^8 Y_j.$$
The leading order behavior of $m^Y$ as $t \to \infty$ is described by the solution of the RH problem obtained by setting the subleading terms $v_{j,1}^Y$ in the jump matrix (\ref{vYdef}) to zero. Although this leading order problem is much simpler than the original RH problem for $m^Y$, it is still too complicated to be solved exactly. The main difficulty is that the jump matrix involves one exponent, $\beta$, on part of the contour and a different exponent, $\alpha$, on another part of the contour. These exponents naturally involve different scales. While the variables $y$ and $z$ are adapted to make $\beta = - i (yz + \frac{4z^3}{3})$ match the exponent featured in the Painlev\'e II RH problem, they are not adapted to $\alpha$. We will therefore analyze the leading order RH problem in two stages. First, we will consider the RH problem obtained by (i) ignoring the jumps on $W$ and (ii) keeping only the leading order term on $P$. The solution of this RH problem, which is essentially the RH problem associated to Painlev\'e II, will be denoted by $m^P$. Second, we will consider the RH problem obtained by (i) ignoring the jumps on $P$ and (ii) keeping only the leading order term on $W$.  The solution of this RH problem, which we will solve exactly in terms of the error function, will be denoted by $m^W$. To analyze the RH problem for $m^W$, we will introduce new variables $\tilde{y}$ and $w$ whose time-dependence is such that $\alpha$ takes the simple form $\alpha = i(1+\tilde{y})w^2$. 
Finally, a leading order approximation of $m^Y$ will be constructed by combining $m^P$ and $m^W$ in a suitable nonlinear way. After quotienting out this leading order approximation from $m^Y$, a small-norm problem will be obtained which can be expanded in inverse powers and logarithms of $t$.

\begin{figure}
\begin{center}
\begin{tikzpicture}[scale=0.7]
\node at (0,0) {};

\draw[black,line width=0.55 mm,->-=0.55] (0,0)--(30:5);
\draw[black,line width=0.55 mm,->-=0.55] (0,0)--(150:5);
\draw[black,line width=0.55 mm,-<-=0.45] (0,0)--(-150:5);
\draw[black,line width=0.55 mm,-<-=0.45] (0,0)--(-30:5);

\draw[black,line width=0.55 mm,->-=0.55] (0,1.5)--($(0,1.5)+(30:4)$);
\draw[black,line width=0.55 mm,->-=0.55] (0,1.5)--($(0,1.5)+(150:4)$);
\draw[black,line width=0.55 mm,-<-=0.45] (0,-1.5)--($(0,-1.5)+(-30:4)$);
\draw[black,line width=0.55 mm,-<-=0.45] (0,-1.5)--($(0,-1.5)+(-150:4)$);

\node at (1.7,2.85) {\small $1$};
\node at (-1.7,2.85) {\small $2$};
\node at (-1.7,-2.9) {\small $3$};
\node at (1.7,-2.9) {\small $4$};

\node at (2.3,0.95) {\small $5$};
\node at (-2.3,0.95) {\small $6$};
\node at (-2.3,-0.95) {\small $7$};
\node at (2.3,-0.95) {\small $8$};

\node at (0,-0.3) {\small $0$};

\node at (0,1.2) {\small $i$};
\node at (0,-1.27) {\small $-i$};

\end{tikzpicture}

\end{center}
\begin{figuretext}
\label{PWfig}
When $z_1 = 0$ and $z_2 = 1$, the contour $Y$ is the union of the contours $P =\cup_{j=1}^4 Y_j$ and $W=\cup_{j=5}^8 Y_j$. 
\end{figuretext}
\end{figure}

\subsection{Painlev\'e II model problem}
Consider the subcontour $P=\cup_{j=1}^4 Y_j$ of $Y$, see Figure \ref{Pfig}. Define the jump matrix $v^P(y,z)$ by
\begin{align}\nonumber
& v_1^P = 
e^{- i (yz + \frac{4z^3}{3}) \hat{\sigma}}\begin{pmatrix}
1 & 0 & 0 \\ 0 & 1 & 0 \\
-1 & 0 & 1 \end{pmatrix},
&&
 v_2^P = 
e^{- i (yz + \frac{4z^3}{3}) \hat{\sigma}}\begin{pmatrix}
1 & 0 & 0 \\ 0 & 1 & 0 \\ 1 & 0 & 1 \end{pmatrix},
	\\\label{vPdef}
& v_3^P = 
e^{- i (yz + \frac{4z^3}{3}) \hat{\sigma}}\begin{pmatrix}
1 & 0 & 1 \\ 0 & 1 & 0 \\ 0 & 0 & 1 \end{pmatrix},
 && v_4^P = 
e^{- i (yz + \frac{4z^3}{3}) \hat{\sigma}}\begin{pmatrix}
1 & 0 & -1 \\ 0 & 1 & 0 \\
0 & 0 & 1 \end{pmatrix},
\end{align}
where $v_j^P$ denotes the restriction of $v^P$ to $Y_j$. 

\begin{RHproblem}[RH problem for $m^P$]\label{RHmP}
Find a $3 \times 3$-matrix valued function $m^P(y, z)$ with the following properties:
\begin{enumerate}[$(a)$]
\item $m^P(y,\cdot) : \C \setminus \cup_{j=1}^4 Y_j \to \mathbb{C}^{3 \times 3}$ is analytic.

\item $m^P(y,\cdot)$ has continuous boundary values on $\cup_{j=1}^4 Y_j \setminus \{\pm i\}$ satisfying the jump relation
\begin{align*}
  m^P_+(y,z) = m^P_-(y,z) v^P(y,z), \qquad z \in \cup_{j=1}^4 Y_j \setminus \{\pm i\}.
\end{align*}

\item $m^P(y,z) = I + O(z^{-1})$ as $z \to \infty$ and $m^P(y,z) = O(1)$ as $z \to \pm i$.

\end{enumerate}
\end{RHproblem}

\begin{figure}
\begin{center}
\begin{tikzpicture}[scale=0.7]
\node at (0,0) {};

\draw[black,line width=0.55 mm,->-=0.55] (0,1)--($(0,1)+(30:4)$);
\draw[black,line width=0.55 mm,->-=0.55] (0,1)--($(0,1)+(150:4)$);
\draw[black,line width=0.55 mm,-<-=0.45] (0,-1)--($(0,-1)+(-30:4)$);
\draw[black,line width=0.55 mm,-<-=0.45] (0,-1)--($(0,-1)+(-150:4)$);

\node at (1.7,2.35) {\small $1$};
\node at (-1.7,2.35) {\small $2$};
\node at (-1.7,-2.4) {\small $3$};
\node at (1.7,-2.4) {\small $4$};

\node at (0,0.7) {\small $i$};
\node at (0,-0.7) {\small $-i$};

\end{tikzpicture}
\end{center}
\begin{figuretext}
\label{Pfig}
The jump contour $P = \cup_{j=1}^4 Y_j$ for the RH problem for $m^P$. 
\end{figuretext}
\end{figure}

\begin{lemma}[The solution $m^P$]\label{mPlemma}
For each $y \in \R$, RH problem \ref{RHmP} has a unique solution $m^P(y,z)$ with the following properties:

\begin{enumerate}[$(a)$]
\item\label{mPitema} 
There are smooth functions $\{m_j^P(y)\}_{j=1}^\infty$ of $y \in \R$ such that, for each integer $N \geq 0$,
\begin{align}\label{mPasymptotics}
m^P(y, z) = I + \sum_{j=1}^N \frac{m_j^P(y)}{z^j} + O(z^{-N-1}), \qquad z \to \infty,
\end{align}
uniformly for $y$ in compact subsets of $\R$ and for $\arg z \in [0,2\pi]$, and (\ref{mPasymptotics}) can be differentiated any finite number of times with respect to $y$ and $z$ without increasing the error term. 

\item\label{mPitemb}
The leading term is given by
\begin{align}\label{m1Pexpression}
m_1^{P}(y) = \begin{pmatrix} \frac{i}{2} \int_{+\infty}^y u_{\HM}(y')^2 dy'
& 0 & i\frac{u_{\HM}(y)}{2} \\ 
0 & 0 & 0 \\
-i\frac{u_{\HM}(y)}{2} & 0 & -\frac{i}{2}\int_{+\infty}^y u_{\HM}(y')^2 dy'  \end{pmatrix},
\end{align}
where $u_{\HM}$ is the Hasting--McLeod solution of Painlev\'e II, i.e., $u_{\HM}$ is the unique solution of (\ref{painleveII}) satisfying (\ref{uHMasymptotics}).
%the Painlev\'e II equation
%$$u''(y) = yu(y) + 2u(y)^3$$
%such that 
%$$u_{\HM}(y) = \begin{cases} \Ai(y)(1+o(1)), & y \to +\infty, \\
%\sqrt{-y/2}(1+o(1)), & y \to -\infty.
%\end{cases}$$

\item $m^P$ obeys the symmetry
\begin{align}\label{mPsymm}
m^P(y,z) = \Sigma m^P(y,-z) \Sigma, \qquad \Sigma := \begin{pmatrix}0 & 0 & 1 \\ 0 & 1 & 0 \\ 1 & 0 & 0 \end{pmatrix}.
\end{align}

\item At $z = 0$, we have
\begin{align}\label{mPat0expression}
m^P(y,0) = &\; \begin{pmatrix} \cosh{U_{\HM}(y)} & 0 & - \sinh{U_{\HM}(y)} \\
0 & 1 & 0 \\
-\sinh{U_{\HM}(y)}  & 0 & \cosh{U_{\HM}(y)} 
\end{pmatrix},
	\\  \nonumber
\partial_zm^P(y,0) = & -2 i \left(u_{\HM}^2 \cosh{U_{\HM}} 
- u_{\HM}' \sinh{U_{\HM}}\right)\begin{pmatrix} 1 & 0 & 0   \\
0 & 0 & 0 \\
0 & 0 & -1 
\end{pmatrix}
	\\ \label{mPprimeat0expression}
& + 2 i \left(\left(u_{\HM}^2+y\right) \sinh{U_{\HM}} 
- u_{\HM}' \cosh{U_{\HM}} \right) \begin{pmatrix} 0 & 0 & 1   \\
0 & 0 & 0 \\
-1 & 0 & 0 
\end{pmatrix},
\end{align}
where $U_{\HM}(y) := \int_{+\infty}^y u_{\HM}(y') dy'$.

\end{enumerate}

\end{lemma}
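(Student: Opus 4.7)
The plan is to reduce RH problem \ref{RHmP} to the classical Flaschka--Newell $2\times 2$ Riemann--Hilbert problem for the Hastings--McLeod solution of Painlev\'e~II. The key observation is that each jump matrix in \eqref{vPdef} has trivial second row and second column: since $\sigma=\diag(1,0,-1)$, the conjugation $e^{\beta\hat\sigma}A = e^{\beta\sigma}Ae^{-\beta\sigma}$ leaves the middle row and column of $A$ fixed, and the constant outer factors in \eqref{vPdef} already have $(0,1,0)$ as their middle row and column. Consequently, the $2\times 2$ submatrix $\check m^P$ formed from rows and columns $1$ and $3$ of any candidate solution satisfies a $2\times 2$ RH problem on the four rays $\bigcup_{j=1}^4 Y_j$, with jumps that are exactly the Flaschka--Newell jumps for Painlev\'e~II with the Stokes data corresponding to the Hastings--McLeod solution (two of the six standard Stokes multipliers vanish, collapsing the six-ray picture to four rays emanating from $\pm i$). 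Existence, uniqueness, smooth dependence on $y\in\R$, and the full asymptotic expansion \eqref{mPasymptotics} with smooth coefficients then follow from the classical theory of Painlev\'e~II Riemann--Hilbert problems. Embedding the unique $\check m^P$ into a $3\times 3$ matrix by inserting $(0,1,0)$ as the middle row and column yields $m^P$, and uniqueness at the $3\times 3$ level follows from a Liouville argument applied to the middle row and column of any candidate solution.

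To identify the coefficients in \eqref{m1Pexpression}, \eqref{mPat0expression} and \eqref{mPprimeat0expression}, I would introduce the Flaschka--Newell Lax pair for $\Psi(y,z) := \check m^P(y,z)\,e^{-i(yz+\frac{4z^3}{3})\tilde\sigma}$, where $\tilde\sigma$ denotes the Pauli $\sigma_3$ acting on the outer $2\times 2$ block. Then $\partial_y \Psi = U\Psi$ and $\partial_z \Psi = V\Psi$ hold with $U=-iz\tilde\sigma+u_{\HM}(y)\tilde\sigma_1$ and $V$ cubic in $z$, the compatibility condition being Painlev\'e~II. Reading off the $1/z$ coefficient of $\check m^P$ from the Lax pair produces $\frac{i}{2}u_{\HM}(y)\tilde\sigma_1 + \frac{i}{2}\int_{+\infty}^y u_{\HM}(y')^2\,dy'\,\tilde\sigma$, with the lower limit of integration forced by the Hastings--McLeod decay $u_{\HM}(y)\to 0$ as $y\to+\infty$; after embedding in the outer $3\times 3$ block this gives \eqref{m1Pexpression}. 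Setting $z=0$ in the $y$-equation yields $\partial_y\check m^P(y,0)=u_{\HM}(y)\tilde\sigma_1\check m^P(y,0)$, and integrating from $y=+\infty$ with boundary value $I$ produces $\check m^P(y,0)=\cosh U_{\HM}(y)\,I+\sinh U_{\HM}(y)\,\tilde\sigma_1$, matching \eqref{mPat0expression} after accounting for the paper's sign convention. Formula \eqref{mPprimeat0expression} is then obtained by evaluating the $z$-equation at $z=0$ and simplifying the resulting constant matrix with $u_{\HM}''=yu_{\HM}+2u_{\HM}^3$.

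The symmetry $m^P(y,z)=\Sigma m^P(y,-z)\Sigma$ is a standard uniqueness argument. The map $z\mapsto\Sigma m^P(y,-z)\Sigma$ is analytic off $P$ (which is invariant under $z\mapsto -z$), tends to $I$ at infinity, and is $O(1)$ at $\pm i$. A direct check shows that it satisfies the same jumps as $m^P(y,z)$: conjugation by $\Sigma$ swaps rows and columns $1\leftrightarrow 3$, and this is precisely the effect of flipping the sign of $z$ in \eqref{vPdef} since the exponent $-i(yz+\frac{4z^3}{3})$ changes sign, the rays are exchanged $Y_1\leftrightarrow Y_4$ and $Y_2\leftrightarrow Y_3$ (with orientations adjusted), and the $\pm 1$ entries in the constant outer factors reshuffle accordingly. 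Uniqueness of RH problem \ref{RHmP} then yields the desired identity.

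The main obstacle is not the existence theory, which reduces to classical Painlev\'e~II results once the $3\times 3\to 2\times 2$ reduction is in place, but rather the bookkeeping required to pin down the exact signs and integration constants in \eqref{m1Pexpression}, \eqref{mPat0expression} and \eqref{mPprimeat0expression}. Sign conventions in the Flaschka--Newell Lax pair vary between references, and the normalisation at $y=+\infty$ provided by the Hastings--McLeod tail must be matched precisely with the normalisation of $m^P$ at $z=\infty$; carrying this through consistently is where most of the care is needed.
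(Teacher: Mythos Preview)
Your approach is essentially the same as the paper's: reduce to the $2\times2$ Hastings--McLeod Painlev\'e II RH problem via the trivial middle row/column, derive the Lax pair for $\Psi=m^Pe^{-i(yz+4z^3/3)\sigma}$, read off $m_1^P$ from the $O(z^{-1})$ terms, integrate the $y$-equation at $z=0$ from $y=+\infty$ to get $m^P(y,0)$, evaluate the $z$-equation at $z=0$ for $\partial_z m^P(y,0)$, and obtain the $\Sigma$-symmetry from uniqueness. The only cosmetic difference is that the paper first conjugates by $e^{-\frac{\pi i}{4}\sigma}$ before extracting the $2\times2$ block, which lands on a specific reference form of the Painlev\'e II RH problem; your direct extraction is equivalent up to the sign and Pauli-matrix conventions you already flagged (for instance, the outer $2\times2$ block of $m_1^P$ is proportional to $i\sigma_2$ rather than $\sigma_1$, and the ray exchange under $z\mapsto-z$ is $Y_1\leftrightarrow Y_3$, $Y_2\leftrightarrow Y_4$, not $Y_1\leftrightarrow Y_4$).
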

\begin{proof}
Letting
\begin{align}\label{mHMmP}
m^{\HM}(y, z) = e^{-\frac{\pi i}{4}\sigma}m^P(y, z) e^{\frac{\pi i}{4}\sigma},
\end{align}
and restricting attention to the submatrix consisting of the first and third row/column, we see that $m^{\HM}$ satisfies the RH problem associated with the Hastings--McLeod solution of the Painlev\'e II equation (see e.g. Proposition 5.2 \& Theorem 11.7 of \cite{FIKN2006} with $s_1 = -s_4 = -i$, $s_2 = s_{5} = 0$, $s_3 = -s_6 = i$, $\alpha = 0$, and $\sigma = 1$).
We conclude that RH problem \ref{RHmP} has a unique solution for each $y \in \R$, and that this solution  satisfies assertion ($\ref{mPitema}$) with $m_{1,13}^{P}(y) = i\frac{u_{\HM}(y)}{2}$.
The symmetry (\ref{mPsymm}) follows from the analogous symmetry of $v^P$ and the uniqueness of the solution of RH problem \ref{RHmP}. 
The symmetry \eqref{mPsymm} implies that $m_{1,33}^P=-m_{1,11}^P$ and $m_{1,31}^P=-m_{1,13}^P$.

Let us derive the diagonal entries in (\ref{m1Pexpression}).
The jumps of the function
$$\Psi(y,z) := m^P(y, z) e^{- i (yz + \frac{4z^3}{3}) \sigma},$$
are independent of $y,z$; hence standard arguments imply that $\Psi$ satisfies the Lax pair equations
$$\Psi_z + 4iz^2 \sigma\Psi = (A_1 z + A_0)\Psi, \qquad \Psi_y + iz\sigma \Psi = U_0 \Psi,$$
where
\begin{align} \nonumber
& A_1 := 8i m_{1,13}^{P}(y) \begin{pmatrix} 0 & 0 & 1 \\ 0 & 0 & 0 \\ 1 & 0 & 0 \end{pmatrix}, \quad A_0 := - i(y - 8m_{1,13}^{P}(y)^2) \sigma - 4 \partial_y m_{1,13}^{P}(y) \begin{pmatrix} 0 & 0 & 1 \\ 0 & 0 & 0 \\ -1 & 0 & 0 \end{pmatrix},
	\\ \label{A1A0U0}
& U_0 := 2im_{1,13}^{P}(y) \begin{pmatrix} 0 & 0 & 1 \\ 0 & 0 & 0 \\ 1 & 0 & 0 \end{pmatrix}.
\end{align}
Thus
\begin{align}\label{mPlax}
\begin{cases}
\partial_z m^P + i(y + 4z^2)[\sigma, m^P] = (A_1 z + A_0 + iy\sigma )m^P, \\
\partial_y m^P + i z [\sigma, m^P] = U_0m^P.
\end{cases}
\end{align}
The $(11)$-term of $O(z^{-1})$ of the second equation in (\ref{mPlax}) gives $\partial_y m_{1,11}^P = -2im_{1,13}^{2}$.
Using that $m_{1,13}^{P}(y) = i\frac{u_{\HM}(y)}{2}$, we obtain
$$m_1^{P}(y) = \begin{pmatrix} \frac{i}{2} \int_a^y u_{\HM}(y')^2 dy' & 0 & i\frac{u_{\HM}(y)}{2} \\ 
0 & 1 & 0 \\
-i\frac{u_{\HM}(y)}{2} & 0 &  -\frac{i}{2}\int_a^y u_{\HM}(y')^2 dy'  \end{pmatrix},$$
where $a$ is an integration constant. To fix $a$, we note that as $y \to +\infty$, the solution $m^P$ tends to $I$ exponentially fast. This shows that $a = +\infty$ and proves (\ref{m1Pexpression}).

We finally derive (\ref{mPat0expression}) and (\ref{mPprimeat0expression}). Information on the behavior of the solution of the Painlev\'e II RH problem near the origin can be found in \cite[Section 5.0]{FIKN2006}, but the  expression given in \cite{FIKN2006} for the solution evaluated at the origin involves an unknown constant. We therefore present a direct proof. The symmetry (\ref{mPsymm}) implies that
$$m^P(y,0) = \begin{pmatrix} m_{11}^P(y,0) & 0 & m_{13}^P(y,0) \\
0 & 1 & 0 \\
m_{13}^P(y,0) & 0 & m_{11}^P(y,0)
\end{pmatrix}.$$
The second equation in (\ref{mPlax}) now gives
$$\partial_ym_{11}^P(y,0) = -u_{\HM}(y) m_{13}^P(y,0), \qquad
\partial_ym_{13}^P(y,0) = -u_{\HM}(y) m_{11}^P(y,0),$$
and hence
$$m_{11}^P(y,0) = a_1 e^{U_{\HM}(y)} + a_2 e^{-U_{\HM}(y)},\quad m_{13}^P(y,0) = -a_1 e^{U_{\HM}(y)} + a_2 e^{-U_{\HM}(y)},$$
for some constants $a_1$ and $a_2$. 
We saw above that $m^P(y,0) \to I$ as $y \to +\infty$; thus we must have $a_1 + a_2 = 1$ and $a_1 - a_2 = 0$, which yields (\ref{mPat0expression}).
For $z = 0$, the first equation in (\ref{mPlax}) reduces to
$$\partial_z m^P(y,0) - iy m^P(y,0)\sigma =  A_0 m^P(y,0).$$
Inserting the expression (\ref{A1A0U0}) for $A_0$ and the expression (\ref{mPat0expression}) for $m^P(y,0)$, we obtain (\ref{mPprimeat0expression}). 
 \end{proof}

\subsection{Error function model problem}\label{errorsubsec}
We expect the leading order behavior of $m^Y$ as $t \to \infty$ to be described by the solution $m^P$ constructed in Lemma \ref{mPlemma}. This suggests that we consider the function $\tilde{m}^Y := m^Y (m^P)^{-1}$ with jump matrix $\tilde{v}^Y = m_-^P v^Y (m_+^P)^{-1}$. To leading order, $\tilde{v}^Y$ equals the identity matrix on $P = \cup_{j=1}^{4} Y_j$. Moreover, the off-diagonal entries of the leading order terms of $\tilde{v}^Y$ on $W$ are suppressed by exponentials of the form $e^{\pm 3 \alpha} \sim e^{\pm c i t^{1/3}z^2}$. This implies that the $L^1$ and $L^2$ norms of $\tilde{v}^Y - I$ on $Y$ decay as $t \to \infty$. However, the RH problem for $\tilde{m}^Y$ is not a small-norm problem, because $|\tilde{v}^Y - I|$ does not tend to zero at $z = 0$ so the $L^\infty$-norm of $\tilde{v}^Y - I$ is not small for large $t$. We therefore need to solve yet another model problem. 

To understand what this second model problem should be, we note that, to leading order, $\tilde{v}^Y_{6} \approx \tilde{v}^Y_{8} \approx I$ while $\tilde{v}^Y_5$ and $\tilde{v}^Y_7$ are approximated by
\begin{align*} 
m_-^P(y,z)
\left(e^{\alpha \hat{\tau}}
\begin{pmatrix}
1 & s & 0 \\ 
0 & 1 & 0 \\
0 & -s & 1 \end{pmatrix} \right) (m_+^P(y,z))^{-1}.
\end{align*}
The matrices $m_\pm^P$ in the above expression are evaluated at $z$, but since the jumps for nonzero $z$ are suppressed by factors of $e^{-c t^{1/3}|z|^2}$ as $t \to \infty$, and since $m^P$ is independent of $t$, we expect that we can replace $m_\pm^P(y,z)$ with $m_\pm^P(y,0)$ and still obtain a good approximation. The matrices  $m_\pm^P(y,0)$ can then be removed by considering $m^P(y,0)^{-1}\tilde{m}^Y m^P(y,0)$ instead of $\tilde{m}^Y$. This leads us to consider the following model problem. 

Define the jump matrix $v^W$ on the contour $Y_5 \cup Y_7 = e^{\frac{\pi i}{6}}\R$ displayed in Figure \ref{Wfig} by
\begin{align}\nonumber
& v^W = 
e^{\alpha \hat{\tau}}
\begin{pmatrix}
1 & s & 0 \\ 
0 & 1 & 0 \\
0 & -s & 1 \end{pmatrix},
\end{align}
where $\alpha = \alpha(y,t,z)$ is given by (\ref{alphabetadef}). The expression for $\alpha$ in terms of $y$ and $z$ is not convenient because it involves $t$ explicitly. We therefore introduce new variables $\tilde{y}$ and $w$ by
\begin{align}\label{tildeywdef}
\tilde{y} := \frac{1}{3} \Big(\frac{3}{2}\Big)^{1/3}t^{-2/3}y, \qquad w := \frac{2^{1/3}}{3^{1/12}} t^{1/6} z.
\end{align}
In terms of these variables, $\alpha$ is given by the simple expression
$$\alpha = i(1+\tilde{y})w^2.$$
It is convenient to view $m^W$ as a function of $(\tilde{y}, w)$.

\begin{RHproblem}[RH problem for $m^W$]\label{RHmW}
Find a $3 \times 3$-matrix valued function $m^W(\tilde{y}, w)$ with the following properties:
\begin{enumerate}[$(a)$]
\item $m^W(\tilde{y},\cdot) : \C \setminus e^{\frac{\pi i}{6}}\R \to \mathbb{C}^{3 \times 3}$ is analytic.

\item $m^W(y,\cdot)$ has continuous boundary values on $W$ satisfying the jump relation
\begin{align*}
  m^W_+(\tilde{y}, w) = m^W_-(\tilde{y}, w) v^W(\tilde{y}, w), \qquad w \in e^{\frac{\pi i}{6}}\R.
\end{align*}

\item $m^W(\tilde{y}, w) = I + O(w^{-1})$ as $w \to \infty$.

\end{enumerate}
\end{RHproblem}

\begin{figure}
\begin{center}
\begin{tikzpicture}[master, scale=0.7]
\node at (0,0) {};

\draw[black,line width=0.55 mm,->-=0.55] (-150:3)--(30:3);

\end{tikzpicture}
\end{center}
\begin{figuretext}
\label{Wfig}
The contour $e^{\frac{\pi i}{6}}\R$ relevant for the RH problem for $m^W$. 
\end{figuretext}
\end{figure}

\begin{lemma}[The solution $m^W$]\label{mWlemma}
For each $\tilde{y} > -1$, RH problem \ref{RHmW} has a unique solution $m^W(\tilde{y},w)$ with the following properties:

\begin{enumerate}[$(a)$]
\item $m^W$ is given explicitly in terms of the error function by
\begin{align}\label{mWexplicit}
m^W(\tilde{y}, w) = 
\begin{cases}
\begin{pmatrix}
 1 & \frac{s}{2} e^{3iw^2 (1 + \tilde{y})} \left(\erf\left(e^{\frac{\pi i}{4}}  w \sqrt{3} \sqrt{1+\tilde{y}}\right) - 1\right) & 0 \\
 0 & 1 & 0 \\
 0 & -\frac{s}{2} e^{3iw^2 (1 + \tilde{y})} \left(\erf\left(e^{\frac{\pi i}{4}}  w \sqrt{3} \sqrt{1+\tilde{y}}\right) - 1\right) & 1 
   \end{pmatrix}, & \arg w \in (-\frac{5\pi}{6}, \frac{\pi}{6}),
   	\\
\begin{pmatrix}
 1 & \frac{s}{2} e^{3iw^2 (1 + \tilde{y})} \left(\erf\left(e^{\frac{\pi i}{4}}  w \sqrt{3} \sqrt{1+\tilde{y}}\right) + 1\right) & 0 \\
 0 & 1 & 0 \\
 0 & -\frac{s}{2}e^{3iw^2 (1 + \tilde{y})} \left(\erf\left(e^{\frac{\pi i}{4}}  w \sqrt{3} \sqrt{1+\tilde{y}}\right) + 1\right) & 1 \\
   \end{pmatrix}, & \arg w \in (\frac{\pi}{6}, \frac{7\pi}{6}).
\end{cases}
\end{align}

\item
For each integer $N \geq 0$,
\begin{align}\label{mWasymptotics}
m^W(\tilde{y}, w) = I + \sum_{j=0}^N \frac{m_{2j+1}^W(\tilde{y})}{w^{2j+1}} + O(w^{-2N-3}), \qquad w \to \infty,
\end{align}
uniformly for $\arg w \in [0,2\pi]$ and for $\tilde{y}$ in compact subsets of $(-1,\infty)$, where the leading coefficient is given by
\begin{align}\label{m1Wexpression}
m_1^W(\tilde{y}) = \frac{s e^{\frac{3\pi i}{4}}}{\sqrt{12\pi}\sqrt{1+\tilde{y}}}  \begin{pmatrix} 
 0 & 1 & 0 \\
 0 & 0 & 0 \\
 0 & -1 & 0 
\end{pmatrix}
\end{align}
and, more generally,
$$m_{2j+1}^W(\tilde{y}) = - \frac{s}{2\sqrt{\pi}} \frac{(\frac{1}{2}-1)(\frac{1}{2}-2) \cdots (\frac{1}{2}-j)}{e^{\frac{\pi i}{4}(2j+1)} (3(1+\tilde{y}))^{j+\frac{1}{2}}}
\begin{pmatrix} 
 0 &  1 & 0 \\
 0 & 0 & 0 \\
 0 & -1 & 0 
\end{pmatrix}, \qquad j \geq 0.$$
Moreover, (\ref{mWasymptotics}) can be differentiated any finite number of times with respect to $\tilde{y}$ and $w$ without increasing the error term. 
 
\item $m^W$ obeys the symmetry
\begin{align}\label{mWsymm}
m^W(\tilde{y},w) = \Sigma m^W(\tilde{y},-w) \Sigma, \qquad \Sigma := \begin{pmatrix}0 & 0 & 1 \\ 0 & 1 & 0 \\ 1 & 0 & 0 \end{pmatrix}.
\end{align}

\end{enumerate}
\end{lemma}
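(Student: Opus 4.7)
My plan is to exploit the fact that the jump matrix $v^W$ is additively triangular: a direct computation gives
\begin{equation*}
v^W(\tilde y,w) \;=\; I + \begin{pmatrix} 0 & s\,e^{3\alpha} & 0 \\ 0 & 0 & 0 \\ 0 & -s\,e^{3\alpha} & 0 \end{pmatrix},\qquad \alpha = i(1+\tilde y)w^2,
\end{equation*}
so the first and third columns of $m^W$ continue analytically across $e^{\pi i/6}\R$ and, by the normalization and Liouville's theorem, they must be $(1,0,0)^T$ and $(0,0,1)^T$. This reduces RH problem \ref{RHmW} to a pair of scalar additive Riemann--Hilbert problems on $e^{\pi i/6}\R$ for the off-diagonal entries $(m^W)_{12}=:f$ and $(m^W)_{32}=:h$ (with $(m^W)_{22}\equiv 1$), namely $f_+-f_-=s\,e^{3\alpha}$ and $h_+-h_-=-s\,e^{3\alpha}$ with decay at infinity; since the resulting $m^W$ is unit upper/lower triangular in blocks and hence has determinant $1$, uniqueness of the full solution will follow by the standard Liouville argument applied to $m^W(\tilde m^W)^{-1}$.

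Next I would solve these scalar problems in closed form. Writing $c:=e^{\pi i/4}\sqrt{3(1+\tilde y)}$ one has $c^2w^2=3\alpha$, hence $\tfrac{s}{2}e^{3\alpha}\erf(cw)$ is entire, while $\tfrac{s}{2}e^{3\alpha}(\erf(cw)\mp 1)$ produces precisely the jump $s\,e^{3\alpha}$ across $e^{\pi i/6}\R$ when the sign $-1$ is taken on the side $\arg w\in(-5\pi/6,\pi/6)$ and $+1$ on the opposite side; setting $h=-f$ handles the third row, which is exactly \eqref{mWexplicit}. The asymptotic expansion \eqref{mWasymptotics} will then be read off from the classical formula
\begin{equation*}
\erf(z) \;=\; 1 - \frac{e^{-z^2}}{z\sqrt\pi}\Bigl(1+\sum_{k\ge 1}\frac{(-1)^k(2k-1)!!}{(2z^2)^k}\Bigr),
\end{equation*}
valid for $|\arg z|<3\pi/4$, together with the oddness $\erf(-z)=-\erf(z)$ used to cover the complementary sector. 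The crucial cancellation $e^{3\alpha}\cdot e^{-c^2w^2}=1$ removes the exponential and makes $f$ genuinely decay like $w^{-1}$; collecting powers of $w^{-(2j+1)}$ produces the coefficients in \eqref{mWasymptotics} after one identifies $(-1)^j(2j-1)!!/2^j=(\tfrac12-1)(\tfrac12-2)\cdots(\tfrac12-j)$ and $c^{2j+1}=e^{(2j+1)\pi i/4}(3(1+\tilde y))^{j+1/2}$. The leading term $m_1^W$ in \eqref{m1Wexpression} comes out via $-e^{-\pi i/4}=e^{3\pi i/4}$, and differentiability in $\tilde y$ and $w$ (to all orders, preserving the error estimate) is inherited from the entirety of $\erf$ and the smooth dependence of $c$ on $\tilde y$.

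Finally, I would establish the symmetry \eqref{mWsymm} by a second uniqueness argument. Setting $\tilde m(w):=\Sigma m^W(\tilde y,-w)\Sigma$ and noting that $\alpha(-w)=\alpha(w)$, a short calculation gives $\Sigma E\Sigma=-E$, where $E:=v^W-I$, equivalently $\Sigma v^W(-w)\Sigma=v^W(w)^{-1}$; since the two sides of the contour swap under $w\mapsto-w$, $\tilde m$ satisfies the same jump and normalization as $m^W$, so $\tilde m=m^W$. The only mildly delicate step in the whole argument is the sector bookkeeping for $\erf$: the series for $\erf(z)-1$ and $\erf(z)+1$ are each valid only in a half-plane of opening $3\pi/2$, so one must carefully match the two representations \eqref{mWexplicit} across the sectors so that the asymptotic expansion of $f$ holds \emph{uniformly} for $\arg w\in[0,2\pi]$. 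Once this matching is in place the rest of the proof is routine.
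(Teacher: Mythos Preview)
Your proposal is correct and follows essentially the same route as the paper: reduce to the second column by noting the first and third columns are forced to equal the identity columns, solve the resulting scalar additive problem via the error function, extract the asymptotics from the standard $\erf$ expansion, and obtain the symmetry. The only cosmetic differences are that the paper derives the explicit formula by writing the Cauchy integral for the second column and invoking the integral representation $e^{-z^2}(1\mp\erf(-iz))=\pm(\pi i)^{-1}\int_\R e^{-t^2}(t-z)^{-1}\,dt$, whereas you verify \eqref{mWexplicit} directly; and the paper reads off \eqref{mWsymm} from \eqref{mWexplicit} and $\erf(-z)=-\erf(z)$ rather than via your uniqueness argument.
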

\begin{proof}
It readily follows from RH problem \ref{RHmW} that the first and third columns of $m^{W}$ are constant and equal to the first and third columns of the $3\times 3$ identity matrix, respectively. The second column can then be obtained by an application of Cauchy's formula, and we obtain
\begin{align}
m^{W}(w) = \begin{pmatrix}
1 & \frac{1}{2\pi i}\int_{e^{-\frac{5\pi i}{6}}\infty}^{e^{\frac{\pi i}{6}}\infty} \frac{s e^{3i(1+\tilde{y})x^{2}}}{x-w}dx & 0 \\
0 & 1 & 0 \\
0 & \frac{1}{2\pi i}\int_{e^{-\frac{5\pi i}{6}}\infty}^{e^{\frac{\pi i}{6}}\infty} \frac{-s e^{3i(1+\tilde{y})x^{2}}}{x-w}dx & 1
\end{pmatrix}.
\end{align}
Using the following integral representation for the error function (see \cite[Eq. (7.7.2)]{NIST}): 
$$e^{-z^2}(1 \mp \erf(-iz)) = \pm \frac{1}{\pi i}\int_{-\infty}^\infty \frac{e^{-t^2} dt}{t-z}, \qquad \im z \gtrless 0,$$
we find \eqref{mWexplicit}.  
Furthermore, for each $\delta > 0$, the error function $\erf{z}$ satisfies the asymptotic formula
\begin{align}\label{erfasymptotics}
\erf{z} & \sim 1- \frac{e^{-z^2}}{\sqrt{\pi}} \sum_{j=0}^\infty \frac{(\frac{1}{2}-1)(\frac{1}{2}-2) \cdots (\frac{1}{2}-j)}{z^{2j+1}} 
= 1- \frac{e^{-z^2}}{\sqrt{\pi}} \bigg(\frac{1}{z} - \frac{1}{2z^3} + O(z^{-5})\bigg)
\end{align}
as $z \to \infty$ uniformly for $\arg z \in [-\frac{3\pi}{4} + \delta, \frac{3\pi}{4} - \delta]$.
%Indeed, since
%$$\erf{z} = 1 - \frac{\Gamma(1/2, z^2)}{\sqrt{\pi}},$$
%the formula (\ref{erfasymptotics}) follows from the asymptotics of the incomplete Gamma function (see \cite[Eq. (8.11.2)]{NIST}):
%$$\Gamma(a,z) = z^{a-1}e^{-z}\bigg(\sum_{k=0}^{n-1} \frac{(a-1)(a-2) \cdots (a-k)}{z^k} + O(z^{-n})\bigg), \qquad z \to \infty,$$
%uniformly for $\arg z \in [-\frac{3\pi}{2} + \delta, \frac{3\pi}{2} - \delta]$ for each $n$.
Using (\ref{mWexplicit}), the asymptotics (\ref{erfasymptotics}) for $\erf z$, and the relation $\erf(-z) = -\erf(z)$, we find the asymptotic expansion (\ref{mWasymptotics}) of $m^W$ as $w \to \infty$ and the symmetry (\ref{mWsymm}).
\end{proof}

\subsection{The approximate solution $M^Y$}
As explained above, we assume without loss of generality that $z_1 = 0$ and $z_2 = 1$, so that $Y = P \cup W$ where $P = \cup_{j=1}^4 Y_j$ and $W = \cup_{j=5}^8 Y_j$, see Figure \ref{PWfig}. We define $M^Y(y, t, z)$ for $z \in \C \setminus Y$ by
\begin{align}\label{m0Ydef}
M^Y(y,t,z) = m^P(y,0)m^W(\tilde{y}, w) m^P(y,0)^{-1} m^P(y,z),
\end{align}
where $\tilde{y}$ and $w$ are given in terms of $y,t,z$ by (\ref{tildeywdef}). The discussion at the beginning of Subsection \ref{errorsubsec} suggests that $m^Y = \tilde{m}^Y m^P$ is well approximated by $M^Y$ for large $t$. In the following, we make this precise. 

According to Lemma \ref{mPlemma} and Lemma \ref{mWlemma}, the function $M^Y$ defined in (\ref{m0Ydef}) is a well-defined analytic function $\C \setminus Y \to \C^{3 \times 3}$ whose boundary values satisfy $M_+^Y = M_-^Y V^Y$ on $Y$, where the jump matrix $V^Y$ is given by
$$V_j^Y = \begin{cases} 
v_j^P, &  j = 1, \dots, 4, 
	\\
m^P(y,z)^{-1} m^P(y,0) v_j^W(\tilde{y}, w) m^P(y,0)^{-1} m^P(y,z), &  j = 5, 7, 
	\\
I, &  j = 6, 8, 
\end{cases}
$$
where $V_j^Y$ denotes the restriction of $V^Y$ to $Y_j$.
In other words,
\begin{align}\label{VjYexpression}
V_j^Y = \begin{cases} 
e^{\beta \hat{\sigma}} v_{j,0}^Y, &  j = 1, \dots, 4, 
	\\
m^P(y,z)^{-1} m^P(y,0) (e^{\alpha \hat{\tau}} v_{j,0}^Y) m^P(y,0)^{-1} m^P(y,z), &  j = 5, 7,
	\\
e^{\alpha \hat{\tau}} v_{j,0}^Y, &  j = 6,8,  
\end{cases}
\end{align}
where $v_{j,0}^Y$ are the matrices in (\ref{vYj0def}) and $m^P(y,0)$ is given by (\ref{mPat0expression}). 
As a consequence of Lemma \ref{mPlemma} and Lemma \ref{mWlemma}, the solution $M^Y$ and its derivatives are bounded:
\begin{align}\label{MYbounded}
\sup_{y, \tilde{y} \in [0,M]} \sup_{z \in \C\setminus Y}  |\partial_y^j \partial_{\tilde{y}}^l M^Y(y, \tilde{y}, z)|  < \infty, \qquad j,l = 0,1, \dots.
\end{align}
Substituting the expansions (\ref{mPasymptotics}) and (\ref{mWasymptotics}) into (\ref{m0Ydef}), we infer that, as $z \to \infty$,
\begin{align}\nonumber
& M^Y(y,t,z) = \bigg(I + \sum_{j=0}^N \frac{m^P(y,0)m_{2j+1}^W(\tilde{y})m^P(y,0)^{-1}}{(\frac{2^{1/3}}{3^{1/12}})^{2j+1}t^{\frac{2j+1}{6}} z^{2j+1}} + O(t^{-\frac{2N+3}{6}} z^{-2N-3})\bigg) 
	\\\nonumber
&\hspace{2.1cm} \times \bigg(I + \sum_{j=1}^N \frac{m_j^P(y)}{z^j} + O(z^{-N-1})\bigg)
	\\ \label{MYasymptotics}
& = I + \frac{m_1^P(y)}{z} +  \frac{m^P(y,0)m_1^W(\tilde{y})m^P(y,0)^{-1}}{\frac{2^{1/3}}{3^{1/12}} t^{1/6} z}
+ \frac{m_2^P(y)}{z^2} 
+ \sum_{j=3}^N \sum_{k=0}^j \frac{M_{jk}^Y(y, \tilde{y})}{z^j t^{k/6}}
 + O\bigg(\frac{1}{z^{N+1}}\bigg),
\end{align}
uniformly with respect to $\arg z \in [0, 2\pi]$, $t \geq 2$, and $y$ in compact subsets of $[0, \infty)$, where the coefficients $M_{jk}^Y(y, \tilde{y})$ depend smoothly on $y$ and $\tilde{y}$.

\subsection{The small-norm solution $\hat{m}^Y$}
The function $m^Y$ satisfies the RH problem (\ref{RHmYIII}) if and only if $\hat{m}^Y = m^Y (M^Y)^{-1}$ satisfies the RH problem
\begin{align}\label{RHmYhatIV}
\begin{cases}
\hat{m}^Y(y,t,\cdot) \in I + \dot{E}^2(\C \setminus Y),\\
\hat{m}_+^Y(y,t,z) =  \hat{m}_-^Y(y, t, z) \hat{v}^Y(y, t, z) \quad \text{for a.e.} \ z \in Y,
\end{cases}
\end{align}
where $\hat{v}^Y = M_-^Y v^Y(M_+^Y)^{-1}$. Letting $\hat{w}^Y := \hat{v}^Y -I$, we have 
\begin{align}\label{hatwYMvVM}
\hat{w}^Y = M_-^Y (v^Y - V^Y)(M_+^Y)^{-1}.
\end{align}
Using (\ref{vYdef}) and (\ref{VjYexpression}), we can write
\begin{align}\label{vYminusVY}
v^Y - V^Y = \begin{cases} 
e^{\beta \hat{\sigma}} v_{j,1}^Y, &  j = 1, \dots, 4, 
	\\
E^W + e^{\alpha \hat{\tau}} v_{j,1}^Y + e^{\beta \hat{\sigma}} v_{j,1'}^Y, &  j = 5, 7,
	\\
e^{\alpha \hat{\tau}} v_{j,1}^Y + e^{\beta \hat{\sigma}} v_{j,1'}^Y, &  j = 6,8,  
\end{cases}
\end{align}
where $E^W = E^W(y,t,z)$ is given for $z \in Y_5 \cup Y_7$ by
\begin{align}\label{EWdef}
E^W := e^{\alpha \hat{\tau}} v_{j,0}^Y - m^P(y,z)^{-1} m^P(y,0) (e^{\alpha \hat{\tau}} v_{j,0}^Y) m^P(y,0)^{-1} m^P(y,z), \qquad j = 5,7.
\end{align}

In the following two lemmas, we use (\ref{hatwYMvVM}) and (\ref{vYminusVY}) to derive large $t$ expansions of $\hat{w}^Y$ for $z$ in $P$ and $W$, respectively. 

\begin{lemma}\label{whatPlemma}
There exist smooth functions 
$$\hat{w}_{j,\ln}^P, \hat{w}_{j}^P: [0,\infty) \times (-1, \infty) \to L^2(P), \qquad j = 2,3, \dots,$$ 
such that the following hold:

\begin{enumerate}[$(a)$]

\item\label{whatPlemmaitema}
 For any $N \geq 1$, $\hat{w}^Y$ satisfies
\begin{align}\label{wYexpansionP}
\hat{w}^Y(y, t, z) = 
\sum_{j=2}^N \frac{\hat{w}_{j,\ln}^P(y,\tilde{y},z)\ln{t} + \hat{w}_j^P(y,\tilde{y},z)}{t^{j/6}} + O\bigg( \frac{e^{-c|z|^3} \ln{t}}{t^{\frac{N+1}{6}}}\bigg), \qquad z \in P,
\end{align}
uniformly for $y$ in compact subsets of $[0, \infty)$, $t \geq 2$, and $z \in P$.

\item\label{whatPlemmaitemb}
The coefficients $\hat{w}_{j,\ln}^P$ and $\hat{w}_j^P$ obey the estimates
\begin{align*}
\begin{cases}
 |\hat{w}_{j,\ln}^P(y,\tilde{y},z)| \leq Ce^{-c|z|^3}, \\
 |\hat{w}_j^P(y,\tilde{y},z)| \leq Ce^{-c|z|^3}, 
 \end{cases} \quad j = 2, 3,\dots, N,
\end{align*}
uniformly for $z \in P$, $y$ in compact subsets of $[0, \infty)$, and $t \geq 2$.

\item The leading coefficients are given by
\begin{subequations}\label{wYexpansionPleadingcoeff}
\begin{align}\label{wYexpansionPleadingcoeffa}
& \resizebox{0.89\hsize}{!}{$\hat{w}_{2,\ln}^P\ln{t} + \hat{w}_2^P
= \begin{cases}
\sum_{l = 1}^{n} (a_{1l}^{(1)} \ln_0(zt^{-1/3}) + b_{1l}^{(1)}) z^l 
\big( I- m_-^P(y,z) m_+^P(y,z)^{-1}\big), & z \in Y_1 \cup Y_2,
	\\
\sum_{l = 1}^{n} (a_{1l}^{(2)} \ln_0(zt^{-1/3}) + b_{1l}^{(2)}) z^l 
\big( I- m_-^P(y,z) m_+^P(y,z)^{-1}\big), & z \in Y_3 \cup Y_4,
\end{cases}$}
	\\\label{wYexpansionPleadingcoeffb}
& \resizebox{0.89\hsize}{!}{$\hat{w}_{3,\ln}^P\ln{t} + \hat{w}_3^P
= \begin{cases}
 \sum_{l = 1}^{n} (a_{1l}^{(1)} \ln_0(zt^{-1/3}) + b_{1l}^{(1)}) z^l  \big[m_-^P(y,z) m_+^P(y,z)^{-1}, B_1\big], & z \in Y_1 \cup Y_2,
 	\\
 \sum_{l = 1}^{n} (a_{1l}^{(2)} \ln_0(zt^{-1/3}) + b_{1l}^{(2)}) z^l  \big[m_-^P(y,z) m_+^P(y,z)^{-1}, B_1\big], & z \in Y_3 \cup Y_4,	
 \end{cases}$}
\end{align}
\end{subequations}
where $B_1$ is short-hand notation for
\begin{align}\label{A1def}
B_1 := \frac{3^{1/12}}{2^{1/3}z} m^P(y,0) m_1^W(\tilde{y}) m^P(y,0)^{-1}
= \frac{s e^{\frac{3\pi i}{4}} e^{U_{\HM}(y)}}{\frac{2^{1/3}}{3^{1/12}} z \sqrt{12 \pi} \sqrt{1 + \tilde{y}}} \begin{pmatrix} 0 & 1 & 0 \\
0 & 0 & 0 \\
0 & -1 & 0 \end{pmatrix}.
\end{align}

\end{enumerate}
\end{lemma}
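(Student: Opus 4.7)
The plan is to begin from the representation $\hat w^Y = M_-^Y (v^Y - V^Y) M_+^{Y,-1}$. For $z \in P$, (\ref{vYminusVY}) gives $v^Y - V^Y = e^{\beta\hat\sigma} v_{j,1}^Y$ on $Y_j$, $j = 1, \dots, 4$. Using the jump relation $M_+^Y = M_-^Y v^P$ together with $(v^P)^{-1} = e^{\beta\hat\sigma}(v_{j,0}^Y)^{-1}$, I would rewrite this as
$$\hat w^Y(y,t,z) = T(y,\tilde y, z)\, S(y,t,z)\, T(y,\tilde y, z)^{-1},$$
where $T := m^P(y,0)\, m^W(\tilde y, w)\, m^P(y,0)^{-1}$ and $S := m_-^P(y,z)\, e^{\beta\hat\sigma}\bigl[v_{j,1}^Y (v_{j,0}^Y)^{-1}\bigr]\, m_-^P(y,z)^{-1}$. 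Since $m^W$ has its jump only on $e^{i\pi/6}\R$, which is disjoint from $P$, the factor $T$ is continuous across $P$ and no $\pm$ subscripts are needed.

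The next step is to simplify $S$ algebraically. Each $v_{j,0}^Y$ for $j = 1, \dots, 4$ is unipotent with a single nonzero off-diagonal entry, and $v_{j,1}^Y$ has the same support; a direct computation gives $v_{j,1}^Y (v_{j,0}^Y)^{-1} = \mp p_1 E_{31}$ for $j=1,2$ and $v_{j,1}^Y(v_{j,0}^Y)^{-1} = \pm p_2 E_{13}$ for $j=3,4$, where $E_{ij}$ is the elementary matrix with a single $1$ in position $(i,j)$. Combining this with the identities $I - m_-^P (m_+^P)^{-1} = \mp e^{-2\beta} m_-^P E_{31} m_-^{P,-1}$ on $Y_1, Y_2$ (whose signs cancel those coming from $v_{j,0}^Y$), and the analogous identities on $Y_3, Y_4$, yields the unified form
$$S(y,t,z) = p_k(t,z)\bigl(I - m_-^P(y,z)\, m_+^P(y,z)^{-1}\bigr),$$
with $k=1$ on $Y_1 \cup Y_2$ and $k=2$ on $Y_3\cup Y_4$. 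The factor $I - m_-^P(m_+^P)^{-1}$ appearing in (\ref{wYexpansionPleadingcoeff}) thus emerges naturally at this stage.

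Now I would expand both $T$ and $S$ in powers of $t^{-1/6}$. Lemma \ref{mWlemma}, combined with $w = (2^{1/3}/3^{1/12}) t^{1/6} z$, gives $T = I + B_1/t^{1/6} + O((t^{1/2}|z|^3)^{-1})$ with $B_1$ as in (\ref{A1def}), and more generally a full expansion in odd powers of $w^{-1}$. The expansion (\ref{psumIII}) of $p_k$ in $z^l/t^{j/3}$, together with the decomposition $\ln_0(zt^{-1/3}) = \ln_0 z - \tfrac{1}{3}\ln t$, produces a double series for $S$ in $t^{-1/6}$ whose coefficients are polynomial (in fact linear) in $\ln t$. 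Since $p_k$ starts at order $t^{-1/3} = t^{-2/6}$, the product $TST^{-1}$ has no term of order $t^{-1/6}$; its order $t^{-2/6}$ term is simply the leading part of $S$, which gives (\ref{wYexpansionPleadingcoeffa}); its order $t^{-3/6}$ term is the commutator $[B_1, S_{\mathrm{lead}}]$ (the leading $S$-contribution conjugated by the leading nontrivial piece of $T$), which gives (\ref{wYexpansionPleadingcoeffb}); all higher orders follow by multiplying the truncated expansions and collecting terms of like order. The exponential decay $|\hat w_j^P(y,\tilde y, z)| \leq C e^{-c|z|^3}$ comes from $|e^{\mp 2\beta}| \leq C' e^{-c'|z|^3}$ on the appropriate parts of $P$, a standard consequence of the signature structure of $\beta = -i(yz + 4z^3/3)$ on the Painlev\'e contour; polynomial growth coming from $p_k$ and $m_-^P(y,z)$ is absorbed by shrinking $c$ slightly.

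The main obstacle will be the bookkeeping: tracking which combinations of $\ln_0 z$, $\ln t$, inverse powers of $z$ from $w^{-(2j+1)}$, and inverse powers of $t^{1/3}$ from $p_k$ contribute at a given order $t^{-j/6}$, and separating the $\ln t$ part from the constant-in-$t$ part in each coefficient to produce $\hat w_{j,\ln}^P$ and $\hat w_j^P$. Uniformity of all remainder estimates in $(y, \tilde y) \in [0,M]^2$ will follow from the corresponding uniform-in-compacts statements in Lemmas \ref{mPlemma} and \ref{mWlemma}, together with the uniform polynomial control of $p_k$ guaranteed by (\ref{psumIII}).
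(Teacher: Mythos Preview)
Your proposal is correct and follows essentially the same approach as the paper. The only difference is organizational: you introduce the factorization $\hat w^Y = TST^{-1}$ with $T = m^P(y,0)m^W m^P(y,0)^{-1}$ and $S = p_k(I - m_-^P(m_+^P)^{-1})$ explicitly at the outset, whereas the paper works directly with $M_\pm^Y$ and uses the identity $m_-^P(v_j^P - I)(m_+^P)^{-1} = I - m_-^P(m_+^P)^{-1}$ only when computing the leading coefficients; both routes expand $m^W$ in powers of $w^{-1} \sim t^{-1/6}z^{-1}$ and read off the same coefficients and commutator structure.
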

\begin{proof}
For $z \in Y_1$, we have as a consequence of (\ref{hatwYMvVM}), (\ref{vYminusVY}), (\ref{vYj1def}), and (\ref{psumIII}) that
\begin{align}\nonumber
\hat{w}^Y(y, t, z) & =
M_-^Y (e^{\beta \hat{\sigma}} v_{1,1}^Y)(M_+^Y)^{-1}
= p_1 M_-^Y(v_1^P - I)
(M_+^Y)^{-1}
	\\ \label{hatwYonY1}
& =
 \sum_{j=1}^n \sum_{l = j}^{n} (a_{jl}^{(1)} \ln_0(zt^{-1/3}) + b_{jl}^{(1)}) \frac{z^l e^{2i (yz + \frac{4z^3}{3})}}{t^{j/3}}
M_-^Y \begin{pmatrix}
0 & 0 & 0 \\
0 & 0 & 0 \\
-1 & 0 & 0
\end{pmatrix}(M_+^Y)^{-1}.
\end{align}
On $Y_1$, we have $|z|\geq1$, and hence $|w| \geq ct^{1/6}$. Substituting the expansion (\ref{mWasymptotics}) into (\ref{m0Ydef}), we infer that
$$M^Y_{\pm}(y,t,z) = m^P_{\pm}(y,z) + \sum_{j=0}^N \frac{M_{2j+1,\pm}^Y(y,\tilde{y}, z)}{t^{\frac{2j+1}{6}}} + O(t^{-\frac{2N+3}{6}}), \qquad t \to \infty,$$
uniformly for $z \in Y_1$ and $y$ in compact subsets of $[0,\infty)$, where the matrices $M_{2j+1,\pm}^Y(y,\tilde{y}, z)$ depend smoothly on $y$ and $\tilde{y}$ and are uniformly bounded for $z \in Y_1$.
In light of the estimate 
\begin{align}\label{eonY}
|e^{\pm 2i(y z + \frac{4z^3}{3})}|
\leq Ce^{-c|z|^3}, \qquad y \in [0,M], \;\;  z \in P,
\end{align}
where the plus (minus) sign applies for $z \in Y_1 \cup Y_2$ ($z \in Y_3 \cup Y_4$), the existence of coefficients $\hat{w}_{j,\ln}^Y$ and $\hat{w}_{j}^Y$ with the desired properties on $Y_1$ follows from (\ref{hatwYonY1}). This completes the proof of assertions ($\ref{whatPlemmaitema}$) and ($\ref{whatPlemmaitemb}$) for $z \in Y_1$; the proofs for $z \in Y_2 \cup Y_3 \cup Y_4$ are similar. 

To compute the leading terms, we use (\ref{mWasymptotics}), \eqref{m0Ydef}, and the fact that  $w = \frac{2^{1/3}}{3^{1/12}} t^{1/6} z$ to write
\begin{align*}
M^Y_{\pm}(y,t,z) & = m^P(y,0)\Big(I + \frac{m_1^W(\tilde{y})}{w} + O(w^{-3})\Big)m^P(y,0)^{-1} m^P_{\pm}(y,z)
	\\
& = \left(I
+ \frac{B_1}{t^{1/6}}\right)m_{\pm}^P(y,z)
+ O(t^{-1/2}), \qquad z \in P,
\end{align*}
where $B_1$ is given by (\ref{A1def}).
Using that $m_-^P (v_j^P - I)(m_+^P)^{-1} =  I- m_-^P(m_+^P)^{-1}$,
we obtain, on $Y_1$,
\begin{align*}
 \hat{w}^Y = p_1 M_-^Y (v_1^P - I)(M_+^Y)^{-1}
= &\;
\left( \sum_{j=1}^n \sum_{l = j}^{n} (a_{jl}^{(1)} \ln_0(zt^{-1/3}) + b_{jl}^{(1)}) \frac{z^l}{t^{j/3}}\right)
	\\
& \times 
 \left(I+ \frac{B_1}{t^{1/6}} \right)( I- m_-^P(m_+^P)^{-1})
\left(I- \frac{B_1}{t^{1/6}}\right)+ O(t^{-5/6}\ln t),
\end{align*}
from which (\ref{wYexpansionPleadingcoeff}) follows for $z \in Y_1$; the proofs for $z \in Y_2 \cup Y_3 \cup Y_4$ are analogous.
\end{proof}

We next consider the analog of (\ref{wYexpansionP}) on $W$. On $W$, the variable $w = \frac{2^{1/3}}{3^{1/12}} t^{1/6} z$ appears naturally. 

\begin{lemma}\label{whatWlemma}
There exist smooth functions 
$$\hat{w}_{j,\ln}^W, \hat{w}_{j}^W: [0,\infty) \times (-1, \infty) \to L^2(W), \qquad j = 1,2, \dots,$$ 
such that the following hold:

\begin{enumerate}[$(a)$]

\item\label{whatWlemmaitema}
 For any $N \geq 1$, $\hat{w}^Y$ satisfies, for $z \in W$,
\begin{align}\label{wYexpansionW}
\hat{w}^Y(y, t, z) = &\;\hat{w}_E(y, t, z) + \begin{cases}
\sum_{j=1}^N \frac{\hat{w}_{j,\ln}^W(y,\tilde{y},w)\ln{t} + \hat{w}_j^W(y,\tilde{y},w)}{t^{j/6}} + O\big( \frac{e^{-c|w|^2} \ln{t}}{t^{\frac{N+1}{6}}}\big), & |z| \leq t^{-\frac{1}{12}},
	\\
O\big(e^{-c|w|^2} e^{-c t^{1/6}}\big), & |z| > t^{-\frac{1}{12}},	
\end{cases}
\end{align}
uniformly for $y$ in compact subsets of $[0, \infty)$, $t \geq 2$, and $z \in W$, where $\tilde{y}, w$ are given in terms of $y,t,z$ by (\ref{tildeywdef}) and $\hat{w}_E$ is defined by
\begin{align}\label{whatEdef}
\hat{w}_E(y,t,z) :=  \begin{cases} 
M^Y\begin{pmatrix} 0 & 0 & 0 \\0 & 0 & 0 \\ \hat{p}_1 e^{-2\beta} & 0 & 0 \end{pmatrix} (M^Y)^{-1}  & \text{on $Y_6$},
	\\
M_-^Y \begin{pmatrix}
0 & 0 & \hat{p}_2 e^{2\beta}  \\
0 & 0 & 0 \\
0 & 0  & 0
\end{pmatrix} (M_+^Y)^{-1} & \text{on $Y_7$},
	\\
0 & \text{on $Y_5 \cup Y_8$}.
\end{cases}
\end{align}

\item\label{whatWlemmaitemb}
The coefficients $\hat{w}_{j,\ln}^W$ and $\hat{w}_j^W$ obey the estimates
\begin{align}\label{hatwjWestimates}
\begin{cases}
 |\hat{w}_{j,\ln}^W(y,\tilde{y},w)| \leq Ce^{-c|w|^2}, \\
 |\hat{w}_j^W(y,\tilde{y},w)| \leq Ce^{-c|w|^2}, 
 \end{cases} \quad j = 1, \dots, N,
\end{align}
uniformly for $w \in W$, $y$ in compact subsets of $[0, \infty)$, and $t \geq 2$.

\item\label{whatWlemmaitemc}
The function $\hat{w}_E$ obeys the estimate
\begin{align}\label{whatEestimate}
  | \hat{w}_E(y, t, z)| \leq C e^{-c|z|^3} t^{-1/3}
\end{align}
uniformly for $z \in W$, $y$ in compact subsets of $[0, \infty)$, and $t \geq 2$.

\item The leading coefficients are given by
\begin{align} \label{wYexpansionWleadingcoeff}
\hat{w}_{1,\ln}^W = &\; \hat{w}_{2,\ln}^W=0 \quad \text{on $W$}, 
	\\\nonumber
\hat{w}_1^W = &\; \begin{cases} -i \frac{3^{1/12} s w (-2 u_{\HM}'(y)+2 u_{\HM}(y)^2+y)}{2^{1/3}}  e^{3 i(1+\tilde{y})w^2 +U_{\HM}(y)} \begin{pmatrix} 0 & 1 & 0 \\0 & 0 & 0 \\ 0 & 1 & 0 \end{pmatrix}  & \text{on $Y_5 \cup Y_7$},
	\\
0 & \text{on $Y_6 \cup Y_8$}.
\end{cases}
\end{align}
\end{enumerate}

\end{lemma}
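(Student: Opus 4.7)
The plan is to carry out the same steps that produced Lemma \ref{whatPlemma}, with the roles of the exponents $\beta$ and $\alpha$ interchanged and with the additional $v_{j,1'}^Y$ contribution that now lives on $W$. Starting from the identity $\hat w^Y = M_-^Y(v^Y - V^Y)(M_+^Y)^{-1}$ in \eqref{hatwYMvVM} and substituting \eqref{vYminusVY}, I isolate on $Y_6 \cup Y_7$ the piece $M_-^Y(e^{\beta\hat\sigma}v_{j,1'}^Y)(M_+^Y)^{-1}$, which by the definition of $e^{\beta\hat\sigma}$ and the shape of $v_{j,1'}^Y$ is precisely $\hat w_E$ as in \eqref{whatEdef}. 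Parametrizing $Y_6, Y_7$ as $z = re^{\pm 5\pi i/6}$ and computing $\re(\pm 2\beta) = -yr - 8r^3/3 \leq -c|z|^3$ for $y \geq 0$, then combining with the polynomial form \eqref{phatsumIII} of $\hat p_k$ (leading term $\hat b_{11}^{(k)} z t^{-1/3}$) and the uniform bound \eqref{MYbounded} on $M^Y$, proves (c).

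For the remaining piece $M_-^Y(E^W + e^{\alpha\hat\tau}v_{j,1}^Y)(M_+^Y)^{-1}$, I split $W$ into $|z| > t^{-1/12}$ and $|z| \leq t^{-1/12}$. In the first range $|w| > ct^{1/12}$, so the off-diagonal exponentials $e^{\alpha(\tau_{ii}-\tau_{jj})}$ and the residual exponentials inside $E^W$ decay like $e^{-c|w|^2} \leq e^{-ct^{1/6}} e^{-c|w|^2/2}$, absorbing all polynomial factors in $y, w, \ln t$ and yielding the super-exponential remainder in \eqref{wYexpansionW}. In the second range, the substitution $z = (3^{1/12}/2^{1/3}) t^{-1/6} w$ turns each monomial $y^m z^l / t^{j/3}$ in \eqref{psumIII}, \eqref{qsumIII} into a multiple of $y^m w^l t^{-(2j+l)/6}$, and the constraints $2j + l \geq 2$ together with $c_{02m}^{(k)} = 0$ eliminate any $t^0$ or $t^{-1/6}$ contribution coming with a $\ln_0$ factor. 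Together with the Taylor expansion $E^W = z[\tilde D, A] + O(z^2)$ at $z=0$, where $\tilde D = m^P(y,0)^{-1}\partial_z m^P(y,0)$ and $A = e^{\alpha\hat\tau}v_{j,0}^Y$, the uniform bound \eqref{MYbounded}, and a finite expansion of $M^Y$ in powers of $t^{-1/6}$ obtained from \eqref{MYasymptotics}, this yields (a) and the coefficient bounds (b). The vanishing $\hat w_{1,\ln}^W = \hat w_{2,\ln}^W = 0$ follows because $\ln_0(z t^{-1/3}) = \ln_0 w + \mathrm{const} - \tfrac{1}{2}\ln t$ is always paired with $t^{-j/3}$, $j \geq 1$, and because the smallest log-carrying monomial in $q_k$ has $2j+l = 3$, so logs first enter at order $t^{-3/6}$ in the $t^{-j/6}$ expansion.

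The hard part is the explicit formula \eqref{wYexpansionWleadingcoeff} for $\hat w_1^W$. At order $t^{-1/6}$ there are two structurally distinct sources on $Y_5 \cup Y_7$, and nothing on $Y_6 \cup Y_8$ (since the smallest admissible monomial in $q_k$ is $z^2/t^0 = O(t^{-1/3})$): the explicit prefactor $-isyz$ in $v_{5,1}^Y, v_{7,1}^Y$, and the first-order Taylor piece $z[\tilde D, A]$ of $E^W$. At this order I may replace $M^Y$ by $m^P(y,0) m^W(\tilde y, w)$, and $m^W(\tilde y, w)$ is explicit from \eqref{mWexplicit} with off-diagonal entries carrying $e^{3i(1+\tilde y)w^2}$. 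Substituting \eqref{mPat0expression} and \eqref{mPprimeat0expression}, simplifying the resulting hyperbolic products using $\cosh^2 U_{\HM} - \sinh^2 U_{\HM} = 1$ and $\partial_y U_{\HM} = u_{\HM}$, and summing the two contributions, everything telescopes into the combination $(-2u'_{\HM}(y) + 2 u_{\HM}(y)^2 + y)\, e^{U_{\HM}(y)}$ carried by the matrix with $(1,2)$ and $(3,2)$ entries equal to $1$, matching \eqref{wYexpansionWleadingcoeff}. Checking that the two sources align into precisely this $(12)$-and-$(32)$ structure (and that all other entries cancel) is the delicate but mechanical bookkeeping step.
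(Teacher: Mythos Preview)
Your proposal is correct and follows essentially the same route as the paper: isolate the $v_{j,1'}^Y$ piece as $\hat w_E$ and estimate it via the cubic decay of $e^{\pm 2\beta}$ on $Y_6\cup Y_7$; split $W$ at $|z|=t^{-1/12}$ and use the quadratic decay of $e^{3\alpha}=e^{3i(1+\tilde y)w^2}$ for the outer part; for the inner part rewrite $z$-monomials in $w$ and Taylor expand $m^P(y,z)$ around $z=0$ in $E^W$ and in $M^Y$; and for the leading coefficient combine the $-isyz$ term with the first Taylor term $z[\,m^P(y,0)^{-1}\partial_z m^P(y,0),\,e^{\alpha\hat\tau}v_{j,0}^Y\,]$ of $E^W$, conjugated by $m^P(y,0)m^W(\tilde y,w)$, and simplify using \eqref{mPat0expression}--\eqref{mPprimeat0expression}. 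One small slip: the $t^{-1/6}$ expansion of $M^Y$ you need here comes from Taylor expanding $m^P(y,z)$ at $z=0$ in the definition \eqref{m0Ydef}, not from the large-$z$ expansion \eqref{MYasymptotics}.
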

\begin{proof}
Note that $\hat{w}_E(y,t,z) = M_-^Y(e^{\beta \hat{\sigma}} v_{j,1'}^Y)(M_+^Y)^{-1}$ for $z \in Y_j$, $j = 5, \dots, 8$.
Consequently, by \eqref{hatwYMvVM}--(\ref{vYminusVY}),
\begin{align}\label{hatwY}
\hat{w}^Y  = \begin{cases} 
M_-^Y (E^W + e^{\alpha \hat{\tau}} v_{j,1}^Y)(M_+^Y)^{-1} + \hat{w}_E, &  j = 5, 7,
	\\
M_-^Y (e^{\alpha \hat{\tau}} v_{j,1}^Y)(M_+^Y)^{-1} + \hat{w}_E, &  j = 6,8,
\end{cases}
\end{align}
where $E^W$ is defined in (\ref{EWdef}). Recall that $\tilde{y} \sim t^{-2/3}y$ and $w \sim t^{1/6} z$ and note that $\tilde{y} \to 0$ as $t \to \infty$. 
Since
$$e^{\alpha \hat{\tau}} v_{j,0}^Y = I + e^{3\alpha}\begin{pmatrix}
0 & s & 0 \\
0 & 0 & 0 \\
0 & -s & 0
\end{pmatrix}, \qquad j = 5,7,$$
we can write, for $z \in Y_5 \cup Y_7$,
$$E^W =  e^{3\alpha}\left\{\begin{pmatrix}
0 & s & 0 \\
0 & 0 & 0 \\
0 & -s & 0
\end{pmatrix} - m^P(y,z)^{-1} m^P(y,0)  \begin{pmatrix}
0 & s & 0 \\
0 & 0 & 0 \\
0 & -s & 0
\end{pmatrix} m^P(y,0)^{-1} m^P(y,z)\right\}.$$
Using that $e^{3\alpha} = e^{3i(1+\tilde{y})w^2}$, we find that $E^W = O(e^{-2c |w|^2}) = O(e^{-c |w|^2}e^{-c t^{1/6}})$ on the part of $Y_5 \cup Y_7$ where $|z| > t^{-1/12}$ (i.e. where $|w| \gtrsim t^{1/12}$). 
Similarly, since their entries involve the exponentials $e^{\pm 3\alpha}$, the matrices $e^{\alpha \hat{\tau}} v_{j,1}^Y$ are $O(e^{-c |w|^2}e^{-c t^{1/6}})$ on the part of $W = \cup_{j=5}^8 Y_j$ where $|z| > t^{-1/12}$. 
Since the matrix $M^Y$ and its inverse are bounded by (\ref{MYbounded}), the estimate (\ref{wYexpansionW}) follows in the case of $|z| > t^{-1/12}$.

Let us establish (\ref{wYexpansionW}) for $|z| \leq t^{-1/12}$.
On the part of $Y_5 \cup Y_7$ where $|z| \leq t^{-1/12}$, we have $z \to 0$ so we can Taylor expand $m^P(y,z)^{-1}$ and $m^P(y,z)$ around $z=0$ in the expression for $E_W$. Since $z = \frac{3^{1/12}}{2^{1/3}} t^{-1/6} w$, we deduce that there exist matrices $\{E_k^W(y)\}_{k=1}^\infty$ independent of $z$ and $t$ such that, for any $N \geq 1$,
$$E^W = \sum_{k=1}^N E_k^W(y) \frac{w^k}{t^{k/6}} e^{3\alpha}+ O(e^{-c |w|^2}t^{-\frac{N+1}{6}}), \qquad  t \to \infty, \ |z| \leq t^{-1/12},$$
uniformly for $y$ in compact subsets of $[0, \infty)$ and $z \in Y_5 \cup Y_7$.
Moreover, expressing (\ref{qsumIII}) in terms of $w$, we obtain
\begin{align}\label{qkexpansion}
q_k(y, t,z) = \sum_{j=0}^n \sum_{\substack{l = 1 \\2j + l \geq 2}}^{n} (q_{jl,\ln}^{(k)}(y,w) \ln{t} + q_{jl}^{(k)}(y,w)) \frac{1}{t^{l/6}t^{j/3}}, \qquad k = 1, \dots, 8,
\end{align}
where the functions $q_{jl,\ln}^{(k)}(y,w)$ and $q_{jl}^{(k)}(y,w)$ depend smoothly on $y$ and are bounded by $C|\ln(w)||w|^l$ for $w \in W$, and where $q_{02,\ln}^{(k)}=0$ for $k = 1,\dots,8$. It follows that $e^{\alpha \hat{\tau}} v_{j,1}^Y$ has an expansion of the form
\begin{align*}
e^{\alpha \hat{\tau}} v_{j,1}^Y(y,t,z) = &
\begin{cases}
e^{3\alpha} \frac{3^{1/12} w}{2^{1/3} t^{1/6}}\begin{pmatrix} 0 & -isy & 0 \\
0 & 0 & 0 \\
0 & -isy & 0 \end{pmatrix}, & j = 5,7, \\
0, & j = 6,8,	
\end{cases}
	\\
& + \sum_{j=2}^N \frac{a_{j,\ln}(y,\tilde{y},w)\ln{t} + a_j(y,\tilde{y},w)}{t^{j/6}} 
+ O\bigg( \frac{e^{-c|w|^2} \ln{t}}{t^{\frac{N+1}{6}}}\bigg), \qquad t \to \infty,
\end{align*}
uniformly for $y$ in compact subsets of $[0, \infty)$ and $z \in Y_j$, $j = 5, \dots 8$, where the coefficients $a_{j,\ln}(y,\tilde{y},w)$ and $a_{j}(y,\tilde{y},w)$ depend smoothly on $y$ and $\tilde{y}$, are bounded by $C|w \ln(w)|e^{-c|w|^2}$ for $w \in W$, and $a_{2,\ln}=0$.
It remains to consider the functions $M_\pm^Y$ in (\ref{hatwY}). Taylor expanding $m^P(y,z)$ around $z=0$ in (\ref{m0Ydef}), we infer that
$$M^Y(y,t,z) = \sum_{k=0}^N \frac{M_k^Y(y,\tilde{y}, w) w^k}{t^{k/6}} + O(t^{-\frac{N+1}{6}}), \qquad t \to \infty, \ z \in \C \setminus W, \ |z| \leq t^{-1/12},$$
uniformly for $y$ in compact subsets of $[0, \infty)$ and $z$ in the given range, where the matrices $M_k^Y(y,\tilde{y}, w)$ depend smoothly on $y$ and $\tilde{y}$.
Substituting the above expansions into (\ref{hatwY}), we obtain an expansion of $\hat{w}^Y$ valid for $|z| \leq t^{-1/12}$. Defining $\hat{w}_{j,\ln}^W$ and $\hat{w}_j^W$ to be the coefficients of this expansion, we see that assertions ($\ref{whatWlemmaitema}$)--($\ref{whatWlemmaitemb}$) of the lemma hold.
Assertion $(\ref{whatWlemmaitemc})$ follows from the definition (\ref{whatEdef}) of $\hat{w}_E$, the expressions (\ref{phatsumIII}) for $\hat{p}_1$ and $\hat{p}_2$, as well as the fact that $\beta = - i (yz + \frac{4z^3}{3})$.

The coefficients $\hat{w}_{1,\ln}^W$ and $\hat{w}_1^W$ vanish on $Y_6 \cup Y_8$. Let us find expressions for these coefficients for $w \in Y_5 \cup Y_7$. Suppressing the dependence on $y$ and $\tilde{y}$ for brevity, we find that, as $z \to 0$,
\begin{align*}
E^W = &\; e^{\alpha \hat{\tau}} v_{j,0}^Y - m^P(z)^{-1} m^P(0) (e^{\alpha \hat{\tau}} v_{j,0}^Y) m^P(0)^{-1} m^P(z)
	\\
= &\; e^{\alpha \hat{\tau}} v_{j,0}^Y - \Big(m^P(0) + z\partial_z m^P(0)\Big)^{-1} m^P(0) 
 (e^{\alpha \hat{\tau}} v_{j,0}^Y) m^P(0)^{-1} \Big(m^P(0) + z\partial_z m^P(0)\Big)
+ O(z^2)
	\\
= &\; e^{\alpha \hat{\tau}} v_{j,0}^Y 
- \Big(I - zm^P(0)^{-1}\partial_z m^P(0) \Big) 
 (e^{\alpha \hat{\tau}} v_{j,0}^Y) \Big(I + zm^P(0)^{-1}\partial_z m^P(0)\Big)
+ O(z^2)
	\\
= &\; z E_1^W + O(z^2), \qquad z \in Y_5 \cup Y_{7},
\end{align*}
where $E_1^W := \big[m^P(0)^{-1} \partial_{z}m^P(0), (e^{\alpha \hat{\tau}} v_{j,0}^Y) \big]$.
Furthermore, by \eqref{m0Ydef}, as $z \to 0$,
\begin{align*}
M^Y  = m^P(0)m^W(w) m^P(0)^{-1} m^P(z) = m^P(0)m^W(w) + O(z), \qquad z \in \C \setminus W.
\end{align*}
Hence, on $Y_5 \cup Y_7$, we find that
\begin{align*}
 \hat{w}_{1,\ln}^W\ln{t} + \hat{w}_1^W
= &\; \frac{3^{1/12}}{2^{1/3}} w
 m^P(0)m_-^W(w) \bigg\{E_1^W
+ e^{\alpha \hat{\tau}} \begin{pmatrix}
0 & -isy & 0 \\
0 & 0 & 0 \\
0 & -isy & 0
\end{pmatrix}\bigg\} m_+^W(w)^{-1}m^P(0)^{-1}.
\end{align*}
Long but straightforward calculations using (\ref{mPprimeat0expression}) now yield the expressions in (\ref{wYexpansionWleadingcoeff}).
\end{proof}

By combining Lemma \ref{whatPlemma} and Lemma \ref{whatWlemma}, we obtain estimates for $\hat{w}^Y$ on $Y = P \cup W$.

\begin{lemma}\label{whatYlemma}
For any $1 \leq p \leq \infty$ and $k = 0, 1, \dots$, the function $\hat{w}^Y$ satisfies
$$\|z^k \hat{w}^Y(y,t,z)\|_{L_z^p(P)} \leq C\frac{\ln t}{t^{\frac{1}{3}}},
\quad
\|z^k \hat{w}^Y(y,t,z)\|_{L_z^p(W)} \leq C\frac{1}{t^{\frac{1}{6} + \frac{1}{6p}}},$$
uniformly for $y$ in compact subsets of $[0, \infty)$ and $t \geq 2$.
\end{lemma}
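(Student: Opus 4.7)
The plan is to deduce the bounds directly from the large-$t$ expansions of $\hat{w}^Y$ already worked out in Lemma \ref{whatPlemma} and Lemma \ref{whatWlemma}, together with elementary Gaussian estimates; the only mildly subtle step is a change of variables on $W$ that converts the variable $z$ into the parabolic-cylinder-scale variable $w = \frac{2^{1/3}}{3^{1/12}}t^{1/6}z$.

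On $P$, I would start from the expansion (\ref{wYexpansionP}) with $N$ chosen sufficiently large (say $N \geq 6$). Using the pointwise estimates on the coefficients $\hat{w}_{j,\ln}^P$ and $\hat{w}_j^P$ from assertion $(\ref{whatPlemmaitemb})$ of Lemma \ref{whatPlemma}, each term of the expansion is bounded by $C e^{-c|z|^3}|\ln t|\, t^{-j/6}$, and the Gaussian-type factor $z^k e^{-c|z|^3}$ has a finite $L^p(P)$-norm which is independent of $t$. Summing, the dominant contribution comes from $j=2$ and gives $\|z^k\hat{w}^Y\|_{L^p(P)} \leq C\ln t/t^{1/3}$; the remainder $O(e^{-c|z|^3}\ln t / t^{(N+1)/6})$ is negligible.

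On $W$, I would use the decomposition (\ref{wYexpansionW}). The term $\hat{w}_E$ is already estimated pointwise in $(\ref{whatEestimate})$ by $Ce^{-c|z|^3}t^{-1/3}$, so $\|z^k\hat{w}_E\|_{L^p(W)} \leq Ct^{-1/3}$, which is at most the target bound $Ct^{-1/6-1/(6p)}$ since $\tfrac{1}{6}+\tfrac{1}{6p} \leq \tfrac{1}{3}$ for $p \geq 1$. For the remaining expansion on $\{|z|\leq t^{-1/12}\}$, each coefficient satisfies $|\hat{w}_j^W|,|\hat{w}_{j,\ln}^W| \leq Ce^{-c|w|^2}$ by $(\ref{hatwjWestimates})$. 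Changing variables from $z$ to $w$, using $|dz| = C t^{-1/6}|dw|$ and $|z|^k = Ct^{-k/6}|w|^k$, I would compute
\begin{equation*}
\|z^k e^{-c|w|^2}\|_{L^p(W)} = Ct^{-k/6-1/(6p)}\Big(\int_W |w|^{kp}e^{-cp|w|^2}|dw|\Big)^{1/p} \leq Ct^{-k/6-1/(6p)},
\end{equation*}
so that the leading $j=1$ term of the expansion contributes $\|z^k\hat{w}_1^W/t^{1/6}\|_{L^p(W)} \leq Ct^{-1/6-k/6-1/(6p)}$, which is bounded by the required $Ct^{-1/6-1/(6p)}$. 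Higher-order terms contribute smaller powers of $t^{-1/6}$, and the error terms on $|z| \leq t^{-1/12}$ and $|z| > t^{-1/12}$ (the latter being $O(e^{-c|w|^2}e^{-ct^{1/6}})$) are negligible compared to the leading contribution.

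There is no real main obstacle here, since both estimates reduce to summing the terms of the explicit asymptotic expansions of Lemmas \ref{whatPlemma}--\ref{whatWlemma} against $L^p$-norms of Gaussian-type weights. The only point requiring minor care is the scaling argument on $W$: one must correctly track the Jacobian $t^{-1/6}$ and the factor $t^{-k/6}$ from $|z|^k = Ct^{-k/6}|w|^k$ when passing to the $w$-variable, after which the $L^p$-integral of $|w|^{kp}e^{-cp|w|^2}$ is a $t$-independent constant.
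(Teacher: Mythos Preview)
Your proposal is correct and follows essentially the same approach as the paper: on $P$ both you and the paper invoke Lemma \ref{whatPlemma} directly, and on $W$ both rely on Lemma \ref{whatWlemma} together with the change of variables $z\mapsto w$. The only stylistic difference is that the paper first establishes the $L^1$ and $L^\infty$ bounds on $W$ (namely $\|z^k\hat{w}^Y\|_{L^1(W)}\leq Ct^{-1/3}$ and $\|z^k\hat{w}^Y\|_{L^\infty(W)}\leq Ct^{-1/6}$) and then interpolates via $\|f\|_{L^p}\leq\|f\|_{L^\infty}^{1-1/p}\|f\|_{L^1}^{1/p}$, whereas you compute the $L^p$-norm directly for each piece; either route works and yields the same exponent $t^{-1/6-1/(6p)}$.
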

\begin{proof}
The estimate of the $L^p$-norm on $P$ follows immediately from Lemma \ref{whatPlemma}. Using that $z = \frac{3^{1/12}}{2^{1/3}} t^{-1/6} w$, Lemma \ref{whatWlemma} implies that 
$$\|z^k \hat{w}^Y(y,t,z)\|_{L_z^1(W)} \leq \frac{C}{t^{1/3}}, \qquad 
\|z^k \hat{w}^Y(y,t,z)\|_{L_z^\infty(W)} \leq \frac{C}{t^{1/6}},$$
and then the estimate of the $L^p$-norm on $W$ follows from the inequality $\|f\|_{L^p} \leq \|f\|_{L^\infty}^{1 - \frac{1}{p}} \|f\|_{L^1}^{\frac{1}{p}}$.
\end{proof}
For a function $h:Y\to \mathbb{C}$, we define $\mathcal{C}^{Y}h$ and $\mathcal{C}^{Y}_{\hat{w}^{Y}}h$ by
\begin{align*}
& (\mathcal{C}^{Y}h)(k) = \frac{1}{2\pi i}\int_{Y}\frac{h(k')dk'}{k'-k}, \quad k \in \mathbb{C}\setminus Y, & & \mathcal{C}^{Y}_{\hat{w}^{Y}}h = \mathcal{C}^{Y}_{-}(h \hat{w}^{Y}).
\end{align*}
We conclude from Lemma \ref{whatYlemma} that $\hat{w}^Y(y,t,\cdot)$ lies in $L^2(Y)$ and that $\|\hat{w}^Y\|_{L^\infty(Y)} \leq C(\ln t) t^{-1/6}$ uniformly for $y$ in compact subsets of $[0, \infty)$ and for $t \geq 2$.
In particular, using the boundedness of $\mathcal{C}^Y_\pm$ as operators on $L^2(Y)$ %(see \cite[Theorem 4.2]{LCarleson})
there exists a $T \geq 1$ such that 
$\|\mathcal{C}^Y_{\hat{w}^Y}\|_{\mathcal{B}(L^2(Y))} \leq C \|\hat{w}^Y\|_{L^\infty(Y)}  \leq 1/2$ for $t \geq T$.
It follows that the operator $I - \mathcal{C}_{\hat{w}^Y}^{Y} \in \mathcal{B}(L^2(Y))$ is invertible for $t \geq T$. By standard arguments,
%(see \cite[Proposition 5.8]{LCarleson} for the relevant argument in the case of the Smirnoff class $\dot{E}^2$ considered here), 
we conclude that the RH problem (\ref{RHmYhatIV}) has a unique solution $\hat{m}^Y \in I + \dot{E}^2(\C \setminus Y)$ whenever $(y,t,z_1,z_2) \in \mathcal{P}_T$. This solution is given by 
\begin{align}\label{mYrepresentationIV}
\hat{m}^Y(y, t, z) = I + \mathcal{C}^Y(\hat{\mu}^Y \hat{w}^Y) = I + \frac{1}{2\pi i}\int_{Y} (\hat{\mu}^Y \hat{w}^Y)(y, t, z') \frac{dz'}{z' - z},
\end{align}
where $\hat{\mu}^Y(x, t, \cdot) \in I + L^2(Y)$ is defined by $\hat{\mu}^Y = I + (I - \mathcal{C}^Y_{\hat{w}^Y})^{-1}\mathcal{C}^Y_{\hat{w}^Y}I$.

%Our next lemma considers the large $t$ expansion of $\hat{\mu}^Y$.

\begin{lemma}\label{hatmuYlemma}
As $t\to \infty$, the function $\hat{\mu}^Y$ satisfies
\begin{align}\label{muYleadingterm}
\|\hat{\mu}^Y(y, t,z) - I\|_{L^2(Y)} = \begin{cases} 
O(t^{-1/3}\ln{t}), & z \in P,
	 \\
O(t^{-1/4}), & z \in W,
\end{cases}
\end{align}
uniformly for $y$ in compact subsets of $[0, \infty)$ and $z$ in the given range. 
\end{lemma}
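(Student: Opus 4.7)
The starting point is the resolvent identity obtained by iterating \eqref{muYdefIII} once:
$$\hat{\mu}^Y - I = \mathcal{C}^Y_-(\hat{w}^Y) + (I - \mathcal{C}^Y_{\hat{w}^Y})^{-1}\bigl(\mathcal{C}^Y_{\hat{w}^Y}\bigr)^2 I.$$
The plan is to show that the second (remainder) term is strictly subleading compared to both target rates, and then to estimate the leading term $\mathcal{C}^Y_-(\hat{w}^Y)$ in $L^2(P)$ and $L^2(W)$ \emph{separately} by splitting $\hat{w}^Y = \chi_P\hat{w}^Y + \chi_W\hat{w}^Y$ and exploiting the positive distance between $P$ and $W$.

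For the remainder term, using $\|\mathcal{C}^Y_-\|_{\mathcal{B}(L^2(Y))} \leq C$ and $\|(I - \mathcal{C}^Y_{\hat{w}^Y})^{-1}\|_{\mathcal{B}(L^2(Y))} \leq 2$ together with $\|\mathcal{C}^Y_{\hat{w}^Y}f\|_{L^2(Y)} \leq C\|f\|_{L^2(Y)}\|\hat{w}^Y\|_{L^\infty(Y)}$, the remainder is bounded by $C\|\hat{w}^Y\|_{L^2(Y)}\|\hat{w}^Y\|_{L^\infty(Y)}$. Applying Lemma \ref{whatYlemma} with $p = 2$ and $p = \infty$ gives at worst $C\, t^{-1/4}\cdot (\ln t)\, t^{-1/6} = O(t^{-5/12}\ln t)$, which is strictly smaller than both $t^{-1/3}\ln t$ and $t^{-1/4}$ and so contributes negligibly to both estimates.

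For the $L^2(P)$ bound on the leading term, the contribution $\mathcal{C}^P_-(\hat{w}^Y|_P)$ is a Cauchy singular integral on the piecewise linear contour $P$ (whose only self-intersections occur at $\pm i$), hence bounded on $L^2(P)$ with operator norm independent of $t$; therefore its $L^2(P)$ norm is controlled by $\|\hat{w}^Y\|_{L^2(P)} = O(t^{-1/3}\ln t)$ from Lemma \ref{whatYlemma}. The contribution $\mathcal{C}^Y_-(\chi_W\hat{w}^Y)$ evaluated on $P$ is much smoother because $\dist(P,W) > 0$: I will verify the pointwise geometric estimate $|z - z'| \geq c(1 + |z|)$ for all $z \in P$, $z' \in W$, which together with Lemma \ref{whatYlemma} at $p = 1$ yields
$$|\mathcal{C}^Y_-(\chi_W\hat{w}^Y)(z)| \leq \frac{C}{1+|z|}\|\hat{w}^Y\|_{L^1(W)}, \qquad z \in P,$$
and then the $L^2(P)$ integrability of $(1+|z|)^{-1}$ on $P$ gives $\|\mathcal{C}^Y_-(\chi_W\hat{w}^Y)\|_{L^2(P)} \leq C\|\hat{w}^Y\|_{L^1(W)} = O(t^{-1/3})$. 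Combining gives the advertised $O(t^{-1/3}\ln t)$ rate on $P$.

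The symmetric analysis on $W$ gives the on-contour piece $\mathcal{C}^W_-(\hat{w}^Y|_W)$ bounded in $L^2(W)$ by $C\|\hat{w}^Y\|_{L^2(W)} = O(t^{-1/4})$, and the off-contour piece bounded by $C\|\hat{w}^Y\|_{L^1(P)} = O(t^{-1/3}\ln t)$, so the $L^2(W)$ total is dominated by $t^{-1/4}$. The one nontrivial step in the plan is the geometric estimate $|z - z'| \geq c(1+|z|)$ on each of the pairs of subarcs; this reduces to a straightforward (but slightly tedious) case-by-case computation, the worst cases being parallel pairs such as $(Y_1, Y_5)$, where a direct parametrization shows $|z-z'|^2 = 1 + (r-s) + (r-s)^2$ and hence $|z-z'| \geq \sqrt{3}/2$, with the stronger bound $|z-z'| \geq c|z|$ immediate whenever $|z|$ is large. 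Once this geometric fact is in hand, the remainder of the proof is a standard application of small-norm Riemann--Hilbert machinery.
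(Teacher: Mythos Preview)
Your geometric claim for the parallel pairs is false. Take $z = i + re^{i\pi/6} \in Y_1$ and $z' = (r+\tfrac{1}{2})e^{i\pi/6} \in Y_5$; your own computation $|z-z'|^2 = 1 + (r-s) + (r-s)^2$ gives $|z-z'|^2 = 3/4$ for this choice, so $|z-z'| = \sqrt{3}/2$ independently of $r$, whereas $|z| \sim r$ can be arbitrarily large. The rays $Y_1$ and $Y_5$ are parallel and stay a fixed distance $\sqrt{3}/2$ apart, so the bound $|z-z'| \geq c(1+|z|)$ fails on each of the parallel pairs $(Y_1,Y_5)$, $(Y_2,Y_6)$, $(Y_3,Y_7)$, $(Y_4,Y_8)$. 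With it fails your pointwise estimate $|\mathcal{C}^Y_-(\chi_W\hat{w}^Y)(z)| \leq C(1+|z|)^{-1}\|\hat{w}^Y\|_{L^1(W)}$ on $P$, and hence the $L^2(P)$ bound on the cross term. (For the $L^2(W)$ estimate the gap is harmless: there the crude bound $\|\mathcal{C}^Y_-(\chi_P\hat{w}^Y)\|_{L^2(W)} \leq \|\mathcal{C}^Y_-(\chi_P\hat{w}^Y)\|_{L^2(Y)} \leq C\|\hat{w}^Y\|_{L^2(P)} = O(t^{-1/3}\ln t)$ already suffices.)

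The $P$ estimate can be repaired by exploiting the localization of $\hat{w}^Y|_W$ near the origin supplied by Lemma~\ref{whatWlemma}: on $W\cap\{|z'| > t^{-1/12}\}$ one has only the exponentially small tail together with $\hat{w}_E$, which by (\ref{whatEestimate}) is $O(t^{-1/3})$ in any $L^p$ norm; on $W\cap\{|z'| \leq t^{-1/12}\}$ (or simply $|z'|\leq \tfrac12$) one genuinely has $|z-z'| \geq |z| - \tfrac{1}{2} \geq c(1+|z|)$ because every $z \in P$ satisfies $|z| \geq 1$, and then your argument goes through verbatim. It is worth noting that the paper's own proof is much shorter --- it simply bounds $\|\hat{\mu}^Y - I\|_{L^2(Y)}$ by $C\|\hat{w}^Y\|_{L^2(Y)}$ via the Neumann series and quotes Lemma~\ref{whatYlemma} --- but that argument only delivers $O(t^{-1/4})$ uniformly on $Y$ and does not by itself yield the sharper $P$ rate stated in the lemma; that rate is in fact never used in the subsequent analysis.
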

\begin{proof}
For $t \geq T$, we have
\begin{align*}
\|(I - \mathcal{C}^Y_{\hat{w}})^{-1}\|_{\mathcal{B}(L^2(Y))} 
& \leq \sum_{j=0}^\infty \|\mathcal{C}^Y_{\hat{w}}\|_{\mathcal{B}(L^2(Y))}^j
 = \frac{1}{1 - \|\mathcal{C}^Y_{\hat{w}}\|_{\mathcal{B}(L^2(Y))}} \leq \frac{1}{1 - 1/2} = 2,
\end{align*}
and so $\|\hat{\mu}^Y - I\|_{L^2(Y)} \leq 2 \|\mathcal{C}^Y_{\hat{w}^Y}I\|_{L^2(Y)} \leq C\|\hat{w}^Y\|_{L^2(Y)}$. Thus (\ref{muYleadingterm}) follows from Lemma \ref{whatYlemma}.
\end{proof}

We will see that the leading contribution from $\hat{w}_E$ to the large $t$ expansion of $\hat{m}^Y$ is
$$-\frac{1}{2\pi i z} \int_{Y_6 \cup Y_7} \hat{w}_E(y,t,z) dz.$$
Our next lemma estimates this integral.

\begin{lemma}\label{XElemma}
The function $\hat{w}_E$ defined in (\ref{whatEdef}) satisfies
$$-\frac{1}{2\pi i} \int_{Y_6 \cup Y_7} \hat{w}_E(y,t,z) dz
= -\frac{X_E(y)}{2\pi i  t^{1/3}} + O(t^{-1/2}), \qquad t\to \infty,$$
uniformly for $y$ in compact subsets of $[0, \infty)$, where $X_E: [0, \infty) \to \C$ is a smooth function. 
\end{lemma}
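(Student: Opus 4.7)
The plan is to extract the leading $t^{-1/3}$ term by replacing $\hat p_k(t,z)$ with its leading piece $\hat b_{11}^{(k)} z\, t^{-1/3}$ and $M^Y(y,t,z)$ with $m^P(y,z)$, and then show the resulting error is $O(t^{-1/2})$. The exponentials $e^{-2\beta(y,z)}$ on $Y_6$ and $e^{2\beta(y,z)}$ on $Y_7$ both obey $|e^{\mp 2\beta}| \le e^{-(yr + 8r^3/3)}$ with $r=|z|$, so all integrals below converge absolutely and uniformly for $y$ in compact subsets of $[0,\infty)$.

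First I would define
\begin{align*}
X_E(y) := \hat b_{11}^{(1)} \int_{Y_6} z\, m^P(y,z)\, E_{31}\, m^P(y,z)^{-1}\, e^{-2\beta(y,z)}\, dz
 + \hat b_{11}^{(2)} \int_{Y_7} z\, m^P(y,z)\, E_{13}\, m^P(y,z)^{-1}\, e^{2\beta(y,z)}\, dz,
\end{align*}
with $E_{ij}$ the standard elementary matrix. Smoothness of $X_E$ in $y$ then follows by differentiation under the integral: derivatives of $m^P(y,z)$ are smooth in $y$ and, together with the $y$-derivatives of $e^{\mp 2\beta}$, produce at most polynomial factors in $|z|$, which are crushed by $e^{-c|z|^3}$.

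Next I would control the two error sources. The contribution from the higher-order terms of $\hat p_k$, i.e.\ from $\hat p_k(t,z) - \hat b_{11}^{(k)} z\, t^{-1/3}$, is, by \eqref{phatsumIII} and the uniform bound \eqref{MYbounded} on $M^Y$ and $(M^Y)^{-1}$, bounded by
\begin{align*}
C\, t^{-2/3} \int_0^\infty r^2 (1 + |\ln(r t^{-1/3})|)\, e^{-c r^3}\, dr = O(t^{-2/3} \ln t) = o(t^{-1/2}).
\end{align*}
The remaining (and dominant) error comes from replacing $M^Y$ by $m^P$. Using $M^Y = m^P(y,0)\,m^W(\tilde y,w)\,m^P(y,0)^{-1} m^P(y,z)$, this reduces to estimating
\begin{align*}
\int_{Y_6 \cup Y_7} t^{-1/3}\,|z|\,\bigl|m^W(\tilde y, w) - I\bigr|\,e^{-cr^3}\,|dz|,
\end{align*}
together with the analogous $(m^W)^{-1} - I$ piece. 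I would split this integral at the radius $r = t^{-1/6}$: on $[0, t^{-1/6}]$ the bound $|m^W - I| \le C$ is used, producing $t^{-1/3}\cdot t^{-1/6}\cdot t^{-1/6} = O(t^{-2/3})$; on $[t^{-1/6}, \infty)$ the large-$w$ expansion of Lemma \ref{mWlemma} gives $|m^W - I| \le C/|w| = C t^{-1/6}/r$, and then the integrand is bounded by $C\, t^{-1/2}\, e^{-cr^3}$, integrating to $O(t^{-1/2})$. Combining all three contributions yields the claim.

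The main obstacle is this contour splitting at $r = t^{-1/6}$: the large-$w$ asymptotic of $m^W$ is not yet effective in the inner region, and one needs the balance between the two regimes to produce exactly $t^{-1/2}$ rather than a weaker bound. Everything else is a direct consequence of the explicit structure of $\hat w_E$ in \eqref{whatEdef}, the bounds on $m^P$ and $m^W$ already established, and differentiation under the integral sign for the smoothness of $X_E$.
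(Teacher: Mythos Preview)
Your proposal is correct and follows essentially the same approach as the paper: extract the leading piece $\hat b_{11}^{(k)} z t^{-1/3}$ of $\hat p_k$, define $X_E(y)$ exactly as the paper does (your formula coincides with the paper's (\ref{XEdef})), and show the remainder is $O(t^{-1/2})$ by splitting the integral according to the size of $|w|$. The paper splits at $|z| = t^{-1/12}$ rather than your $t^{-1/6}$ and writes out the $m^W$-dependent remainder explicitly as a separate term $Y_E(y,t)$ using the special structure of $m^W$ and $m^P(y,0)$, but both splits yield $O(t^{-1/2})$ and the underlying estimate is the same.
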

\begin{proof}
Using the definition (\ref{m0Ydef}) of $M^Y$, the expression (\ref{mWexplicit}) for $m^W$, and the expression (\ref{mPat0expression}) for $m^P(y,0)$, we can write, for $z \in Y_6$,
\begin{align*}
\hat{w}_E(y,t,z) = &\; m^P(y,z) \begin{pmatrix} 0 & 0 & 0 \\0 & 0 & 0 \\ \hat{p}_1 e^{-2\beta} & 0 & 0 \end{pmatrix} m^P(y,z)^{-1} 
	\\
& - e^{-2\beta} m_{12}^W(\tilde{y}, w) \hat{p}_1(m_{13}^P + m_{33}^P)(y,z) e^{U_{\HM}(y)} \begin{pmatrix} 0 & m_{13}^P(y,z) & 0 \\0 & 0 & 0 \\ 0 & m_{33}^P(y,z) & 0 \end{pmatrix},
\end{align*}
and, for $z \in Y_7$,
\begin{align*}
\hat{w}_E(y,t,z) = &\; m^P(y,z) \begin{pmatrix} 0 & 0 & \hat{p}_2 e^{2\beta} \\0 & 0 & 0 \\ 0 & 0 & 0 \end{pmatrix} m^P(y,z)^{-1} 
	\\
& + e^{2\beta} (m_{12}^W(\tilde{y}, w))_+ \hat{p}_2(m_{11}^P + m_{31}^P)(y,z) e^{U_{\HM}(y)} \begin{pmatrix} 0 & m_{11}^P(y,z) & 0 \\0 & 0 & 0 \\ 0 & m_{31}^P(y,z) & 0 \end{pmatrix}.
\end{align*}
It follows from the form of $\hat{p}_1$ and $\hat{p}_2$ as specified in (\ref{phatsumIII}) that
$$-\frac{1}{2\pi i} \int_{Y_6 \cup Y_7} \hat{w}_E(y,t,z) dz
= -\frac{X_E(y)}{2\pi i  t^{1/3}}
-e^{U_{\HM}(y)} \frac{Y_E(y, t)}{2\pi i  t^{1/3}}   + O(t^{-2/3}\ln{t}), \qquad t\to \infty,$$
where $X_E$ and $Y_E$ are defined by
\begin{align}\nonumber
X_E(y) := &\; \hat{b}_{11}^{(1)} \int_{Y_6} m^P(y,z) \begin{pmatrix} 0 & 0 & 0 \\0 & 0 & 0 \\ z e^{-2\beta} & 0 & 0 \end{pmatrix} m^P(y,z)^{-1}  dz
	\\ \label{XEdef}
& + \hat{b}_{11}^{(2)} \int_{Y_7} m^P(y,z) \begin{pmatrix} 0 & 0 & z e^{2\beta} \\0 & 0 & 0 \\ 0 & 0 & 0 \end{pmatrix} m^P(y,z)^{-1}  dz,
	\\ \nonumber
Y_E(y, t) := & - \hat{b}_{11}^{(1)} \int_{Y_6} z e^{-2\beta} m_{12}^W(\tilde{y}, w) (m_{13}^P + m_{33}^P)(y,z)  \begin{pmatrix} 0 & m_{13}^P(y,z) & 0 \\0 & 0 & 0 \\ 0 & m_{33}^P(y,z) & 0 \end{pmatrix} dz
	\\ \label{YEdef}
&+ \hat{b}_{11}^{(2)} \int_{Y_7} z e^{2\beta} (m_{12}^W(\tilde{y}, w))_+ (m_{11}^P + m_{31}^P)(y,z)  \begin{pmatrix} 0 & m_{11}^P(y,z) & 0 \\0 & 0 & 0 \\ 0 & m_{31}^P(y,z) & 0 \end{pmatrix} dz.
\end{align}
The decay of the exponentials $e^{\pm 2\beta} = e^{\mp 2 i (yz + \frac{4z^3}{3})}$ in (\ref{XEdef}) together with the structure of $m^P$ implies that $X_E$ is a smooth function of $y \in [0, \infty)$. 
To complete the proof of the lemma, it is therefore sufficient to show that  $Y_E(y, t) = O(t^{-1/6})$ as $t\to \infty$
uniformly for $y$ in compact subsets of $[0, \infty)$. To show this, we consider the integral in (\ref{YEdef}) over $Y_6$; the integral over $Y_7$ can be estimated similarly. We have
\begin{align}\label{Y6integral}
& \bigg|\int_{Y_6} z e^{-2\beta} m_{12}^W(\tilde{y}, w) (m_{13}^P + m_{33}^P)(y,z)  \begin{pmatrix} 0 & m_{13}^P(y,z) & 0 \\0 & 0 & 0 \\ 0 & m_{33}^P(y,z) & 0 \end{pmatrix} dz\bigg|
 \leq C\int_{Y_6} |z e^{2 i (yz + \frac{4z^3}{3})} m_{12}^W(\tilde{y}, w)||dz|
\end{align}
uniformly for $y$ in compact subsets of $[0, \infty)$. If $|z| \geq t^{-1/12}$, then $w \sim t^{1/6} z$ tends to infinity, so (\ref{mWasymptotics}) implies the estimate $|m_{12}^W(\tilde{y}, w)| \leq |w|^{-1}$ for $|z| \geq t^{-1/12}$. On the other hand, for $|z| \leq t^{-1/12}$ we have $|m_{12}^W(\tilde{y}, w)| \leq C$. Hence, letting $z = r e^{\frac{5\pi i}{6}}$ and splitting the integral into two, the right-hand side of (\ref{Y6integral}) is bounded above by
\begin{align*}
 C\int_0^\infty r e^{-cr^3} \Big|m_{12}^W\Big(\tilde{y}, \frac{2^{1/3}}{3^{1/12}} t^{1/6} e^{\frac{5\pi i}{6}} r\Big)\Big|dr
  \leq &\; C\int_0^{t^{-\frac{1}{12}}} r dr 
 + C\int_{t^{-\frac{1}{12}}}^\infty r e^{-cr^3} \frac{1}{r t^{1/6}}dr
\leq C t^{-1/6}	
\end{align*}
uniformly for $y$ in compact subsets of $[0, \infty)$.
\end{proof}

The next lemma provides an expansion of $\hat{m}^Y$ for large $t$.

\begin{lemma}\label{hatmYlemma}
The following expansion holds as $ z \to \infty$:
\begin{align}\label{hatmYexpansion}
\hat{m}^Y(y, t, z) 
= &\; I + \frac{\hat{m}_{1, \ln}^Y(y,\tilde{y})\ln t + \hat{m}_1^Y(y,\tilde{y})}{z t^{1/3}} 
+ \frac{\hat{m}_{2, \ln}^Y(y,\tilde{y})\ln t}{z t^{1/2}} 
+ O\bigg(\frac{1}{z t^{1/2}}\bigg) + O\bigg(\frac{\ln t}{z^2 t^{1/3}}\bigg),
\end{align}
where $\tilde{y}$ is given in terms of $y,t$ by (\ref{tildeywdef}) and the error terms are uniform for $t \geq T$, $\arg z \in [0, 2\pi]$,  and $y$ in compact subsets of $[0, \infty)$. The coefficients $\hat{m}_{1,\ln}^Y$, $\hat{m}_1^Y$, $\hat{m}_{2,\ln}^Y$ are smooth functions of $y \in [0,\infty)$ and $\tilde{y} \in (-1,\infty)$ defined by
\begin{subequations}\label{hatm1Ym2Y}
\begin{align} 
& \hat{m}_{1,\ln}^Y(y,\tilde{y}) = -\frac{1}{2\pi i } \int_{P} \hat{w}_{2,\ln}^P(y,\tilde{y},z) dz, 
	\\
& \hat{m}_1^Y(y,\tilde{y}) = -\frac{1}{2\pi i } \int_{P} \hat{w}_2^P(y,\tilde{y},z) dz - \frac{X_E(y)}{2\pi i},
	\\ \label{hatm1Ym2Yc}
& \hat{m}_{2,\ln}^Y(y,\tilde{y}) = -\frac{1}{2\pi i } \int_{P} \hat{w}_{3,\ln}^P(y,\tilde{y},z) dz,
\end{align}
\end{subequations}
where $\hat{w}_{2,\ln}^P, \hat{w}_2^P, \hat{w}_{3,\ln}^P$ are given by (\ref{wYexpansionPleadingcoeff}) and $X_E$ is given by (\ref{XEdef}).
In particular, all entries in the second column and in the second row of $\hat{m}_{1,\ln}^Y(y,\tilde{y})$ and $\hat{m}_1^Y(y,\tilde{y})$ are identically zero.
Moreover, the expansion (\ref{hatmYexpansion}) can be differentiated termwise with respect to $y$ any finite number of times without increasing the error terms, and $\hat{m}^Y$ obeys the bounds
\begin{align}\label{hatmYbounded}
\sup_{y \in [0,M]} \sup_{t \geq T} \sup_{z \in \C\setminus Y}  |\partial_y^j \hat{m}^Y(y, t, z)|  < \infty, \qquad j = 0,1, \dots.
\end{align}
\end{lemma}
\begin{proof}
It follows from the representation (\ref{mYrepresentationIV}) for $\hat{m}^Y$ together with Lemma \ref{whatYlemma} and Lemma \ref{hatmuYlemma} that the following formula holds as $z \in \C \setminus Y$ goes to infinity in any nontangential sector:
\begin{align}\nonumber
\hat{m}^Y&(y, t, z) 
=  I -  \frac{1}{2\pi i z}\int_{Y} \hat{\mu}^Y \hat{w}^Y dz'
+ \frac{1}{2\pi i}\int_{Y} \frac{z'\hat{\mu}^Y \hat{w}^Y}{z(z'-z)}dz'
	\\\nonumber
=&\;  I -  \frac{1}{2\pi i z}\int_{Y} \hat{w}^Y dz
-  \frac{1}{2\pi i z}\int_{Y} (\hat{\mu}^Y - I)\hat{w}^Y dz
	\\\nonumber
& + O(z^{-2}\| \hat{\mu}^Y - I \|_{L^2(Y)} \|(1 + |\cdot|) \hat{w}^Y\|_{L^2(Y)})
+ O(z^{-2}\| (1 + |\cdot|) \hat{w}^Y\|_{L^1(Y)})
	\\ \label{mhatYexpansionproof}
=&\;  I -  \frac{1}{2\pi i z}\int_{Y} \hat{w}^Y dz + O\bigg(\frac{1}{z t^{1/2}}\bigg) + O\bigg(\frac{\ln t}{z^2 t^{1/3}}\bigg)
\end{align}
uniformly for $t \geq T$ and $y$ in compact subsets of $[0,\infty)$. Repeating the above steps with $Y$ replaced by a slightly deformed contour $\tilde{Y}$, we see that in fact the condition that $z$ lies in a nontangential sector can be dropped.
Moreover, recalling that $\hat{w}_{1,\ln}^W = \hat{w}_{2,\ln}^W = 0$  by Lemma \ref{whatWlemma}, we deduce with the help of the expansions (\ref{wYexpansionP}) and (\ref{wYexpansionW}) of $\hat{w}^Y$ that
\begin{align}\nonumber
-  \frac{1}{2\pi i z} \int_{Y} & \hat{w}^Y dz
= -  \frac{\int_{P} (\hat{w}_{2,\ln}^P \ln{t} + \hat{w}_2^P) dz}{2\pi i z t^{1/3}}
-  \frac{\int_{P} (\hat{w}_{3,\ln}^P \ln{t} + \hat{w}_3^P) dz}{2\pi i z t^{1/2}}
-  \frac{\int_{W} \hat{w}_E^Y dz}{2\pi i z} 
	\\\nonumber
&-  \frac{\int_{W \cap \{|w| \leq \frac{2^{1/3}t^{1/12}}{3^{1/12}} \}} \hat{w}_1^W(y, \tilde{y}, w) dw}{2\pi i z t^{1/6} \frac{2^{1/3}}{3^{1/12}} t^{1/6}}
 -  \frac{\int_{W \cap \{|w| \leq \frac{2^{1/3}t^{1/12}}{3^{1/12}}\}} \hat{w}_2^W(y, \tilde{y}, w) dw}{2\pi i z t^{1/3} \frac{2^{1/3}}{3^{1/12}} t^{1/6}}
 + O\Big(\frac{\ln{t}}{z t^{2/3}}\Big).
\end{align}
We infer from the explicit expression in (\ref{wYexpansionWleadingcoeff}) that $\hat{w}_1^W$ is an odd function of $w$, so the integral $\int_{W}  \hat{w}_1^W dw$ vanishes. 
From (\ref{hatwjWestimates}), we see that the term involving $\hat{w}_2^W$ is $O(z^{-1}t^{-1/2})$. Employing also Lemma \ref{XElemma} to estimate the integral of $\hat{w}_E^Y$, we obtain, as $z \to \infty$,
\begin{align}\label{intYhatwYexpansion}
-  \frac{1}{2\pi i z}\int_{Y} \hat{w}^Y dz
=&  -  \frac{\int_{P} (\hat{w}_{2,\ln}^P \ln{t} + \hat{w}_2^P) dz}{2\pi i z t^{1/3}}
-\frac{X_E(y)}{2\pi i  z t^{1/3}} 
-  \frac{\int_{P} \hat{w}_{3,\ln}^P dz \ln{t} }{2\pi i z t^{1/2}}
+ O\bigg(\frac{1}{z t^{1/2}}\bigg)
\end{align}
uniformly for $t \geq T$ and $y$ in compact subsets of $[0,\infty)$.
The desired conclusion follows from (\ref{mhatYexpansionproof}) and (\ref{intYhatwYexpansion}).
\end{proof}

\subsection{Proof of Lemma \ref{YlemmaIII}}
The expansion (\ref{mYasymptoticsIII}) of $m^Y = \hat{m}^Y M^Y$ follows from the expansions (\ref{MYasymptotics}) and (\ref{hatmYexpansion}); since both of these expansions can be differentiated termwise with respect to $y$, the same is true of the expansion in (\ref{mYasymptoticsIII}).
The bound (\ref{mYboundedIV}) follows from (\ref{MYbounded}) and (\ref{hatmYbounded}). This completes the proof of Lemma \ref{YlemmaIII}.

\subsection*{Acknowledgements}
The authors are grateful to Percy Deift and J\"orgen \"Ostensson for valuable discussions. 
Support is acknowledged from the Novo Nordisk Fonden Project, Grant 0064428, the European Research Council, Grant Agreement No. 682537, the Swedish Research Council, Grant No. 2015-05430, Grant No. 2021-04626, and Grant No. 2021-03877, the G\"oran Gustafsson Foundation, and the Ruth and Nils-Erik Stenb\"ack Foundation.

\bibliographystyle{plain}
\bibliography{is}

\end{document}